\newcommand{\norm}[1]{\left\lVert#1\right\rVert}
\definecolor{mygreen}{RGB}{28,172,0} 
\definecolor{mylilas}{RGB}{170,55,241}
\definecolor{red}{rgb}{1,0,0}
\newtheorem{theorem}{Theorem}[section]
\newtheorem{corollary}{Corollary}
\newtheorem{lemma}{Lemma}
\newtheorem{remark}{Remark}
\newtheorem{conj}[theorem]{Conjecture}
\newtheorem{definition}{Definition}[section]
\newtheorem{proposition}{Proposition} 
\title{Blow-Up Dynamics for the $L^2$ critical case of the $2$D Zakharov-Kuznetsov equation}
 \author{Francisc Bozgan \thanks{NYUAD Research Institute, New York University Abu Dhabi, Saadiyat Island, PO Box 129188, Abu Dhabi, United Arab Emirates, {\sf ftb208@nyu.edu}}\quad Tej-Eddine Ghoul \thanks{Department of Mathematics, New York University Abu Dhabi, Saadiyat Island, PO Box 129188, Abu Dhabi, United Arab Emirates, {\sf teg6@nyu.edu}} \quad Nader Masmoudi \thanks{Department of Mathematics, New York University in Abu Dhabi, Saadiyat Island, P.O. Box 129188, Abu Dhabi, United Arab Emirates-- Courant Institute of Mathematical Sciences, New York University, 251 Mercer Street, New York, NY 10012, USA, {\sf nm30@nyu.edu}} \quad Kai Yang \thanks{College of Mathematics and Statistics, Chongqing University, Chongqing City, {\sf yangkai10@hotmail.com}}
}
\begin{document}
\maketitle
\begin{abstract}

We investigate the blow-up dynamics for the $L^2$ critical two-dimensional Zakharov-Kuznetsov equation 
\begin{equation*}
\begin{cases} \partial_t u+\partial_{x_1} (\Delta u+u^3)=0, \mbox{ } x=(x_1,x_2)\in \mathbb{R}^2, \mbox{ } t \in \mathbb{R}\\
u(0,x_1,x_2)=u_0(x_1,x_2)\in H^1(\mathbb{R}^2),
\end{cases}
\end{equation*} 
with initial data $u_0$ slightly exceeding the mass of the soliton solution $Q$, which satisfies $-\Delta Q + Q - Q^3 = 0$. Employing methodologies analogous to those used in the study of the gKdV equation \cite{MartelMerleRaphael1}, we categorize the behavior of the solution into three outcomes: asymptotic stability, finite-time blow-up, or divergence from the soliton's vicinity. The universal blow-up behavior that we find is slightly different from the conjecture of \cite{KleinRoudenkoStoilov}, by deriving a non-trivial, computationally determinable constant for the blow-up rate, dependent on the two-dimensional soliton's behavior. The construction of blow-up solution involves the bubbling of the solitary wave which ensures that it is \textit{stable}. 
\end{abstract} 
\section{Introduction}
The generalized Zakharov-Kuznetsov (gZK) equation with initial data $u_0$ reads as

\begin{equation*}
\begin{cases} \partial_t u+\partial_{x_1} (\Delta u+u^p)=0, \mbox{ } x=(x_1,\ldots, x_N)\in \mathbb{R}^N, \mbox{ }p\geq 1, \mbox{ } t \in \mathbb{R}\\
u(0,x_1,\ldots, x_N)=u_0(x_1, \ldots, x_N),
\end{cases}
\end{equation*} 
(where $\Delta$ is the $N$-Laplacian) which is a higher-dimensional extension of the Korteweg-de Vries (KdV) model for the shallow water waves 
\begin{equation*}
\partial_t u+\partial_x(\partial_{xx}u+u^p)=0, \mbox{ } x\in \mathbb{R}, \mbox{ }t \in \mathbb{R}.
\end{equation*}
When $p\neq 2$, the above equation is called the generalized KdV, with the particular case of $p=3$ which is called the modified KdV equation. 
In this paper, we are interested in the gZK equation, which was originally derived by Zakharov and Kuznetsov to describe weakly magnetized ion-acoustic waves in a plasma comprising cold ions and hot isothermal electrons in the presence of a uniform magnetic field in $3$D \cite{ZakharovKuznetsov}. 
During the lifespan of the solution $u(t),$ we have the conserved mass and energy: for $\vec{x} \in \mathbb{R}^N,$
$$M[u(t)]=M[u(0)]=\int_{\mathbb{R}^N}u(t,\vec{x})^2d\vec{x}$$
and 

$$E[u(t)]=E[u(0)]=\frac{1}{2}\int_{\mathbb{R}^N}[\nabla u(t,\vec{x})]^2d\vec{x}-\frac{1}{p+1}\int_{\mathbb{R}^N}[u(t,\vec{x})]^{p+1}d\vec{x}.$$
On $\mathbb{R}^2$, we call $\vec{x}=(x,y).$ For a solution $u$ decaying at infinity on $\mathbb{R}^2$ we get that 
$$\int_{\mathbb{R}}u(x,y,t)dx=\int_{\mathbb{R}}u(x,y,0)dx$$
by integrating the equation in the $x$ variable.
 For the equation, we have the scaling $u_{\lambda}=\lambda^{\frac{2}{p-1}}u(\lambda^3t,\lambda x, \lambda y)$ and we have that 
 $$\|u_{\lambda}(0,\cdot)\|_{\dot{H}^s(\mathbb{R}^N)}=\lambda^{\frac{2}{p-1}+s-\frac{N}{2}}\|u_0\|_{\dot{H}^s(\mathbb{R}^N)},$$ 
 which leads for the 2D cubic ZK equation to be critical case when $s=0$, hence it is $L^2$-critical. 
 From now on, we will discuss only this case, namely $N=2$ and $p=3.$
 The generalized ZK equation has a family of traveling waves  (or solitary waves, solitions) and they travel in the $x$-direction 
 $$u(t,x,y)=Q_c(x-ct,y)$$
 with $Q_c(x,y)\rightarrow 0$ as $|(x,y)| \rightarrow +\infty.$ Here, $Q_c$ is the dilation of the ground state: 
 
$$Q_c(\vec{x})=c^{\frac{1}{p}-1}Q(c^{\frac{1}{2}}\vec{x})=c^{\frac{1}{p}-1}Q(c^{\frac{1}{2}}\vec{x}), \mbox{ } \vec{x}=(x,y)$$
with $Q$ being the unique radial positive solution in $H^1(\mathbb{R}^2)$ of the non-linear elliptic equation $-\Delta Q+Q-Q^p=0$ (for existence, see \cite{WalterStrauss}, \cite{BerestyckiLionsPeletier}, for uniqueness \cite{Kwong}). From \cite{GidasNiNirenberg}, we note that $Q \in C^{\infty}(\mathbb{R}^2),$ $\partial_rQ(r)<0$ for any $r=|x|>0,$ and for any multi-index $\alpha,$ 
$$|\partial^{\alpha}Q(\vec{x})|\leq c(\alpha)e^{-|\vec{x}|} \mbox{ for any } \vec{x}\in \mathbb{R}^2.$$

It follows from the conservation of the mass and energy, together with the Weinstein inequality \cite{Weinstein}, that if $\|u\|_{L^2}<\|Q\|_{L^2}$ we have 
$$\|\nabla u\|^{2}_{L^2}\leq \frac{2E(u)}{1-\frac{\|u\|_{L^2}}{\|Q\|_{L^2}}}.$$

In this case, we will have global well-posedness for the equation, therefore blow-up can only occur in the case $\|u\|_{L^2}\geq \|Q\|_{L^2}.$ 

The local well-posedness theory is well-known and we state the following known results: 

\begin{theorem} 
The cubic 2D ZK equation is locally well-posed in $H^{s}(\mathbb{R}^2)$, with $s\geq \frac{1}{4}.$ (\cite{RibaudVento}, \cite{Kinoshita}). Moreover, if $\|u_0\|_{L^2}<\|Q\|_{L^2},$ we have global well-posedness in $H^1(\mathbb{R}^2)$ in \cite{LinaresPastor}, which was further improved to $H^{s}(\mathbb{R}^2)$ with $s>\frac{53}{63}$ in \cite{LinaresPastor2} and $H^{s}(\mathbb{R}^2)$ with $s>\frac{3}{4}$ in \cite{BhattacharyaFarahRoudenko}.
\end{theorem}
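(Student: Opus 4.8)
The plan is to treat the three assertions in turn. For local well-posedness I would recast the Cauchy problem in Duhamel form,
\[
u(t) = W(t)u_0 - \int_0^t W(t-t')\,\partial_{x_1}\big(u(t')^3\big)\,dt',
\]
where $W(t)=e^{-t\partial_{x_1}\Delta}$ is the linear Zakharov--Kuznetsov group, whose Fourier symbol is $e^{it\phi(\xi)}$ with dispersion relation $\phi(\xi)=\xi_1|\xi|^2=\xi_1^3+\xi_1\xi_2^2$, and run a contraction argument in a Bourgain space $X^{s,b}$ with $b$ slightly above $1/2$ (at the endpoint $s=\tfrac14$, intersected with an auxiliary space encoding $\ell^1$-summability over dyadic modulation blocks). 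The estimate that closes the fixed point is the trilinear bound
\[
\big\|\partial_{x_1}(u_1u_2u_3)\big\|_{X^{s,b-1}} \;\lesssim\; \|u_1\|_{X^{s,b}}\|u_2\|_{X^{s,b}}\|u_3\|_{X^{s,b}}, \qquad s\ge \tfrac14,
\]
together with the standard linear and energy estimates in $X^{s,b}$.

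To prove the trilinear estimate one localizes the three inputs and the output to dyadic frequency blocks and splits into cases according to the relative sizes of the frequencies. The two sources of smoothing that must be exploited are the anisotropic Strichartz and local smoothing estimates for $W(t)$ (in particular the one-derivative Kato smoothing gain in the $x_1$ variable, which is exactly what compensates the $\partial_{x_1}$ in the nonlinearity) and the algebraic lower bound on the resonance function $\phi(\xi)-\phi(\xi_1)-\phi(\xi_2)-\phi(\xi_3)$ (with $\xi=\xi_1+\xi_2+\xi_3$), which forces a large modulation on one of the factors whenever the spatial frequencies are spread apart. This is the analysis of \cite{RibaudVento}, pushed to the essentially sharp value $s=\tfrac14$ by the refined bilinear estimates of \cite{Kinoshita}; I would cite these for the frequency-localized computations rather than reproduce them. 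I expect this multilinear estimate near $s=\tfrac14$ to be the main obstacle: the derivative in $\partial_{x_1}(u^3)$ costs a unit of regularity that scaling alone does not provide, so one cannot descend to the $L^2$-critical exponent $s=0$, and recovering even $s=\tfrac14$ requires the full strength of the bilinear smoothing of the flow.

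Granting local well-posedness in $H^1$ (the case $s=1\ge\tfrac14$), global well-posedness under the subthreshold hypothesis $\|u_0\|_{L^2}<\|Q\|_{L^2}$ follows softly from the conservation laws. Mass conservation gives $\|u(t)\|_{L^2}=\|u_0\|_{L^2}<\|Q\|_{L^2}$ on the maximal interval, so the Weinstein inequality stated above applies with a uniform constant and yields
\[
\|\nabla u(t)\|_{L^2}^2 \;\le\; \frac{2E(u_0)}{1-\|u_0\|_{L^2}/\|Q\|_{L^2}},
\]
a bound independent of $t$; hence $\sup_t\|u(t)\|_{H^1}<\infty$. Since the local existence time depends only on $\|u_0\|_{H^1}$, this a priori bound rules out finite-time blow-up and the solution extends globally, as in \cite{LinaresPastor}.

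For global well-posedness in $H^s$ with $s<1$ there is no conserved quantity controlling $\|u\|_{H^s}$, and I would use the $I$-method. Introduce the Fourier multiplier $I=I_N$ equal to the identity for frequencies $\lesssim N$ and of size $N^{1-s}|\xi|^{s-1}$ for frequencies $\gg N$, so that $\|Iu\|_{H^1}\lesssim N^{1-s}\|u\|_{H^s}$ while $I$ is bounded on $H^s$ uniformly in $N$. One then establishes an almost-conservation law for the modified energy $E(Iu)$: over a time step of unit length, $|E(Iu(t))-E(Iu(0))|\lesssim N^{-\alpha}$ for some $\alpha=\alpha(s)>0$, the gain coming from commutator estimates for $E(Iu)-E(u)$ handled by the same resonance analysis as in the local theory, now with the weight $I$ inserted. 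Iterating over $\sim N^{\alpha}$ steps, controlling $\|u(t)\|_{H^s}$ by $\|Iu(t)\|_{H^1}$, and optimizing in $N$ yields polynomial-in-time growth bounds and hence global existence, for every $s$ above the threshold where $\alpha(s)$ becomes positive. Successive sharpenings of the multilinear input (and, in \cite{BhattacharyaFarahRoudenko}, its combination with the subthreshold Gagliardo--Nirenberg structure) lower this threshold to $s>\tfrac{53}{63}$ in \cite{LinaresPastor2} and finally to $s>\tfrac34$ in \cite{BhattacharyaFarahRoudenko}; as in the local theory, the technical crux is extracting the power of $N$ from the modified-energy increment.
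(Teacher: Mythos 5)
The paper offers no proof of this theorem: it is stated purely as a survey of known results, with the entire content deferred to the cited references. Your outline correctly reflects the standard arguments behind those citations (Duhamel plus Bourgain-space contraction with the resonance/smoothing analysis for local well-posedness at $s\ge\frac14$, conservation of mass and energy plus the sharp Gagliardo--Nirenberg/Weinstein inequality for global well-posedness in $H^1$, and the $I$-method with almost-conservation of $E(Iu)$ for the sub-$H^1$ results), and the one part you carry out in full — the soft deduction of the uniform $H^1$ bound and hence global existence under $\|u_0\|_{L^2}<\|Q\|_{L^2}$ — is complete and correct (note only that the exponent on $\|u\|_{L^2}/\|Q\|_{L^2}$ in the paper's displayed Weinstein inequality should be $2$, which does not affect the argument since the denominator remains bounded away from zero).
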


In terms of the stability of the traveling wave, we start with the following definition. 
\begin{definition} 
Denote 
$$U_{\alpha}=\Big\{ u \in H^1(\mathbb{R}^2) : \inf_{\vec{y} \in \mathbb{R}^2}\|u(\cdot)-Q(\cdot+\vec{y})\|_{H^1}\leq \alpha\Big\}.$$
We say that $Q$, the radially symmetric solution of $-\Delta Q+Q-Q^p=0$, is stable if for all $\alpha> 0,$ there exists $\delta>0$ such that if $u_0 \in U_{\delta},$ then the corresponding solution $u(t)$ is defined for all $t\geq 0$ and $u(t)\in U_{\alpha}$ for all $t\geq 0.$ Otherwise, we call $Q$ is unstable. 
\end{definition}
In her pivotal work on dispersive solitary waves across higher dimensions, de Bouard \cite{deBouard} demonstrated that in dimensions two and three, the stability of traveling waves of the form \(Q(x-t, y)\) depends critically on the nonlinearity exponent \(p\). Specifically, these waves are stable for \(p < 1 + \frac{4}{n}\) and become unstable for \(p > 1 + \frac{4}{n}\), with \(1 + \frac{4}{n} = 3\) in two dimensions. This analysis draws upon foundational concepts developed by Bona, Souganidis, and Strauss \cite{BonaSouganidisStrauss} for instability, as well as stability frameworks by Grillakis, Shatah, and Strauss \cite{GrillakisShatahStrauss}.

For the \(L^2\)-critical case in two dimensions, where \(p=3\), Farah, Holmer, and Roudenko \cite{FarahHolmerRoudenko} applied methods initially established by Merle and Martel \cite{MerleMartel} to demonstrate the instability of solitons within this regime. Further explorations by the same group \cite{FarahHolmerRoudenko2} offered an alternative proof of instability in the supercritical case for \(p>3\), utilizing techniques adapted from Combet's work \cite{Combet} on the generalized KdV equation. Moreover, C\^{o}te, Mu\~{n}oz, Pilod, and Simpson \cite{CoteMunozPilodSimpson} have provided insights into the asymptotic stability for cases where \(2 < p < p^* \approx 2.3\), elucidating subtle aspects of dynamical behavior in this parameter space.

For the blow-up question of the gZK, we saw previously that we need $\|Q\|_{L^2}\leq\|u\|_{L^2}.$ From the local well-posedness theory (\cite{Faminski}, \cite{LinaresPastor2}, \cite{RibaudVento}), we have that if $T<+\infty,$ then 
$$\lim_{t \nearrow T}\|\nabla u(t)\|_{L^{2}_{xy}}=+\infty.$$
If $T=+\infty,$ then either 
$$\lim_{t \nearrow T}\|\nabla u(t)\|_{L^{2}_{xy}}=+\infty \mbox{ or } \liminf_{t \nearrow T}\|\nabla u(t)\|_{L^{2}_{xy}}<+\infty$$
are a priori possible. In either case $T<+\infty$ or $T=+\infty,$ we say that $u(t)$ blows up at forward time $T$ if 
$$\liminf_{t \nearrow T}\|\nabla u(t)\|_{L^{2}_{xy}}=+\infty.$$

In terms of blow-up results for the $L^2$ critical ZK equation, we have the following result: 

\begin{theorem} \cite{FarahHolmerRoudenkoYang}.
There exists $\alpha_0>0$ such that the following holds. Suppose the $u(t)$ is a solution in $H^1$ of the 2D cubic ZK equation with $E_0<0$ and 
$$0<\|u\|_{L^2}^2-\|Q\|_{L^2}^2\leq \alpha_0.$$
Then $u(t)$ blows up in finite or infinite forward time. 
\end{theorem}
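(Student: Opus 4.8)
The plan is to follow the virial/monotonicity strategy of Martel--Merle for the $L^2$-critical gKdV, adapted to the two-dimensional geometry of the ZK equation. The hypothesis $0 < \|u\|_{L^2}^2 - \|Q\|_{L^2}^2 \leq \alpha_0$ places the solution, after modulation, in a small tubular neighborhood of the soliton manifold, so the first step is to set up the standard decomposition: using the implicit function theorem, for $t$ in the maximal interval of existence where $u(t)$ remains close to the soliton family, write
\begin{equation*}
u(t,x,y) = \frac{1}{\lambda(t)}\Big( Q + \varepsilon(t) \Big)\!\left( \frac{x - x_1(t)}{\lambda(t)}, \frac{y - x_2(t)}{\lambda(t)} \right),
\end{equation*}
with $\lambda(t) > 0$ and orthogonality conditions on $\varepsilon(t)$ chosen to kill the kernel directions of the linearized operator (scaling and the two translations), together with a normalization fixing the sign of $\lambda_t/\lambda$. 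One then derives the modulation equations controlling $\lambda_t/\lambda$, $(x_1)_t$, $(x_2)_t$ in terms of $\varepsilon$, and from the energy and mass conservation laws extracts, using $E_0 < 0$ and the Weinstein-type spectral coercivity of the linearized energy under the orthogonality conditions, the key pointwise lower bound of the form $\lambda(t)^{2} E_0 \lesssim -\|Q\|_{L^2}^{-\text{something}}$, i.e. a control showing $\|\nabla u(t)\|_{L^2} \sim 1/\lambda(t)$ and that it suffices to prove $\lambda(t) \to 0$.

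The heart of the argument is then a monotonicity formula: I would introduce a localized virial-type functional, something like $\mathcal{J}(t) = \int \psi\big(\tfrac{x - x_1(t)}{\lambda(t)}\big)\, \varepsilon(t)^2$ or, more in the spirit of Martel--Merle, a quantity built from $\int \phi\,u^2$ over a half-space moving with the soliton, and show it is essentially monotone in the rescaled time $s$ defined by $ds/dt = 1/\lambda^3$. The anisotropy of ZK (the derivative $\partial_{x_1}$ distinguishes the $x$-direction, while the Laplacian is isotropic) means the cutoff and the virial weight must be chosen carefully: one typically cuts off only in the $x_1$ variable, exploiting that the flux $\int u \,dx_1$ is conserved in each horizontal line and that the linear part transports mass to the left (the $-\partial_{x_1}\Delta$ group spreads dispersively toward $x_1 = -\infty$). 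The outcome should be: either $\lambda(s) \to 0$ as $s \to S$ (blow-up, possibly $S = +\infty$ in original time giving infinite-time blow-up), or $\lambda$ stays bounded below, in which case the monotonicity of the localized mass forces $\int_{x_1 > 0}\varepsilon^2 \to 0$ while simultaneously the negativity of the energy is incompatible with $\varepsilon \to 0$; this contradiction rules out the bounded-$\lambda$, non-blow-up scenario. Combined with the dichotomy from the local theory ($\liminf \|\nabla u\| = +\infty$ is precisely $\lambda \to 0$ along a sequence, which the monotonicity upgrades to a genuine limit), this yields blow-up in finite or infinite forward time.

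The main obstacle I anticipate is establishing the monotonicity formula with the correct sign in the two-dimensional anisotropic setting: in gKdV the one-dimensional virial identity producing the good sign is classical, but for ZK one must handle the transverse $x_2$-derivative terms generated when differentiating $\int \psi(x_1)\, u^2$ against the equation $u_t = -\partial_{x_1}(\Delta u + u^3)$. The cross term $\int \psi'(x_1)\,(\partial_{x_2} u)^2$ and the boundary-in-$x_2$ behavior must be shown to be either of favorable sign or controllable by the dispersive leftward flux; this is exactly where the specific structure of $Q$ (its exponential decay and the sign of $\partial_r Q$) and delicate commutator estimates enter. A secondary difficulty is the modulation theory itself: one needs the linearized operator $L = -\Delta + 1 - 3Q^2$ to have the expected spectral structure (one-dimensional negative eigenspace, kernel spanned by $\partial_{x_1}Q$, $\partial_{x_2}Q$, and the scaling direction $\Lambda Q$ lying outside the kernel), which in $\mathbb{R}^2$ for the radial ground state follows from known results but requires care to state the coercivity of the Schrödinger form $\langle L\varepsilon, \varepsilon\rangle$ under the chosen orthogonality conditions. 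Once these two ingredients are in place, the remainder of the proof is the now-standard bootstrap combining modulation equations, conservation laws, and the monotonicity estimate.
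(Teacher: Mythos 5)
The paper does not prove this statement: it is quoted from Farah--Holmer--Roudenko--Yang, with only the remark that their proof follows the Martel--Merle blow-up analysis for critical gKdV. Your sketch follows that same broad strategy (modulation near the soliton plus a localized virial/monotonicity estimate), so at the level of the overall plan you are aligned with the cited proof. (Within the present paper, the case $E_0<0$ is later rederived much more simply as a corollary of the full rigidity theorem of Section 7: energy conservation gives $\lambda^3(t)\lvert E_0\rvert \lesssim \lvert b(t)\rvert + \mathcal{N}_6(t)$, which excludes the asymptotic-stability scenario and forces blow-up; but that uses the entire machinery of the paper and is not how FHRY argue.)

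As written, though, your proposal has two genuine gaps. First, the entry point is stated incorrectly: the condition $0<\|u\|_{L^2}^2-\|Q\|_{L^2}^2\le\alpha_0$ by itself does not place $u(t)$ in a tube around the soliton manifold. What does is the variational characterization of $Q$ (Weinstein / concentration--compactness): any $H^1$ function with $E\le 0$ and mass close to $\|Q\|_{L^2}^2$ is, after rescaling by $\lambda_0=\|\nabla Q\|_{L^2}/\|\nabla u\|_{L^2}$ and translating, $H^1$-close to $\pm Q$. This is exactly the orbital-stability-type theorem recalled in Appendix D of the present paper; it must be invoked at every time $t$, and it is what reduces the problem to showing $\lambda(t)\to 0$. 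Second, the step you defer as the ``main obstacle'' is precisely the nontrivial 2D input, and the proposal does not supply it. Differentiating $\int\psi(x_1)u^2$ along the ZK flow gives $\int\psi'''u^2-\int\psi'(3u_{x_1}^2+u_{x_2}^2)+\tfrac32\int\psi'u^4$, so the transverse term actually carries the favorable sign and is not the issue; the real difficulty is the coercivity of the linearized virial quadratic form $\int(3v_{x}^2+v_{y}^2+v^2-3Q^2v^2+6xQQ_xv^2)$ under the orthogonality conditions. In one dimension this is classical, but in 2D it rests on the spectral (``angle'') analysis of the operator $A=1-3\partial_x^2-\partial_y^2-3Q^2+6xQQ_x$, including numerically verified spectral data, as recalled in Appendix C here and taken from FHRY. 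Without that coercivity the monotonicity formula has no sign and the argument does not close.
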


Their proof is based on the blow-up analysis for the $L^2$ critical gKdV equation in \cite{Merle01} and \cite{MartelMerle00}. 

\subsection{gKdV Blow-Up}
 
There was much development on the blow-up problem for the $L^2-$critical case of the generalized Korteweg-de Vries equation 
\begin{equation}\label{gKdV}
\begin{cases} 
u_t+\partial_x(u_{xx}+u^5)=0, \mbox{ }x \in \mathbb{R}, t\in \mathbb{R}\\
u(0,x)=u_0(x), \mbox{ }x \in \mathbb{R}
\end{cases} 
\end{equation}
see \cite{MartelMerle00},\cite{Merle01}, \cite{MartelMerle02}, \cite{MartelMerle02bis}, \cite{MartelMerleRaphael1}, \cite{MartelMerleRaphael15}, \cite{MartelMerleRaphael15bis}, \cite{CombetMartel}, \cite{MartelPilod24}.
It was proved in \cite{MartelMerleRaphael1} that there exists a subset of initial data, included and open for $\|\cdot \|_{H^1}$ in the set 
$$\mathcal{A}=\Bigg\{u_0=Q+\varepsilon_0:\varepsilon_0 \in H^1, \| \varepsilon_0\|_{H^1}<\delta_0 \mbox{ and } \int_{x>0}x^{10}\varepsilon_{0}^{2}dx<1\Bigg\}$$
We also define the $L^2-$modulated tube around the soliton manifold: 
$$\mathcal{T}_{\alpha^*}=\Bigg\{ u \in H^1 \mbox{ with } \inf_{\lambda_0>0,x_0 \in \mathbb{R}}\norm{u-\frac{1}{\lambda_{0}^{\frac{1}{2}}}Q\Bigg(\frac{\cdot -x_0}{\lambda_0}\Bigg)}_{L^2}<\alpha^*\Bigg\}.$$

The dynamics of the blow-up for the $L^2$ critical gKdV are given in the following theorem: 
\begin{theorem} 
\cite{MartelMerleRaphael1} There exist universal constants $0<\delta_0\ll \alpha^*\ll1$ such that the following holds. Let $u_0 \in \mathcal{A}$ be the initial data of a solution $u(t)$ of $\eqref{gKdV}.$
\begin{item}
\item[i)] If $E(u_0)\leq 0$ and $u_0$ is not a solition, then $u(t)$ blows up in finite time and, for all $t \in [0,T), u(t) \in \mathcal{T}_{\alpha^*}.$
\item[ii)] Assume that $u(t)$ blows up in finite time $T$ and that for all $t \in [0,T), u(t) \in \mathcal{T}_{\alpha^*}.$ Then there exists $l_0=l_0(u_0)>0$ such that 
$$\|u_{x}(t)\|_{L^2}\sim \frac{\|Q'\|_{L^2}}{l_0(T-t)} \mbox{ as } t\rightarrow T.$$
Moreover, there exists $\lambda(t), x(t)$ and $u^*\in H^1, u^*\neq 0,$ such that 
$$u(t,x)-\frac{1}{\lambda^{\frac{1}{2}}(t)}Q\Bigg(\frac{x-x(t)}{\lambda(t)}\Bigg)\rightarrow u^* \mbox{ in } L^2 \mbox{ as }t\rightarrow T,$$
where 
$$\lambda(t)\sim l_0(T-t), x(t)\sim \frac{1}{l_{0}^{2}(T-t)}\mbox{ as }  t\rightarrow T,$$
$$\int_{x>R}(u^*)^2(x)dx \sim \frac{\|Q\|_{L^1}^{2}}{8l_0R^2}\mbox{ as } R\rightarrow +\infty.$$
\item[iii)] Openness of the stable blow up: Assume that $u(t)$ blows up in finite time $T$ and that for all $t\in [0,T),u(t)\in \mathcal{T}_{\alpha^*}.$ Then there exists $\rho_0=\rho_0(u_0)>0$ such that for all $v_0 \in \mathcal{A}$ with $\|u_0-v_0\|_{H^1}<\rho_0,$ the corresponding solution $v(t)$ blows up in finite time $T(v_0)$ as in $(ii).$ 
\end{item}

\end{theorem}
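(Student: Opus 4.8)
The plan is a nonlinear modulation analysis around the soliton coupled with a monotonicity (Lyapunov) argument, following \cite{MartelMerleRaphael1}. For every $t$ with $u(t)\in\mathcal{T}_{\alpha^{*}}$, the implicit function theorem provides $C^{1}$ parameters $\lambda(t)>0$, $x(t)\in\mathbb{R}$ and a decomposition
$$u(t,x)=\frac{1}{\lambda^{1/2}(t)}\bigl(Q_{b(t)}+\varepsilon\bigr)\Bigl(t,\frac{x-x(t)}{\lambda(t)}\Bigr),$$
where $\{Q_{b}\}_{|b|\ll 1}$ is a family of approximate self-similar profiles with $Q_{0}=Q$ (obtained by solving the self-similar profile ODE, truncated so that $Q_{b}\in L^{2}$), $b(t)$ is a fourth modulation parameter, and $\varepsilon(t)$ is forced to satisfy three orthogonality conditions adapted to the generalized kernel of the linearized operator $L=-\partial_{y}^{2}+1-5Q^{4}$. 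Passing to the rescaled time $s$ with $ds/dt=\lambda^{-3}$ and projecting the equation onto the modulation directions yields the modulation equations
$$\Bigl|\tfrac{\lambda_{s}}{\lambda}+b\Bigr|+|b_{s}|+\Bigl|\tfrac{x_{s}}{\lambda}-1\Bigr|\ \lesssim\ \|\varepsilon\,e^{-|y|/2}\|_{L^{2}}+b^{2},$$
together with an algebraic relation linking $\lambda^{2}E(u_{0})$, the parameter $b$ and $\langle L\varepsilon,\varepsilon\rangle$, which uses the Pohozaev identity $E(Q)=0$.

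\textbf{Lyapunov functional, trichotomy, and part (i).} The analytic heart is the construction of a mixed energy--virial functional of the form
$$\mathcal{F}=\int(\partial_{y}\varepsilon)^{2}+\int\varepsilon^{2}\psi_{b}-\frac{1}{3}\int\bigl[(Q_{b}+\varepsilon)^{6}-Q_{b}^{6}-6Q_{b}^{5}\varepsilon\bigr]+\bigl(\text{localized virial correction}\bigr),$$
with a carefully chosen weight $\psi_{b}$, which I would show satisfies, for an appropriate exponent $J$, the almost-monotonicity
$$\frac{d}{ds}\Bigl(\frac{\mathcal{F}}{\lambda^{2J}}\Bigr)\ \le\ \frac{C}{\lambda^{2J}}\bigl(b\,|b_{s}|+\text{lower order}\bigr),$$
built from the (sub)coercivity of $L$ under the three orthogonality conditions and a localized virial/Pohozaev identity. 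This upgrades the \emph{a priori} modulation bound to control of $\|\varepsilon(s)\|$ in a weighted $H^{1}$ norm by $b^{2}$, which feeds back into the modulation equations and produces a closed two-dimensional dynamical system for $(b,\lambda)$. Its analysis on $\mathcal{A}$ yields the rigidity trichotomy: either $u$ exits $\mathcal{T}_{\alpha^{*}}$ in finite time, or $u(t)$ converges to a rescaled soliton, or $b$ stays strictly positive and $\lambda(s)\to0$ in such a way that $u$ blows up in finite time while remaining in $\mathcal{T}_{\alpha^{*}}$. Under $E(u_{0})\le0$ with $u_{0}$ not a soliton, the energy relation obstructs convergence to a soliton (where $E=0$) and coercivity away from the soliton family obstructs a bounded-gradient exit, so only the blow-up branch survives, which is (i).

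\textbf{Parts (ii) and (iii).} In the blow-up regime I would sharpen the ODE system: from $\lambda_{s}/\lambda\approx-b$, the reduced law for $b_{s}$, and the energy relation, one gets $b(s)\to0$ with $\lambda(t)\sim l_{0}(T-t)$ for some $l_{0}=l_{0}(u_{0})>0$ and $b(s)\sim\tfrac{1}{2s}$; since $\|u_{x}(t)\|_{L^{2}}\sim\lambda^{-1}(t)\|Q'\|_{L^{2}}$ this gives the stated blow-up rate, and integrating $x_{t}=\lambda^{-3}x_{s}\approx\lambda^{-2}$ gives $x(t)\sim l_{0}^{-2}(T-t)^{-1}$. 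The strong $L^{2}$ convergence $u(t)-\lambda^{-1/2}Q((\cdot-x(t))/\lambda)\to u^{*}$ follows from an almost-monotonicity of the $L^{2}$ mass to the right of the moving soliton (``no mass returns from the right''); tracking the flux of $L^{2}$ mass ejected through a window travelling with the soliton, using $\int Q\,dy=\|Q\|_{L^{1}}$ and the local conservation law $\partial_{t}(u^{2})+\partial_{x}(2uu_{xx}-u_{x}^{2}+\tfrac{5}{3}u^{6})=0$, then yields $\int_{x>R}(u^{*})^{2}\sim\|Q\|_{L^{1}}^{2}/(8l_{0}R^{2})$. For (iii), every estimate above is quantitative and stable under small $H^{1}$ perturbations of $u_{0}$ within $\mathcal{A}$: by continuous dependence a nearby solution $v(t)$ stays $H^{1}$-close to $u(t)$ on $[0,T-\eta]$, hence enters the region $\{b>0,\ \|\varepsilon\|_{L^{2}}\ll 1\}$ characterizing the blow-up branch, which is open and forward-invariant, so the preceding monotonicity analysis applies verbatim to $v$ and gives finite-time blow-up as in (ii).

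\textbf{Main obstacle.} The crux is the construction of the mixed energy--virial functional with a genuine almost-monotonicity: it requires the precise design of the profiles $Q_{b}$ and their truncation, the weight $\psi_{b}$, and the localized-virial correction, and above all the sharp (sub)coercivity of $L$ under the chosen orthogonality conditions --- orthogonality to $\partial_{y}Q$ and $\Lambda Q$ alone does not suffice, so one must work with $L$-adapted directions and control the resulting unstable mode through the $b$-dynamics. Getting the sign right in $d\mathcal{F}/ds$ and closing the bootstrap between the localized norm of $\varepsilon$ and $b$ is where essentially all the difficulty concentrates; the sharp-rate ODE and the flux computation for $u^{*}$ are comparatively routine once that is in place.
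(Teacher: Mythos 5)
This theorem is quoted verbatim from \cite{MartelMerleRaphael1}: the paper offers no proof of it, citing it only as the gKdV template for the ZK analysis developed in the body. Your outline is a faithful reconstruction of that template --- modulation around truncated profiles $Q_b$ with adapted orthogonality conditions, coercivity of $L$ on the orthogonal complement, a mixed energy--virial functional with almost-monotonicity in $s$, the resulting $(b,\lambda)$ rigidity trichotomy giving (i) and the sharp rates in (ii), Kato-type localized mass identities for the tail of $u^*$, and a continuity/forward-invariance argument for (iii) --- and it is essentially the same strategy the present paper adapts in its Sections 3--9 to the 2D ZK setting. Two cosmetic remarks: for 1D gKdV, $b$ is the third modulation parameter (alongside $\lambda$ and $x$), not the fourth; and, as you yourself flag, the sketch asserts rather than establishes the two genuinely hard steps (the coercivity under the chosen orthogonality directions and the favorable sign in $d\mathcal{F}/ds$), so it is an accurate roadmap rather than a proof.
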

\section{Main Result} 

\subsection{Preliminaries and Motivation}

In this section, we introduce the notation and preliminary concepts necessary for our main result. Let \( Q \) denote the unique, positive, radial solution in \( \mathbb{R}^2 \) to the nonlinear elliptic equation
\[
-\Delta Q + Q - Q^3 = 0.
\]
This solution is fundamental in our analysis of the generalized Zakharov-Kuznetsov equation. Define \( g(x_2) \) as the integral over the spatial domain of the derivative of \( Q \) with respect to \( x_2 \), that is,
\[
g(x_2) = \int_{-\infty}^{\infty} x_2 Q_{x_2}(x_1, x_2) \, dx_1 = \int_{-\infty}^{\infty} \Lambda Q(x_1, x_2) \, dx_1,
\]
where \( \Lambda \) is a differential operator that will be defined subsequently. The Fourier transform of \( g \), denoted \( \hat{g}(\xi) \), is taken over the real line \( \mathbb{R} \).

The critical constant \( c \) is then defined by the formula:
\begin{equation}
c = \frac{\int_{\mathbb{R}} \frac{2}{\xi^2 + 1} \hat{g}^2(\xi) \, d\xi}{\int_{\mathbb{R}} \hat{g}^2(\xi) \, d\xi}.
\label{eq:definitionofc}
\end{equation}
It is easy to see that \( 0 < c < 2 \). Importantly, this constant \( c \) is related to the exponent of the blow-up rate in our main theorem concerning the blow-up behavior of solutions to the equation. Specifically, it determines the rate at which the solution's amplitude increases as it approaches the singularity, thereby characterizing the critical dynamics of the blow-up process of a solution of the $L^2$-critical Zakharov-Kuznetsov equation in $2$D, 
\begin{equation} \label{eq:ZK}
\begin{cases}
u_t+\partial_{x_1}(\Delta u+u^3)=0,\\
u(0,x_1,x_2)=u_0(x_1,x_2) \in H^1(\mathbb{R}^2).
\end{cases}
\end{equation} 

The primary aim of this study is to address a conjecture proposed by Klein, Roudenko, and Stoilov \cite{KleinRoudenkoStoilov}, which postulated that the blow-up rate exponent is $\frac{1}{2}$ for the $L^2$-critical generalized Zakharov-Kuznetsov (gZK) equation. Our findings reveal that the actual value of the exponent is approximately $\frac{3}{4}$. This discrepancy underscores a fundamental difference in the dynamics of blow-up between the gKdV and gZK equations, challenging the existing theoretical predictions and suggesting new complexities in the behavior of dispersive equations.
Below we state the conjecture proposed in \cite{KleinRoudenkoStoilov}.

\begin{conj} 
Consider the critical 2D ZK equation \eqref{eq:ZK}. If $u_0\in \mathcal{S}(\mathbb{R}^2)$ is sufficiently localized and $\|u_0\|_{L^2}>\|Q\|_{L^2},$ then the solution blows up in finite time $T$ such that as $t\rightarrow T$
$$u(x,y,t)-\frac{1}{\lambda(t)}Q\Big(\frac{x-x(t)}{\lambda(t)},\frac{y-y(t)}{\lambda(t)}\Big)\rightarrow \tilde{u} \in L^2,$$
 with 
 $$\|\nabla u(t)\|_{L^2}\sim \frac{1}{(T-t)^{\frac{1}{2}}}, \lambda(t)\sim (T-t)^{\frac{1}{2}}, x(t)\sim \frac{1}{T-t}, y(t)\rightarrow y^*\in \mathbb{R}.$$
\end{conj}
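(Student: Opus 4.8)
The plan is to transplant the Martel--Merle--Rapha\"{e}l blow-up analysis for $L^2$-critical gKdV (the theorem from \cite{MartelMerleRaphael1} quoted above) to the anisotropic two-dimensional flow \eqref{eq:ZK}, running a bootstrap argument inside an $L^2$-modulated tube around the two-dimensional soliton manifold $\big\{\tfrac{1}{\lambda}Q\big(\tfrac{\cdot-x_0}{\lambda},\tfrac{\cdot-y_0}{\lambda}\big)\big\}$. First I would fix the geometric decomposition: for $u(t)$ in the tube write
\begin{equation*}
u(t,x_1,x_2)=\frac{1}{\lambda(t)}\,(Q_{b(t)}+\varepsilon)\!\left(s,\frac{x_1-x(t)}{\lambda(t)},\frac{x_2-y(t)}{\lambda(t)}\right),\qquad \frac{ds}{dt}=\frac{1}{\lambda(t)^{3}},
\end{equation*}
with scaling parameter $\lambda>0$, centers $(x,y)\in\mathbb{R}^2$, an extra modulation parameter $b\in\mathbb{R}$, and $Q_b=Q+bP+\cdots$ a refined profile to be built. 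The four parameters are pinned down by four orthogonality conditions on $\varepsilon$: against the scaling direction $\Lambda Q$, the two translation directions $\partial_{x_1}Q$ and $\partial_{x_2}Q$, and a fourth direction dual to $b$, in exact parallel with the gKdV construction.

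Second, I would construct the refined profile and extract the modulation system. One asks $Q_b$ to solve \eqref{eq:ZK} in self-similar variables up to an $O(b^2)$ error; at first order this forces $P$ to solve $\partial_{x_1}(LP)=\Lambda Q$ with $L=-\Delta+1-3Q^2$, an equation with no $L^2$ solution --- $P$ necessarily carries a tail that does not decay in $x_1$. This is the core of the anisotropic difficulty, and it is exactly where the constant $c$ of \eqref{eq:definitionofc} is produced: integrating the profile equation over $x_1$ collapses it to a one-dimensional problem in $x_2$ for $\int P\,dx_1$ with source $-g(x_2)=-\int \Lambda Q\,dx_1$, whose resolution on the Fourier side is governed by the kernel $(\xi^2+1)^{-1}$, precisely the weight entering the averaged quantity $c$. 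Differentiating the orthogonality conditions and using the equation then produces
\begin{equation*}
\Big|\frac{\lambda_s}{\lambda}+b\Big|+\Big|\frac{x_s}{\lambda}-1\Big|+\Big|\frac{y_s}{\lambda}\Big|\lesssim \|\varepsilon\|_{\mathrm{loc}}+b^2,\qquad b_s=-\theta\,b^2+o(b^2),
\end{equation*}
where $\theta$ is a soliton-determined constant built from the same spectral data as $c$. In particular $y_s/\lambda$ is quadratically small, so $y(t)$ converges to some $y^*$.

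Third, I would control $\varepsilon$ by an energy--virial Lyapunov argument: form a functional $\mathcal{F}(\varepsilon)\approx \int\!\big(|\nabla\varepsilon|^2+\varepsilon^2-3Q^2\varepsilon^2\big)+(\text{localized }x_1\text{-virial correction})$ and prove $d\mathcal{F}/ds\le -\kappa\,\|\varepsilon\|_{\mathrm{loc}}^2+b\,(\cdots)$ using (i) the coercivity of $L$ modulo the four orthogonality directions and (ii) a two-dimensional virial/Pohozaev identity exploiting the $x_1$-smoothing of \eqref{eq:ZK}. Together with the $b$-law and the sharp relation between $b$, $\lambda$ and the almost-conserved mass/energy, this closes the bootstrap: $b(s)\sim 1/s$, $\lambda(s)\downarrow 0$ as $s\to\infty$, and translating to original time through $T-t=\int_s^{\infty}\lambda^3\,ds'$ gives the asymptotics of the conjecture --- $\lambda(t)\sim (T-t)^{1/2}$, $\|\nabla u(t)\|_{L^2}\sim (T-t)^{-1/2}$, $x(t)\sim (T-t)^{-1}$, $y(t)\to y^*$ --- together with the strong $L^2$ convergence $u(t)-\tfrac{1}{\lambda}Q\big(\tfrac{\cdot-x}{\lambda},\tfrac{\cdot-y}{\lambda}\big)\to\tilde u$, the nonzero residual $\tilde u$ arising from the frozen, non-$L^2$-flat part of the profile tail.

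I expect the main obstacle to be precisely the refined profile $Q_b$ and the resulting \emph{exact} blow-up law. Unlike gKdV, where dispersion and the profile inhabit the same single variable, here dispersion acts only along $x_1$ while the corrector $P$ is genuinely two-dimensional and lives in a slab, so the law is dictated by a spectral average against $\hat g^2$ rather than by one scalar: what survives is determined by whether this average reproduces the same $b$-versus-$\lambda$ balance as in gKdV --- i.e. whether the scaling exponent is genuinely $\tfrac12$ and, simultaneously, the translation rate is genuinely $(T-t)^{-1}$ --- or whether the factor $c\in(0,2)$ shifts it. If it does, the universal rate is governed by $c$ and the conjecture's exponents must be corrected accordingly; this is the step on which the whole statement stands or falls. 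Secondary difficulties, both expected to be adaptable from the gKdV theory but requiring genuine two-dimensional estimates, are establishing a monotonicity formula with the correct sign for the anisotropic $L^2$-flux in $x_1$, and the rigidity/``no return'' argument ruling out the solution leaving the tube.
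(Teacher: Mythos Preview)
This statement is a \emph{conjecture} quoted from \cite{KleinRoudenkoStoilov}; the paper does not prove it and in fact shows its exponents are wrong. The paper's main theorem (Theorem~\ref{maintheorem}) establishes instead that $\|\nabla u(t)\|_{L^2}\sim (T-t)^{-1/(3-c)}$ and $\lambda(t)\sim (T-t)^{1/(3-c)}$ with $c$ defined by \eqref{eq:definitionofc}; numerically $c\approx 1.66$, so the exponent is roughly $3/4$, not $1/2$. Your final paragraph identifies this exactly: the law $b_s=-c\,b^2+o(b^2)$ together with $\lambda_s/\lambda=-b$ gives $\lambda\sim (T-t)^{1/(3-c)}$, which equals $(T-t)^{1/2}$ only if $c=1$. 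The paper computes $c$ (via the one-dimensional reduction you describe, solving $-F''+F=g$ on the Fourier side) and finds $c\neq 1$, so the conjecture fails as stated and your proposal, correctly carried out, would refute rather than establish it.

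That said, your outline of the machinery is essentially the paper's strategy for proving the corrected result, with a few points where the paper deviates from what you sketch. First, the orthogonality directions are not the naive $\Lambda Q,\,Q_{x_1},\,Q_{x_2}$ plus a $b$-dual: the paper explains that the gKdV-style choice $y_1\Lambda Q,\,y_1 Q_{y_1}$ forces a degenerate modulation matrix in two dimensions, and instead uses $(\varepsilon,Q)=(\varepsilon,\varphi(y_1)\Lambda Q)=(\varepsilon,\varphi(y_1)Q_{y_1})=(\varepsilon,\varphi(y_1)Q_{y_2})=0$ with a perturbed odd weight $\varphi$. Second, the Lyapunov control of $\varepsilon$ cannot be closed as in gKdV because the only propagatable bound is $b\ll\lambda^c$ with $c<2$ (not $b\ll\lambda^2$); the paper therefore builds a two-parameter family $\mathcal{F}_{i,j}$ of mixed-energy functionals with an extra weight $\tilde\phi_{i,B}$ whose time derivative is engineered to absorb the scaling drift $\tfrac{\lambda_s}{\lambda}\Lambda\varepsilon$. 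Third, the strong $L^2$ convergence to the residual $u^*$ is obtained not by Kato-type local mass/energy identities but by Duhamel and Foschi's inhomogeneous Strichartz estimates for the linear ZK group, in the spirit of the supercritical NLS analysis; the paper also shows the convergence \emph{cannot} be upgraded to $H^1$.
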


Our investigation targets specific initial data close to the ground state \( Q \), the unique positive radial solution of the equation \( -\Delta Q + Q - Q^3 = 0 \) in \( \mathbb{R}^2 \). We define the set of initial conditions by:
\[
\mathcal{A}_{\alpha_0} = \left\{ u_0 = Q + \varepsilon_0 : \|\varepsilon_0\|_{H^1(\mathbb{R}^2)} < \alpha_0 \text{ and } \int_{x > 0} x_1^7 \varepsilon_0^2(x, y) \, dx \, dy < 1 \right\}.
\]

To analyze the behavior of solutions, we consider the \( L^2 \)-modulated tube surrounding the soliton manifold:
\[
\mathcal{T}_{\alpha^*} = \left\{ u \in H^1(\mathbb{R}^2) : \inf_{\lambda_0 > 0, (x_1, x_2) \in \mathbb{R}^2} \left\| u - \frac{1}{\lambda_0} Q\left(\frac{\cdot - x_1}{\lambda_0}, \frac{\cdot - x_2}{\lambda_0}\right) \right\|_{L^2} < \alpha^* \right\}.
\]
The constants \( \alpha_0 \) and \( \alpha^* \) are chosen such that \( 0 < \alpha_0 \ll \alpha^* \ll 1 \).

\subsection{Main Theorem}\label{mainthm}

\begin{theorem}\label{maintheorem} 
For universal constants \( 0 < \alpha_0 \ll \alpha^* \ll 1 \) and initial data \( u_0 \in \mathcal{A}_{\alpha_0} \) with the solution $u(t)$ of \eqref{eq:ZK}, the following scenarios occur:
\begin{itemize}
    \item[(a)] If the energy \( E(u_0) \leq 0 \) and \( u_0 \) is not the soliton, then \( u(t) \) blows up in finite time \( T \), and for all \( t \in [0, T) \), \( u(t) \) remains within \( \mathcal{T}_{\alpha^*} \).
    \item[(b)] Assuming \( u(t) \) blows up in finite time \( T \) and remains within \( \mathcal{T}_{\alpha^*} \) for all \( t \in [0, T) \), there exists a constant \( c_0 = c_0(u_0) > 0 \), and with \( c \) defined as in Equation \eqref{eq:definitionofc}, the gradient norm satisfies:
    \[
    \|\nabla u(t)\|_{L^2(\mathbb{R}^2)} \sim \frac{\|\nabla Q\|_{L^2(\mathbb{R}^2)}}{c_0 (T - t)^{\frac{1}{3-c}}} \quad \text{as } t \rightarrow T,
    \]
    indicating that the blow-up is reached by \( T \) as \( 0 < c < 2 \).
    \item[(c)] Stable blow-up: Define
    \[
    \mathcal{O} = \{ u \in H^1 : u(t) \in \mathcal{T}_{\alpha^*} \text{ for all } t \in [0, T) \text{ where } T \text{ is the maximal time of existence} \}
    \]
    and denote the subset of solutions that blow up in finite time by \( \mathcal{O}_b \). This subset is open in \( H^1 \cap \mathcal{A}_{\alpha_0} \).
\end{itemize}
\end{theorem}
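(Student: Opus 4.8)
The plan is to transplant the modulation and energy--virial analysis of Martel--Merle--Rapha\"el for the $L^2$-critical gKdV equation \cite{MartelMerleRaphael1} to the $2$D Zakharov--Kuznetsov flow \eqref{eq:ZK}, the whole point being to track how the transverse variable $x_2$ deforms the soliton dynamics and thereby produces the constant $c$ of \eqref{eq:definitionofc}. As long as $u(t)\in\mathcal T_{\alpha^*}$, a modulation (implicit function) argument yields $C^1$ parameters $\lambda(t)>0$, $(x_1(t),x_2(t))\in\mathbb R^2$, a small scalar $b(t)$ and a remainder $\varepsilon(t,\cdot)$ with $\norm{\varepsilon(t)}_{L^2}\ll1$ such that
\[
u(t,x_1,x_2)=\frac1{\lambda(t)}\big(Q_{b(t)}+\varepsilon(t)\big)\!\left(\frac{x_1-x_1(t)}{\lambda(t)},\frac{x_2-x_2(t)}{\lambda(t)}\right),
\]
where $Q_b=Q+bP+O(b^2)$ is a slowly deformed profile and $\varepsilon$ is fixed by four orthogonality conditions (against $\partial_{x_1}Q$, $\partial_{x_2}Q$, a $\Lambda Q$-type direction and one more), chosen so that the quadratic form of the linearized operator is coercive on the orthogonal complement and so that the modulation parameters are controlled. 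The polynomial decay condition $\int_{x>0}x_1^7\varepsilon_0^2<1$ defining $\mathcal A_{\alpha_0}$ is what will later make the virial monotonicity usable on the right tail.

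Passing to the renormalized time $s$ with $ds/dt=\lambda^{-3}$ and self-similar spatial variables, $\varepsilon$ solves a perturbation of $\varepsilon_s=\partial_{x_1}(L\varepsilon)$, with $L\varepsilon=-\Delta\varepsilon+\varepsilon-3Q^2\varepsilon$ the $2$D linearized operator; I would first record its structure: kernel $\mathrm{span}\{\partial_{x_1}Q,\partial_{x_2}Q\}$, $L\Lambda Q=-2Q$, and the $L^2$-critical degeneracy $(L\Lambda Q,\Lambda Q)=0$. The profile $P$ is defined through $\partial_{x_1}(LP)=-\Lambda Q$; integrating in $x_1$ one hits the obstruction $g(x_2)=\int_{\mathbb R}\Lambda Q\,dx_1\not\equiv0$, which forces $P$ to carry a nonlocal transverse component solving (away from the soliton, where $Q\simeq0$) $-\partial_{x_2}^2\phi+\phi=\tfrac12 g$, i.e. $\widehat\phi(\xi)=\tfrac12\widehat g(\xi)/(\xi^2+1)$. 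Projecting the $\varepsilon$-equation onto the conjugate directions then yields the modulation system
\[
\Big|\frac{x_{1,s}}{\lambda}-1\Big|+\Big|\frac{x_{2,s}}{\lambda}\Big|+\Big|\frac{\lambda_s}{\lambda}+b\Big|\lesssim\norm{\varepsilon}_{\mathrm{loc}}+b^2,\qquad b_s=-\,c\,b^2+(\text{lower order}),
\]
where, because of the transverse solve above together with $L\Lambda Q=-2Q$, the leading coefficient of the $b$-law is exactly $c=\big(\int 2(\xi^2+1)^{-1}\widehat g^{\,2}\big)/\big(\int\widehat g^{\,2}\big)$. Integrating gives $b(s)\sim1/(cs)$, hence $-\lambda_s/\lambda\sim1/(cs)$, $\lambda(s)\sim s^{-1/c}$; since $dt=\lambda^3\,ds$ and $c<3$, the elapsed time $\int^\infty\lambda^3\,ds$ is finite, $s\,(T-t)^{c/(3-c)}\to\mathrm{const}$ and $\lambda(t)\sim(T-t)^{1/(3-c)}$, so $\norm{\nabla u(t)}_{L^2}=\lambda^{-1}\norm{\nabla(Q_b+\varepsilon)}_{L^2}\sim\norm{\nabla Q}_{L^2}/(c_0(T-t)^{1/(3-c)})$, which is (b).

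Making this rigorous and obtaining (a) and (c) requires a bootstrap on $(\varepsilon,b,\lambda)$ on the maximal interval where $u(t)\in\mathcal T_{\alpha^*}$. The engine is a mixed energy--virial Lyapunov functional
\[
\mathcal F=\int\big(|\nabla\varepsilon|^2+\varepsilon^2-3Q^2\varepsilon^2\big)\psi\;+\;(\text{localized }x_1\text{-virial correction})\;+\;(\text{lower order}),
\]
built after the gKdV model but with weights now depending on $(x_1,x_2)$, for which one proves a one-sided differential inequality $d\mathcal F/ds\lesssim-\mu\norm{\varepsilon}_{H^1_{\mathrm{loc}}}^2+O(b^3)$ together with the coercivity $\mathcal F\gtrsim\norm{\varepsilon}_{H^1_{\mathrm{loc}}}^2$ modulo the modulation directions (this uses the spectral/virial structure of the $2$D operator $L$). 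Conservation of mass and energy — with $E(Q)=0$ in the $L^2$-critical case, so that $\lambda^2E(u_0)=\tfrac12(L\varepsilon,\varepsilon)+O(b^2)+(\text{h.o.t.})$ — then shows that the hypothesis $E(u_0)\le0$ (with $u_0$ not a soliton) traps the solution in the regime $b>0$: $b$ stays positive, $\varepsilon$ remains $\ll b$, $\lambda(s)\to0$, and the finite-time blow-up with the rate of (b) follows, proving (a). Part (c) follows because the defining inequalities of this trapped regime are open in $H^1$ and are propagated by the flow, so $\mathcal O_b$ is open in $H^1\cap\mathcal A_{\alpha_0}$.

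I expect the main obstacle to be precisely the transverse direction. One has to design the virial multiplier so that it is localized in $x_1$ (the dispersive direction of ZK) yet compatible with the $x_2$-motion, and verify that the $2$D linearized operator $L$ still has exactly the coercivity needed once the finitely many modulation directions are removed; and because $g\not\equiv0$ the profile equation for $P$ is genuinely nonlocal in $x_2$, so carrying $P$ and its transverse tail through every energy estimate without losing the favorable sign — which is also what pins down the constant $c$ — is the delicate part.
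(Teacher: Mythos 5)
Your proposal follows the same overall architecture as the paper (modulation with four orthogonality conditions, the profile $P$ with $(LP)_{y_1}=\Lambda Q$ and the transverse nonlocal solve $-F_{y_2y_2}+F=g$ that produces the constant $c$, a mixed energy--virial Lyapunov functional, and a bootstrap inside $\mathcal{T}_{\alpha^*}$), and the formal ODE integration giving $\lambda(t)\sim(T-t)^{1/(3-c)}$ is exactly right. However, there are three concrete gaps. First, for part (a) your claim that conservation of mass and energy ``traps the solution in the regime $b>0$'' does not survive the case $E(u_0)=0$: the soliton itself has zero energy and $b\equiv 0$, so energy conservation alone only forces $\lambda(t)\to 0$ when $E_0<0$. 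For $E_0=0$ one must separately rule out the asymptotic-stability branch of the trichotomy; the paper does this by a contradiction argument showing $\lim_{D\to\infty}\int\!\int\varepsilon^2(t)\chi_D(y_1)=o_{t\to T}(1)$, hence $\|\varepsilon(t)\|_{L^2}\to 0$, which forces $u_0$ to saturate the variational characterization of $Q$. You also omit the orbital-stability step (Weinstein-type trapping) needed to guarantee $u(t)\in\mathcal{T}_{\alpha^*}$ on all of $[0,T)$ when $E_0\le 0$, i.e.\ to exclude the Exit case before any of the blow-up machinery applies.

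Second, and more structurally, your single Lyapunov functional would not close. The paper stresses that, unlike gKdV where the a priori bound $b\ll\lambda^2$ offsets the drift term $\frac{\lambda_s}{\lambda}\Lambda\varepsilon$, here one can only propagate $b\ll\lambda^c$ with $c<2$; this forces the two-parameter family $\mathcal{F}_{i,j}$ with the extra weight $\tilde\phi_{i,j}$ designed so that the drift contribution is absorbed into a total derivative $\frac{1}{\lambda^i}\frac{d}{ds}\bigl(\lambda^i\int\!\int\varepsilon^2\tilde{\tilde\phi}_i\bigr)$, and the estimates must then be run at several exponents $j=0,c,(3-\nu)c$. Relatedly, your route to the rate --- integrating $b_s\approx-cb^2$ to get $b\sim 1/(cs)$ --- presupposes the blow-up regime; the trichotomy (and hence both (a) and the openness in (c)) actually comes from proving $\int\bigl|\frac{d}{ds}\{b/\tilde\lambda^c\}\bigr|ds<\infty$ so that $b/\tilde\lambda^c\to c_0$ and discussing the sign of $c_0$. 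Finally, for (c), ``open conditions are propagated'' is not sufficient because the blow-up time could a priori be unstable; the paper needs weak $H^1$ continuity of the flow together with the rigidity classification to conclude that perturbed data, once forced to have $\lambda_n$ small, must fall into the blow-up branch. A smaller point: the gKdV orthogonality directions $y_1\Lambda Q$, $y_1Q_{y_1}$ cannot be used verbatim (they force $\det\tilde M=0$); the paper replaces them by $\varphi(y_1)\Lambda Q$, $\varphi(y_1)Q_{y_1}$, $\varphi(y_1)Q_{y_2}$ with $\varphi(x)=x+\tilde\epsilon x^4e^{-|x|}$, which your sketch leaves unspecified.
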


\subsection{Detailed Comments}

\textit{Comments:}
\begin{itemize}
    \item[(1).] The blow-up rate \(\|\nabla u(t)\|_{L^2}\) is significantly faster, behaving as \(\frac{1}{(T-t)^{\frac{1}{3-c}}}\), compared to the self-similar blow-up rate of \(\frac{1}{(T-t)^{\frac{1}{3}}}\). Notably, for \(c \geq 1\), the blow-up locus in the \(x_1\) direction can recede to infinity, while in the \(x_2\) direction it converges to a fixed point. This distinction underscores the critical role of the decay behavior in \(x_1\) at infinity in the initial data, a feature highlighted by the specific weighting in the \(x_1\) variable for \(u_0 \in \mathcal{A}_{\alpha_0}\). The \(x_2\) variable, by contrast, does not necessitate similar decay conditions. See Theorem \ref{rigiditytheorem} for more details. The weight $y_{1}^{7}$ in the definition of $\mathcal{A}_{\alpha_0}$ is not optimal. For example, we observe that a smaller weight, i.e. $y_{1}^{4},$ would be sufficient to prove the blow-up dynamics for the negative energy case. The present septic weight is required from the proof of blow-up for the zero energy case. 
    \item[(2).] Employing the Weinstein inequality \cite{Weinstein}, we find:
    \[
    \frac{1}{2}\|\nabla u\|^{2}_{L^2} \left(1-\frac{\|u_0\|^{4}_{L^2}}{\|Q\|^{4}_{L^2}}\right) \leq E_0,
    \]
    indicating that \(E(u_0) \leq 0\) necessarily implies \(\|u_0\|_{L^2} > \|Q\|_{L^2}\), unless \(u_0\) is equivalent to \(Q\) up to scaling and translations. This result clarifies the conditions under which blow-up occurs, particularly noting that a non-positive energy typically leads to blow-up, aligning with observations in the \(L^2\)-critical gKdV context \cite{MartelMerleRaphael1}.

    \item[(3).] In contrast to the gKdV scenario discussed in \cite{MartelMerleRaphael1}, where the radiation $u^*$ of the asymptotic profile belongs to $H^1$ and there is substantial evidence supporting strong convergence to $u^*$ in $H^1$, the situation here does not exhibit strong convergence in $H^1$. This situation resembles more closely the NLS blow-up scenario outlined in \cite{MerleRaphaelNLS}, highlighting the anticipation of only strong $L^2$ convergence to the radiation. 

    \item[(4).] The conjecture (Conjecture $2$ in \cite{KleinRoudenkoStoilov}) by Klein, Roudenko, and Stoilov was suggesting that \( c = 1 \). Our analysis finds that $c$ is given by \eqref{eq:definitionofc} and our numerical analysis identifies the exponent \( c \approx 1.6632 \), suggesting unexplored complexities in the dynamics of the gZK equation. We include in Appendix E the MATLAB code used for computing \( c \), see \ref{appendixE}.

    \item[(5).] We significantly advance the findings of \cite{FarahHolmerRoudenkoYang} by examining scenarios where \( E_0 = 0 \) and detailing the dynamics of solution blow-up for \( E_0 \leq 0 \).
\end{itemize}

\textbf{Continuation:} The core contribution of this paper is a rigidity theorem akin to that found in \cite{MartelMerleRaphael1} for the \(L^2\)-critical gKdV blow-up scenario.

\begin{theorem} 
There exist universal constants $0\ll\alpha_0\ll \alpha^*\ll 1$ such that the following holds. Let $u_0 \in \mathcal{A}_{\alpha_0}.$ Then, we have a complete classification of the behavior of $u$: 
\begin{itemize} 
\item[(1)] (Exit of Tube) There exist $t^*\in(0,T)$ such that $u(t^*)\notin \mathcal{T}_{\alpha^*}.$
\item[(2)] (Stable Blow Up) For all $t \in [0,T), u(t)\in \mathcal{T}_{\alpha^*}$ and the solution blows up in finite time $T<+\infty$ in the way described by Theorem \ref{maintheorem}.
\item[(3)]  (Asymptotic Stability) The solution is global, for all $t\geq 0, u(t)\in \mathcal{T}_{\alpha^*},$ and there exists $\lambda_{\infty}>0, x_1(t)\in C^1, x_{\infty}\in \mathbb{R}$ such that 
$$\lambda_{\infty}u(t,\lambda_{\infty}\cdot+x_1(t), \lambda_{\infty}\cdot+x_{\infty})\rightarrow Q \mbox{ in } H^1_{loc} \mbox{ as }t \rightarrow +\infty,$$
with $|\lambda_{\infty}-1|\leq o_{\alpha_0\rightarrow 0}(1)$ and  $x_1(t)\sim \frac{t}{\lambda_{\infty}^{2}}$ as $t\rightarrow +\infty.$
\end{itemize}
\end{theorem}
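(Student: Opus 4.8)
The plan is to follow the now-classical strategy of Martel--Merle--Rapha\"el adapted to two dimensions, with the crucial new ingredient being the anisotropic geometry of the $2$D soliton. First I would set up the nonlinear decomposition: for $u_0 \in \mathcal{A}_{\alpha_0}$, as long as $u(t) \in \mathcal{T}_{\alpha^*}$, a modulation argument (implicit function theorem applied to suitable orthogonality conditions against $\Lambda Q$, $\partial_{x_1} Q$, $\partial_{x_2} Q$) produces $C^1$ parameters $\lambda(t)>0$, $(x_1(t),x_2(t))\in\mathbb{R}^2$ and a small remainder $\varepsilon(t,y)$ in the rescaled frame, so that $u(t,x) = \frac{1}{\lambda(t)}(Q+\varepsilon)\big(\frac{x-x(t)}{\lambda(t)}\big)$. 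Rescaling time by $ds/dt = 1/\lambda^3$, one derives the evolution equation for $\varepsilon$ together with the modulation equations for $\frac{\lambda_s}{\lambda}$, $\frac{(x_1)_s}{\lambda}-1$, $\frac{(x_2)_s}{\lambda}$, in which $b := -\frac{\lambda_s}{\lambda}$ plays the role of the principal small quantity. The key monotonicity tool is an almost-conservation law: a localized virial/energy functional $\mathcal{F}(\varepsilon)$ (weighted by a cutoff in the $x_1$-direction, reflecting the $x_1^7$ weight in $\mathcal{A}_{\alpha_0}$ that is propagated by the transport term $\partial_{x_1}$) satisfying $\frac{d}{ds}\mathcal{F} \lesssim -\mu \, b^2 + (\text{errors})$ for some $\mu>0$, which forces $b$ to have a sign and yields the dichotomy.

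Next I would run the trapping/bootstrap argument. On the set where $u$ stays in $\mathcal{T}_{\alpha^*}$ I would establish the sharp relations $b_s \sim -c_1 b^2$ (this is where the constant $c$ from \eqref{eq:definitionofc} enters: the coefficient is computed from the spectral/linear algebra of the operator $L = -\Delta + 1 - 3Q^2$ restricted along the $x_2$-fibers, precisely the quantity $\int \frac{2}{\xi^2+1}\hat g^2 / \int \hat g^2$ after Fourier transforming in $x_2$), and $\frac{\lambda_s}{\lambda} = -b(1+o(1))$. Integrating these ODEs: in the blow-up regime $b(s) \sim \frac{1}{c_1 s}$, hence $\lambda(s) \sim s^{-1/c_1}$, and converting back via $dt = \lambda^3 ds$ gives $T - t \sim \lambda^{3}/(\text{const}) \cdot s \sim \lambda^{3-c}$, i.e. $\lambda(t) \sim (T-t)^{1/(3-c)}$ and $\|\nabla u\|_{L^2} \sim \|\nabla Q\|_{L^2}/(c_0(T-t)^{1/(3-c)})$ — exactly statement (b) and, through $x_1(t) = \int \lambda^{-1}\,\ldots$, the receding behavior of $x_1$ while $x_2(t) \to x_\infty$. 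Case (a) (zero or negative energy) follows because the sign of the energy rules out the asymptotic-stability and exit alternatives, forcing $b$ into the blow-up branch; here the full $x_1^7$ weight is needed to close the error estimates in the $E_0=0$ case, as noted in Comment (1). Case (c) (asymptotic stability) is the complementary branch where $b \to 0$, $\lambda \to \lambda_\infty$, and a rigidity/Liouville argument near $Q$ (de Bouard's instability notwithstanding, the modulated tube retains just enough coercivity from the orthogonality conditions) gives $H^1_{loc}$ convergence to $Q$ with $x_1(t) \sim t/\lambda_\infty^2$. The openness statement (2)/(c) in $H^1$ is a soft consequence of continuity of the flow and the strict quantitative inequalities characterizing the blow-up regime.

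The main obstacle, and the genuinely new difficulty compared with gKdV, is the anisotropy: the soliton $Q$ is radial but the equation is not, so the linearized operator and the virial functional do not diagonalize in a single variable. Concretely, the computation of the sharp law $b_s \sim -c_1 b^2$ requires understanding the solvability of $L w = \Lambda Q$-type equations fiber-by-fiber in $x_2$ (equivalently, after Fourier transform in $x_2$, solving a one-parameter family $(-\partial_{x_1}^2 + \xi^2 + 1 - 3Q^2)$ of ODEs), and the constant $c$ is exactly the averaged ratio that emerges — this is why $c$ is not $1$ as conjectured. A second, related obstacle is the construction of the correct cutoff/weight for the monotonicity functional: the $x_1$-transport wants to push mass to the right, so the weight must be chosen so that the ``bad'' flux terms have a favorable sign while still controlling the $\varepsilon$-quadratic form; getting the $x_1^7$ (vs. $x_1^4$) threshold right for $E_0=0$ is delicate. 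Finally, because the radiation $u^*$ only converges in $L^2$ and not in $H^1$ (Comment (3)), the error analysis must be carried out carefully at the $L^2$ level with only local $H^1$ control, more in the spirit of the NLS analysis of Merle--Rapha\"el than of gKdV.
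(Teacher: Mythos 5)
Your outline captures the broad Martel--Merle--Rapha\"el architecture, but three of its load-bearing steps do not survive contact with the actual difficulties, and each is a genuine gap rather than a detail. First, the modulation setup: you propose orthogonality against $\Lambda Q,\partial_{x_1}Q,\partial_{x_2}Q$ and a decomposition $u=\lambda^{-1}(Q+\varepsilon)$ with $b:=-\lambda_s/\lambda$. The paper shows this cannot work. To get the sharp law $b_s+cb^2=O(\|\varepsilon\|^2_{H^1_\omega}+|b|^3)$ (without which the exponent $\frac{1}{3-c}$ is not reachable) one must include the first-order profile correction $Q_b=Q+b\chi_bP_1$ with $(LP_1)_{y_1}=\Lambda Q$, treat $b$ as a fourth modulated parameter, and choose four orthogonality directions for which the matrix $\tilde M$ is invertible \emph{and} which are compatible with the mixed energy functional. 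The paper proves that the gKdV-style choices ($x_1\Lambda Q$, $x_1Q_{x_1}$, etc.) force $\det\tilde M=0$ in 2D, and the resolution is to orthogonalize against $Q$, $\varphi(y_1)\Lambda Q$, $\varphi(y_1)Q_{y_1}$, $\varphi(y_1)Q_{y_2}$ with $\varphi(x)=x+\tilde\epsilon x^4e^{-|x|}$ a small perturbation of $x$. Your choice of directions is precisely the degenerate one the construction is designed to avoid.

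Second, the mechanism of the trichotomy is wrong as stated. The monotonicity functional does not satisfy $\frac{d}{ds}\mathcal F\lesssim-\mu b^2$, and it does not ``force $b$ to have a sign''; what one proves is $\frac{d}{ds}(\mathcal F_{i,j}/\lambda^j)+\mu\lambda^{-j}\int(|\nabla\varepsilon|^2+\varepsilon^2)(\phi_{i,B})_{y_1}\lesssim b^4/\lambda^j$, i.e.\ the dissipation is in $\varepsilon$ and the source is $b^4$. The trichotomy is then obtained by showing $\int|\frac{d}{ds}(b/\tilde\lambda^c)|\,ds<\infty$, hence $b/\tilde\lambda^c\to c_0$, and discussing the sign of the limit $c_0$ (Exit for $c_0<0$, Soliton for $c_0=0$, Blow-up for $c_0>0$). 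This step in turn requires the dispersive estimates at the weighted levels $\lambda^{-\alpha c}$ with $\alpha$ up to $3^-$, and it requires the sharp $\tilde\lambda=\lambda(1-J)$ correction from the extra functional $J(s)=(\varepsilon,\frac{1}{2c_Q}\int_{-\infty}^{y_1}\Lambda Q)$ — none of which appear in your plan. Third, you do not address the drift term $\frac{\lambda_s}{\lambda}\Lambda\varepsilon$ in the energy identity: in gKdV it is absorbed using the a priori bound $b\ll\lambda^2$, but here only $b\ll\lambda^c$ with $c<2$ can be propagated, and the paper must introduce the auxiliary weight $\tilde\phi_{i,B}$ and rewrite the offending contribution as $\frac{1}{\lambda^i}\frac{d}{ds}(\lambda^i\int\varepsilon^2\tilde\phi_{i,B})$ to close the estimate. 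Without these three ingredients the bootstrap does not close and the classification, in particular the blow-up speed in case (2), cannot be derived.
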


\textbf{Notation.} We denote by $L$ the linearized operator around the ground state $Q$, namely $L=-\Delta+1+3Q^2.$

Also, we introduce the scaling operator 
$$\Lambda f=f+x_1f_{x_1}+x_2f_{x_2}.$$
For any small constant $0< \alpha \ll1,$ we define by $\delta(\alpha)$ a generic small constant with 
$$\delta(\alpha)\rightarrow 0 \mbox{ as } \alpha\rightarrow 0.$$
Finally, the $L^2$ scalar product in $\mathbb{R}^2$: 
$$(f,g)=\int_{\mathbb{R}} \int_{\mathbb{R}} f(x_1,x_2)g(x_1,x_2)dx_1dx_2.$$

\subsection{Outline of the Proof}
\textbf{Construction of the Approximate Profile.}

We begin by seeking a solution to the Zakharov-Kuznetsov (ZK) equation, positing the form:
\[
u(t,x_1,x_2) = \frac{1}{\lambda(t)} Q_{b(s)}\left(\frac{x_1 - x_1(t)}{\lambda(t)}, \frac{x_2 - x_2(t)}{\lambda(t)}\right),
\]
where the dynamics are governed by the system:
\begin{equation}\label{eq:ODE}
\frac{ds}{dt} = \frac{1}{\lambda^3(t)}, \quad b = -\frac{\lambda_s}{\lambda}, \quad \frac{(x_1)_s}{\lambda} = 1, \quad \frac{(x_2)_s}{\lambda} = 0.
\end{equation}
Upon substituting into \eqref{eq:ZK}, we derive an approximate self-similar equation:
\[
b_s \frac{\partial Q_b}{\partial b} + b \Lambda Q_b + (\Delta Q_b - Q_b + Q_b^3)_{x_1} = 0.
\]
This formulation necessitates a suitable law of variation for \(b\), ensuring solvability for the sequence of functions \(\{P_i\}_{i \geq 1}\):
\[
b_s = -c_0 - c_1 b - c_2 b^2 - c_3 b^3 - \ldots, \quad Q_b = Q + b P_1 + b^2 P_2 + \ldots
\]
We find the coefficients as follows:
\begin{itemize}
    \item At \(O(1)\), the solitary wave equation \((\Delta Q - Q + Q^3)_{x_1} = 0\) indicates \(c_0 = 0\).
    \item At \(O(b)\), the equation \(c_1 P_1 + (L P_1)_{x_1} - \Lambda Q = 0\) simplifies to \(c_1 = 0\), thus \((L P_1)_{x_1} = \Lambda Q\). This equation is solvable since \(\int_{-\infty}^{x_1}\Lambda Qdx_1'\) orthogonally complements the kernel of \(L\) (here, the $L^2-$criticality of the equation, i.e. cubic nonlinearity, is crucial for this).
    \item At \(O(b^2)\), the equation \(-c_2 P_1 + (L P_2 + N(P_1))_{x_1} = 0\) leads to the solvability condition \((-c_2 P_1 + N(P_1)_{x_1}, Q) = 0\), ultimately defining \(c_2 = c\) as per \eqref{eq:definitionofc}.
\end{itemize}
Thus, the dynamics are encapsulated by the system:
\[
b_s + cb^2 = 0, \quad b = -\frac{\lambda_s}{\lambda}, \quad \frac{(x_1)_s}{\lambda} = 1, \quad \frac{(x_2)_s}{\lambda} = 0, \quad \frac{ds}{dt} = \frac{1}{\lambda^3(t)}.
\]
This can be reexpressed as:
\begin{equation}\label{eq:ODE2}
(\lambda_t \lambda^{2-c})_t = 0, \quad (x_1)_t = \frac{1}{\lambda^2}, \quad (x_2)_t = 0, \quad b = -\lambda_t \lambda^2.
\end{equation}
Setting \(\lambda(0) = 1\) elucidates the phase portrait:
\begin{itemize}
    \item For \(b_0 < 0\), \(\lambda(t) = [1 - (3-c) b_0 t]^{\frac{1}{3-c}}\) increases indefinitely as \(t \rightarrow \infty\).
    \item For \(b_0 = 0\), \(\lambda(t) = 1\) consistently for all \(t\).
    \item For \(b_0 > 0\), \(\lambda(t) = [1 - (3-c) b_0 t]^{\frac{1}{3-c}}\) collapses at \(T = \frac{1}{(3-c) b_0}\), with \(\lambda(t) = b_0^{\frac{1}{3-c}} (T-t)^{\frac{1}{3-c}}\).
\end{itemize}
This delineation forecasts a trichotomy in the dynamics of \(u\) based on initial conditions in \(\mathcal{A}_{\alpha_0}\): Exit Case, Asymptotic Stability Case and Blow-Up Case.

\hspace{1cm}

\textbf{Decomposition of the Flow and Orthogonality Conditions} 

We will try to find a solution of \eqref{eq:ZK} of the form 
$$u(t,x_1,x_2)=\frac{1}{\lambda(t)}(Q_{b(t)}+\varepsilon)\Big(t,\frac{x_1-x_1(t)}{\lambda(t)},\frac{x_2-x_2(t)}{\lambda(t)}\Big),$$
$$\mbox{with }Q_{b(t)}=Q\Big(\frac{x_1-x_1(t)}{\lambda(t)},\frac{x_2-x_2(t)}{\lambda(t)}\Big)+b(t)\chi\Big(|b(t)|^{\gamma}\frac{x_1-x_1(t)}{\lambda(t)}\Big)P_1\Big(\frac{x_1-x_1(t)}{\lambda(t)},\frac{x_2-x_2(t)}{\lambda(t)}\Big),$$
as $P_1$ is defined above and with $\gamma<1$ and $\chi$ is a well-chosen cut-off function.

We choose $\lambda(t), b(t), x_1(t),x_2(t)$ so that $\varepsilon(t)$ is orthogonal to some well-chosen functions in order to get good bounds on the approximate dynamical system for the geometric variables. A necessary condition is to choose orthogonalities $\varphi_1, \varphi_2, \varphi_3, \varphi_4$ such that the matrix  
\[
\tilde{M}=
\begin{bmatrix} 
(\Lambda Q, \varphi_1) & (Q_{x_1},\varphi_1) & (Q_{x_2}, \varphi_1) & (P_1, \varphi_1) \\
(\Lambda Q,\varphi_2) & (Q_{x_1},\varphi_2) & (Q_{x_2},\varphi_2) & (P_1, \varphi_2) \\
(\Lambda Q,\varphi_3) & (Q_{x_1},\varphi_3) & (Q_{x_2},\varphi_3) & (P_1, \varphi_3) \\
(\Lambda Q, \varphi_4) & (Q_{x_1},\varphi_4) & (Q_{x_2},\varphi_4) & (P_1, \varphi_4) 
\end{bmatrix} 
\]
has nonzero determinant. These orthogonalities must have sufficient decay such that the inner products with $P_1 \not \in L^2$ exist. For simplicity, we will take one of the orthogonality $\varphi_1=Q$ which is convenient for the coercivity estimate of $L.$ Since, we need an orthogonal function $\varphi_2$ to be odd in $x_2$ for the coercivity of $L$, we observe that a necessary and sufficient condition for $\det \tilde{M}\neq 0$ is to have $(Q_{x_2}, \varphi_2)\neq 0$ and that the determinant of 
\[
M^*=
\begin{bmatrix} 
(\Lambda Q,\varphi_3) & (Q_{x_1},\varphi_3)  \\
(\Lambda Q, \varphi_4) & (Q_{x_1},\varphi_4)
\end{bmatrix} 
\]
is nonzero. We choose the weight $\varphi(x_1):\mathbb{R}\rightarrow \mathbb{R}$ and some orthogonal conditions 
\begin{equation} \label{weightortho}
Q, \varphi(x_1)\Lambda Q, \varphi(x_1)Q_{x_1}, \varphi(x_1)Q_{x_2}
\end{equation} satisfying these properties. 

Moreover, if $\{L\partial_{x_1}\varphi_{2},L\partial_{x_1}\varphi_{3},L\partial_{x_1}\varphi_{4}\}\cap \mbox{span} \{\varphi_1,\varphi_2,\varphi_3,\varphi_4\}=\emptyset,$ then the quantities of the dynamical system $\frac{\lambda_s}{\lambda}+b, \frac{(x_1)_s}{\lambda}-1, \frac{(x_2)_s}{\lambda}$ have \textit{bad} estimates of the order $\|\varepsilon\|_{H^{1}_{\omega}},$ (a weighted $H^1$ norm) while the quantity $b_s+cb^2$ has a \textit{good} estimate of the order $\|\varepsilon\|^{2}_{H^{1}_{\omega}}.$ This observation is important in choosing the energy functional. 

\hspace{1cm} 

\newpage
\textbf{Mixed Energy Functional} 

We use an energy method in order to get a pointwise control in time of the control the residual term $\varepsilon,$ more precisely of a $H^1$-weighted norm $\|\varepsilon\|_{H^{1}_{\omega}}.$ First,  we define the weighted norms
$$\mathcal{N}_i(s)=\int \int \Big(|\nabla \varepsilon|^2\psi(x_1)+\varepsilon^2\phi_i(x_1)\Big)$$
where the weights $\phi_i,\psi$ are controlling only the \textit{problematic} growth in the $x_1$ direction, more precisely they have an exponential decay at $-\infty$ and $\phi_i$ has polynomial growth of degree $i$ at $+\infty$ in order to propagate the localization that appear in $\mathcal{A}_{\alpha_0}$ for larger times. These weights are chosen to offset the lack of decay of $P_1$ in the $x_1$ direction.  

We introduce the mixed energy functional 
$$\mathcal{F}_{i,j}(s)=\int \int \{|\nabla \varepsilon|^2\psi(x_1)+\varepsilon^2\phi_{i,j}(x_1)-\frac{1}{2}[(Q_b+\varepsilon)^4-Q_{b}^{4}-4Q_{b}^{3}\varepsilon]\}(s),$$
where $j$ controls the decay of the functional and the weight $\phi_{i,j}=\hat{\phi}_i+\tilde{\phi}_{i,j},$ with $\hat{\phi}$ not depending on $j$ and adapted to the orthogonalities $\varphi_1,\varphi_2,\varphi_3,\varphi_4$ in order to get the quantities (that appear as terms in $(\mathcal{F}_{i,j})_s$) 
$$\Big(\frac{\lambda_s}{\lambda}+b\Big)\int \int \varepsilon \Lambda Q \hat{\phi}_i,  \Big( \frac{(x_1)_s}{\lambda}-1\Big)\int \int \varepsilon Q_{x_1} \hat{\phi}_i,  \frac{(x_2)_s}{\lambda}\int \int \varepsilon Q_{x_2} \hat{\phi}_i\ll \|\varepsilon\|_{H^{1}_{\omega}}^{2}.$$
(better estimates than $\leq \|\varepsilon\|_{H^{1}_{\omega}}^{2}$).

The weight $\tilde{\phi}_{i,j}$ is adapted to offset the most \textit{problematic} term that appears in the equation of $\varepsilon_s,$ specifically the drift operator $\frac{\lambda_s}{\lambda}\Lambda \varepsilon.$ While in \cite{MartelMerleRaphael1} they offset this term by choosing some good a priori conditions on $(b,\lambda, \varepsilon)$ that they can propagate for all time $t$ as long as $u(t)\in \mathcal{T}_{\alpha^*},$ more precisely using that $b\ll \lambda^2.$ Since for our problem, the only bound that we can propagate is of the form $b\ll \lambda^c$ with $c<2.$ Therefore, to solve the control of the drift operator, we artificially create a derivative, more precisely
$$\frac{d}{ds}\int\int \varepsilon^2\tilde{\phi}_{i,j}+j\frac{\lambda_s}{\lambda}\int \int \varepsilon\Lambda \varepsilon \hat{\phi}_i\approx\frac{1}{\lambda^i}\frac{d}{ds}\Big(\lambda^i\int \int \varepsilon^2\tilde{\tilde{\phi}}_i\Big) \ll \|\varepsilon\|_{H^{1}_{\omega}}^{2}$$ 
for a suitable cut-off function $\tilde{\tilde{\phi}}_i,$ allowing us to close the estimates for $\mathcal{F}_{i,j}.$

The family of functionals $\mathcal{F}_{i,j}$ satisfy 
$$\frac{d}{ds}\Big(\frac{\mathcal{F}_{i,j}}{\lambda^j}\Big)+\frac{\|\varepsilon\|_{H^{1}_{\omega}}}{\lambda^j}\lesssim \frac{b^4}{\lambda^j},$$
for $j\geq 0,$ where the power $4$ of $b$ is a consequence of the size of the error of the approximate solution $Q_b.$ Applying the energy estimate for  $j=0, c, (3-\nu)c$ ($\nu \ll1$), we get the appropriate estimates of $\|\varepsilon\|_{H^{1}_{\omega}}.$ This, together with the coercivity of the functionals, $\mathcal{F}_{i,j}\geq \|\varepsilon\|^{2}_{H^{1}_{\omega}},$ gives rise to a dispersive estimate that ultimately will control the error term. 

\hspace{1cm} 
\newpage
\textbf{Virial Estimate}

The Virial estimate that appears in the energy control is of the form of the bilinear form 
$$-2(\varphi(x_1)(L\varepsilon)_{x_1}, \varepsilon)=((A+A^*)\varepsilon, \varepsilon),$$
where $\varphi(x_1)$ is the weight we chose in \eqref{weightortho}, $A$ is the composition operator of $L$, $-\partial_{x_1}$ and multiplication with $\varphi(x_1)$ and $A^*$ is its adjoint.  

This implies that we need coercivity of some self-adjoint operator of the form
$$\mathcal{L}_{\varphi}=-3\partial_{x_1x_1}-\partial_{x_2x_2}+\Big(1-\frac{\varphi_{x_1x_1x_1}}{\varphi_{x_1}}\Big) -3Q^2 + 6QQ_{x_1}\frac{\varphi}{\varphi_{x_1}},$$
with orthogonalities as in \eqref{weightortho}. Trying to find such function $\varphi(x_1)$ is not trivial. The first observations are that $\varphi$ has to be strictly increasing on $\mathbb{R}$ and that $\varphi_{x_1}>\varphi_{x_1x_1x_1},$ since we want to ensure the essential spectrum is positive. Since $QQ_{x_1}$ is positive for $x_1<0$ and negative for $x_1>0,$ we can improve the positive definiteness of the bilinear form associated to $\mathcal{L}_{\varphi}$ by choosing a suitable function $\varphi$ such that $\varphi/\varphi_{x_1}$ is small for $x_1>0$ and large when $x_1<0.$  We choose $\varphi(x_1)=1+e^{\frac{x_1}{\alpha_1}},$ where $\alpha_1$ is selected such that the orthogonalities in \eqref{weightortho} are in $H^1.$ 

In order to prove coercivity for $\mathcal{L}_{\varphi},$ we found a generalization of the result from \cite{Weinstein} (Appendix E) for multiple negative eigenvalues. We believe this method will be useful to provide coercivity results for other dispersive equations. Another novelty in this proof is that we do not use directly the orthogonalities from \eqref{weightortho} but instead, we showed the existence of such linear combination which can give the coercivity of the Virial operator. See more details in \ref{AppendixC}.

\hspace{1cm}
 
\textbf{Control of Dynamics} 

Since we want $\frac{b}{\lambda^c}\sim -\lambda_t\lambda^{2-c}\sim c_0,$ understanding the evolution in time of the quantity $\frac{b}{\lambda^c}$ is critical for our analysis. We achieve this by controlling $\int \Big|\frac{d}{ds}\Big\{\frac{b}{\lambda^c}\Big\}\Big|ds<+\infty,$ which means 
$$\frac{b}{\lambda^c}\rightarrow c_0,$$ 
and the trichotomy will come from the discussion if $c_0<0$ (Exit), $c_0=0$ (Asymptotic Stability), $c_0>0$ (Blow-Up). In order to control these dynamics in the gKdV case \cite{MartelMerleRaphael1}, the authors try to find a domination law between the quantities $b$ and $\mathcal{N}_i.$ In the present paper, we provide a more direct route by analyzing strictly the quantity $b/\lambda^c,$ see Section \ref{Rigidity near the soliton}.

\hspace{1cm}

\textbf{Strong Convergence of the Asympotic Profile} 

While the convergence of the asymptotic profile is proved in \cite{MartelMerleRaphael1} by employing Kato identities and energy estimates for some localized mass and energy functionals, we draw inspiration from the $L^2$ supercritical NLS \cite{MerleRaphaelSzeftel} and $L^2$ supercritical gKdV \cite{Lan} in treating the convergence. 

We employ the Duhamel formula and some refined Strichartz estimates of Foschi \cite{Foschi} we can control the difference equation regarding $\tilde{u}=u-Q_S$ where 
$$Q_S(x_1,x_2)- \frac{1}{\lambda(t)}Q\Big(\frac{x_1-x_1(t)}{\lambda(t)},\frac{x_2-x_2(t)}{\lambda(t)}\Big)\rightarrow 0 \mbox{ as } t\rightarrow T.$$
This implies that $\tilde{u}(t)$ is a Cauchy sequence for $t\rightarrow T,$ therefore convergence in $L^2$ to a radiation $u^*.$ In \cite{MartelMerleRaphael1}, it is shown that the radiation belongs to $H^1$, and they propose the possibility of strong convergence in $H^1$ towards it. However, in our scenario, we invalidate such conjectured convergence in $H^1$. Whether $u^*$ is not in $H^1$ remains an open question. 

\hspace{1cm} 

\textbf{Blow-Up for non-negative energy}

We observe that for $E_0<0,$ from the conservation of energy we get that $\lambda(t)\rightarrow 0$  which is consistent only with the Blow-Up dynamics. For $E_0=0$ it is more delicate since $Q$ (up to scaling and translations) can satisfy this condition. Nevertheless, we will prove that this is the only case that does not blow up. 

While in \cite{MartelMerleRaphael1} the method of proof is using Kato identities and localized energy estimates/Morawetz identities, we employ a different route. In a proof by contradiction, we suppose the solution $u(t)$ converges asymptotically to the ground state. We manage to prove that for a fixed $D\gg 1,$ and for a suitable cut-off function $\chi_D$ with supp($\chi_D)\in [-D,D]$ that controls the $y_1$ variable, we obtain $\lim_{D\rightarrow +\infty}\int \int \varepsilon^2(t)\chi_D(y_1)=o_{t\rightarrow T}(1).$ This is an improvement of the brute force bound $\int \int \varepsilon^2(t)\chi_D=O(D^2)$ for a fixed $t>0.$ It will imply that $\|\varepsilon(t)\|_{L^2}\rightarrow 0,$ which forces the solution $u(t)$ to be mass critical and energy critical, which by the variational characterization of the ground state $Q$ implies that $u$ is equivalent to $Q$ up to scaling and translations.  

\subsection{Organization of the Paper} 
The organization of the paper is the following: in Section \ref{Coercivity of the Linearized Operator} we prove the coercivity of the $L$ operator, in Section \ref{Construction of the approximate solution $Q_b$} we construct the approximate profile and in Section \ref{Modulation Equation} we provide estimates of the geometrical variables. We employ the mixed-energy method in Sections \ref{Monotonicity Formulas}and  \ref{Energy Estimates} which leads to the rigidity theorem in Section \ref{Rigidity near the soliton}. The rest of Theorem \ref{mainthm} as stability and analysis of the radiation function appear in Sections \ref{Stability of Blow-Up}, \ref{Strong Convergence in $L^2$ of the Asymptotic Profile}. The blow-up for non-negative energy is in Section \ref{Blow Up for E0}.

\subsection{Acknowledgements} 
The work of the authors is supported by Tamkeen under the NYU Abu Dhabi Research Institute grant CG002 of the center SITE. Data sharing not applicable to this article as no datasets were generated or analysed during the current study. The authors have no competing interests to declare that are relevant to the content of this article.

\section{Coercivity of the Linearized Operator}  \label{Coercivity of the Linearized Operator}
 
In this section we will use $x,y$ for the spatial variables. We begin by stating well-known properties of the linearized operator $L=-\Delta+1-3Q^2.$ 

\begin{lemma}
The following holds for the operator $L$: 
\begin{itemize} 
\item $L$ is self-adjoint and $\sigma_{ess}(L)=[\lambda_{ess},+\infty)$ for some $\lambda_{ess} >0.$
\item ker(L)=span\{$Q_x,Q_y$\}
\item $L$ has a unique single negative eigenvalue $-\lambda_0$ (with $\lambda_0>0$)  associated to a positive radially symmetric eigenfunction $\chi_0$. Moreover, there exists $\delta>0$ such that 
$$|\chi_0(x)|\lesssim e^{-\delta|x|} \mbox{ for all }x \in \mathbb{R}^2.$$
\end{itemize}
\end{lemma}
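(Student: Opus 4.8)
The three assertions are the classical spectral facts for the linearized NLS operator, specialized to two dimensions, and I would establish them in the order listed, leaning on the uniqueness of the ground state (Kwong) and the decomposition into angular Fourier modes.

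\emph{Self-adjointness and essential spectrum.} Since $Q\in C^\infty$ with $|Q(x)|\lesssim e^{-|x|}$, the multiplication operator $-3Q^2$ is bounded, symmetric, and vanishing at infinity; hence $L=-\Delta+1-3Q^2$ is self-adjoint on $H^2(\mathbb{R}^2)$ by Kato--Rellich. Moreover $-3Q^2(-\Delta+1)^{-1}$ is compact (a decaying multiplier composed with a smoothing resolvent), so by Weyl's theorem $\sigma_{ess}(L)=\sigma_{ess}(-\Delta+1)=[1,\infty)$, giving $\lambda_{ess}=1>0$; below $1$ the spectrum consists of finitely many eigenvalues of finite multiplicity.

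\emph{Kernel.} Differentiating $-\Delta Q+Q-Q^3=0$ in $x$ and in $y$ yields $LQ_x=LQ_y=0$, and $Q_x,Q_y$ are linearly independent with exponential decay, so $\mathrm{span}\{Q_x,Q_y\}\subseteq\ker L$. For the reverse inclusion I would pass to polar coordinates and write $L=\bigoplus_{k\ge 0} L_k$ on the angular modes $e^{\pm i k\theta}$, where $L_k=-\partial_r^2-\tfrac1r\partial_r+\tfrac{k^2}{r^2}+1-3Q^2(r)$ acts on radial functions. Differentiating the radial ODE in $r$ gives $L_1 Q_r=0$; since $\partial_rQ<0$ on $(0,\infty)$, the function $Q_r$ does not change sign, hence it is a ground state of $L_1$ and $L_1\ge0$ with $\ker L_1=\mathbb{R}Q_r$. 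For $k\ge 2$, $L_k=L_1+\tfrac{k^2-1}{r^2}>0$, so $\ker L_k=\{0\}$. It remains to show $\ker L_0=\{0\}$, which is exactly the nondegeneracy of the ground state; this is the classical input (Kwong's uniqueness together with Sturm oscillation theory for the radial operator $L_0$ with potential $1-3Q^2$), and I would cite it. Assembling the modes, $\ker L=\mathbb{R}\,Q_r\cos\theta\oplus\mathbb{R}\,Q_r\sin\theta=\mathrm{span}\{Q_x,Q_y\}$.

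\emph{The negative eigenvalue.} Pairing the equation with $Q$ gives the Nehari identity $\int|\nabla Q|^2+\int Q^2=\int Q^4$, whence
\[
(LQ,Q)=\int|\nabla Q|^2+\int Q^2-3\int Q^4=-2\int Q^4<0,
\]
so $L$ has at least one negative eigenvalue. Since $L_k\ge0$ for all $k\ge1$ (shown above), every negative eigenvalue comes from the radial block $L_0$, which, being a half-line Schr\"odinger operator with a decreasing single-well potential, possesses exactly one negative eigenvalue (the next radial eigenvalue lies at or above $0$ by comparison with $L_1\ge0$ and a Sturm-oscillation argument, since a second negative radial eigenfunction would have a single node). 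This negative eigenvalue $-\lambda_0$ is simple, being the bottom of the spectrum of a Schr\"odinger operator, and its eigenfunction $\chi_0$ can be chosen positive by the strong maximum principle / Perron--Frobenius; it is radial because $L$ is rotation invariant and the ground eigenspace is one-dimensional. Finally, exponential decay follows by writing $(-\Delta+1+\lambda_0)\chi_0=3Q^2\chi_0$ with $1+\lambda_0>0$: the Yukawa resolvent kernel in $\mathbb{R}^2$ decays like $e^{-\sqrt{1+\lambda_0}\,|x|}$, so inserting the bound $Q^2\lesssim e^{-2|x|}$ and bootstrapping (or an Agmon estimate) gives $|\chi_0(x)|\lesssim e^{-\delta|x|}$ for any $\delta<\min(1,\sqrt{1+\lambda_0})$, in particular for some $\delta>0$.

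\emph{Main obstacle.} The only genuinely nontrivial points are $\ker L_0=\{0\}$ (nondegeneracy) and the count of negative radial eigenvalues of $L_0$; both rest on Kwong's uniqueness theorem and Sturm--Liouville analysis of the radial profile, and I would invoke them as known rather than reprove them. Everything else is routine perturbation theory, elliptic regularity, and the maximum principle.
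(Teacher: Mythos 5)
The paper offers no proof of this lemma at all: it is stated as a collection of ``well-known properties'' of $L$ and the burden is pushed to the references (Weinstein, Kwong, etc.). So your sketch cannot be compared to an in-paper argument; judged on its own, it follows the standard route (Kato--Rellich and Weyl for the essential spectrum, angular-harmonic decomposition plus nondegeneracy for the kernel, $(LQ,Q)<0$ for existence of a negative eigenvalue, Perron--Frobenius and Agmon decay for the ground eigenfunction), and almost all of it is correct. Two small remarks: your value $(LQ,Q)=-2\int Q^4$ is the right one (indeed $LQ=-2Q^3$, as the paper itself uses later), even though the paper's next lemma records $-\int Q^4$; and the nondegeneracy $\ker L_0=\{0\}$ is correctly flagged as the external input.

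There is, however, one step that does not work as written: the claim that the \emph{second} radial eigenvalue of $L_0$ is $\ge 0$ ``by comparison with $L_1\ge 0$.'' Since $L_0=L_1-\tfrac{1}{r^2}$, one has $L_0\le L_1$ in the form sense, so min--max gives $\lambda_j(L_0)\le\lambda_j(L_1)$ --- the inequality points the wrong way, and $L_1\ge 0$ yields no lower bound on the spectrum of $L_0$. A nodal/Wronskian argument on the radial line can be made to work but requires genuine care precisely because of that $1/r^2$ discrepancy. The clean and standard fix is already sitting in the paper's very next lemma: Weinstein's result $L|_{\{Q\}^{\perp}}\ge 0$. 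Nonnegativity on a codimension-one subspace forces, by min--max, at most one negative eigenvalue of $L$; combined with $(LQ,Q)<0$ this gives exactly one, and simplicity, radial symmetry, positivity and exponential decay of $\chi_0$ then follow exactly as you describe. I would replace the Sturm comparison with this variational count (or explicitly cite the Morse-index-one property of the critical ground state).
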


\begin{lemma}
The following identities and conditions hold for $L$: 
\begin{itemize} 
\item $L(\Lambda Q)=-2Q$  and $\int \int Q \Lambda Q =0.$
\item $(LQ,Q)=-\int Q^4 <0.$
\item $L_{\{Q\}^{\perp}}\geq 0.$ (\cite{Weinstein})
\end{itemize} 
\end{lemma}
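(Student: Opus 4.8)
The plan is to dispatch the three assertions in turn: the first two are immediate consequences of the profile equation $-\Delta Q+Q-Q^3=0$ together with the $L^2$-critical scaling, whereas the third is the Weinstein coercivity estimate, which I would cite rather than reprove.

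For the identity $L(\Lambda Q)=-2Q$ I would use the mass-preserving scaling $Q_\lambda(x,y)=\lambda Q(\lambda x,\lambda y)$. Substituting into the elliptic equation and using $\Delta Q=Q-Q^3$ one checks that
$$-\Delta Q_\lambda+Q_\lambda-Q_\lambda^3=(1-\lambda^2)\,Q_\lambda .$$
Since $\partial_\lambda Q_\lambda\big|_{\lambda=1}=Q+xQ_x+yQ_y=\Lambda Q$, differentiating this identity at $\lambda=1$ and recalling $L=-\Delta+1-3Q^2$ gives $L(\Lambda Q)=-2Q$. For the orthogonality, note $\|Q_\lambda\|_{L^2}^2=\|Q\|_{L^2}^2$ is independent of $\lambda$ — this is exactly the $L^2$-criticality — so differentiating $\tfrac12\|Q_\lambda\|_{L^2}^2$ at $\lambda=1$ yields $(\Lambda Q,Q)=0$; equivalently one integrates by parts, $\int\int Q(xQ_x+yQ_y)=\tfrac12\int\int (x,y)\cdot\nabla(Q^2)=-\tfrac12\int\int\operatorname{div}(x,y)\,Q^2=-\int\int Q^2$, and adds $\int\int Q^2$.

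The second item is a one-line computation: since $-\Delta Q+Q=Q^3$, we get $LQ=-\Delta Q+Q-3Q^3=-2Q^3$, hence $(LQ,Q)=-2\int\int Q^4<0$.

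The substantive assertion is $L_{\{Q\}^\perp}\ge 0$, which I would deduce from the variational characterization of the ground state (Weinstein). The point is that $Q$ minimizes, up to the invariances $v\mapsto\mu v$, $v\mapsto v(\cdot-z)$ and $v\mapsto\lambda v(\lambda\,\cdot)$, the Gagliardo–Nirenberg/Weinstein functional $W(v)=\|\nabla v\|_{L^2}^2\,\|v\|_{L^2}^2/\|v\|_{L^4}^4$; nonnegativity of the second variation of $W$ at this constrained minimizer, after the contributions of the three invariances are accounted for (the amplitude scaling $v\mapsto\mu v$ producing exactly the single constraint $(\varepsilon,Q)=0$, the other two landing in $\ker L=\mathrm{span}\{Q_x,Q_y\}$), yields $(L\varepsilon,\varepsilon)\ge 0$ for every $\varepsilon$ with $(\varepsilon,Q)=0$. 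Equivalently one can argue spectrally: $L$ has a single negative eigenvalue $-\lambda_0$ (eigenfunction $\chi_0>0$) and $\ker L=\mathrm{span}\{Q_x,Q_y\}$, and since $L(\Lambda Q)=-2Q$ with $(\Lambda Q,Q)=0$ one has $(L^{-1}Q,Q)=-\tfrac12(\Lambda Q,Q)=0$, which is precisely the borderline case of the standard criterion guaranteeing nonnegativity of the quadratic form on $\{Q\}^\perp$. The main obstacle is exactly this spectral inequality; I would invoke \cite{Weinstein} for it, recording only the consistency check that the bound is sharp: $\Lambda Q\in\{Q\}^\perp$ and $(L\Lambda Q,\Lambda Q)=(-2Q,\Lambda Q)=0$, so the form is nonnegative but degenerate on $\mathrm{span}\{Q_x,Q_y,\Lambda Q\}$ — which is why the coercivity of $L$ in later sections must be enforced through additional orthogonality conditions.
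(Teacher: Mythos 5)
Your proof is correct. The paper states this lemma without proof (it is presented as a collection of well-known facts, with \cite{Weinstein} cited only for the third item), and your arguments are exactly the standard ones being left implicit: differentiating the $L^2$-invariant scaling family $Q_\lambda=\lambda Q(\lambda\cdot)$ for $L(\Lambda Q)=-2Q$ and $(\Lambda Q,Q)=0$, the profile equation for the second item, and Weinstein's spectral criterion for the third, including the correct observation that $(L^{-1}Q,Q)=-\tfrac12(\Lambda Q,Q)=0$ is the borderline case, so the form is only nonnegative (degenerate along $\Lambda Q$), which is indeed why the paper later imposes extra orthogonality conditions. One small point: your computation yields $(LQ,Q)=-2\int\int Q^4$, whereas the statement reads $-\int\int Q^4$; your constant is the right one — the paper itself uses $LQ=-2Q^3$ in the coercivity proof — so the discrepancy is a typo in the lemma as stated and does not affect the substantive conclusion $(LQ,Q)<0$.
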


\begin{lemma}\label{Qdecay} We have that for any $0<\eta\ll1$ and any $\alpha\in \mathbb{N}^2$ then there exists $C_{\alpha,Q}>0$ such that 
$$|\partial^{\alpha}Q(x,y)|\leq C_{\alpha,Q,\eta}e^{-(1-\eta)|x|-\sqrt{2\eta-\eta^2}|y|}.$$
In particular, for any $0<\eta\ll1$, there exists $C_{\Lambda, Q}>0,$ such that 
$$|\Lambda Q(x,y)|\leq C_{\Lambda, Q, \eta}e^{-(1-\eta)|x|-(2\eta-\eta^2)|y|}.$$
\end{lemma}
\begin{proof}
From \cite{GidasNiNirenberg}, we have that there exists $C_{\alpha,Q}$ such that $$|\partial^{\alpha}Q(x,y)|\leq C_{\alpha,Q}e^{-\sqrt{x^2+y^2}}$$
and $\sqrt{x^2+y^2}\geq (1-\eta)|x|+\sqrt{2\eta-\eta^2}|y|,$ we obtain the result. 
\end{proof}

We proceed with proving the coercivity property for operator $L.$ From now on we denote 
\begin{equation}\label{alphas}
\alpha_1=1.01 \text{ and }\alpha_2=1.005=\alpha_1-\frac{1}{200}.
\end{equation}
Define $\varphi:\mathbb{R}\rightarrow \mathbb{R}$ with $$\varphi (x) =1+ e^{\frac{x}{\alpha_1}}.$$ We notice that $\varphi \in C^{\infty}$ and using Lemma \ref{Qdecay} we see that $\varphi(x)Q_x, \varphi(x)Q_y, \varphi(x)\Lambda Q \in H^{\infty}(\mathbb{R}^2).$ and each of them are bounded pointwise by $e^{-\frac{1}{200\alpha_1\alpha_2}|x|-\big(1-\frac{1}{\alpha_{2}^{2}}\big)|y|}.$.   

We define 
\begin{equation}\label{M*}
    M^*=\begin{bmatrix}
(\Lambda Q, \varphi(x)\Lambda Q) & (\Lambda Q, \varphi(x)Q_{x})\\
(\Lambda Q, \varphi(x)Q_{x}) & (Q_{x}, \varphi(x)Q_{x})
\end{bmatrix}
\end{equation}

We observe numerically that $\det M^*=391.2525\neq 0$ and that $(Q_{y}, \varphi(x)Q_{y})=12.9692\neq 0.$

\begin{lemma} 
Denote $S=\{u \in H^1(\mathbb{R}^2): (u, Q)=(u, \varphi(x)Q_{y})=(u,\varphi(x)\Lambda Q)=(u,\varphi(x)Q_{x})=0\}.$ 
\begin{itemize} 
\item[a).] Coercivity: There exists $\delta>0$ such that 
$$\inf_{u \in S}(Lu,u)\geq \delta \|u\|^{2}_{H^1}.$$
\item[b).] There exists $\tilde{\delta}>0$, such that for $u\in H^1(\mathbb{R}^2)$, 
\begin{equation}\label{eq:coercivitylemma}
(Lu,u)\geq \tilde{\delta}\|u\|^2_{H^1}-\frac{1}{\tilde{\delta}}\Big( (u,Q)^2+(u, \varphi(x)Q_{y})^2+(u,\varphi(x)\Lambda Q)^2+(u,\varphi(x)Q_{x})^2\Big).
\end{equation}
\end{itemize} 

\end{lemma}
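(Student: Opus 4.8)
The plan is to prove the coercivity statement (Lemma, part (a)) first, and then deduce part (b) as a standard consequence.

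\textbf{Strategy for part (a).} The spectral structure of $L$ is already recorded: $L$ is self-adjoint, $\sigma_{ess}(L) = [\lambda_{ess}, +\infty)$ with $\lambda_{ess} > 0$, $\ker L = \mathrm{span}\{Q_x, Q_y\}$, and $L$ has a single negative eigenvalue $-\lambda_0$ with positive eigenfunction $\chi_0$. The Weinstein-type fact $L_{\{Q\}^\perp} \geq 0$ tells us that restricting to $\{Q\}^\perp$ already kills the negative direction, since $(\chi_0, Q) \neq 0$ (as $\chi_0 > 0$ and $Q > 0$). So on the subspace $\{u : (u,Q) = 0\}$ we have $(Lu,u) \geq 0$, and the only remaining obstruction to strict positivity is the two-dimensional kernel $\{Q_x, Q_y\}$. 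The three extra orthogonality conditions $(u, \varphi(x)\Lambda Q) = (u,\varphi(x)Q_x) = (u, \varphi(x)Q_y) = 0$ are there precisely to remove the kernel. First I would verify that these conditions, together with $(u,Q)=0$, are a genuinely independent set of constraints and in particular that no nonzero kernel element $aQ_x + bQ_y$ satisfies all of them: evaluating the three $\varphi$-conditions on $aQ_x + bQ_y$ produces a linear system whose matrix has the block structure $\begin{bmatrix} M^* & 0 \\ 0 & (Q_y,\varphi(x)Q_y)\end{bmatrix}$ relative to the pairing with $\Lambda Q, Q_x$ (for the $x$-part) and $Q_y$ (for the $y$-part) — wait, more carefully: $(Q_y, \varphi(x)\Lambda Q) = 0$ and $(Q_y,\varphi(x)Q_x)=0$ by oddness in $x$, and $(Q_x, \varphi(x)Q_y) = 0$ similarly, so the relevant $3\times 3$ Gram-type matrix is block-diagonal with the invertible block $M^*$ (nonzero determinant was just established from the bound \eqref{tildeepsilon} on $\tilde\epsilon$) and the scalar $(Q_y,\varphi(x)Q_y) = \tilde\epsilon \int\int x^4 e^{-|x|} Q_y^2 > 0$. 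Hence the constraints are nondegenerate on $\ker L$.

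\textbf{The compactness/continuity argument.} With nondegeneracy in hand, I would run the standard argument: suppose the infimum $\inf_{u \in S, \|u\|_{H^1}=1}(Lu,u)$ is $\leq 0$. By the spectral decomposition write $u = u_- + u_0 + u_+$ along the negative eigenspace, the kernel, and the positive part; one shows $(Lu,u) \geq c\|u_+\|_{H^1}^2 - C\|u_-\|^2$, and then uses that the orthogonality to $Q$ controls $\|u_-\|$ in terms of $\|u_+\|$ (via $(u,Q)=0$ and $(\chi_0,Q)\neq 0$, plus $(\chi_0, Q_x) = (\chi_0,Q_y) = 0$ by parity), while orthogonality to $\varphi(x)\Lambda Q, \varphi(x)Q_x, \varphi(x)Q_y$ controls $\|u_0\|$. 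Quadratically, $(Lu,u) \geq \delta \|u\|_{H^1}^2$ on the constrained set. Alternatively, and perhaps cleaner to write, I would take a minimizing sequence $u_n$, extract a weak $H^1$ limit $u_\infty$, use local compactness to pass the constraints and the potential term $3\int\int Q^2 u_n^2$ to the limit, and argue that either $u_\infty \neq 0$ is a nonpositive-energy constrained minimizer — which forces $u_\infty$ to be a Lagrange-multiplier eigenfunction incompatible with the constraints and the spectral picture — or $u_\infty = 0$, in which case $(Lu_n, u_n) \to \|\nabla u_n\|^2 + \|u_n\|^2 \to 1 > 0$ by the vanishing of the compact term, contradiction.

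\textbf{Part (b) from part (a).} This is the routine "coercivity modulo the orthogonality directions" upgrade: given arbitrary $u \in H^1$, decompose $u = u^\flat + \sum_k c_k e_k$ where $e_1 = Q$, $e_2 = \varphi(x)Q_y$, $e_3 = \varphi(x)\Lambda Q$, $e_4 = \varphi(x)Q_x$ and $u^\flat \in S$ is the projection of $u$ off the span of these four functions (the span is four-dimensional by the nondegeneracy above, so the projection is well-defined), with $|c_k| \lesssim |(u, e_k)|$. Then apply (a) to $u^\flat$, expand $(Lu, u)$ bilinearly, and absorb all cross terms and the finite-dimensional piece using Cauchy–Schwarz and Young's inequality $ab \leq \eta a^2 + \eta^{-1}b^2$ with $\eta$ small relative to $\delta(\tilde\epsilon)$; the $H^1$-boundedness of $L$ on the finite-dimensional span makes every error term either of the form $\eta\|u\|_{H^1}^2$ or $C_\eta \sum_k (u,e_k)^2$. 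Collecting terms gives \eqref{eq:coercivitylemma} with $\tilde\delta(\tilde\epsilon) = \delta(\tilde\epsilon)/2$, say.

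\textbf{Main obstacle.} The genuinely delicate point is not the soft functional analysis but verifying the nondegeneracy of the four orthogonality conditions — i.e. that $\varphi(x)\Lambda Q$, $\varphi(x)Q_x$, $\varphi(x)Q_y$, $Q$ are "in general position" with respect to $\ker L \oplus \mathrm{span}\{\chi_0\}$. The computation of $M^*$ and the sign $K > 0$ has been set up precisely for this, and the block-diagonal parity structure separates the $x$-kernel direction from the $y$-kernel direction; one must also check the negative direction is not accidentally orthogonal to $Q$, which follows from $\chi_0 > 0$, $Q > 0 \Rightarrow (\chi_0, Q) > 0$. Once these linear-algebra facts are nailed down, the coercivity follows from the textbook argument, and the only care needed is tracking that all implied constants depend only on $\tilde\epsilon$ (through the lower bound on $\det M^*$ and on $(Q_y, \varphi(x)Q_y)$) and on the fixed spectral gaps of $L$.
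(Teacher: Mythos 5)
Your proposal is correct and, for part (a), follows essentially the same route as the paper: nonnegativity on $\{Q\}^{\perp}$ from Weinstein, a minimizing sequence with weak $H^1$ convergence and local compactness of the potential term, exclusion of the vanishing limit via $1\leq \|f_n\|_{H^1}^2\leq 3\int\int Q^2f_n^2+\eta$, and then a Lagrange-multiplier identity whose coefficients are killed by the parity structure and $\det M^*\neq 0$ (used twice: once to kill the multipliers of $\varphi(x)\Lambda Q$ and $\varphi(x)Q_x$, and once more after $Lf=\alpha Q$ forces $f\in -\tfrac{\alpha}{2}\Lambda Q+\ker L$). The only real divergence is in part (b): you correct $u$ by the constraint functions $Q,\varphi(x)Q_y,\varphi(x)\Lambda Q,\varphi(x)Q_x$ themselves, which requires inverting their Gram matrix (invertible, by an easy linear-independence check at $x=0$ and parity), whereas the paper corrects by $Q,\Lambda Q,Q_x,Q_y$, which makes the expansion of $(Lv,v)$ cleaner since $LQ_x=LQ_y=0$ and $L\Lambda Q=-2Q$, and reduces the solvability of the coefficient system to the already-established $\det M^*\neq 0$. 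Both versions close the same way via Cauchy--Schwarz and Young; your first (spectral decomposition) sketch for part (a) is looser than what you would need to write down, but your fallback minimizing-sequence argument is the one the paper actually uses.
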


\begin{proof} 
Proof of $a).$: By Weinsten \cite{Weinstein} either by Proposition 2.7 (first proof) or by Lemma E1 (second proof), we have that $$\inf_{(u,Q)=0}(Lu,u)\geq 0.$$

Now take $u \in S.$ Since it implies that $(u,Q)=0$, we have that $$\inf_{\|u\|_{L^{2}}=1, u \in S}(Lu,u)\geq 0.$$ We will show that if $u \in S$, then  $$\inf_{\|u\|_{L^{2}}=1, u \in S} (Lu,u)>0.$$ 

Let $\inf_{\|u\|_{L^{2}}=1, u \in S}(Lu,u)=\tau\geq 0.$ We will show, by contradiction, that $\tau=0$ is not possible. Suppose that $\tau=0$ and we will show that the minimum is attained. 

Let $f_n$ be a minimizing sequence i.e. $f_n \in H^1,$ $\|f_n\|_{L^{2}}=1, (Lf_n,f_n)\downarrow 0$ and $f_n$ satisfies $(f_n, Q)=(f_n, \varphi(x)Q_{y})=(f_n,\varphi(x)\Lambda Q)=(f_n,\varphi(x)Q_{x})=0.$ Then for any $\eta >0$ such that for $n$ large enough
$$0< \int \int (\nabla f_n)^2dx+\int \int f_n^2dx \leq 3\int \int Q^2 f_n^2dx +\eta.$$

Since $\|f_n\|_{L^{2}}=1$, the above inequality implies $\|f_n\|_{H^1}$ are uniformly bounded. Thus a subsequence $f_n$ exists that converges weakly to some $H^1\cap L^{2}_{\omega}$ function $f$. By weak convergence, $f$ satisfies $(f, Q)=(f,\varphi(x)\Lambda Q)=(f,\varphi(x)Q_{x})=(f_n,\varphi(x)Q_{y})=0$ since all of $Q, \varphi(x)\Lambda Q, \varphi(x)Q_{x}$ and $\varphi(x)Q_{y}$ are in $L^2.$ 

Claim. We also have $\int Q^2 f_n^2dx \rightarrow \int Q^2 f^2dx$ as $n\rightarrow \infty.$

Take $\epsilon >0$, then there exists $R>0$ such that $e^{-R}(1+\|f\|_{L^2})\leq \epsilon.$ Therefore, 
$$\int_{\mathbb{R}^2\setminus B(0,R)}Q^2(f_n^2-f^2)\leq \sup_{|x|\geq R}Q^2 \int \int |f_n^2-f^2| \lesssim e^{-R}(1+\|f\|^2_{L^2})\leq \epsilon.$$
 
 By Rellich-Kondrashov theorem, we have that $\int_{B(0,R)}Q^2f_n^2\rightarrow \int_{B(0,R)}Q^2f^2$, therefore there exists $N(R)\in \mathbb{N}$ such that, for $n \geq N(R),$  $|\int_{B(0,R)}Q^2 (f_n^2-f^2)dx|\leq \epsilon$. Putting all together, we get that for sufficiently large $n$, $|\int_{\mathbb{R}^2}Q^2 (f_n^2-f^2)dx|\leq \epsilon.$ Hence the claim is proved.  

From the claim and the fact that $$1\leq \|f_n\|^{2}_{H^1}\leq 3\int \int Q^2 f_n^2 +\eta$$ we get that $f\not\equiv 0,$ as $\eta$ can be as small as possible.

We will show that the minimum is attained at $f$ and that $\|f\|_{L^{2}}=1.$ By weak convergence, we have $\|f\|_{L^{2}}\leq \liminf_{n \rightarrow \infty}\|f_n\|_{L^{2}}=1.$ Suppose $\|f\|_{L^{2}}<1.$ 
Let $\zeta \in L^2, \|\zeta\|_{L^2}=1.$ Since bounded linear operators preserve weak convergence and the gradient operator $\nabla: H^1 \rightarrow L^2$ is bounded, then $f_n \rightharpoonup f$ in $H^1$ implies $\nabla f_n \rightharpoonup \nabla f$ in $L^2.$ Hence, 
$$(\zeta, \nabla f)=\liminf_{n\rightarrow \infty}(\zeta, \nabla f_n)\leq \liminf_{n\rightarrow \infty}\|\nabla f_n\|_{L^2}\|\zeta\|_{L^2}=\liminf_{n\rightarrow \infty}\|\nabla f_n\|_{L^2}.$$ 
Maximizing over $\zeta,$ we obtain 
$$\|\nabla f\|_{L^2}\leq \liminf_{n\rightarrow\infty}\|\nabla f_n\|_{L^2}.$$
Since $\int Q^2 f_n^2dx \rightarrow \int Q^2 f^2dx$, we have 
$$(Lf,f)\leq \liminf_{n \rightarrow \infty}(Lf_n,f_n)=0.$$
Denote $g=\frac{f}{\|f\|_{L^{2}}},$ we get $(Lg,g)\leq0$ and since $\inf_{\|u\|_{L^{2}}=1, u \in S}(Lu,u)\geq 0$, then $(Lg,g)=0.$ Thus we can take $\|f\|_{L^{2}}=1$ and the minimum is attained there. 

Moreover, since $\lim_{n\rightarrow +\infty}(Lf_n,f_n)=0=(Lf,f)$ and $\lim_{n \rightarrow +\infty}\int_{\mathbb{R}^2} Q^2f_{n}^{2}=\int_{\mathbb{R}^2} Q^2f^2$ we get that $\lim_{n\rightarrow +\infty}\|f_n\|_{H^1}=\|f\|_{H^1}$ and since $f_n\rightharpoonup f$ in $H^1,$ then $f_n\rightarrow f$ in $H^1$ strongly.

Since the minimum is attained in $S$ at a function $f\not\equiv 0,$ there exists $(f,\lambda,\alpha, \beta, \gamma,\delta)$ among the critical points of the Lagrange multiplier problem 
\begin{itemize} 
\item[a)] 
\begin{equation}\label{Lagrange}
(L-\lambda)f=\alpha Q +\beta  \varphi(x)Q_{y} +\gamma  \varphi(x)\Lambda Q+\delta \varphi(x)Q_{x}, \text{with } \lambda, \alpha, \beta, \gamma, \delta \in \mathbb{R},
\end{equation} 
\item[b)] $\|f\|_{L^2}=1,$
\item[c)]  $(f, Q)=(f, \varphi(x)Q_{y})=(f,\varphi(x)\Lambda Q)=(f,\varphi(x)Q_{x})=0.$
\end{itemize}

If we take the scalar product of (\ref{Lagrange}) with $f$, together with the fact $(Lf,f)=0$ and items b), c) above, we get that $\lambda=0.$

If we take the scalar product of (\ref{Lagrange}) with $Q_{y},$ together with $(Lf,Q_{y})=(f,L(Q_{y}))=0,$ $(Q,Q_{y})=(\varphi(x)\Lambda Q, Q_{y})=(\varphi(x)Q_{x},Q_{y})=0$ as $Q_{y}$ is odd in $y$ and $\varphi(x)\Lambda Q, \varphi(x)Q_{x}$ are even in $y$.   Also, $(\varphi(x)Q_{y}, Q_{y})\neq 0,$ hence $\beta=0.$

If we take the scalar product of (\ref{Lagrange}) with $\Lambda Q,$ together with $(Lf,\Lambda Q)=(f,L(\Lambda Q))=-2(f,Q)=0, (Q,\Lambda Q)=0$. Therefore, 
\begin{equation}\label{eq1det}
0=\gamma (\varphi(x)\Lambda Q, \Lambda Q) +\delta (\varphi(x)Q_{x}, \Lambda Q).
\end{equation} 

If we take the scalar product of (\ref{Lagrange}) with $Q_{x},$ together with $(Lf,Q_{x})=(f,L(Q_{x}))=0,$ $(Q,Q_{x})=0.$ Therefore, 
\begin{equation}\label{eq2det}
0=\gamma (\varphi(x)\Lambda Q, Q_{x}) +\delta (\varphi(x)Q_{x}, \Lambda Q).
\end{equation} 
From \eqref{eq1det}, \eqref{eq2det} we get that 
\[
M^*\begin{bmatrix} \gamma \\ \delta \end{bmatrix}=\begin{bmatrix} 0 \\ 0 \end{bmatrix}
\]
Since $\det M^*\neq 0,$ we get $\gamma=\delta=0.$
Therefore, $Lf=\alpha Q$, hence $f=-\frac{\alpha}{2}\Lambda Q+\rho_1Q_{x}+\rho_{2}Q_{y}$, for some $\alpha, \rho_1,\rho_2.$ 

We continue by projecting on $\varphi(x)Q_{x},$ so 
$$0=(f,\varphi(x)Q_{x})=-\frac{\alpha}{2}(\Lambda Q,\varphi(x)Q_{x})+\rho_1(Q_{x},\varphi(x)Q_{x})+\rho_{2}(Q_{y},xQ_{x})$$
\begin{equation}\label{eq3det}
=-\frac{\alpha}{2}(\Lambda Q,xQ_{x})+\rho_1(Q_{x},\varphi(x)Q_{x}).
\end{equation}

Moreover, by projecting on $\varphi(x)\Lambda Q,$ so 
$$0=(f,\varphi(x)\Lambda Q)=-\frac{\alpha}{2}(\Lambda Q,\varphi(x)\Lambda Q)+\rho_1(Q_{x},\varphi(x)\Lambda Q)+\rho_{2}(Q_{y},\varphi(x)\Lambda Q)$$
\begin{equation}\label{eq4det}
=-\frac{\alpha}{2}(\Lambda Q,\varphi(x)\Lambda Q)+\rho_1(Q_{x},\varphi(x)\Lambda Q).
\end{equation}

From \eqref{eq3det}, \eqref{eq4det} we get that 
\[
M^*\begin{bmatrix} -\frac{\alpha}{2} \\ \rho_1 \end{bmatrix}=\begin{bmatrix} 0 \\ 0 \end{bmatrix}
\]
Since $\det M^*\neq 0,$ we get $\alpha=\rho_1=0.$

Hence $f=\rho_{2}Q_{y_2}$, then $0=(f, \varphi(x)Q_{y})=\rho_2(Q_{y},\varphi(x)Q_{y})\neq 0$, so $\rho_2=0.$ 
So $f \equiv 0$, contradiction.

Therefore, if $\|u\|_{L^2}=1$ and $u\in S,$ there exists $\delta_1>0,$ so 
\begin{equation}\label{eq:1}
(Lu,u)\geq \delta_1\|u\|_{L^2}.
\end{equation}
Also, 
\begin{equation}\label{eq:2}
(Lu,u)=\|u\|^2_{H^1}-3\int Q^2 u^2\geq \|u\|_{H^1}^2-3\|Q\|_{L^{\infty}}^2\|u\|_{L^2}^2 
\end{equation}  
 and multiplying (\ref{eq:1})  with $\frac{3\|Q\|_{L^{\infty}}^2}{\delta_1}$ and adding it  to (\ref{eq:2}), we get that there exists $\delta >0$ such that 
$$(Lu,u)\geq \delta\|u\|_{H^1}.$$

Proof of b): Take $u \in H^1(\mathbb{R}^2)$ and let $v=u+a Q +b\Lambda Q+  cQ_{x} +dQ_{y},$ hence $v \in H^1(\mathbb{R}^2).$  
We choose the coefficients in the following way: 
$$a=-\frac{(u,Q)}{\|Q\|_{L^2}}, d=-\frac{(u,\varphi(x)Q_{y})}{(Q_{y},\varphi(x)Q_{y})},$$
and, since $\det M^*\neq 0,$
\[
\begin{bmatrix} b \\ c \end{bmatrix} =(M^*)^{-1}\begin{bmatrix} \frac{(u,Q)}{\|Q\|_{L^2}}(Q,\varphi(x)Q_{x})-(u, \varphi(x)Q_{x})\\  \frac{(u,Q)}{\|Q\|_{L^2}}(Q,\varphi(x)\Lambda Q)-(u, \varphi(x)\Lambda Q)\end{bmatrix}
\]
(in particular $b,c$ are linear combinations of $\{ (u,Q), (u,\varphi(x)Q_{x}), (u,\varphi(x)\Lambda Q)\}.)$ 

By simple algebraic computations, we get $(v, Q)=(v, \varphi(x)Q_{x})=(v,\varphi(x)\Lambda Q)=(v,\varphi(x)Q_{y})=0.$ Hence, by part a), for some $\delta >0,$
\begin{equation}\label{eq:v}
(Lv,v)\geq \delta \|v\|_{L^2}.
\end{equation}

By expanding $(Lv,v)$ and using that $LQ=-2Q^3, L(\Lambda Q)=-2Q,$ 
 \begin{equation}\label{eq:L}
 \begin{split}
 (Lv,v)&=(Lu,u)+4a(u,Q^3)+b(u,Q)-4ad\|Q\|_{L^2}^2-2a^2\|Q\|_{L^4}^4
 \\& \leq (Lu,u)+\frac{80}{\delta}a^2\|Q\|_{L^6}^6 +\frac{\delta}{20}\|u\|^2_{L^2}+\frac{20}{\delta}b^2\|Q\|_{L^2}^2 +\frac{\delta}{20}\|u\|^2_{L^2}+\frac{4}{\delta}a^2\|Q\|_{L^2}^2+\frac{1}{\delta}b^2\|Q\|_{L^2}^2\\&\leq (Lu,u)+\frac{\delta}{10}\|u\|^2_{L^2}+a^2\frac{1}{\delta}(4\|Q\|_{L^2}^2+80\|Q\|_{L^6}^6)+\frac{21}{\delta}b^2\|Q\|_{L^2}^2
 \end{split} 
 \end{equation} 

Let $K_1=4 \max\{ \|Q\|_{L^2}^2, \|Q_{x}\|_{L^2}^2, \|Q_{y}\|_{L^2}^2, \|\Lambda Q\|_{L^2}^2\}.$ Using that $Q, Q_{x}, Q_{y}, \Lambda Q$ are orthogonal to each other, we have 
\begin{equation}\label{eq:3}
\begin{split} 
\int \int v^2&=\int \int (u-aQ-b\Lambda Q-cQ_{x}-d Q_{y})^2\\&=\int \int u^2 +a^2 \|Q\|_{L^2}^2+b^2\|\Lambda Q\|_{L^2}^2+c^2\|Q_{x}\|_{L^2}^2+d^2 \|Q_{y}\|_{L^2}^2\\&-2a(u, Q)-2b(u, \Lambda Q)-2c(u,Q_{x})-2d(u, Q_{y})\\
&=\int \int \frac{u^2}{5}-4a^2\|Q\|_{L^2}^2-4b^2\|\Lambda Q\|_{L^2}^2-4c^2\|Q_{x}\|_{L^2}^2-4d^2\|Q_{y}\|_{L^2}^2+\\
&+\Big(\int \int \frac{u^2}{5}+5a^2\|Q\|_{L^2}^2-2(u,Q)\Big) + \Big(\int \int \frac{u^2}{5}+5b^2\|\Lambda Q\|_{L^2}^2-2b(u, \Lambda Q)\Big)\\
&+ \Big(\int \int \frac{u^2}{5}+5c^2\|Q_{x}\|_{L^2}^2-2c(u, Q_{x})\Big)+\Big(\int \int \frac{u^2}{5}+5d^2\|Q_{y}\|_{L^2}^2-2d(u, Q_{y})\Big)\\
&\geq \int \int \frac{u^2}{5}-K_1(a^2+b^2+c^2+d^2)
\end{split}
\end{equation}

From (\ref{eq:v}), (\ref{eq:L}) and (\ref{eq:3}), we get, for some $K_2>0,$ 
\begin{equation}\label{eq:u}
(Lu,u)\geq \delta\int \int \frac{u^2}{10}-K_2(a^2+b^2+c^2+d^2).
\end{equation} 

Since $a,b,c,d $ are linear combinations of $\{(u,Q),(u,\varphi(x)Q_{x}),(u,\varphi(x)\Lambda Q), (u,\varphi(x)Q_{y})\},$ there exists $A>0$ such that 
\begin{equation}\label{eq:sum}
a^2+b^2+c^2+d^2\leq A\Big((u,Q)^2+(u, \varphi(x)\Lambda Q)^2+(u, \varphi(x)Q_{x})^2+(u,\varphi(x)Q_{y})^2\Big).
\end{equation}
Combining (\ref{eq:v}), (\ref{eq:u}) and (\ref{eq:sum}), we get that 
\begin{equation*}
\begin{split}
(Lu,u)&\geq \delta'\|u\|^2_{H^1}-\frac{1}{\delta''}\Big( (u,Q)^2+(u,\varphi(x)\Lambda Q)^2+(u, \varphi(x)Q_{x})^2+(u,\varphi(x)Q_{y})^2\Big)\\&\geq \delta_0\|u\|^2_{H^1}-\frac{1}{\delta_0}\Big( (u,Q)^2+(u,\varphi(x)\Lambda Q)^2+(u, \varphi(x)Q_{x})^2+(u,\varphi(x)Q_{y})^2\Big)
 \end{split}
 \end{equation*}
 where $\delta_0=\min(\delta', \delta'').$
\end{proof}

\section{Construction of the approximate solution \texorpdfstring{$Q_b$}{Lg}}\label{Construction of the approximate solution $Q_b$}

In this section, using $x_1,x_2$ as spatial variables, we try to find an approximation of the soliton $Q(x_1,x_2)$ such that we get a sufficient approximate self-similar equation 

$$b_s\frac{\partial Q_b}{\partial b}+b\Lambda Q_b+(\Delta Q_b-Q_b+Q_{b}^{3})_{x_1}=0.$$ 

By writing $Q_b=Q+bP_1+\ldots,$ it means $(LP_1)_{x_1}=\Lambda Q,$ therefore we need to prove we can invert the operator $\partial_{x_1}L.$ We can do that since $(\Lambda Q, Q)=0.$    
  
\begin{lemma} The fundamental solution of $-\Delta+1$ in $\mathbb{R}^2$ is $\frac{1}{2\pi}K_0(|\cdot|),$ where $K_0$ is the modified Bessel function of the second kind. It satisfies $K_0\in C^{\infty}(\mathbb{R}^{\ast}_{+})$ and we have the following properties: 
\begin{itemize}
\item[a).] For all $r>0,$ we have that $$\frac{\sqrt{\pi}e^{-r}}{\sqrt{2(r+\frac{1}{4})}}<K_0(r)<\frac{\sqrt{\pi}e^{-r}}{\sqrt{2r}},$$
 $$K_0(r) \sim_{r \rightarrow 0} -\ln(r).$$
\item[b).] For all $r>0$, we have that $K_0(r)>0.$
\item[c).] $\int_{\mathbb{R}}K_0(x)dx$ is finite.
\end{itemize}
\end{lemma}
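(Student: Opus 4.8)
The plan is to establish each of the three items by invoking the standard integral representation of the modified Bessel function of the second kind, namely $K_0(r) = \int_0^\infty e^{-r\cosh t}\,dt$, together with the equivalent representation $K_0(r) = \int_1^\infty \frac{e^{-rs}}{\sqrt{s^2-1}}\,ds$ obtained by the substitution $s = \cosh t$. That $\frac{1}{2\pi}K_0(|x|)$ is the fundamental solution of $-\Delta + 1$ on $\mathbb{R}^2$ is classical: one checks that $K_0(|x|)$ is radial, solves the modified Bessel ODE $u'' + \frac{1}{r}u' - u = 0$ away from the origin (hence $(-\Delta+1)K_0(|x|) = 0$ on $\mathbb{R}^2\setminus\{0\}$), and has the logarithmic singularity $K_0(r)\sim -\ln r$ matching that of the two-dimensional Newtonian-type kernel, so that the distributional Laplacian produces exactly a $2\pi$ multiple of the Dirac mass; smoothness on $(0,\infty)$ follows from the analyticity of the ODE coefficients there, or directly from differentiating under the integral sign in the representation above.

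For item (a), the asymptotic $K_0(r)\sim_{r\to 0} -\ln r$ follows by splitting $\int_1^\infty \frac{e^{-rs}}{\sqrt{s^2-1}}\,ds$ near $s=1$ and using that $\int_1^{2}\frac{ds}{\sqrt{s^2-1}}$ diverges logarithmically while the tail contributes an $O(1)$ term; more precisely one writes $K_0(r) = \int_1^\infty \frac{e^{-rs} - e^{-2rs}}{\sqrt{s^2-1}}\,ds + \int_1^\infty \frac{e^{-2rs}}{\sqrt{s^2-1}}\,ds$ and recognizes the scaling $r \mapsto 2r$, or simply compares with the known expansion $K_0(r) = -\ln(r/2) - \gamma_E + O(r^2\ln r)$. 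For the two-sided exponential bound, I would factor $e^{-r}$ out of the representation $K_0(r) = e^{-r}\int_0^\infty e^{-r(\cosh t - 1)}\,dt$ and estimate $\cosh t - 1$ from above and below by multiples of $t^2$ on a suitable range; alternatively, use $K_0(r) = \sqrt{\frac{\pi}{2r}}\,e^{-r}\int_0^\infty e^{-u}\big(1 + \tfrac{u}{2r}\big)^{-1/2} u^{-1/2}\,du \cdot \frac{1}{\Gamma(1/2)}$ coming from the substitution $s = 1 + u/r$ in $\int_1^\infty \frac{e^{-rs}}{\sqrt{s^2-1}}\,ds = e^{-r}\int_0^\infty \frac{e^{-u}}{\sqrt{u(u+2r)}}\,du$, whence $\frac{1}{\sqrt{u(u+2r)}} < \frac{1}{\sqrt{2ru}}$ gives the upper bound $K_0(r) < \sqrt{\frac{\pi}{2r}}e^{-r}$ after evaluating $\int_0^\infty e^{-u}u^{-1/2}\,du = \sqrt\pi$, and the sharper lower bound comes from $u + 2r \le 2(u + \tfrac14)\cdot\frac{?}{}$—here one checks $u(u+2r) < 2(r+\tfrac14)u$ fails in general, so instead bound $\sqrt{u(u+2r)}$ using $u+2r \le (2r + \tfrac12)(1 + \tfrac{u}{?})$; the cleanest route is the known inequality $K_0(r) > \sqrt{\frac{\pi}{2(r + 1/4)}}e^{-r}$ which follows from $\frac{1}{\sqrt{u(u+2r)}} > \frac{1}{\sqrt{(u + 1/2)(2r + 1/2)}}$ combined with $\int_0^\infty \frac{e^{-u}}{\sqrt{u+1/2}}\,du$ and a further elementary estimate, or from the continued-fraction / Turán-type inequalities for $K_\nu$.

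Item (b), positivity $K_0(r) > 0$ for $r > 0$, is immediate from any of the integral representations since the integrand is strictly positive. Item (c), finiteness of $\int_{\mathbb{R}} K_0(|x|)\,dx$, follows by splitting the integral at $r = 1$: near the origin the integrand behaves like $-\ln|x|$, which is integrable on a bounded interval, and for $|x| \ge 1$ the upper bound from (a) gives $K_0(|x|) < \sqrt{\pi/(2|x|)}\,e^{-|x|}$, whose integral over $|x|\ge 1$ converges by the exponential decay. I expect the main obstacle to be the lower bound in item (a): pinning down the constant $\frac14$ inside the square root requires a slightly delicate elementary estimate of $\int_0^\infty \frac{e^{-u}}{\sqrt{u(u+2r)}}\,du$ from below that is uniform in $r$, and the most transparent way is probably to cite the classical sharp bounds for $K_0$ (e.g. from Abramowitz–Stegun or the literature on Bessel function inequalities) rather than re-derive them, since the precise value of that constant plays no essential role later—only the exponential rate $e^{-r}$ and the polynomial correction matter for the subsequent construction of $P_1$.
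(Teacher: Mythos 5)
Your proposal is essentially correct, but it is worth noting that the paper does not prove this lemma at all: it simply states the properties and refers to \cite{AbramowitzStegun} and \cite{ChuYang}. You, by contrast, actually sketch proofs, and most of them are sound and standard: the identification of $\frac{1}{2\pi}K_0(|\cdot|)$ as the fundamental solution via the modified Bessel ODE plus the logarithmic singularity, the substitution $s=1+u/r$ giving $K_0(r)=e^{-r}\int_0^\infty \frac{e^{-u}}{\sqrt{u(u+2r)}}\,du$, the upper bound from $u+2r>2r$ and $\int_0^\infty e^{-u}u^{-1/2}\,du=\sqrt{\pi}$, positivity from the positive integrand, and integrability from the $-\ln r$ behaviour near $0$ together with the exponential upper bound at infinity. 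The one genuinely delicate point is the lower bound with the constant $\tfrac14$: the pointwise comparison $\frac{1}{\sqrt{u(u+2r)}}>\frac{1}{\sqrt{(u+1/2)(2r+1/2)}}$ you float is false for large $u$ (it amounts to $u^2<u/2+r+1/4$), as you yourself half-notice, and the correct derivation of the constant requires a less direct argument (e.g.\ a Cauchy--Schwarz/Tur\'an-type inequality or the monotonicity results in the cited Chu--Yang reference). Your fallback — cite the classical sharp bounds, since only the rate $e^{-r}$ and the polynomial correction are used downstream — is exactly what the paper does, so nothing is lost; but as written your text should either carry out that lower bound honestly or replace the broken inequality with the citation rather than leaving the placeholder question marks.
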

All these results can be found in \cite{AbramowitzStegun} and \cite{ChuYang}.  

\begin{lemma}
Suppose that $f \in H^2(\mathbb{R}^2)$ satisfies, for $K>0$ and $k>0,$ 
$$\forall x \in \mathbb{R}^2, |(-\Delta f+f)(x)|\leq K(1+|x|)^ke^{-|x|}.$$
Then, there exists $C>0$ independent on $f,x$ such that 
$$\forall x \in \mathbb{R}^2, |f(x)|\leq CK(1+|x|)^{k+\frac{3}{2}}e^{-|x|}.$$
\end{lemma}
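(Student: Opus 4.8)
The plan is to convert the pointwise decay estimate on $-\Delta f + f$ into a pointwise decay estimate on $f$ by representing $f$ via the fundamental solution $\frac{1}{2\pi}K_0(|\cdot|)$ from the previous lemma. Writing $h = -\Delta f + f$, we have $f = \frac{1}{2\pi}K_0(|\cdot|) * h$, and since $|h(x)| \le K(1+|x|)^k e^{-|x|}$ we need to estimate the convolution integral
\[
|f(x)| \le \frac{K}{2\pi}\int_{\mathbb{R}^2} K_0(|x-y|)(1+|y|)^k e^{-|y|}\, dy.
\]
The upper bound $K_0(r) < \sqrt{\pi}e^{-r}/\sqrt{2r}$ from Lemma 3.3(a) near infinity, together with the logarithmic bound near the origin, controls the kernel. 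The task is then the standard estimate of a convolution of two exponentially-decaying (with polynomial correction) functions in $\mathbb{R}^2$: the worst direction is along the ray through $x$, and one expects to lose a factor that is polynomial in $|x|$ because both the kernel and the source decay like $e^{-r}$ at the same rate, so their convolution picks up an extra power of $|x|$. The half-integer gain $\frac{3}{2}$ over the exponent $k$ reflects the $r^{-1/2}$ behaviour of $K_0$ combined with the two-dimensional volume element.

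First I would split $\mathbb{R}^2$ into the region $\{|y| \le |x|/2\}$, the region $\{|x-y| \le |x|/2\}$, and the far region where both $|y|$ and $|x-y|$ are comparable to $|x|$. On $\{|y| \le |x|/2\}$ one has $|x-y| \ge |x|/2$, so $K_0(|x-y|) \lesssim |x|^{-1/2} e^{-|x-y|} \le |x|^{-1/2} e^{-|x|} e^{|y|}$; since the source carries $e^{-|y|}$, the exponentials cancel and we integrate $(1+|y|)^k$ over a ball of radius $|x|/2$, yielding something like $|x|^{k+2} \cdot |x|^{-1/2} e^{-|x|}$, which is of the desired form $(1+|x|)^{k+3/2}e^{-|x|}$ (in fact a little better, since the triangle-inequality loss $|y| \le |x|-|x-y|$ near the relevant region is not saturated away from the ray). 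On $\{|x-y| \le |x|/2\}$ one has $|y| \ge |x|/2$, so $(1+|y|)^k e^{-|y|} \lesssim (1+|x|)^k e^{-|x|/2} \cdot e^{-|y|/2}$ roughly, and $\int_{|z|\le |x|/2} K_0(|z|)\,dz$ is bounded uniformly (using the integrability of $K_0$ and the mild log singularity at $0$), again giving an acceptable bound. The far region is handled by brute force with both exponentials.

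The main obstacle, and the place where care is genuinely needed, is getting the precise polynomial exponent right rather than an overly lossy one: a naïve use of the triangle inequality $e^{-|x-y|}e^{-|y|} \le e^{-|x|}$ on the full space would produce $\int_{\mathbb{R}^2} K_0(|x-y|)(1+|y|)^k\,dy \, e^{-|x|}$, but this integral diverges, so one must exploit the decay transverse to the segment joining $0$ and $x$ — equivalently, use that $|x-y| + |y| - |x|$ is bounded below by a positive multiple of $\mathrm{dist}(y, [0,x])^2/|x|$ (or simply of $|y|$ once $y$ is away from that segment) to absorb the polynomial growth. This is a routine but slightly delicate Laplace-type asymptotic computation; once it is carried out one reads off the gain of $\frac{3}{2}$ in the exponent from $K_0(r) \sim r^{-1/2}e^{-r}$ and the one-dimensional Gaussian-type integral in the transverse variable. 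The constant $C$ is manifestly independent of $f$ and $x$ since it depends only on the kernel $K_0$ and on $k$. I would also note at the start that $f \in H^2$ guarantees the convolution representation is legitimate (so that no distributional subtleties arise), and that $H^2 \hookrightarrow L^\infty$ in $\mathbb{R}^2$ ensures $f$ is continuous so the pointwise statement makes sense.
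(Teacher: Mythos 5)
Your overall strategy — represent $f=\frac{1}{2\pi}K_0*(-\Delta f+f)$ and estimate the convolution by splitting $\mathbb{R}^2$ into regions, reading off the $\tfrac32$ from $K_0(r)\lesssim r^{-1/2}e^{-r}$ together with a two‑dimensional volume factor — is exactly the paper's, and your treatment of the region $\{|y|\le |x|/2\}$ is correct and gives the stated exponent. However, two of your three regions do not close as written. On $\{|x-y|\le |x|/2\}$ you bound the source by $(1+|x|)^k e^{-|x|/2}e^{-|y|/2}$ and then integrate $K_0$ over a ball; this yields only $(1+|x|)^k e^{-|x|/2}$, and $e^{-|x|/2}$ is \emph{not} $\lesssim (1+|x|)^{k+3/2}e^{-|x|}$ for large $|x|$, so the step fails. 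The repair is the same device you used in the first region, applied symmetrically: write $e^{-|y|}\le e^{-|x|}e^{|x-y|}$ and absorb the gained exponential into the kernel via $K_0(|x-y|)e^{|x-y|}\lesssim |x-y|^{-1/2}$, so that $\int_{|z|\le |x|/2}|z|^{-1/2}\,dz\lesssim |x|^{3/2}$ produces the correct bound. Similarly, the far region $\{|y|>|x|/2,\ |x-y|>|x|/2\}$ cannot be handled "by brute force with both exponentials": any splitting $|x-y|+|y|\ge \theta|x|+(1-\theta)|y|$ with $\theta<1$ loses the full factor $e^{-|x|}$, while $\theta=1$ leaves the non‑integrable $(1+|y|)^k$. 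You do name the correct tool (the quadratic lower bound on $|x-y|+|y|-|x|$ in terms of the distance to the segment $[0,x]$), but you present it as commentary rather than as the load‑bearing step for this region, and it must actually be carried out there.

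It is worth noting that the paper sidesteps both issues with a cleaner device: it sets $F(r)=(1+r)^k e^{-r}$ and uses its monotonicity to find a threshold $\tilde x<|x|$ with $F(|y|)\le 2F(|x|)$ for all $|y|\ge \tilde x$; on that entire set (which contains both your "near $x$" and "far" regions) one simply bounds $\int K_0(|x-y|)F(|y|)\,dy\le 2F(|x|)\int K_0$, with no loss and no transverse analysis, and the $\tfrac32$ is incurred only on the inner ball $|y|\le\tilde x$ via the triangle inequality and $\int_{B(0,\tilde x)}|x-y|^{-1/2}\,dy\lesssim |x|^{3/2}$. If you adopt that comparison for $|y|\gtrsim|x|$, your argument closes without the Laplace‑type computation.
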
 

\begin{proof} 
Let $(-\Delta f+f)(x)=g(x)$ and knowing that the fundamental solution $-\Delta+I$ in $\mathbb{R}^2$ is $\frac{1}{2\pi}K_0(|\cdot|)$ is the modified Bessel function of the second kind with the properties described above. Since $f \in H^2(\mathbb{R}^2)$, we have 
$$f=\frac{1}{2\pi}K_0\ast g$$
and using $K_0\geq 0,$ for $x \in \mathbb{R}^2,$
$$|f(x)|\leq \frac{1}{2\pi}\norm{\frac{e^{|x|}g(x)}{(1+|x|)^k}}_{L^{\infty}(\mathbb{R}^2)}\int_{\mathbb{R}^2} K_0(|x-y|)e^{-|y|}(1+|y|)^kdy$$ 
and we denote $h(x)=\int_{\mathbb{R}^2} K_0(|x-y|)e^{-|y|}(1+|y|)^kdy.$ We can show by change of variables that $h(x)=h(-x)>0$, hence we need to bound $g$ only on $[0,\infty).$ Let  $F(r)=e^{-r}(1+r)^k$ on $[0,\infty)$. 

For $0<k<1$, we have that $F'(r)<0$ on $(0,\infty)$, therefore $F(r)$ is strictly decreasing on $[0,\infty).$ Since $F(0)=1$ and $F(r)\rightarrow 0$ such that $r \rightarrow \infty$, there exists $\tilde{r} \in (0,\infty)$ with $F(\tilde{r}_0)=\frac{1}{2}.$ 

For $1\leq k$, we have that at $k-1$ is a local maximum, and $F$ is strictly increasing on $[0,r-1]$ and $F$ is strictly decreasing on $[k-1,\infty).$ Since $F(r)\rightarrow 0$ as $r \rightarrow \infty,$ there exists $\tilde{r}_1 \in [k-1,\infty)$ with $F(\tilde{r}_1)=\frac{F(k-1)}{2}.$

\begin{itemize} 
\item[i).] Case $0<k<1$ and $0\leq |x| \leq \tilde{r}_0$, we have $F(|x|)\geq F(\tilde{r}_0)=\frac{1}{2}$, then 
$$h(x)=\int_{\mathbb{R}^2} K_0(|x-y|)F(|y|)dy\leq \int_{\mathbb{R}^2} K_0(|x-y|)dy \lesssim F(|x|)$$
where we used that $F(|y|)\leq 1.$

\item[ii).] Case $1\leq k$ and $0\leq |x| \leq \tilde{r}_1$,  we have $F(|x|)\geq \min\{F(0), F(\tilde{r}_1)\}=\alpha$, then 
$$h(x)=\int_{\mathbb{R}^2} K_0(|x-y|)F(|y|)dy\leq \int_{\mathbb{R}^2} K_0(|x-y|)dy \frac{F(k-1)}{\alpha}\alpha\leq C(k)F(|x|)$$
where we used that $F(|y|)\leq F(k-1).$

\item[iii).] If $0<k<1$ and $|x|\geq \tilde{r}_0,$ then $\tilde{x}_0=\inf\{t\in [0,|x|]: F(t)\leq 2F(|x|)\}$ exists as $2F(|x|)\leq 2F(\tilde{r}_0)=F(0)=1$ and that $\tilde{x}_0<|x|.$ 

If $1\leq k$ and $|x|\geq \tilde{r}_1,$ then $\tilde{x}_1=\inf\{t\in [r-1,|x|]: F(t)\leq 2F(|x|)\}$ exists as $2F(|x|)\leq 2F(\tilde{r}_1)=F(r-1)$ and that $\tilde{x}_1<|x|.$ 

In both cases, if $\tilde{x}_i \leq |y| \leq |x|,$ then $F(|y|)\leq 2F(|x|)$, for $i=0,1.$

\begin{itemize}
\item[a).] Let $|y|\geq |x|,$ then $F(|y|)\leq F(|x|)$ so 
\begin{equation*}
\begin{split}
\int_{|y|\geq |x|}K_0(|x-y|)e^{-|y|}(1+|y|)^kdy&\leq  \int K_0(|x-y|)dy F(|x|)\lesssim F(|x|)
\end{split}
\end{equation*} 
\item[b).] Let $\tilde{x}_i\leq |y|\leq |x| $, then $F(|y|)\leq 2F(|x|)$, then 
\begin{equation*}
\begin{split}
\int_{\tilde{x}_i\leq |y|\leq |x|}K_0(|x-y|)e^{-|y|}(1+|y|)^kdy&\leq  \int_{\tilde{x}_i\leq y\leq x}K_0(|x-y|)dy 2F(|x|)\lesssim F(|x|)
\end{split}
\end{equation*} 
\item[c).] Let $0 \leq |y|\leq \tilde{x}_i(<|x|)$, we have that $(1+|y|)^k\leq (1+|x|)^k$, then  
\begin{equation*}
\begin{split}
\int_{0\leq |y|\leq \tilde{x}_i}K_0(|x-y|)e^{-|y|}(1+|y|)^kdy&\leq  \int_{B(0,\tilde{x}_i)}\frac{1}{|x-y|^{\frac{1}{2}}}e^{-|x-y|}e^{-|y|}(1+|y|)^kdy\\&\leq \int_{B(0,\tilde{x}_i)}\frac{1}{|x-y|^{\frac{1}{2}}}e^{-|x|}(1+|x|)^kdy\\&\lesssim \int_{B(0,\tilde{x}_i)}\frac{1}{|x-y|^{\frac{1}{2}}} dyF(|x|) \\&\leq |x|^{\frac{3}{2}}F(|x|)\lesssim e^{-|x|}(1+|x|)^{k+\frac{3}{2}}
\end{split}
\end{equation*}
\end{itemize}
\end{itemize}
Therefore, we get in all cases that $h(x)\leq Ce^{-|x|}(1+|x|)^{k+\frac{3}{2}}$, hence $|f(x)|\leq CKe^{-|x|}(1+x)^{k+\frac{3}{2}}.$

\end{proof}

Denote by $$\mathcal{Y}=\{f\in C^{\infty}(\mathbb{R}^2): \forall i,j\geq 0, \exists r_{i,j}, C_{i,j}>0 \text{ such that } |\partial^{i}_{x_1}\partial^{j}_{x_2}f(\vec{x})|\leq C_{i,j}(1+|\vec{x}|)^{r_{i,j}}e^{-|\vec{x}|}\}.$$

\begin{lemma} 
Suppose $f \in H^2(\mathbb{R}^2)$ such that $Lf \in \mathcal{Y}$. Then $f \in C^{\infty}(\mathbb{R}^2)$ and there exists $K_n, r_n>0$ such that $$|\Delta^n f(\vec{x})|\leq K_n(1+|\vec{x}|)^{r_n}e^{-|\vec{x}|}$$ for all $n\geq 0.$
\begin{proof} 
Firstly, since $-\Delta f=(Lf-f+3Q^2f)$ and that $Lf, Q \in \mathcal{Y}$, then by induction on $j$, if $f \in C^j(\mathbb{R}^2),$ then $\Delta f \in C^j(\mathbb{R}^2)$ which implies that $ f \in C^{j+2}(\mathbb{R}^2).$

For the second part, we will proceed by induction on $n.$

Base case: $(n=0)$ 
Since $\|f\|_{L^{\infty}(\mathbb{R}^2)}\leq \|f\|_{H^2(\mathbb{R}^2)}$, $Q \in \mathcal{Y}$, therefore there exists $K,r>0$ such that 
$$|(-\Delta f + f)(\vec{x})|\leq |Lf+3Q^2f|\leq |Lf|+3Q^2\|f\|_{L^{\infty}}\leq K(1+|\vec{x}|)^re^{-|\vec{x}|}.$$
By the previous lemma, we get that, $|f(x)|\leq K(1+|x|)^re^{-\frac{|x|}{2}}.$ 

Induction step: 
Suppose that for $n$, we have that $|\Delta^n f(\vec{x})|\lesssim (1+|\vec{x}|)^re^{-|\vec{x}|}.$ By the Kolmogorov-Landau inequality in two dimensions (see \cite{Ditzian}), we get that, if $i+j\leq 2n-1$
$$\|\partial_{x_1}^{i}\partial_{x_2}^{j} f\|_{L^{\infty}}\lesssim \|f\|_{L^{\infty}}\|\Delta^n f\|_{L^{\infty}}\lesssim (1+|\vec{x}|)^re^{-|\vec{x}|}.$$

Using this, together with $\Delta^n Lf, Q^{2} \in \mathcal{Y}$, we get 
\begin{equation*}
\begin{split} 
|\Delta^{n+1}f(\vec{x})|&=|-\Delta^n(Lf)+\Delta^nf-3\Delta^n(Q^{2}f)|\leq |-\Delta^n(Lf)|+|\Delta^nf|+3|\Delta^n(Q^{2}f)|
\\&\leq |-\Delta^n(Lf)|+|\Delta^n f|+3Q^{2}|\Delta^n f|+\sum\limits_{\substack{i,j,k,l \\ i+j\leq 2n-1}}a_{i,j,k,l}|\partial_{x_1}^{i}\partial_{x_2}^{j}f\partial_{x_1}^{k}\partial_{x_2}^{l}(Q^{2})| 
\\& \leq K(1+|\vec{x}|)^re^{-|\vec{x}|}
\end{split} 
\end{equation*} 
\end{proof}

\end{lemma}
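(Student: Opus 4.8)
The plan is to prove both assertions by induction, bootstrapping from the already-established property of the fundamental solution $\frac{1}{2\pi}K_0(|\cdot|)$ of $-\Delta+1$ and the previous pointwise lemma (the one converting a bound $|(-\Delta f+f)(x)|\le K(1+|x|)^ke^{-|x|}$ into $|f(x)|\le CK(1+|x|)^{k+3/2}e^{-|x|}$), but now with exponential weight $e^{-|x|/2}$ instead of $e^{-|x|}$. The two claims are logically separate: the regularity claim $f\in C^\infty$ is a standard elliptic bootstrap using the algebra of $\mathcal{Y}$ and $Q\in\mathcal{Y}$, and the decay claim is a quantitative induction on $n$. I would present the regularity part first, exactly as sketched: write $-\Delta f = Lf - f + 3Q^2 f$; since $Lf\in\mathcal{Y}\subset C^\infty$, $Q\in\mathcal{Y}\subset C^\infty$, and $f\in H^2\subset C^0$ (by Sobolev embedding in $\mathbb{R}^2$, after possibly using that $Lf$ smooth forces extra integrability — more carefully, $f\in H^2$ and $\Delta f = -(Lf-f+3Q^2f)\in H^2$ gives $f\in H^4$, and iterating $f\in H^{2n}$ for all $n$, hence $f\in C^\infty$ by Sobolev). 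The cleanest phrasing is the induction-on-$j$ one already written: if $f\in C^j$ then the right-hand side $Lf-f+3Q^2f\in C^j$, so $\Delta f\in C^j$, so $f\in C^{j+2}$.

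For the decay estimate I would run induction on $n$. The base case $n=0$ is an immediate application of the preceding pointwise lemma: $\|f\|_{L^\infty}\le\|f\|_{H^2}<\infty$, so $|(-\Delta f+f)(x)| = |Lf - (-3Q^2 f+\text{err})|\le |Lf| + 3Q^2\|f\|_{L^\infty}\le K(1+|x|)^re^{-|x|/2}$ for suitable $K,r$ (using $Lf\in\mathcal{Y}$ and $Q^2$ decaying like $e^{-|x|}\le e^{-|x|/2}$); the lemma then yields $|f(x)|\le K'(1+|x|)^{r+3/2}e^{-|x|/2}$. For the inductive step, assuming $|\Delta^n f(x)|\lesssim(1+|x|)^{r_n}e^{-|x|/2}$, I would first upgrade this to pointwise control of all lower-order derivatives $\partial^i_{x_1}\partial^j_{x_2}f$ with $i+j\le 2n-1$ via a Kolmogorov–Landau / Gagliardo–Nirenberg interpolation inequality in two dimensions (citing \cite{Ditzian}), which interpolates between $\|f\|_{L^\infty}$ and $\|\Delta^n f\|_{L^\infty}$ locally — this is where one gets the pointwise decay of the intermediate derivatives. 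Then write $\Delta^{n+1}f = -\Delta^n(Lf) + \Delta^n f - 3\Delta^n(Q^2 f)$, expand $\Delta^n(Q^2 f)$ by the Leibniz rule into a sum $\sum a_{ijkl}\,\partial^i_{x_1}\partial^j_{x_2}f\cdot\partial^k_{x_1}\partial^l_{x_2}(Q^2)$, and bound each term: $\Delta^n(Lf)\in\mathcal{Y}$ since $Lf\in\mathcal{Y}$ and $\mathcal{Y}$ is closed under $\Delta$; $\Delta^n f$ is controlled by hypothesis; and in each Leibniz term the factor $\partial^k\partial^l(Q^2)$ decays like $e^{-|x|}$, dominating, so the products decay at least like $(1+|x|)^{r}e^{-|x|/2}$. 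Collecting exponents gives $|\Delta^{n+1}f(x)|\le K_{n+1}(1+|x|)^{r_{n+1}}e^{-|x|/2}$, closing the induction.

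I would then note that combining the pointwise bounds on $\Delta^n f$ for all $n$ with the two-dimensional Kolmogorov–Landau inequality (as used in the inductive step) gives the analogous decay for every mixed derivative $\partial^i_{x_1}\partial^j_{x_2}f$, so in fact $f\in\mathcal{Y}$ — a remark worth stating explicitly since that is how the lemma will be used downstream when solving $(LP_1)_{x_1}=\Lambda Q$ and the higher profile equations.

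The main obstacle, and the one point requiring care rather than routine computation, is the interpolation step: one needs a version of the Kolmogorov–Landau/Gagliardo–Nirenberg inequality that is genuinely \emph{pointwise} (or at least local-uniform), producing decay of the intermediate derivatives $\partial^i_{x_1}\partial^j_{x_2}f$ from the decay of $f$ and $\Delta^n f$, not merely a global norm bound. The honest way to do this is to apply the interpolation inequality on unit balls $B(x,1)$ centered at each point $x$, track that both $\sup_{B(x,1)}|f|$ and $\sup_{B(x,1)}|\Delta^n f|$ inherit the weight $e^{-|x|/2}$ up to a harmless constant (since $e^{-|y|/2}\le e^{1/2}e^{-|x|/2}$ for $|y-x|\le1$), and conclude a pointwise-in-$x$ bound on the intermediate derivatives. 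Once this localization is set up, the remaining exponent bookkeeping in the Leibniz expansion is mechanical.
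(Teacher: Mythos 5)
Your proposal follows essentially the same route as the paper: the same elliptic bootstrap for $C^\infty$ regularity, the same base case via the preceding pointwise lemma, and the same inductive step combining the Kolmogorov--Landau interpolation with a Leibniz expansion of $\Delta^n(Q^2 f)$. Your extra care about localizing the interpolation inequality to unit balls so that the intermediate derivatives genuinely inherit the pointwise weight $e^{-|x|/2}$ is a refinement the paper itself glosses over, and is worth keeping.
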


\begin{lemma} 
Suppose that $f\in H^2(\mathbb{R}^2)$ such that $Lf \in \mathcal{Y},$ then $f \in \mathcal{Y}.$
\end{lemma}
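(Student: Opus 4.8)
The plan is to combine the two preceding lemmas so that the gain of $3/2$ in the polynomial weight per application is iterated to kill all polynomial growth, while the derivative bounds come from the smoothing lemma just proved. First I would observe that the previous lemma already gives $f\in C^\infty(\mathbb{R}^2)$ and pointwise bounds $|\Delta^n f(x)|\le K_n(1+|x|)^{r_n}e^{-|x|/2}$ for all $n\ge 0$; by the Kolmogorov--Landau (Gagliardo--Nirenberg interpolation) inequality in two dimensions, these control every mixed derivative $\partial_{x_1}^i\partial_{x_2}^j f$ pointwise by $(1+|x|)^{r}e^{-|x|/2}$. Hence the only thing left to upgrade is the polynomial power: I must show that in fact, for every multi-index, $|\partial_{x_1}^i\partial_{x_2}^j f(x)|\le C_{i,j}(1+|x|)^{\rho_{i,j}}e^{-|x|/2}$ can be replaced by the \emph{same} exponential rate with an exponent that is irrelevant — which is exactly the definition of $\mathcal Y$ with rate $e^{-|x|/2}$. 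Wait: $\mathcal Y$ already allows \emph{arbitrary} polynomial prefactors $(1+|x|)^{r_{i,j}}$, so in fact the conclusion $f\in\mathcal Y$ is almost immediate from the previous lemma — the content is purely that $f$ is smooth with derivatives bounded by $(1+|x|)^{r_{i,j}}e^{-|x|/2}$, and that is precisely what the previous lemma delivers once combined with Kolmogorov--Landau.

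So the proof I would write is short. First, invoke the previous lemma to get $f\in C^\infty$ and $|\Delta^n f(x)|\le K_n(1+|x|)^{r_n}e^{-|x|/2}$ for all $n$. Second, fix $i,j\ge 0$ and choose $n$ with $2n-1\ge i+j$; apply the two-dimensional Kolmogorov--Landau inequality (as already cited, \cite{Ditzian}) to interpolate: $\|\partial_{x_1}^i\partial_{x_2}^j f\|_{L^\infty(B(x,1))}\lesssim \|f\|_{L^\infty(B(x,1))}^{1-\theta}\|\Delta^n f\|_{L^\infty(B(x,1))}^{\theta}$ for a suitable $\theta\in(0,1)$, or more simply use the global interpolation on all of $\mathbb{R}^2$ localized with a cutoff. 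Using the base-case bound $|f(x)|\le K(1+|x|)^{r}e^{-|x|/2}$ together with the $\Delta^n f$ bound, the product is again of the form $(1+|x|)^{r_{i,j}}e^{-|x|/2}$ (the product of two such bounds is of the same type since $e^{-|x|/2}\cdot e^{-|x|/2}=e^{-|x|}\le e^{-|x|/2}$ up to adjusting constants, and polynomial factors just add). This sets $r_{i,j}$ and verifies the membership $f\in\mathcal Y$.

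The one point requiring a little care is the localization in the interpolation inequality: Kolmogorov--Landau on $\mathbb{R}^2$ in its clean form bounds $\|D^\alpha f\|_{L^\infty(\mathbb{R}^2)}$ by powers of $\|f\|_{L^\infty}$ and $\|\Delta^n f\|_{L^\infty}$, which gives a uniform bound but not the spatial decay. To get the \emph{pointwise} weighted estimate I would apply the inequality to $f$ restricted to the unit ball $B(x_0,1)$ (all relevant quantities on that ball are comparable to their values at $x_0$ up to a fixed constant because $e^{-|x|/2}$ and $(1+|x|)^{r}$ vary by a bounded factor over a unit ball), so the local interpolation constant is uniform in $x_0$ and the weight is read off at the center. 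This is the only mild obstacle; everything else is a direct appeal to the previous two lemmas. I would then conclude that every derivative of $f$ has the required bound, hence $f\in\mathcal Y$.
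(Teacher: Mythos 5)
Your proposal is correct and follows essentially the same route as the paper: invoke the preceding lemma for the bounds on $\Delta^n f$ and then interpolate with the two-dimensional Kolmogorov--Landau inequality to control all mixed derivatives by $(1+|x|)^{r_{i,j}}e^{-|x|/2}$. Your extra care about localizing the interpolation on unit balls to preserve the pointwise weight is a legitimate refinement of a step the paper states somewhat loosely (it mixes sup-norms with pointwise weighted bounds), but it does not change the argument.
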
 
\begin{proof} 
By the previous lemma, we get that  $$|\Delta^n f(\vec{x})|\leq K_n(1+|\vec{x}|)^{r_n}e^{-|\vec{x}|}$$ for all $n\geq 0.$
By the Kolmogorov-Landau inequality in two dimensions as in \cite{Ditzian}, we get that, if $i,j, \geq 0$, 
$$\|\partial_{x_1}^{i}\partial_{x_2}^{j} f\|_{L^{\infty}}\lesssim \|f\|_{L^{\infty}}\|\Delta^{i+j} f\|_{L^{\infty}}\lesssim K_{i+j}(1+|\vec{x}|)^{r_{i+j}}e^{-|\vec{x}|}.$$

\end{proof} 

\begin{lemma}
For any function $h \in L^2(\mathbb{R}^2)$ orthogonal to $\nabla Q$ for the $L^2$ scalar product, there exists a unique function $f \in H^2(\mathbb{R}^2)$ orthogonal to $\nabla Q$ such that $Lf=h.$ Moreover, if $h$ is even (respectively, odd), then $f$ is even (respectively, odd). 
\end{lemma}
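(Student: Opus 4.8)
The plan is to show this via the spectral theory of the self-adjoint operator $L = -\Delta + 1 - 3Q^2$ acting on $L^2(\mathbb{R}^2)$, using the restriction to the orthogonal complement of $\ker L$. Recall from the lemmas already established that $L$ is self-adjoint with $\sigma_{\mathrm{ess}}(L) = [\lambda_{\mathrm{ess}}, +\infty)$ for some $\lambda_{\mathrm{ess}} > 0$, that $\ker L = \mathrm{span}\{Q_{x_1}, Q_{x_2}\}$, and that $L$ has exactly one negative eigenvalue $-\lambda_0$ with radial eigenfunction $\chi_0$. Set $H = (\ker L)^\perp = \{\nabla Q\}^\perp$ in $L^2$. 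First I would observe that $H$ is invariant under $L$ (since $L$ is self-adjoint and $\ker L$ is $L$-invariant) and that $0$ is not in the spectrum of $L$ restricted to $H$: indeed $0$ is an isolated point of $\sigma(L)$ (it lies below $\lambda_{\mathrm{ess}}$ and the discrete spectrum below $\lambda_{\mathrm{ess}}$ consists of finitely many eigenvalues of finite multiplicity), and removing the full eigenspace $\ker L$ removes $0$ from the spectrum entirely. Hence $L|_H : H \cap H^2 \to H$ is a bijection with bounded inverse; given $h \in H$ we define $f = (L|_H)^{-1} h \in H^2 \cap H$, which is the unique element of $H \cap H^2$ with $Lf = h$. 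Uniqueness within $H^2$ (not just within $H$) follows because any two solutions differ by an element of $\ker L \subset H$, which forces them to coincide once both are required orthogonal to $\nabla Q$. The $H^2$ regularity is automatic: $Lf = h \in L^2$ gives $\Delta f = f - 3Q^2 f - h \in L^2$, so $f \in H^2$ by elliptic regularity.

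For the parity statement, I would exploit the fact that the reflections $R_1:(x_1,x_2)\mapsto(-x_1,x_2)$ and $R_2:(x_1,x_2)\mapsto(x_1,-x_2)$ both commute with $L$, because $Q$ is radial, hence even in each variable separately, so $Q^2\circ R_i = Q^2$ and $\Delta$ commutes with $R_i$. Therefore if $h$ is even, then $f\circ R_i$ also solves $L(f\circ R_i) = h\circ R_i = h$; moreover $f \circ R_i$ is again orthogonal to $\nabla Q$ — one checks $(f\circ R_i, Q_{x_j}) = \pm(f, Q_{x_j}) = 0$ using that $Q_{x_1}$ is odd in $x_1$ and even in $x_2$, and symmetrically for $Q_{x_2}$. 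By the uniqueness just proved, $f \circ R_i = f$ for $i=1,2$, i.e. $f$ is even. The odd case is handled the same way: if $h$ is odd (meaning $h\circ R_i = -h$ for $i=1,2$, which is the relevant notion here since $\Lambda Q$-type quantities and their antiderivatives carry definite parity in each variable), then $f\circ R_i$ solves $L(f\circ R_i) = -h$ and lies in $\{\nabla Q\}^\perp$, so $f\circ R_i = -f$ by uniqueness.

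The one point requiring a little care — and the main obstacle, such as it is — is justifying that $0$ is genuinely isolated in $\sigma(L)$ and that the inverse on $H$ is bounded, i.e. that there is a spectral gap $(-\delta,\delta)\setminus\{0\}$ disjoint from $\sigma(L)$. This is where the structure $\sigma_{\mathrm{ess}}(L) = [\lambda_{\mathrm{ess}},+\infty)$ with $\lambda_{\mathrm{ess}}>0$ is essential: below $\lambda_{\mathrm{ess}}$ the spectrum is purely discrete, so the eigenvalue $0$ has a neighborhood meeting $\sigma(L)$ only in $\{0\}$, and after projecting off $\ker L$ the functional calculus gives $\|(L|_H)^{-1}\| \le \min(\lambda_0^{-1}, \mathrm{dist}(0,\sigma(L)\setminus\{0\})^{-1}) < \infty$. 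Alternatively, one can avoid abstract spectral theory and argue by a direct Fredholm/variational construction: solvability of $Lf=h$ for $h\perp\ker L$ follows from the Fredholm alternative for the compact perturbation $3Q^2$ of $-\Delta+1$, and the parity descent goes through verbatim. I would present the spectral argument as the cleaner route.
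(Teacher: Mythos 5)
Your proof is correct and complete. The paper itself offers no argument for this lemma beyond the single sentence ``This is a consequence of the Lax--Milgram theorem,'' so the comparison is necessarily between your fully worked-out spectral route and the variational/Fredholm route the authors presumably have in mind. Your approach reads the invertibility of $L$ on $(\ker L)^{\perp}$ directly off the spectral picture: $0$ is an isolated eigenvalue because it sits below $\lambda_{\mathrm{ess}}>0$, the spectral projection onto $\{0\}$ is exactly the projection onto $\ker L=\mathrm{span}\{Q_{x_1},Q_{x_2}\}$, and removing it leaves a spectrum bounded away from $0$, giving a bounded inverse; $H^2$ regularity then follows from $\Delta f=f-3Q^2f-h\in L^2$. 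The Lax--Milgram/Fredholm route instead inverts $-\Delta+1$ and treats $3Q^2$ as a relatively compact perturbation, so that solvability of $Lf=h$ is equivalent to $h\perp\ker L^{*}=\ker L$ by the Fredholm alternative; this avoids invoking the location of the essential spectrum but requires the compactness of $3Q^2(-\Delta+1)^{-1}$ (which follows from the exponential decay of $Q$). The two arguments are of comparable length and both are standard; your spectral version has the small advantage of delivering the quantitative bound $\|(L|_H)^{-1}\|=\mathrm{dist}(0,\sigma(L)\setminus\{0\})^{-1}$ for free (note this is the correct expression --- the $\min$ with $\lambda_0^{-1}$ in your write-up is a harmless slip, since $-\lambda_0\in\sigma(L)\setminus\{0\}$ already forces $\mathrm{dist}(0,\sigma(L)\setminus\{0\})\le\lambda_0$). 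Your parity argument by reflection symmetry and uniqueness is exactly right, and usefully works one variable at a time, which matches how the paper actually applies the lemma (e.g.\ evenness in $y_2$ alone for the function $R$ in the construction of $P$).
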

This is a consequence of the Lax-Milgram theorem. 

\begin{lemma} \label{Pdecaylemma}
There exists a unique smooth function $P$ such that $P_{x_1} \in \mathcal{Y}$ and 
$$(LP)_{x_1}=\Lambda Q, (P,Q)=\frac{1}{4}\norm{\int_{-\infty}^{\infty}\Lambda Qdx_1}^2_{L^2_{x_2}}>0, (P,Q_{x_1})=(P,Q_{x_2})=0,$$ 
$$\lim_{x_1\rightarrow +\infty}P(x_1,x_2)=0, \forall x_2 \in \mathbb{R}$$
and 
$$|P(x_1,x_2)|\lesssim e^{-(1-\eta)|x_1|-\sqrt{2\eta-\eta^2}|x_2|}$$
for $x_1>0$ and for any $0<\eta\ll1.$
Moreover, $\overline{Q}_b=Q+bP$ is an approximate solution in the sense that: 
$$\|(\Delta \overline{Q}_b-\overline{Q}_b+\overline{Q}_b^3)_{y_1}+b\Lambda \overline{Q}_b\|_{L^{\infty}}\lesssim b^2.$$
\end{lemma}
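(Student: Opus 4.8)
The plan is to construct $P$ by first solving the underlying elliptic problem for $LP$, then deducing the regularity/decay and the approximate-solution estimate. Since $\int_{\mathbb{R}^2}\Lambda Q\,Q = 0$ (and, crucially from the $L^2$-criticality, the antiderivative $R(y_1,y_2):=\int_{-\infty}^{y_1}\Lambda Q(y_1',y_2)\,dy_1'$ is well-defined and decaying as $y_1\to+\infty$ because $\int_{\mathbb{R}}\Lambda Q\,dy_1 =: g(y_2)$ — wait, this is not zero — so in fact I should be careful: the equation $(LP)_{y_1}=\Lambda Q$ is solved by setting $LP = R$ where $R$ is chosen to vanish at $+\infty$, i.e. $R(y_1,y_2) = -\int_{y_1}^{\infty}\Lambda Q(y_1',y_2)\,dy_1'$, and $R$ does not decay as $y_1\to -\infty$; it tends to $-g(y_2)$). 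So the genuine first step is: define $R$ this way, check $R$ is smooth with the stated one-sided decay, and check $(R,\nabla Q)=0$ so that the previous lemma (Lax–Milgram) applies to solve $L\tilde P = R$ with $\tilde P\in H^2$, even in $y_2$ since $R$ is. Then $P := \tilde P$ satisfies $(LP)_{y_1} = R_{y_1} = \Lambda Q$ up to handling the fact that $R\notin L^2$; this forces a splitting $R = R_{\mathrm{loc}} + (\text{a fixed profile in } y_1 \text{ tending to } -g(y_2))$ and treating the non-$L^2$ part separately, e.g. by explicitly solving $L w = -g(y_2)\theta(-y_1)$-type pieces or, more cleanly, by first subtracting a fixed smooth function $\Phi(y_1,y_2)$ with $\Phi_{y_1}$ compactly supported in $y_1$ and $\Phi \to -g(y_2)$ as $y_1\to -\infty$, solving for the remainder in $H^2$.

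The second step is normalization and uniqueness. Adding to any solution an element of $\ker\partial_{y_1}L$ on the relevant space — which, after imposing the $y_1\to+\infty$ decay, reduces to $\mathrm{span}\{Q_{y_1},Q_{y_2}\}$ (since $\ker L = \mathrm{span}\{Q_{y_1},Q_{y_2}\}$ and constants-in-$y_1$ are excluded by the decay) — I can fix the two free parameters by imposing $(P,Q_{y_1})=(P,Q_{y_2})=0$; this is a $2\times 2$ solvable linear system because $(Q_{y_i},Q_{y_j})$ is diagonal and nonzero. The value $(P,Q)$ is then whatever it is, and the claimed identity $(P,Q)=\tfrac14\|\int_{\mathbb{R}}\Lambda Q\,dy_1\|_{L^2_{y_2}}^2$ should be proved as an a posteriori computation: pair $(LP)_{y_1}=\Lambda Q$ against a suitable antiderivative, integrate by parts in $y_1$, use $L(\Lambda Q)=-2Q$ and $LQ = -2Q^3$ — this is the same type of identity-chasing as in the gKdV construction and in the coercivity section, and positivity is immediate. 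I would present this identity with care but not grind the integrations by parts.

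The third step is propagating decay: $LP = R$ with $R_{y_1}\in\mathcal{Y}$ but $R$ itself only one-sided decaying. After the subtraction of $\Phi$ above, $L(P-\Phi) = R - L\Phi$, and one should check $R-L\Phi\in\mathcal{Y}$ (choosing $\Phi$ so that $L\Phi$ cancels the slowly-decaying tail — here one uses that $g$ and hence $\Phi$ can be taken with all derivatives decaying in $y_2$, and $\Phi_{y_1}\in\mathcal{Y}$). Then the regularity/decay lemmas proved just above (Lemma on $Lf\in\mathcal{Y}\Rightarrow f\in\mathcal{Y}$, via the $K_0$-convolution bound and the Kolmogorov–Landau interpolation) give $P-\Phi\in\mathcal{Y}$, hence $P_{y_1} = \Phi_{y_1} + (P-\Phi)_{y_1}\in\mathcal{Y}$, and $\lim_{y_1\to+\infty}P(y_1,y_2)=0$ follows since both pieces vanish there.

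The last step is the approximate-solution estimate. Write $\overline Q_b = Q + bP$ and expand $(\Delta\overline Q_b - \overline Q_b + \overline Q_b^3)_{y_1} + b\Lambda\overline Q_b$. The $O(1)$ terms cancel by the soliton equation; the $O(b)$ terms cancel precisely by $(LP)_{y_1}=\Lambda Q$ (note $\Delta(bP) - bP + 3Q^2(bP) = -b\,LP$, so its $y_1$-derivative is $-b\Lambda Q$, killing $b\Lambda Q$); what remains is $b\Lambda(bP) + \big(3Q(bP)^2 + (bP)^3\big)_{y_1} = b^2\Lambda P + b^2(3QP^2)_{y_1} + b^3(P^3)_{y_1}$, and since $P_{y_1},\Lambda P$ and all the polynomial combinations with $Q\in\mathcal{Y}$ are bounded (indeed in $\mathcal{Y}$, using $P_{y_1}\in\mathcal{Y}$ and that $P$ itself is bounded even though not in $L^2$), this is $O(b^2)$ in $L^\infty$ for $|b|\le 1$. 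I would note that $\Lambda P = P + y_1 P_{y_1} + y_2 P_{y_2}$ is bounded: $P_{y_1},P_{y_2}\in\mathcal Y$ decay exponentially so the $y_i$-weights are harmless, and $P$ is bounded by the $L^\infty$ control from the construction.

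\textbf{Main obstacle.} The only real subtlety is Step 1 — inverting $\partial_{y_1}L$ when the natural right-hand side $R=\int_{-\infty}^{y_1}\Lambda Q$ is not in $L^2$ (it does not decay as $y_1\to-\infty$, tending to $-g(y_2)$). Everything else is either a direct application of the Lax–Milgram lemma and the $\mathcal{Y}$-regularity lemmas already established, or a bookkeeping expansion. Handling the non-$L^2$ tail cleanly — via the subtraction of an explicit profile $\Phi(y_1,y_2)$ adapted to $g(y_2)$ so that the remainder lands in $H^2$ with $L(\text{remainder})\in\mathcal{Y}$ — is where the argument needs the most care, and it is also where the hypothesis $\lim_{y_1\to+\infty}P=0$ (rather than, say, $y_1\to-\infty$) is forced on us.
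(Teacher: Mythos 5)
Your overall architecture is the paper's: invert $\partial_{y_1}L$ by working with the antiderivative $R=-\int_{y_1}^{\infty}\Lambda Q$, peel off the non-decaying tail at $y_1\to-\infty$ by subtracting an explicit profile, solve for the $H^2$ remainder via the Lax--Milgram lemma and the $\mathcal{Y}$-regularity lemmas, normalize against $\ker L=\mathrm{span}\{Q_{y_1},Q_{y_2}\}$, obtain $(P,Q)$ by integration by parts using $L(\Lambda Q)=-2Q$, and read off the $O(b^2)$ error from the cancellation at orders $1$ and $b$. The paper implements the subtraction by setting $U=(-\Delta+1)^{-1}\Lambda Q$ and using $\Phi=-\int_{y_1}^{\infty}U$, so that $R-L\Phi=-3Q^2\int_{y_1}^{\infty}U\in\mathcal{Y}$ automatically.

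There is, however, a concrete error in your Step 1 that would make the remainder equation unsolvable as written: you require $\Phi\to -g(y_2)$ as $y_1\to-\infty$. Take $\Phi=h(y_2)\chi(y_1)$ with $\chi\to 1$ at $-\infty$; then $L\Phi\to -h''+h$ there, while $R\to -g$, so $R-L\Phi\to -g+h''-h$. With your choice $h=-g$ this limit is $-g''\not\equiv 0$, so $R-L\Phi$ does not decay in $y_1$, is not in $L^2$, and the Lax--Milgram step for the remainder fails. The correct requirement is that the limiting profile solve the one-dimensional problem $-h''+h=-g$, i.e. $h=F$ as in \eqref{Feq} --- which is exactly what the paper's $-\int_{\mathbb{R}}U\,dy_1$ produces, and is not incidental: this $F$ is precisely the function through which the blow-up constant $c$ of \eqref{eq:definitionofc} is computed. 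Your own guiding principle ("choose $\Phi$ so that $L\Phi$ cancels the tail") leads directly to this ODE, so the fix is within reach, but as stated the construction does not close. Two smaller points: strict positivity of $(P,Q)$ is not "immediate" --- one must rule out $\int_{\mathbb{R}}\Lambda Q\,dy_1\equiv 0$, which the paper does by noting this integral equals $y_2\partial_{y_2}\int Q\,dy_1$ and invoking $Q\in L^1$, $Q>0$; and in the last step $P_{y_2}\notin\mathcal{Y}$ (it tends to $F'(y_2)$ as $y_1\to-\infty$), though $y_2P_{y_2}$ is still bounded because $F'$ decays in $y_2$, so your conclusion that $\Lambda P$ is bounded survives with the corrected justification.
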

\begin{proof} 
First, we note by Lax-Milgram theorem, since $\Lambda Q \in L^2(\mathbb{R}^2)$, there exists $U \in H^2(\mathbb{R}^2)$ such  that $(-\Delta+I)U=\Lambda Q.$ By the previous lemma, we get:  
\begin{itemize} 
\item
 Since $\Lambda Q \in \mathcal{Y}$, we get that there exists $r>0$ with $|U(\vec{x})|\lesssim (1+|\vec{x}|)^re^{-|\vec{x}|}.$
 \item 
 Since $\Lambda Q $ is even in both $x_1,x_2$, then $U$ is even in both $x_1, x_2.$
 \end{itemize}
Consider $R=L(\int_{x_1}^{\infty}U)-\int^{\infty}_{x_1}\Lambda Q=\int^{\infty}_{x_1}(-\Delta U +U -\Lambda Q)-3Q^2\int^{\infty}_{x_1}U=-3Q^2\int_{-\infty}^{x_1}U \in C^{\infty}.$ For any $\alpha, \beta, \gamma, \delta \in \mathbb{N},$
$$|\partial_{x_1}^{\alpha}\partial_{x_2}^{\beta}(Q^2)\partial_{x_1}^{\gamma}\partial_{x_2}^{\delta}\int^{\infty}_{x_1}U|\lesssim (1+|\vec{x}|)^{r_{\alpha, \beta}}e^{-|\vec{x}|}(1+|x_2|)^{r_{\gamma, \delta}}e^{-|x_2|}\lesssim (1+|\vec{x}|)^{r_{\alpha, \beta}+r_{\gamma, \delta}}e^{-|\vec{x}|},$$
therefore $R \in \mathcal{Y}.$ Also, $$(R,Q_{x_1})=(L(\int^{\infty}_{x_1}U), Q_{x_1})-(\int^{\infty}_{x_1}\Lambda Q, Q_{x_1})=(\int^{\infty}_{x_1}U, LQ_{x_1})+(\Lambda Q, Q)=0$$ and 
$$(R,Q_{x_2})=(L(\int^{\infty}_{x_1}U), Q_{x_2})-(\int^{\infty}_{x_1}\Lambda Q, Q_{x_2})=(\int^{\infty}_{x_1}U, LQ_{x_2})=0.$$
Hence, by the previous lemma, there exists $\tilde{P} \in \mathcal{Y}$ such that $L\tilde{P}=R$ with $(\tilde{P}, Q_{x_1})=(\tilde{P},Q_{x_2})=0$ and since $R$ is even in $x_2$, then $\tilde{P}$ is even in $x_2.$
Take $P=\tilde{P}-\int^{\infty}_{x_1}U+\frac{(U,Q)}{\|Q_{x_1}\|_{L^2}^{2}}Q_{x_1},$ then $$LP=L\tilde{P}-L(\int^{\infty}_{x_1}U)=L(\int^{\infty}_{x_1}U)-\int^{\infty}_{x_1}\Lambda Q-L(\int^{\infty}_{x_1}U)+\frac{(U,Q)}{\|Q_{x_1}\|_{L^2}^{2}}LQ_{x_1}=-\int^{\infty}_{x_1}\Lambda Q$$ therefore $(LP)_{x_1}=\Lambda Q.$ We have
$$(P,Q_{x_1})=(\tilde{P},Q_{x_1})-(\int^{\infty}_{x_1}U, Q_{x_1})+\frac{(U,Q)}{\|Q_{x_1}\|_{L^2}^{2}}(Q_{x_1},Q_{x_1})=-(U, Q)+(U,Q)=0$$ and 
$$(P,Q_{x_2})=(\tilde{P},Q_{x_2})-(\int^{\infty}_{x_1}U, Q_{x_2})+\frac{(U,Q)}{\|Q_{x_1}\|_{L^2}^{2}}(Q_{x_1},Q_{x_2})=0,$$ since $\int^{\infty}_{x_1}U, Q_{x_1}$ are even in $x_2.$

We have that 
$$P(x_1,x_2)=\tilde{P}(x_1,x_2)-\int^{\infty}_{x_1}U(x'_1,x_2)dx'_1+\frac{(U,Q)}{\|Q_{x_1}\|_{L^2}^{2}}Q_{x_1}(x_1,x_2)$$ with $\tilde{P}(x_1,x_2),Q_{x_1}(x_1,x_2)  \in \mathcal{Y}$ and $P$ is even $x_2$. Therefore, we have
$\lim_{x_1\rightarrow -\infty} P(x_1,x_2)=F(x_2)$ with $F(x_2)\lesssim (|x_2|+1)e^{-\frac{|x_2|}{2}} $. Also, $\lim_{y_1 \rightarrow +\infty}P(x_1,x_2)=0$ and moreover, $$|P(x_1,x_2)|\lesssim e^{-(1-\eta)|x_1|-\sqrt{2\eta-\eta^2}|x_2|}$$
for $x_1>0$ and for any $0<\eta\ll1.$ Finally, we have $\lim_{x_2 \rightarrow \pm \infty}P(x_1,x_2)=0.$

Let's prove the uniqueness of $P.$ Suppose we have another $P_0$ satisfying all the properties of $P.$ First, $L(P-P_0)=F(x_2)$, for some function $F.$ Since $\lim_{x_1\rightarrow +\infty}P_0(x_1,x_2)=0$ and by consequence $\lim_{x_1\rightarrow +\infty}(P_0)_{x_2x_2}(x_1,x_2)=0$, then $F(x_2)=\lim_{x_1\rightarrow +\infty}LP_0(x_1,x_2)=\lim_{x_1\rightarrow +\infty}[-(P_0)_{x_2x_2}(x_1,x_2)+P_0(x_1,x_2)]=0,$ thus $LP_0(x_1,x_2)=LP(x_1,x_2).$ Therefore, $P-P_0 \in \mbox{Ker}(L)=\mbox{span}\{Q_{x_1},Q_{x_2}\}.$ Since both $P,P_0\perp \mbox{Ker}(L),$ we get $P=P_0.$   

Finally, if we denote the approximate solution $Q_b=Q+bP$ we see that it is indeed an approximation given that $$(\Delta Q_b-Q_b+Q_b^3)_{x_1}+b\Lambda Q_b=b(-(LP)_{x_1}+\Lambda Q)+O(b^2)=O(b^2).$$

\textit{Claim 1}. We have that $(P,Q)=\frac{1}{4}\int_{-\infty}^{\infty}\Big(\int_{-\infty}^{\infty}\Lambda Qdx_1)^2dx_2.$

Since $LP=-P_{x_1x_1}-P_{x_2x_2}+P-3Q^2P=-\int_{x_1}^{\infty}\Lambda Qdx_1$, hence $(LP)_{x_1}=\Lambda Q.$ We have that $\lim_{x_1\rightarrow -\infty} LP=-\lim_{x_1\rightarrow -\infty}P_{x_2x_2}+\lim_{x_1\rightarrow -\infty} P=\int_{-\infty}^{\infty}\Lambda Q.$  Also, remember that $L(\Lambda Q)=-2Q$. We show two methods for the claimed identity: 
\begin{itemize}
\item[(1)] \textit{First proof of Claim 1.} 
$$-(P,Q)=\frac{1}{2}(P, L(\Lambda Q))=\frac{1}{2}(LP,\Lambda Q)=\frac{1}{2}(-\int_{x_1}^{\infty}\Lambda Q, \Lambda Q)=$$
$$=\frac{1}{2}\int_{-\infty}^{\infty} \int_{-\infty}^{\infty}  \Big(-\int_{x_1}^{\infty}\Lambda Q\Big)\Lambda Q dx_1dx_2=\frac{1}{2}\int_{-\infty}^{\infty} \int_{-\infty}^{\infty} \partial_{x_1} \frac{\Big(-\int_{x_1}^{\infty}\Lambda Q\Big)^2}{2}dx_1dx_2$$
$$=\frac{1}{4}\int_{-\infty}^{\infty} \Big(-\lim_{x_1 \rightarrow -\infty}\Big(\int_{x_1}^{\infty}\Lambda Qdx_1\Big)^2\Big)dx_2=-\frac{1}{4}\int_{-\infty}^{\infty} \Big(\int_{-\infty}^{\infty}\Lambda Q dx_1\Big)^2dx_2.$$
\item[(2)] \textit{Second proof of Claim 1.}
$$-(P,Q)=\frac{1}{2}(P, L(\Lambda Q))=\frac{1}{2}(P, L((LP)_{x_1}))=-(LP, (LP)_{x_1})$$
$$=\frac{1}{2}\int_{-\infty}^{\infty}\int_{-\infty}^{\infty} \partial_{x_1}\Big(\frac{(LP)^2}{2}\Big)dx_1dx_2=\int_{-\infty}^{\infty}\Big(-\lim_{x_1\rightarrow -\infty}\frac{(LP)^2}{4}\Big)dx_2$$
$$=-\frac{1}{4}\int_{-\infty}^{\infty}\Big(\int_{-\infty}^{\infty}\Lambda Q dx_1\Big)^2dx_2.$$
 And the claim is proved. 
\end{itemize}

Suppose by contradiction that $\frac{1}{4}\int_{-\infty}^{\infty}\Big(\int_{-\infty}^{\infty}\Lambda Q dx_1\Big)^2dx_2=0$ which holds if $\int_{-\infty}^{\infty}\Lambda Qdx_1=x_2\Big(\int_{-\infty}^{\infty} Qdx_1\Big)_{x_2}\equiv 0$ a.e. $x_2,$ meaning $\int_{-\infty}^{\infty} Qdx_1\equiv c_0 \in \mathbb{R}.$ Since $\|Q\|_{L^1(\mathbb{R}^2)}<+\infty,$ we get $c_0=0,$ contradiction with $Q$ being positive. Hence, $(P,Q)>0.$ 

\end{proof}

\section{Modulation Equations} \label{Modulation Equation}
Let $u_t+\Delta \partial_{x_1}u +3u^2\partial_{x_1}u=0$ and take $v(t,y_1,y_2)=\lambda(t)u\Big(\lambda(t)y_1+x_1(t), \lambda(t)y_2+x_2(t)\Big).$ We have that 
$$v_t(t, y_1,y_2)=\lambda_t u+\lambda u_t+\lambda_t \lambda y_1 u_{x_1}+\lambda_t \lambda y_2 u_{x_2}+\lambda (x_1)_tu_{x_1}+\lambda (x_2)_t u_{x_2}$$
$v_{y_1}=\lambda^2 u_{x_1}, v_{y_2}=\lambda^2 u_{x_2}, v_{y_1y_1y_1}=\lambda^4 u_{x_1x_1x_1}, v_{y_2y_2y_1}=\lambda^4 u_{x_2x_2x_1}.$
Therefore, 
\begin{equation*}
\begin{split} 
\lambda^3v_t&=\lambda^2\lambda_t v+\lambda u_t+\lambda^2\lambda_ty_1v_{y_1}+\lambda^2\lambda_ty_2v_{y_2}+\lambda^2(x_1)_tv_{y_1}+\lambda^2(x_2)_tv_{y_2}
\\&=\lambda^2\lambda_t v+\lambda(-u_{x_1x_1x_1}-u_{x_2x_2x_1}-3u^2\partial_{x_1}u)+\lambda^2\lambda_ty_1v_{y_1}+\lambda^2\lambda_ty_2v_{y_2}+\lambda^2(x_1)_tv_{y_1}+\lambda^2(x_2)_tv_{y_2}
\\&=\lambda^2\lambda_t v-v_{x_1x_1x_1}-v_{x_2x_2x_1}-3v^2\partial_{x_1}v+\lambda^2\lambda_ty_1v_{y_1}+\lambda^2\lambda_ty_2v_{y_2}+\lambda^2(x_1)_tv_{y_1}+\lambda^2(x_2)_tv_{y_2}
\end{split} 
\end{equation*}

We make the change of variables $\frac{ds}{dt}=\frac{1}{\lambda^3},$ so $\lambda_s=\lambda^3 \lambda_t$ and $\lambda^3 v_t=v_s,$ $\lambda^3 (x_i)_t=(x_i)_s.$ Hence 
$$v_s-\frac{\lambda_s}{\lambda}\Lambda v-\frac{(x_1)_s}{\lambda}v_{y_1}-\frac{(x_2)_s}{\lambda}v_{y_2}+\partial_{y_1}\Delta v+3v^2\partial_{x_1}v=0$$ with $\Lambda v=v+y_1v_{y_1}+y_2v_{y_2}.$ 

Consider $\chi \in C^{\infty}(\mathbb{R})$ with $0\leq \chi \leq 1, \chi'\geq 0$ and $\chi\equiv 0$ on $(-\infty,-2]$ and $\chi\equiv 1$ on $[-1,\infty).$

Consider $Q_b(y_1,y_2)=Q(y_1,y_2)+b\chi(|b|^{\gamma}y_1)P(y_1,y_2)$. Now take $\varepsilon(y_1,y_2,s)=v(y_1,y_2,s)-Q_b(y_1,y_2)$ and using that $\varepsilon_s=v_s-(b\chi(|b|^{\gamma}y_1))_sP$ with $\partial_{y_1}(-Q+\Delta Q +Q^3)=0,$
the modulation equation reads the following: 
\begin{equation}\label{eq:ModulationEquation}
\begin{split}
\varepsilon_s- (L\varepsilon)_{y_1}&=\Big(\frac{\lambda_s}{\lambda}+b\Big)\Lambda Q_b +\frac{\lambda_s}{\lambda}\Lambda \varepsilon + \Big(\frac{(x_1)_s}{\lambda}-1\Big)(\varepsilon+Q_b)_{y_1}+\frac{(x_2)_s}{\lambda}(\varepsilon+Q_b)_{y_2}+\Phi_b\\
&+[(-\Delta Q_b+Q_b-Q_b^3)_{y_1}-b\Lambda Q_b]-3[(Q_b^2-Q^2)\varepsilon]_{y_1}-[(\varepsilon+Q_b)^3-Q_b^3-3Q_b^2\varepsilon]_{y_1}\\
&=\Big(\frac{\lambda_s}{\lambda}+b\Big)\Lambda Q_b+\Big(\frac{(x_1)_s}{\lambda}-1\Big)(Q_b)_{y_1}+\frac{(x_2)_s}{\lambda}(Q_b)_{y_2}+\Phi_b\\
&+\frac{\lambda_s}{\lambda}\Lambda \varepsilon+\Big(\frac{(x_1)_s}{\lambda}-1\Big)\varepsilon_{y_1}+\frac{(x_2)_s}{\lambda}\varepsilon_{y_2}+\Psi_b-R_b(\varepsilon)_{y_1}-R_{NL}(\varepsilon)_{y_1}
\end{split}
\end{equation}

where $\chi_b(\cdot)=\chi(|b|^{\gamma}\cdot)$, $\Psi_b=[(-\Delta Q_b+Q_b-Q_b^3)_{y_1}-b\Lambda Q_b]$, $\Phi_b=-b_s(\chi_b+\gamma y_1(\chi_B)_{y_1})P,$ $R_b(\varepsilon)=3[(Q_b^2-Q^2)\varepsilon]$, $R_{NL}(\varepsilon)=[(\varepsilon+Q_b)^3-Q_b^3-3Q_b^2\varepsilon].$

We have that 
$$\Psi_b=[(-\Delta Q_b+Q_b-Q_b^3)_{y_1}-b\Lambda Q_b]=(-\Delta Q + Q -Q^3)_{y_1}+b[(LP)_{y_1}-\Lambda Q]+b(\chi_b-1)(LP)_{y_1}$$
$$+b[(\chi_b)_{y_1}P-3(\chi_b)_{y_1}P_{y_1y_1}-3(\chi_b)_{y_1y_1}P_{y_1}-(\chi_b)_{y_1y_1y_1}P-3(\chi_b)_{y_1}Q^2P-(\chi_b)_{y_1}P_{y_2y_2}]$$
$$+b^2(-3(\chi_b^2P^2Q)_{y_1}-\Lambda (\chi_bP))-b^3(\chi_b^3P^3)_{y_1}$$
$$=b(\chi_b-1)\Lambda Q+b[(\chi_b)_{y_1}P-3(\chi_b)_{y_1}P_{y_1y_1}-3(\chi_b)_{y_1y_1}P_{y_1}-(\chi_b)_{y_1y_1y_1}P-3(\chi_b)_{y_1}Q^2P-(\chi_b)_{y_1}P_{y_2y_2}]$$
$$+b^2(-3(\chi_b^2P^2Q)_{y_1}-\Lambda (\chi_bP))-b^3(\chi_b^3P^3)_{y_1}$$

\begin{lemma}
We have the following estimates: 
\begin{equation}\label{eq:estimatesPsib}
\begin{split}
|\Psi_b| &\leq |b|^{1+\gamma}1_{[-\frac{2}{|b|^{\gamma}},-\frac{1}{|b|^{\gamma}}]}(y_1)e^{-\frac{|y_2|}{2}}+b^2(1_{[-\frac{2}{|b|^{\gamma}},0]}(y_1)+|y_1|e^{-\frac{|y_1|}{2}})e^{-\frac{|y_2|}{4}},\\
|(\Psi_b)_{y_2y_2}| &\leq |b|^{1+\gamma}1_{[-\frac{2}{|b|^{\gamma}},-\frac{1}{|b|^{\gamma}}]}(y_1)e^{-\frac{|y_2|}{2}}+b^2(1_{[-\frac{2}{|b|^{\gamma}},0]}(y_1)+|y_1|e^{-\frac{|y_1|}{2}})e^{-\frac{|y_2|}{4}},\\
|\partial_{y_1}^k\Psi_b| &\leq |b|^{1+(k+1)\gamma}1_{[-\frac{2}{|b|^{\gamma}},-\frac{1}{|b|^{\gamma}}]}(y_1)e^{-\frac{|y_2|}{2}}+b^2(1_{[-\frac{2}{|b|^{\gamma}},0]}(y_1)+|y_1|e^{-\frac{|y_1|}{2}})e^{-\frac{|y_2|}{4}}.\\
\end{split}
\end{equation}
\end{lemma}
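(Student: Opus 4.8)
The plan is to substitute the explicit decomposition of $\Psi_b$ written just above the statement and bound each of its dozen-odd terms, assigning every term to one of the three pieces of the right-hand side according to its $y_1$-support and its order of vanishing in $b$. Two elementary facts will be used throughout. First, since $0<\gamma<1$, for any fixed $c>0$ and any $N$ one has $e^{-c/|b|^{\gamma}}\lesssim_N |b|^N$. Second, on the annulus $\mathcal C_b=\{-2|b|^{-\gamma}\le y_1\le -|b|^{-\gamma}\}$ — the support of $(\chi_b)_{y_1},(\chi_b)_{y_1y_1},(\chi_b)_{y_1y_1y_1}$ — one has $|y_1|\ge |b|^{-\gamma}$, so any factor $e^{-|y_1|/2}$, or $Q$, $\nabla Q$, $\nabla^2 Q$, or more generally any element of $\mathcal Y$, is pointwise $O(|b|^N)$ there for every $N$. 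I will also use repeatedly that each $y_1$-derivative falling on $\chi_b=\chi(|b|^{\gamma}\,\cdot)$ costs a factor $|b|^{\gamma}$ and produces a function supported on $\mathcal C_b$, while $\partial_{y_2}\chi_b\equiv 0$.

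I would first sharpen the decay of $P$ beyond what the previous lemma states. From $P=\tilde P-\int_{y_1}^{\infty}U+cQ_{y_1}$ with $\tilde P,Q_{y_1},U\in\mathcal Y$ one reads off: $\nabla P\in\mathcal Y$; $|P(y_1,y_2)|\lesssim e^{-|y_1|-|y_2|}$ for $y_1\ge 0$; and, crucially for the terms that see only $P$ (not $\nabla P$ and not a factor of $Q$), the functions $P,P_{y_2},P_{y_1y_1},P_{y_2y_2}$ are bounded in $y_1$ with exponential decay in $y_2$ uniformly in $y_1$, since $\int_{y_1}^{\infty}f$ inherits exponential $y_2$-decay from any $f\in\mathcal Y$ (split the $y_1'$-integral at $|y_1'|=|y_2|$); moreover on $\mathcal C_b$ the function $P$ agrees with its limit $F(y_2)$ up to an $O(|b|^N)$ error, with $|F(y_2)|\lesssim(1+|y_2|)e^{-|y_2|/2}$.

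With these at hand the accounting runs as follows. The term $b(\chi_b-1)\Lambda Q$ lives on $\{y_1\le -|b|^{-\gamma}\}$, where $|\Lambda Q|\lesssim e^{-|(y_1,y_2)|}$; extracting one factor $e^{-|y_1|/2}\lesssim|b|^{N}$ leaves $\lesssim b^2 e^{-(|y_1|+|y_2|)/4}$. Among the $O(b)$ terms carrying a cutoff-derivative, those with a factor $Q^2$ or a factor $\nabla P$ are $O(|b|^N)$ on $\mathcal C_b$ and join the last piece; those with two or three derivatives of $\chi_b$ carry an extra $|b|^{\gamma}$, hence $\lesssim|b|^{1+2\gamma}+|b|^{1+3\gamma}\le|b|^{1+\gamma}$ on $\mathcal C_b$; and the remaining one, $b(\chi_b)_{y_1}P$, is precisely $|b|^{1+\gamma}1_{\mathcal C_b}(y_1)e^{-|y_2|/2}$. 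In the $O(b^2)$ group one expands $(\chi_b^2P^2Q)_{y_1}$ and $\Lambda(\chi_bP)=\chi_bP+y_1(\chi_bP)_{y_1}+y_2(\chi_bP)_{y_2}$: every piece with a factor $Q$, $Q_{y_1}$ or $\nabla P$ is $\lesssim b^2 e^{-(|y_1|+|y_2|)/4}$ (on $\{y_1\ge 0\}$ via $|P|\lesssim e^{-|y_1|-|y_2|}$, elsewhere via $\mathcal Y$-decay), while the pieces that see only $P$ or $P_{y_2}$ times $\chi_b$ (or times the bounded localized weight $y_1(\chi_b)_{y_1}$) give $\lesssim b^2 1_{[-2|b|^{-\gamma},0]}(y_1)e^{-|y_2|/4}$ on $\{-2|b|^{-\gamma}\le y_1\le 0\}$ and $\lesssim b^2 e^{-(|y_1|+|y_2|)/4}$ on $\{y_1\ge 0\}$. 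Finally $b^3(\chi_b^3P^3)_{y_1}$ is absorbed into the $O(b^2)$ bound since $|b|^3\le|b|^2$ and it has the same support structure as $\chi_bP$. Adding these yields the bound on $|\Psi_b|$.

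For $(\Psi_b)_{y_2y_2}$ the bound is identical in form, because $\partial_{y_2}$ annihilates $\chi_b$ and maps both the class $\mathcal Y$ and the class of functions bounded in $y_1$ with exponential $y_2$-decay into themselves, while $\partial_{y_2}^2\Lambda(\chi_bP)=\Lambda(\chi_bP_{y_2y_2})-2\chi_bP_{y_2y_2}$ keeps the same structure. For $|\partial_{y_1}^{k}\Psi_b|$ (the estimate is meant for $k\ge 1$; $k=0$ is the first estimate) one differentiates the decomposition: derivatives on $\chi_b$ each cost $|b|^{\gamma}$ and stay on $\mathcal C_b$, so the dominant annulus term is $b\,\partial_{y_1}^{k+1}\chi_b\cdot P$, of size $|b|^{1+(k+1)\gamma}1_{\mathcal C_b}(y_1)e^{-|y_2|/2}$; derivatives on $P$ or $Q$ keep exponential decay; and the key point is that the previously non-$y_1$-decaying piece $\chi_bP$ now becomes $(\chi_b)_{y_1}P$, absorbed into the first term with room to spare since $|b|^{2+k\gamma}\le|b|^{1+(k+1)\gamma}$ for $\gamma\le 1$, plus $\chi_bP_{y_1}$ with $P_{y_1}\in\mathcal Y$, which contributes $b^2 e^{-(|y_1|+|y_2|)/4}$ — so no annulus indicator remains in the $O(b^2)$ part. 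The main obstacle is organizational rather than analytic: because $P\notin\mathcal Y$ (only $\nabla P$ is), every single term must be sorted according to whether its $y_1$-decay comes from a factor of $Q$ or $\nabla P$, from a cutoff-derivative localizing it to $\mathcal C_b$, or merely from $\chi_b$ restricting it to $\{y_1\ge -2|b|^{-\gamma}\}$, and one must simultaneously keep track of the interplay of powers of $|b|$, powers of $|b|^{\gamma}$, and the super-polynomial smallness $e^{-c|b|^{-\gamma}}$. (The exponents $1/2$ and $1/4$ are not sharp; they are what survives after absorbing polynomial factors and replacing $\sqrt{y_1^2+y_2^2}$ by $|y_1|+|y_2|$, and only genuine exponential decay in $y_2$, and in $y_1$ off $\mathcal C_b$, is used downstream.)
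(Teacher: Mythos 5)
Your proposal is correct and follows essentially the same route as the paper: substitute the explicit decomposition of $\Psi_b$, bound each term according to whether its $y_1$-localization comes from a cutoff derivative (support in $[-2|b|^{-\gamma},-|b|^{-\gamma}]$, each derivative of $\chi_b$ costing $|b|^{\gamma}$), from a factor of $Q$ or $P_{y_1}\in\mathcal Y$, or merely from $\chi_b$, and absorb the super-polynomially small pieces via $e^{-c|b|^{-\gamma}}\lesssim|b|^{N}$. The only nit is your passing claim that $\nabla P\in\mathcal Y$ — the paper only gives $P_{y_1}\in\mathcal Y$, and $P_{y_2}$ does not decay as $y_1\to-\infty$ — but your actual term-by-term accounting treats $P_{y_2}$ correctly as bounded in $y_1$ with exponential decay in $y_2$, so this does not affect the argument.
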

\begin{proof} 
We have that 
$$|b(\chi_b)_{y_1}P|\lesssim |b|^{1+\gamma}1_{[-\frac{2}{|b|^{\gamma}},-\frac{1}{|b|^{\gamma}}]}(y_1)e^{-\frac{|y_2|}{2}},$$
$$|b(\chi_b)_{y_1}P_{y_2y_2}|\lesssim |b|^{1+\gamma}1_{[-\frac{2}{|b|^{\gamma}},-\frac{1}{|b|^{\gamma}}]}(y_1)e^{-\frac{|y_2|}{2}},$$
$$|b(1-\chi_b)|\Lambda Q|\lesssim |b|1_{[-\infty,-\frac{1}{|b|^{\gamma}}]}(y_1)e^{-\frac{|y_1|}{2}}e^{-\frac{|y_2|}{2}}\lesssim |b|e^{-\frac{1}{4|b|^{\gamma}}}e^{-\frac{|y_1|}{4}}e^{-\frac{|y_2|}{2}}\lesssim b^3e^{-\frac{|y_1|}{4}}e^{-\frac{|y_2|}{2}},$$
$$|b(\chi_b)_{y_1}P_{y_1y_1}|\lesssim |b|^{1+\gamma}1_{[-\frac{2}{|b|^{\gamma}},-\frac{1}{|b|^{\gamma}}]}(y_1)e^{-\frac{|y_1|+|y_2|}{2}}\lesssim b^3e^{-\frac{|y_1|}{4}}e^{-\frac{|y_2|}{2}},$$
$$|b(\chi_b)_{y_1y_1}P_{y_1}|\lesssim |b|^{1+2\gamma}1_{[-\frac{2}{|b|^{\gamma}},-\frac{1}{|b|^{\gamma}}]}(y_1)e^{-\frac{|y_1|+|y_2|}{2}}\lesssim b^3e^{-\frac{|y_1|}{4}-\frac{|y_2|}{2}},$$
$$|b(\chi_b)_{y_1y_1y_1}P|\lesssim |b|^{1+3\gamma}1_{[-\frac{2}{|b|^{\gamma}},-\frac{1}{|b|^{\gamma}}]}(y_1)e^{-\frac{|y_2|}{2}}\lesssim b^3e^{-\frac{|y_1|}{4}-\frac{|y_2|}{2}},$$ 
$$|b(\chi_b)_{y_1}Q^2P|\lesssim |b|^{1+\gamma}1_{[-\frac{2}{|b|^{\gamma}},-\frac{1}{|b|^{\gamma}}]}(y_1)e^{-|y_1|}e^{-|y_2|}\lesssim b^31_{[-\frac{2}{|b|^{\gamma}},-\frac{1}{|b|^{\gamma}}]}(y_1)e^{-\frac{|y_1|}{2}}e^{-|y_2|}.$$
Since $\Lambda(\chi_bP)=\chi_bP+y_1(\chi_b)_{y_1}P+y_1\chi_bP_{y_1}+y_2\chi_bP_{y_2}$
$$|b^2\chi_bP|\lesssim b^2(1_{[-\frac{2}{|b|^{\gamma}},0]}(y_1)+|y_1|e^{-\frac{|y_1|}{2}})e^{-\frac{|y_2|}{2}},$$
$$|b^2y_1(\chi_b)_{y_1}P|\lesssim b^2\frac{2}{|b|^{\gamma}}1_{[-\frac{2}{|b|^{\gamma}},-\frac{1}{|b|^{\gamma}}]}(y_1)|b|^{\gamma}|(\chi_{y_1})(|b|^{\gamma}y_1)||P|\lesssim b^21_{[-\frac{2}{|b|^{\gamma}},-\frac{1}{|b|^{\gamma}}]}(y_1)e^{-\frac{|y_2|}{2}},$$
$$|b^2y_1\chi_bP_{y_1}|\lesssim b^2 |y_1|e^{-\frac{|y_1|+|y_2|}{2}},$$
$$|b^2y_2\chi_bP_{y_2}|\lesssim b^2|y_2|(1_{[-\frac{2}{|b|^{\gamma}},0]}(y_1)e^{-\frac{|y_2|}{2}}+1_{[0,\infty)}(y_1)e^{-\frac{|y_1|+|y_2|}{2}})\lesssim b^2(1_{[-\frac{2}{|b|^{\gamma}},0]}(y_1)+e^{-\frac{|y_1|}{2}})e^{-\frac{|y_2|}{4}},$$
$$|-3b^2(\chi_b^2P^2Q)_{y_1}|\lesssim b^2e^{-\frac{|y_1|+|y_2|}{2}}.$$
Hence, putting all the estimates together we get that 
$$|\Psi_b| \leq |b|^{1+\gamma}1_{[-\frac{2}{|b|^{\gamma}},-\frac{1}{|b|^{\gamma}}]}(y_1)e^{-|y_2|}+b^2(1_{[-\frac{2}{|b|^{\gamma}},0]}(y_1)+e^{-\frac{|y_1|}{4}})e^{-\frac{|y_2|}{4}}$$
We do the same computations for $(\Psi_b)_{y_2y_2}$ and for $\partial_{y_1}^k\Psi_b.$
 \end{proof}

Since $u_0 \in \mathcal{T}_{\alpha^*},$ we assume there exists $t_0>0$ such that $\forall t \in [0,t_0],$ $u(t) \in \mathcal{T}_{\alpha^*}.$ Therefore there exists some parameters $\tilde{\lambda}(t)>0,(\tilde{x}_1(t),\tilde{x}_2(t))\in \mathbb{R}^2$ such that 
$$\|Q-\tilde{\lambda}(t)u(t,\tilde{\lambda}(t)y_1+\tilde{x}_1(t), \tilde{\lambda}(t)y_2+\tilde{x}_2(t))\|_{L^2(\mathbb{R}^2)}<\nu<\nu_0,$$
for some small $\nu_0 >0.$

\begin{lemma} \label{eq:nualpha}(modulated flow) There exist continuous functions $(\lambda, x_1, x_2,b):[0,t_0]\rightarrow (0,+\infty) \times \mathbb{R}^3$ such that 
$$\forall t \in [0,t_0], \varepsilon(t,y_1,y_2)=\lambda(t)u(t, \lambda(t)y_1+x_1(t), \lambda(t)y_2+x_2(t))-Q_{b(t)}(y_1,y_2)$$
satisfies the orthogonality conditions: 
$$(\varepsilon(t),Q)=(\varepsilon(t),\varphi(y_1)\Lambda Q)=(\varepsilon(t),\varphi(y_1)Q_{y_1})=(\varepsilon(t),\varphi(y_1)Q_{y_2})=0.$$

Moreover, we have that 
$$\Big|\frac{\tilde{\lambda}(t)}{\lambda(t)}-1\Big|+|(x_1(t),x_2(t))-(\tilde{x}_1(t),\tilde{x}_2(t))|+|b(t)|+\|\varepsilon(t)\|_{L^2}\lesssim \delta(\nu), \|\varepsilon(t)\|_{H^1}\lesssim \delta(\|\varepsilon(0)\|_{H^1}).$$
\end{lemma}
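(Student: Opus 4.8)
This is an application of the implicit function theorem; the only substantial point is the invertibility of the relevant Jacobian, which is exactly the matrix $\tilde M$ from the outline and is made nondegenerate by the choice of $\tilde\epsilon$ in \eqref{tildeepsilon}. Working at the scale $\tilde\lambda(t)$, set $w(t,z_1,z_2)=\tilde\lambda(t)u\big(t,\tilde\lambda(t)z_1+\tilde x_1(t),\tilde\lambda(t)z_2+\tilde x_2(t)\big)$, so that $\|w(t)-Q\|_{L^2}<\nu$, and seek $(\mu,a_1,a_2,b)$ near $(1,0,0,0)$ such that
$$\varepsilon_{\mu,a_1,a_2,b}(y_1,y_2):=\mu\,w\big(t,\mu y_1+a_1,\mu y_2+a_2\big)-Q_b(y_1,y_2)$$
is orthogonal to $Q$, $\varphi(y_1)\Lambda Q$, $\varphi(y_1)Q_{y_1}$, $\varphi(y_1)Q_{y_2}$; then $\lambda=\mu\tilde\lambda$, $x_i=\tilde\lambda a_i+\tilde x_i$ are the desired parameters. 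All four pairings make sense despite $P\notin L^2$: each test function decays exponentially in $(y_1,y_2)$ (by the construction of $Q_b$), which dominates the at-most-linear growth of $Q_b=Q+b\chi_bP$ in $y_1$.

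Let $\rho$ be the $\mathbb R^4$-valued map whose components are the four scalar products $(\varepsilon_{\mu,a_1,a_2,b},Q)$, $(\varepsilon_{\mu,a_1,a_2,b},\varphi Q_{y_2})$, $(\varepsilon_{\mu,a_1,a_2,b},\varphi\Lambda Q)$, $(\varepsilon_{\mu,a_1,a_2,b},\varphi Q_{y_1})$, viewed as a function of $(\mu,a_1,a_2,b)$ near $(1,0,0,0)$ and of $w$ near $Q$ in $L^2$. After the substitution $z=\mu y+a$ each component reads $\mu^{-1}\!\int w(z)\,\Theta(\mu^{-1}(z-a))\,dz-(Q_b,\Theta)$ for a fixed smooth, exponentially decaying $\Theta$, so $\rho$ is $C^1$ in $(\mu,a_1,a_2,b)$ — differentiation under the integral sign being justified by Cauchy--Schwarz — and affine, hence Lipschitz, in $w\in L^2$; the $b$-derivative at $b=0$ equals $-(P,\Theta)$ by dominated convergence, since $(Q_b,\Theta)-(Q,\Theta)=b(\chi_bP,\Theta)$ with $\chi_b\to1$ pointwise and $|\chi_bP\Theta|\le|P\Theta|\in L^1$. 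Also $\rho(1,0,0,0,Q)=0$.

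The heart of the matter is the partial Jacobian $J=\partial_{(\mu,a_1,a_2,b)}\rho$ at $(1,0,0,0,Q)$. Since $\partial_\mu\varepsilon=\Lambda Q$, $\partial_{a_1}\varepsilon=Q_{y_1}$, $\partial_{a_2}\varepsilon=Q_{y_2}$ and $\partial_b\varepsilon=-P$ at the base point, $J$ is precisely $\tilde M$ up to a sign in the last column. Using $(\Lambda Q,Q)=(Q_{y_1},Q)=(Q_{y_2},Q)=0$ by oddness and $(P,Q)>0$ (shown above, $(P,Q)=\frac14\|\int_{\mathbb R}\Lambda Q\,dy_1\|_{L^2_{y_2}}^2$), the first row of $J$ is $(0,0,0,-(P,Q))$, so $\det J=\pm(P,Q)$ times the $3\times3$ minor pairing $(\varphi Q_{y_2},\varphi\Lambda Q,\varphi Q_{y_1})$ against $(\Lambda Q,Q_{y_1},Q_{y_2})$. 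In that minor, parity in $y_2$ — recall $\varphi=\varphi(y_1)$, $\Lambda Q$ and $Q_{y_1}$ are even and $Q_{y_2}$ odd in $y_2$ — annihilates every pairing of an even factor with $Q_{y_2}$, leaving a block-triangular matrix of determinant $\pm(Q_{y_2},\varphi Q_{y_2})\det M^*$; and $(Q_{y_2},\varphi Q_{y_2})=\tilde\epsilon\iint y_1^4e^{-|y_1|}Q_{y_2}^2>0$ while $\det M^*\neq0$ by \eqref{tildeepsilon}. Hence $\det J\neq0$.

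The implicit function theorem now yields a $C^1$ map $w\mapsto(\mu(w),a_1(w),a_2(w),b(w))$ solving $\rho=0$ with value $(1,0,0,0)$ at $w=Q$; composing with $t\mapsto w(t)$, which is continuous into $L^2$ by local well-posedness, gives the continuous modulation parameters, well defined by the local uniqueness in the IFT. For the quantitative bounds one uses that $J$ is invertible at every point of the soliton manifold and that the construction is scaling/translation covariant, so the quantitative IFT gives $|\mu-1|+|a_1|+|a_2|+|b|\lesssim\|w-Q\|_{L^2}<\nu$; thus $|\tilde\lambda/\lambda-1|=|\mu^{-1}-1|\lesssim\delta(\nu)$, $|b|\lesssim\delta(\nu)$, and, since $\|\mu w(\mu\cdot+a)-Q\|_{L^2}\le\|w-Q\|_{L^2}+\|\mu Q(\mu\cdot+a)-Q\|_{L^2}$ and $\|Q-Q_b\|_{L^2}\lesssim|b|^{1-\gamma/2}$, also $\|\varepsilon\|_{L^2}\lesssim\delta(\nu)$; the estimate on $|x_i-\tilde x_i|$ follows the same way. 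Repeating the argument verbatim with $L^2$ replaced by $H^1$ (all test functions lie in every weighted Sobolev space) and using the $H^1$-continuity of the flow on $[0,t_0]$ gives $\|\varepsilon(t)\|_{H^1}\lesssim\delta(\|\varepsilon(0)\|_{H^1})$. The main obstacle is the invertibility of $J$: the naive directions $y_1\Lambda Q,\ y_1Q_{y_1},\ y_1Q_{y_2}$ would force $\det J=0$ (as in the outline), and introducing the weight $\varphi$ with $\tilde\epsilon$ fixed by \eqref{tildeepsilon} is exactly what breaks this degeneracy.
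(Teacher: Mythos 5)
Your proposal is correct and follows essentially the same route as the paper: an implicit function theorem argument (the paper follows Lemma 2 of Merle--Raphael for NLS) in which the Jacobian at $(1,0,0,0,Q)$ reduces, via $(\Lambda Q,Q)=(Q_{y_1},Q)=(Q_{y_2},Q)=0$, $(P,Q)>0$, and parity in $y_2$, to $\pm(P,Q)(Q_{y_2},\varphi(y_1)Q_{y_2})\det M^*\neq 0$, followed by composition with the coarse modulation $(\tilde\lambda,\tilde x_1,\tilde x_2)$ and the quantitative IFT bounds. Your handling of the $b$-derivative through the cutoff $\chi_b$ by dominated convergence is in fact slightly more careful than the paper's.
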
 
\begin{proof} 

\hspace{1cm}

\textit{Claim}. For $\alpha>0,$ let $U_{\alpha}=\{u \in H^1(\mathbb{R}^2):\|u-Q\|_{H^1}\leq \alpha\}$ 
and for $u \in H^1(\mathbb{R}^2), \lambda_1>0, (\hat{x}_1,\hat{x}_2) \in \mathbb{R}^2, b \in \mathbb{R}$, we define 
 
 \begin{equation}\label{eq:GWP34}
 \varepsilon_{\lambda_1, \hat{x}_1,\hat{x}_2,b}(y_1,y_2)=\lambda_1u(\lambda_1 y_1+\hat{x}_1,\lambda_1 y_2+\hat{x}_2)-Q_b(y_1,y_2).
 \end{equation}
 We claim that there exist $\overline{\alpha}>0$ and a unique $C^1$ map: $U_{\overline{\alpha}}\rightarrow (1-\overline{\lambda},1+\overline{\lambda})\times (-\overline{x}_1,\overline{x}_1)\times (-\overline{x}_2,\overline{x}_2)\times (-\overline{b},\overline{b})$ such that if $u \in U_{\overline{\alpha}},$ then there is a unique $(\lambda_1, \hat{x}_1, \hat{x}_2,b)$ such that $\varepsilon_{\lambda_1, \hat{x}_1, \hat{x}_2,b}$ defined as in \eqref{eq:GWP34} is such that 
 $$(\varepsilon_{\lambda_1, \hat{x}_1, \hat{x}_2,b}, Q)=(\varepsilon_{\lambda_1, \hat{x}_1, \hat{x}_2,b}, \varphi(y_1)\Lambda Q)=(\varepsilon_{\lambda_1, \hat{x}_1, \hat{x}_2,b}, \varphi(y_1)Q_{y_1})=(\varepsilon_{\lambda_1, \hat{x}_1, \hat{x}_2,b}, \varphi(y_1)Q_{y_2})=0.$$
 Moreover, there exists a constant $C_1>0,$ such that if $u \in U_{\overline{\alpha}},$ then 
 $$\|\varepsilon_{\lambda_1, \hat{x}_1, \hat{x}_2,b}\|_{H^1}+|\lambda_1-1|+|(\hat{x}_1,\hat{x}_2)|+|b|\leq C_1\overline{\alpha}.$$

 \textit{Proof of Claim.} We follow the proof of Lemma 2 in \cite{MerleRaphael05} for NLS. 
 
We define the following functionals: 
$$\rho^1_{\lambda_1, \hat{x}_1, \hat{x}_2,b}(u)=\int \int \varepsilon_{\lambda_1, \hat{x}_1,\hat{x}_2,b} Q,$$
$$\rho^2_{\lambda_1, \hat{x}_1, \hat{x}_2,b}(u)=\int \int\varepsilon_{\lambda_1, \hat{x}_1,\hat{x}_2,b} \varphi(y_1)Q_{y_2},$$
$$\rho^3_{\lambda_1, \hat{x}_1, \hat{x}_2,b}(u)=\int \int \varepsilon_{\lambda_1, \hat{x}_1,\hat{x}_2,b} \varphi(y_1)Q_{y_1},$$
$$\rho^4_{\lambda_1, \hat{x}_1, \hat{x}_2,b}(u)=\int \int \varepsilon_{\lambda_1, \hat{x}_1,\hat{x}_2,b} \varphi(y_1)\Lambda Q.$$
Also, 
$$\frac{\partial \varepsilon_{\lambda_1, \hat{x}_1,\hat{x}_2,b}}{\partial \lambda_1}\mid_{\lambda_1=1, \hat{x}_1=0,\hat{x}_2=0,b=0}=\Lambda u,$$
$$\frac{\partial \varepsilon_{\lambda_1, \hat{x}_1,\hat{x}_2,b}}{\partial \hat{x}_1}\mid_{\lambda_1=1, \hat{x}_1=0,\hat{x}_2=0,b=0}=u_{y_1},$$
$$\frac{\partial \varepsilon_{\lambda_1, \hat{x}_1,\hat{x}_2,b}}{\partial \hat{x}_2}\mid_{\lambda_1=1, \hat{x}_1=0,\hat{x}_2=0,b=0}=u_{y_2},$$
$$\frac{\partial \varepsilon_{\lambda_1, \hat{x}_1,\hat{x}_2,b}}{\partial b}\mid_{\lambda_1=1, \hat{x}_1=0,\hat{x}_2=0,b=0}=P.$$

Therefore, 
$$\frac{\partial \rho^1_{\lambda_1, \hat{x}_1,\hat{x}_2,b}}{\partial \lambda_1}\mid_{\lambda_1=1, \hat{x}_1=0,\hat{x}_2=0,b=0, u=Q}=\int \int \Lambda Q \cdot Q=0,$$
$$\frac{\partial \rho^1_{\lambda_1, \hat{x}_1,\hat{x}_2,b}}{\partial \hat{x}_1}\mid_{\lambda_1=1, \hat{x}_1=0,\hat{x}_2=0,b=0, u=Q}=\int \int Q_{y_1} \cdot Q=0,$$
$$\frac{\partial \rho^1_{\lambda_1, \hat{x}_1,\hat{x}_2,b}}{\partial \hat{x}_2}\mid_{\lambda_1=1, \hat{x}_1=0,\hat{x}_2=0,b=0, u=Q}=\int \int Q_{y_2} \cdot Q=0,$$
$$\frac{\partial \rho^1_{\lambda_1, \hat{x}_1,\hat{x}_2,b}}{\partial b}\mid_{\lambda_1=1, \hat{x}_1=0,\hat{x}_2=0,b=0, u=Q}=(P,Q),$$

$$\frac{\partial \rho^2_{\lambda_1, \hat{x}_1,\hat{x}_2,b}}{\partial \lambda_1}\mid_{\lambda_1=1, \hat{x}_1=0,\hat{x}_2=0,b=0, u=Q}=\int \int \Lambda Q \cdot \varphi(y_1)Q_{y_2}=0 \text{ as }\Lambda Q\varphi(y_1)Q_{y_2} \text{ is odd in }y_2,$$
$$\frac{\partial \rho^2_{\lambda_1, \hat{x}_1,\hat{x}_2,b}}{\partial \hat{x}_1}\mid_{\lambda_1=1, \hat{x}_1=0,\hat{x}_2=0,b=0, u=Q}=\int \int Q_{y_1} \cdot \varphi(y_1)Q_{y_2}=0,$$
$$\frac{\partial \rho^2_{\lambda_1, \hat{x}_1,\hat{x}_2,b}}{\partial \hat{x}_2}\mid_{\lambda_1=1, \hat{x}_1=0,\hat{x}_2=0,b=0, u=Q}=\int \int Q_{y_2} \cdot \varphi(y_1)Q_{y_2}\neq 0,$$
$$\frac{\partial \rho^2_{\lambda_1, \hat{x}_1,\hat{x}_2,b}}{\partial b}\mid_{\lambda_1=1, \hat{x}_1=0,\hat{x}_2=0,b=0, u=Q}=\int \int P \cdot \varphi(y_1)Q_{y_2}=0,$$ as  $P$ is even $y_2$,   $\varphi(y_1)Q_{y_2}$ odd in $y_2$ and 
$|(P,\varphi(y_1)Q_{y_2})|<+\infty,$ by the decay properties of $P$ and $\varphi(y_1)Q_{y_2}.$ We continue with 

$$\frac{\partial \rho^3_{\lambda_1, \hat{x}_1,\hat{x}_2,b}}{\partial \lambda_1}\mid_{\lambda_1=1, \hat{x}_1=0,\hat{x}_2=0,b=0, u=Q}=\int \int \Lambda Q \cdot \varphi(y_1)\Lambda Q,$$
$$\frac{\partial \rho^3_{\lambda_1, \hat{x}_1,\hat{x}_2,b}}{\partial \hat{x}_1}\mid_{\lambda_1=1, \hat{x}_1=0,\hat{x}_2=0,b=0, u=Q}=\int \int Q_{y_1}\varphi(y_1)\Lambda Q,$$
$$\frac{\partial \rho^3_{\lambda_1, \hat{x}_1,\hat{x}_2,b}}{\partial \hat{x}_2}\mid_{\lambda_1=1, \hat{x}_1=0,\hat{x}_2=0,b=0, u=Q}=\int \int Q_{y_2} \cdot \varphi(y_1)\Lambda Q=0 \text{ as }Q_{y_2} \text{ is odd }y_2 , \Lambda Q \text{ even in }y_2,$$
$$\frac{\partial \rho^3_{\lambda_1, \hat{x}_1,\hat{x}_2,b}}{\partial b}\mid_{\lambda_1=1, \hat{x}_1=0,\hat{x}_2=0,b=0, u=Q}=\int P \cdot \varphi(y_1)\Lambda Q,$$
which is well defined by the decay properties of $P.$
$$\frac{\partial \rho^4_{\lambda_1, \hat{x}_1,\hat{x}_2,b}}{\partial \lambda_1}\mid_{\lambda_1=1, \hat{x}_1=0,\hat{x}_2=0,b=0, u=Q}=\int \int \Lambda Q \cdot \varphi(y_1)Q_{y_1},$$
$$\frac{\partial \rho^4_{\lambda_1, \hat{x}_1,\hat{x}_2,b}}{\partial \hat{x}_1}\mid_{\lambda_1=1, \hat{x}_1=0,\hat{x}_2=0,b=0, u=Q}=\int \int Q_{y_1} \cdot \varphi(y_1)Q_{y_1},$$
$$\frac{\partial \rho^4_{\lambda_1, \hat{x}_1,\hat{x}_2,b}}{\partial \hat{x}_2}\mid_{\lambda_1=1, \hat{x}_1=0,\hat{x}_2=0,b=0, u=Q}=\int \int Q_{y_2} \cdot \varphi(y_1)Q_{y_1}=0,$$
as $Q_{y_2}$ is odd $y_2$,   $y_1Q_{y_1}$ even in $y_2$
$$\frac{\partial \rho^4_{\lambda_1, \hat{x}_1,\hat{x}_2,b}}{\partial b}\mid_{\lambda_1=1, \hat{x}_1=0,\hat{x}_2=0,b=0, u=Q}=\int \int P \cdot \varphi(y_1)Q_{y_1}, \text{which is well-defined}.$$ 
The associated Jacobian matrix is 
$$
\begin{bmatrix}
0 & 0 & 0 & (P,Q)\\
0 & 0 & (Q_{y_2},\varphi(y_1)Q_{y_2}) & 0\\
(\Lambda Q,\varphi(y_1)\Lambda Q)&  (Q_{y_1},\varphi(y_1)\Lambda Q) & 0 & (P,\varphi(y_1)Q_{y_1}) \\
(Q_{y_1},\varphi(y_1)\Lambda Q) & (Q_{y_1},\varphi(y_1)Q_{y_1}) & 0 &  (P,\varphi(y_1)\Lambda Q)\\
\end{bmatrix} 
$$
and we see that its determinant $-(P,Q)\|Q_{y_2}\|_{L^2}^2\det M^*\neq 0,$ proving the existence of $\varepsilon$ satisfying the orthogonalities. 

By the implicit function theorem, there exist $\overline{\alpha},$ a neighborhood $V_{1,0,0,0}$ of $(1,0,0,0)$ in $\mathbb{R}^4$ and a unique $C^1$ map $(\lambda_1,\hat{x}_1, \hat{x}_2, b):\{u\in \mathbb{R}^2:\|u-Q\|_{H^1(\mathbb{R}^2)}<\overline{\alpha}\}\rightarrow V_{(1,0,0,0)}$ such that the orthogonality conditions hold. The claim is proved.

Now, take $\nu<\min\{\overline{\alpha}, \nu_0\},$. For all time on $[0,t_0]$, there are parameters $\tilde{\lambda}(t)>0, \tilde{x}_1(t)\in \mathbb{R}, \tilde{x}_2(t)\in \mathbb{R}$ such that 
$$\|Q-\tilde{\lambda}(t)u(t, \tilde{\lambda}(t)x_1+\tilde{x}_1(t), \tilde{\lambda}(t)x_2+\tilde{x}_2(t))\|_{H^1(\mathbb{R}^2)}<\nu.$$

Now, apply the claim to the function $\tilde{\lambda}(t)u(t, \tilde{\lambda}(t)x_1+\tilde{x}_1(t), \tilde{\lambda}(t)x_2+\tilde{x}_2(t))$, and putting $\lambda(t)=\tilde{\lambda}(t)\lambda_1(t), x_1(t)=\lambda_1(t)\tilde{x}_1(t)+\hat{x}_1(t),x_2(t)=\lambda_1(t)\tilde{x}_2(t)+\hat{x}_2(t)$
and by the claim we get that $\varepsilon(t,x_1,x_2)=\lambda(t)u(t,\lambda(t)x_1+x_1(t), \lambda(t)x_2+x_2(t))-Q_b(x_1,x_2)$ sastisfies 
$$\varepsilon(t)\perp Q, \varepsilon(t)\perp \varphi(y_1)\Lambda Q, \varepsilon(t) \perp \varphi(y_1)Q_{y_1}, \varepsilon\perp \varphi(y_1)Q_{y_2},$$

$$\|\varepsilon(t)\|_{L^2}+\Big|\frac{\tilde{\lambda}(t)}{\lambda(t)}-1\Big|+|b(t)|<\delta(\nu),$$ 
and $\|\varepsilon(t)\|_{H^1}\lesssim \delta(\overline{\alpha})=\delta(\|\varepsilon(0)\|_{H^1})$ for all $t \in [0,t_0].$
 \end{proof}

We mention a Sobolev-type inequality that we are going to use to bound the nonlinearity terms in the modulation equation. 
 \begin{lemma}[Sobolev Lemma]\label{SobolevLemma} Suppose that $u \in H^1(\mathbb{R}^2)$ and a positive function $\theta \in H^1(\mathbb{R}^2)$ such that $|\theta_{x_1}|\leq \theta $ and $|\theta_{x_2}|\leq \theta$. We have that 
 
$$\int \int u^4 \theta dx_1dx_2  \leq 3 \|u\|_{L^2}^2 \int \int \Big( u^2 +u_{x_1}^2+u_{x_2}^2)\theta dx_1dx_2,$$
$$\int \int u^3 \theta dx_1dx_2 \leq \sqrt{3} \|u\|_{L^2} \int \int \Big( u^2 +u_{x_1}^2+u_{x_2}^2)\theta dx_1dx_2.$$
\end{lemma}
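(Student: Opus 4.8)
\emph{Proof sketch.} The plan is to adapt the classical two–dimensional Gagliardo--Nirenberg inequality $\|f\|_{L^4(\mathbb{R}^2)}^4\le 2\|f\|_{L^2}^2\|\nabla f\|_{L^2}^2$ to the weighted setting, the point being that the hypothesis $|\theta_{x_i}|\le\theta$ is precisely a bound on the logarithmic derivative of $\theta$: it gives $|\partial_{x_i}\theta^{1/2}|\le\tfrac12\theta^{1/2}$ and hence
\[
\big|\partial_{x_i}(u^2\theta^{1/2})\big|\le\Big(2|u||u_{x_i}|+\tfrac12 u^2\Big)\theta^{1/2},\qquad i=1,2.
\]
By a standard approximation argument we may assume $u\in C_c^\infty(\mathbb{R}^2)$ (and, as in the applications, $\theta$ smooth and positive); for a general $u\in H^1(\mathbb{R}^2)$ with the right-hand sides finite the estimates then follow by density.

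Writing $u^4\theta=(u^2\theta^{1/2})^2$ and integrating in one variable at a time from $-\infty$ we get
\[
(u^2\theta^{1/2})(x_1,x_2)\le F_1(x_2):=\int_{\mathbb{R}}\big|\partial_{x_1}(u^2\theta^{1/2})\big|\,dx_1,\qquad
(u^2\theta^{1/2})(x_1,x_2)\le F_2(x_1):=\int_{\mathbb{R}}\big|\partial_{x_2}(u^2\theta^{1/2})\big|\,dx_2,
\]
so that, by Fubini,
\[
\int\int u^4\theta=\int\int (u^2\theta^{1/2})^2\le\int\int F_1(x_2)F_2(x_1)\,dx_1dx_2=\Big(\int F_1\Big)\Big(\int F_2\Big).
\]
Using the pointwise bound above together with Cauchy--Schwarz to extract a full (unweighted) $\|u\|_{L^2}$,
\[
\int F_1\le\int\int\Big(2|u||u_{x_1}|+\tfrac12 u^2\Big)\theta^{1/2}\le\|u\|_{L^2}\Big(2\Big(\int\int u_{x_1}^2\theta\Big)^{1/2}+\tfrac12\Big(\int\int u^2\theta\Big)^{1/2}\Big),
\]
and symmetrically for $\int F_2$ with $u_{x_2}$. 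Setting $a^2=\int\int u_{x_1}^2\theta$, $b^2=\int\int u_{x_2}^2\theta$, $c^2=\int\int u^2\theta$, this gives $\int\int u^4\theta\le\|u\|_{L^2}^2\big(2a+\tfrac c2\big)\big(2b+\tfrac c2\big)$, and Young's inequality ($4ab\le2a^2+2b^2$, $ac\le\tfrac12 a^2+\tfrac12 c^2$, $bc\le\tfrac12 b^2+\tfrac12 c^2$) yields
\[
\big(2a+\tfrac c2\big)\big(2b+\tfrac c2\big)=4ab+ac+bc+\tfrac{c^2}{4}\le\tfrac52 a^2+\tfrac52 b^2+\tfrac54 c^2\le 3(a^2+b^2+c^2),
\]
which is the first inequality. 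The second follows from Cauchy--Schwarz, $\int\int u^3\theta\le\int\int|u|^3\theta\le\big(\int\int u^2\theta\big)^{1/2}\big(\int\int u^4\theta\big)^{1/2}$, inserting the quartic bound just obtained and using $\int\int u^2\theta\le\int\int(u^2+u_{x_1}^2+u_{x_2}^2)\theta$.

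The computation is essentially routine; the only delicate point is the reduction to $u\in C_c^\infty$ (in particular the finiteness of the weighted norms that makes the passage to the limit legitimate), together with keeping track of the numerical constants so that the cross terms collapse exactly into $3$ and $\sqrt3$. I do not expect any further obstacle, since the weight enters only through the harmless factor $|\nabla\theta|/\theta\le1$, which is also what forces the presence of the zeroth-order term $u^2$ on the right-hand side.
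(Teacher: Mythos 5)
Your proof is correct and follows essentially the same route as the paper's (Appendix A): write $u^4\theta=(u^2\theta^{1/2})^2$, bound the sup in each variable by the one-dimensional integral of the derivative, apply Fubini and Cauchy--Schwarz, and absorb the weight via $|\nabla\theta^{1/2}|\le\tfrac12\theta^{1/2}$, with the cubic bound then following from Cauchy--Schwarz. The only difference is cosmetic bookkeeping at the end (your explicit Young's inequality versus the paper's direct grouping of cross terms), and both yield the stated constants $3$ and $\sqrt3$.
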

We include a proof of the lemma in Appendix A \ref{lemmasobolev}. 

We define 
\[
\vartheta(y_1,y_2)=\begin{cases} \frac{1}{\alpha_1}|y_1|+\Big(1-\frac{1}{\alpha_{1}^{2}}\Big)|y_2|, & \text{if }y_1<0, \\
\frac{1}{200\alpha_1\alpha_2}|y_1|+\Big(1-\frac{1}{\alpha_2^2}\Big)|y_2| & \text{if } y_1\geq0,\\
\end{cases}
\]
where $\alpha_1,\alpha_2$ are defined in \eqref{alphas}. We further define 
$\mathcal{M}(s)=\int \int \varepsilon^2(s)e^{-\vartheta(y_1,y_2)}$ ( $\lesssim\|\varepsilon\|^{2}_{L^2}$) and $\widetilde{\mathcal{M}}(s)=\int \int (|\nabla\varepsilon|^2+\varepsilon^2)(s)e^{-\vartheta(y_1,y_2)}.$

Since we introduced a new time variable $$s=\int_{0}^{t}\frac{dt'}{\lambda(t')^3} \mbox{ as } \frac{ds}{dt}=\frac{1}{\lambda^3},$$
then all functions depending on $t \in [0,t_0],$ for some $t_0>0$ can now be seen depending on $s \in [0,s_0],$ with $s_0=s(t_0).$  

\begin{lemma} (Estimates for modulated coefficients) \label{decompositionlemma}
Suppose that, for $t \in [0,t_0],$ 
\begin{equation}\label{eq:smallness}
\|\varepsilon(t)\|_{L^2}\leq \hat{\nu}
\end{equation}
for a small enough universal constant $0<\hat{\nu}<\min\{\nu_0, \overline{\alpha}\}$ with $\nu_0,\overline{\alpha}$ defined in Lemma \ref{eq:nualpha}. Then the map $s \in [0,s_0]\rightarrow (\lambda(s),x_1(s),x_2(s),b(s))$ is $C^1$ and it satisfies 
\begin{equation} \label{eq:orthogonalities}
(\varepsilon, Q)=(\varepsilon, \varphi(y_1)Q_{y_1})=(\varepsilon, \varphi(y_1)\Lambda Q)=(\varepsilon, \varphi(y_1)Q_{y_2})=0.
\end{equation} 
Then we have that 
\begin{equation}\label{eq:modulatedcoefficients}
\begin{split}
&|b_s+cb^2|\lesssim |b|\mathcal{M}^{\frac{1}{2}}+\mathcal{M}+\|\varepsilon\|_{L^2}\widetilde{\mathcal{M}}+|b|^3,\\
&|b_s|\lesssim b^2+\mathcal{M}+\|\varepsilon\|_{L^2}\widetilde{\mathcal{M}},\\
&\Big|\frac{\lambda_s}{\lambda}+b\Big|+\Big|\frac{(x_1)_s}{\lambda}-1\Big|+\Big|\frac{(x_2)_s}{\lambda}\Big|\lesssim b^2+\mathcal{M}^{\frac{1}{2}}+\|\varepsilon\|_{L^2}\widetilde{\mathcal{M}}.\\
\end{split}
\end{equation}
\end{lemma}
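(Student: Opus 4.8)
The strategy is to differentiate the four orthogonality conditions \eqref{eq:orthogonalities} in $s$ and substitute the modulation equation \eqref{eq:ModulationEquation} for $\varepsilon_s$, producing an almost-linear $4\times4$ system for $\big(\tfrac{\lambda_s}{\lambda}+b,\ \tfrac{(x_1)_s}{\lambda}-1,\ \tfrac{(x_2)_s}{\lambda},\ b_s\big)$ whose matrix is a perturbation of the invertible matrix $M^*$ from Section \ref{Coercivity of the Linearized Operator}, and then inverting it. For the $C^1$ regularity, the implicit function theorem of Lemma \ref{eq:nualpha} already exhibits the parameters as a $C^1$ function of $u(t)\in H^1$; combined with a standard regularization argument — approximating $u_0$ by smooth data for which $t\mapsto u(t)$ is $C^1$ with values in $H^1$, deriving the identities below, and passing to the limit — one gets that $s\mapsto(\lambda,x_1,x_2,b)$ is $C^1$ and that \eqref{eq:ModulationEquation} holds in the strong sense. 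Since the four test functions $Q$, $\varphi(y_1)\Lambda Q$, $\varphi(y_1)Q_{y_1}$, $\varphi(y_1)Q_{y_2}$ do not depend on $s$, differentiating \eqref{eq:orthogonalities} gives $(\varepsilon_s,\cdot)=0$ against each of them.

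I would first extract the equation for $b_s$ by pairing \eqref{eq:ModulationEquation} with $\varphi_1=Q$. The linear term drops out, $((L\varepsilon)_{y_1},Q)=-(L\varepsilon,Q_{y_1})=-(\varepsilon,LQ_{y_1})=0$ since $LQ_{y_1}=0$; the coefficients $(\Lambda Q_b,Q)$, $((Q_b)_{y_1},Q)$ are $O(b)$ because $(\Lambda Q,Q)=(Q_{y_1},Q)=0$, and $((Q_b)_{y_2},Q)=0$ by parity in $y_2$; the term $\Phi_b$ gives $-b_s(P,Q)$ up to a negligible error coming from the cutoff $\chi_b$; the residual $(\Psi_b,Q)$ contributes precisely the quadratic term $c\,b^2(P,Q)$ up to $O(|b|^3)$ — this is the $O(b^2)$ solvability computation of Section \ref{Construction of the approximate solution $Q_b$} that fixes $c$ as in \eqref{eq:definitionofc}; the drift term $\tfrac{\lambda_s}{\lambda}(\Lambda\varepsilon,Q)=-\tfrac{\lambda_s}{\lambda}(\varepsilon,\Lambda Q)$ and the quasi-linear term $(R_b(\varepsilon)_{y_1},Q)$ are $O(|b|\mathcal{M}^{1/2})$ since $\tfrac{\lambda_s}{\lambda}=-b+O(\cdots)$ and $Q_b^2-Q^2=O(b)$ with exponential decay; and $(R_{NL}(\varepsilon)_{y_1},Q)=O(\mathcal{M}+\|\varepsilon\|_{L^2}\tilde{\mathcal{M}})$ by the Sobolev-type estimate of Lemma \ref{SobolevLemma} applied to $\int\int Q_{y_1}(Q_b\varepsilon^2+\varepsilon^3)$. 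Collecting terms and dividing by $(P,Q)>0$ yields $|b_s+cb^2|\lesssim|b|\mathcal{M}^{1/2}+\mathcal{M}+\|\varepsilon\|_{L^2}\tilde{\mathcal{M}}+|b|^3$ once the bounds on $\tfrac{\lambda_s}{\lambda}+b$ and $\tfrac{(x_1)_s}{\lambda}-1$ (multiplying $O(b)$ coefficients) are substituted; and $|b_s|\lesssim b^2+\mathcal{M}+\|\varepsilon\|_{L^2}\tilde{\mathcal{M}}$ follows from this with $|b|\mathcal{M}^{1/2}\le\tfrac12(b^2+\mathcal{M})$.

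For the three remaining parameters I pair \eqref{eq:ModulationEquation} with $\varphi(y_1)\Lambda Q$, $\varphi(y_1)Q_{y_1}$, $\varphi(y_1)Q_{y_2}$. Now the linear term no longer vanishes: $((L\varepsilon)_{y_1},\varphi_i)=-(\varepsilon,L\partial_{y_1}\varphi_i)=O(\mathcal{M}^{1/2})$, which is exactly why these three estimates reach only the order $\mathcal{M}^{1/2}$. Parity in $y_2$ decouples $\tfrac{(x_2)_s}{\lambda}$: pairing with $\varphi(y_1)Q_{y_2}$ kills $(\Lambda Q,\varphi Q_{y_2})$, $(Q_{y_1},\varphi Q_{y_2})$ and $(P,\varphi Q_{y_2})$ while $(Q_{y_2},\varphi Q_{y_2})\neq0$, producing $|\tfrac{(x_2)_s}{\lambda}|\lesssim b^2+\mathcal{M}^{1/2}+\|\varepsilon\|_{L^2}\tilde{\mathcal{M}}$ directly, with the $\Psi_b$, $R_b$, $R_{NL}$, $\Phi_b$ contributions being $O(b^2)$, $O(|b|\mathcal{M}^{1/2})$, $O(\mathcal{M}+\|\varepsilon\|_{L^2}\tilde{\mathcal{M}})$, $O(|b_s|)$ respectively and absorbed. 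The two equations left, in $\big(\tfrac{\lambda_s}{\lambda}+b,\tfrac{(x_1)_s}{\lambda}-1\big)$, have matrix equal to $M^*$ up to $O(|b|)$ corrections from the $(P,\varphi\cdot)$ entries and $O(\|\varepsilon\|_{L^2})$ corrections; since $\det M^*\neq0$ this stays invertible for $\hat\nu$ small, and inversion gives the stated bounds, which in turn close the estimate for $b_s+cb^2$.

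The main obstacle I anticipate is twofold. First, justifying the $C^1$ regularity and the strong form of \eqref{eq:ModulationEquation} at the low $H^1$ regularity available, which forces the regularization/approximation step; some care is also needed because the inner products involving $P$ (which is not in $L^2$) are only finite thanks to the decay of the test functions $\varphi(y_1)\Lambda Q,\varphi(y_1)Q_{y_1},\varphi(y_1)Q_{y_2}$ and of $Q$. Second — and this is the genuinely substantive point — verifying that the $O(b^2)$ part of $(\Psi_b,Q)$ equals $c\,b^2(P,Q)$ with the precise $c$ of \eqref{eq:definitionofc}, rather than merely being $O(b^2)$; it is this identification that upgrades ``$b_s=O(b^2)+\dots$'' to the sharp quadratic-correction estimate for $b_s+cb^2$ on which the rigidity argument downstream depends.
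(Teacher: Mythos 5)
Your proposal follows essentially the same route as the paper: project the modulation equation onto the four orthogonality directions, use $y_2$-parity to decouple $\tfrac{(x_2)_s}{\lambda}$, invert the $2\times2$ system with matrix $M^*$ (perturbed by $O(|b|)+O(\mathcal{M}^{1/2})$) for $\tfrac{\lambda_s}{\lambda}+b$ and $\tfrac{(x_1)_s}{\lambda}-1$, and close the $b_s+cb^2$ estimate by substituting these back, with the sharp constant coming from the identity $(\Lambda P+(3QP^2)_{y_1},Q)=c\,(P,Q)$ that you correctly single out as the substantive computation (the paper proves it by integration by parts and the Fourier characterization of $F=\lim_{y_1\to-\infty}P$). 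The argument and the error accounting match the paper's proof.
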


\begin{proof}
 We consider the orthogonality conditions $(\varepsilon, Q)=(\varepsilon, \varphi(y_1)Q_{y_2})=(\varepsilon,\varphi(y_1)\Lambda Q)=(\varepsilon, \varphi(y_1)Q_{y_1})=0,$ more precisely $Q$ will give the estimate for $b_s$, $\varphi(y_1)Q_{y_2}$ will give the estimate for $\frac{(x_2)_s}{\lambda},$ and the interplay of both orthogonalities $\varphi(y_1)\Lambda Q,  \varphi(y_1)Q_{y_1}$ will give the estimates for $\frac{\lambda_s}{\lambda}+b$ and $\frac{(x_1)_s}{\lambda}-1.$

\textit{Step 1.} By projecting the modulated equation \eqref{eq:ModulationEquation} on $Q$ and using the orthogonality condition $(\varepsilon,Q)=0$ and  $( (L\varepsilon)_{y_1}, Q)=-(\varepsilon, L(Q_{y_1}))=0 $, we get the following: 

\begin{equation*}
\begin{split} 
&\Big(\frac{\lambda_s}{\lambda}+b\Big)(\Lambda Q_b,Q)+\Big(\frac{(x_1)_s}{\lambda}-1\Big)((Q_b)_{y_1},Q)+\frac{(x_2)_s}{\lambda}((Q_b)_{y_2},Q)-b_s((\chi_b+\gamma y_1(\chi_b)_{y_1})P,Q)\\
&=-\frac{\lambda_s}{\lambda}(\Lambda \varepsilon, Q)-\Big(\frac{(x_1)_s}{\lambda}-1\Big)(\varepsilon_{y_1},Q)-\frac{(x_2)_s}{\lambda}(\varepsilon_{y_2},Q)-(\Psi_b,Q)+(R_b(\varepsilon)_{y_1},Q)+(R_{NL}(\varepsilon)_{y_1},Q)
\end{split} 
\end{equation*}

Using that $(\Lambda Q,Q)=(Q_{y_1},Q)=(Q_{y_2},Q)= 0,$ we notice that 
$$|(\Lambda Q_b,Q)|=|-b(\chi_bP,\Lambda Q)|\lesssim |b|^3$$
$$|((Q_b)_{y_1},Q)|=|-b(\chi_bP,Q_{y_1})|\lesssim |b|^3$$
$$|((Q_b)_{y_2},Q)|=|-b(\chi_bP,Q_{y_2})|\lesssim |b|^3$$
$$\Big((\chi_b+\gamma y_1(\chi_b)_{y_1})P,Q\Big)=(P,Q)+\Big([(1-\chi_b)+\gamma y_1(1-\chi_b)_{y_1}]P,Q\Big)$$
As 
\[
(1-\chi_b)+\gamma y_1(1-\chi_b)_{y_1}=\begin{cases}
1 &\quad \text{ on } (-\infty,-\frac{2}{|b|^{\gamma}}] \\
(1-\chi_b)+\gamma y_1(1-\chi_b)_{y_1} &\quad \text{ on } [-\frac{2}{|b|^{\gamma}},-\frac{1}{|b|^{\gamma}}]\\
0 &\quad \text{ on } [-\frac{1}{|b|^{\gamma}},\infty)\\
\end{cases}
\]
and $|(1-\chi_b)+\gamma y_1(1-\chi_b)_{y_1}|\lesssim 1+\gamma\|\chi_{y_1}\|_{L^{\infty}_{y_1}}=C(\chi,\gamma)$ then 
\begin{equation*}
\begin{split}
\Big|\int \int [(1-\chi_b)+\gamma y_1(1-\chi_b)_{y_1}]PQ\Big|&\leq \int \int_{(-\infty,-\frac{2}{|b|^{\gamma}}]}C(\chi,\gamma)e^{-\frac{|y_1|+|y_2|}{2}}\\&\lesssim \int e^{-\frac{|y_2|}{2}}dy_2   \int_{(-\infty,-\frac{2}{|b|^{\gamma}}] }e^{-\frac{|y_1|}{2}}dy_1\lesssim e^{-\frac{1}{|b|^{\gamma}}}\lesssim |b|^3
\end{split}
\end{equation*}
hence $\Big((\chi_b+\gamma y_1(\chi_b)_{y_1})P,Q\Big)=(P,Q)+O(|b|^3)=\frac{1}{4}\int_{-\infty}^{\infty}\Big(\int_{-\infty}^{\infty}\Lambda Qdy_1)^2dy_2+O(|b|^3).$ We notice that $$(\Psi_b,Q)=\Big((-\Delta Q_b+Q_b-Q_b^3)_{y_1}-b\Lambda Q_b,Q\Big)=b\Big([(LP)_{y_1}-\Lambda Q],Q\Big)-b^2\Big(\Lambda P+ ((3QP^2)_{y_1},Q\Big)+O(b^3)$$
$$=-b^2\Big(\Lambda P+ ((3QP^2)_{y_1},Q\Big)+O(b^3)$$

\textit{Claim}. The following holds $\Big(\Lambda P+ ((3QP^2)_{y_1},Q\Big)=\frac{c}{4}\int_{-\infty}^{\infty}\Big(\int_{-\infty}^{\infty}\Lambda Qdy_1)^2dy_2$ where $c$ is defined as \eqref{eq:definitionofc}. 

\textit{Proof of the Claim}. We prove in two ways the fact that $\Big(\Lambda P+ ((3QP^2)_{y_1},Q\Big)=-\frac{1}{2}\int_{-\infty}^{\infty}\lim_{y_1\rightarrow -\infty}P\Big(\int_{-\infty}^{\infty}\Lambda Qdy_1\Big)dy_2:$
\begin{itemize} 
\item[(1)] \textit{First proof of the identity.} 

We observe that $-(\Lambda P+ ((3QP^2)_{y_1},Q)=(P, \Lambda Q)+(3QP^2,Q_{y_1}),$ therefore 
$$(P,\Lambda Q)=(P, (LP)_{y_1})=(P, -P_{y_1y_1y_1})+(P,-P_{y_2y_2y_1})+(P,P_{y_1})+(P,-3(Q^2P)_{y_1})$$
$$=(P,-P_{y_2y_2y_1})-\int_{-\infty}^{\infty}\lim_{y_1\rightarrow -\infty}\frac{P^2}{2}dy_2+(P_{y_1},3Q^2P)$$
$$=\int_{-\infty}^{\infty}\lim_{y_1\rightarrow -\infty}\frac{P_{y_2y_2}P}{2}dy_2-\int_{-\infty}^{\infty}\lim_{y_1\rightarrow -\infty}\frac{P^2}{2}dy_2+\int \int 3Q^2\Big(\frac{P^2}{2}\Big)_{y_1}$$
$$=-\int_{-\infty}^{\infty}\lim_{y_1\rightarrow -\infty}\frac{-P_{y_2y_2}P+P^2}{2}dy_2-\int \int 3QQ_{y_1}P^2$$
$$=-\frac{1}{2}\int_{-\infty}^{\infty}\lim_{y_1\rightarrow -\infty}(-P_{y_2y_2}+P)\lim_{y_1\rightarrow -\infty}Pdy_2-\int \int 3QQ_{y_1}P^2$$
$$=\frac{1}{2}\int_{-\infty}^{\infty}\Big(\int_{-\infty}^{\infty}\Lambda Q dy_1\Big)\Big(\lim_{y_1\rightarrow -\infty}P\Big)dy_2-(3QP^2,Q_{y_1})$$
$$=\frac{1}{2}\int_{-\infty}^{\infty}\Big(\int_{-\infty}^{\infty}\Lambda Q dy_1\Big)\Big(\lim_{y_1\rightarrow -\infty}P\Big)dy_2-(3QP^2,Q_{y_1})$$
Hence $$-(\Lambda P+ (3QP^2)_{y_1},Q)=\frac{1}{2}\int_{-\infty}^{\infty}\Big(\int_{-\infty}^{\infty}\Lambda Q dy_1\Big)\Big(\lim_{y_1\rightarrow -\infty}P\Big)dy_2.$$

\item[(2)] \textit{Second proof of the identity.}

By the definition of $LP$ we have that $6QQ_{y_1}P=LP_{y_1}-(LP)_{y_1}$, hence 
$$ -((3QP^2)_{y_1},Q)=(3QP^2,Q_{y_1})=\frac{1}{2}(LP_{y_1}-(LP)_{y_1},P)=\frac{1}{2}(LP_{y_1},P)-\frac{1}{2}((LP)_{y_1},P)$$
$$-(\Lambda P,Q)=(P, \Lambda Q)=(P,(LP)_{y_1})$$ 
so $$-(\Lambda P+ ((3QP^2)_{y_1},Q)=\frac{1}{2}(LP_{y_1},P)+\frac{1}{2}((LP)_{y_1},P)$$
$$=\frac{1}{2}\int_{-\infty}^{\infty}\int_{-\infty}^{\infty} \partial_{y_1} (P \cdot LP)dy_1dy_2=\frac{1}{2}\int_{-\infty}^{\infty}(-\lim_{y_1\rightarrow -\infty}P\cdot LP)dy_2$$
$$=-\frac{1}{2}\int_{-\infty}^{\infty}\lim_{y_1\rightarrow -\infty}P\cdot \lim_{y_1\rightarrow -\infty}(-P_{y_2y_2}+P)dy_2$$
$$=\frac{1}{2}\int_{-\infty}^{\infty}\lim_{y_1\rightarrow -\infty}P\Big(\int_{-\infty}^{\infty}\Lambda Qdy_1\Big)dy_2$$
\end{itemize}

 Denote $\lim_{y_1\rightarrow -\infty}P=F(y_2)$ which satisfies  

 \begin{equation} \label{Feq}
 \begin{cases}
 -F_{y_2y_2}+F=-\int_{-\infty}^{\infty}\Lambda Qdy_1=g(y_2)\\
 \lim_{y_2\rightarrow -\infty}F(y_2)=\lim_{y_2\rightarrow \infty}F(y_2)=0.
 \end{cases}
 \end{equation}
 
  The system \eqref{Feq} implies uniqueness of the solution $F$ which satisfies $(\xi^2+1)\hat{F}(\xi)=\hat{g}(\xi)$ by taking it on the Fourier side. It follows that 
$$\frac{\frac{1}{2}\int_{-\infty}^{\infty}F\Big(-\int_{-\infty}^{\infty}\Lambda Q dy_1\Big)dy_2}{\frac{1}{4}\int_{-\infty}^{\infty}\Big(\int_{-\infty}^{\infty}\Lambda Q dy_1\Big)^2dy_2}=\frac{\int_{\mathbb{R}}\frac{2}{\xi^2+1}\hat{g}^2(\xi)d\xi}{\int_{\mathbb{R}} \hat{g}^2(\xi)d\xi}=c.$$
And the claim is proved. 

Using Lemma \ref{Qdecay} we oobtain  
$$|(\Lambda \varepsilon, Q)|=|-(\varepsilon, \Lambda Q)|\lesssim \Big(\int \int \varepsilon^2 e^{-\vartheta(y_1,y_2)}\Big)^{\frac{1}{2}} \lesssim \mathcal{M}^{\frac{1}{2}},$$

$$|(\varepsilon_{y_1},Q)|=|-(\varepsilon,Q_{y_1})|\lesssim \Big(\int \int \varepsilon^2 e^{-\vartheta(y_1,y_2)}\Big)^{\frac{1}{2}} \lesssim \mathcal{M}^{\frac{1}{2}},$$

$$|(\varepsilon_{y_2},Q)|=|-(\varepsilon,Q_{y_2})|\lesssim \Big(\int \int \varepsilon^2 e^{-\vartheta(y_1,y_2)}\Big)^{\frac{1}{2}} \lesssim \mathcal{M}^{\frac{1}{2}},$$ 

$$|(R_b(\varepsilon)_{y_1},Q)|=|(R_b(\varepsilon),Q_{y_1})|\lesssim|b|\int \int |\chi_bPQQ_{y_1}\varepsilon|+ b^2\int \int \chi_b^2P^2|Q_{y_1}\varepsilon|$$
$$\lesssim |b| \Big(\int \int \varepsilon^2e^{-\vartheta(y_1,y_2)}\Big)^{\frac{1}{2}}\lesssim  |b|\mathcal{M}^{\frac{1}{2}},$$

$$|(R_{NL}(\varepsilon)_{y_1},Q)|=|(R_{NL}(\varepsilon),Q_{y_1})|=$$

$$=\Big|3\int \int Q_b Q_{y_1}\varepsilon^2+\int \int\varepsilon^3 Q_{y_1}\Big| \lesssim \int \int\varepsilon^2 e^{-\vartheta(y_1,y_2)}+\int \int\varepsilon^3e^{-\vartheta(y_1,y_2)}$$
$$\lesssim \int \int\varepsilon^2 e^{-\vartheta(y_1,y_2)}+\|\varepsilon\|_{L^2}\int \int (|\nabla \varepsilon|^2+\varepsilon^2)e^{-\vartheta(y_1,y_2)}$$
$$\lesssim \mathcal{M}+\|\varepsilon\|_{L^2}\widetilde{\mathcal{M}}$$
(Here we used the Sobolev Inequality \eqref{SobolevLemma} with $\theta(y_1,y_2)=e^{-\vartheta(y_1,y_2)}$ which satisfies $|\theta_{y_1}|\leq \theta$ and $|\theta_{y_2}|\leq \theta$).

Therefore, using that $|\frac{\lambda_s}{\lambda}|\leq |\frac{\lambda_s}{\lambda}+b|+|b|$, we get
$$|b_s+cb^2|\frac{1}{4}\int_{-\infty}^{\infty}\Big(\int_{-\infty}^{\infty}\Lambda Q dy_1\Big)^2dy_2\lesssim \Big(\Big|\frac{\lambda_s}{\lambda}+b\Big|+\Big|\frac{(x_1)_s}{\lambda}-1\Big|+\Big|\frac{(x_2)_s}{\lambda}\Big|\Big)(|b|+\mathcal{M}^{\frac{1}{2}})$$
$$+|b|\mathcal{M}^{\frac{1}{2}}+\mathcal{M}+\|\varepsilon\|_{L^2}\widetilde{\mathcal{M}}+|b|^3,$$
so
\begin{equation}\label{eq:bsestprelim}
|b_s+cb^2|\lesssim \Big(\Big|\frac{\lambda_s}{\lambda}+b\Big|+\Big|\frac{(x_1)_s}{\lambda}-1\Big|+\Big|\frac{(x_2)_s}{\lambda}\Big|\Big)(|b|+\mathcal{M}^{\frac{1}{2}})+|b|\mathcal{M}^{\frac{1}{2}}+\mathcal{M}+\|\varepsilon\|_{L^2}\widetilde{\mathcal{M}}+|b|^3.
\end{equation}
In particular, we get 
\begin{equation}\label{eq:bestprelim}
|b_s|\lesssim \Big(\Big|\frac{\lambda_s}{\lambda}+b\Big|+\Big|\frac{(x_1)_s}{\lambda}-1\Big|+\Big|\frac{(x_2)_s}{\lambda}\Big|\Big)(|b|+\mathcal{M}^{\frac{1}{2}})+b^2+\mathcal{M}+\|\varepsilon\|_{L^2}\widetilde{\mathcal{M}}+|b|^3.
\end{equation}

\textit{Step 2.} By projecting the modulated equation on  $\varphi(y_1)Q_{y_2}$ and we use the orthogonality condition $(\varepsilon,\varphi(y_1)Q_{y_2})=0$, we get the following: 

\begin{equation*}
\begin{split} 
&\Big(\frac{\lambda_s}{\lambda}+b\Big)(\Lambda Q_b,\varphi(y_1)Q_{y_2})+\Big(\frac{(x_1)_s}{\lambda}-1\Big)((Q_b)_{y_1},\varphi(y_1)Q_{y_2})+\frac{(x_2)_s}{\lambda}((Q_b)_{y_2},\varphi(y_1)Q_{y_2})\\&-b_s((\chi_b+\gamma y_1(\chi_b)_{y_1})P,\varphi(y_1)Q_{y_2})
=-((L\varepsilon)_{y_1},\varphi(y_1)Q_{y_2})-\frac{\lambda_s}{\lambda}(\Lambda \varepsilon, \varphi(y_1)Q_{y_2})\\
&-\Big(\frac{(x_1)_s}{\lambda}-1\Big)\Big(\varepsilon_{y_1},\varphi(y_1)Q_{y_2}\Big)-\frac{(x_2)_s}{\lambda}(\varepsilon_{y_2},\varphi(y_1)Q_{y_2})-\Big(\Psi_b,Q_{y_2}\Big)\\
&+(R_b(\varepsilon)_{y_1},\varphi(y_1)Q_{y_2})+(R_{NL}(\varepsilon)_{y_1},\varphi(y_1)Q_{y_2})
\end{split} 
\end{equation*}

Using that $(\Lambda Q,\varphi(y_1)Q_{y_2})=(Q_{y_1},\varphi(y_1)Q_{y_2})= 0,$ (as $\Lambda Q, Q_{y_1}$ are even in $y_2$ and $\varphi(y_1)Q_{y_2}$ is odd in $y_2$), we have that 
$$|(\Lambda Q_b,\varphi(y_1)Q_{y_2})|=|-b(\chi_bP,\Lambda (\varphi(y_1)Q_{y_2}))|=|b||(\chi_bP,\varphi(y_1)\Lambda Q_{y_2})+(\chi_bP,y_1\varphi_{y_1}Q_{y_2})|.$$

Since $|\chi_b|\leq 1$ and using that $|P(y_1,y_2)|\lesssim e^{-\frac{|y_1|+|y_2|}{2}}$ for $y_1\in [0,\infty),$ and $\|P\|_{L^{\infty}_{y_1y_2}}\leq C$ for $y_1\in (-\infty,0],$
$$\Big|\int \int \chi_bP \varphi(y_1)\Lambda Q_{y_2}\Big|, \Big|\int \int \chi_bP y_1\varphi_{y_1}Q_{y_2}\Big|\lesssim 1.$$

Hence,
$$|(\Lambda Q_b,\varphi(y_1)Q_{y_2})|=|-b(\chi_bP,\Lambda (\varphi(y_1)Q_{y_2}))|\lesssim |b|$$
and by the same computations, 
$$|((Q_b)_{y_1},\varphi(y_1)Q_{y_2})|=|-b(\chi_bP,(\varphi(y_1)Q_{y_2})_{y_1})|\lesssim |b|.$$

Moreover 
$$((Q_b)_{y_2},\varphi(y_1)Q_{y_2})=(Q_{y_2},\varphi(y_1)Q_{y_2})-b(\chi_bP,\varphi(y_1)Q_{y_2y_2})$$
and by the same computations above
$$|b(\chi_bP,\varphi(y_1)Q_{y_2y_2})|\lesssim |b|.$$
Also, we obtain $((\chi_b+\gamma y_1(\chi_b)_{y_1})P,\varphi(y_1)Q_{y_2})=0$ as $P$ is even in $y_2.$

Take $\beta$ such that $1+\frac{1}{100}=\alpha_1>1+\frac{1}{200}=\alpha_2>\beta>\frac{\sqrt{2}}{\sqrt{\alpha_{2}^{2}+1}}\alpha_2>1$ and using from Lemma \ref{Qdecay} that $|Q_{y_2y_2}(y_1,y_2)|\lesssim e^{-\frac{|y_1|}{\beta}-\big(1-\frac{1}{\beta^2}\big)|y_2|}$ and observing that $\frac{1}{\alpha_1}+\frac{1}{\alpha_2}-\frac{2}{\beta}<0$ and $\frac{2}{\beta^2}-1-\frac{1}{\alpha_{2}^{2}}<0,$ then

$$|(\Lambda \varepsilon, \varphi(y_1)Q_{y_2})|=|-(\varepsilon, \Lambda (\varphi(y_1)Q_{y_2}))|\lesssim \mathcal{M}^{\frac{1}{2}}\Big(\int \int \varphi(y_1)^2Q_{y_2y_2}^{2}e^{\vartheta(y_1,y_2)}\Big)^{\frac{1}{2}}$$
$$\lesssim \mathcal{M}^{\frac{1}{2}}\Big(\int \int e^{\big(\frac{1}{\alpha_1}+\frac{1}{\alpha_2}-\frac{2}{\beta}\big)|y_1|+(\frac{2}{\beta^2}-1-\frac{1}{\alpha_{2}^{2}}\big)|y_2|}\Big)^{\frac{1}{2}}\lesssim \mathcal{M}^{\frac{1}{2}}.$$
Similarly, we get 
$$|(\varepsilon_{y_1},\varphi(y_1)Q_{y_2})|=|-(\varepsilon,(\varphi(y_1)Q_{y_2})_{y_1})| \lesssim \mathcal{M}^{\frac{1}{2}},$$
and 
$$|(\varepsilon_{y_2},\varphi(y_1)Q_{y_2})|=|-(\varepsilon,\varphi(y_1)Q_{y_2y_2})| \lesssim \mathcal{M}^{\frac{1}{2}},$$ 
and finally 
$$|( (L\varepsilon)_{y_1}, \varphi(y_1)Q_{y_2})|=|(\varepsilon, L((\varphi(y_1)Q_{y_2})_{y_1})|\lesssim \mathcal{M}^{\frac{1}{2}}.$$
We estimate the remaining terms 
\begin{equation*}
\begin{split}
|(\Psi_b,\varphi(y_1)Q_{y_2})|&\lesssim  |b|^{1+\gamma}\int \int 1_{[-\frac{2}{|b|^{\gamma}},-\frac{1}{|b|^{\gamma}}]}(y_1)e^{-\frac{|y_2|}{2}}|\varphi(y_1)Q_{y_2}|+b^2\lesssim b^2,
\end{split}
\end{equation*}

$$|(R_b(\varepsilon)_{y_1},\varphi(y_1)Q_{y_2})|=|(R_b(\varepsilon),(\varphi(y_1)Q_{y_2})_{y_1})|$$
$$\lesssim|b|\int \int |\chi_bPQ(\varphi(y_1)Q_{y_2})_{y_1}\varepsilon|+ b^2\int \int \chi_b^2P^2|(\varphi(y_1)Q_{y_2})_{y_1}\varepsilon|\lesssim  |b|\mathcal{M}^{\frac{1}{2}},$$

$$\Big|\Big(R_{NL}(\varepsilon)_{y_1},\varphi(y_1)Q_{y_2}\Big)\Big|=|(R_{NL}(\varepsilon),(\varphi(y_1)Q_{y_2})_{y_1})|=$$

$$=\Big|3\int \int Q_b (\varphi(y_1)Q_{y_2})_{y_1}\varepsilon^2+\int \int \varepsilon^3 (\varphi(y_1)Q_{y_2})_{y_1}\Big| \lesssim \int \int \varepsilon^2 e^{-\vartheta(y_1,y_2)}+\int \int \varepsilon^3e^{-\vartheta(y_1,y_2)}$$
$$\lesssim \int\int \varepsilon^2 e^{-\vartheta(y_1,y_2)}+\|\varepsilon\|_{L^2}\int \int (|\nabla \varepsilon|^2+\varepsilon^2)e^{-\vartheta(y_1,y_2)}$$
$$\lesssim \mathcal{M}+\|\varepsilon\|_{L^2}\widetilde{\mathcal{M}}$$
(Here we used the Sobolev Inequality \ref{SobolevLemma} as before).

Therefore, we get, also using that $|\frac{\lambda_s}{\lambda}|\leq |\frac{\lambda_s}{\lambda}+b|+|b|$,  
$$\Big|\frac{(x_2)_s}{\lambda}\Big||(Q_{y_2}, \varphi(y_1)Q_{y_2})|\lesssim \Big(\Big|\frac{\lambda_s}{\lambda}+b\Big|+\Big|\frac{(x_1)_s}{\lambda}-1\Big|+\Big|\frac{(x_2)_s}{\lambda}\Big|\Big)(|b|+\mathcal{M}^{\frac{1}{2}})+b^2+\mathcal{M}^{\frac{1}{2}}+\|\varepsilon\|_{L^2}\widetilde{\mathcal{M}}$$
so
\begin{equation} \label{eq:x2estprelim}
\Big|\frac{(x_2)_s}{\lambda}\Big|\lesssim \Big(\Big|\frac{\lambda_s}{\lambda}+b\Big|+\Big|\frac{(x_1)_s}{\lambda}-1\Big|+\Big|\frac{(x_2)_s}{\lambda}\Big|\Big)(|b|+\mathcal{M}^{\frac{1}{2}})+b^2+\mathcal{M}^{\frac{1}{2}}+\|\varepsilon\|_{L^2}\widetilde{\mathcal{M}}.
\end{equation}

\textit{Step 3}. By projecting the modulated equation on  $\varphi(y_1)\Lambda Q$ and using the orthogonality conditions $(\varepsilon,\varphi(y_1)\Lambda Q)=0$, thus we get the following: 

\begin{equation*}
\begin{split} 
&\Big(\frac{\lambda_s}{\lambda}+b\Big)(\Lambda Q_b,\varphi(y_1)\Lambda Q)+\Big(\frac{(x_1)_s}{\lambda}-1\Big)((Q_b)_{y_1},\varphi(y_1)\Lambda Q)+\frac{(x_2)_s}{\lambda}((Q_b)_{y_2},\varphi(y_1)\Lambda Q)\\&=b_s((\chi_b+\gamma y_1(\chi_b)_{y_1})P,\varphi(y_1)\Lambda Q)+(\varepsilon,L([\varphi(y_1)\Lambda Q]_{y_1}))-\frac{\lambda_s}{\lambda}(\Lambda \varepsilon, \varphi(y_1)\Lambda Q)\\&-\Big(\frac{(x_1)_s}{\lambda}-1\Big)(\varepsilon_{y_1},\varphi(y_1)\Lambda Q)-\frac{(x_2)_s}{\lambda}(\varepsilon_{y_2},\varphi(y_1)\Lambda Q)-(\Psi_b,\varphi(y_1)\Lambda Q)\\&+(R_b(\varepsilon)_{y_1},\varphi(y_1)\Lambda Q)+(R_{NL}(\varepsilon)_{y_1},\varphi(y_1)\Lambda Q).
\end{split} 
\end{equation*}

Using that $(Q_{y_2},\varphi(y_1)\Lambda Q)= 0,$ we have that 
$$|((Q_b)_{y_2},y_1Q_{y_1})|=|-b(\chi_bP,\varphi(y_1)Q_{y_1y_2})|\lesssim |b|.$$
Also
$$((Q_b)_{y_1},\varphi(y_1)\Lambda Q)=(Q_{y_1},\varphi(y_1)\Lambda Q)-b(\chi_bP,(\varphi(y_1)\Lambda Q)_{y_1})$$
with $|b(\chi_bP,(\varphi(y_1)\Lambda Q)_{y_1})|\lesssim |b|$
and 
$$((\Lambda Q_b)_{y_1},\varphi(y_1)\Lambda Q)=(\Lambda Q,\varphi(y_1)\Lambda Q)-b(\chi_bP,\Lambda (\varphi(y_1)\Lambda Q))$$
with $|b(\chi_bP,\Lambda (\varphi(y_1)\Lambda Q))|\lesssim |b|.$
We notice 
$$((\chi_b+\gamma y_1(\chi_b)_{y_1})P,\varphi(y_1)\Lambda Q)=(P,\varphi(y_1)\Lambda Q)+([(1-\chi_b)+\gamma y_1(1-\chi_b)_{y_1}]P,\varphi(y_1)\Lambda Q)$$
As 
\[
(1-\chi_b)+\gamma y_1(1-\chi_b)_{y_1}=\begin{cases}
1 &\quad \text{ on } (-\infty,-\frac{2}{|b|^{\gamma}}] \\
(1-\chi_b)+\gamma y_1(1-\chi_b)_{y_1} &\quad \text{ on } [-\frac{2}{|b|^{\gamma}},-\frac{1}{|b|^{\gamma}}]\\
0 &\quad \text{ on } [-\frac{1}{|b|^{\gamma}},\infty)\\
\end{cases}
\]
and $|(1-\chi_b)+\gamma y_1(1-\chi_b)_{y_1}|\lesssim 1+\gamma\|\chi_{y_1}\|_{L^{\infty}_{y_1}}=C(\chi,\gamma)$ then 
\begin{equation*}
\begin{split}
\Big|\int \int [(1-\chi_b)+\gamma y_1(1-\chi_b)_{y_1}]P\varphi(y_1)\Lambda Q\Big|&\leq \int \int_{(-\infty,-\frac{1}{|b|^{\gamma}}]}C(\chi,\gamma)e^{-\frac{|y_1|}{2}-\frac{|y_2|}{2}}\\&\lesssim \int e^{-\frac{|y_2|}{2}}dy_2   \int_{(-\infty,-\frac{1}{|b|^{\gamma}}] }e^{-\frac{|y_1|}{2}}dy_1\lesssim e^{-\frac{1}{2|b|^{\gamma}}}\lesssim |b|^3
\end{split}
\end{equation*}
hence $((\chi_b+\gamma y_1(\chi_b)_{y_1})P,\varphi(y_1)\Lambda Q)=(P,\varphi(y_1)\Lambda Q)+O(|b|^3).$
Similarly as we did before for the orthogonality $\varphi(y_1)Q_{y_2},$ we obtain 
$$|(\varepsilon,L([\varphi(y_1)\Lambda Q]_{y_1}))|\lesssim \Big(\int \int\varepsilon^2 e^{-\vartheta(y_1,y_2)}\Big)^{\frac{1}{2}} \lesssim \mathcal{M}^{\frac{1}{2}},$$

$$|(\Lambda \varepsilon, \varphi(y_1)\Lambda Q)|=|-(\varepsilon, \Lambda (\varphi(y_1)\Lambda Q))|\lesssim \Big(\int \int\varepsilon^2 e^{-\vartheta(y_1,y_2)}\Big)^{\frac{1}{2}} \lesssim \mathcal{M}^{\frac{1}{2}},$$

$$|(\varepsilon_{y_1},\varphi(y_1)\Lambda Q )|=|-(\varepsilon,(\varphi(y_1)\Lambda Q)_{y_1})|\lesssim \Big(\int \int\varepsilon^2 e^{-\vartheta(y_1,y_2)}\Big)^{\frac{1}{2}} \lesssim \mathcal{M}^{\frac{1}{2}},$$

$$|(\varepsilon_{y_2},\varphi(y_1)\Lambda Q)|=|-(\varepsilon,(\varphi(y_1)\Lambda Q)_{y_2})|\lesssim \Big(\int \int\varepsilon^2 e^{-\vartheta(y_1,y_2)}\Big)^{\frac{1}{2}} \lesssim \mathcal{M}^{\frac{1}{2}},$$ 

\begin{equation*}
\begin{split}
|(\Psi_b,\varphi(y_1)\Lambda Q)|&\lesssim  |b|^{1+\gamma}\int \int 1_{[-\frac{2}{|b|^{\gamma}},-\frac{1}{|b|^{\gamma}}]}(y_1)e^{-\frac{|y_2|}{2}}|\varphi(y_1)\Lambda Q|+b^2\\
& \lesssim |b|^{1+\gamma} \int_{[-\frac{2}{|b|^{\gamma}},-\frac{1}{|b|^{\gamma}}]}e^{-\frac{|y_1|}{2}}\int e^{-|y_2|}+b^2\lesssim |b|e^{-\frac{1}{2|b|^{\gamma}}}+b^2\lesssim b^2, 
\end{split}
\end{equation*}

$$|(R_b(\varepsilon)_{y_1},\varphi(y_1)\Lambda Q)|=|(R_b(\varepsilon),(\varphi(y_1)\Lambda Q)_{y_1})|$$
$$=|b|\int \int |\chi_bPQ(\varphi(y_1)\Lambda Q)_{y_1}\varepsilon|+ b^2\int \int \chi_b^2P^2|(\varphi(y_1)\Lambda Q)_{y_1}\varepsilon|\lesssim |b|\mathcal{M}^{\frac{1}{2}},$$

$$|(R_{NL}(\varepsilon)_{y_1},y_1\Lambda Q)|=|(R_{NL}(\varepsilon),(\varphi(y_1)\Lambda Q)_{y_1})|=$$

$$=\Big|3\int \int Q_b (\varphi(y_1)\Lambda Q)_{y_1}\varepsilon^2+\int \int\varepsilon^3 (\varphi(y_1)\Lambda Q)_{y_1}\Big| \lesssim \int \int \varepsilon^2 e^{-\vartheta(y_1,y_2)}+\int \int \varepsilon^3e^{-\vartheta(y_1,y_2)}$$
$$\lesssim \int \int \varepsilon^2 e^{-\vartheta(y_1,y_2)}+\|\varepsilon\|_{L^2}\int \int (|\nabla \varepsilon|^2+\varepsilon^2)e^{-\vartheta(y_1,y_2)}$$
$$\lesssim \mathcal{M}+\|\varepsilon\|_{L^2}\widetilde{\mathcal{M}}.$$
(Here we used the Sobolev Inequality \eqref{SobolevLemma} as before).

Together with the fact $|\frac{\lambda_s}{\lambda}|\leq |\frac{\lambda_s}{\lambda}+b|+|b|$, we get 
\begin{equation} \label{eq: x1estprelim} 
\begin{split}
&\Big|\Big(\frac{(x_1)_s}{\lambda}-1\Big)|(\varphi(y_1)\Lambda Q,Q_{y_1})+\Big(\frac{\lambda_s}{\lambda}+b\Big)(\varphi(y_1)\Lambda Q, \Lambda Q)\Big|\lesssim \mathcal{M}^{\frac{1}{2}}+b^2+\|\varepsilon\|_{L^2}\widetilde{\mathcal{M}}+\\
&+\Big(\Big|\frac{\lambda_s}{\lambda}+b\Big|+\Big|\frac{(x_1)_s}{\lambda}-1\Big|+\Big|\frac{(x_2)_s}{\lambda}\Big|\Big)(|b|+\mathcal{M}^{\frac{1}{2}})+|b_s|(|(P,\varphi(y_1)\Lambda Q)|+O(b^3)).
\end{split}
\end{equation}

\textit{Step 4.} By projecting the modulated equation on  $\varphi(y_1)Q_{y_1}$ and we use the orthogonality condition $(\varepsilon,\varphi(y_1)Q_{y_1})=0$, we get the following: 

\begin{equation*}
\begin{split} 
&\Big(\frac{\lambda_s}{\lambda}+b\Big)(\Lambda Q_b,\varphi(y_1)Q_{y_1})+\Big(\frac{(x_1)_s}{\lambda}-1\Big)((Q_b)_{y_1},\varphi(y_1)Q_{y_1})+\frac{(x_2)_s}{\lambda}\Big((Q_b)_{y_2},\varphi(y_1)Q_{y_1})\\
&=b_s((\chi_b+\gamma y_1(\chi_b)_{y_1})P,\varphi(y_1)Q_{y_1})+(\varepsilon,L(\varphi(y_1)Q_{y_1})_{y_1})-\frac{\lambda_s}{\lambda}(\Lambda \varepsilon, \varphi(y_1)Q_{y_1})\\&-\Big(\frac{(x_1)_s}{\lambda}-1\Big)(\varepsilon_{y_1},\varphi(y_1)Q_{y_1}\Big)-\frac{(x_2)_s}{\lambda}(\varepsilon_{y_2},\varphi(y_1)Q_{y_1})-(\Psi_b,\varphi(y_1)Q_{y_1})\\
&+(R_b(\varepsilon)_{y_1},\varphi(y_1)Q_{y_1})+(R_{NL}(\varepsilon)_{y_1},\varphi(y_1)Q_{y_1}).
\end{split} 
\end{equation*}

Using that $(Q_{y_2},\varphi(y_1)Q_{y_1})= 0,$ we have that 
$$|((Q_b)_{y_2},\varphi(y_1)Q_{y_1})|=|-b(\chi_bP,\varphi(y_1)Q_{y_1y_2})|\lesssim |b|.$$
Moreover
$$((Q_b)_{y_1},\varphi(y_1)Q_{y_1})=(Q_{y_1},\varphi(y_1)Q_{y_1})-b(\chi_bP,(\varphi(y_1)Q_{y_1})_{y_1})$$
with $|b(\chi_bP,(\varphi(y_1)Q_{y_1})_{y_1})|\lesssim |b|$
and $$|(\Lambda Q_b,\varphi(y_1)Q_{y_1})|=(\Lambda Q, \varphi(y_1)Q_{y_1})-b(\chi_bP,\Lambda (\varphi(y_1)Q_{y_1})$$
with $|b(\chi_bP,\Lambda (\varphi(y_1)Q_{y_1}))|\lesssim |b|.$ We notice 
$$\Big((\chi_b+\gamma y_1(\chi_b)_{y_1})P,\varphi(y_1)Q_{y_1}\Big)=(P,\varphi(y_1)Q_{y_1})+\Big([(1-\chi_b)+\gamma y_1(1-\chi_b)_{y_1}]P,\varphi(y_1)Q_{y_1}\Big).$$
As 
\[
(1-\chi_b)+\gamma y_1(1-\chi_b)_{y_1}=\begin{cases}
1 &\quad \text{ on } (-\infty,-\frac{2}{|b|^{\gamma}}] \\
(1-\chi_b)+\gamma y_1(1-\chi_b)_{y_1} &\quad \text{ on } [-\frac{2}{|b|^{\gamma}},-\frac{1}{|b|^{\gamma}}]\\
0 &\quad \text{ on } [-\frac{1}{|b|^{\gamma}},\infty)\\
\end{cases}
\]
and $|(1-\chi_b)+\gamma y_1(1-\chi_b)_{y_1}|\lesssim 1+\gamma\|\chi_{y_1}\|_{L^{\infty}_{y_1}}=C(\chi,\gamma)$ then 
\begin{equation*}
\begin{split}
\Big|\int \int [(1-\chi_b)+\gamma y_1(1-\chi_b)_{y_1}]P\varphi(y_1)Q_{y_1}\Big|&\leq \int \int_{(-\infty,-\frac{2}{|b|^{\gamma}}]}C(\chi,\gamma)e^{-\frac{|y_1|+|y_2|}{4}}\\&\lesssim \int e^{-|y_2|/4}dy_2   \int_{(-\infty,-\frac{1}{|b|^{\gamma}}] }e^{-|y_1|/4}dy_1\lesssim e^{-\frac{1}{4|b|^{\gamma}}}\lesssim |b|^3
\end{split}
\end{equation*}
hence $((\chi_b+\gamma y_1(\chi_b)_{y_1})P,\varphi(y_1)Q_{y_1})=(P,\varphi(y_1)Q_{y_1})+O(|b|^3).$
We continue with 
$$|(\varepsilon,L((\varphi(y_1)Q_{y_1})_{y_1}))|\lesssim \Big(\int \int\varepsilon^2 e^{-\vartheta(y_1,y_2)}\Big)^{\frac{1}{2}}= \mathcal{M}^{\frac{1}{2}},$$

$$|(\Lambda \varepsilon, \varphi(y_1)Q_{y_1})|=|-\big(\varepsilon, \Lambda (\varphi(y_1)Q_{y_1})\big)|\lesssim \Big(\int \int\varepsilon^2 e^{-\vartheta(y_1,y_2)}\Big)^{\frac{1}{2}}=\mathcal{M}^{\frac{1}{2}},$$

$$|(\varepsilon_{y_1},\varphi(y_1)Q_{y_1})|=|-(\varepsilon,(\varphi(y_1)y_1Q_{y_1})_{y_1})|\lesssim \Big(\int \int \varepsilon^2 e^{-\vartheta(y_1,y_2)}\Big)^{\frac{1}{2}}= \mathcal{M}^{\frac{1}{2}},$$

$$|(\varepsilon_{y_2},y_1Q_{y_1})|=|-(\varepsilon,\varphi(y_1)Q_{y_1y_2})|\lesssim \Big(\int  \int \varepsilon^2 e^{-\vartheta(y_1,y_2)}\Big)^{\frac{1}{2}} =\mathcal{M}^{\frac{1}{2}},$$ 

\begin{equation*}
\begin{split}
|(\Psi_b,\varphi(y_1)Q_{y_1})|& \lesssim  |b|^{1+\gamma}\int \int 1_{[-\frac{2}{|b|^{\gamma}},-\frac{1}{|b|^{\gamma}}]}(y_1)e^{-\frac{|y_2|}{2}}|\varphi(y_1)Q_{y_1}|+b^2\\
& \lesssim |b|^{1+\gamma} \int_{[-\frac{2}{|b|^{\gamma}},-\frac{1}{|b|^{\gamma}}]}e^{-\frac{|y_1|}{2}}\int e^{-\frac{|y_2|}{2}}+b^2 \lesssim |b|e^{-\frac{1}{2|b|^{\gamma}}}+b^2\lesssim b^2,
\end{split}
\end{equation*}

$$|(R_b(\varepsilon)_{y_1},\varphi(y_1)Q_{y_1})|=|(R_b(\varepsilon),(\varphi(y_1)Q_{y_1})_{y_1})|$$
$$=|b|\int \int |\chi_bPQ(y_1Q_{y_1})_{y_1}\varepsilon|+ b^2\int \int \chi_b^2P^2|(\varphi(y_1)Q_{y_1})_{y_1}\varepsilon|$$
$$\lesssim |b| \Big(\int \int \varepsilon^2e^{-\vartheta(y_1,y_2)}\Big)^{\frac{1}{2}}= |b|\mathcal{M}^{\frac{1}{2}},$$

$$|(R_{NL}(\varepsilon)_{y_1},\varphi(y_1)Q_{y_1})|=|(R_{NL}(\varepsilon),(\varphi(y_1)Q_{y_1})_{y_1})|=$$

$$=\Big|3\int \int Q_b (\varphi(y_1)Q_{y_1})_{y_1}\varepsilon^2+\int \int\varepsilon^3 (\varphi(y_1)Q_{y_1})_{y_1}\Big| \lesssim \int \int \varepsilon^2 e^{-\vartheta(y_1,y_2)}+\int \int \varepsilon^3e^{-\vartheta(y_1,y_2)}$$
$$\lesssim \int \int \varepsilon^2 e^{-\vartheta(y_1,y_2)}+\|\varepsilon\|_{L^2}\int \int (|\nabla \varepsilon|^2+\varepsilon^2)e^{-\vartheta(y_1,y_2)}$$
$$= \mathcal{M}+\|\varepsilon\|_{L^2}\widetilde{\mathcal{M}}$$
(Here we used the Sobolev Inequality \eqref{SobolevLemma} as before).

Therefore,using that $|\frac{\lambda_s}{\lambda}|\leq |\frac{\lambda_s}{\lambda}+b|+|b|$, we get 
\begin{equation}\label{eq:lambdaestprelim}
\begin{split}
&\Big|\Big(\frac{\lambda_s}{\lambda}+b\Big)(\Lambda Q, \varphi(y_1)Q_{y_1})+\Big(\frac{(x_1)_s}{\lambda}-1\Big)(\varphi(y_1)Q_{y_1},Q_{y_1})\Big|\lesssim \mathcal{M}^{\frac{1}{2}}+b^2+\|\varepsilon\|_{L^2}\widetilde{\mathcal{M}}\\
&+\Big(\Big|\frac{\lambda_s}{\lambda}+b\Big|+\Big|\frac{(x_1)_s}{\lambda}-1\Big|+\Big|\frac{(x_2)_s}{\lambda}\Big|\Big)(|b|+\mathcal{M}^{\frac{1}{2}})+|b_s|(|(P,\varphi(y_1)Q_{y_1})|+O(b^3)).
\end{split}
\end{equation}

From \eqref{eq:bestprelim} we can rewrite \eqref{eq: x1estprelim} and \eqref{eq:lambdaestprelim} as 

\begin{equation} \label{eq: x1estprelim2} 
\begin{split}
&\Big|\Big(\frac{(x_1)_s}{\lambda}-1\Big)|(\varphi(y_1)\Lambda Q,Q_{y_1})+\Big(\frac{\lambda_s}{\lambda}+b\Big)(\varphi(y_1)\Lambda Q, \Lambda Q)\Big|\\
&\lesssim \Big(\Big|\frac{\lambda_s}{\lambda}+b\Big|+\Big|\frac{(x_1)_s}{\lambda}-1\Big|+\Big|\frac{(x_2)_s}{\lambda}\Big|\Big)(|b|+\mathcal{M}^{\frac{1}{2}})+\mathcal{M}^{\frac{1}{2}}+b^2+\|\varepsilon\|_{L^2}\widetilde{\mathcal{M}}
\end{split}
\end{equation}
and 
\begin{equation}\label{eq:lambdaestprelim2}
\begin{split}
&\Big|\Big(\frac{\lambda_s}{\lambda}+b\Big)(\Lambda Q, \varphi(y_1)Q_{y_1})+\Big(\frac{(x_1)_s}{\lambda}-1\Big)(\varphi(y_1)Q_{y_1},Q_{y_1})\Big|\\
&\lesssim \Big(\Big|\frac{\lambda_s}{\lambda}+b\Big|+\Big|\frac{(x_1)_s}{\lambda}-1\Big|+\Big|\frac{(x_2)_s}{\lambda}\Big|\Big)(|b|+\mathcal{M}^{\frac{1}{2}})+\mathcal{M}^{\frac{1}{2}}+b^2+\|\varepsilon\|_{L^2}\widetilde{\mathcal{M}}.
\end{split}
\end{equation}
Denote $K=\Big(\Big|\frac{\lambda_s}{\lambda}+b\Big|+\Big|\frac{(x_1)_s}{\lambda}-1\Big|+\Big|\frac{(x_2)_s}{\lambda}\Big|\Big)(|b|+\mathcal{M}^{\frac{1}{2}})+\mathcal{M}^{\frac{1}{2}}+b^2+\|\varepsilon\|_{L^2}\widetilde{\mathcal{M}}$ and multiply \eqref{eq: x1estprelim2} with $|(\varphi(y_1)Q_{y_1}, Q_{y_1})|$ and multiply \eqref{eq:lambdaestprelim2} with $|(\varphi(y_1)\Lambda Q, Q_{y_1})|$ together with the triangle inequality it yields 
\begin{equation}\label{eq:lambdaest}
|\det M^*|\Big|\frac{\lambda_s}{\lambda}+b\Big|\lesssim K.
\end{equation}
Also, multiply \eqref{eq: x1estprelim2} with $|(\varphi(y_1)\Lambda Q, Q_{y_1})|$ and multiply \eqref{eq:lambdaestprelim2} with $|(\varphi(y_1)\Lambda Q, \Lambda Q)|$ together with the triangle inequality it yields 
\begin{equation}\label{eq:x1est}
|\det M^*|\Big|\frac{(x_1)_s}{\lambda}-1\Big|\lesssim K.
\end{equation}
From \eqref{eq:x2estprelim}, \eqref{eq:lambdaest} and \eqref{eq:x1est} together with the fact that $\det M^*\neq 0,$ we get there exists $C>0$ 
$$\Big|\frac{\lambda_s}{\lambda}+b\Big|+\Big|\frac{(x_1)_s}{\lambda}-1\Big|+\Big|\frac{(x_2)_s}{\lambda}\Big|\leq C 
\Big(\Big|\frac{\lambda_s}{\lambda}+b\Big|+\Big|\frac{(x_1)_s}{\lambda}-1\Big|+\Big|\frac{(x_2)_s}{\lambda}\Big|\Big)(|b|+\mathcal{M}^{\frac{1}{2}})+\mathcal{M}^{\frac{1}{2}}+b^2+\|\varepsilon\|_{L^2}\widetilde{\mathcal{M}}$$
and by taking $\hat{\nu}$ such that $C\hat{\nu}<\frac{1}{2}$ and since $|b|,\mathcal{M}<\hat{\nu}$ we obtain the final estimate
\begin{equation}\label{eq:x1x2lambda} 
\Big|\frac{\lambda_s}{\lambda}+b\Big|+|\Big|\frac{(x_1)_s}{\lambda}-1\Big|+\Big|\frac{(x_2)_s}{\lambda}\Big|\lesssim \mathcal{M}^{\frac{1}{2}}+b^2+\|\varepsilon\|_{L^2}\widetilde{\mathcal{M}}.
\end{equation} 

From \eqref{eq:bestprelim} and \eqref{eq:bsestprelim} together with \eqref{eq:x1x2lambda}, we get 
\begin{equation}\label{eq:b} 
|b_s|\lesssim \mathcal{M}+b^2+\|\varepsilon\|_{L^2}\widetilde{\mathcal{M}}
\end{equation} 
and 
\begin{equation}\label{eq:bs} 
|b_s+cb^2|\lesssim |b|\mathcal{M}^{\frac{1}{2}}+\mathcal{M}+|b|^3+\|\varepsilon\|_{L^2}\widetilde{\mathcal{M}}.
\end{equation} 
\end{proof}

\begin{lemma} \label{sharporthogonalities}
Define the weight 
\[
\Omega(y_1,y_2)=
\begin{cases} 
e^{-\frac{|y_1|+|y_2|}{4}}, y_1<-1\\
e^{-\frac{|y_2|}{4}}y_{1}^{2}, y_1>1.
\end{cases}
\]
and denote $\widehat{\mathcal{M}}(s)=\int \int \varepsilon^2(s)\Omega(y_1,y_2).$ Write 
$$c_Q=\frac{1}{4}\int_{-\infty}^{\infty}(\int_{-\infty}^{\infty}\Lambda Qdy_1)^2dy_2 \mbox{ and }\tilde{c}_Q=\frac{1}{4}\int_{-\infty}^{\infty}(\int_{-\infty}^{\infty}Q_{y_2}dy_1)^2dy_2$$ and denote $J(s)=(\varepsilon(s), \frac{1}{2c_Q}\int_{-\infty}^{y_1}\Lambda Q)$ and $\tilde{J}(s)=(\varepsilon(s), \frac{1}{2\tilde{c}_Q}\int_{-\infty}^{y_1}Q_{y_2})$. Under the assumptions on $\varepsilon$ from Lemma \ref{decompositionlemma} and the additional assumption that $\widehat{\mathcal{M}}(s)<+\infty$ on $s\in [0,s_0],$ we have that 
\begin{itemize} 
\item[a)] $|J(s)|, |\tilde{J}(s)|\lesssim \widehat{\mathcal{M}}(s)^{\frac{1}{2}};$
\item[b)] $\Big| \frac{d}{ds}J(s)+\frac{\lambda_s}{\lambda}J(s)-\Big(\frac{\lambda_s}{\lambda}+b\Big)\Big|\lesssim  b^2(s)+\widehat{\mathcal{M}}(s)+\delta(\nu^*)\widetilde{\mathcal{M}}(s);$
\item[c)] $\Big| \frac{d}{ds}\tilde{J}(s)+\frac{\lambda_s}{\lambda}\tilde{J}(s)-\frac{(x_2)_s}{\lambda}\Big|\lesssim  b^2(s)+\widehat{\mathcal{M}}(s)+\delta(\nu^*)\widetilde{\mathcal{M}}(s).$

\end{itemize}
\end{lemma}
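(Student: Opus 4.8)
The plan is to realize $J$ and $\tilde J$ as scalar products of $\varepsilon$ with the $y_1$-antiderivatives
\[
\Phi=\frac{1}{2c_Q}\int_{-\infty}^{y_1}\Lambda Q(y_1',y_2)\,dy_1',\qquad \tilde\Phi=\frac{1}{2\tilde c_Q}\int_{-\infty}^{y_1}Q_{y_2}(y_1',y_2)\,dy_1',
\]
so that $J=(\varepsilon,\Phi)$, $\tilde J=(\varepsilon,\tilde\Phi)$, and then to exploit: (i) $\Phi_{y_1}=\tfrac{1}{2c_Q}\Lambda Q$ and $\tilde\Phi_{y_1}=\tfrac{1}{2\tilde c_Q}Q_{y_2}$ are exponentially localized; (ii) $\Phi,\tilde\Phi$ are bounded, decay exponentially in $y_2$ (since $\int_{\mathbb R}\Lambda Q\,dy_1=y_2(\int_{\mathbb R}Q\,dy_1)_{y_2}$ and $\int_{\mathbb R}Q_{y_2}\,dy_1$ do) and decay exponentially as $y_1\to-\infty$, but only stay bounded as $y_1\to+\infty$; (iii) integrating $\Lambda Q$ (resp. $Q_{y_2}$) against its own $y_1$-antiderivative and using the definitions of $c_Q,\tilde c_Q$ gives $(\Lambda Q,\Phi)=(Q_{y_2},\tilde\Phi)=1$; (iv) $L\Lambda Q=-2Q$ and $LQ_{y_2}=0$. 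For (a) I would apply Cauchy--Schwarz with the weight $\Omega$: $|J|\le\hat{\mathcal M}^{1/2}\big(\int\int\Phi^2/\Omega\big)^{1/2}$, and check $\int\int\Phi^2/\Omega<\infty$ — on $\{y_1<-1\}$, $\Omega=e^{-(|y_1|+|y_2|)/4}$ while $\Phi^2$ decays exponentially in both variables, so the quotient is integrable; on $\{y_1>1\}$, $\Omega=y_1^2e^{-|y_2|/4}$ and $|\Phi|\lesssim e^{-|y_2|/2}$ give $\Phi^2/\Omega\lesssim y_1^{-2}e^{-3|y_2|/4}$, integrable over $\{y_1>1\}\times\mathbb R$; on $|y_1|\le1$ everything is bounded. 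The same works for $\tilde\Phi$. I will also use $\mathcal M\lesssim\hat{\mathcal M}$ throughout.

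\textbf{The two main terms in (b).} For (b), since $\Phi$ does not depend on $s$, $\tfrac{d}{ds}J=(\varepsilon_s,\Phi)$, and I would substitute the modulation equation \eqref{eq:ModulationEquation}. The linear term vanishes by the orthogonality $(\varepsilon,Q)=0$:
\[
\big((L\varepsilon)_{y_1},\Phi\big)=-(L\varepsilon,\Phi_{y_1})=-\tfrac{1}{2c_Q}(\varepsilon,L\Lambda Q)=\tfrac{1}{c_Q}(\varepsilon,Q)=0.
\]
The principal term $\big(\tfrac{\lambda_s}{\lambda}+b\big)(\Lambda Q_b,\Phi)$ equals $\big(\tfrac{\lambda_s}{\lambda}+b\big)(\Lambda Q,\Phi)+b\big(\tfrac{\lambda_s}{\lambda}+b\big)(\Lambda(\chi_bP),\Phi)=\big(\tfrac{\lambda_s}{\lambda}+b\big)+O\big(|b|\,|\tfrac{\lambda_s}{\lambda}+b|\big)$, the correction being controlled because $(\Lambda(\chi_bP),\Phi)=O(1)$ (the $(\chi_b)_{y_1}$-pieces are supported where $\Phi=O(e^{-c|b|^{-\gamma}})$). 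The drift term supplies the $-\tfrac{\lambda_s}{\lambda}J$ appearing in the statement: using $\Lambda^*=-\Lambda$ on $L^2(\mathbb R^2)$,
\[
\tfrac{\lambda_s}{\lambda}(\Lambda\varepsilon,\Phi)=-\tfrac{\lambda_s}{\lambda}(\varepsilon,\Lambda\Phi)=-\tfrac{\lambda_s}{\lambda}\big(\varepsilon,\Phi+y_1\Phi_{y_1}+y_2\Phi_{y_2}\big)=-\tfrac{\lambda_s}{\lambda}J-\tfrac{\lambda_s}{\lambda}\big(\varepsilon,y_1\Phi_{y_1}+y_2\Phi_{y_2}\big),
\]
and $y_1\Phi_{y_1},y_2\Phi_{y_2}$ being exponentially localized, the last term is $O\big(|\tfrac{\lambda_s}{\lambda}|\,\mathcal M^{1/2}\big)$.

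\textbf{Error terms and part (c).} All remaining terms of \eqref{eq:ModulationEquation} paired with $\Phi$ are errors of size $b^2+\hat{\mathcal M}+\delta(\nu^*)\tilde{\mathcal M}$, which I would establish using: the $y_2$-parity of $\Lambda Q,Q_{y_1}$ against $\Phi$ together with $(Q,\Lambda Q)=0$ for the $\big(\tfrac{(x_1)_s}{\lambda}-1\big)(Q_b)_{y_1}$ and $\tfrac{(x_2)_s}{\lambda}(Q_b)_{y_2}$ terms; $|b_s|\lesssim b^2+\mathcal M+\|\varepsilon\|_{L^2}\tilde{\mathcal M}$ from Lemma \ref{decompositionlemma} and boundedness of $(\chi_bP,\Phi)$ for the $-b_s(\chi_b+\gamma y_1(\chi_b)_{y_1})P$ term; the pointwise bounds \eqref{eq:estimatesPsib} for $\Psi_b$ (its $|b|^{1+\gamma}$-part sits where $\Phi$ is exponentially small); integration by parts onto $\Phi_{y_1}=\tfrac{1}{2c_Q}\Lambda Q$ together with the Sobolev Lemma \ref{SobolevLemma} for the $\varepsilon^3$ part of $R_{NL}(\varepsilon)_{y_1}$, plus $R_b(\varepsilon)_{y_1}$; and $|\tfrac{(x_1)_s}{\lambda}-1|+|\tfrac{(x_2)_s}{\lambda}|\lesssim b^2+\mathcal M^{1/2}+\|\varepsilon\|_{L^2}\tilde{\mathcal M}$ (Lemma \ref{decompositionlemma}) with $|\tfrac{\lambda_s}{\lambda}|\le|\tfrac{\lambda_s}{\lambda}+b|+|b|$ and $|b|\mathcal M^{1/2}\le b^2+\mathcal M$ for the linear-in-$\varepsilon$ terms $\big(\tfrac{(x_1)_s}{\lambda}-1\big)\varepsilon_{y_1}$ and $\tfrac{(x_2)_s}{\lambda}\varepsilon_{y_2}$. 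Collecting, $\tfrac{d}{ds}J=\big(\tfrac{\lambda_s}{\lambda}+b\big)-\tfrac{\lambda_s}{\lambda}J+O\big(b^2+\hat{\mathcal M}+\delta(\nu^*)\tilde{\mathcal M}\big)$, which is (b). Part (c) is the same computation with $\tilde\Phi$ in place of $\Phi$: the linear term vanishes directly since $((L\varepsilon)_{y_1},\tilde\Phi)=-\tfrac{1}{2\tilde c_Q}(\varepsilon,LQ_{y_2})=0$; the principal term is $\tfrac{(x_2)_s}{\lambda}(Q_{y_2},\tilde\Phi)=\tfrac{(x_2)_s}{\lambda}$; the term $\big(\tfrac{\lambda_s}{\lambda}+b\big)(\Lambda Q_b,\tilde\Phi)$ is an error because $(\Lambda Q,\tilde\Phi)=0$ by $y_2$-parity; and the drift term gives $-\tfrac{\lambda_s}{\lambda}\tilde J$.

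\textbf{Main obstacle.} The genuinely delicate point is that $\Phi,\tilde\Phi$ fail to decay as $y_1\to+\infty$, so none of the pairings defining $J,\tilde J$ or $\tfrac{d}{ds}J,\tfrac{d}{ds}\tilde J$ is absolutely convergent against a bare exponential weight; it is exactly the quadratic growth $y_1^2$ built into $\Omega$ on $\{y_1>1\}$, matched against the exponential $y_2$-decay of $\Phi,\tilde\Phi$ there, that makes these quantities controllable by $\hat{\mathcal M}^{1/2}$ and $\hat{\mathcal M}$ — this is precisely why the hypothesis $\hat{\mathcal M}(s)<+\infty$ is imposed and why the estimates are stated in terms of $\hat{\mathcal M}$ rather than $\mathcal M$. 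The secondary bookkeeping subtlety is that the drift term $\tfrac{\lambda_s}{\lambda}\Lambda\varepsilon$ contributes $-\tfrac{\lambda_s}{\lambda}J$ (resp. $-\tfrac{\lambda_s}{\lambda}\tilde J$), which is why the conclusion is phrased with this term kept on the left-hand side rather than as a direct bound on $\tfrac{d}{ds}J$.
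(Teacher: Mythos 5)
Your proposal is correct and follows essentially the same route as the paper's proof in Appendix B: part (a) by Cauchy--Schwarz against the weight $\Omega$ (whose $y_1^2$ growth for $y_1>1$ is exactly what compensates the non-decay of $\int_{-\infty}^{y_1}\Lambda Q$ in $y_1$), and parts (b), (c) by projecting the modulation equation onto the normalized antiderivatives, using $((L\varepsilon)_{y_1},\Phi)=-\tfrac{1}{2c_Q}(\varepsilon,L\Lambda Q)=\tfrac1{c_Q}(\varepsilon,Q)=0$ (resp. $LQ_{y_2}=0$), the normalization $(\Lambda Q,\Phi)=(Q_{y_2},\tilde\Phi)=1$, the drift term producing $-\tfrac{\lambda_s}{\lambda}J$ after the anti-self-adjointness of $\Lambda$, and parity/localization for the remaining error terms. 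The identification of the key points — the cancellations, the role of $\hat{\mathcal M}$, and why the drift is kept on the left — matches the paper's argument.
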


The proof of the lemma is included in Appendix B \ref{AppendixB}.
\begin{lemma} 
Estimates induced by the conservation laws: 
\begin{itemize}
\item[i)] 
\begin{equation}\label{eq:Qbconservation}
\Big|\int \int Q_b^2-\int \int Q^2 -2b(P,Q)\Big|\lesssim |b|^{2-\gamma}.
\end{equation}
\item[ii)] $$E(Q_b)=-b(P,Q)+O(b^2).$$
\item[iii)] 
\begin{equation}\label{eq:massconservation}
\|\varepsilon\|_{L^2}^2\lesssim |b|^{\frac{1}{2}}+\Big|\int \int u_0^2 -\int \int Q\Big|.
\end{equation}
\item[iv)] 
\begin{equation}\label{eq:energyconservation}
|2\lambda^2E_0+2b(P,Q)-\|\nabla \varepsilon\|_{L^2}^2|\lesssim  |b|^{\frac{3-\gamma}{2}}+\widetilde{\mathcal{M}}+(\| \varepsilon\|_{L^2}^2+|b|^{\frac{1-\gamma}{2}})\|\nabla \varepsilon\|_{L^2}^2.
\end{equation}
\end{itemize}
\end{lemma}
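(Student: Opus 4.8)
The plan is to obtain all four bounds by expanding the relevant explicit or conserved quantity and carefully tracking the interplay between powers of $b$ and the mild loss coming from the truncation $\chi_b$. The one structural feature driving every estimate is that $P\notin L^2(\mathbb{R}^2)$: only $P_{y_1}\in\mathcal{Y}$, while $P(y_1,\cdot)\to F$ as $y_1\to-\infty$ with $F$ Schwartz but not identically zero (indeed $F$ solves $-F''+F=-\int\Lambda Q\,dy_1\not\equiv 0$). Consequently $\chi_b P$ and its first derivatives are supported up to the strip $\{-2|b|^{-\gamma}\le y_1\le 0\}$, of width $\sim|b|^{-\gamma}$, on which the $y_2$-profile of $P$ is Schwartz, so $\|\chi_b P\|_{L^2}+\|\nabla(\chi_b P)\|_{L^2}\lesssim|b|^{-\gamma/2}$, whereas for $y_1>0$ everything is exponentially small. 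These two facts (width loss $|b|^{-\gamma/2}$, exponential decay elsewhere) will be used repeatedly.

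For (i) I would expand $Q_b^2=Q^2+2b\chi_b PQ+b^2\chi_b^2P^2$; since $Q$ decays exponentially and $1-\chi_b$ lives in $\{y_1\le-|b|^{-\gamma}\}$, replacing $\chi_b PQ$ by $PQ$ costs only $e^{-c|b|^{-\gamma}}=O(|b|^3)$, while $b^2\int\int\chi_b^2P^2\lesssim b^2|b|^{-\gamma}=|b|^{2-\gamma}$ by the width estimate. For (ii), $E$ is a quartic polynomial in its argument, so I expand $E(Q+b\chi_b P)$ about $Q$: the zeroth order term $E(Q)$ vanishes by the Pohozaev identities for the $L^2$-critical ground state ($\int|\nabla Q|^2=\tfrac12\int Q^4$); the first order term equals $b\big(\int\nabla Q\cdot\nabla(\chi_b P)-\int Q^3\chi_b P\big)=b\int(-\Delta Q-Q^3)\chi_b P=-b\int Q\chi_b P=-b(P,Q)+O(|b|^3)$ using $-\Delta Q+Q-Q^3=0$ and the exponential decay of $Q$; and the remaining $b^2$, $b^3$, $b^4$ terms are $O(|b|^{2-\gamma})$ by the norm bounds on $\chi_b P$ and $\nabla(\chi_b P)$. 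This gives $E(Q_b)=-b(P,Q)+O(|b|^{2-\gamma})$, which is the claim. For (iii) I would use conservation of mass and the fact that $v\mapsto\lambda u(\lambda\cdot+x)$ preserves the $L^2$ norm, so $\int\int u_0^2=\|Q_b\|_{L^2}^2+2(Q_b,\varepsilon)+\|\varepsilon\|_{L^2}^2$; the orthogonality $(\varepsilon,Q)=0$ reduces $(Q_b,\varepsilon)$ to $b(\chi_b P,\varepsilon)$, bounded crudely by $|b|\,\|\chi_b P\|_{L^2}\|\varepsilon\|_{L^2}\lesssim|b|^{1-\gamma/2}$ using the width estimate and the a priori smallness of $\|\varepsilon\|_{L^2}$; plugging (i) in for $\|Q_b\|_{L^2}^2$ and rearranging gives $\|\varepsilon\|_{L^2}^2=\big(\int\int u_0^2-\int\int Q^2\big)-2b(P,Q)+O(|b|^{2-\gamma})+O(|b|^{1-\gamma/2})$, and since $\gamma<1$ the largest small-$b$ error is $|b|^{1-\gamma/2}\le|b|^{1/2}$.

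For (iv), conservation of energy and the scaling law give $2\lambda^3E_0=2E(Q_b+\varepsilon)$; expanding the quartic energy and isolating $\|\nabla\varepsilon\|_{L^2}^2$ yields
\[
\|\nabla\varepsilon\|_{L^2}^2=2\lambda^3E_0-2E(Q_b)+2(\Delta Q_b+Q_b^3,\varepsilon)+3\int\int Q_b^2\varepsilon^2+2\int\int Q_b\varepsilon^3+\tfrac12\int\int\varepsilon^4,
\]
where I used $-\|\nabla Q_b\|_{L^2}^2+\tfrac12\int Q_b^4=-2E(Q_b)$ and an integration by parts in the cross term. Then I substitute (ii), $-2E(Q_b)=2b(P,Q)+O(|b|^{2-\gamma})$; I rewrite, via the ground state equation, $\Delta Q_b+Q_b^3=Q+b\Delta(\chi_b P)+3bQ^2\chi_b P+3b^2Q\chi_b^2P^2+b^3\chi_b^3P^3$, so $(Q,\varepsilon)=0$ removes the leading piece, $|2b(\Delta(\chi_b P),\varepsilon)|=|2b(\nabla(\chi_b P),\nabla\varepsilon)|\lesssim|b|^{1-\gamma/2}\|\nabla\varepsilon\|_{L^2}\lesssim|b|^{(1-\gamma)/2}\|\nabla\varepsilon\|_{L^2}^2+|b|^{(3-\gamma)/2}$ by Young, and the summands carrying a factor $Q$ (hence exponential decay in both variables) are controlled by $|b|\tilde{\mathcal{M}}^{1/2}+|b|^{3-\gamma/2}\|\varepsilon\|_{L^2}\lesssim\tilde{\mathcal{M}}+|b|^{(3-\gamma)/2}$. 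The remaining $\varepsilon$-terms are handled by the exponential weight of $\tilde{\mathcal{M}}$ together with $\|P\|_{L^\infty}<\infty$ for $3\int Q_b^2\varepsilon^2$, by Lemma~\ref{SobolevLemma} (for the $Q$ part) and the two-dimensional Gagliardo–Nirenberg inequality (for the $b\chi_b P$ part) for $2\int Q_b\varepsilon^3$, and by Gagliardo–Nirenberg, $\tfrac12\int\varepsilon^4\lesssim\|\varepsilon\|_{L^2}^2\|\nabla\varepsilon\|_{L^2}^2$. Collecting and using $|b|^{2-\gamma}+b^2\lesssim|b|^{(3-\gamma)/2}$ (valid since $\gamma<1$) and $\|\varepsilon\|_{L^2}\lesssim 1$ gives (iv).

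The main obstacle will be the non-integrability of $P$ in $y_1$ at $-\infty$, which makes the $H^1$ norm of $\chi_b P$ diverge like $|b|^{-\gamma/2}$: wherever $\chi_b P$, $\nabla(\chi_b P)$ or $\Delta(\chi_b P)$ is paired with $\varepsilon$, I must match it against the accompanying power of $b$ and split by Young's inequality with the \emph{precise} exponent $(1-\gamma)/2$, so that the unavoidable piece is exactly the term $|b|^{(1-\gamma)/2}\|\nabla\varepsilon\|_{L^2}^2$ displayed in (iv) and the genuine remainder is $|b|^{(3-\gamma)/2}$; the same tradeoff is what produces $|b|^{1-\gamma/2}\le|b|^{1/2}$ in (iii) and the $|b|^{2-\gamma}$ loss in (i)–(ii). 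The second place requiring care is the linear-in-$\varepsilon$ term in (iv): without the algebraic identity for $\Delta Q_b+Q_b^3$ coming from $-\Delta Q+Q-Q^3=0$, together with the orthogonality $(\varepsilon,Q)=0$, one would be left with an uncontrolled $O(\|\varepsilon\|_{L^2})$ contribution that cannot be absorbed into the right-hand side.
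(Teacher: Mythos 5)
Your proposal is correct and follows essentially the same route as the paper: expand $Q_b=Q+b\chi_bP$ in each conserved/explicit quantity, use $\Delta Q+Q-Q^3=0$ and the orthogonality $(\varepsilon,Q)=0$ to kill the linear-in-$\varepsilon$ terms, integrate by parts on $\Delta(\chi_bP)$, and trade the $|b|^{-\gamma/2}$ loss from $\|\chi_bP\|_{H^1}$ against the accompanying powers of $b$ via Young's inequality with exponent $(1-\gamma)/2$. One caveat on (ii): your remainder $O(|b|^{2-\gamma})$ is what the computation honestly yields (since $\|\partial_{y_2}(\chi_bP)\|_{L^2}^2\sim|b|^{-\gamma}$ because $P\to F(y_2)\not\equiv\mathrm{const}$ as $y_1\to-\infty$), so it does not literally give the stated $O(b^2)$; the paper's own proof glosses over exactly this term, and the weaker bound suffices for every downstream use (all of which only require absorption into $|b|^{\frac{3-\gamma}{2}}$), but you should not assert that $O(|b|^{2-\gamma})$ "is the claim" without noting the discrepancy.
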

\begin{proof}

Proof of $i)$  
$$\Big|\int \int Q_b^2-\int \int Q^2 -2b(P,Q)\Big|\leq 2b((1-\chi_b)P,Q)+b^2\int \int \chi_b^2P^2\leq |b|^{2-\gamma}+O(b^2)\leq |b|^{2-\gamma}$$
Proof of $ii)$ Recall the definition of energy, $E(u)=\frac{1}{2}\int \int |\nabla u|^2-\frac{1}{4}\int \int u^4.$
$$E[(Q_b+\varepsilon)(s)]=E(v(s))=\frac{\lambda^2(s)}{2}\int \int |\nabla u|^2-\frac{\lambda^2(s)}{4}\int \int u^4=\lambda^2(s)E(u)=\lambda^2(s)E_0$$
Also, using that $E(Q)=0$ and $\Delta Q+Q^3=Q,$ we have 
$$E(Q_b)=E(Q)-b\int \int \chi_bP\Delta Q -b\int \int \chi_bPQ^3 +O(b^2)=-b(P,Q)+O(b^2).$$

Proof of $iii).$ By the conservation of mass, 
$$\int \int (Q_b+\varepsilon)^2=\int \int u_0^2$$ 
so, using the orthogonality $(\varepsilon, Q)=0,$  
\begin{equation*}
\begin{split}
\int \int u_0^2-\int \int Q^2&=\int \int Q_b^2-\int \int Q^2+\int \int \varepsilon^2+2(\varepsilon, Q_b)\\
&=\int \int \varepsilon^2+b^2\int \int \chi_b^2P^2+2b\int \int \chi_bPQ+2(\varepsilon, Q)+b\int \int \varepsilon \chi_bP\\
&=\int \int \varepsilon^2+b^2\int \int \chi_b^2P^2+2b\int \int \chi_bPQ+b\int \int \varepsilon \chi_bP
\end{split}
\end{equation*}
Since, 
$$\int \int \chi_b^2P^2 \lesssim |b|^{-\gamma}, \text{  } \Big|\int \int \chi_bPQ\Big|\lesssim 1, \text{  } \Big|\int \int \varepsilon \chi_bP\Big|\lesssim |b|^{-\frac{\gamma}{2}} \|\varepsilon\|_{L^2}$$
we get 
$$\int \int \varepsilon^2\lesssim |b|+|b|^{2-\gamma}+|b|^{1-\frac{\gamma}{2}} \|\varepsilon\|_{L^2}+\Big|\int \int u_0^2-\int \int Q^2\Big|$$
and, using $|b(t)|\leq \hat{\nu}\ll 1$ we get 
$$\|\varepsilon\|_{L^2}^2 \leq C|b|+C|b|^{2-\gamma}+\frac{1}{2}\|\varepsilon\|_{L^2}^2+\Big|\int \int u_0^2-\int \int Q^2\Big|$$
so 
$$\|\varepsilon\|_{L^2}^2\lesssim |b|+\Big|\int \int u_0^2-\int \int Q^2\Big|.$$

Proof of $iv).$ Using $ii),$ i.e. $E(Q_b)=-b(P,Q)+O(b^2)$ and the orthogonality condition $(\varepsilon,Q)=0,$ we get 
\begin{equation*}
\begin{split}
\lambda^2E_0&=E(Q_b)+\frac{1}{2}\int \int |\nabla \varepsilon|^2-\int \int \varepsilon[\Delta(Q_b-Q)+(Q_b^3-Q^3)]-\frac{1}{4}\int \int [(Q_b+\varepsilon)^4-Q_b^4-4Q_b^3\varepsilon]\\
&=-b(P,Q)+O(b^2)+\frac{1}{2}\int \int |\nabla \varepsilon|^2+b\int \int \varepsilon \Delta(\chi_bP)+\int \int \varepsilon(Q_b^3-Q^3)\\
&-\frac{1}{4}\int \int [(Q_b+\varepsilon)^4-Q_b^4-4Q_b^3\varepsilon]\\
\end{split}
\end{equation*}
We have
$$ \Big|\int \int \varepsilon \Delta(\chi_bP)\Big|=\Big|\int \int \nabla \varepsilon\cdot \nabla(\chi_b P)\Big|\lesssim \int \int |\varepsilon_{y_1}(\chi_b)_{y_1}P|+\int \int |\varepsilon_{y_1}\chi_bP_{y_1}|+\int \int |\varepsilon_{y_2}\chi_bP_{y_2}|$$
We estimate each of the terms:
$$\int \int |\varepsilon_{y_1}(\chi_b)_{y_1}P|\lesssim |b|^{\frac{\gamma}{2}}\|\varepsilon_{y_1}\|_{L^2}\mbox{,  } \int \int |\varepsilon_{y_1}\chi_bP_{y_1}|\lesssim \widetilde{\mathcal{M}}^{\frac{1}{2}}\mbox{,  }\int \int |\varepsilon_{y_2}\chi_bP_{y_2}|\lesssim |b|^{-\frac{\gamma}{2}}\|\varepsilon_{y_2}\|_{L^2},$$
which yields
$$\Big|\int \int \varepsilon \Delta(b\chi_bP)\Big|\lesssim |b|\widetilde{\mathcal{M}}^{\frac{1}{2}}+|b|^{1+\frac{\gamma}{2}}\|\varepsilon_{y_1}\|_{L^2}+|b|^{1-\frac{\gamma}{2}}\|\varepsilon_{y_2}\|_{L^2}\lesssim |b|^{\frac{3-\gamma}{2}}+\widetilde{\mathcal{M}}+ |b|^{\frac{1-\gamma}{2}}\|\nabla\varepsilon\|_{L^2}^{2}.$$

The other term is 
$$\int \int |\varepsilon(Q_b^3-Q^3)|\lesssim b\int \int |\varepsilon Q^2\chi_bP|+b^2\int \int |\varepsilon Q(\chi_bP)^2|+|b|^3\int \int |\varepsilon (\chi_bP)^3|$$ 
We estimate each of the terms:
$$b\int \int |\varepsilon Q^2\chi_bP|\lesssim b\widetilde{\mathcal{M}}^{\frac{1}{2}}, b^2\int \int |\varepsilon Q(\chi_bP)^2|\lesssim  b^2\widetilde{\mathcal{M}}^{\frac{1}{2}}, |b|^3\int \int |\varepsilon (\chi_bP)^3|\lesssim |b|^{3-\gamma}\|\varepsilon\|_{L^2},$$
which yields
$$\int \int |\varepsilon(Q_b^3-Q^3)|\lesssim b\widetilde{\mathcal{M}}^{\frac{1}{2}}+|b|^{3-\gamma}\|\varepsilon\|_{L^2}.$$

For the last one, 
$$\int \int |(Q_b+\varepsilon)^4-Q_b^4-4Q_b^3\varepsilon|\leq \int \int Q^2\varepsilon^2+b^2\int \int \varepsilon^2(\chi_bP)^2+\int \int \varepsilon^4$$
$$\lesssim \widetilde{\mathcal{M}}+\|\varepsilon\|_{L^2}^2\|\nabla\varepsilon\|_{L^2}^2+b^2\|\varepsilon\|_{L^2}^{2}$$
Putting all the estimates together we conclude
$$\Big|\lambda^2E_0+b(P,Q)-\frac{1}{2}\int \int |\nabla \varepsilon|^2\Big|\lesssim |b|^{\frac{3-\gamma}{2}}+\widetilde{\mathcal{M}}+(\| \varepsilon\|_{L^2}^2+|b|^{\frac{1-\gamma}{2}})\|\nabla \varepsilon\|_{L^2}^2.$$
\end{proof}

\section{Monotonicity Formulas} \label{Monotonicity Formulas}

\textbf{Choice of weights.} For $i\geq 1,$ we choose the weights $\phi_{i,B}, \tilde{\phi}_{i,B}:\mathbb{R}^2\rightarrow \mathbb{R}$, 

\[   
\phi_{i,B}(y_1,y_2) = 
     \begin{cases}
      e^{\frac{y_1}{B}} \text{ for } y_1<-B \\
       1+\varphi(y_1)e^{-\frac{B}{2\alpha_1}} \text{ for } -\frac{B}{2}<y_1<\frac{B}{2}\\
       \frac{y_1^i}{B^i} \text{ for } y_1>B \\ 
     \end{cases}
\]

\[   
\tilde{\phi}_{i,B}(y_1,y_2) = 
     \begin{cases}
       0 \text{ for } y_1<\frac{B}{2}\\
       \frac{y_1^i}{B^i} \text{ for } y_1>B \\ 
     \end{cases}
\]

and with $\phi_{i,B}>0,$ $(\phi_{i,B})_{y_1}>0$ as $\varphi_{y_1}>0$. Also, let $\psi_B(y_1,y_2):\mathbb{R}^2\rightarrow \mathbb{R}$

\[   
\psi(y_1,y_2) = 
     \begin{cases}
      e^{\frac{2y_1}{B}} \text{ for } y_1<-B \\
       1 \text{  for } y_1>-\frac{B}{2}\\
     \end{cases}
\]

and with $(\psi_B)_{y_1}(y_1,y_2)\geq0.$  

By these definitions, we have for $\forall (y_1,y_2) \in \mathbb{R}^2,$ 
\begin{equation} 
\begin{split} 
|(\phi_{i,B})_{y_1y_1y_1}(y_1,y_2)|+|(\phi_{i,B})_{y_1y_1}(y_1,y_2)|&+|(\psi_B)_{y_1y_1y_1}(y_1,y_2)|+|y_1(\psi_B)_{y_1y_1y_1}(y_1,y_2)|+|\psi_B(y_1,y_2)|\\& \lesssim (\phi_{i,B})_{y_1}(y_1,y_2)\lesssim \phi_{i,B}(y_1,y_2).
\end{split} 
\end{equation}

\textbf{Defining the Norms.}
We define the following norms 
$$\mathcal{N}_i(s)=\int \int (\varepsilon_{y_1}^2+\varepsilon_{y_2}^2)\psi_B+\varepsilon^2\phi_{i,B},\mbox{     } \mathcal{N}_{i,loc}(s)=\int \int \varepsilon^2(\phi_{i,B})_{y_1},$$
$$ \widetilde{\mathcal{N}}_{i}(s)=\int \int (|\nabla\varepsilon|^2+\varepsilon^2)(\phi_{i,B})_{y_1}$$
and notice that, for $B\geq 400,$ we have 
$$\mathcal{M}(s)\leq e^{\frac{B}{2\alpha_1}}\mathcal{N}_{i,loc}(s), \mathcal{M}(s)\leq \widetilde{\mathcal{M}}(s)\leq e^{\frac{B}{2\alpha_1}}\widetilde{\mathcal{N}}_{i}(s),  \mathcal{M}(s)\leq \widetilde{\mathcal{M}}(s)\leq \mathcal{N}_{i}(s),$$
$$\widehat{\mathcal{M}}(s)\leq B^2\mathcal{N}_{i}(s), \text{ for }i\geq 2,$$
and finally, 
$$\widetilde{\mathcal{N}}_{i}(s)\leq \mathcal{N}_{i}(s)\leq e^{\frac{B}{\alpha_1}}\widetilde{\mathcal{N}}_{i}(s).$$

For $i,j$ with $i\geq 1$ and $i\geq j\geq 0$, we define the Lyapunov functional (mixed energy) by 

\begin{equation} 
\begin{split} 
\mathcal{F}_{i,j}=\int \int (\varepsilon_{y_1}^2+\varepsilon_{y_2}^2)\psi_B+\varepsilon^2 \phi_{i,B}+\frac{i-j}{i+j}\varepsilon^2\tilde{\phi}_{i,B}-\frac{1}{2}\Big((\varepsilon+Q_b)^4-Q_b^4-4Q_b^3\varepsilon\Big)\psi_B
\end{split} 
\end{equation} 

\begin{proposition}\label{Monotonicity} There exists $\mu>0,$ $B>0$ large enough (to be fixed later) and $0<\nu^*<\hat{\nu}$ such that the following holds: Suppose $\varepsilon(t,y_1,y_2)$ satisfies the modulation equation and the orthogonality conditions from Lemma \ref{eq:nualpha} on $[0,t_0]$ and the following a priori estimates hold on $[0,s_0]$ where $s(t_0)=s_0:$
\begin{equation}\label{H1}
\textbf{(H1)} \mbox{ } \|\varepsilon(s)\|_{L^2}+|b(s)|+\mathcal{N}_{2}(s)\leq \nu^*.    
\end{equation}

Then we have the following: 
\begin{itemize} 
\item[(i)] Lyapunov control: For $i\geq1$ and $i\geq j\geq 0,$
$$\frac{d}{ds}\Bigg\{\frac{\mathcal{F}_{i,j}}{\lambda^j}\Bigg\}+\frac{\mu}{\lambda^j} \int \int (\varepsilon_{y_1}^2+\varepsilon_{y_2}^2+\varepsilon^2)(\phi_{i,B})_{y_1}\lesssim \frac{|b|^4}{\lambda^j}.$$
\item[(ii)] Coercivity of $\mathcal{F}_{i,j}$: For $i\geq1$ and $i\geq j\geq 0,$
$$\mathcal{N}_i\lesssim \mathcal{F}_{i,j} \lesssim \mathcal{N}_i.$$
\end{itemize}
\end{proposition}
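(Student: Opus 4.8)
The plan is to prove (ii) first, since it is essentially algebraic, and then (i) by a monotonicity computation based directly on the modulation equation \eqref{eq:ModulationEquation}; constants will be fixed in the order $B$ large (depending only on $Q$ and the fixed profiles $P,\varphi$), then $\nu^*$ small (depending on $B$), and finally $\mu>0$ is read off at the end. For (ii), expand $(\varepsilon+Q_b)^4-Q_b^4-4Q_b^3\varepsilon=6Q_b^2\varepsilon^2+4Q_b\varepsilon^3+\varepsilon^4$, so that
\[
\mathcal{F}_{i,j}=\mathcal{N}_i+\frac{i-j}{i+j}\int\int\varepsilon^2\tilde\phi_{i,B}-3\int\int Q_b^2\varepsilon^2\psi_B-2\int\int Q_b\varepsilon^3\psi_B-\frac12\int\int\varepsilon^4\psi_B.
\]
Since $\psi_B$ depends only on $y_1$ and $|(\psi_B)_{y_1}|\le\psi_B$, Lemma \ref{SobolevLemma} with $\theta=\psi_B$ gives $\int\int\varepsilon^4\psi_B\lesssim\|\varepsilon\|_{L^2}^2\mathcal{N}_i$ and $\int\int|Q_b|\,\varepsilon^3\psi_B\lesssim\|\varepsilon\|_{L^2}\mathcal{N}_i$, both $\le\delta(\nu^*)\mathcal{N}_i$ under \eqref{H1}; and $0\le\tfrac{i-j}{i+j}<1$ with $\tilde\phi_{i,B}\le\phi_{i,B}$ forces $0\le\tfrac{i-j}{i+j}\int\int\varepsilon^2\tilde\phi_{i,B}\le\mathcal{N}_i$, which already yields $\mathcal{F}_{i,j}\lesssim\mathcal{N}_i$. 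For the lower bound, since $|Q_b^2-Q^2|\lesssim|b|e^{-|y_1|-|y_2|}$ contributes only $\delta(\nu^*)\mathcal{N}_i$, it suffices to show $\int\int(|\nabla\varepsilon|^2\psi_B+\varepsilon^2\phi_{i,B}-3Q^2\varepsilon^2\psi_B)\gtrsim\mathcal{N}_i$. Fix $R_0$ with $1\ll R_0\ll B$ and a cutoff $\zeta$ equal to $1$ on $B(0,R_0)$, supported in $B(0,2R_0)$, so that $\psi_B\equiv1$ and $\phi_{i,B}\asymp1$ on $\mathrm{supp}\,\zeta$: apply \eqref{eq:coercivitylemma} to $\zeta\varepsilon$, noting that $(\zeta\varepsilon,Q),(\zeta\varepsilon,\varphi(y_1)\Lambda Q),(\zeta\varepsilon,\varphi(y_1)Q_{y_1}),(\zeta\varepsilon,\varphi(y_1)Q_{y_2})$ differ from the vanishing inner products with $\varepsilon$ only by quantities supported in $\{|y_1|+|y_2|\ge R_0\}$, hence $\lesssim e^{-R_0/4}\mathcal{N}_i^{1/2}$, which are absorbed; on the complement $Q^2\le e^{-2R_0}$, so $3Q^2\varepsilon^2\psi_B\le\tfrac12\varepsilon^2\phi_{i,B}$. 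Summing the two regions and choosing $R_0$ large then $\nu^*$ small proves (ii).

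For (i), write $\varepsilon_s=(L\varepsilon)_{y_1}+\mathcal{R}$ with $\mathcal{R}$ the right-hand side of \eqref{eq:ModulationEquation}; differentiating $\mathcal{F}_{i,j}/\lambda^j$ splits into (a) the \emph{linear flux} — the contribution of $(L\varepsilon)_{y_1}$ to the quadratic and quartic parts of $\mathcal{F}_{i,j}$; (b) the \emph{drift} — the contribution of $\tfrac{\lambda_s}{\lambda}\Lambda\varepsilon$ together with $-j\tfrac{\lambda_s}{\lambda}\cdot\tfrac{\mathcal{F}_{i,j}}{\lambda^j}$ coming from $\tfrac{d}{ds}\lambda^{-j}$; and (c) the \emph{source} terms from $(\tfrac{\lambda_s}{\lambda}+b)\Lambda Q_b$, $(\tfrac{(x_1)_s}{\lambda}-1)(Q_b)_{y_1}$, $\tfrac{(x_2)_s}{\lambda}(Q_b)_{y_2}$, $\Phi_b$, $\Psi_b$, $R_b(\varepsilon)_{y_1}$, $R_{NL}(\varepsilon)_{y_1}$, plus the $(Q_b)_s$ terms from the quartic part. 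In (a), integrating by parts and using the cubic-derivative structure of $(L\varepsilon)_{y_1}$, the $\varepsilon^2\phi_{i,B}$ part of $\mathcal{F}_{i,j}$ produces the dissipative term $-3\int\int(\varepsilon_{y_1}^2+\varepsilon_{y_2}^2)(\phi_{i,B})_{y_1}-\int\int\varepsilon^2(\phi_{i,B})_{y_1}$ up to errors $\int\int\varepsilon^2|(\phi_{i,B})_{y_1y_1y_1}|$ and terms localized near $Q$, while the $|\nabla\varepsilon|^2\psi_B$ part contributes only errors weighted by $|(\psi_B)_{y_1y_1y_1}|+|y_1(\psi_B)_{y_1y_1y_1}|$; by the structural inequalities stated just before the definition of $\mathcal{N}_i$ (whose implicit constants are $O(B^{-1})$ on the exponential region), all these are $\le\delta(B)\int\int(\varepsilon_{y_1}^2+\varepsilon_{y_2}^2+\varepsilon^2)(\phi_{i,B})_{y_1}$, absorbed by shrinking the coefficient. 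The quartic part of $\mathcal{F}_{i,j}$ is tailored precisely so that the worst cubic-in-$\varepsilon$ interaction with $(L\varepsilon)_{y_1}$ cancels, leaving only harmless lower-order pieces. In (c), the modulation estimates \eqref{eq:modulatedcoefficients}, the bounds \eqref{eq:estimatesPsib} on $\Psi_b$ (of size $O(b^2)$), Lemma \ref{SobolevLemma}, and Cauchy--Schwarz bound every such term by $\tfrac{\mu}{10\lambda^j}\int\int(\varepsilon_{y_1}^2+\varepsilon_{y_2}^2+\varepsilon^2)(\phi_{i,B})_{y_1}+C\tfrac{b^4}{\lambda^j}$, the power $b^4$ being exactly the square of the size of $\Psi_b$ after completing the square.

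The main obstacle is (b). Directly, $\tfrac{\lambda_s}{\lambda}\Lambda\varepsilon$ contributes, after integration by parts, a quantity comparable to $|\tfrac{\lambda_s}{\lambda}|\,\mathcal{N}_i\sim|b|\,\mathcal{N}_i$, which is \emph{not} absorbable into the dissipation nor into $b^4/\lambda^j$, since in our regime only $|b|\ll\lambda^c$ with $c<2$ can be propagated rather than $|b|\ll\lambda^2$ as in \cite{MartelMerleRaphael1}. This is resolved by the two artificial ingredients of $\mathcal{F}_{i,j}/\lambda^j$, namely the weight $\tilde\phi_{i,B}$ with coefficient $\tfrac{i-j}{i+j}$ and the prefactor $\lambda^{-j}$: on $\{y_1>B\}$ the quadratic part of $\mathcal{F}_{i,j}$ equals $\tfrac{2i}{i+j}\int\int\varepsilon^2\tfrac{y_1^i}{B^i}$, and using $\tfrac{\lambda_s}{\lambda}=-b+O(b^2+\mathcal{M}^{1/2}+\dots)$ together with $\Lambda\varepsilon=\varepsilon+y_1\varepsilon_{y_1}+y_2\varepsilon_{y_2}$, the combination
\[
\frac{d}{ds}\Big\{\lambda^{-j}\int\int\varepsilon^2\tilde\phi_{i,B}\Big\}-j\frac{\lambda_s}{\lambda}\lambda^{-j}\int\int\varepsilon^2\phi_{i,B}
\]
reorganizes into $\lambda^{-i}\tfrac{d}{ds}\big(\lambda^i\int\int\varepsilon^2\tilde{\tilde\phi}_i\big)$ for a suitable weight $\tilde{\tilde\phi}_i$, i.e. the drift is absorbed into an exact derivative carrying a favourable power of $\lambda$, and the residue is again of the form $\delta(\nu^*)\tilde{\mathcal{N}}_i+C b^4/\lambda^j$. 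Collecting (a), (b), (c), absorbing all $\delta(B)$ and $\delta(\nu^*)$ errors, and fixing $\mu>0$ accordingly yields
\[
\frac{d}{ds}\Big\{\frac{\mathcal{F}_{i,j}}{\lambda^j}\Big\}+\frac{\mu}{\lambda^j}\int\int(\varepsilon_{y_1}^2+\varepsilon_{y_2}^2+\varepsilon^2)(\phi_{i,B})_{y_1}\lesssim\frac{|b|^4}{\lambda^j},
\]
which is assertion (i). The bulk of the technical work lies in carrying out the weight reorganization in (b) uniformly in $0\le j\le i$ and in verifying that every source and error term indeed fits under $\tfrac{\mu}{\lambda^j}\tilde{\mathcal{N}}_i+C\tfrac{b^4}{\lambda^j}$.
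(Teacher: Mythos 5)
Your part (ii) is essentially sound: the expansion of the quartic, the Sobolev bounds on the cubic and quartic pieces, and a localized application of \eqref{eq:coercivitylemma} all work, although the paper takes a cleaner route by applying the coercivity directly to $\varepsilon\sqrt{\psi_B}$ (which avoids commutators on $\{y_1>-B/2\}$ since $\psi_B\equiv 1$ there, and confines the orthogonality defects to $\{y_1<-B/2\}$ where they are $O(e^{-B/2})$). Your identification of the drift as the main obstacle in (i), and the reorganization of the $\tilde\phi_{i,B}$-term together with the $\lambda^{-j}$ prefactor into $\lambda^{-i}\frac{d}{ds}(\lambda^i\int\int\varepsilon^2\tilde{\tilde\phi}_i)$, is exactly the paper's mechanism for $f_2$.

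However, there are two genuine gaps in your treatment of part (i). First, in your step (c) you claim the source terms $(\tfrac{\lambda_s}{\lambda}+b)\Lambda Q_b$, $(\tfrac{(x_1)_s}{\lambda}-1)(Q_b)_{y_1}$, $\tfrac{(x_2)_s}{\lambda}(Q_b)_{y_2}$ are handled by \eqref{eq:modulatedcoefficients}, Lemma \ref{SobolevLemma} and Cauchy--Schwarz. This fails. The modulation estimates only give $|\tfrac{\lambda_s}{\lambda}+b|\lesssim \mathcal{M}^{1/2}+b^2+\dots$, i.e.\ a bound of order $\mathcal{N}_{i,loc}^{1/2}$, and the pairing $\int\int\Lambda Q\,\varepsilon(1-\phi_{i,B})$ is generically also of order $B^{-1/2}\mathcal{N}_{i,loc}^{1/2}$ with an implicit constant that is \emph{not} small in $B$; the product is therefore comparable to $\mathcal{N}_{i,loc}$ itself and cannot be absorbed into $\tfrac{\mu}{100B}\int\int(|\nabla\varepsilon|^2+\varepsilon^2)(\phi_{i,B})_{y_1}$. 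The paper's proof hinges on the cancellation
\[
\int\int_{\{|y_1|<\frac{B}{2}\}}\Lambda Q\Big(1+\frac{\varphi(y_1)}{B}-\phi_{i,B}\Big)\varepsilon=0,
\]
and its analogues for $Q_{y_1}$ and $Q_{y_2}$, which hold precisely because the orthogonality directions are $\varphi(y_1)\Lambda Q,\varphi(y_1)Q_{y_1},\varphi(y_1)Q_{y_2}$ and because $\phi_{i,B}$ is \emph{defined} to equal $1+\varphi(y_1)/B$ on $|y_1|<B/2$. This design of the weight around the orthogonality conditions is the key idea of the section, and the two explicit Remarks in the paper's Steps 2 and 3 show exactly that the naive Cauchy--Schwarz bound you propose is "much bigger than what we want." Second, in your step (a) the dissipativity of the linear flux in the central region $\{|y_1|<\tfrac{B}{2}\}$ is not a consequence of integration by parts plus absorbable errors: after integrating by parts one is left with $-\tfrac{1}{B}\int\int_{\{|y_1|<B/2\}}(3\varepsilon_{y_1}^2+\varepsilon_{y_2}^2+\varepsilon^2-3Q^2\varepsilon^2+6y_1QQ_{y_1}\varepsilon^2)$, in which the terms $-3Q^2\varepsilon^2+6y_1QQ_{y_1}\varepsilon^2$ are of order one relative to $\varepsilon^2$ near the soliton and cannot be "absorbed by shrinking the coefficient." Positivity of this quadratic form is the Virial-type coercivity of $-2y_1\partial_{y_1}L$ (Lemma \ref{Virial}, proved in Appendix C via a spectral property of the operator $1-3\partial_x^2-\partial_y^2-3Q^2+6xQQ_x$ taken from \cite{FarahHolmerRoudenkoYang}), and it is an independent, nontrivial input that your outline omits.
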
 
\begin{proof} 
Algebraic computations on $\mathcal{F}_{i,j}.$ First, we write $\mathcal{F}_{i,j}=\mathcal{F}_i+\frac{i-j}{i+j}\int \int\varepsilon^2\tilde{\phi}_{i,B}$ with 
$$\mathcal{F}_{i}=\int \int (\varepsilon_{y_1}^2+\varepsilon_{y_2}^2)\psi_B+\varepsilon^2 \phi_{i,B}-\frac{1}{2}\Big((\varepsilon+Q_b)^4-Q_b^4-4Q_b^3\varepsilon\Big)\psi_B$$

For $i\geq j\geq 0$ we have 
\begin{equation*} 
\begin{split} 
\lambda^j\frac{d}{ds}\Bigg\{\frac{\mathcal{F}_{i,j}}{\lambda^j}\Bigg\}=&2\int \int \psi_B (\varepsilon_{y_1})_s\varepsilon_{y_1}+\psi_B(\varepsilon_{y_2})_s\varepsilon_{y_2}+2\varepsilon_s\{\varepsilon\phi_{i,B}-\psi_B[(\varepsilon+Q_b)^3-Q_b^3]\}\\
& -2\int \int \psi_B (Q_b)_s[(Q_b+\varepsilon)^3-Q_b^3-3Q_b^2\varepsilon]-j\frac{\lambda_s}{\lambda}\mathcal{F}_{i}+\frac{i-j}{i+j}\lambda^j\frac{d}{ds}\Bigg\{\frac{\int \int \varepsilon^2\tilde{\phi}_{i,B}}{\lambda^j}\Bigg\}
\end{split} 
\end{equation*} 
\end{proof} 
 We use the modulated flow equation \eqref{eq:ModulationEquation} 
 \begin{equation*}
 \begin{split}
 \varepsilon_s-\frac{\lambda_s}{\lambda}\Lambda \varepsilon=&\Big(-\Delta \varepsilon +\varepsilon -[(Q_b+\varepsilon)^3-Q_b^3]\Big)_{y_1}+\Big(\frac{\lambda_s}{\lambda}+b\Big)\Lambda Q_b+ \Big(\frac{(x_1)_s}{\lambda}-1\Big)(Q_b+\varepsilon)_{y_1}\\&+ \Big(\frac{(x_2)_s}{\lambda}\Big)(Q_b+\varepsilon)_{y_2}+\Phi_b+\Psi_b
 \end{split} 
 \end{equation*} 
 where $\Psi_b=[(-\Delta Q_b+Q_b-Q_b)_{y_1}-b\Lambda Q_b]$ and $\Phi_b=-b_s(\chi_b+\gamma y_1(\chi_b)_{y_1})P,$ to split 

$$\lambda^j\frac{d}{ds}\Bigg\{\frac{\mathcal{F}_{i,j}}{\lambda^j}\Bigg\}=f_1+f_2+f_3$$
where 
$$f_1=2\int \int (\varepsilon_s-\frac{\lambda_s}{\lambda}\Lambda \varepsilon)\Big(-(\psi_B\varepsilon_{y_1})_{y_1}-(\psi_B\varepsilon_{y_2})_{y_2}+\varepsilon \phi_{i,B}-\psi_B[(Q_b+\varepsilon)^3-Q_b^3]\Big)$$
 $$f_2=2 \frac{\lambda_s}{\lambda}\int \int \Lambda \varepsilon  \Big(-(\psi_B\varepsilon_{y_1})_{y_1}-(\psi_B\varepsilon_{y_2})_{y_2}+\varepsilon \phi_{i,B}-\psi_B[(Q_b+\varepsilon)^3-Q_b^3]\Big)$$
 $$-j\frac{\lambda_s}{\lambda}\mathcal{F}_i+\frac{i-j}{i+j}\lambda^j\frac{d}{ds}\Bigg\{\frac{\int \int \varepsilon^2\tilde{\phi}_{i,B}}{\lambda^j}\Bigg\}$$
 $$f_3=-2\int \int \psi_B (Q_b)_s [(Q_b+\varepsilon)^3-Q_b^3-3Q_b^2\varepsilon].$$ 
 The meaning of this splitting is to differentiate between the terms on which the time derivative acts on $\varepsilon$, i.e. $f_{1},f_{2}$ and on $Q_b$, i.e. $f_{3}.$ We also treat separately the \textit{problematic} drift operator $\frac{\lambda_s}{\lambda}\Lambda\varepsilon$ in the term $f_{2}.$ 
 
\subsection{ \textbf{The computations for } \texorpdfstring{$f_1$}{Lg}:} 
 
We split $f_1$ using the modulation equation, 
$$f_1=f_{1,1}+f_{1,2}+f_{1,3}+f_{1,4}+f_{1,5}+f_{1,6}$$ with  

\begin{equation*} 
\begin{split} 
f_{1,1}=&2\int \int \{-\Delta \varepsilon + \varepsilon -[(\varepsilon+Q_b)^3-Q_b^3]\}_{y_1}\{-\Delta \varepsilon + \varepsilon -[(\varepsilon+Q_b)^3-Q_b^3]\}\psi_B +\\& 
2\int \int \{-\Delta \varepsilon + \varepsilon -[(\varepsilon+Q_b)^3-Q_b^3]\}_{y_1}\{-(\psi_B)_{y_1}\varepsilon_{y_1}-(\psi_B)_{y_2}\varepsilon_{y_2}+\varepsilon (\phi_{i,B}-\psi_B)\},
\end{split} 
\end{equation*} 

\begin{equation*} 
\begin{split} 
f_{1,2}=&2\Bigg( \frac{\lambda_s}{\lambda}+b\Bigg)\int \int \Lambda Q_b \Big(-(\psi_B\varepsilon_{y_1})_{y_1}-(\psi_B\varepsilon_{y_2})_{y_2}+\varepsilon \phi_{i,B}-\psi_B[(Q_b+\varepsilon)^3-Q_b^3]\Big),
 \end{split} 
\end{equation*} 
 
 \begin{equation*} 
\begin{split} 
f_{1,3}=&2\Bigg( \frac{(x_1)_s}{\lambda}-1\Bigg)\int \int  (Q_b+\varepsilon)_{y_1} \Big(-(\psi_B\varepsilon_{y_1})_{y_1}-(\psi_B\varepsilon_{y_2})_{y_2}+\varepsilon \phi_{i,B}-\psi_B[(Q_b+\varepsilon)^3-Q_b^3]\Big),
 \end{split} 
\end{equation*} 

 \begin{equation*} 
\begin{split} 
f_{1,4}=&2\Bigg( \frac{(x_2)_s}{\lambda}\Bigg)\int \int  (Q_b+\varepsilon)_{y_2} \Big(-(\psi_B\varepsilon_{y_1})_{y_1}-(\psi_B\varepsilon_{y_2})_{y_2}+\varepsilon \phi_{i,B}-\psi_B[(Q_b+\varepsilon)^3-Q_b^3]\Big),
 \end{split} 
\end{equation*} 
 
  \begin{equation*} 
\begin{split} 
f_{1,5}=&-2b_s \int \int (\chi_b+\gamma y_1 (\chi_b)_{y_1})P \Big(-(\psi_B\varepsilon_{y_1})_{y_1}-(\psi_B\varepsilon_{y_2})_{y_2}+\varepsilon \phi_{i,B}-\psi_B[(Q_b+\varepsilon)^3-Q_b^3]\Big),
 \end{split} 
\end{equation*} 
\begin{equation*}
\begin{split} 
f_{1,6}=&2\int \int \Psi_b \Big(-(\psi_B\varepsilon_{y_1})_{y_1}-(\psi_B\varepsilon_{y_2})_{y_2}+\varepsilon \phi_{i,B}-\psi_B[(Q_b+\varepsilon)^3-Q_b^3]\Big).
 \end{split} 
\end{equation*} 

\textbf{Step 1: Estimating} $f_{1,1}$

We will prove that 
$$f_{1,1} \leq -\mu\int \int [(\psi_B)_{y_1}(\varepsilon_{y_1y_1}^2+\varepsilon_{y_2y_2}^2)+(\phi_{i,B})_{y_1}(\varepsilon_{y_1}^2+\varepsilon_{y_2}^2+\varepsilon^2)]$$

Recall 
\begin{equation*} 
\begin{split} 
f_{1,1}=&2\int \int \{-\Delta \varepsilon + \varepsilon -[(\varepsilon+Q_b)^3-Q_b^3]\}_{y_1}\{-\Delta \varepsilon + \varepsilon -[(\varepsilon+Q_b)^3-Q_b^3]\}\psi_B +\\& 
2\int \int \{-\Delta \varepsilon + \varepsilon -[(\varepsilon+Q_b)^3-Q_b^3]\}_{y_1}\{-(\psi_B)_{y_1}\varepsilon_{y_1}-(\psi_B)_{y_2}\varepsilon_{y_2}+\varepsilon (\phi_{i,B}-\psi_B)\}\\
&=f_{1,1}^{(i)}+f_{1,1}^{(ii)}
\end{split} 
\end{equation*} 

We have that 
 \begin{equation*} 
 \begin{split} 
&f_{1,1}^{(i)}=2 \int \int \{-\Delta \varepsilon + \varepsilon -[(\varepsilon+Q_b)^3-Q_b^3]\}_{y_1}\{-\Delta \varepsilon + \varepsilon -[(\varepsilon+Q_b)^3-Q_b^3]\}\psi_B \\
&=\int \int \partial_{y_1}\{(-\Delta \varepsilon + \varepsilon -[(\varepsilon+Q_b)^3-Q_b^3])^2\}\psi_B\\
&=-\int \int (-\Delta \varepsilon + \varepsilon -[(\varepsilon+Q_b)^3-Q_b^3])^2 (\psi_B)_{y_1}\\
&=-\int \int (\psi_B)_{y_1}[-\Delta \varepsilon+ \varepsilon]^2-\int \int (\psi_B)_{y_1}\{[-\Delta \varepsilon +\varepsilon - ((Q_b+\varepsilon)^3-Q_b^3)]^2-[-\Delta \varepsilon +\varepsilon]^2\}\\
\end{split} 
\end{equation*} 

(effectively, we isolate the large term $-\Delta \varepsilon +\varepsilon$ and the small term $(Q_b+\varepsilon)^3-Q_b^3=O(\mathcal{N}_{i}^{\frac{1}{2}}+|b|))$. We get that  

\begin{equation*} 
\begin{split} 
&-\int \int (\psi_B)_{y_1}[-\Delta \varepsilon+ \varepsilon]^2=-\int \int (\psi_B)_{y_1}(\varepsilon_{y_1y_1}^2+\varepsilon_{y_2y_2}^2+\varepsilon^2+2\varepsilon_{y_1y_1}\varepsilon_{y_2y_2}-2\varepsilon \varepsilon_{y_1y_1}-2\varepsilon \varepsilon_{y_2y_2})
\end{split} 
\end{equation*}
 
 and 
 $$2\int \int (\psi_B)_{y_1}\varepsilon \varepsilon_{y_1y_1}=-2\int \int (\psi_B)_{y_1}\varepsilon_{y_1}^2+\int \int (\psi_B)_{y_1y_1y_1}\varepsilon^2$$
 $$2\int \int (\psi_B)_{y_1}\varepsilon \varepsilon_{y_2y_2}=-2\int \int (\psi_B)_{y_1}\varepsilon_{y_2}^2+\int \int (\psi_B)_{y_1y_2y_2}\varepsilon^2$$
 \begin{equation*}
 \begin{split}
 -2\int \int (\psi_B)_{y_1}\varepsilon_{y_1y_1} \varepsilon_{y_2y_2}&=-2\int \int (\psi_B)_{y_1}\varepsilon_{y_1y_2}^2+\int \int (\psi_B)_{y_1y_2y_2}\varepsilon_{y_1}^2\\&+\int \int (\psi_B)_{y_1y_2y_2}\varepsilon_{y_1}^2-2\int \int (\psi_B)_{y_1y_1y_2}\varepsilon_{y_1}\varepsilon_{y_2}
 \end{split}
 \end{equation*} 
 
 hence 
\begin{equation*} 
\begin{split} 
 -\int \int (\psi_B)_{y_1}[-\Delta \varepsilon+ \varepsilon]^2&= -\int \int (\psi_B)_{y_1}(\varepsilon_{y_1y_1}^2+\varepsilon_{y_2y_2}^2+2\varepsilon_{y_1}^2+2\varepsilon_{y_2}^2)+\int \int \varepsilon^2[-(\psi_B)_{y_1}+\Delta(\psi_B)_{y_1}]\\
 &-2\int \int (\psi_B)_{y_1}\varepsilon_{y_1y_2}^2-2\int \int (\psi_B)_{y_1y_1y_2}\varepsilon_{y_1}\varepsilon_{y_2}
\end{split} 
\end{equation*}
 
 Now we look at 
 \begin{equation*} 
 \begin{split} 
 f_{1,1}^{(ii)}&=2\int \int \{-\Delta \varepsilon + \varepsilon -[(\varepsilon+Q_b)^3-Q_b^3]\}_{y_1}\{-(\psi_B)_{y_1}\varepsilon_{y_1}-(\psi_B)_{y_2}\varepsilon_{y_2}+\varepsilon (\phi_{i,B}-\psi_B)\}\\
 &=-2\int \int (-\varepsilon_{y_1y_1}-\varepsilon_{y_2y_2}+\varepsilon)\{-(\psi_B)_{y_1}\varepsilon_{y_1}-(\psi_B)_{y_2}\varepsilon_{y_2}+\varepsilon (\phi_{i,B}-\psi_B)\}_{y_1}\\&-2\int \int [(Q_b+\varepsilon)^3-Q_b^3]_{y_1}\{-(\psi_B)_{y_1}\varepsilon_{y_1}-(\psi_B)_{y_2}\varepsilon_{y_2}+\varepsilon (\phi_{i,B}-\psi_B)\}
 \end{split}
 \end{equation*}
 
 with 
  \begin{equation*} 
 \begin{split} 
-&2\int \int (-\varepsilon_{y_1y_1})\{-(\psi_B)_{y_1}\varepsilon_{y_1}-(\psi_B)_{y_2}\varepsilon_{y_2}+\varepsilon (\phi_{i,B}-\psi_B)\}_{y_1}\\
&=-2\int \int (-\varepsilon_{y_1y_1})(-(\psi_B)_{y_1y_1}\varepsilon_{y_1}-(\psi_B)_{y_1}\varepsilon_{y_1y_1}-(\psi_B)_{y_1y_2}\varepsilon_{y_2}-(\psi_B)_{y_2}\varepsilon_{y_1y_2})
\\&+\int \int (-\varepsilon_{y_1y_1})(\varepsilon_{y_1}(\phi_{i,B}-\psi_B)+\varepsilon((\phi_{i,B})_{y_1}-(\psi_B)_{y_1}))\\
&=-2\int \int (\psi_B)_{y_1}\varepsilon_{y_1y_1}^2+\int \int \varepsilon_{y_1}^2[\Delta(\psi_B)_{y_1}-3(\phi_{i,B}-\psi_B)_{y_1}]+\int \int \varepsilon^2(\phi_{i,B}-\psi_B)_{y_1y_1y_1}\\&+2\int \int (\psi_B)_{y_1y_1y_2}\varepsilon_{y_1}\varepsilon_{y_2}-2\int \int (\psi_B)_{y_2}\varepsilon_{y_1y_1}\varepsilon_{y_1y_2}
 \end{split}
 \end{equation*}
also 
  \begin{equation*} 
 \begin{split} 
-&2\int \int (-\varepsilon_{y_2y_2})\{-(\psi_B)_{y_1}\varepsilon_{y_1}-(\psi_B)_{y_2}\varepsilon_{y_2}+\varepsilon (\phi_{i,B}-\psi_B)\}_{y_1}\\
&=\int \int \varepsilon_{y_1}^2(\psi_B)_{y_1y_1y_2}+\int \int \varepsilon_{y_2}^2[-(\psi_B)_{y_1y_1y_1}-(\phi_{i,B}-\psi_B)_{y_1}]+\int \int \varepsilon^2(\phi_{i,B}-\psi_B)_{y_1y_2y_2}\\&-2\int \int (\psi_B)_{y_1}\varepsilon_{y_1y_2}^2-2\int \int (\psi_B)_{y_1y_1y_2}\varepsilon_{y_1}\varepsilon_{y_2}-2\int \int (\psi_B)_{y_2}\varepsilon_{y_1y_2}\varepsilon_{y_2y_2}
\end{split}
\end{equation*}
and, finally 
  \begin{equation*} 
 \begin{split} 
-&2\int \int \varepsilon \{-(\psi_B)_{y_1}\varepsilon_{y_1}-(\psi_B)_{y_2}\varepsilon_{y_2}+\varepsilon (\phi_{i,B}-\psi_B)\}_{y_1}\\
 &=-2\int \int \varepsilon_{y_1}^2(\psi_B)_{y_1}+\int \int \varepsilon^2[(\psi_B)_{y_1y_1y_2}-(\psi_B)_{y_1y_2y_2}-(\phi_{i,B}-\psi_B)_{y_1}]-2\int \int \varepsilon_{y_1}\varepsilon_{y_2}(\psi_B)_{y_2}
 \end{split}
 \end{equation*}
 
 Using the identity 
 \begin{equation*} 
 \begin{split} 
 -\Big[(Q_b+\varepsilon)^p-Q_b^p]_{y_1}\varepsilon&=\Big(\frac{(Q_b+\varepsilon)^{p+1}}{p+1}-\frac{Q_b^{p+1}}{p+1}-(Q_b+\varepsilon)^p\varepsilon\Big)_{y_1}\\
 &-\Big((Q_b+\varepsilon)^p-Q_b^p-pQ_b^{p-1}\varepsilon\Big)(Q_b)_{y_1}
 \end{split} 
 \end{equation*}
 hence we get that (with $p=3$) 
 \begin{equation*} 
 \begin{split} 
 -2\int \int \Big[(Q_b+\varepsilon)^3-Q_b^3]_{y_1}\varepsilon (\phi_{i,B}-\psi_B)&=-\frac{1}{2}\int \int (\phi_{i,B}-\psi_B)_{y_1}\Big((Q_b+\varepsilon)^{4}-Q_b^{4}-4(Q_b+\varepsilon)^3\varepsilon\Big)\\&-2\int \int (\phi_{i,B}-\psi_B)\Big((Q_b+\varepsilon)^3-Q_b^3-3Q_b^{2}\varepsilon\Big)(Q_b)_{y_1}
 \end{split}
 \end{equation*}
 
 therefore, putting all the computations together, we get 
 
 \begin{equation*} 
 \begin{split} 
 f_{1,1}&=-\int \int (\psi_B)_{y_1}(3\varepsilon_{y_1y_1}^2+\varepsilon_{y_2y_2}^2)+\int \int \varepsilon_{y_1}^2[-(\psi_B)_{y_1}-3(\phi_{i,B})_{y_1}+\Delta(\psi_B)_{y_1}+(\psi_B)_{y_1y_1y_2}]\\
 &+\int \int \varepsilon_{y_2}^2[-(\psi_B)_{y_1}-(\phi_{i,B})_{y_1}-(\psi_B)_{y_1y_1y_1}]+\int \int \varepsilon^2[-(\phi_{i,B})_{y_1}+\Delta(\phi_{i,B})_{y_1}]\\
 &-4\int \int (\psi_B)_{y_1}\varepsilon_{y_1y_2}^2-2\int \int \varepsilon_{y_1}\varepsilon_{y_2}[(\psi_B)_{y_2}+(\psi_B)_{y_1y_1y_2}]-2\int \int (\psi_B)_{y_2}\varepsilon_{y_1y_2}\Delta \varepsilon\\
 &-\int \int (\psi_B)_{y_1}\{[-\Delta \varepsilon +\varepsilon - ((Q_b+\varepsilon)^3-Q_b^3)]^2-[-\Delta \varepsilon +\varepsilon]^2\}\\
 &-\frac{1}{2}\int \int (\phi_{i,B}-\psi_B)_{y_1}\Big((Q_b+\varepsilon)^{4}-Q_b^{4}-4(Q_b+\varepsilon)^3\varepsilon\Big)\\&-2\int \int (\phi_{i,B}-\psi_B)\Big((Q_b+\varepsilon)^3-Q_b^3-3Q_b^{2}\varepsilon\Big)(Q_b)_{y_1}\\
 &-2\int \int [(Q_b+\varepsilon)^3-Q_b^3]_{y_1}\{-(\psi_B)_{y_1}\varepsilon_{y_1}-(\psi_B)_{y_2}\varepsilon_{y_2}\} 
 \end{split}
 \end{equation*}
 
 and, using that both $\phi_{i,B}$ and $\psi_B$ are independent of $y_2$, 
  \begin{equation*} 
 \begin{split} 
 f_{1,1}&=-\int \int (\psi_B)_{y_1}(3\varepsilon_{y_1y_1}^2+\varepsilon_{y_2y_2}^2)+\int \int \varepsilon_{y_1}^2[-(\psi_B)_{y_1}-3(\phi_{i,B})_{y_1}+(\psi_B)_{y_1y_1y_1}]\\
 &+\int \int \varepsilon_{y_2}^2[-(\psi_B)_{y_1}-(\phi_{i,B})_{y_1}-(\psi_B)_{y_1y_1y_1}]+\int \int \varepsilon^2[-(\phi_{i,B})_{y_1}+(\phi_{i,B})_{y_1y_1y_1}]-4\int \int (\psi_B)_{y_1}\varepsilon_{y_1y_2}^2\\
 &-\int \int (\psi_B)_{y_1}\{[-\Delta \varepsilon +\varepsilon - ((Q_b+\varepsilon)^3-Q_b^3)]^2-[-\Delta \varepsilon +\varepsilon]^2\}\\
 &-\frac{1}{2}\int \int (\phi_{i,B}-\psi_B)_{y_1}\Big((Q_b+\varepsilon)^{4}-Q_b^{4}-4(Q_b+\varepsilon)^3\varepsilon\Big)\\
 &-2\int \int (\phi_{i,B}-\psi_B)\Big((Q_b+\varepsilon)^3-Q_b^3-3Q_b^{2}\varepsilon\Big)(Q_b)_{y_1}+2\int \int [(Q_b+\varepsilon)^3-Q_b^3]_{y_1}(\psi_B)_{y_1}\varepsilon_{y_1}\\
 &=f_{1,1}^{<}+f_{1,1}^{\sim}+f_{1,1}^{>}
 \end{split}
 \end{equation*}
 where the three terms denote integration in the regions  $\{y_1<-\frac{B}{2}\},$ $\{|y_1|\leq \frac{B}{2}\},$ $\{y_1>\frac{B}{2}\}.$
 
On the intervals $I_1=\{y_1 < -\frac{B}{2}\}$, $I_2=\{y_1 >\frac{B}{2}\},$ we have 
 $$\int \int_{I_i} \varepsilon^2 |(\phi_{i,B})_{y_1y_1y_1}|\leq \frac{1}{B^2}  \int \int_{I_i} \varepsilon^2 (\phi_{i,B})_{y_1}$$
 $$\int \int_{I_i} \varepsilon_{y_1}^2 |(\psi_B)_{y_1y_1y_1}|\leq \frac{1}{B^2}  \int \int_{I_i} \varepsilon_{y_1}^2 (\phi_{i,B})_{y_1}$$
 $$ \int \int_{I_i} \varepsilon_{y_2}^2 |(\psi_B)_{y_1y_1y_1}|\leq \frac{1}{B^2}  \int \int_{I_i} \varepsilon_{y_2}^2 (\phi_{i,B})_{y_1}.$$
 
\textbf{\textit{The region $y_1 <-\frac{B}{2}.$}}

\textit{Estimates of the term} $\int \int_{y_1<-\frac{B}{2}}  (\phi_{i,B}-\psi_B)_{y_1}\Big((Q_b+\varepsilon)^{4}-Q_b^{4}-4(Q_b+\varepsilon)^3\varepsilon\Big):$

Since $$|(Q_b+\varepsilon)^{4}-Q_b^{4}-4(Q_b+\varepsilon)^3\varepsilon|\lesssim \varepsilon^4+Q_b^2\varepsilon^2$$ and 
$|(\phi_{i,B})_{y_1}-(\psi_B)_{y_1}|\leq (\phi_{i,B})_{y_1}$
 we have that 
 \begin{equation*}
 \begin{split}
&|\int \int_{y_1<-\frac{B}{2}}  (\phi_{i,B}-\psi_B)_{y_1}\Big((Q_b+\varepsilon)^{4}-Q_b^{4}-4(Q_b+\varepsilon)^3\varepsilon\Big)|\\&\leq \int \int_{y_1<-\frac{B}{2}}  |(\phi_{i,B}-\psi_B)_{y_1}||(Q_b+\varepsilon)^{4}-Q_b^{4}-4(Q_b+\varepsilon)^3\varepsilon|\\
 &\leq \int \int_{y_1<-\frac{B}{2}}  (\phi_{i,B})_{y_1}(\varepsilon^4+Q_b^2\varepsilon^2)\\
 &\leq \int \int_{y_1<-\frac{B}{2}}   (\phi_{i,B})_{y_1}\varepsilon^4+\int \int_{y_1<-\frac{B}{2}}  (\phi_{i,B})_{y_1}Q_b^2\varepsilon^2
 \end{split}
 \end{equation*}
 
 From the Sobolev Lemma \eqref{SobolevLemma} with $\theta=\phi_{i,B}1_{\{y_1<-\frac{B}{2}\}}$ (since it satisfies $|\theta_{y_1}|\leq \theta$, $\theta_{y_2}=0$) 
 $$\int \int_{y_1<-\frac{B}{2}}   (\phi_{i,B})_{y_1}\varepsilon^4\lesssim \|\varepsilon\|_{L^2}^2\int \int_{y_1<-\frac{B}{2}}(\varepsilon^2+|\nabla \varepsilon|^2)(\phi_{i,B})_{y_1} \lesssim \delta(\nu^{*})\int \int_{y_1<-\frac{B}{2}}(\varepsilon^2+|\nabla \varepsilon|^2)(\phi_{i,B})_{y_1}$$
 and since $\|Q_b\|^2_{L^{\infty}_{y_1,y_2}}\lesssim e^{-B}+|b|$ for $y_1<-\frac{B}{2}$, 
 $$\int \int_{y_1<-\frac{B}{2}} (\phi_{i,B})_{y_1}Q_b^2\varepsilon^2 \lesssim (e^{-B}+|b|)\int \int_{y_1<-\frac{B}{2}}(\phi_{i,B})_{y_1}\varepsilon^2$$
 hence 
$$|\int \int_{y_1<-\frac{B}{2}}  (\phi_{i,B}-\psi_B)_{y_1}\Big((Q_b+\varepsilon)^{4}-Q_b^{4}-4(Q_b+\varepsilon)^3\varepsilon\Big)|\lesssim (\delta(\nu^{*})+e^{-B}+|b|))\int \int_{y_1<-\frac{B}{2}}(\phi_{i,B})_{y_1}(\varepsilon^2+|\nabla \varepsilon|^2)|$$
 
 \textit{Estimates of the term} $\int \int_{\{y_1<-\frac{B}{2}\}} (\phi_{i,B}-\psi_B)\Big((Q_b+\varepsilon)^3-Q_b^3-3Q_b^{2}\varepsilon\Big)(Q_b)_{y_1}$
 
 Since 
 $$|(Q_b+\varepsilon)^3-Q_b^3-3Q_b^{2}\varepsilon|\lesssim |\varepsilon|^3+|Q_b|\varepsilon^2$$
 and for $y_1 <-\frac{B}{2}$ we have 
 \begin{itemize}
 \item $|\phi_{i,B}-\psi_B|\leq B(\phi_{i,B})_{y_1}$
 \item  $|Q_b|\leq |Q|+|b||P\chi_b|\leq e^{-\frac{B}{4}}+C|b|$
 \item $|(Q_b)_{y_1}|\leq |Q_{y_1}|+|b||(P\chi_b)_{y_1}|\leq e^{-\frac{B}{4}}+C|b|$
 \end{itemize} 
hence 
 \begin{equation*}
 \begin{split} 
& \int \int_{\{y_1<-\frac{B}{2}\}} |\psi_B-\phi_{i,B}| |(Q_b+\varepsilon)^3-Q_b^3-3Q_b^{2}\varepsilon| |(Q_b)_{y_1}|\\& \leq B \int \int_{\{y_1<-\frac{B}{2}\}} (\phi_{i,B})_{y_1}(|\varepsilon|^3+|Q_b|\varepsilon^2)(|Q_{y_1}|+|b||(P\chi_b)_{y_1}|)
 \end{split} 
 \end{equation*}
and, by the Sobolev lemma \eqref{SobolevLemma}, 
  \begin{equation*}
 \begin{split} 
 B \int \int_{\{y_1<-\frac{B}{2}\}} (\phi_{i,B})_{y_1}|\varepsilon|^3(|Q_{y_1}|+|b||(P\chi_b)_{y_1}|)\lesssim B(e^{-\frac{B}{4}}+C|b|)\|\varepsilon\|_{L^2}\int \int_{\{y_1<-\frac{B}{2}\}} (\phi_{i,B})_{y_1}(|\varepsilon|^2+|\nabla \varepsilon|^2) 
  \end{split} 
 \end{equation*}
 and 
   \begin{equation*}
 \begin{split} 
 B \int \int_{\{y_1<-\frac{B}{2}\}} (\phi_{i,B})_{y_1}|Q_b|\varepsilon^2(|Q_{y_1}|+|b||(P\chi_b)_{y_1}|)\lesssim B(e^{-\frac{B}{4}}+C|b|)^2 \int \int_{\{y_1<-\frac{B}{2}\}} (\phi_{i,B})_{y_1}|\varepsilon|^2
  \end{split} 
 \end{equation*} 
 hence 
   \begin{equation*}
 \begin{split} 
 &\Big|\int \int_{y_1<-\frac{B}{2}}  (\phi_{i,B}-\psi_B)_{y_1}\Big((Q_b+\varepsilon)^{4}-Q_b^{4}-4(Q_b+\varepsilon)^3\varepsilon\Big)\Big|\\& \lesssim B(e^{-\frac{B}{4}}+C|b|)(\|\varepsilon\|_{L^2}+e^{-\frac{B}{4}}+C|b|)\int \int_{\{y_1<-\frac{B}{2}\}} (\phi_{i,B})_{y_1}(|\varepsilon|^2+|\nabla \varepsilon|^2)
   \end{split} 
 \end{equation*} 
 
 \begin{lemma} \label{SobolevLemma2}
 Suppose that $u \in H^1(\mathbb{R}^2)$ and a positive function $\theta \in L^2(\mathbb{R}^2)$ such that $|\theta_{x_1}|\leq \theta $, $|\theta_{x_2}|\leq \theta$, $|\theta_{x_1x_1}|\leq \theta $, $|\theta_{x_2x_2}|\leq \theta$. 
 Let $$A_1=\int \int u^2 u_{x_1}^2\theta +\int \int u^2 u_{x_2}^2\theta + \int \int u^4\theta$$ 
 $$A_2=\int \int u_{x_1x_1}^2\theta  + \int \int u_{x_2x_2}^2\theta +\int \int u^2\theta.$$
 Then we have $A_1 \lesssim \|u\|^2_{L^2}A_2$ and $\int \int u^6\theta\lesssim \|u\|_{L^2}^{4}A_1.$
 \end{lemma}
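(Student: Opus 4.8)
The plan is to reduce both estimates to the weighted Gagliardo--Nirenberg inequality of Lemma \ref{SobolevLemma}, supplemented by integrations by parts that trade one derivative for a factor of the weight; this is affordable precisely because of the hypotheses $|\theta_{x_i}|\leq\theta$ and $|\theta_{x_ix_i}|\leq\theta$, which give in particular $|\partial_{x_i}\sqrt{\theta}|\leq\tfrac12\sqrt{\theta}$. Throughout I would first assume $u$ smooth and compactly supported, so that every integral below is finite and all boundary terms vanish, and then recover the general case by density, since the final constants depend only on $\|u\|_{L^2}$, $A_1$ and $A_2$.

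\noindent\emph{The sixth-power estimate.} First I would treat $\int\int u^{6}\theta$. Viewing $f=u^{3}\sqrt{\theta}$ as a function of $x_1$ along a horizontal line and using $f(-\infty,x_2)=0$ with $|\partial_{x_1}\sqrt{\theta}|\leq\tfrac12\sqrt{\theta}$, integration of $f_{x_1}$ gives the pointwise bound $|u^{3}\sqrt{\theta}|(x_1,x_2)\leq G(x_2):=\int_{\mathbb{R}}\big(3u^{2}|u_{x_1}|+\tfrac12|u|^{3}\big)\sqrt{\theta}\,dx_1'$, and symmetrically $|u^{3}\sqrt{\theta}|(x_1,x_2)\leq H(x_1)$ with $\partial_{x_1}$ replaced by $\partial_{x_2}$. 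Then $u^{6}\theta\leq G(x_2)H(x_1)$, so by Fubini $\int\int u^{6}\theta\leq\big(\int_{\mathbb{R}}G\big)\big(\int_{\mathbb{R}}H\big)$, and Cauchy--Schwarz applied to each factor (peeling off one bare $|u|$ to pair with the unweighted $L^2$ norm and sending the rest to the weighted norm) bounds $\int_{\mathbb{R}}G$ and $\int_{\mathbb{R}}H$ by $\lesssim\|u\|_{L^2}\big((\int\int u^{2}u_{x_1}^{2}\theta)^{1/2}+(\int\int u^{4}\theta)^{1/2}\big)\lesssim\|u\|_{L^2}A_1^{1/2}$, which yields $\int\int u^{6}\theta\lesssim\|u\|_{L^2}^{2}A_1$ and hence the claimed sixth-power bound. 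This mirrors the argument proving Lemma \ref{SobolevLemma} in Appendix A.

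\noindent\emph{The first inequality.} Next I would control the first derivatives: integrating by parts, $\int\int u_{x_1}^{2}\theta=-\int\int u\,u_{x_1x_1}\theta-\int\int u\,u_{x_1}\theta_{x_1}$, and Cauchy--Schwarz on the first term together with $|\theta_{x_1}|\leq\theta$ on the second, followed by absorbing the resulting $\int\int u_{x_1}^{2}\theta$, yield $\int\int(u_{x_1}^{2}+u_{x_2}^{2})\theta\lesssim A_2$. Feeding this into Lemma \ref{SobolevLemma} gives $\int\int u^{4}\theta\lesssim\|u\|_{L^2}^{2}\int\int(u^2+u_{x_1}^2+u_{x_2}^2)\theta\lesssim\|u\|_{L^2}^{2}A_2$, disposing of the $u^4$ piece of $A_1$. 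For $\int\int u^{2}u_{x_1}^{2}\theta$ I would integrate by parts in $x_1$ to obtain $3\int\int u^{2}u_{x_1}^{2}\theta=-\int\int u^{3}u_{x_1x_1}\theta-\int\int u^{3}u_{x_1}\theta_{x_1}$; by Cauchy--Schwarz the last term is $\leq(\int\int u^{4}\theta)^{1/2}(\int\int u^{2}u_{x_1}^{2}\theta)^{1/2}$, absorbable once $\int\int u^4\theta$ is controlled, while the first term is $\leq(\int\int u^{6}\theta)^{1/2}(\int\int u_{x_1x_1}^{2}\theta)^{1/2}\lesssim\|u\|_{L^2}A_1^{1/2}A_2^{1/2}$ by the sixth-power step and $\int\int u_{x_1x_1}^2\theta\leq A_2$; the $x_2$ term is identical. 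Summing the three pieces of $A_1$ and applying Young's inequality gives $A_1\leq\tfrac12 A_1+C\|u\|_{L^2}^{2}A_2$, and absorbing $\tfrac12 A_1$ finishes the proof.

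\noindent\emph{Main obstacle.} No single step is deep; the delicate point is bookkeeping of the apparent circularity. The first inequality uses the sixth-power estimate, whose right-hand side contains $A_1$, and $A_1$ reappears on the right in the argument for $\int\int u^2 u_{x_1}^2\theta$. This is harmless because the sixth-power estimate is self-contained (it uses only $\|u\|_{L^2}$ and $A_1$, never $A_2$) and every reappearance of $A_1$ carries either a small factor $\|u\|_{L^2}$ or, after Young's inequality, a coefficient $<1$, so it can be absorbed --- which is exactly why one must first reduce to smooth, compactly supported $u$ so that $A_1<\infty$ holds a priori. Making the absorption constants uniform in the weight $\theta$ (rather than in $u$) is the other thing to watch, and it follows since all Cauchy--Schwarz and Young steps only ever invoke $|\theta_{x_i}|\leq\theta$, $|\theta_{x_ix_i}|\leq\theta$.
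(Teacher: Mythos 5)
Your proof is correct and follows essentially the same route as the paper's: the sixth-power bound via the max-along-lines/Fubini/Cauchy--Schwarz argument of Lemma \ref{SobolevLemma}, then integration by parts $3\int\int u^2u_{x_i}^2\theta=-\int\int u^3u_{x_ix_i}\theta-\int\int u^3u_{x_i}\theta_{x_i}$ combined with Cauchy--Schwarz and absorption; your only deviations are cosmetic (handling $\int\int u^4\theta$ separately through Lemma \ref{SobolevLemma} and a first-derivative bound, and absorbing via Young rather than dividing by $A_1^{1/2}$). Note that both you and the paper actually establish $\int\int u^6\theta\lesssim\|u\|_{L^2}^{2}A_1$, which does not formally imply the stated $\|u\|_{L^2}^{4}A_1$ when $\|u\|_{L^2}<1$ --- the exponent $4$ in the statement appears to be a typo for $2$, so this is not a gap in your argument.
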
 
We include the proof of this in the Appendix A, Lemma \eqref{lemmasobolev2}.

\textit{Estimates of the term } $\int \int_{\{y_1<-\frac{B}{2}\}} [(Q_b+\varepsilon)^3-Q_b^3]_{y_1}(\psi_B)_{y_1}\varepsilon_{y_1}:$
Since we have $|Q_b|, |(Q_b)_{y_1}|\lesssim e^{-\frac{B}{2}}+|b|$ for $y_1<-\frac{B}{2},$ and from Lemma \eqref{SobolevLemma2} 
$$\int \int_{\{y_1<-\frac{B}{2}\}}(\psi_B)_{y_1}\varepsilon^2\varepsilon_{y_1}^2\lesssim \|\varepsilon\|_{L^2}^2\int \int (\varepsilon_{y_1y_1}^{2}+\varepsilon_{y_2y_2}^{2}+\varepsilon^2)(\psi_{B})_{y_1},$$ 
and from Lemma \eqref{SobolevLemma} 
$$\int \int_{\{y_1<-\frac{B}{2}\}}(\psi_B)_{y_1}\varepsilon^4\lesssim \|\varepsilon\|_{L^2}^2\int \int (|\nabla\varepsilon|^2+\varepsilon^2)(\phi_{i,B})_{y_1},$$
we obtain the estimate 
$$\int \int_{\{y_1<-\frac{B}{2}\}} [(Q_b+\varepsilon)^3-Q_b^3]_{y_1}(\psi_B)_{y_1}\varepsilon_{y_1} $$
$$\lesssim |\int \int_{\{y_1<-\frac{B}{2}\}}(\psi_B)_{y_1}\varepsilon_{y_1}\{(Q_b)_{y_1}(\varepsilon^2+2\varepsilon Q_b)+(Q_b+\varepsilon)^2\varepsilon_{y_1}\}|$$
$$\lesssim \int \int_{\{y_1<-\frac{B}{2}\}}(\psi_B)_{y_1}\varepsilon^2|\varepsilon_{y_1}||(Q_b)_{y_1}|+\int \int_{\{y_1<-\frac{B}{2}\}}(\psi_B)_{y_1}|\varepsilon\varepsilon_{y_1}||\frac{1}{2}(Q^2_b)_{y_1}|$$
$$+\int \int_{\{y_1<-\frac{B}{2}\}}(\psi_B)_{y_1}\varepsilon_{y_1}^2(Q_b+\varepsilon)^2$$
$$\lesssim \|(Q_b)_{y_1}\|_{L^{\infty}}\int \int_{\{y_1<-\frac{B}{2}\}}(\psi_B)_{y_1}\varepsilon_{y_1}^2+\|(Q_b)_{y_1}\|_{L^{\infty}}\int \int_{\{y_1<-\frac{B}{2}\}}(\psi_B)_{y_1}\varepsilon^4$$
$$+ \|(Q^2_b)_{y_1}\|_{L^{\infty}}\int \int_{\{y_1<-\frac{B}{2}\}}(\psi_B)_{y_1}\varepsilon_{y_1}^2+\|(Q^2_b)_{y_1}\|_{L^{\infty}}\int \int_{\{y_1<-\frac{B}{2}\}}(\psi_B)_{y_1}\varepsilon^2$$
$$+ \|Q^2_b\|_{L^{\infty}}\int \int_{\{y_1<-\frac{B}{2}\}}(\psi_B)_{y_1}\varepsilon_{y_1}^2+\int \int_{\{y_1<-\frac{B}{2}\}}(\psi_B)_{y_1}\varepsilon^2\varepsilon_{y_1}^2$$
$$ \lesssim (e^{-\frac{B}{2}}+|b|)\int \int_{\{y_1<-\frac{B}{2}\}}(\psi_B)_{y_1}\varepsilon_{y_1}^2+(e^{-\frac{B}{2}}+|b|)^2\int \int_{\{y_1<-\frac{B}{2}\}}(\psi_B)_{y_1}\varepsilon^2$$
$$+(e^{-\frac{B}{2}}+|b|)\|\varepsilon\|_{L^2}^2 \int \int (|\nabla\varepsilon|^2+\varepsilon^2)(\phi_{i,B})_{y_1}+ \|\varepsilon\|_{L^2}^2 \int \int (\varepsilon_{y_1y_1}^{2}+\varepsilon_{y_2y_2}^{2}+\varepsilon^2)(\psi_{B})_{y_1}$$
$$\lesssim (e^{-\frac{B}{2}}+|b|)\Big(\int \int_{\{y_1<-\frac{B}{2}\}}(\psi_B)_{y_1}\varepsilon_{y_1}^2+\int \int_{\{y_1<-\frac{B}{2}\}}(\psi_B)_{y_1}\varepsilon^2\Big)$$
$$+ \|\varepsilon\|_{L^2}^2 \int \int (\varepsilon_{y_1y_1}^{2}+\varepsilon_{y_2y_2}^{2}+\varepsilon^2)(\psi_{B})_{y_1}.$$
\textit{Estimates of the term } $\int \int (\psi_B)_{y_1}\{[-\Delta \varepsilon +\varepsilon - ((Q_b+\varepsilon)^3-Q_b^3)]^2-[-\Delta \varepsilon +\varepsilon]^2\}:$ 
Since $|(Q_b+\varepsilon)^3-Q_b^3|\lesssim |\varepsilon|^3+Q_b^2|\varepsilon|$ and from Lemma \eqref{SobolevLemma2}, we have 
$$|\int \int (\psi_B)_{y_1}\{[-\Delta \varepsilon +\varepsilon - ((Q_b+\varepsilon)^3-Q_b^3)]^2-[-\Delta \varepsilon +\varepsilon]^2\}|$$
$$\lesssim \int \int (\psi_B)_{y_1}|[-2\Delta \varepsilon +2\varepsilon - ((Q_b+\varepsilon)^3-Q_b^3)][(Q_b+\varepsilon)^3-Q_b^3]|$$
$$\lesssim \int \int (\psi_B)_{y_1}|-2\Delta \varepsilon +2\varepsilon |(|\varepsilon|^3+Q_b^2|\varepsilon|)+ \int \int (\psi_B)_{y_1}(|\varepsilon|^3+Q_b^2|\varepsilon|)^2$$
$$\lesssim \int \int_{\{y_1<-\frac{B}{2}\}}(\psi_B)_{y_1}|\varepsilon|^3|\varepsilon_{y_1y_1}|+\int \int_{\{y_1<-\frac{B}{2}\}}(\psi_B)_{y_1}|\varepsilon|^3|\varepsilon_{y_2y_2}|+ \int \int_{\{y_1<-\frac{B}{2}\}}(\psi_B)_{y_1}\varepsilon^4$$
$$+\|Q^2_b\|_{L^{\infty}}\int \int_{\{y_1<-\frac{B}{2}\}}(\psi_B)_{y_1}|\varepsilon\varepsilon_{y_1y_1}|+\|Q^2_b\|_{L^{\infty}}\int \int_{\{y_1<-\frac{B}{2}\}}(\psi_B)_{y_1}|\varepsilon\varepsilon_{y_2y_2}|$$
$$+\|Q^2_b\|_{L^{\infty}}\int \int_{\{y_1<-\frac{B}{2}\}}(\psi_B)_{y_1}\varepsilon^2+\|Q^4_b\|_{L^{\infty}}\int \int_{\{y_1<-\frac{B}{2}\}}(\psi_B)_{y_1}\varepsilon^2+\int \int_{\{y_1<-\frac{B}{2}\}}(\psi_B)_{y_1}\varepsilon^6$$
$$\lesssim \|\varepsilon\|_{L^2}^2\mathcal{A}+(e^{-\frac{B}{2}}+|b|)^2\int \int_{\{y_1<-\frac{B}{2}\}}(\psi_B)_{y_1}\varepsilon^2+(e^{-\frac{B}{2}}+|b|)^2\mathcal{A}+\|\varepsilon\|_{L^2}^4\mathcal{A}$$
$$\lesssim (\|\varepsilon\|_{L^2}^2+e^{-B}+b^2)\mathcal{A}+(e^{-\frac{B}{2}}+|b|)^2\int \int_{\{y_1<-\frac{B}{2}\}}(\psi_B)_{y_1}\varepsilon^2,$$
where here $\mathcal{A}=\int \int (\varepsilon_{y_1y_1}^{2}+\varepsilon_{y_2y_2}^{2}+\varepsilon^2)(\psi_B)_{y_1}.$
By putting together all the estimates we get that for some independent $C>0$ we get 
\begin{equation*} 
\begin{split} 
f_{1,1}^{<}&\leq [-3+C(\|\varepsilon\|_{L^2}^2+e^{-B}+b^2)]\int \int_{y_1 < -\frac{B}{2}}\varepsilon_{y_1y_1}^2(\psi_B)_{y_1}\\
&+[-1+C(\|\varepsilon\|_{L^2}^2+e^{-B}+b^2)]\int \int_{y_1 < -\frac{B}{2}}\varepsilon_{y_2y_2}^2(\psi_B)_{y_1}\\
&+[-3+C(\|\varepsilon\|_{L^2}+e^{-B}+b^2)+CB(e^{-\frac{B}{2}}+|b|+\|\varepsilon\|_{L^2})(e^{-\frac{B}{4}}+|b|)]\int \int_{y_1 < -\frac{B}{2}}\varepsilon_{y_1}^2(\phi_{i,B})_{y_1}\\
&+[-1+C(\|\varepsilon\|_{L^2}+e^{-B}+b^2)+CB(e^{-\frac{B}{2}}+|b|+\|\varepsilon\|_{L^2})(e^{-\frac{B}{4}}+|b|)]\int \int_{y_1 < -\frac{B}{2}}\varepsilon_{y_2}^2(\phi_{i,B})_{y_1}\\
&+[-1+C(\frac{1}{B^2}+e^{-\frac{B}{2}}+|b|)]\int \int_{y_1 < -\frac{B}{2}}\varepsilon_{y_1}^2(\psi_B)_{y_1}+(-1+\frac{C}{B^2})\int \int_{y_1 < -\frac{B}{2}}\varepsilon_{y_1}^2(\psi_B)_{y_1}\\
&+[-1+C(\frac{1}{B^2}+\|\varepsilon\|_{L^2}^2+e^{-B}+b^2)]\int \int_{y_1 < -\frac{B}{2}}\varepsilon^2(\phi_{i,B})_{y_1}\\
&-4\int \int_{y_1 < -\frac{B}{2}}\varepsilon_{y_1y_2}^2(\psi_B)_{y_1}+C(e^{-\frac{B}{2}}+|b|+\varepsilon\|_{L^2}^2)\int \int_{y_1 < -\frac{B}{2}}\varepsilon^2(\psi_B)_{y_1}\\
&+C(e^{-B}+|b|^2+\varepsilon\|_{L^2}^2)\int \int_{y_1>-\frac{B}{2}}(\varepsilon^2+\varepsilon^2_{y_1y_1}+\varepsilon^2_{y_2y_2})(\psi_B)_{y_1}.
\end{split}
\end{equation*}

\textbf{\textit{ The region $y_1>\frac{B}{2}.$}}

Since we have that $(\psi_B)_{y_1}=0$ on this region we have 
  \begin{equation*} 
 \begin{split} 
 f_{1,1}^{>}&=-3\int \int_{\{y_1>\frac{B}{2}\}} \varepsilon_{y_1}^2(\phi_{i,B})_{y_1}-\int \int_{\{y_1>\frac{B}{2}\}} \varepsilon_{y_2}^2(\phi_{i,B})_{y_1}+\int \int_{\{y_1>\frac{B}{2}\}} \varepsilon^2[-(\phi_{i,B})_{y_1}+(\phi_{i,B})_{y_1y_1y_1}]\\
 &-\frac{1}{2}\int \int_{\{y_1>\frac{B}{2}\}} (\phi_{i,B})_{y_1}\Big((Q_b+\varepsilon)^{4}-Q_b^{4}-4(Q_b+\varepsilon)^3\varepsilon\Big)\\
 &-2\int \int_{\{y_1>\frac{B}{2}\}} (\phi_{i,B}-\psi_B)\Big((Q_b+\varepsilon)^3-Q_b^3-3Q_b^{2}\varepsilon\Big)(Q_b)_{y_1}
 \end{split}
 \end{equation*}
 We get 
 $$\int \int_{\{y_1>\frac{B}{2}\}}(\phi_{i,B})_{y_1y_1y_1}\varepsilon^2\leq \frac{1}{B^2}\int \int_{\{y_1>\frac{B}{2}\}}(\phi_{i,B})_{y_1}\varepsilon^2$$
 and since $|(Q_b+\varepsilon)^4-Q_b^4-4(Q_b+\varepsilon)^3\varepsilon|\lesssim \varepsilon^4+Q_b^2\varepsilon^2$ and, for $y_1>\frac{B}{2},$ $|Q_b(y_1,y_2)|\lesssim e^{-B}+b^2$, then 
 $$\Big|\int \int_{\{y_1>\frac{B}{2}\}} (\phi_{i,B})_{y_1}\Big((Q_b+\varepsilon)^{4}-Q_b^{4}-4(Q_b+\varepsilon)^3\varepsilon\Big)\Big|\lesssim \int \int_{\{y_1>\frac{B}{2}\}} (\phi_{i,B})_{y_1}( \varepsilon^4+Q_b^2\varepsilon^2)$$
 and from the Sobolev Lemma \eqref{SobolevLemma}, we get 
 $$\int \int_{\{y_1>\frac{B}{2}\}} (\phi_{i,B})_{y_1} \varepsilon^4 \lesssim \|\varepsilon\|_{L^2}^2 \int \int_{\{y_1>\frac{B}{2}\}} (\phi_{i,B})_{y_1}(\varepsilon^2+|\nabla \varepsilon|^2)$$
 $$\int \int_{\{y_1>\frac{B}{2}\}} (\phi_{i,B})_{y_1} Q_b^2\varepsilon^2 \lesssim (e^{-B}+b^2) \int \int_{\{y_1>\frac{B}{2}\}} (\phi_{i,B})_{y_1}\varepsilon^2.$$
Since $|(Q_b+\varepsilon)^3-Q_b^3-3Q_b^2\varepsilon|\lesssim |\varepsilon|^3+|Q_b|\varepsilon^2,$ $|\phi_{i,B}-\psi_B|\leq 2 \phi_{i,B}$, $|Q_b|\lesssim e^{-\frac{B}{2}}+|b|$ (on $y_1>\frac{B}{2}$) and $|(Q_b)_{y_1}|=|Q_{y_1}+b\chi_bP_{y_1}|\lesssim e^{-\frac{|y_1|+|y_2|}{2}}$ (as $|P_{y_1}|\lesssim e^{-\frac{|y_1|+|y_2|}{2}}$  and $(\chi_b)_{y_1}=0$ for $y_1>\frac{B}{2}$), which yields $$|\phi_{i,B}(y_1,y_2)(Q_b)_{y_1}(y_1,y_2)|\lesssim e^{-|y_1|}\frac{y_1^i}{B^i}\lesssim (\phi_{i,B})_{y_1}(y_1,y_2)$$
hence 
\begin{equation*}
\begin{split} 
\Big|\int \int_{\{y_1>\frac{B}{2}\}} (\phi_{i,B}-\psi_B)&\Big((Q_b+\varepsilon)^3-Q_b^3-3Q_b^{2}\varepsilon\Big)(Q_b)_{y_1}\Big|\lesssim \Big|\int \int_{\{y_1>\frac{B}{2}\}}\phi_{i,B}(|\varepsilon\Big|^3+|Q_b|\varepsilon^2)(Q_b)_{y_1}|\\
&\lesssim \int \int_{\{y_1>\frac{B}{2}\}}(\phi_{i,B})_{y_1}|\varepsilon|^3+\int \int_{\{y_1>\frac{B}{2}\}}(\phi_{i,B})_{y_1}|Q_b||\varepsilon|^2\\
\end{split}
\end{equation*}
and, by the Sobolev Lemma \eqref{SobolevLemma}, 
$$\int \int_{\{y_1>\frac{B}{2}\}}(\phi_{i,B})_{y_1}|\varepsilon|^3\lesssim \|\varepsilon\|_{L^2}\int \int_{\{y_1>\frac{B}{2}\}}(\phi_{i,B})_{y_1}(\varepsilon|^2+|\nabla \varepsilon|^2)$$
$$\int \int_{\{y_1>\frac{B}{2}\}}(\phi_{i,B})_{y_1}|Q_b||\varepsilon|^2\lesssim (e^{-\frac{B}{2}}+|b|)\int \int_{\{y_1>\frac{B}{2}\}}(\phi_{i,B})_{y_1}\varepsilon^2.$$
Putting all the estimates together, 
\begin{equation*}
\begin{split} 
f_{1,1}^{>}&\leq [-3+C(\|\varepsilon\|^2_{L^2}+\|\varepsilon\|_{L^2})]\int \int_{\{y_1>\frac{B}{2}\}}(\phi_{i,B})_{y_1}\varepsilon^2_{y_1}+
[-1+C(\|\varepsilon\|^2_{L^2}+\|\varepsilon\|_{L^2})]\int \int_{\{y_1>\frac{B}{2}\}}(\phi_{i,B})_{y_1}\varepsilon^2_{y_2}\\
&[-1+C(\frac{1}{B^2}+\|\varepsilon\|^2_{L^2}+\|\varepsilon\|_{L^2}+e^{-B}+e^{-\frac{B}{2}}+|b|+b^2)]\int \int_{\{y_1>\frac{B}{2}\}}(\phi_{i,B})_{y_1}\varepsilon^2\\
&\leq (-3+C\|\varepsilon\|_{L^2})\int \int_{\{y_1>\frac{B}{2}\}}(\phi_{i,B})_{y_1}\varepsilon^2_{y_1}+(-1+C\|\varepsilon\|_{L^2})\int \int_{\{y_1>\frac{B}{2}\}}(\phi_{i,B})_{y_1}\varepsilon^2_{y_2}\\&+[-1+C(\frac{1}{B^2}+\|\varepsilon\|_{L^2}+e^{-\frac{B}{2}}+|b|)]\int_{\{y_1>\frac{B}{2}\}}(\phi_{i,B})_{y_1}\varepsilon^2
\end{split} 
\end{equation*}

\textbf{\textit{The region $|y_1|<\frac{B}{2}.$}}

 Since we have in this region that $\phi_{i,B}=1+\varphi(y_1)e^{-\frac{B}{2\alpha_1}},$ with $\varphi_{y_1y_1y_1}=\frac{1}{\alpha_1^2}\varphi_{y_1}$ $\psi_B=1,$ (so $(\psi_B)_{y_1}=(\psi_B)_{y_1y_1y_1}=0)$ then 
   \begin{equation*} 
 \begin{split} 
 f_{1,1}^{\sim}&=-e^{-\frac{B}{2\alpha_1}}\int \int_{\{|y_1|<\frac{B}{2}\}} 3\varepsilon_{y_1}^2\varphi_{y_1}(y_1)-e^{-\frac{B}{2\alpha_1}}\int \int_{\{|y_1|<\frac{B}{2}\}} \varepsilon_{y_2}^2\varphi_{y_1}(y_1)-e^{-\frac{B}{2\alpha_1}}\int \int_{\{|y_1|<\frac{B}{2}\}} \varepsilon^2\varphi_{y_1}(y_1)\\
 &+e^{-\frac{B}{2\alpha_1}}\int \int_{\{|y_1|<\frac{B}{2}\}} \varepsilon^2\varphi_{y_1y_1y_1}(y_1)-\frac{1}{2}e^{-\frac{B}{2\alpha_1}}\int \int_{\{|y_1|<\frac{B}{2}\}} \Big((Q_b+\varepsilon)^{4}-Q_b^{4}-4(Q_b+\varepsilon)^3\varepsilon\Big)\varphi_{y_1}(y_1)\\\
 &-2e^{-\frac{B}{2\alpha_1}}\int \int_{\{|y_1|<\frac{B}{2}\}} \Big((Q_b+\varepsilon)^3-Q_b^3-3Q_b^{2}\varepsilon\Big)(Q_b)_{y_1}\varphi(y_1)\\
 &=-e^{-\frac{B}{2\alpha_1}}\Big\{ \int \int_{\{|y_1|<\frac{B}{2}\}} (3\varepsilon_{y_1}^2+\varepsilon_{y_2}^2)\varphi_{y_1}+\varepsilon^2(\varphi_{y_1}-\varphi_{y_1y_1y_1})-3Q^2\varepsilon^2\varphi_{y_1}+6QQ_{y_1}\varepsilon^2\varphi\Big\}+R_{Vir}(\varepsilon)
 \end{split}
 \end{equation*}
 with 
 \begin{equation*}
 \begin{split}
 R_{Vir}(\varepsilon)= &e^{-\frac{B}{2\alpha_1}}\int \int_{\{|y_1|<\frac{B}{2}\}}\{ 3(Q_b^2-Q^2)\varepsilon^2\varphi_{y_1}-6(\frac{Q_b^2}{2}-\frac{Q^2}{2})_{y_1}\varepsilon^2\varphi\\
 &+\frac{3}{2}\varepsilon^4\varphi_{y_1}+4Q_b\varepsilon^3\varphi_{y_1}+2(Q_b)_{y_1}\varepsilon^3\varphi(y_1)\}
 \end{split}
 \end{equation*}
 
\textit{ \textbf{Remark}. In fact, we isolate in the $R_{Vir}(\varepsilon)$ with terms that are small, namely of order $O(b)$ or have terms of higher powers $\varepsilon^3, \varepsilon^4.$}
 
 We mention a lemma for a Virial-type estimate originating from the coercivity of the operator $-\varphi(y_1)\partial_{y_1}L.$
 
  \begin{lemma} \label{Virial} Let $v \in H^1(\mathbb{R}^2)$ satisfying $(v,Q)=(v,\varphi(y_1)\Lambda Q)=(v,\varphi(y_1)Q_{y_1})=(v,\varphi(y_1)Q_{y_2}).$ There exist $\mu>0$ and $B_0>0$ such that if $B\geq B_0,$ then 
 \begin{equation}
 \begin{split}
\int \int_{\{|y_1|<\frac{B}{2}\}} &[3v_{y_1}^2+ v_{y_2}^2]\varphi_{y_1}+ v^2(\varphi_{y_1}-\varphi_{y_1y_1y_1}) -3Q^2v^2\varphi_{y_1} +6QQ_{x}v^2\varphi)\geq \\
&\mu \int \int_{\{|y_1|<\frac{B}{2}\}} (v_{y_1}^2+ v_{y_2}^2+ v^2)\varphi_{y_1}-\frac{1}{\mu}e^{-\frac{B}{800\alpha_1\alpha_2}}\int \int_{\{|y_1|>\frac{B}{2}\}} v^2e^{-\frac{|y_1|}{200\alpha_1\alpha_2}}.
 \end{split}
 \end{equation}
 \end{lemma}
 
 The proof of the lemma is contained in Appendix C \eqref{AppendixC}.
 
 From this we find that there exists $\mu>0$ such that 
 
 \begin{equation} 
 \begin{split}
& -\Big\{ \int \int_{\{|y_1|<\frac{B}{2}\}} [3\varepsilon_{y_1}^2+\varepsilon_{y_2}^2+\varepsilon^2-3Q^2\varepsilon^2](\phi_{i,B})_{y_1}-\varepsilon^2(\phi_{i,B})_{y_1y_1y_1}+6QQ_{y_1}\varepsilon^2(\phi_{i,B}-1)\Big\}\leq\\
 & -\mu\int \int_{\{|y_1|<\frac{B}{2}\}} (\varepsilon_{y_1}^2+ \varepsilon_{y_2}^2+ \varepsilon^2)(\phi_{i,B})_{y_1}+\frac{1}{\mu}Be^{-\frac{B}{800\alpha_1\alpha_2}}\int \int_{\{|y_1|>\frac{B}{2}\}} (\varepsilon_{y_1}^2+ \varepsilon_{y_2}^2+\varepsilon^2)(\phi_{i,B})_{y_1}, 
 \end{split} 
 \end{equation} 
 where we used that for $B$ large enough, then $e^{-\frac{B}{200\alpha_1\alpha_2}}\leq B(\phi_{i,B})_{y_1},$ for $|y_1|>\frac{B}{2}.$
 
 Now we move to bound $R_{Vir}(\varepsilon).$ Since $|Q_b^2-Q^2|\lesssim |b|$, then 
 $$\Big|3e^{-\frac{B}{2\alpha_1}}\int \int_{\{|y_1|<\frac{B}{2}\}}(Q_b^2-Q^2)\varepsilon^2\varphi_{y_1}|\lesssim |b|\int \int_{\{|y_1|<\frac{B}{2}\}}\varepsilon^2(\phi_{i,B})_{y_1}.$$
 Since $|\varphi(y_1)e^{-\frac{B}{2\alpha_1}}(Q_b^2-Q^2)_{y_1}|=|be^{\frac{B}{\alpha_1}}\varphi_{y_1}(y_1)|\|2\chi_bPQ+b\chi_b^2P^2\|_{L^{\infty}_{y_1y_2}},$ then 
 $$|3e^{-\frac{B}{2\alpha_1}}\int \int_{\{|y_1|<\frac{B}{2}\}}(Q_b^2-Q^2)_{y_1}\varepsilon^2\varphi(y_1)|\lesssim |b|e^{\frac{B}{\alpha_1}}\int \int_{\{|y_1|<\frac{B}{2}\}}\varepsilon^2(\phi_{i,B})_{y_1}.$$
 From the Sobolev Lemma \eqref{SobolevLemma}, we have 
 $$|e^{-\frac{B}{2\alpha_1}}\int \int_{\{|y_1|<\frac{B}{2}\}}(Q_b)_{y_1}\varepsilon^3\varphi_{y_1}|\lesssim \|\varepsilon\|_{L^2}\int \int_{\{|y_1|<\frac{B}{2}\}}(\varepsilon^2+|\nabla \varepsilon|^2)(\phi_{i,B})_{y_1},$$
  $$e^{-\frac{B}{2\alpha_1}}\int \int_{\{|y_1|<\frac{B}{2}\}}\varepsilon^4\varphi_{y_1}\lesssim \|\varepsilon\|^2_{L^2}\int \int_{\{|y_1|<\frac{B}{2}\}}(\varepsilon^2+|\nabla \varepsilon|^2)(\phi_{i,B})_{y_1}.$$
  Finally, since $|(Q_b)_{y_1}|\lesssim |b|+e^{-\frac{|x|}{\alpha_1}},$ we get 
  $$|e^{-\frac{B}{2\alpha_1}}\int \int_{\{|y_1|<\frac{B}{2}\}}(Q_b)_{y_1}\varepsilon^3\varphi(y_1)|\lesssim \|\varepsilon\|_{L^2}\int \int_{\{|y_1|<\frac{B}{2}\}}(\varepsilon^2+|\nabla \varepsilon|^2)(\phi_{i,B})_{y_1}.$$

Putting all estimates together gives
$$R_{Vir}(\varepsilon)\leq C(\|\varepsilon\|_{L^2}+|b|e^{\frac{B}{\alpha_1}})\int \int_{\{|y_1|<\frac{B}{2}\}}(\varepsilon_{y_1}^2+\varepsilon_{y_2}^2+\varepsilon^2)(\phi_{i,B})_{y_1},$$
and from \eqref{H1} we get 
$$f_{1,1}^{\sim}\leq -\mu\int \int (|\nabla\varepsilon|^2+\varepsilon^2)(\phi_{i,B})_{y_1}+C(\|\varepsilon\|_{L^2}+|b|e^{\frac{B}{\alpha_1}}+Be^{-\frac{B}{800\alpha_1\alpha_2}})\int \int(|\nabla \varepsilon|^2+\varepsilon^2)(\phi_{i,B})_{y_1}$$
$$\leq -\frac{\mu}{2}\int \int (|\nabla\varepsilon|^2+\varepsilon^2)(\phi_{i,B})_{y_1}$$
 Now, putting together all estimates for $f_{1,1}^{<}, f_{1,1}^{\sim},f_{1,1}^{>}$ we get that 
 \begin{equation}\label{f11}
 \begin{split}
 f_{1,1}\leq -\frac{\mu}{4}\int \int [(\varepsilon_{y_1y_1}^2+\varepsilon_{y_2y_2}^2)(\psi_B)_{y_1}+(\varepsilon_{y_1}^2+\varepsilon_{y_2}^2+\varepsilon^2)(\phi_{i,B})_{y_1}].
 \end{split}
 \end{equation}
 
 \textbf{Step 2: Estimating } $f_{1,2}$
 
 For this part, we will use cancellation to get rid of the large linear terms $\varepsilon, \varepsilon_{y_1y_1}, \varepsilon_{y_2y_2}$ that could pose problems for the estimates, first by utilizing the properties of the operator $L$ and the orthogonality property of the modulated error. We rewrite it as 
 \begin{equation*} 
\begin{split} 
f_{1,2}&=2\Bigg( \frac{\lambda_s}{\lambda}+b\Bigg)\int \int \Lambda Q_b \Big(-(\psi_B\varepsilon_{y_1})_{y_1}-(\psi_B\varepsilon_{y_2})_{y_2}+\varepsilon \phi_{i,B}-\psi_B[(Q_b+\varepsilon)^3-Q_b^3]\Big)\\
&=2\Bigg( \frac{\lambda_s}{\lambda}+b\Bigg)\int \int \Lambda Q\Big(-(\psi_B)_{y_1}\varepsilon_{y_1}+L\varepsilon+\Delta \varepsilon(1-\psi_B)-\varepsilon (1-\phi_{i,B})-\psi_B[(Q_b+\varepsilon)^3-Q_b^3]+3Q^2\varepsilon\Big)\\
&+2b\Bigg( \frac{\lambda_s}{\lambda}+b\Bigg)\int \int \Lambda (\chi_bP)\Big(-(\psi_B\varepsilon_{y_1})_{y_1}-(\psi_B\varepsilon_{y_2})_{y_2}+\varepsilon \phi_{i,B}-\psi_B[(Q_b+\varepsilon)^3-Q_b^3]\Big)\\
&=2\Bigg( \frac{\lambda_s}{\lambda}+b\Bigg)\int \int \Lambda Q\Big(-(\psi_B)_{y_1}\varepsilon_{y_1}+\Delta \varepsilon(1-\psi_B)-\varepsilon (1-\phi_{i,B})-\psi_B[(Q_b+\varepsilon)^3-Q_b^3]+3Q^2\varepsilon\Big)\\
&+2b\Bigg( \frac{\lambda_s}{\lambda}+b\Bigg)\int \int \Lambda (\chi_bP)\Big(-(\psi_B\varepsilon_{y_1})_{y_1}-\psi_B\varepsilon_{y_2y_2}+\varepsilon \phi_{i,B}-\psi_B[(Q_b+\varepsilon)^3-Q_b^3]\Big)\\
 \end{split} 
\end{equation*} 
where we used $\int \int \Lambda Q L\varepsilon=\int \int L\Lambda Q \varepsilon=-2\int \int Q \varepsilon=0.$

\hspace{1cm}

\textit{Estimates for the term }
\begin{equation*}
\begin{split}
&\int \int \Lambda Q\Big(-(\psi_B)_{y_1}\varepsilon_{y_1}+\Delta \varepsilon(1-\psi_B)-\varepsilon (1-\phi_{i,B})-\psi_B[(Q_b+\varepsilon)^3-Q_b^3]+3Q^2\varepsilon\Big)\\
&=\int \int \{-(\Lambda Q)_{y_1}(\psi_B)_{y_1}\varepsilon+\Delta(\Lambda Q)(1-\psi_B)\varepsilon-\Lambda Q(\varepsilon(1-\phi_{i,B})-(1-\psi_B)[(Q_b+\varepsilon)^3-Q_b^3])\}\\
&+\int \int \Lambda Q [(Q_b+\varepsilon)^3-Q_b^3-3Q_b^2\varepsilon]+3\int \int \Lambda Q (Q_b^2-Q^2)\varepsilon
\end{split}
\end{equation*}
Since $1-\psi_B=(\psi_B)_{y_1}=0$ on $y_1>-\frac{B}{2}$ and also by Lemma \ref{Qdecay} we use $|(\Lambda Q)_{y_1}|, |\Delta(\Lambda Q)|\leq e^{-\frac{|y_1|}{\alpha_1}-\bigg(1-\frac{1}{\alpha_{1}^{2}}\bigg)|y_2|}$ to obtain 
\begin{equation*}
\begin{split} 
\Big|\int \int_{\{y_1<-\frac{B}{2}\}} (\Lambda Q)_{y_1}(\psi_B)_{y_1}\varepsilon\Big| &\leq \Big( \int \int_{\{y_1<-\frac{B}{2}\}} \varepsilon^2(\phi_{i,B})_{y_1}\Big)^{\frac{1}{2}}\Big(\int \int_{\{y_1<-\frac{B}{2}\}}[(\Lambda Q)_{y_1}]^2\frac{[(\psi_B)_{y_1}]^2}{(\phi_{i,B})_{y_1}}\Big)^{\frac{1}{2}}\\
&\leq \sqrt{B}e^{-\frac{B}{2\alpha_1}}\Big( \int \int_{\{y_1<-\frac{B}{2}\}} \varepsilon^2(\phi_{i,B})_{y_1}\Big)^{\frac{1}{2}},
\end{split}
\end{equation*}
and similarly 
\begin{equation*}
\begin{split} 
\Big|\int \int_{\{y_1<-\frac{B}{2}\}} \Delta(\Lambda Q)(1-\psi_B)\varepsilon\Big| &\leq \Big( \int \int_{\{y_1<-\frac{B}{2}\}} \varepsilon^2(\phi_{i,B})_{y_1}\Big)^{\frac{1}{2}}\Big(\int \int_{\{y_1<-\frac{B}{2}\}}[\Delta(\Lambda Q)]^2\frac{(1-\psi_B)^2}{(\phi_{i,B})_{y_1}}\Big)^{\frac{1}{2}}\\
&\leq \sqrt{B}e^{-\frac{B}{2\alpha_1}}\Big( \int \int_{\{y_1<-\frac{B}{2}\}} \varepsilon^2(\phi_{i,B})_{y_1}\Big)^{\frac{1}{2}}
\end{split}
\end{equation*}

For the next term, by the orthogonality condition $(\varepsilon,\varphi(y_1)\Lambda Q)=0,$ we have 
$$\int \int \Lambda Q(1-\phi_{i,B})\varepsilon=\int \int \Lambda Q(1+\frac{\varphi(y_1)}{B}-\phi_{i,B})\varepsilon$$
and we split again into regions $\{y_1<-\frac{B}{2}\}, \{|y_1|<\frac{B}{2}\}, \{y_1>\frac{B}{2}\}.$ For the first region, we use that $\varphi(y_1)e^{-\frac{B}{2\alpha_1}}, \phi_{i,B}(y_1)\leq 1$ to get 
$$\Big|\int \int_{\{y_1<-\frac{B}{2}\}} \Lambda Q(1+\varphi(y_1)e^{-\frac{B}{2\alpha_1}}-\phi_{i,B})\varepsilon\Big|\leq$$
$$\leq \Big( \int \int_{\{y_1<-\frac{B}{2}\}} \varepsilon^2(\phi_{i,B})_{y_1}\Big)^{\frac{1}{2}}\Big(\int \int_{\{y_1<-\frac{B}{2}\}}(\Lambda Q)^2\frac{(1+\varphi(y_1)e^{-\frac{B}{2\alpha_1}}-\phi_{i,B})^2}{(\phi_{i,B})_{y_1}}\Big)^{\frac{1}{2}}$$
$$\leq \sqrt{B}e^{-\frac{B}{2\alpha_1}}\Big( \int \int_{\{y_1<-\frac{B}{2}\}} \varepsilon^2(\phi_{i,B})_{y_1}\Big)^{\frac{1}{2}}.$$
For the third region, we use that $\phi_{i,B}(y_1)\leq \varphi(y_1)e^{-\frac{B}{2\alpha_1}}$ for $B\geq 20,$ to get 
\begin{equation*}
\begin{split} 
&\Big|\int \int_{\{y_1>\frac{B}{2}\}} \Lambda Q(1+\varphi(y_1)e^{-\frac{B}{2\alpha_1}}-\phi_{i,B})\varepsilon\Big| \\
&\leq \Big( \int \int_{\{y_1>\frac{B}{2}\}} \varepsilon^2(\phi_{i,B})_{y_1}\Big)^{\frac{1}{2}}\Big(\int \int_{\{y_1>\frac{B}{2}\}}(\Lambda Q)^2\frac{(1+\varphi(y_1)e^{-\frac{B}{2\alpha_1}}-\phi_{i,B})^2}{(\phi_{i,B})_{y_1}}\Big)^{\frac{1}{2}}\\
&\leq  \sqrt{B}e^{-\frac{B}{2\alpha_1}}\Big( \int \int_{\{y_1>\frac{B}{2}\}} e^{(-\frac{2}{\alpha_2}+\frac{2}{\alpha_1})|y_1|-2(1-\frac{1}{\alpha_{2}^{2}})|y_2|}\Big)^{\frac{1}{2}} 
\Big( \int \int_{\{y_1>\frac{B}{2}\}} \varepsilon^2(\phi_{i,B})_{y_1}\Big)^{\frac{1}{2}}\\
&\lesssim \sqrt{B}e^{-\frac{B}{2\alpha_1}}\Big( \int \int_{\{y_1>\frac{B}{2}\}} \varepsilon^2(\phi_{i,B})_{y_1}\Big)^{\frac{1}{2}}\\
&\lesssim \sqrt{B}e^{-\frac{B}{2\alpha_1}}\Big( \int \int_{\{y_1>\frac{B}{2}\}} \varepsilon^2(\phi_{i,B})_{y_1}\Big)^{\frac{1}{2}}\\
\end{split}
\end{equation*}
For the region that is problematic, we use the specific construction of the weight $\phi_{i,B}$ in order to get 
\begin{equation*}
\begin{split} 
\int \int_{\{|y_1|<\frac{B}{2}\}} \Lambda Q(1+\varphi(y_1)e^{-\frac{B}{2\alpha_1}}-\phi_{i,B})\varepsilon=0.
\end{split}
\end{equation*}

For the nonlinear term, we have 
\begin{equation*}
\begin{split} 
\Big|\int \int \Lambda Q(1-\psi_B) [(Q_b+\varepsilon)^3-Q_b^3]\Big|&\leq \int \int_{\{y_1<-\frac{B}{2}\}} \Lambda Q(1-\psi_B)|\varepsilon|^3+ \int \int_{\{y_1<-\frac{B}{2}\}} \Lambda Q(1-\psi_B)|\varepsilon|Q_b^2
\end{split}
\end{equation*}
Since $|\Lambda Q (1-\psi_B)|\leq B(\phi_{i,B})_{y_1}$ for $y_1<-\frac{B}{2},$ it yields 
\begin{equation*}
\begin{split} 
\int \int |\Lambda Q|(1-\psi_B)|\varepsilon|^3&\leq \int \int |\varepsilon|^3B(\phi_{i,B})_{y_1}\leq  B\int \int |\varepsilon|^3(\phi_{i,B})_{y_1}\\
&\leq B\|\varepsilon\|_{L^2}\int \int (\varepsilon_{y_1}^2+\varepsilon_{y_2}^2+\varepsilon^2)(\phi_{i,B})_{y_1}.
\end{split}
\end{equation*}
Since $|\Lambda Q|(1-\psi_B)\leq B(\phi_{i,B})_{y_1}$ for all $y_1<-\frac{B}{2},$ 
\begin{equation*}
\begin{split} 
\int \int_{\{y_1<-\frac{B}{2}\}} |\Lambda Q|(1-\psi_B)|\varepsilon|Q_b^2 &\leq \Big( \int \int_{\{y_1<-\frac{B}{2}\}} \varepsilon^2(\phi_{i,B})_{y_1}\Big)^{\frac{1}{2}}\Big(\int \int_{\{y_1<-\frac{B}{2}\}}(\Lambda Q)^2Q_b^4\frac{(1-\psi_B)^2}{(\phi_{i,B})_{y_1}}\Big)^{\frac{1}{2}}\\
&\leq \Big( \int \int_{\{y_1<-\frac{B}{2}\}} \varepsilon^2(\phi_{i,B})_{y_1}\Big)^{\frac{1}{2}}(e^{-B}+|b|)\sqrt{B},
\end{split}
\end{equation*}
$$\Big|\int \int \Lambda Q [(Q_b+\varepsilon)^3-Q_b^3-3Q_b^2\varepsilon]\Big|\lesssim \int \int |\Lambda Q|| \varepsilon|^3+\int \int |\Lambda QQ_b|\varepsilon^2$$
Since $|\Lambda Q|\lesssim e^{\frac{B}{2\alpha_1}}(\phi_{i,B})_{y_1}$ and $|Q_b|\lesssim 1,$ we estimate 
\begin{equation*}
\begin{split} 
\int \int |\Lambda Q||\varepsilon|^3&\leq \int \int |\varepsilon|^3e^{\frac{B}{2\alpha_1}}(\phi_{i,B})_{y_1}\leq  e^{\frac{B}{2\alpha_1}}\int \int |\varepsilon|^3(\phi_{i,B})_{y_1}\\
&\leq e^{\frac{B}{2\alpha_1}}\|\varepsilon\|_{L^2}\int \int (\varepsilon_{y_1}^2+\varepsilon_{y_2}^2+\varepsilon^2)(\phi_{i,B})_{y_1},
\end{split}
\end{equation*}
\begin{equation*}
\begin{split} 
\int \int |\Lambda Q Q_b|\varepsilon^2\lesssim e^{\frac{B}{2\alpha_1}}\int \int \varepsilon^2(\phi_{i,B})_{y_1},
\end{split}
\end{equation*}
\begin{equation*}
\begin{split} 
\Big|\int \int \Lambda Q (Q_b^2-Q^2)\varepsilon\Big|&\lesssim |b| \Big(\int \int \varepsilon^2(\phi_{i,B})_{y_1}\Big)^{\frac{1}{2}}\Big(\int \int \frac{[\Lambda Q]^2}{(\phi_{i,B})_{y_1}}\Big)^{\frac{1}{2}}\\
&\lesssim |b|e^{\frac{B}{2\alpha_1}}\Big(\int \int \varepsilon^2(\phi_{i,B})_{y_1}\Big)^{\frac{1}{2}}.
\end{split}
\end{equation*}

\textit{\textbf{Remark.} We see that if we have let to estimate the term $\int \int \Lambda Q \varepsilon(1-\phi_{i,B})$ as it is, we would have gotten 
\begin{equation*}
\begin{split} 
\Big|\int \int_{\{|y_1|<\frac{B}{2}\}} \Lambda Q(1-\phi_{i,B})\varepsilon\Big| &\leq \Big( \int \int_{\{|y_1|<-\frac{B}{2}\}} \varepsilon^2(\phi_{i,B})_{y_1}\Big)^{\frac{1}{2}}\Big(\int \int_{\{|y_1|<\frac{B}{2}\}}(\Lambda Q)^2\frac{(1-\phi_{i,B})^2}{(\phi_{i,B})_{y_1}}\Big)^{\frac{1}{2}}\\
&\leq e^{-\frac{B}{4\alpha_1}}\Big( \int \int\varepsilon^2(\phi_{i,B})_{y_1}\Big)^{\frac{1}{2}}
\end{split}
\end{equation*}
Therefore 
\begin{equation*}
\begin{split} 
\Big|\Big(\frac{\lambda_s}{\lambda}+b\Big)\int \int_{\{|y_1|<\frac{B}{2}\}} \Lambda Q(1-\phi_{i,B})\varepsilon\Big|&\lesssim (e^{\frac{B}{4\alpha_1}}\mathcal{N}_{1,loc}(s)^{\frac{1}{2}}+e^{\frac{B}{2\alpha_1}}\widetilde{\mathcal{N}}_{i}+b^2)e^{-\frac{B}{4\alpha_1}}\Big( \int \int\varepsilon^2(\phi_{i,B})_{y_1}\Big)^{\frac{1}{2}}\\
&\lesssim  \int \int\varepsilon^2(\phi_{i,B})_{y_1}+(e^{\frac{B}{2\alpha_1}}\widetilde{\mathcal{N}}_{i}+b^2)e^{-\frac{B}{4\alpha_1}}\Big( \int \int\varepsilon^2(\phi_{i,B})_{y_1}\Big)^{\frac{1}{2}}\\
&\lesssim \int \int\varepsilon^2(\phi_{i,B})_{y_1}+o\Big(\int \int\varepsilon^2(\phi_{i,B})_{y_1}\Big)+b^4
\end{split}
\end{equation*}
which is much bigger than what we want on the right hand side, 
$$\frac{\mu}{100} \int \int(|\nabla\varepsilon|^2+\varepsilon^2)(\phi_{i,B})_{y_1}+b^4$$
Hence, by using the orthogonality condition $(\varepsilon, \varphi(y_1)\Lambda Q)=0$ we have offset this loss of the estimate, by making $$\int \int_{\{|y_1|<\frac{B}{2}\}} \Lambda Q(1+\varphi(y_1)e^{-\frac{B}{2\alpha_1}}-\phi_{i,B})\varepsilon=0.$$}

Putting the above estimates together and using that $\widetilde{\mathcal{N}}_{i}\leq \mathcal{N}_{i}$ we obtain 
\begin{equation*}
\begin{split}
&\Big|\Bigg( \frac{\lambda_s}{\lambda}+b\Bigg)\int \int \Lambda Q\Big(-(\psi_B)_{y_1}\varepsilon_{y_1}+\Delta \varepsilon(1-\psi_B)-\varepsilon (1-\phi_{i,B})-\psi_B[(Q_b+\varepsilon)^3-Q_b^3]+3Q^2\varepsilon\Big)\Big|\\
&\leq (e^{\frac{B}{4\alpha_1}}\mathcal{N}_{1,loc}(s)^{\frac{1}{2}}+e^{\frac{B}{2\alpha_1}}\widetilde{\mathcal{N}}_{1}+b^2)\Big[\Big(\mathcal{N}_{i,loc}^{\frac{1}{2}}\sqrt{B}(e^{-\frac{B}{2\alpha_1}}+|b|)+e^{\frac{B}{2\alpha_1}}\widetilde{\mathcal{N}}_{i}\Big]\\
&\lesssim (\sqrt{B}e^{-\frac{B}{4\alpha_1}}+\sqrt{B}|b|e^{\frac{B}{\alpha_1}})\mathcal{N}_{i,loc}+(e^{\frac{B}{\alpha_1}}\mathcal{N}_{1}^{\frac{1}{2}}+e^{\frac{B}{2\alpha_1}}b^2)\widetilde{\mathcal{N}}_{i}+b^4\\
&\leq Cb^4+\frac{\mu}{100B}\int \int (\varepsilon_{y_1}^2+\varepsilon_{y_2}^2+\varepsilon^2)(\phi_{i,B})_{y_1}.
\end{split}
\end{equation*}
\textit{Estimates for the term} 
$$\Bigg( \frac{\lambda_s}{\lambda}+b\Bigg)\int \int \Lambda (\chi_bP)\Big(-(\psi_B\varepsilon_{y_1})_{y_1}-\psi_B\varepsilon_{y_2y_2}+\varepsilon \phi_{i,B}-\psi_B[(Q_b+\varepsilon)^3-Q_b^3]\Big)$$
$$=\Bigg( \frac{\lambda_s}{\lambda}+b\Bigg)\int \int \{-\Delta [\Lambda (\chi_bP)]\psi_B\varepsilon-[\Lambda (\chi_bP)]_{y_1}(\psi_B)_{y_1}\varepsilon+\Lambda (\chi_bP)\varepsilon \phi_{i,B}-\Lambda (\chi_bP)\psi_B[(Q_b+\varepsilon)^3-Q_b^3]\}$$

We have that 
\[   
|\Lambda (\chi_bP)|, |\Delta [\Lambda (\chi_bP)]|, |[\Lambda (\chi_bP)]_{y_1}|
     \begin{cases}
      =0 &\text{ if } y_1 \in (-\infty,-\frac{2}{|b|^{\gamma}}]\\
      \leq (1+|y_2|)e^{-\frac{|y_2|}{2}} &\quad\text{if } y_1 \in[-\frac{2}{|b|^{\gamma}},-\frac{1}{|b|^{\gamma}}] \\
        \leq (1+|y_2|)e^{-\frac{|y_2|}{2}} &\quad\text{if } y_1 \in[-\frac{1}{|b|^{\gamma}},0]\\
       \leq (1+|y_1|+|y_2|)e^{-\frac{|y_1|+|y_2|}{2}} &\quad\text{if } y_1 \in[0,+\infty)\\ 
     \end{cases}
\]
We proceed with the computations for the first term of the above and we do it the same for the others
\begin{equation*}
\begin{split} 
\int \int_{[-\frac{2}{|b|^{\gamma}},-\frac{B}{2}]}|\Delta\Lambda (\chi_bP)|\psi_B|\varepsilon|&\leq \Big(\int \int \varepsilon^2 (\phi_{i,B})_{y_1}\Big)^{\frac{1}{2}}\Big( \int \int_{[-\frac{2}{|b|^{\gamma}},-\frac{B}{2}]} [\Delta\Lambda (\chi_bP)]^2\frac{\psi_B^2}{(\phi_{i,B})_{y_1}}\Big)^{\frac{1}{2}}\\
&\leq\Big(\int \int \varepsilon^2 (\phi_{i,B})_{y_1}\Big)^{\frac{1}{2}}\Big( \int \int_{[-\frac{2}{|b|^{\gamma}},-\frac{B}{2}]} (1+|y_2|)^2e^{-|y_2|}Be^{\frac{3y_1}{B}}\Big)^{\frac{1}{2}}\\
&\leq \sqrt{B}\Big(\int \int \varepsilon^2 (\phi_{i,B})_{y_1}\Big)^{\frac{1}{2}}
\end{split}
\end{equation*}
\begin{equation*}
\begin{split} 
\int \int_{[-\frac{B}{2},0]}|\Delta\Lambda (\chi_bP)|\psi_B|\varepsilon|&\leq \Big(\int \int \varepsilon^2 (\phi_{i,B})_{y_1}\Big)^{\frac{1}{2}}\Big( \int \int_{[-\frac{B}{2},0]} [\Delta\Lambda (\chi_bP)]^2\frac{\psi_B^2}{(\phi_{i,B})_{y_1}}\Big)^{\frac{1}{2}}\\
&\leq\Big(\int \int \varepsilon^2 (\phi_{i,B})_{y_1}\Big)^{\frac{1}{2}}\Big( \int \int_{[-\frac{B}{2},0]} (1+|y_2|)^2e^{-|y_2|}e^{\frac{B}{2\alpha_1}}e^{\frac{|y_1|}{\alpha_1}}\Big)^{\frac{1}{2}}\\
&\leq e^{\frac{B}{2\alpha_1}}\Big(\int \int \varepsilon^2 (\phi_{i,B})_{y_1}\Big)^{\frac{1}{2}}
\end{split}
\end{equation*}
\begin{equation*}
\begin{split} 
\int \int_{[0,\frac{B}{2}]}|\Delta\Lambda (\chi_bP)|\psi_B|\varepsilon|&\leq \Big(\int \int \varepsilon^2 (\phi_{i,B})_{y_1}\Big)^{\frac{1}{2}}\Big( \int \int_{[0,\frac{B}{2}]} [\Delta\Lambda (\chi_bP)]^2\frac{\psi_B^2}{(\phi_{i,B})_{y_1}}\Big)^{\frac{1}{2}}\\
&\leq\Big(\int \int \varepsilon^2 (\phi_{i,B})_{y_1}\Big)^{\frac{1}{2}}\Big( \int \int_{[0,\frac{B}{2}]} (1+|y_1|+|y_2|)^2e^{-|y_1|-|y_2|}e^{\frac{B}{2\alpha_1}}\Big)^{\frac{1}{2}}\\
&\leq e^{\frac{B}{4\alpha_1}}\Big(\int \int \varepsilon^2 (\phi_{i,B})_{y_1}\Big)^{\frac{1}{2}}
\end{split}
\end{equation*}
\begin{equation*}
\begin{split} 
\int \int_{[\frac{B}{2},\infty)}|\Delta\Lambda (\chi_bP)|\psi_B|\varepsilon|&\leq \Big(\int \int \varepsilon^2 (\phi_{i,B})_{y_1}\Big)^{\frac{1}{2}}\Big( \int \int_{[\frac{B}{2},\infty)} [\Delta\Lambda (\chi_bP)]^2\frac{\psi_B^2}{(\phi_{i,B})_{y_1}}\Big)^{\frac{1}{2}}\\
&\leq\Big(\int \int \varepsilon^2 (\phi_{i,B})_{y_1}\Big)^{\frac{1}{2}}\Big( \int \int_{[\frac{B}{2},\infty)} (1+|y_1|+|y_2|)^2e^{-|y_1|-|y_2|}B\Big)^{\frac{1}{2}}\\
&\leq \sqrt{B}\Big(\int \int \varepsilon^2 (\phi_{i,B})_{y_1}\Big)^{\frac{1}{2}}
\end{split}
\end{equation*}
Hence we get 
\begin{equation*}
\begin{split} 
\Big|\int \int\Delta\Lambda (\chi_bP)\psi_B\varepsilon\Big|&\leq e^{\frac{B}{2\alpha_1}}\Big(\int \int \varepsilon^2 (\phi_{i,B})_{y_1}\Big)^{\frac{1}{2}}
\end{split}
\end{equation*}
Also, by the same computations, 
\begin{equation*}
\begin{split} 
\Big|\int \int[\Lambda (\chi_bP)]_{y_1}(\psi_B)_{y_1}\varepsilon\Big|&\leq \sqrt{B}\Big(\int \int \varepsilon^2 (\phi_{i,B})_{y_1}\Big)^{\frac{1}{2}}
\end{split}
\end{equation*}
\begin{equation*}
\begin{split} 
\Big|\int \int\Lambda (\chi_bP)\phi_{i,B}\varepsilon\Big|&\leq e^{-\frac{B}{4\alpha_1}}\Big(\int \int \varepsilon^2 (\phi_{i,B})_{y_1}\Big)^{\frac{1}{2}}
\end{split}
\end{equation*}
By the previous estimates,  $|\Lambda (\chi_bP)\psi_B|\leq e^{\frac{B}{2\alpha_1}}(\phi_{i,B})_{y_1},$ and we treat the last term in the following way: 
\begin{equation*}
\begin{split} 
\int \int|\Lambda (\chi_bP)|\psi_B|\varepsilon|^3&\leq e^{\frac{B}{2\alpha_1}}|\int \int(\phi_{i,B})_{y_1}|\varepsilon|^3\leq e^{\frac{B}{2\alpha_1}}\|\varepsilon\|_{L^2}\int \int (|\nabla\varepsilon|^2+\varepsilon^2) (\phi_{i,B})_{y_1}
\end{split}
\end{equation*}
\begin{equation*}
\begin{split} 
\int \int|\Lambda (\chi_bP)|\psi_B|Q_b^2\varepsilon|&\leq \Big( \int \int (Q^2+|b|)[\Lambda (\chi_bP)]^2\frac{\psi_B^2}{(\phi_{i,B})_{y_1}}\Big)^{\frac{1}{2}}\Big(\int \int(\phi_{i,B})_{y_1}|\varepsilon|^2\Big)^{\frac{1}{2}}\\
&\leq e^{\frac{B}{4\alpha_1}}\Big(\int \int(\phi_{i,B})_{y_1}\varepsilon^2\Big)^{\frac{1}{2}}
\end{split}
\end{equation*}
\begin{equation*}
\begin{split} 
\int \int|\Lambda (\chi_bP)|\psi_B|Q_b|\varepsilon^2\leq e^{-\frac{B}{2\alpha_1}}\int \int(\phi_{i,B})_{y_1}\varepsilon^2
\end{split}
\end{equation*}
\begin{equation*}
\begin{split} 
\int \int|\Lambda (\chi_bP)|\psi_B[(Q_b+\varepsilon)^3-Q_b^3]|\leq e^{\frac{B}{4\alpha_1}}\Big(\int \int(\phi_{i,B})_{y_1}|\varepsilon|^2\Big)^{\frac{1}{2}}+e^{-\frac{B}{2\alpha_1}}\int \int (|\nabla\varepsilon|^2+\varepsilon^2) (\phi_{i,B})_{y_1}
\end{split}
\end{equation*}
Finally, putting all the above estimates together, we obtain 
\begin{equation*}
\begin{split} 
&\Bigg|\Bigg( \frac{\lambda_s}{\lambda}+b\Bigg)b\int \int \Lambda (\chi_bP)\Big(-(\psi_B\varepsilon_{y_1})_{y_1}-\psi_B\varepsilon_{y_2y_2}+\varepsilon \phi_{i,B}-\psi_B[(Q_b+\varepsilon)^3-Q_b^3]\Big)\Bigg|\\
&\leq (e^{\frac{B}{4\alpha_1}}\mathcal{N}_{1,loc}(s)^{\frac{1}{2}}+e^{\frac{B}{2\alpha_1}}\widetilde{\mathcal{N}}_{1}+b^2)|b|\Big[\Big( \int \int \varepsilon^2(\phi_{i,B})_{y_1}\Big)^{\frac{1}{2}}e^{\frac{B}{4\alpha_1}}+e^{-\frac{B}{2\alpha_1}}\int \int (\varepsilon_{y_1}^2+\varepsilon_{y_2}^2+\varepsilon^2)(\phi_{i,B})_{y_1}\Big]\\
&\lesssim e^{\frac{B}{4\alpha_1}}|b|^3\mathcal{N}_{i,loc}(s)^{\frac{1}{2}}+(B^{-\frac{1}{2}}\mathcal{N}_{1}+|b|)\int \int (\varepsilon_{y_1}^2+\varepsilon_{y_2}^2+\varepsilon^2)(\phi_{i,B})_{y_1}\\
&\lesssim \frac{\mu}{100B}\int \int (\varepsilon_{y_1}^2+\varepsilon_{y_2}^2+\varepsilon^2)(\phi_{i,B})_{y_1}+Cb^4
\end{split}
\end{equation*}
Hence the term $f_{1,2}$ is bounded by 
\begin{equation}\label{f12}
\begin{split} 
|f_{1,2}|\leq\frac{\mu}{100B}\int \int (\varepsilon_{y_1}^2+\varepsilon_{y_2}^2+\varepsilon^2)(\phi_{i,B})_{y_1}+Cb^4.
\end{split}
\end{equation}

\textbf{Step 3. Estimates for the term $f_{1,3}$}

For this part we will use the cancellation of the linear terms of $\varepsilon$ in the same fashion as we did for $f_{1,2}.$ Recall that 
$$ f_{1,3}=2\Bigg( \frac{(x_1)_s}{\lambda}-1\Bigg)\int \int  (Q_b+\varepsilon)_{y_1} \Big(-(\psi_B\varepsilon_{y_1})_{y_1}-(\psi_B\varepsilon_{y_2})_{y_2}+\varepsilon \phi_{i,B}-\psi_B[(Q_b+\varepsilon)^3-Q_b^3]\Big)$$

We use the identity 
\begin{equation*}
\begin{split} 
&\int \int \psi_B (Q_b)_{y_1}[(\varepsilon+Q_b)^p-Q_b^p-pQ_b^{p-1}\varepsilon)+\int \int \psi_B\varepsilon_{y_1}[(\varepsilon+Q_b)^p-Q_b^p]\\
&=\frac{1}{p+1}\int \int \psi_B\partial_{y_1}[(Q_b+\varepsilon)^{p+1}-Q_b^{p+1}-(p+1)Q_b^p\varepsilon]\\&=-\frac{1}{p+1}\int \int (\psi_B)_{y_1}[(Q_b+\varepsilon)^{p+1}-Q_b^{p+1}-(p+1)Q_b^p\varepsilon]
\end{split}
\end{equation*}
and using it for $p=3,$ we can rewrite the term as 
\begin{equation*}
\begin{split} 
f_{1,3}&=2\Bigg( \frac{(x_1)_s}{\lambda}-1\Bigg)\frac{1}{4}\int \int (\psi_B)_{y_1}[(Q_b+\varepsilon)^{4}-Q_b^{4}-4Q_b^3\varepsilon]\\
&+2\Bigg( \frac{(x_1)_s}{\lambda}-1\Bigg)\int \int  (b\chi_bP+\varepsilon)_{y_1} \Big(-(\psi\varepsilon_{y_1})_{y_1}-(\psi\varepsilon_{y_2})_{y_2}+\varepsilon \phi_{i,B}\Big)\\
&+2\Bigg( \frac{(x_1)_s}{\lambda}-1\Bigg)\int \int  Q_{y_1} \Big(L\varepsilon-(\psi_B)_{y_1}\varepsilon_{y_1}+(1-\psi_B)\Delta \varepsilon -\varepsilon(1- \phi_{i,B})\Big)\\
&+2\Bigg( \frac{(x_1)_s}{\lambda}-1\Bigg)\int \int \varepsilon \psi_B \partial_{y_1}[Q_b^3-Q^3].
\end{split}
\end{equation*}
Since $\int \int Q_{y_1}L\varepsilon=0$, $|(Q_b+\varepsilon)^{4}-Q_b^{4}-4Q_b^3\varepsilon|\lesssim \varepsilon^4+Q_b^2\varepsilon^2,$ $(\psi_B)_{y_1}\leq (\phi_{i,B})_{y_1}$ and $(\psi_B)_{y_1}=0$ on $y_1<-\frac{B}{2}$ the term is estimated 
$$\Big|\int \int (\psi_B)_{y_1}[(Q_b+\varepsilon)^{4}-Q_b^{4}-4Q_b^3\varepsilon]\Big|\leq \int \int (\phi_{i,B})_{y_1}\varepsilon^4+\int \int_{(-\infty,-\frac{B}{2}]}(\phi_{i,B})_{y_1}Q_b^2\varepsilon^2$$
$$\leq \|\varepsilon\|_{L^2}^2\int \int (|\nabla \varepsilon|^2+\varepsilon^2)(\phi_{i,B})_{y_1}+(e^{-\frac{B}{2}}+|b|)^2\int \int (\phi_{i,B})_{y_1}\varepsilon^2,$$
and using the decay properties of $P$, we get 
$$\Big|\int \int  (b\chi_bP)_{y_1} \Big(-(\psi\varepsilon_{y_1})_{y_1}-(\psi\varepsilon_{y_2})_{y_2}+\varepsilon \phi_{i,B}\Big)\Big|\leq |b|e^{\frac{B}{4\alpha_1}}\Big(\int \int (\phi_{i,B})_{y_1}(|\nabla\varepsilon|^2+\varepsilon^2)\Big)^{\frac{1}{2}}.$$
We continue by using the decay properties of $Q,$  
\begin{equation*}
\begin{split}
\Big|\int \int_{(-\infty,-\frac{B}{2}]} Q_{y_1}[(1-\psi_B)\varepsilon_{y_1}]_{y_1}\Big|&=\Big|\int \int_{(-\infty,-\frac{B}{2}]} Q_{y_1y_1y_1}\varepsilon (1-\psi_B)-\int \int_{(-\infty,-\frac{B}{2}]} Q_{y_1y_1}\varepsilon (\psi_B)_{y_1}\Big|\\
&\leq \sqrt{B}e^{-\frac{B}{2\alpha_1}}\Big(\int \int (\phi_{i,B})_{y_1}\varepsilon^2\Big)^{\frac{1}{2}},
\end{split}
\end{equation*}
\begin{equation*}
\begin{split}
\Big|\int \int Q_{y_1}[(1-\psi_B)\varepsilon_{y_2}]_{y_2}\Big|&=\Big|\int \int_{(-\infty,-\frac{B}{2}]} Q_{y_1y_2y_2}\varepsilon (1-\psi_B)\Big|\\
&\leq \sqrt{B}e^{-\frac{B}{2\alpha_1}}\Big(\int \int (\phi_{i,B})_{y_1}\varepsilon^2\Big)^{\frac{1}{2}}.
\end{split}
\end{equation*}

For the next term, by the orthogonality condition $(\varepsilon,\varphi(y_1)Q_{y_1})=0,$ we have 
$$\int \int Q_{y_1}(1-\phi_{i,B})\varepsilon=\int \int Q_{y_1}(1+\varphi(y_1)e^{-\frac{B}{2\alpha_1}}-\phi_{i,B})\varepsilon$$
we split again into regions $\{y_1<-\frac{B}{2}\}, \{|y_1|<\frac{B}{2}\}, \{y_1>\frac{B}{2}\}.$ For the first region, we use that $\varphi(y_1)e^{-\frac{B}{2\alpha_1}}, \phi_{i,B}(y_1)\leq 1$ to get 
$$\Big|\int \int_{\{y_1<-\frac{B}{2}\}} Q_{y_1}(1+\varphi(y_1)e^{-\frac{B}{2\alpha_1}}-\phi_{i,B})\varepsilon\Big| \leq $$
$$\leq \Big( \int \int_{\{y_1<-\frac{B}{2}\}} \varepsilon^2(\phi_{i,B})_{y_1}\Big)^{\frac{1}{2}}\Big(\int \int_{\{y_1<-\frac{B}{2}\}}Q_{y_1}^2\frac{(1+\varphi(y_1)e^{-\frac{B}{2\alpha_1}}-\phi_{i,B})^2}{(\phi_{i,B})_{y_1}}\Big)^{\frac{1}{2}}$$
$$\leq \sqrt{B}e^{-\frac{B}{2\alpha_1}}\Big( \int \int_{\{y_1<-\frac{B}{2}\}} \varepsilon^2(\phi_{i,B})_{y_1}\Big)^{\frac{1}{2}}$$
\begin{equation*}
\begin{split} 
\int \int_{\{|y_1|<\frac{B}{2}\}} Q_{y_1}(1+\varphi(y_1)e^{-\frac{B}{2\alpha_1}}-\phi_{i,B})\varepsilon=0,
\end{split}
\end{equation*}
For the third region, we use that $\phi_{i,B}(y_1)\leq \varphi(y_1)e^{-\frac{B}{2\alpha_1}}$ for $B\geq 20,$ to get 
\begin{equation*}
\begin{split} 
&\Big|\int \int_{\{y_1>\frac{B}{2}\}}Q_{y_1}(1+\varphi(y_1)e^{-\frac{B}{2\alpha_1}}-\phi_{i,B})\varepsilon\Big| \\
&\leq \Big( \int \int_{\{y_1>\frac{B}{2}\}} \varepsilon^2(\phi_{i,B})_{y_1}\Big)^{\frac{1}{2}}\Big(\int \int_{\{y_1>\frac{B}{2}\}}Q_{y_1}^2\frac{(1+\varphi(y_1)e^{-\frac{B}{2\alpha_1}}-\phi_{i,B})^2}{(\phi_{i,B})_{y_1}}\Big)^{\frac{1}{2}}\\
&\leq  \sqrt{B}e^{-\frac{B}{2\alpha_1}}\Big( \int \int_{\{y_1>\frac{B}{2}\}} e^{(-\frac{2}{\alpha_2}+\frac{2}{\alpha_1})|y_1|-2(1-\frac{1}{\alpha_{2}^{2}})|y_2|}\Big)^{\frac{1}{2}} 
\Big( \int \int_{\{y_1>\frac{B}{2}\}} \varepsilon^2(\phi_{i,B})_{y_1}\Big)^{\frac{1}{2}}\\
&\lesssim \sqrt{B}e^{-\frac{B}{2\alpha_1}}\Big( \int \int_{\{y_1>\frac{B}{2}\}} \varepsilon^2(\phi_{i,B})_{y_1}\Big)^{\frac{1}{2}}\\
&\lesssim \sqrt{B}e^{-\frac{B}{2\alpha_1}}\Big( \int \int_{\{y_1>\frac{B}{2}\}} \varepsilon^2(\phi_{i,B})_{y_1}\Big)^{\frac{1}{2}}\\
\end{split}
\end{equation*}
For the region that is problematic, we use the specific construction of the weight $\phi_{i,B}$ in order to get 
\begin{equation*}
\begin{split} 
\int \int_{\{|y_1|<\frac{B}{2}\}} Q_{y_1}(1+\varphi(y_1)e^{-\frac{B}{2\alpha_1}}-\phi_{i,B})\varepsilon=0.
\end{split}
\end{equation*}

We integrate by parts to bound 

\begin{equation*}
\begin{split}
&\Big|\int \int  \varepsilon_{y_1} \Big(-(\psi_B\varepsilon_{y_1})_{y_1}-(\psi_B\varepsilon_{y_2})_{y_2}+\varepsilon \phi_{i,B}\Big)\Big|=\\
&=\Big|-\frac{1}{2}\int \int (\varepsilon_{y_1}^2+\varepsilon_{y_2}^2)(\psi_B)_{y_1}+\varepsilon^2(\phi_{i,B})_{y_1}\Big|\lesssim \int \int (|\nabla \varepsilon|^2+\varepsilon^2)(\phi_{i,B})_{y_1}
\end{split}
\end{equation*}
and since $|\partial_{y_1}[Q_b^3-Q^3]|\leq |b|,$
$$\Big|\int \int \varepsilon \psi_B \partial_{y_1}[Q_b^3-Q^3]\Big|\leq e^{\frac{B}{4\alpha_1}}|b|\Big(\int \int (\phi_{i,B})_{y_1}\varepsilon^2\Big)^{\frac{1}{2}}$$
Therefore, as $| \frac{(x_1)_s}{\lambda}-1|\leq e^{\frac{B}{4\alpha_1}}\mathcal{N}_{1,loc}(s)^{\frac{1}{2}}+e^{\frac{B}{2\alpha_1}}\widetilde{\mathcal{N}}_{i}+b^2,$ we put together all the above estimates to get 
\begin{equation}\label{f13}
\begin{split}
f_{1,3}&\leq (e^{\frac{B}{4\alpha_1}}\mathcal{N}_{1,loc}^{\frac{1}{2}}+e^{\frac{B}{2\alpha_1}}\widetilde{\mathcal{N}}_{i}+b^2)(\sqrt{B}e^{-\frac{B}{2\alpha_1}}\mathcal{N}_{i,loc}^{\frac{1}{2}}+e^{-\frac{B}{2}}\mathcal{N}_{i,loc}+e^{\frac{B}{4\alpha_1}}|b|\widetilde{\mathcal{N}}_{i}^{\frac{1}{2}})\\
&\leq \frac{\mu}{100B}\int \int (\varepsilon_{y_1}^2+\varepsilon_{y_2}^2+\varepsilon^2)(\phi_{i,B})_{y_1}+Cb^4.
\end{split}
\end{equation}

\textbf{Step 4. Estimates for the term $f_{1,4}$} 

Similarly as for $f_{1,2},f_{1,3},$ we use again cancellations to deal with the large terms. Recall that  
$$ f_{1,4}=2\frac{(x_2)_s}{\lambda}\int \int  (Q_b+\varepsilon)_{y_2} \Big(-(\psi_B\varepsilon_{y_1})_{y_1}-(\psi_B\varepsilon_{y_2})_{y_2}+\varepsilon \phi_{i,B}-\psi_B[(Q_b+\varepsilon)^3-Q_b^3]\Big)$$

We have the identity 
\begin{equation*}
\begin{split} 
&\int \int \psi_B (Q_b)_{y_2}[(\varepsilon+Q_b)^p-Q_b^p-pQ_b^{p-1}\varepsilon)+\int \int \psi_B\varepsilon_{y_2}[(\varepsilon+Q_b)^p-Q_b^p]\\
&=\frac{1}{p+1}\int \int \psi_B\partial_{y_2}[(Q_b+\varepsilon)^{p+1}-Q_b^{p+1}-(p+1)Q_b^p\varepsilon]\\&=-\frac{1}{p+1}\int \int (\psi_B)_{y_2}[(Q_b+\varepsilon)^{p+1}-Q_b^{p+1}-(p+1)Q_b^p\varepsilon]=0
\end{split}
\end{equation*}
and we use it for $p=3$ together with $\int \int Q_{y_2}L\varepsilon=0$,
\begin{equation*}
\begin{split} 
f_{1,4}&=2 \frac{(x_2)_s}{\lambda}\int \int  (b\chi_bP+\varepsilon)_{y_2} \Big(-(\psi_B\varepsilon_{y_1})_{y_1}-(\psi_B\varepsilon_{y_2})_{y_2}+\varepsilon \phi_{i,B}\Big)\\
&+2 \frac{(x_2)_s}{\lambda}\int \int  Q_{y_2} \Big(-(\psi_B)_{y_1}\varepsilon_{y_1}+(1-\psi_B)\Delta \varepsilon -\varepsilon(1- \phi_{i,B})\Big)\\
&+2 \frac{(x_2)_s}{\lambda}\int \int \varepsilon \psi_B \partial_{y_2}[Q_b^3-Q^3],
\end{split}
\end{equation*}

Using the decay properties of $P$, we get 
$$\Big|\int \int  (b\chi_bP)_{y_2} \Big(-(\psi\varepsilon_{y_1})_{y_1}-(\psi\varepsilon_{y_2})_{y_2}+\varepsilon \phi_{i,B}\Big)\Big|\leq |b|e^{\frac{B}{4\alpha_1}}\Big(\int \int (\phi_{i,B})_{y_1}(|\nabla\varepsilon|^2+\varepsilon^2)\Big)^{\frac{1}{2}}.$$
We continue by using the decay properties of $Q,$  
\begin{equation*}
\begin{split}
\Big|\int \int_{(-\infty,-\frac{B}{2}]} Q_{y_2}[(1-\psi_B)\varepsilon_{y_1}]_{y_1}\Big|&=\Big|\int \int_{(-\infty,-\frac{B}{2}]} Q_{y_2y_1y_1}\varepsilon (1-\psi_B)-\int \int_{(-\infty,-\frac{B}{2}]} Q_{y_2y_1}\varepsilon (\psi_B)_{y_1}\Big|\\
&\leq \sqrt{B}e^{-\frac{B}{2\alpha_1}}\Big(\int \int (\phi_{i,B})_{y_1}\varepsilon^2\Big)^{\frac{1}{2}},
\end{split}
\end{equation*}
\begin{equation*}
\begin{split}
\Big|\int \int Q_{y_2}[(1-\psi_B)\varepsilon_{y_2}]_{y_2}\Big|&=\Big|\int \int_{(-\infty,-\frac{B}{2}]} Q_{y_2y_2y_2}\varepsilon (1-\psi_B)\Big|\\
&\leq \sqrt{B}e^{-\frac{B}{2\alpha_1}}\Big(\int \int (\phi_{i,B})_{y_1}\varepsilon^2\Big)^{\frac{1}{2}}.
\end{split}
\end{equation*}

For the next term, by the orthogonality condition $(\varepsilon,\varphi(y_1)Q_{y_1})=0,$ we have 
$$\int \int Q_{y_2}(1-\phi_{i,B})\varepsilon=\int \int Q_{y_2}(1+\varphi(y_1)e^{-\frac{B}{2\alpha_1}}-\phi_{i,B})\varepsilon$$
we split again into regions $\{y_1<-\frac{B}{2}\}, \{|y_1|<\frac{B}{2}\}, \{y_1>\frac{B}{2}\}.$ For the first region, we use that $\varphi(y_1)e^{-\frac{B}{2\alpha_1}}, \phi_{i,B}(y_1)\leq 1$ to get 
$$\Big|\int \int_{\{y_1<-\frac{B}{2}\}} Q_{y_2}(1+\varphi(y_1)e^{-\frac{B}{2\alpha_1}}-\phi_{i,B})\varepsilon\Big| \leq $$
$$\leq \Big( \int \int_{\{y_1<-\frac{B}{2}\}} \varepsilon^2(\phi_{i,B})_{y_1}\Big)^{\frac{1}{2}}\Big(\int \int_{\{y_1<-\frac{B}{2}\}}Q_{y_2}^2\frac{(1+\varphi(y_1)e^{-\frac{B}{2\alpha_1}}-\phi_{i,B})^2}{(\phi_{i,B})_{y_1}}\Big)^{\frac{1}{2}}$$
$$\leq \sqrt{B}e^{-\frac{B}{2\alpha_1}}\Big( \int \int_{\{y_1<-\frac{B}{2}\}} \varepsilon^2(\phi_{i,B})_{y_1}\Big)^{\frac{1}{2}}$$
\begin{equation*}
\begin{split} 
\int \int_{\{|y_1|<\frac{B}{2}\}} Q_{y_2}(1+\varphi(y_1)e^{-\frac{B}{2\alpha_1}}-\phi_{i,B})\varepsilon=0,
\end{split}
\end{equation*}
For the third region, we use that $\phi_{i,B}(y_1)\leq \varphi(y_1)e^{-\frac{B}{2\alpha_1}}$ for $B\geq 20,$ to get 
\begin{equation*}
\begin{split} 
&\Big|\int \int_{\{y_1>\frac{B}{2}\}}Q_{y_2}(1+\varphi(y_1)e^{-\frac{B}{2\alpha_1}}-\phi_{i,B})\varepsilon\Big| \\
&\leq \Big( \int \int_{\{y_1>\frac{B}{2}\}} \varepsilon^2(\phi_{i,B})_{y_1}\Big)^{\frac{1}{2}}\Big(\int \int_{\{y_1>\frac{B}{2}\}}Q_{y_2}^2\frac{(1+\varphi(y_1)e^{-\frac{B}{2\alpha_1}}-\phi_{i,B})^2}{(\phi_{i,B})_{y_1}}\Big)^{\frac{1}{2}}\\
&\leq  \sqrt{B}e^{-\frac{B}{2\alpha_1}}\Big( \int \int_{\{y_1>\frac{B}{2}\}} e^{(-\frac{2}{\alpha_2}+\frac{2}{\alpha_1})|y_1|-2(1-\frac{1}{\alpha_{2}^{2}})|y_2|}\Big)^{\frac{1}{2}} 
\Big( \int \int_{\{y_1>\frac{B}{2}\}} \varepsilon^2(\phi_{i,B})_{y_1}\Big)^{\frac{1}{2}}\\
&\lesssim \sqrt{B}e^{-\frac{B}{2\alpha_1}}\Big( \int \int_{\{y_1>\frac{B}{2}\}} \varepsilon^2(\phi_{i,B})_{y_1}\Big)^{\frac{1}{2}}\\
&\lesssim \sqrt{B}e^{-\frac{B}{2\alpha_1}}\Big( \int \int_{\{y_1>\frac{B}{2}\}} \varepsilon^2(\phi_{i,B})_{y_1}\Big)^{\frac{1}{2}}\\
\end{split}
\end{equation*}
For the region that is problematic, we use the specific construction of the weight $\phi_{i,B}$ in order to get the cancellations like in the terms $f_{1,2},f_{1,3},$
\begin{equation*}
\begin{split} 
\int \int_{\{|y_1|<\frac{B}{2}\}} Q_{y_2}(1+\varphi(y_1)e^{-\frac{B}{2\alpha_1}}-\phi_{i,B})\varepsilon=0.
\end{split}
\end{equation*}

We integrate by parts to get

\begin{equation*}
\begin{split}
&\Big|\int \int  \varepsilon_{y_2} \Big(-(\psi_B\varepsilon_{y_1})_{y_1}-(\psi_B\varepsilon_{y_2})_{y_2}+\varepsilon \phi_{i,B}\Big)\Big|=\\
&=\Big|-\frac{1}{2}\int \int (\varepsilon_{y_1}^2+\varepsilon_{y_2}^2)(\psi_B)_{y_2}+\varepsilon^2(\phi_{i,B})_{y_2}\Big|=0
\end{split}
\end{equation*}
and since $|\partial_{y_2}[Q_b^3-Q^3]|\leq |b|,$
$$\Big|\int \int \varepsilon \psi_B \partial_{y_2}[Q_b^3-Q^3]\Big|\leq e^{\frac{B}{4\alpha_1}}|b|\Big(\int \int (\phi_{i,B})_{y_1}\varepsilon^2\Big)^{\frac{1}{2}}$$
Therefore, as $| \frac{(x_2)_s}{\lambda}|\leq e^{\frac{B}{4\alpha_1}}\mathcal{N}_{1,loc}(s)^{\frac{1}{2}}+e^{\frac{B}{2\alpha_1}}\widetilde{\mathcal{N}}_{i}+b^2,$ we put together all the above estimates to get 
\begin{equation}\label{f14}
\begin{split}
f_{1,4}&\leq (e^{\frac{B}{4\alpha_1}}\mathcal{N}_{1,loc}^{\frac{1}{2}}+e^{\frac{B}{2\alpha_1}}\widetilde{\mathcal{N}}_{i}+b^2)(\sqrt{B}e^{-\frac{B}{2\alpha_1}}\mathcal{N}_{i,loc}^{\frac{1}{2}}+e^{\frac{B}{4\alpha_1}}|b|\widetilde{\mathcal{N}}_{i}^{\frac{1}{2}})\\
&\leq \frac{\mu}{100B}\int \int (\varepsilon_{y_1}^2+\varepsilon_{y_2}^2+\varepsilon^2)(\phi_{i,B})_{y_1}+Cb^4.
\end{split}
\end{equation}

\textbf{Step 5. Estimates for the term} 

$$f_{1,5}=-2b_s \int \int (\chi_b+\gamma y_1 (\chi_b)_{y_1})P \Big(-(\psi_B\varepsilon_{y_1})_{y_1}-(\psi_B\varepsilon_{y_2})_{y_2}+\varepsilon \phi_{i,B}-\psi_B[(Q_b+\varepsilon)^3-Q_b^3]\Big)$$

Denote $\zeta_b=\chi_b+y_1\gamma (\chi_b)_{y_1}$, hence 
\begin{equation*}
\begin{split}
\Big|\int \int (\zeta_b)_{y_1y_1}P\psi_B \varepsilon\Big|&=\Big|\int \int_{[-\frac{2}{|b|^{\gamma}}, -\frac{1}{|b|^{\gamma}}]}(\zeta_b)_{y_1y_1}\psi_B \varepsilon\Big|\\
&\leq \Big(\int \int \varepsilon^2 (\phi_{i,B})_{y_1}\Big)^{\frac{1}{2}}\Big(\int \int_{[-\frac{2}{|b|^{\gamma}}, -\frac{1}{|b|^{\gamma}}]}\frac{[(\zeta_b)_{y_1y_1}]^2\psi_B^2P^2}{(\phi_{i,B})_{y_1}}\Big)^{\frac{1}{2}}\\
&\leq |b|^{\gamma}\Big(\int \int \varepsilon^2 (\phi_{i,B})_{y_1}\Big)^{\frac{1}{2}}
\end{split}
\end{equation*}
\begin{equation*}
\begin{split}
\Big|\int \int \zeta_bP_{y_1y_1}\psi_B \varepsilon\Big|&=\Big|\int \int \zeta_b\psi_BP_{y_1y_1} \varepsilon\Big|\\
&\leq \Big(\int \int \varepsilon^2 (\phi_{i,B})_{y_1}\Big)^{\frac{1}{2}}\Big(\int \int \frac{\zeta_b^2\psi_B^2P_{y_1y_1}^2}{(\phi_{i,B})_{y_1}}\Big)^{\frac{1}{2}}\\
&\leq e^{\frac{B}{4\alpha_1}}\Big(\int \int \varepsilon^2 (\phi_{i,B})_{y_1}\Big)^{\frac{1}{2}}
\end{split}
\end{equation*}
\begin{equation*}
\begin{split}
\Big|\int \int (\zeta_b)_{y_1}P_{y_1}\psi_B \varepsilon\Big|&=\Big|\int \int_{[-\frac{2}{|b|^{\gamma}}, -\frac{1}{|b|^{\gamma}}]}(\zeta_b)_{y_1}\psi_BP_{y_1} \varepsilon\Big|\\
&\leq \Big(\int \int \varepsilon^2 (\phi_{i,B})_{y_1}\Big)^{\frac{1}{2}}\Big(\int \int_{[-\frac{2}{|b|^{\gamma}}, -\frac{1}{|b|^{\gamma}}]}\frac{[(\zeta_b)_{y_1}]^2\psi_B^2P_{y_1}^2}{(\phi_{i,B})_{y_1}}\Big)^{\frac{1}{2}}\\
&\leq |b|^{\gamma}\Big(\int \int \varepsilon^2 (\phi_{i,B})_{y_1}\Big)^{\frac{1}{2}},
\end{split}
\end{equation*}
\begin{equation*}
\begin{split}
\Big|\int \int (\zeta_b)_{y_1}(\psi_B)_{y_1}P \varepsilon\Big|&=\Big|\int \int_{[-\frac{2}{|b|^{\gamma}}, -\frac{1}{|b|^{\gamma}}]}(\zeta_b)_{y_1}(\psi_B)_{y_1}P \varepsilon\Big|\\
&\leq \Big(\int \int \varepsilon^2 (\phi_{i,B})_{y_1}\Big)^{\frac{1}{2}}\Big(\int \int_{[-\frac{2}{|b|^{\gamma}}, -\frac{1}{|b|^{\gamma}}]}\frac{[(\zeta_b)_{y_1}]^2[(\psi_B)_{y_1}]^2P^2}{(\phi_{i,B})_{y_1}}\Big)^{\frac{1}{2}}\\
&\leq |b|^{\gamma}\Big(\int \int \varepsilon^2 (\phi_{i,B})_{y_1}\Big)^{\frac{1}{2}},
\end{split}
\end{equation*}
\begin{equation*}
\begin{split}
\Big|\int \int \zeta_b(\psi_B)_{y_1}P_{y_1} \varepsilon\Big|&=\Big|\int \int \zeta_b(\psi_B)_{y_1}P_{y_1} \varepsilon\Big|\\
&\leq \Big(\int \int \varepsilon^2 (\phi_{i,B})_{y_1}\Big)^{\frac{1}{2}}\Big(\int \int \frac{\zeta_b^2[(\psi_B)_{y_1}]^2P^2}{(\phi_{i,B})_{y_1}}\Big)^{\frac{1}{2}}\\
&\leq \sqrt{B}\Big(\int \int \varepsilon^2 (\phi_{i,B})_{y_1}\Big)^{\frac{1}{2}},
\end{split}
\end{equation*}
\begin{equation*}
\begin{split}
\Big|\int \int \zeta_bP_{y_2y_2}\psi_B \varepsilon\Big|&=\Big|\int \int \zeta_b\psi_BP_{y_1y_1} \varepsilon\Big|\\
&\leq \Big(\int \int \varepsilon^2 (\phi_{i,B})_{y_1}\Big)^{\frac{1}{2}}\Big(\int \int \frac{\zeta_b^2\psi_B^2P_{y_2y_2}^2}{(\phi_{i,B})_{y_1}}\Big)^{\frac{1}{2}}\\
&\leq e^{\frac{B}{2\alpha_1}}\Big(\int \int \varepsilon^2 (\phi_{i,B})_{y_1}\Big)^{\frac{1}{2}},
\end{split}
\end{equation*}
\begin{equation*}
\begin{split}
\Big|\int \int \zeta_bP\phi_{i,B} \varepsilon\Big|&=\Big|\int \int \zeta_b\phi_{i,B} \varepsilon\Big|\\
&\leq \Big(\int \int \varepsilon^2 (\phi_{i,B})_{y_1}\Big)^{\frac{1}{2}}\Big(\int \int \frac{\zeta_b^2\phi_{i,B}^2P^2}{(\phi_{i,B})_{y_1}}\Big)^{\frac{1}{2}}\\
&\leq e^{\frac{B}{2\alpha_1}}\Big(\int \int \varepsilon^2 (\phi_{i,B})_{y_1}\Big)^{\frac{1}{2}},
\end{split}
\end{equation*}
\begin{equation*}
\begin{split}
\Big|\int \int \zeta_bP (-\psi_B)[(Q_b+\varepsilon)^3-Q_b^3]\Big|\leq \int \int |\zeta_b|P \psi_B|\varepsilon|^3+ \int \int |\zeta_b|P \psi_BQ_b^2|\varepsilon|
\end{split}
\end{equation*}
and, since $\zeta_bP$ is bounded on $\mathbb{R}^2$
\begin{equation*}
\begin{split}
\int \int |\zeta_b|P \psi_B|\varepsilon|^3\lesssim \|\varepsilon\|_{L^2}e^{\frac{B}{4\alpha_1}}\int \int (|\nabla \varepsilon|^2+\varepsilon^2)(\phi_{i,B})_{y_1}
\end{split}
\end{equation*}
\begin{equation*}
\begin{split}
\int \int |\zeta_b|P \psi_BQ_b^2|\varepsilon|&\leq \Big(\int \int \varepsilon^2 (\phi_{i,B})_{y_1}\Big)^{\frac{1}{2}}\Big(\int \int \frac{\zeta_b^2\psi_B^2P^2Q_{b}^{2}}{(\phi_{i,B})_{y_1}}\Big)^{\frac{1}{2}}\\
&\leq e^{\frac{B}{2\alpha_1}}\Big(\int \int \varepsilon^2 (\phi_{i,B})_{y_1}\Big)^{\frac{1}{2}}.
\end{split}
\end{equation*}
As $|b_s|\leq e^{\frac{B}{2\alpha_1}}\mathcal{N}_{1,loc}+\mathcal{N}_{1}+b^2$, we put together all the above estimates to obtain   
\begin{equation}\label{f15}
\begin{split}
f_{1,5}&\leq (e^{\frac{B}{2\alpha_1}}\mathcal{N}_{1,loc}+\mathcal{N}_{1}+b^2)e^{\frac{B}{2\alpha_1}}\Big(\int \int \varepsilon^2 (\phi_{i,B})_{y_1}\Big)^{\frac{1}{2}}\\
&\leq \frac{\mu}{100B}\int \int (|\nabla \varepsilon|^2+\varepsilon^2)(\phi_{i,B})_{y_1}+Cb^4.
\end{split}
\end{equation}

\textbf{Step 6. Estimates for the term} 

$$f_{1,6}=-2 \int \int\Psi_b \Big(-(\psi_B\varepsilon_{y_1})_{y_1}-(\psi_B\varepsilon_{y_2})_{y_2}+\varepsilon \phi_{i,B}-\psi_B[(Q_b+\varepsilon)^3-Q_b^3]\Big)$$

Using the estimates \eqref{eq:estimatesPsib} for $\Psi_b$ we have  
\begin{equation*}
\begin{split}
\Big|\int \int (\Psi_b)_{y_1y_1}\psi_B \varepsilon\Big|&=\Big|\int \int(\Psi_b)_{y_1y_1}\psi_B \varepsilon\Big|\\
&\leq \Big(\int \int \varepsilon^2 (\phi_{i,B})_{y_1}\Big)^{\frac{1}{2}}\Big(\int \int\frac{[(\Psi_b)_{y_1y_1}]^2\psi_B^2}{(\phi_{i,B})_{y_1}}\Big)^{\frac{1}{2}}\\
&\leq b^2e^{\frac{B}{4\alpha_1}}\Big(\int \int y_{1}^{2}e^{-|y_1|}e^{-\frac{|y_2|}{2}}e^{\frac{|y_1|}{\alpha_1}}\Big)^{\frac{1}{2}}\Big(\int \int \varepsilon^2 (\phi_{i,B})_{y_1}\Big)^{\frac{1}{2}}\\
&\leq b^2e^{\frac{B}{4\alpha_1}}\Big(\int \int \varepsilon^2 (\phi_{i,B})_{y_1}\Big)^{\frac{1}{2}},
\end{split}
\end{equation*}
\begin{equation*}
\begin{split}
\Big|\int \int (\Psi_b)_{y_1}(\psi_B)_{y_1} \varepsilon\Big|&=\Big|\int \int(\Psi_b)_{y_1}(\psi_B)_{y_1} \varepsilon\Big|\\
&\leq \Big(\int \int \varepsilon^2 (\phi_{i,B})_{y_1}\Big)^{\frac{1}{2}}\Big(\int \int\frac{[(\Psi_b)_{y_1}]^2[(\psi_B)_{y_1}]^2}{(\phi_{i,B})_{y_1}}\Big)^{\frac{1}{2}}\\
&\leq b^2B^{\frac{1}{2}}\Big(\int \int \varepsilon^2 (\phi_{i,B})_{y_1}\Big)^{\frac{1}{2}},
\end{split}
\end{equation*}
\begin{equation*}
\begin{split}
\Big|\int \int (\Psi_b)_{y_2y_2}\psi_B \varepsilon\Big|&=\Big|\int \int(\Psi_b)_{y_2y_2}\psi_B \varepsilon\Big|\\
&\leq \Big(\int \int \varepsilon^2 (\phi_{i,B})_{y_1}\Big)^{\frac{1}{2}}\Big(\int \int\frac{[(\Psi_b)_{y_2y_2}]^2\psi_B^2}{(\phi_{i,B})_{y_1}}\Big)^{\frac{1}{2}}\\
&\leq b^2e^{\frac{B}{4\alpha_1}}\Big(\int \int \varepsilon^2 (\phi_{i,B})_{y_1}\Big)^{\frac{1}{2}},
\end{split}
\end{equation*}
\begin{equation*}
\begin{split}
\Big|\int \int \Psi_b\phi_{i,B} \varepsilon\Big|&=\Big|\int \int\Psi_b\psi_B \varepsilon\Big|\\
&\leq \Big(\int \int \varepsilon^2 (\phi_{i,B})_{y_1}\Big)^{\frac{1}{2}}\Big(\int \int\frac{\Psi_b^2\phi_{i,B}^2}{(\phi_{i,B})_{y_1}}\Big)^{\frac{1}{2}}\\
&\leq b^2e^{-\frac{B}{4\alpha_1}}\Big(\int \int \varepsilon^2 (\phi_{i,B})_{y_1}\Big)^{\frac{1}{2}},
\end{split}
\end{equation*}
\begin{equation*}
\begin{split}
\Big|\int \int \Psi_b (-\psi_B)[(Q_b+\varepsilon)^3-Q_b^3]\Big|\leq \int \int |\Psi_b| \psi_B|\varepsilon|^3+ \int \int |\Psi_b| \psi_BQ_b^2|\varepsilon|,
\end{split}
\end{equation*}
and for each term, 
\begin{equation*}
\begin{split}
\int \int |\Psi_b| \psi_B|\varepsilon|^3\lesssim b^{1+\gamma}\|\varepsilon\|_{L^2}e^{\frac{B}{2\alpha_1}}\int \int (|\nabla \varepsilon|^2+\varepsilon^2)(\phi_{i,B})_{y_1},
\end{split}
\end{equation*}
\begin{equation*}
\begin{split}
\int \int |\Psi_b| \psi_BQ_b^2|\varepsilon|&\leq \Big(\int \int \varepsilon^2 (\phi_{i,B})_{y_1}\Big)^{\frac{1}{2}}\Big(\int \int \frac{\Psi_{b}^{2}\psi_B^2}{(\phi_{i,B})_{y_1}}\Big)^{\frac{1}{2}}\\
&\leq b^2e^{\frac{B}{2\alpha_1}}\Big(\int \int \varepsilon^2 (\phi_{i,B})_{y_1}\Big)^{\frac{1}{2}}.
\end{split}
\end{equation*}
Hence
\begin{equation}\label{f16}
\begin{split}
f_{1,6}&\leq b^2e^{\frac{B}{2\alpha_1}}\mathcal{N}_{i,loc}^{\frac{1}{2}}+e^{\frac{B}{2\alpha_1}} \|\varepsilon\|_{L^2}\int \int (|\nabla \varepsilon|^2+\varepsilon^2)(\phi_{i,B})_{y_1}\\
&\leq \frac{\mu}{100B}\int \int (|\nabla \varepsilon|^2+\varepsilon^2)(\phi_{i,B})_{y_1}+Cb^4.
\end{split}
\end{equation}

\textbf{Conclusion for the term $f_{1}$}

Putting the estimates \eqref{f11}, \eqref{f12}, \eqref{f13}, \eqref{f14}, \eqref{f15}, \eqref{f16} together, we have 
\begin{equation}\label{f1}
f_1\leq -\mu\int \int (\varepsilon_{y_1y_1}^2+\varepsilon_{y_2y_2}^2)(\psi_B)_{y_1}-\frac{\mu}{2}\int \int (|\nabla \varepsilon|^2+\varepsilon^2)(\phi_{i,B})_{y_1}+Cb^4.    
\end{equation}

\subsection{\textbf{The computations for} \texorpdfstring{$f_2$}{Lg}:}

We will now control the drift term that appears in the modulated flow. Recall 
 $$f_2=2 \frac{\lambda_s}{\lambda}\int \int \Lambda \varepsilon  \Big(-(\psi_B\varepsilon_{y_1})_{y_1}-(\psi_B\varepsilon_{y_2})_{y_2}+\varepsilon \phi_{i,B}-\psi_B[(Q_b+\varepsilon)^3-Q_b^3]\Big)$$
 $$-j\frac{\lambda_s}{\lambda}\mathcal{F}_i+\frac{i-j}{i+j}\lambda^j\frac{d}{ds}\Bigg\{\frac{\int \int \varepsilon^2\tilde{\phi}_{i,B}}{\lambda^j}\Bigg\}.$$
 We have 
$$2\int \int \Lambda \varepsilon [-(\psi_B\varepsilon_{y_1})]_{y_1}=\int \int (2\psi_B-y_1(\psi_B)_{y_1})\varepsilon_{y_1}^2,$$
$$2\int \int \Lambda \varepsilon [-(\psi_B\varepsilon_{y_2})]_{y_2}=\int \int (2\psi_B-y_1(\psi_B)_{y_1})\varepsilon_{y_2}^2,$$
$$2\int \int \Lambda \varepsilon \phi_{i,B}\varepsilon=-\int \int y_1(\phi_{i,B})_{y_1}\varepsilon^2.$$
We have the following identity 
  \begin{equation*}
\begin{split}
\int \int \Lambda \varepsilon \psi_B [(Q_b+\varepsilon)^p-Q_b^p]&=\frac{1}{p+1}\int \int \Big( \frac{p+3}{p-1}\psi_B-y_1(\psi_B)_{y_1}\Big)[(Q_b+\varepsilon)^{p+1}-Q_b^{p+1}-(p+1)Q_b^p\varepsilon] \\&-\int \int \psi_B\Lambda Q_b [(Q_b+\varepsilon)^p-Q_b^p-pQ_b^{p-1}\varepsilon],
\end{split}
\end{equation*}
thus, in our case for $p=3,$ 
  \begin{equation*}
\begin{split}
-2\int \int \Lambda \varepsilon \psi_B [(Q_b+\varepsilon)^3-Q_b^3]&=-\frac{1}{2}\int \int \Big( 3\psi_B-y_1(\psi_B)_{y_1}\Big)[(Q_b+\varepsilon)^{4}-Q_b^{4}-4Q_b^3\varepsilon] \\&+2\int \int \psi_B\Lambda Q_b [(Q_b+\varepsilon)^3-Q_b^3-3Q_b^{2}\varepsilon].
\end{split}
\end{equation*}
Therefore 
\begin{equation*}
\begin{split}
f_{2}&= \frac{\lambda_s}{\lambda}\int \int [(2-j)\psi_B-y_1(\psi_B)_{y_1}](\varepsilon_{y_1}^2+\varepsilon_{y_2}^2)- \frac{\lambda_s}{\lambda}\int \int [j\phi_{i,B}-y_1(\phi_{i,B})_{y_1}]\varepsilon^2\\
&-\frac{\lambda_s}{\lambda}\int \int \frac{1}{2}[(3-j)\psi_B-y_1(\psi_B)_{y_1}][(Q_b+\varepsilon)^4-Q_b^4-4Q_b^3\varepsilon]\\
&+\frac{\lambda_s}{\lambda}\int \int 2\psi_B \Lambda Q_b[(Q_b+\varepsilon)^3-Q_b^3-3Q_b^2\varepsilon]+\frac{i-j}{i+j}\lambda^j\frac{d}{ds}\Bigg\{\frac{\int \int \varepsilon^2\tilde{\phi}_{i,B}}{\lambda^j}\Bigg\}.
\end{split}
\end{equation*}
Since $|(i-j)\psi_B-y_1(\psi_B)_{y_1}|\leq e^{\frac{B}{\alpha_1}}(\phi_{i,B})_{y_1}$ for $i=2,3,$ $\|\Lambda Q_b\|_{L_{y_1y_2}^{\infty}}, \|Q_b\Lambda Q_b\|_{L_{y_1y_2}^{\infty}}\leq C$ and that $\psi_{B}=\phi_{i,B}^{2}$ for $y_1<-\frac{B}{2}$ we have 
$$\Big|\int \int [(2-j)\psi_B-y_1(\psi_B)_{y_1}](\varepsilon_{y_1}^2+\varepsilon_{y_2}^2)\Big|\leq e^{\frac{B}{\alpha_1}} \int \int (\varepsilon_{y_1}^2+\varepsilon_{y_2}^2)(\phi_{i,B})_{y_1},$$
\begin{equation*}
\begin{split}
\Big|\int \int \frac{1}{2}[(3-j)\psi_B-&y_1(\psi_B)_{y_1}][(Q_b+\varepsilon)^4-Q_b^4-4Q_b^3\varepsilon]\Big|\leq \int \int e^{\frac{B}{\alpha_1}}(\phi_{i,B})_{y_1}\varepsilon^4+\int \int e^{\frac{B}{\alpha_1}}(\phi_{i,B})_{y_1}Q_b^2\varepsilon^2\\
&\leq e^{\frac{B}{\alpha_1}}\|\varepsilon\|_{L^2}^2\int \int (\varepsilon_{y_1}^2+\varepsilon_{y_2}^2+\varepsilon^2)(\phi_{i,B})_{y_1}+e^{\frac{B}{\alpha_1}}\|Q_b\|_{L_{y_1y_2}^{\infty}}^2\int \int \varepsilon^2(\phi_{i,B})_{y_1},
\end{split}
\end{equation*}
\begin{equation*}
\begin{split}
\Big|\int \int 2\psi_B &\Lambda Q_b[(Q_b+\varepsilon)^3-Q_b^3-3Q_b^2\varepsilon]\Big|\\
&\lesssim \|\Lambda Q_b\|_{L_{y_1y_2}^{\infty}}e^{\frac{B}{\alpha_1}}\int \int |\varepsilon|^3(\phi_{i,B})_{y_1}+\|Q_b\Lambda Q_b\|_{L_{y_1y_2}^{\infty}}e^{\frac{B}{\alpha_1}}\int \int \varepsilon^2(\phi_{i,B})_{y_1}\\
&\lesssim e^{\frac{B}{\alpha_1}}\|\varepsilon\|_{L^2}\int \int (\varepsilon_{y_1}^2+\varepsilon_{y_2}^2+\varepsilon^2)(\phi_{i,B})_{y_1}+e^{\frac{B}{\alpha_1}}\int \int \varepsilon^2(\phi_{i,B})_{y_1}.
\end{split}
\end{equation*}
For $j\geq 0,$
\[
j\phi_{i,B}-y_1(\phi_{i,B})_{y_1}=\begin{cases} 
(j+\frac{|y_1|}{B})e^{-\frac{|y_1|}{B}}, &\quad \text{ if } y_1<-B\\
j(1+e^{-\frac{B}{2\alpha_1}})+(j-y_1)e^{\frac{y_1}{\alpha_1}}e^{-\frac{B}{2\alpha_1}}, &\quad \text{ if } |y_1|<\frac{B}{2}\\
(j-i)\frac{y_1^i}{B^i},&\quad \text{ if } y_1>B
\end{cases} 
\]
which yields the following estimates
\begin{equation*}
\begin{split}
\int \int_{\{y_1<-B\}} |j\phi_{i,B}-y_1(\phi_{i,B})_{y_1}|\varepsilon^2&\lesssim  \int \int_{\{y_1<-B\}} \frac{|y_1|}{B} e^{-\frac{|y_1|}{B}}\varepsilon^2\\
&\lesssim \Big(  \int \int_{\{y_1<-B\}} \frac{y_1^4}{B^4} e^{-\frac{|y_1|}{B}}\varepsilon^2\Big)^{\frac{1}{4}}\Big(  \int \int_{\{y_1<-B\}} e^{-\frac{|y_1|}{B}}\varepsilon^2\Big)^{\frac{3}{4}}\\
&\lesssim \|\varepsilon\|_{L^2}^{\frac{1}{2}}\Big(  \int \int_{\{y_1<-B\}} B(\phi_{i,B})_{y_1}\varepsilon^2\Big)^{\frac{3}{4}}\\
&\lesssim B^{\frac{1}{2}} \|\varepsilon\|_{L^2}^{\frac{1}{2}}\Big(  \int \int (\phi_{i,B})_{y_1}\varepsilon^2\Big)^{\frac{3}{4}}
\end{split}
\end{equation*}
where we used that $\frac{y_1^2}{B^2}e^{-\frac{|y_1|}{B}}\leq C$ if $y_1<-\frac{B}{2}.$ We continue by estimating  
\begin{equation*}
\begin{split}
\int \int_{\{|y_1|<B\}} |j\phi_{i,B}-y_1(\phi_{i,B})_{y_1}|\varepsilon^2&\lesssim  \int \int_{\{|y_1|<B\}} \varepsilon^2\\
&\lesssim \int \int_{\{|y_1|<B\}} e^{\frac{B}{\alpha_1}}(\phi_{i,B})_{y_1}\varepsilon^2\\
&\lesssim e^{\frac{B}{\alpha_1}} \int \int (\phi_{i,B})_{y_1}\varepsilon^2.
\end{split}
\end{equation*}

It remains to estimate  
$$\frac{i-j}{i+j}\lambda^j\frac{d}{ds}\Bigg\{\frac{\int \int \varepsilon^2\tilde{\phi}_{i,B}}{\lambda^j}\Bigg\}+\frac{\lambda_s}{\lambda}\int \int_{\{y_1>B\}} [j\phi_{i,B}-y_1(\phi_{i,B})_{y_1}]\varepsilon^2$$
$$=\frac{i-j}{i+j}\lambda^j\frac{d}{ds}\Bigg\{\frac{\int \int \varepsilon^2\tilde{\phi}_{i,B}}{\lambda^j}\Bigg\}+\frac{\lambda_s}{\lambda}(i-j)\int \int_{\{y_1>B\}} \frac{y_1^i}{B^i}\varepsilon^2$$
$$=\frac{i-j}{i+j}\Big[\frac{d}{ds}\Bigg\{\int \int \varepsilon^2 \tilde{\phi}_{i,B}\Bigg\}-j\frac{\lambda_s}{\lambda}\int \int \varepsilon^2\tilde{\phi}_{i,B}+(i+j)\frac{\lambda_s}{\lambda}\int \int \varepsilon^2\tilde{\phi}_{i,B}\Big]-(i-j)\frac{\lambda_s}{\lambda}\int\int_{\{\frac{B}{2}\leq y_1\leq B\}} \varepsilon^2\tilde{\phi}_{i,B}$$
$$=\frac{i-j}{i+j}\Big[\frac{d}{ds}\Bigg\{\int \int \varepsilon^2 \tilde{\phi}_{i,B}\Bigg\}+i\frac{\lambda_s}{\lambda}\int \int \varepsilon^2\tilde{\phi}_{i,B}\Big]-(i-j)\frac{\lambda_s}{\lambda}\int\int_{\{\frac{B}{2}\leq y_1\leq B\}} \varepsilon^2\tilde{\phi}_{i,B}$$
$$=\frac{i-j}{i+j}\frac{1}{\lambda^i}\frac{d}{ds}\Bigg\{\lambda^i\int \int \varepsilon^2\tilde{\phi}_{i,B}\Bigg\}-(i-j)\frac{\lambda_s}{\lambda}\int\int_{\{\frac{B}{2}\leq y_1\leq B\}} \varepsilon^2\tilde{\phi}_{i,B}$$

First, we see 
$$\Big|\frac{\lambda_s}{\lambda}\int\int_{\{\frac{B}{2}\leq y_1\leq B\}} \varepsilon^2\tilde{\phi}_{i,B}\Big|\leq \Big|\frac{\lambda_s}{\lambda}\Big|\int \int \varepsilon^2B(\tilde{\phi}_{i,B})_{y_1}.$$

For the other term, we have the following: 

\begin{lemma}
We have that 
\begin{equation}\label{eq:driftestimate}
\frac{1}{\lambda^i}\frac{d}{ds}\Bigg\{\lambda^i\int \int \varepsilon^2\tilde{\phi}_{i,B}\Bigg\}\lesssim b^4+\delta(\nu^*)\mathcal{N}_{1,loc}+\Big|\frac{\lambda_s}{\lambda}\Big|\int \int_{\{\frac{B}{2}\leq y_1\leq 2B\}}B(\tilde{\phi}_{i,B})_{y_1}\varepsilon^2.
\end{equation}
\end{lemma}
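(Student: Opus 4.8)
The plan is to differentiate, substitute the modulated flow equation \eqref{eq:ModulationEquation}, and exploit the fact that the prefactor $\lambda^i$ has been tailored so that the drift operator $\frac{\lambda_s}{\lambda}\Lambda\varepsilon$ produces, after integration by parts, a term that is exactly cancelled away from the transition layer of $\tilde\phi_{i,B}$. Concretely, write
\[
\frac{1}{\lambda^i}\frac{d}{ds}\Bigl\{\lambda^i\int\int\varepsilon^2\tilde\phi_{i,B}\Bigr\}
= i\frac{\lambda_s}{\lambda}\int\int\varepsilon^2\tilde\phi_{i,B}+2\int\int\varepsilon_s\varepsilon\tilde\phi_{i,B},
\]
and insert $\varepsilon_s=\frac{\lambda_s}{\lambda}\Lambda\varepsilon+(-\Delta\varepsilon+\varepsilon-[(Q_b+\varepsilon)^3-Q_b^3])_{y_1}+(\frac{\lambda_s}{\lambda}+b)\Lambda Q_b+(\frac{(x_1)_s}{\lambda}-1)(Q_b+\varepsilon)_{y_1}+\frac{(x_2)_s}{\lambda}(Q_b+\varepsilon)_{y_2}+\Phi_b+\Psi_b$. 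The drift contribution $2\frac{\lambda_s}{\lambda}\int\int\varepsilon\Lambda\varepsilon\tilde\phi_{i,B}$, after integration by parts and using that $\tilde\phi_{i,B}$ does not depend on $y_2$, equals $-\frac{\lambda_s}{\lambda}\int\int y_1(\tilde\phi_{i,B})_{y_1}\varepsilon^2$; adding $i\frac{\lambda_s}{\lambda}\int\int\varepsilon^2\tilde\phi_{i,B}$ this becomes $\frac{\lambda_s}{\lambda}\int\int\varepsilon^2\bigl(i\tilde\phi_{i,B}-y_1(\tilde\phi_{i,B})_{y_1}\bigr)$. Since $\tilde\phi_{i,B}=y_1^i/B^i$ is homogeneous of degree $i$ for $y_1>B$ and vanishes for $y_1<B/2$, the bracket is supported in $\{B/2\le y_1\le 2B\}$ and is bounded there by $CB(\tilde\phi_{i,B})_{y_1}$, which is exactly the last term of the claimed estimate. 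This is the whole point of the weight $\lambda^i$: it turns the otherwise uncontrollable $\frac{\lambda_s}{\lambda}\Lambda\varepsilon$ into a harmless transition-layer remainder.

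Having disposed of the drift term, I would estimate the rest of $2\int\int\varepsilon_s\varepsilon\tilde\phi_{i,B}$. The linear dispersive piece $2\int\int(-\Delta\varepsilon+\varepsilon)_{y_1}\varepsilon\tilde\phi_{i,B}$, after integration by parts in $y_1$ and $y_2$ exactly as in the computation of $f_{1,1}$, equals $-3\int\int\varepsilon_{y_1}^2(\tilde\phi_{i,B})_{y_1}-\int\int\varepsilon_{y_2}^2(\tilde\phi_{i,B})_{y_1}-\int\int\varepsilon^2(\tilde\phi_{i,B})_{y_1}+\int\int\varepsilon^2(\tilde\phi_{i,B})_{y_1y_1y_1}$, and since $|(\tilde\phi_{i,B})_{y_1y_1y_1}|\lesssim B^{-2}(\tilde\phi_{i,B})_{y_1}$ this is, for $B$ large, bounded above by $-\tfrac12\int\int(\varepsilon_{y_1}^2+\varepsilon_{y_2}^2+\varepsilon^2)(\tilde\phi_{i,B})_{y_1}$. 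I keep this good negative term and use it to absorb every subsequent contribution of the form $\delta(\nu^*)\int\int(\varepsilon_{y_1}^2+\varepsilon_{y_2}^2+\varepsilon^2)(\tilde\phi_{i,B})_{y_1}$.

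The remaining terms are handled as in the treatment of $f_1$. The nonlinear term $-2\int\int[(Q_b+\varepsilon)^3-Q_b^3]_{y_1}\varepsilon\tilde\phi_{i,B}$ is rewritten with the algebraic identity from the analysis of $f_{1,1}$; using Lemma \ref{SobolevLemma} and that $\|Q_b\|_{L^\infty}+\|(Q_b)_{y_1}\|_{L^\infty}\lesssim e^{-B/2}+|b|$ on $\operatorname{supp}(\tilde\phi_{i,B})\subset\{y_1>B/2\}$, it is controlled by $\delta(\nu^*)\int\int(\varepsilon_{y_1}^2+\varepsilon_{y_2}^2+\varepsilon^2)(\tilde\phi_{i,B})_{y_1}$ and absorbed. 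The $\varepsilon$-modulation terms $(\frac{(x_1)_s}{\lambda}-1)\varepsilon_{y_1}$ and $\frac{(x_2)_s}{\lambda}\varepsilon_{y_2}$ give, after integration by parts, $-(\frac{(x_1)_s}{\lambda}-1)\int\int\varepsilon^2(\tilde\phi_{i,B})_{y_1}$ and $0$ respectively, the first absorbed since $|\frac{(x_1)_s}{\lambda}-1|\lesssim\delta(\nu^*)$ by Lemma \ref{decompositionlemma} under (H1). Finally, the source-type pieces $(\frac{\lambda_s}{\lambda}+b)\Lambda Q_b$, $(\frac{(x_1)_s}{\lambda}-1)(Q_b)_{y_1}$, $\frac{(x_2)_s}{\lambda}(Q_b)_{y_2}$, $\Phi_b=-b_s(\chi_b+\gamma y_1(\chi_b)_{y_1})P$ and $\Psi_b$ are all paired with $\varepsilon\tilde\phi_{i,B}$; since on $\{y_1>B/2\}$ the functions $\Lambda Q$, $\nabla Q$, $\chi_b P$ and (by \eqref{eq:estimatesPsib}) the relevant part of $\Psi_b$ decay like $e^{-y_1}$, the product of $\tilde\phi_{i,B}$ with each of them is controlled by $e^{-B/4}$ times an exponentially localized weight, so Cauchy--Schwarz together with Lemma \ref{decompositionlemma} bounds these by $\bigl(|\tfrac{\lambda_s}{\lambda}+b|+|\tfrac{(x_1)_s}{\lambda}-1|+|\tfrac{(x_2)_s}{\lambda}|+|b_s|+b^2\bigr)\,e^{-B/4}\,\mathcal{M}^{1/2}\lesssim b^4+\delta(\nu^*)\mathcal{N}_{1,loc}$, where one also uses $\mathcal{M}\lesssim B\,\mathcal{N}_{1,loc}$ and that $\nu^*$ is chosen small depending on $B$. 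Collecting all the bounds yields \eqref{eq:driftestimate}.

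I expect the main difficulty to be bookkeeping rather than any single estimate. Only one genuinely coercive quantity, $-\tfrac12\int\int(\varepsilon_{y_1}^2+\varepsilon_{y_2}^2+\varepsilon^2)(\tilde\phi_{i,B})_{y_1}$, is available, yet several of the terms to be absorbed (notably $-(\frac{(x_1)_s}{\lambda}-1)\int\int\varepsilon^2(\tilde\phi_{i,B})_{y_1}$ and the nonlinear term) are of exactly the same size as this quantity rather than of the size of the right-hand side, so one must lean on the smallness $\delta(\nu^*)\ll1$ coming from (H1) and Lemma \ref{decompositionlemma}; and one must check that the $B^{-2}$ loss from $(\tilde\phi_{i,B})_{y_1y_1y_1}$ and the $e^{-B/4}$ losses from the non-compactly supported weights $\Lambda Q$, $(Q_b)_{y_1}$, $\Psi_b$ really convert into the clean terms $b^4$, $\delta(\nu^*)\mathcal{N}_{1,loc}$, and the single transition-layer term $|\frac{\lambda_s}{\lambda}|\int\int_{\{B/2\le y_1\le 2B\}}B(\tilde\phi_{i,B})_{y_1}\varepsilon^2$, with no leftover of order $\mathcal{N}_i$ or $\tilde{\mathcal{N}}_i$.
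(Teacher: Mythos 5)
Your proposal is correct and follows essentially the same route as the paper's proof: differentiate, insert the modulated flow equation, integrate by parts so that the drift term combines with the $i\frac{\lambda_s}{\lambda}\int\int\varepsilon^2\tilde\phi_{i,B}$ prefactor into $\frac{\lambda_s}{\lambda}\int\int(i\tilde\phi_{i,B}-y_1(\tilde\phi_{i,B})_{y_1})\varepsilon^2$, which vanishes outside the transition layer by homogeneity; the good negative quadratic term from the linear flow absorbs the $\delta(\nu^*)$-small contributions, and the source terms are killed by the exponential decay of $\Lambda Q_b$, $\nabla Q_b$, $\chi_bP$ and $\Psi_b$ on $\operatorname{supp}\tilde\phi_{i,B}\subset\{y_1>B/2\}$. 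The only cosmetic difference is that you extract the $i\frac{\lambda_s}{\lambda}$ term at the outset rather than via the identity $y_1(\tilde\phi_{i,B})_{y_1}=i\tilde\phi_{i,B}$ at the end, and your final small remainder is phrased in terms of $\mathcal{N}_{1,loc}$ rather than the paper's $\tilde{\mathcal{N}}_i$, a discrepancy already present between the paper's lemma statement and its own proof.
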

\begin{proof}
We fix some $B\geq 2\sqrt{2(i-1)(i-2)}$  and we proceed by differentiating 
$$\frac{1}{2}\frac{d}{ds}\Bigg\{\int \int \tilde{\phi}_{i,B}\varepsilon^2\Bigg\}=\int \int \varepsilon \varepsilon_s \tilde{\phi}_{i,B}=$$
$$=\int \int \tilde{\phi}_{i,B}\varepsilon [\frac{\lambda_s}{\lambda}\Lambda \varepsilon +(-\Delta \varepsilon +\varepsilon - (\varepsilon+Q_b)^3+Q_b^3)_{y_1}+\Big(\frac{\lambda_s}{\lambda}+b\Big)\Lambda Q_b +\Big(\frac{(x_1)_s}{\lambda}-1\Big)(Q_b+\varepsilon)_{y_1}$$
$$+\frac{(x_2)_s}{\lambda}(Q_b+\varepsilon)_{y_2}+\Phi_b+\Psi_b]$$
First, we have 
$$\int \int \tilde{\phi}_{i,B}\varepsilon [\frac{\lambda_s}{\lambda}\Lambda \varepsilon +(-\Delta \varepsilon +\varepsilon)_{y_1}]=$$
$$=-\frac{\lambda_s}{\lambda}\frac{1}{2}\int \int y_1(\tilde{\phi}_{i,B})_{y_1}\varepsilon^2+\frac{1}{2}\int \int (\tilde{\phi}_{i,B})_{y_1y_1y_1}\varepsilon^2-\frac{3}{2}\int \int (\tilde{\phi}_{i,B})_{y_1}\varepsilon^2_{y_1}$$
$$-\frac{1}{2}\int \int (\tilde{\phi}_{i,B})_{y_1}\varepsilon^2_{y_2}-\frac{1}{2}\int \int (\tilde{\phi}_{i,B})_{y_1}\varepsilon^2$$
$$\leq -\frac{\lambda_s}{\lambda}\frac{1}{2}\int \int y_1(\tilde{\phi}_{i,B})_{y_1}\varepsilon^2-\frac{1}{4}\int \int (\tilde{\phi}_{i,B})_{y_1}(\varepsilon^2+|\nabla \varepsilon|^2)+\frac{1}{2}\int \int [(\tilde{\phi}_{i,B})_{y_1y_1y_1}-(\tilde{\phi}_{i,B})_{y_1}]\varepsilon^2$$
$$\leq -\frac{\lambda_s}{\lambda}\frac{1}{2}\int \int y_1(\tilde{\phi}_{i,B})_{y_1}\varepsilon^2-\frac{1}{4}\int \int (\tilde{\phi}_{i,B})_{y_1}(\varepsilon^2+|\nabla \varepsilon|^2)+\frac{1}{2}\int \int_{y_1>\frac{B}{2}} \frac{i}{2}y_1^{i-3}[2(i-1)(i-2)-y_1^2]\varepsilon^2$$
$$\leq -\frac{\lambda_s}{\lambda}\frac{1}{2}\int \int y_1(\tilde{\phi}_{i,B})_{y_1}\varepsilon^2-\frac{1}{4}\int \int (\tilde{\phi}_{i,B})_{y_1}(\varepsilon^2+|\nabla \varepsilon|^2)+\frac{1}{2}\int \int_{y_1>\frac{B}{2}} \frac{i}{2}y_1^{i-3}\Big(\frac{B^2}{4}-y_1^2\Big)\varepsilon^2$$
$$\leq -\frac{\lambda_s}{\lambda}\frac{1}{2}\int \int y_1(\tilde{\phi}_{i,B})_{y_1}\varepsilon^2-\frac{1}{4}\int \int (\tilde{\phi}_{i,B})_{y_1}(\varepsilon^2+|\nabla \varepsilon|^2)$$
Also, using $|\Lambda Q_b|\leq e^{-\frac{y_1}{2}-\frac{y_2}{2}}$ for $y_1>\frac{B}{2}$ and  $|\frac{\lambda_s}{\lambda}+b|\lesssim b^2+B^{\frac{1}{2}}\mathcal{N}_{i,loc}^{\frac{1}{2}}+B\|\varepsilon\|_{L^2}\widetilde{\mathcal{N}}_{i}$,
\begin{equation*}
\begin{split}
\Big|\frac{\lambda_s}{\lambda}+b\Big|&\Big|\int \int \tilde{\phi}_{i,B}\varepsilon \Lambda Q_b\Big|\lesssim (b^2+B^{\frac{1}{2}}\mathcal{N}_{i,loc}^{\frac{1}{2}}+B\|\varepsilon\|_{L^2}\widetilde{\mathcal{N}}_{i})\mathcal{N}_{i,loc}^{\frac{1}{2}}(\int \int_{y_1>\frac{B}{2}}y_1^{2i}(\Lambda Q_b)^2)^{\frac{1}{2}}\\
&\lesssim (b^2+B^{\frac{1}{2}}\mathcal{N}_{i,loc}^{\frac{1}{2}}+B\|\varepsilon\|_{L^2}\widetilde{\mathcal{N}}_{i})\mathcal{N}_{i,loc}^{\frac{1}{2}}e^{-\frac{B}{8}}\lesssim b^4+\delta(\nu^*)\widetilde{\mathcal{N}}_{i}.
\end{split}
\end{equation*}

Since, 
$$\int \int \tilde{\phi}_{i,B}\varepsilon (Q_b+\varepsilon)_{y_1}=\int \int \tilde{\phi}_{i,B}\varepsilon (Q_b)_{y_1}-\frac{1}{2}\int \int (\tilde{\phi}_{i,B})_{y_1}\varepsilon^2$$
and given $|(Q_b)_{y_1}|\lesssim e^{-\frac{y_1}{2}-\frac{y_2}{2}}$ for $y_1>\frac{B}{2},$
$$\Big|\int \int_{y_1>\frac{B}{2}}\frac{y_1^i}{B^i}\varepsilon (Q_b)_{y_1}\Big|\leq \mathcal{N}_{i,loc}^{\frac{1}{2}} \Big(\int \int _{y_1>\frac{B}{2}}y_1^{i+1} e^{-\frac{y_1}{2}-\frac{y_2}{2}}\Big)^{\frac{1}{2}}\leq e^{-\frac{B}{8}}\mathcal{N}_{1,loc}^{\frac{1}{2}}\leq \delta(\nu^*)\mathcal{N}_{1,loc}^{\frac{1}{2}}$$
so, $|\frac{(x_1)_s}{\lambda}-1|\lesssim b^2+B^{\frac{1}{2}}\mathcal{N}_{i,loc}^{\frac{1}{2}}+B\|\varepsilon\|_{L^2}\widetilde{\mathcal{N}}_{i}$ and $|b|, \mathcal{N}_{1,loc}\leq \delta(\nu^*),$
\begin{equation*}
\begin{split}
\Big|\frac{(x_1)_s}{\lambda}-1\Big|\Big|\int \int \tilde{\phi}_{i,B}\varepsilon (Q_b+\varepsilon)_{y_1}\Big|&\lesssim (b^2+B^{\frac{1}{2}}\mathcal{N}_{i,loc}^{\frac{1}{2}}+B\|\varepsilon\|_{L^2}\widetilde{\mathcal{N}}_{i})(\delta(\nu^*)\mathcal{N}_{i,loc}^{\frac{1}{2}}+\int \int (\tilde{\phi}_{i,B})_{y_1}\varepsilon^2)\\
&\lesssim b^4+\delta(\nu^*)\widetilde{\mathcal{N}}_{i}+\delta(\nu^*)\int \int (\tilde{\phi}_{i,B})_{y_1}\varepsilon^2.
\end{split}
\end{equation*}

By the same reasoning above, 
$$\Big|\int \int \tilde{\phi}_{i,B}\varepsilon (Q_b+\varepsilon)_{y_2}\Big|=\Big|\int \int \tilde{\phi}_{i,B}\varepsilon (Q_b)_{y_2}\Big|\leq \delta(\nu^*)\mathcal{N}_{i,loc}^{\frac{1}{2}}$$
and using that $|\frac{(x_2)_s}{\lambda}|\lesssim b^2+B^{\frac{1}{2}}\mathcal{N}_{i,loc}^{\frac{1}{2}}+B\|\varepsilon\|_{L^2}\widetilde{\mathcal{N}}_{i},$
$$\Big|\frac{(x_2)_s}{\lambda}\Big|\Big|\int \int \tilde{\phi}_{i,B}\varepsilon (Q_b+\varepsilon)_{y_2}\Big|\lesssim (b^2+B^{\frac{1}{2}}\mathcal{N}_{i,loc}^{\frac{1}{2}}+B\|\varepsilon\|_{L^2}\widetilde{\mathcal{N}}_{i})\delta(\nu^*)\mathcal{N}_{i,loc}^{\frac{1}{2}}$$
$$\lesssim b^4+\delta(\nu^*)\widetilde{\mathcal{N}}_{i}.$$

Also, as $|b_s|\lesssim b^2+B^{\frac{1}{2}}\mathcal{N}_{i,loc}^{\frac{1}{2}}+B\|\varepsilon\|_{L^2}\widetilde{\mathcal{N}}_{i},$ 
\begin{equation*}
\begin{split}
\Big|\int \int \tilde{\phi}_{i,B}\varepsilon \Phi_b\Big|&\lesssim |b_s|\Big|\int \int_{y_1>\frac{B}{2}}(\chi_B+\gamma y_1 (\chi_b)_{y_1})P\frac{y_1^i}{B^i}\varepsilon\Big|\\
&\lesssim |b_s|\mathcal{N}_{1,loc}^{\frac{1}{2}}\Big(\int \int_{y_1>\frac{B}{2}}(\chi_B+\gamma y_1 (\chi_b)_{y_1})^2P^2\frac{y_1^{2i}}{B^{2i-1}}\Big)^{\frac{1}{2}}\\
&\lesssim(b^2+B^{\frac{1}{2}}\mathcal{N}_{i,loc}^{\frac{1}{2}}+B\|\varepsilon\|_{L^2}\widetilde{\mathcal{N}}_{i})\mathcal{N}_{i,loc}^{\frac{1}{2}}e^{-\frac{B}{8}}\lesssim b^4+\delta(\nu^*)\widetilde{\mathcal{N}}_{i}.
\end{split}
\end{equation*}
Since, we have that $|\Psi_b|\leq b^2e^{-\frac{|y_1|}{2}-\frac{|y_2|}{2}}$ for $y_1>\frac{B}{2},$
$$\Big|\int \int \tilde{\phi}_{i,B}\varepsilon \Psi_b\Big|\lesssim b^2\Big|\int \int_{y_1>\frac{B}{2}}\frac{y_1^i}{B^i}\varepsilon e^{-\frac{|y_1|}{2}-\frac{|y_2|}{2}}\Big| \lesssim \delta(\nu^*)b^2\mathcal{N}_{1,loc}^{\frac{1}{2}}\lesssim b^4+\delta(\nu^*)\mathcal{N}_{1,loc}$$

Also, we have that, 
$$\int \int \tilde{\phi}_{i,B}\varepsilon [(Q_b+\varepsilon)^3-Q_b^3]_{y_1}=-\int \int (\tilde{\phi}_{i,B})_{y_1}\varepsilon [(Q_b+\varepsilon)^3-Q_b^3]-\int \int \tilde{\phi}_{i,B}\varepsilon_{y_1}[(Q_b+\varepsilon)^3-Q_b^3]$$
Hence, 

$$\Big|\int \int (\tilde{\phi}_{i,B})_{y_1}\varepsilon (\varepsilon^3+3Q_b\varepsilon^2+3Q_b^2\varepsilon)\Big|\lesssim \int \int (\tilde{\phi}_{i,B})_{y_1}(\varepsilon^4+Q_b^2\varepsilon^2)$$

\begin{equation*}
\begin{split}
\Big|\int \int \tilde{\phi}_{i,B}& \varepsilon_{y_1}[(Q_b+\varepsilon)^3-Q_b^3]\Big|\\&=\Big|-\int \int (\tilde{\phi}_{i,B})_{y_1}\Big(\frac{\varepsilon^4}{4}+Q_b\varepsilon^3+\frac{3}{2}Q_b^2\varepsilon^2)-\int \int \tilde{\phi}_{i,B}(Q_b)_{y_1}\varepsilon^3-\frac{3}{2}\int \int \tilde{\phi}_{i,B}(Q_b^2)_{y_1}\varepsilon^2\Big|\\
&\lesssim \int \int (\tilde{\phi}_{i,B})_{y_1}(\varepsilon^4+Q_b^2\varepsilon^2)+\Big|\int \int \tilde{\phi}_{i,B}(Q_b)_{y_1}\varepsilon^3\Big|+\Big|\int \int \tilde{\phi}_{i,B}(Q_b^2)_{y_1}\varepsilon^2\Big|
\end{split}
\end{equation*}
and by the Sobolev inequality and using that $|Q_b|,|(Q_b)_{y_1}|\leq e^{-\frac{|y_1}{2}-\frac{|y_2|}{2}}$ for $y_1>\frac{B}{2},$
$$\int \int (\tilde{\phi}_{i,B})_{y_1}\varepsilon^4 \lesssim \|\varepsilon\|_{L^2}^2\int \int (\varepsilon^2+|\nabla\varepsilon|^2)(\tilde{\phi}_{i,B})_{y_1}\lesssim \delta(\nu^*)\int \int (\varepsilon^2+|\nabla\varepsilon|^2)(\tilde{\phi}_{i,B})_{y_1}$$
$$\int \int_{y_1>\frac{B}{2}} \tilde{\phi}_{i,B}Q_b^2\varepsilon^2\lesssim e^{-\frac{B}{8}}\int \int \varepsilon^2(\tilde{\phi}_{i,B})_{y_1}\lesssim \delta(\nu^*)\int \int \varepsilon^2(\tilde{\phi}_{i,B})_{y_1}$$
$$\Big|\int \int \tilde{\phi}_{i,B}(Q_b)_{y_1}\varepsilon^3\Big|\lesssim \|\varepsilon\|_{L^2}\int \int (\varepsilon^2+|\nabla\varepsilon|^2)(\tilde{\phi}_{i,B})_{y_1}\lesssim \delta(\nu^*)\int \int (\varepsilon^2+|\nabla \varepsilon|^2)(\tilde{\phi}_{i,B})_{y_1}$$
$$\Big|\int \int \tilde{\phi}_{i,B}(Q_b^2)_{y_1}\varepsilon^2\Big|\lesssim e^{-\frac{B}{8}}\int \int \varepsilon^2(\tilde{\phi}_{i,B})_{y_1}\lesssim \delta(\nu^*)\int \int \varepsilon^2(\tilde{\phi}_{i,B})_{y_1}$$
and summing all the estimates, we get 
$$\Big|\int \int \tilde{\phi}_{i,B} \varepsilon_{y_1}[(Q_b+\varepsilon)^3-Q_b^3]\Big|\lesssim \delta(\nu^*)\int \int (\varepsilon^2+|\nabla \varepsilon|^2)(\tilde{\phi}_{i,B})_{y_1}$$

Finally, using that  $y_1(\tilde{\phi}_{i,B})_{y_1}=i\tilde{\phi}_{i,B},$ we get 
\begin{equation*}
\begin{split}
\frac{d}{ds}\Bigg\{\int \int \tilde{\phi}_{i,B}\varepsilon^2\Bigg\}\leq& -\frac{\lambda_s}{\lambda}\int \int y_1(\tilde{\phi}_{i,B})_{y_1}\varepsilon^2-\frac{1}{2}\int \int (\varepsilon^2+|\nabla \varepsilon|^2)(\tilde{\phi}_{i,B})_{y_1}+Cb^4+\delta(\nu^*)\widetilde{\mathcal{N}}_{i}\\&+\delta(\nu^*)\int \int (\varepsilon^2+|\nabla \varepsilon|^2)(\tilde{\phi}_{i,B})_{y_1}\\
&\leq -\frac{\lambda_s}{\lambda}i\int \int \tilde{\phi}_{i,B}\varepsilon^2+\frac{\lambda_s}{\lambda}\int \int_{\{\frac{B}{2}\leq y_1\leq 2B\}} [i\tilde{\phi}_{i,B}-y_1(\tilde{\phi}_{i,B})_{y_1}]\varepsilon^2\\
&-\frac{1}{4}\int \int (\varepsilon^2+|\nabla \varepsilon|^2)(\tilde{\phi}_{i,B})_{y_1}+Cb^4+\delta(\nu^*)\widetilde{\mathcal{N}}_{i}\\
&\leq  -\frac{\lambda_s}{\lambda}i\int \int \tilde{\phi}_{i,B}\varepsilon^2+\frac{\lambda_s}{\lambda}\int \int_{\{\frac{B}{2}\leq y_1\leq 2B\}} [i\tilde{\phi}_{i,B}-y_1(\tilde{\phi}_{i,B})_{y_1}]\varepsilon^2\\
&+Cb^4+\delta(\nu^*)\widetilde{\mathcal{N}}_{i}
\end{split}
\end{equation*}
therefore 
$$\frac{1}{\lambda^i}\frac{d}{ds}\Bigg\{\lambda^i\int \int \tilde{\phi}_{i,B}\varepsilon^2\Bigg\}\leq \frac{\lambda_s}{\lambda}\int \int_{\{\frac{B}{2}\leq y_1\leq 2B\}} [i\tilde{\phi}_{i,B}-y_1(\tilde{\phi}_{i,B})_{y_1}]\varepsilon^2+Cb^4+\delta(\nu^*)\widetilde{\mathcal{N}}_{i}$$
$$\leq C\Big|\frac{\lambda_s}{\lambda}\Big|\int \int_{\{\frac{B}{2}\leq y_1\leq 2B\}}B(\phi_{i,B})_{y_1}\varepsilon^2+Cb^4+\delta(\nu^*)\widetilde{\mathcal{N}}_{i}.$$
\end{proof}

We get that, using the above Lemma \eqref{eq:driftestimate} and that $|\frac{\lambda_s}{\lambda}|\lesssim |b|+o(\widetilde{\mathcal{N}}_i)\lesssim \delta(\nu^*)$ and that $\|\varepsilon\|_{L^2}\lesssim \delta(\nu^*),$ 
\begin{equation}\label{f2}
\begin{split} 
f_2\leq& \Big|\frac{\lambda_s}{\lambda}\Big|B\int \int |\nabla \varepsilon|^2(\phi_{i,B})_{y_1}+\Big|\frac{\lambda_s}{\lambda}\Big|B\|\varepsilon\|_{L^2}^2\int \int (|\nabla \varepsilon|^2+\varepsilon^2)(\phi_{i,B})_{y_1}\\
&+\Big|\frac{\lambda_s}{\lambda}\Big|\|Q_b\|^2_{L^{\infty}}\int \int  \varepsilon^2(\phi_{i,B})_{y_1}+\Big|\frac{\lambda_s}{\lambda}\Big|B\|\varepsilon\|_{L^2}\int \int (|\nabla \varepsilon|^2+\varepsilon^2)(\phi_{i,B})_{y_1}\\
&+\Big|\frac{\lambda_s}{\lambda}\Big|B\int \int  \varepsilon^2(\phi_{i,B})_{y_1}+\Big|\frac{\lambda_s}{\lambda}\Big|B^{\frac{1}{2}}\|\varepsilon\|_{L^2}^{\frac{1}{2}}(\int \int \varepsilon^2(\phi_{i,B})_{y_1})^{\frac{3}{4}}\\
&+\Big|\frac{\lambda_s}{\lambda}\Big|B\int \int  \varepsilon^2(\phi_{i,B})_{y_1}+\frac{i-j}{i+j}\frac{1}{\lambda^i}\frac{d}{ds}\Bigg\{\lambda^i\int \int \tilde{\phi}_{i,B}\varepsilon^2\Bigg\}\\
&\leq \delta(\nu^*)\int \int (|\nabla \varepsilon|^2+\varepsilon^2)(\phi_{i,B})_{y_1}+\delta(\nu^*)|b|(\int \int \varepsilon^2(\phi_{i,B})_{y_1})^{\frac{3}{4}}+b^4+\delta(\nu^*)\widetilde{\mathcal{N}}_{i}\\
&\lesssim \delta(\nu^*)\int \int (|\nabla \varepsilon|^2+\varepsilon^2)(\phi_{i,B})_{y_1}+b^4.
\end{split}
\end{equation}

\subsection{\textbf{The computations for} \texorpdfstring{$f_3$}{Lg}}

Recall that 
 $$f_3=-2\int \int \psi_B (Q_b)_s [(Q_b+\varepsilon)^3-Q_b^3-3Q_b^2\varepsilon].$$
 We have that 
 $$|(Q_b)_s|=|b_sP(\chi(|b|^{\gamma}y_1)+\gamma|b|^{\gamma}y_1\chi_{y_1}(|b|^{\gamma}))|\lesssim |b_s|\lesssim e^{\frac{B}{2\alpha_1}}\widetilde{\mathcal{N}}_{i}+b^2$$
 \begin{equation*}
\begin{split}
\Big|\int \int \psi_B  [(Q_b+\varepsilon)^3-Q_b^3-3Q_b^2\varepsilon]\Big|&\leq e^{\frac{B}{\alpha_1}} \int \int |\varepsilon|^3(\phi_{i,B})_{y_1}+e^{\frac{B}{\alpha_1}}\|Q_b\|_{L_{y_1y_2}^{\infty}}\int \int \varepsilon^2(\phi_{i,B})_{y_1}\\
&\lesssim e^{\frac{B}{\alpha_1}}\|\varepsilon\|_{L^2}\int \int (\varepsilon_{y_1}^2+\varepsilon_{y_2}^2+\varepsilon^2)(\phi_{i,B})_{y_1}+e^{\frac{B}{\alpha_1}}\int \int \varepsilon^2(\phi_{i,B})_{y_1},
\end{split}
\end{equation*}
so 
 \begin{equation}\label{f3}
\begin{split}
f_3&\leq (e^{\frac{B}{2\alpha_1}}\widetilde{\mathcal{N}}_{i}+b^2)e^{\frac{B}{\alpha_1}}\|\varepsilon\|_{L^2}\int \int (\varepsilon_{y_1}^2+\varepsilon_{y_2}^2+\varepsilon^2)(\phi_{i,B})_{y_1}+(e^{\frac{B}{\alpha_1}}\widetilde{\mathcal{N}}_{i}+b^2)e^{\frac{B}{\alpha_1}}\int \int \varepsilon^2(\phi_{i,B})_{y_1}\\
&\leq\frac{\mu}{100B}\int \int (\varepsilon_{y_1}^2+\varepsilon_{y_2}^2+\varepsilon^2)(\phi_{i,B})_{y_1}.
\end{split}
\end{equation}

\subsection{\textbf{Putting together all estimates:}}

By putting together all estimates for $f_1, f_2, f_3$ from \eqref{f1},\eqref{f2},\eqref{f3}, we get that there exists $\tilde{\mu}>0$ such that 

$$\lambda^j\frac{d}{ds}\Bigg\{ \frac{\mathcal{F}_{i,j}}{\lambda^j}\Bigg\}+\tilde{\mu}\int \int (|\nabla\varepsilon|^2+\varepsilon^2)(\phi_{i,B})_{y_1}\lesssim b^4$$
so 
$$\frac{d}{ds}\Bigg\{ \frac{\mathcal{F}_{i,j}}{\lambda^j}\Bigg\}+\frac{\tilde{\mu}}{\lambda^j}\int \int (|\nabla \varepsilon|^2+\varepsilon^2)(\phi_{i,B})_{y_1}\lesssim \frac{b^4}{\lambda^j}.$$

\subsection{\textbf{Coercivity of the functional} \texorpdfstring{$\mathcal{F}_{i,j}$:}{Lg}}

Since 
 \begin{equation*}
\begin{split}
\Big|\int \int \Big((\varepsilon+Q_b)^4-Q_b^4-4Q_b^3\varepsilon\Big)\psi_B\Big|  &\lesssim \int \int \Big(\varepsilon^4+Q_b^2\varepsilon^2\Big)\psi_B \\
&\lesssim  \|\varepsilon\|_{L^2}^2 \int \int (|\nabla \varepsilon|^2+\varepsilon^2)\psi_B+\|Q_b\|_{L^{\infty}}^2 \int \int \varepsilon^2\psi_B\\
&\lesssim  \|\varepsilon\|_{L^2}^2 \int \int ( \varepsilon_{y_1}^2+\varepsilon_{y_2}^2)\psi_B+\|Q_b\|_{L^{\infty}}^2 \int \int \varepsilon^2\phi_{i,B}\\
&\lesssim \mathcal{N}_i(s)
\end{split}
\end{equation*}
hence $$\mathcal{F}_{i,j}\lesssim \mathcal{N}_{i}.$$
For the lower bound, we write 
\begin{equation*} 
\begin{split} 
\mathcal{F}_{i,j}&=\int \int (\varepsilon_{y_1}^{2}+\varepsilon_{y_2}^{2})\psi_B+\varepsilon^2 \phi_{i,B}+\frac{i-j}{i+j}\varepsilon^2 \tilde{\phi}_{i,B}-\frac{1}{2}\Big((\varepsilon+Q_b)^4-Q_b^4-4Q_b^3\varepsilon\Big)\psi_B\\
&=\int \int [(\varepsilon_{y_1}^{2}+\varepsilon_{y_2}^{2})\psi_B+\varepsilon^2 \phi_{i,B}+\frac{i-j}{i+j}\varepsilon^2 \tilde{\phi}_{i,B}-3Q^2\varepsilon^2\psi_B]\\&-\frac{1}{2}\int \int \Big((\varepsilon+Q_b)^4-Q_b^4-4Q_b^3\varepsilon-6Q_b^2\varepsilon^2\Big)\psi_B-3\int \int (Q_b^2-Q^2)\varepsilon^2
\end{split} 
\end{equation*} 
and since 
\begin{equation*} 
\begin{split} 
\Big|\int \int \Big((\varepsilon+Q_b)^4&-Q_b^4-4Q_b^3\varepsilon-6Q_b^2\varepsilon^2\Big)\psi_B\Big|\lesssim \int \int \Big(\varepsilon^4+|Q_b||\varepsilon|^3\Big)\psi_B\\
&\lesssim  \|\varepsilon\|_{L^2}^2 \int \int ( \varepsilon_{y_1}^2+\varepsilon_{y_2}^2+\varepsilon^2)\psi_B+ \|Q_b\|_{L^{\infty}}\|\varepsilon\|_{L^2} \int \int ( \varepsilon_{y_1}^2+\varepsilon_{y_2}^2+\varepsilon^2)\psi_B
\end{split} 
\end{equation*} 
and 
$$\Big|\int \int (Q_b^2-Q^2)\varepsilon^2\psi_B\Big|\lesssim |b|\int \int \varepsilon^2\phi_{i,B}.$$

So it is sufficient to prove that 
$$\int \int [(\varepsilon_{y_1}^{2}+\varepsilon_{y_2}^{2})\psi_B+\varepsilon^2 (\phi_{i,B}+\frac{i-j}{i+j}\tilde{\phi}_{i,B})-3Q^2\varepsilon^2\psi_B]\geq \mu \int \int [( \varepsilon_{y_1}^2+\varepsilon_{y_2}^2)\psi_B+\varepsilon^2\phi_{i,B}]$$
Since $\psi_B\leq \phi_{i,B}$ and $i\geq j$, it is sufficient to prove that 
$$\int \int [(\varepsilon_{y_1}^{2}+\varepsilon_{y_2}^{2})+\varepsilon^2-3Q^2\varepsilon^2]\psi_B\geq \mu \int \int ( \varepsilon_{y_1}^2+\varepsilon_{y_2}^2+\varepsilon^2)\psi_B$$
We have that 
$$(L(\varepsilon \sqrt{\psi_B}),\varepsilon\sqrt{\psi_B})=\int \int (\varepsilon_{y_1}^2+\varepsilon_{y_2}^2+\varepsilon^2-3Q^2\varepsilon^2)\psi_B+\varepsilon^2\frac{[(\psi_B)_{y_1}]^2}{4\psi_B}+\varepsilon\varepsilon_{y_1}(\psi_B)_{y_1}$$
$$=\int \int (\varepsilon_{y_1}^2+\varepsilon_{y_2}^2+\varepsilon^2-3Q^2\varepsilon^2)\psi_B-\int \int \varepsilon^2\frac{2(\psi_B)_{y_1y_1}\psi_B-[(\psi_B)_{y_1}]^2}{4\psi_B}$$
$$\|\varepsilon\sqrt{\psi_B}\|_{H^1}^2=\int \int (\varepsilon_{y_1}^2+\varepsilon_{y_2}^2+\varepsilon^2)\psi_B-\int \int \varepsilon^2\frac{2(\psi_B)_{y_1y_1}\psi_B-[(\psi_B)_{y_1}]^2}{4\psi_B}$$

Also, using the orthogonalities for $\varepsilon$ 
$$|(\varepsilon\sqrt{\psi_B},Q)|=|(Q,(1-\sqrt{\psi_B})\varepsilon)|=\int \int_{\{y_1<-\frac{B}{2}\}}|Q(1-\sqrt{\psi_B})\varepsilon|$$
$$\leq \Big(\int \int \varepsilon^2\psi_B\Big)^{\frac{1}{2}}\Big(\int \int_{\{y_1<-\frac{B}{2}\}}\frac{Q^2(1-\sqrt{\psi_B})^2}{\psi_B}\Big)^{\frac{1}{2}}\leq e^{-\frac{B}{2}}\Big(\int \int \varepsilon^2\psi_B\Big)^{\frac{1}{2}}$$
$$|(\varepsilon\sqrt{\psi_B},\varphi(y_1)Q_{y_1})|=|(\varphi(y_1)Q_{y_1},(1-\sqrt{\psi_B})\varepsilon)|=\int \int_{\{y_1<-\frac{B}{2}\}}|\varphi(y_1)Q_{y_1}(1-\sqrt{\psi_B})\varepsilon|$$
$$\leq \Big(\int \int \varepsilon^2\psi_B\Big)^{\frac{1}{2}}\Big(\int \int_{\{y_1<-\frac{B}{2}\}}\frac{\varphi(y_1)^2Q_{y_1}^2(1-\sqrt{\psi_B})^2}{\psi_B}\Big)^{\frac{1}{2}}\leq e^{-\frac{B}{2}}\Big(\int \int \varepsilon^2\psi_B\Big)^{\frac{1}{2}}$$
$$|(\varepsilon\sqrt{\psi_B},\varphi(y_1)\Lambda Q)|=|(\varphi(y_1)\Lambda Q,(1-\sqrt{\psi_B})\varepsilon)|=\int \int_{\{y_1<-\frac{B}{2}\}}|\varphi(y_1)\Lambda Q(1-\sqrt{\psi_B})\varepsilon|$$
$$\leq \Big(\int \int \varepsilon^2\psi_B\Big)^{\frac{1}{2}}\Big(\int \int_{\{y_1<-\frac{B}{2}\}}\frac{\varphi(y_1)^2(\Lambda Q)^2(1-\sqrt{\psi_B})^2}{\psi_B}\Big)^{\frac{1}{2}}\leq e^{-\frac{B}{2}}\Big(\int \int \varepsilon^2\psi_B\Big)^{\frac{1}{2}}$$
$$\Big|(\varepsilon\sqrt{\psi_B},\varphi(y_1)Q_{y_2})\Big|=\Big|(\varphi(y)1)Q_{y_2},(1-\sqrt{\psi_B})\varepsilon)\Big|=\int \int_{\{y_1<-\frac{B}{2}\}}\Big|\varphi(y_1)Q_{y_2}(1-\sqrt{\psi_B})\varepsilon\Big|$$
$$\leq \Big(\int \int \varepsilon^2\psi_B\Big)^{\frac{1}{2}}\Big(\int \int_{\{y_1<-\frac{B}{2}\}}\frac{\varphi(y_1)^2Q_{y_2}^2(1-\sqrt{\psi_B})^2}{\psi_B}\Big)^{\frac{1}{2}}\leq e^{-\frac{B}{2}}\Big(\int \int \varepsilon^2\psi_B\Big)^{\frac{1}{2}}$$

From the coercivity of the operator $L$ \eqref{eq:coercivitylemma} we have 
$$\int \int (\varepsilon_{y_1}^2+\varepsilon_{y_2}^2+\varepsilon^2-3Q^2\varepsilon^2)\psi_B-\int \int \varepsilon^2\frac{2(\psi_B)_{y_1y_1}\psi_B-[(\psi_B)_{y_1}]^2}{4\psi_B}\geq$$
$$\geq\delta_1\int \int (\varepsilon_{y_1}^2+\varepsilon_{y_2}^2+\varepsilon^2)\psi_B-\delta_1\int \int \varepsilon^2\frac{2(\psi_B)_{y_1y_1}\psi_B-[(\psi_B)_{y_1}]^2}{4\psi_B}-4\frac{e^{-\frac{B}{2}}}{\delta_1}\Big(\int \int \varepsilon^2\psi_B\Big)$$
hence 
$$\int \int (\varepsilon_{y_1}^2+\varepsilon_{y_2}^2+\varepsilon^2-3Q^2\varepsilon^2)\psi_B\geq$$
$$\geq\delta_1\int \int (\varepsilon_{y_1}^2+\varepsilon_{y_2}^2+\varepsilon^2)\psi_B+(1-\delta_1)\int \int \varepsilon^2\frac{2(\psi_B)_{y_1y_1}\psi_B-[(\psi_B)_{y_1}]^2}{4\psi_B}-4\frac{e^{-\frac{B}{2}}}{\delta_1}\Big(\int \int \varepsilon^2\psi_B\Big)$$
$$\geq (\delta_1-4\frac{e^{-\frac{B}{2}}}{\delta_1})\int \int (\varepsilon_{y_1}^2+\varepsilon_{y_2}^2+\varepsilon^2)\psi_B$$
$$\geq \frac{\delta_1}{2}\int \int (\varepsilon_{y_1}^2+\varepsilon_{y_2}^2+\varepsilon^2)\psi_B$$
where we used that 
\[
\frac{2(\psi_B)_{y_1y_1}\psi_B-[(\psi_B)_{y_1}]^2}{4\psi_B}=\begin{cases}
\frac{1}{B^2}\psi_B &\quad \text{ for } y_1<-\frac{B}{2}\\
0 &\quad \text{ for } y_1\geq -\frac{B}{2}\\
\end{cases}
\]
and we choose $B\geq 2\log\Big(\frac{8}{\delta_1^2}\Big).$
Hence, we get 
$$\int \int [(\varepsilon_1^2+\varepsilon_2^2)\psi_B+\varepsilon^2 \phi_{i,B}-3Q^2\varepsilon^2\psi_B]\geq \frac{\delta_1}{2}\int \int [(\varepsilon_1^2+\varepsilon_2^2)\psi_B+\varepsilon^2 \phi_{i,B}]$$
so $\mathcal{N}_{i}\lesssim \mathcal{F}_{i,j}.$

\section{Energy Estimates} \label{Energy Estimates}

In this section, we find the consequences of the monotonicity formulas to bound the $\varepsilon$ energy-type quantities. We denote 
$$s^{**}=\mbox{sup}\{s: \forall s'<s, |b(s')|+\mathcal{N}_{3}(s')+\|\varepsilon(s')\|_{L^2}\leq \nu^*\}$$ with $\nu^*$ from Proposition \ref{Monotonicity} and suppose that $s^{**}>0.$
\begin{proposition} 
The following holds: 
\begin{itemize}
\item[i)] Dispersive bounds. For any $i \geq 2,$ for all $0\leq s_1\leq s_2<s^{**}$ we have that 
\begin{equation}\label{eq:Dispersiveestimates1}
\mathcal{N}_i(s_2)+\int_{s_1}^{s_2}\int \int (|\nabla \varepsilon|^2+\varepsilon^2)(\phi_{i,B})_{y_1}ds\lesssim \mathcal{N}_i(s_1)+|b^3(s_1)|+|b^3(s_2)|,
\end{equation}
and also, there exists $\tilde{C}>0$ independent of $s_1,s_2$ such that for any $0\leq \alpha < 3-\tilde{C}\nu^*$ and $i\geq \max\{\alpha c, 2\}, $ for all $0\leq s_1\leq s_2<s^{**}$  we have that 
\begin{equation}\label{eq:Dispersiveestimates2}
\frac{\mathcal{N}_i(s_2)}{\lambda^{\alpha c}(s_2)}+\int_{s_1}^{s_2}\frac{\int \int (|\nabla \varepsilon|^2+\varepsilon^2)(\phi_{i,B})_{y_1}}{\lambda^{\alpha c}(s)}ds\lesssim \frac{\mathcal{N}_i(s_1)}{\lambda^{\alpha c}(s_1)}+\frac{|b^3(s_1)|}{\lambda^{\alpha c}(s_1)}+\frac{|b^3(s_2)|}{\lambda^{\alpha c}(s_2)}.
\end{equation}
\item[ii)] Control of the scaling dynamics. Let $\tilde{\lambda}(s)=\lambda(s)(1-J(s)),$ where $J(s)$ is defined in Lemma \ref{sharporthogonalities}. Then on $[0,s^{**}),$
\begin{equation}\label{lambdatildeineq}
\Bigg|\frac{(\tilde{\lambda})_s}{\tilde{\lambda}}+b\Bigg|\lesssim \mathcal{N}_{3,loc}+|b|\bigg(\mathcal{N}_2^{\frac{1}{2}}+|b|\bigg).     
\end{equation}
\item[iii)] Control of the dynamics of $b$. For $0\leq s_1\leq s_2<s^{**},$ 
\begin{equation}\label{controldynamics}
\int_{s_1}^{s_2}b^2(s)ds \lesssim \mathcal{N}_2(s_1)+|b(s_1)|+|b(s_2)|,
\end{equation}
and 
\begin{equation}\label{eq:blambdacinequality}
\Big|\frac{b(s_2)}{\tilde{\lambda}^c(s_2)}-\frac{b(s_1)}{\tilde{\lambda}^c(s_1)}\Big|\lesssim \int_{s_1}^{s_2}\Big|\frac{d}{ds}\Big\{\frac{b}{\tilde{\lambda}^{c}}\Big\}\Big|ds\lesssim \frac{b^2(s_2)}{\tilde{\lambda}^c(s_2)}+\frac{b^2(s_1)}{\tilde{\lambda}^c(s_1)}+\frac{\mathcal{N}_3(s_1)}{\tilde{\lambda}^c(s_1)}.
\end{equation}

\end{itemize}
\end{proposition}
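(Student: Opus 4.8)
For part (i), I integrate the Lyapunov inequality $\frac{d}{ds}\{\mathcal{F}_{i,j}\lambda^{-j}\}+\mu\lambda^{-j}\int\int(|\nabla\varepsilon|^{2}+\varepsilon^{2})(\phi_{i,B})_{y_1}\lesssim|b|^{4}\lambda^{-j}$ of Proposition \ref{Monotonicity}(i) on $[s_1,s_2]$ and use the coercivity $\mathcal{N}_i\lesssim\mathcal{F}_{i,j}\lesssim\mathcal{N}_i$, so that \eqref{eq:Dispersiveestimates1} (take $j=0$) and \eqref{eq:Dispersiveestimates2} (take $j=\alpha c$, admissible since $i\ge\max\{\alpha c,2\}\ge j$) reduce to bounding $\int_{s_1}^{s_2}|b|^{4}\lambda^{-j}\,ds$. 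Writing $b^{4}=-\frac1c b^{2}b_s+\frac1c b^{2}(b_s+cb^{2})$, integrating the first piece by parts in $s$, and inserting $\lambda_s/\lambda=-b+O(b^{2}+\mathcal{M}^{1/2}+\|\varepsilon\|_{L^2}\tilde{\mathcal{M}})$ from Lemma \ref{decompositionlemma} produces the boundary contributions $|b^{3}(s_k)|\lambda^{-j}(s_k)$, a term $\tfrac{\alpha}{3}\int|b|^{4}\lambda^{-j}$ fed back from differentiating $\lambda^{-j}$, and lower-order remainders; the hypothesis $\alpha<3-\tilde C\nu^*$ is exactly what keeps $1-\tfrac{\alpha}{3}$ bounded away from $0$ so that the fed-back term reabsorbs, while the remainders ($\propto b^{2}\mathcal{M}$, $b^{2}\tilde{\mathcal{M}}$, $|b|^{5}$, $|b|^{3}\mathcal{M}^{1/2}$) sink into the coercive dissipation on the left using $|b|+\mathcal{N}_i\le\nu^*$ and $\mathcal{M}\lesssim B\mathcal{N}_{i,loc}$, $\tilde{\mathcal{M}}\lesssim B\tilde{\mathcal{N}}_i$.

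For part (ii), from $\tilde\lambda=\lambda(1-J)$ one computes $\frac{(\tilde\lambda)_s}{\tilde\lambda}+b=\frac{\lambda_s}{\lambda}+b-\frac{J_s}{1-J}$; substituting $J_s=(\frac{\lambda_s}{\lambda}+b)-\frac{\lambda_s}{\lambda}J+O(b^{2}+\hat{\mathcal{M}}+\delta(\nu^*)\tilde{\mathcal{M}})$ from Lemma \ref{sharporthogonalities}(b), the ``bad'' contribution $\frac{\lambda_s}{\lambda}+b$, which is only of order $\|\varepsilon\|_{H^1_\omega}$, cancels and one is left with $\frac{(\tilde\lambda)_s}{\tilde\lambda}+b=-\frac{bJ}{1-J}+O(b^{2}+\hat{\mathcal{M}}+\delta(\nu^*)\tilde{\mathcal{M}})$. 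Bounding $|J|\lesssim\hat{\mathcal{M}}^{1/2}\lesssim\mathcal{N}_2^{1/2}$ and $\hat{\mathcal{M}}\lesssim\mathcal{N}_{3,loc}$, $\tilde{\mathcal{M}}\lesssim\tilde{\mathcal{N}}_3$ by pointwise comparison of the weights then yields \eqref{lambdatildeineq}.

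For part (iii), the bound \eqref{controldynamics} follows by integrating $cb^{2}=(b_s+cb^{2})-b_s$, so $c\int_{s_1}^{s_2}b^{2}=b(s_1)-b(s_2)+\int_{s_1}^{s_2}(b_s+cb^{2})\,ds$: part (i) with $i=2$ controls $\int\mathcal{M}$ and $\int\|\varepsilon\|_{L^2}\tilde{\mathcal{M}}$, while $\int|b|\mathcal{M}^{1/2}\le\tfrac12\int b^{2}+\tfrac12\int\mathcal{M}$ and $\int|b|^{3}\le\nu^*\int b^{2}$ are absorbed on the left. For \eqref{eq:blambdacinequality} the crux is the cancellation
\[
\frac{d}{ds}\Big\{\frac{b}{\tilde\lambda^{c}}\Big\}=\frac{1}{\tilde\lambda^{c}}\Big[(b_s+cb^{2})-cb\Big(\frac{(\tilde\lambda)_s}{\tilde\lambda}+b\Big)\Big],
\]
in which the two $\pm cb^{2}$ coming from differentiating $\tilde\lambda^{-c}$ annihilate each other, so the integrand is quadratically small in $(\varepsilon,b)$ by Lemma \ref{decompositionlemma} and part (ii). Substituting those estimates, the contributions carrying a factor $\mathcal{M}$, $\tilde{\mathcal{M}}$ or $\mathcal{N}_{3,loc}$ are controlled by \eqref{eq:Dispersiveestimates2} with $i=3$ and $\alpha$ near $1$, delivering exactly $\frac{\mathcal{N}_3(s_1)}{\lambda^{c}(s_1)}+\frac{|b^{3}(s_1)|}{\lambda^{c}(s_1)}+\frac{|b^{3}(s_2)|}{\lambda^{c}(s_2)}$; the purely algebraic contributions $|b|^{3}$ and $b^{2}\mathcal{N}_2^{1/2}$ are handled by integrating $\frac{d}{ds}\{b^{2}\tilde\lambda^{-c}\}$, whose leading term $-cb^{3}\tilde\lambda^{-c}$ has net coefficient $2-c>0$ and which produces the boundary terms $b^{2}(s_k)\tilde\lambda^{-c}(s_k)$; and the cross term $|b|\mathcal{M}^{1/2}$ is split by Young's inequality into a piece $\lesssim\lambda^{\eta}b^{2}$ and a piece $\lesssim\lambda^{-\eta}\mathcal{M}$ with $\eta\ll1$, each of which then reduces to a previous case upon reapplying \eqref{eq:Dispersiveestimates2} with the shifted exponent $(1\pm\tfrac{\eta}{c})c<3-\tilde C\nu^*$. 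Using $|b^{3}|\le\nu^* b^{2}$ reassembles the right-hand side of \eqref{eq:blambdacinequality}; throughout, $\tilde\lambda$ and $\lambda$ are interchangeable since $|J|\lesssim\delta(\nu^*)$.

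The step I expect to be the main obstacle is precisely this last one: bounding $\int_{s_1}^{s_2}\big|\frac{d}{ds}\{b\tilde\lambda^{-c}\}\big|\,ds$ without generating a divergent $\int_{s_1}^{s_2}b^{2}\lambda^{-c}\,ds$, which is what every naive estimate of the cross-terms produces and which genuinely diverges logarithmically as $s_2\to s^{**}$ in the blow-up regime. The remedy is never to keep the weight $\lambda^{-c}$ fixed but to trade a small power $\lambda^{\pm\eta}$ between the $b$-factor and the $\varepsilon$-factor, exploiting the full admissible range $0\le\alpha<3-\tilde C\nu^*$ in \eqref{eq:Dispersiveestimates2} together with the strict inequality $2-c>0$; equivalently, one must track that the genuinely controlled quantity is $b/\tilde\lambda^{c}$ itself --- whose time derivative is quadratic by the cancellation above --- rather than $b$ or $\mathcal{N}_i$ in isolation, which is the ``more direct route'' announced in the outline.
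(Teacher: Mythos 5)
Your proposal is correct and follows essentially the same route as the paper: integrating the Lyapunov inequality with the coercivity of $\mathcal{F}_{i,j}$ and controlling $\int b^4\lambda^{-j}$ by integrating $b^2b_s$ by parts (with the feedback term absorbed precisely because $\alpha<3-\tilde C\nu^*$), cancelling the bad modulation term via the refined $J$-equation of Lemma \ref{sharporthogonalities}, and exploiting the algebraic cancellation in $\frac{d}{ds}\{b\tilde\lambda^{-c}\}$ together with an auxiliary integration by parts of $\frac{d}{ds}\{b^2\tilde\lambda^{-c}\}$ to control $\int|b|^3\lambda^{-c}$ (the paper's \eqref{b3lambdac}). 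The only cosmetic differences are your net coefficient ``$2-c$'' where the paper's bookkeeping yields $c/2$ (both strictly positive, so the absorption is unaffected) and your Young's-inequality splitting with weights $\lambda^{\pm\eta}$ for the cross term, which the paper handles by direct absorption into $|b|^3+\mathcal{N}_2$.
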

\begin{proof} 

\textit{Proof of i)}
We have that 
$$cb^2\leq -b_s+C_1\mathcal{N}_{1,loc}+C_2\nu^*\mathcal{N}_1.$$
By integrating the monotonicity formula and using the coercivity $\mathcal{F}_{i,0}$, we get 
\begin{equation*}
\begin{split} 
\mathcal{N}_i(s_2)+\int_{s_1}^{s_2}\int \int(|\nabla \varepsilon|^2+\varepsilon^2)(s)(\phi_{i,B})_{y_1}ds&\lesssim \mathcal{F}_{i,0}(s_2)+\mu\int_{s_1}^{s_2}\int \int(|\nabla \varepsilon|^2+\varepsilon^2)(s)(\phi_{i,B})_{y_1}ds\\
&\lesssim \mathcal{F}_{i,0}(s_1)+\int_{s_1}^{s_2}b^4(s)ds\\
&\lesssim \mathcal{N}_i(s_1)+\int_{s_1}^{s_2}b^4(s)ds.
\end{split}
\end{equation*}
Also, for $i\geq 2,$
\begin{equation*}
\begin{split} 
\int_{s_1}^{s_2}b^4(s)ds&\leq -\int_{s_1}^{s_2}b^2b_sds+C_1(\nu^*)^2\int_{s_1}^{s_2}\mathcal{N}_{1,loc}(s)ds+C_2(\nu^*)^3\int_{s_1}^{s_2}\mathcal{N}_1(s)ds\\
&\leq \Big(\frac{b^3(s_2)}{3}-\frac{b^3(s_1)}{3}\Big)+\delta(\nu^*)\int_{s_1}^{s_2}\Big(\mathcal{N}_{1,loc}(s)+\mathcal{N}_1(s)\Big)ds\\
&\leq |b^3(s_2)|+|b^3(s_1)|+C_3\nu^*\int_{s_1}^{s_2}\int \int(|\nabla \varepsilon|^2+\varepsilon^2)(s)(\phi_{i,B})_{y_1}ds.
\end{split}
\end{equation*}
Combining the two inequalities and taking $\nu^*$ smaller than a universal constant, we get for $i\geq 2,$
$$\mathcal{N}_i(s_2)+\int_{s_1}^{s_2}\int\int (|\nabla \varepsilon|^2+\varepsilon^2)(s)(\phi_{i,B})_{y_1}ds\lesssim  \mathcal{N}_i(s_1)+|b^3(s_2)|+|b^3(s_1)|.$$
For the second dispersive bound, by integrating the monotonicity formula and using the coercivity $\mathcal{F}_{i,\alpha c}$, we get, since $i\geq \alpha c,$
\begin{equation*}
\begin{split} 
\frac{\mathcal{N}_i(s_2)}{\lambda^{\alpha c}(s_2)}+\int_{s_1}^{s_2}\frac{\int \int(|\nabla \varepsilon|^2+\varepsilon^2)(s)(\phi_{i,B})_{y_1}}{\lambda^{\alpha c}(s)}ds&\lesssim \frac{\mathcal{F}_{i,\alpha c}(s_2)}{\lambda^{\alpha c}(s_2)}+\mu\int_{s_1}^{s_2}\frac{\int \int(|\nabla \varepsilon|^2+\varepsilon^2)(s)(\phi_{i,B})_{y_1}}{\lambda^{\alpha c}(s)}ds\\
&\lesssim \frac{\mathcal{F}_{i,\alpha c}(s_1)}{\lambda^{\alpha c}(s_1)}+\int_{s_1}^{s_2}\frac{b^4(s)}{\lambda^{\alpha c}(s)}ds\\
&\lesssim \frac{\mathcal{N}_i(s_1)}{\lambda^{\alpha c}(s_1)}+\int_{s_1}^{s_2}\frac{b^4(s)}{\lambda^{\alpha c}(s)}ds.
\end{split}
\end{equation*}
Also, using that $i\geq 2,$
\begin{equation*}
\begin{split} 
c\int_{s_1}^{s_2}\frac{b^4(s)}{\lambda^{\alpha c}(s)}ds&\leq -\int_{s_1}^{s_2}\frac{b^2b_s}{\lambda^{\alpha c}(s)}ds+C_1(\nu^*)^2\int_{s_1}^{s_2}\frac{\mathcal{N}_{1,loc}(s)}{\lambda^{\alpha c}(s)}ds+C_2(\nu^*)^3\int_{s_1}^{s_2}\frac{\mathcal{N}_1(s)}{\lambda^{\alpha c}(s)}ds\\
&\leq \Big(\frac{b^3(s_1)}{3\lambda^{\alpha c}(s_1)}-\frac{b^3(s_2)}{3\lambda^{\alpha c}(s_2)}\Big)-\frac{\alpha c}{3}\int_{s_1}^{s_2}\frac{b^3}{\lambda^{\alpha c}}\frac{\lambda_s}{\lambda}+C_3(\nu^*)^2\int_{s_1}^{s_2}\frac{\mathcal{N}_{1,loc}(s)+\mathcal{N}_1(s)}{\lambda^{\alpha c}(s)}ds\\
&\leq \Big|\frac{b^3(s_2)}{3\lambda^{\alpha c}(s_2)}\Big|+\Big|\frac{b^3(s_1)}{3\lambda^{\alpha c}(s_1)}\Big|-\frac{\alpha c}{3}\int_{s_1}^{s_2}\frac{b^3}{\lambda^{\alpha c}}\Big(\frac{\lambda_s}{\lambda}+b\Big)+\frac{\alpha c}{3}\int_{s_1}^{s_2}\frac{b^4}{\lambda^{\alpha c}}ds\\
&+C_3(\nu^*)^2\int_{s_1}^{s_2} \frac{\int\int(|\nabla \varepsilon|^2+\varepsilon^2)(s)(\phi_{i,B})_{y_1}}{\lambda^{\alpha c}(s)}ds\\
&\leq \Big|\frac{b^3(s_2)}{3\lambda^{\alpha c}(s_2)}\Big|+\Big|\frac{b^3(s_1)}{3\lambda^{\alpha c}(s_1)}\Big|+\frac{\alpha c}{3}\int_{s_1}^{s_2}\frac{|b|^3}{\lambda^{\alpha c}}(b^2+\mathcal{N}_{1,loc}^{\frac{1}{2}}+\nu^*\mathcal{N}_1)+\frac{\alpha c}{3}\int_{s_1}^{s_2}\frac{b^4}{\lambda^{\alpha c}}ds\\
&+C_3(\nu^*)^2\int_{s_1}^{s_2}\frac{\int \int(|\nabla \varepsilon|^2+\varepsilon^2)(s)(\phi_{i,B})_{y_1}}{\lambda^{\alpha c}(s)}ds\\
&\leq \Big|\frac{b^3(s_2)}{3\lambda^{\alpha c}(s_2)}\Big|+\Big|\frac{b^3(s_1)}{3\lambda^{\alpha c}(s_1)}\Big|+\Big(\frac{\alpha c}{3}+C_5\nu^*\Big)\int_{s_1}^{s_2}\frac{b^4}{\lambda^{\alpha c}}ds\\
&+C_6(\nu^*)^2\int_{s_1}^{s_2} \frac{\int \int(|\nabla \varepsilon|^2+\varepsilon^2)(s)(\phi_{i,B})_{y_1}}{\lambda^{\alpha c}(s)}ds,\\
\end{split}
\end{equation*}
hence 
$$ c\Big(1-\frac{\alpha}{3}-\frac{C_7\nu^*}{c}\Big)\int_{s_1}^{s_2}\frac{b^4(s)}{\lambda^c(s)}ds \lesssim \Big|\frac{b^3(s_2)}{\lambda^c(s_2)}\Big|+\Big|\frac{b^3(s_1)}{\lambda^c(s_1)}\Big|+\delta(\nu^*)\int_{s_1}^{s_2} \frac{\int \int(|\nabla \varepsilon|^2+\varepsilon^2)(s)(\phi_{i,B})_{y_1}}{\lambda^c(s)}ds,$$
therefore there exists $\tilde{C}>0$ such that if $\alpha<3-\tilde{C}\nu^*$ then we get 
$$\int_{s_1}^{s_2}\frac{b^4(s)}{\lambda^{\alpha c}(s)}ds \lesssim \Big|\frac{b^3(s_2)}{\lambda^{\alpha c}(s_2)}\Big|+\Big|\frac{b^3(s_1)}{\lambda^{\alpha c}(s_1)}\Big|+\delta(\nu^*)\int_{s_1}^{s_2} \frac{\int \int(|\nabla \varepsilon|^2+\varepsilon^2)(s)(\phi_{i,B})_{y_1}}{\lambda^{\alpha c}(s)}ds,$$
and combined with the inequality above we get, for $i\geq \max\{\alpha c,2\},$ 
$$\frac{\mathcal{N}_i(s_2)}{\lambda^{\alpha c}(s_2)}+\int_{s_1}^{s_2} \frac{\int \int(|\nabla \varepsilon|^2+\varepsilon^2)(s)(\phi_{i,B})_{y_1}}{\lambda^{\alpha c}(s)}ds\lesssim \frac{\mathcal{N}_i(s_1)}{\lambda^{\alpha c}(s_1)}+\Big|\frac{b^3(s_2)}{\lambda^{\alpha c}(s_2)}\Big|+\Big|\frac{b^3(s_1)}{\lambda^{\alpha c}(s_1)}\Big|.$$

\textit{\textbf{Remark.} It means that as $\nu^*\rightarrow 0,$ then we can take $\alpha$ as close to $3$ as we want. For the rest of the paper we will need just $\alpha=\frac{11}{4}$ for some $\nu^*\leq \nu_0.$}
 
 \textit{Proof of ii)} 
Using that $|J|\lesssim \mathcal{N}_2^{\frac{1}{2}}\lesssim \delta(\nu^*)$ and the differential equation for $J$ from  lemma \ref{sharporthogonalities} we get 
\begin{equation}\label{eq:lambda0estimate}
\begin{split}
\Big|\frac{(\tilde{\lambda})_s}{\tilde{\lambda}}+b\Big|&=\Big|\frac{\lambda_s}{\lambda}+b-\frac{J_s}{1-J}+\frac{bJ(s)}{1-J(s)}\Big|\leq \Big|\frac{\lambda_s}{\lambda}+b-\frac{\lambda_s}{\lambda}J(s)-(J)_s\Big|\frac{1}{|1-J(s)|}+\Big|\frac{bJ(s)}{1-J(s)}\Big|\\
&\lesssim b^2+\mathcal{N}_{3,loc}+|b|\mathcal{N}_2^{\frac{1}{2}}\lesssim b^2+\mathcal{N}_{2}
\end{split}
\end{equation}
\textit{Proof of iii)}
Since 
\begin{equation}\label{thebestimate}
cb^2\leq -b_s+C\mathcal{N}_{1,loc}+\delta(\nu^*)\mathcal{N}_1,
\end{equation}
so using \ref{eq:Dispersiveestimates1} 
\begin{equation*}
\begin{split}
\int_{s_1}^{s_2}b^2&\leq -\int_{s_1}^{s_2}b_sds+C\int_{s_1}^{s_2}\mathcal{N}_{1,loc}ds+\delta(\nu^*)\int_{s_1}^{s_2}\mathcal{N}_{1}ds\\
&\lesssim |b(s_2)|+|b(s_1)|+(C+\delta(\nu^*))\int_{s_1}^{s_2}\int (|\nabla \varepsilon|^2+\varepsilon^2)(\phi_{2,B})_{y_1}\\
&\lesssim |b(s_2)|+|b(s_1)|+C\mathcal{N}_{2}(s_1).
\end{split}
\end{equation*}
For the second control of $b$, we have that $|J(s)|\lesssim \mathcal{N}_{3,loc}^{\frac{1}{2}}(s)\lesssim \delta(\nu^*)$, hence $\tilde{\lambda}(s)>0$ and 
\begin{equation}\label{lambdatildelamnda}
\frac{1}{2}\lambda(s)\leq\tilde{\lambda}(s)\leq 2\lambda(s). 
\end{equation}
This together with \eqref{eq:lambda0estimate} and \eqref{eq:modulatedcoefficients} implies that 
\begin{equation}\label{minmax}
\begin{split} 
\Big|\frac{d}{ds}\Big\{\frac{b}{\tilde{\lambda}^{c}}\Big\}\Big|&=\Big|\frac{b_s+cb^2}{\tilde{\lambda}^{c}}-\Big(\frac{(\tilde{\lambda})_s}{\tilde{\lambda}}+b\Big)\frac{cb}{\tilde{\lambda}^{c}}\Big|\lesssim \frac{1}{\tilde{\lambda}^{c}}\Big(|b|^3+\mathcal{N}_1+|b|\mathcal{N}_{3,loc}\Big)\lesssim \frac{1}{\lambda^c}(|b|^3+\mathcal{N}_2).
\end{split}
\end{equation}

Choose $s,s'\in [s_1, s_2]$ and integrate \eqref{minmax} from $s$ to $s'$ together with  using \eqref{eq:Dispersiveestimates2}, \eqref{thebestimate}, \eqref{lambdatildelamnda}, \eqref{eq:Dispersiveestimates1} and that $[s_1,s_2]\subseteq[0,s^{**}]$ we obtain 
\begin{equation*}
\begin{split}
\Big|\frac{b(s)}{\tilde{\lambda}^c(s)}-\frac{b(s')}{\tilde{\lambda}^c(s')}\Big|&\lesssim \int_{s}^{s'}\frac{{N}_{2}(s'')}{\lambda^c(s'')}ds''+\int_{s}^{s'}\frac{|b^3(s'')|}{\tilde{\lambda}^c(s'')}ds''\lesssim \int_{s_1}^{s_2}\frac{{N}_{2}(s'')}{\lambda^c(s'')}ds''+\int_{s}^{s'}\frac{|b^3(s'')|}{\tilde{\lambda}^c(s'')}ds''\\
&\lesssim \frac{\mathcal{N}_{3}(s_1)}{\lambda^c(s_1)}+\frac{|b^3(s_1)|}{\lambda^c(s_1)}+\frac{|b^3(s)|}{\lambda^c(s)}+\sup_{[s_1,s_2]}\frac{|b|}{\tilde{\lambda}^c}(|b(s)|+|b(s')|+\int_{s}^{s'}\mathcal{N}_1(s'')ds'')\\
&\lesssim \frac{\mathcal{N}_{3}(s_1)}{\lambda^c(s_1)}+\frac{|b^3(s_1)|}{\tilde{\lambda}^c(s_1)}+\frac{|b^3(s)|}{\tilde{\lambda}^c(s)}+\sup_{[s_1,s_2]}\frac{|b|}{\tilde{\lambda}^c}(|b(s)|+|b(s')|+\mathcal{N}_2(s))\\
&\lesssim \frac{\mathcal{N}_{3}(s_1)}{\lambda^c(s_1)}+\sup_{[s_1,s_2]}\frac{|b|}{\tilde{\lambda}^c}\nu^*,
\end{split}
\end{equation*}
which yields that 
$$\sup_{[s_1,s_2]}\frac{|b|}{\tilde{\lambda}^c}-\min_{[s_1,s_2]}\frac{|b|}{\tilde{\lambda}^c}\lesssim \frac{\mathcal{N}_{3}(s_1)}{\lambda^c(s_1)}+\sup_{[s_1,s_2]}\frac{|b|}{\tilde{\lambda}^c}\nu^*,$$
hence $$\sup_{[s_1,s_2]}\frac{|b|}{\tilde{\lambda}^c}\lesssim \min_{[s_1,s_2]}\frac{|b|}{\tilde{\lambda}^c}+ \frac{\mathcal{N}_{3}(s_1)}{\lambda^c(s_1)}.$$

Now, by integrating \eqref{minmax} from $s_1$ to $s_2$ and using \eqref{eq:Dispersiveestimates2}, \eqref{thebestimate}, \eqref{lambdatildelamnda}, \eqref{eq:Dispersiveestimates1}, $[s_1,s_2]\subseteq[0,s^{**}],$ we get  
\begin{equation*}
\begin{split}
\Big|\frac{b(s_2)}{\tilde{\lambda}^c(s_2)}-\frac{b(s_1)}{\tilde{\lambda}^c(s_1)}\Big|&\lesssim \int_{s_1}^{s_2}\frac{{N}_{2}(s)}{\lambda^c(s)}ds+\int_{s_1}^{s_2}\frac{|b^3(s)|}{\tilde{\lambda}^c(s)}ds\\
&\lesssim \frac{\mathcal{N}_{3}(s_1)}{\lambda^c(s_1)}+\frac{|b^3(s_1)|}{\tilde{\lambda}^c(s_1)}+\frac{|b^3(s_2)|}{\tilde{\lambda}^c(s_2)}+\sup_{[s_1,s_2]}\frac{|b|}{\tilde{\lambda}^c}(|b(s_1)|+|b(s_2)|+\mathcal{N}_2(s_1))\\
&\lesssim \frac{\mathcal{N}_{3}(s_1)}{\lambda^c(s_1)}+\frac{|b^3(s_1)|}{\tilde{\lambda}^c(s_1)}+\frac{|b^3(s_2)|}{\tilde{\lambda}^c(s_2)}+\Big(\min_{[s_1,s_2]}\frac{|b|}{\tilde{\lambda}^c}+\frac{\mathcal{N}_{3}(s_1)}{\lambda^c(s_1)}\Big)(|b(s_1)|+|b(s_2)|+\mathcal{N}_2(s_1))\\
&\lesssim \frac{\mathcal{N}_{3}(s_1)}{\tilde{\lambda}^c(s_1)}+\frac{b^2(s_1)}{\tilde{\lambda}^c(s_1)}+\frac{b^2(s_2)}{\tilde{\lambda}^c(s_2)}
\end{split}
\end{equation*}

Moreover, by a simple use of the mean value theorem we have $|(1-J(s))^{-c}-1|\lesssim |J(s)|\lesssim \mathcal{N}_{3,loc}(s)^{\frac{1}{2}}\lesssim \mathcal{N}_{2}(s)^{\frac{1}{2}},$ and this yields 
\begin{equation*}
\begin{split}
\Big|\frac{b(s_2)}{\lambda^c(s_2)}-\frac{b(s_1)}{\lambda^c(s_1)}\Big|&\leq \Big|\Big[\frac{b}{\tilde{\lambda}^{c}}\Big]_{s_1}^{s_2}\Big|+\Big|\frac{b(s_2)}{\lambda^c(s_2)}[(1-J(s_2))^{-c}-1]\Big|+\Big|\frac{b(s_1)}{\lambda^c(s_1)}[(1-J(s_1))^{-c}-1]\Big|\\
&\lesssim \frac{\mathcal{N}_{3}(s_1)}{\tilde{\lambda}^c(s_1)}+\frac{b^2(s_1)}{\tilde{\lambda}^c(s_1)}+\frac{b^2(s_2)}{\tilde{\lambda}^c(s_2)}+\Big|\frac{b(s_2)}{\lambda^c(s_2)}\Big|\mathcal{N}_2(s_2)^{\frac{1}{2}}+\Big|\frac{b(s_1)}{\lambda^c(s_1)}\Big|\mathcal{N}_2(s_1)^{\frac{1}{2}}\\
&\lesssim \frac{\mathcal{N}_{3}(s_1)}{\lambda^c(s_1)}+\frac{b^2(s_1)}{\lambda^c(s_1)}+\frac{b^2(s_2)}{\lambda^c(s_2)}.
\end{split}
\end{equation*}

\end{proof}

\section{Rigidity near the soliton} \label{Rigidity near the soliton}
 
Let $u_0 \in H^1$ with 
$$u_0=Q+\varepsilon_0, \|\varepsilon_0\|_{H^1}<\alpha_0, \int \int_{y_1>0}y_1^7\varepsilon_0^2<1,$$
 and let $u(t)$ be the corresponding solution of the ZK equation on $[0,T).$ Let $\mathcal{T}_{\alpha^{*}}$ be the $L^2$ modulated tube around the soliton manifold: 
 $$\mathcal{T}_{\alpha^*}=\Big\{u \in H^1 \text{ with }\inf_{\tilde{\lambda}>0, x_0, y_0 \in \mathbb{R}}\Big|\Big| u -\frac{1}{\tilde{\lambda}}Q\Big(\frac{\cdot -x_0}{\tilde{\lambda}},\frac{\cdot-y_0}{\tilde{\lambda}}\Big)\Big|\Big|_{L^2}<\alpha^*\Big\}$$
 
 and consider the set of of initial data 
 $$\mathcal{A}_{\alpha_0}=\Big\{ u_0=Q+\varepsilon_0 \text{ with } \|\varepsilon\|_{H^1}<\alpha_0 \text{ and } \int \int_{y_1>0}y_1^7\varepsilon_0^2<1\Big\}.$$

Define the exit time: 
\begin{equation*}
t^*=\sup\{0<t<T, \text{ such that } \forall t'\in [0,t], u(t') \in \mathcal{T}_{\alpha^*}\Big\}
\end{equation*}

which satisfies $t^*>0$ by assumption on the data. 

We recall the a priori estimates
\begin{equation}\label{H1cond}
\textbf{(H1)     } |b(t)|+ \mathcal{N}_3(t)+\|\varepsilon(t)\|_{L^2} \leq \nu^*   
\end{equation}
with $\nu^*$ from Proposition \ref{Monotonicity}.

\begin{theorem} \label{rigiditytheorem}
There exist universal constants $0<\alpha_{0}^{*}\ll \alpha^*\ll \nu^*$ such that the following holds. Let $u_0 \in \mathcal{A}$ with $0<\alpha_0 < \alpha_{0}^{*},$ then $u(t)$ satisfies the assumptions \textbf{(H1)} on $[0,t^*).$

Then the following trichotomy holds: 

\textbf{ (Asymptotic Stability) } Suppose $t^*=T=+\infty.$ We have that there exist $\lambda_{\infty}$ satisfying $|\lambda_{\infty}-1|\leq \delta(\alpha_0), x_{\infty} \in \mathbb{R}$ and $x_1(t)\in C^1$ such that 
$$u(t,\lambda_{\infty}\cdot+x_1(t),\lambda_{\infty}\cdot+x_{\infty})\rightarrow Q \mbox{ in }H^{1}_{loc} \mbox{ as }t\rightarrow +\infty.$$
Moreover,
\begin{equation*}
\begin{split}
&\mathcal{N}_6(t)\rightarrow 0, b(t)\rightarrow 0, \text{ as } t\rightarrow +\infty,\\
&\lim_{t\rightarrow\infty}\lambda(t)=\lambda_{\infty}, x_1(t)=\frac{t}{\lambda_{\infty}^{2}}(1+o_{t\rightarrow T}(1)), \lim_{t\rightarrow\infty}x_2(t)=x_{\infty} \in \mathbb{R} \text{ as }t\rightarrow \infty, 
\end{split}
\end{equation*}
Furthermore, there exists $C^*>0$ such that $|b(t)|\lesssim C^*\mathcal{N}_3(t)$ for all $t\geq 0.$

\textbf{ (Stable Blow Up) } Suppose $t^*=T<+\infty.$ There exists $0<l_0<\delta(\alpha_0)$ such that 
$$\lim_{t \rightarrow T}\frac{\lambda(t)}{(T-t)^{\frac{1}{3-c}}}=l_0,  \lim_{t \rightarrow T}\frac{b(t)}{(T-t)^{\frac{c}{3-c}}}=\frac{l_0^3}{3-c},$$
\[
x_1(t)\sim
\begin{cases} 
\frac{1}{l_0^2}\ln(T-t), &\text{ if }c=1,\\
\frac{1}{l_0^2}(T-t)^{\frac{1-c}{3-c}}, &\text{ if }c\neq 1.
 \end{cases}
 \]
 $$\lim_{t\rightarrow T}x_2(t)=x_{\infty}, \text{ for some }x_{\infty} \in \mathbb{R}$$
 and there holds the bounds: 
 $$\|\nabla \varepsilon(t)\|_{L^2}\sim \lambda^{\frac{c}{2}}(t), \|\varepsilon(t)\|_{L^2}\lesssim \delta(\alpha_0).$$
 Here, we used $0<c<2.$

\textbf{ (Exit of Tube) } Suppose $t^*<T.$ We have
$$\inf_{\lambda_1>0, x_1, x_2 \in \mathbb{R}}\Big|\Big|u(t^*, \cdot, \cdot)-\frac{1}{\lambda_1}Q\Big(\frac{\cdot -x_1}{\lambda_1}, \frac{\cdot-x_2}{\lambda_1}\Big)\Big|\Big|_{L^2}=\alpha^*.$$
In addition, 
$$\frac{(\alpha^*)^{\frac{4}{c}}}{\delta(\alpha_0)} \lesssim \lambda(t^*), \mbox{  }b(t^*)\lesssim -(\alpha^*)^4.$$
  \end{theorem}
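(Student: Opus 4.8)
The plan is to prove the rigidity trichotomy by combining the dispersive/energy estimates of the previous section with the ODE analysis of the quantity $b/\tilde{\lambda}^c$, following the scheme that on $[0,t^*)$ the a priori hypotheses $\mathbf{(H1)}$ hold so that all monotonicity and energy estimates apply. First I would verify that $\mathbf{(H1)}$ is propagated: starting from $u_0\in\mathcal{A}_{\alpha_0}$ with $\|\varepsilon_0\|_{H^1}<\alpha_0$ and the weighted bound $\int_{y_1>0}y_1^7\varepsilon_0^2<1$, Lemma \ref{eq:nualpha} and Lemma \ref{decompositionlemma} give $|b(0)|+\mathcal{N}_3(0)+\|\varepsilon(0)\|_{L^2}\lesssim\delta(\alpha_0)$, and a bootstrap using \eqref{eq:Dispersiveestimates1} together with the mass conservation \eqref{eq:massconservation} shows $\mathcal{N}_3(t)+|b(t)|+\|\varepsilon(t)\|_{L^2}$ stays below $\nu^*$ as long as $u(t)\in\mathcal{T}_{\alpha^*}$, hence on $[0,t^*)$; this is where one needs $\alpha_0^*\ll\alpha^*\ll\nu^*$.

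Next, the core step is to study $w(s):=b(s)/\tilde{\lambda}^c(s)$. By \eqref{eq:blambdacinequality}, $\int_{s_1}^{s_2}|\frac{d}{ds}w|\,ds\lesssim \frac{b^2(s_2)}{\tilde\lambda^c(s_2)}+\frac{b^2(s_1)}{\tilde\lambda^c(s_1)}+\frac{\mathcal{N}_3(s_1)}{\tilde\lambda^c(s_1)}$, and using \eqref{eq:Dispersiveestimates1}, \eqref{controldynamics} and the smallness of $\mathcal{N}_3, b$, one sees that the right side is controlled by $w$ itself (up to $\delta(\nu^*)$ factors), so $w(s)$ is almost monotone and has a limit $c_0$ as $s\to s^{**}$ (or one derives a contradiction with $u(t)\in\mathcal{T}_{\alpha^*}$). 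The trichotomy then comes from the sign of $c_0$, mirroring the phase portrait of \eqref{eq:ODE2}: if $c_0=0$ one shows $\mathcal{N}_i(s)\to 0$ for all $i$ (via \eqref{eq:Dispersiveestimates1} with $s_1\to s^{**}$), $b(s)\to 0$, $\tilde\lambda\to\lambda_\infty$ with $|\lambda_\infty-1|\lesssim\delta(\alpha_0)$, and integrating $(x_1)_s/\lambda=1+O(\mathcal{N})$ and $(x_2)_s/\lambda=O(\mathcal{N})$ gives $x_1(t)\sim t/\lambda_\infty^2$ and $x_2(t)\to x_\infty$; this is Asymptotic Stability with $T=t^*=+\infty$. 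If $c_0>0$, then $b/\tilde\lambda^c\to c_0>0$ forces $\tilde\lambda_s/\tilde\lambda\approx -b<0$ and, via \eqref{lambdatildeineq}, $\lambda(s)\to 0$ in finite time $T$; converting $s$-time to $t$-time through $ds/dt=\lambda^{-3}$ and solving the limiting ODE $(\lambda_t\lambda^{2-c})_t=0$ yields $\lambda(t)\sim l_0(T-t)^{1/(3-c)}$, $b(t)\sim \frac{l_0^3}{3-c}(T-t)^{c/(3-c)}$, and the stated $x_1(t)$ asymptotics by integrating $(x_1)_t=\lambda^{-2}$ (the logarithmic case when $c=1$ and power case otherwise); the gradient bound $\|\nabla\varepsilon\|_{L^2}\sim\lambda^{c/2}$ follows from \eqref{eq:energyconservation} with $E_0$ controlled and $b\sim\lambda^c$, and $t^*=T$ because one cannot exit the tube while $\|\varepsilon\|_{L^2}\lesssim\delta(\alpha_0)$. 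If $c_0<0$, then $\tilde\lambda$ grows and the distance to the soliton manifold increases until $u$ exits $\mathcal{T}_{\alpha^*}$ at $t^*<T$; at the exit time the infimum defining $\mathcal{T}_{\alpha^*}$ equals $\alpha^*$ by continuity, and tracking how $\alpha^*\sim\|\varepsilon(t^*)\|_{L^2}$ relates to $b$ and $\lambda$ through the conservation laws and $b/\tilde\lambda^c\approx c_0$ gives the quantitative bounds $\lambda(t^*)\gtrsim (\alpha^*)^{4/c}/\delta(\alpha_0)$ and $b(t^*)\lesssim -(\alpha^*)^4$.

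For the explicit asymptotics in each branch I would, in the blow-up case, set $v(s)=b(s)/\tilde\lambda^c(s)$, show $|v(s)-c_0|\lesssim b^2(s)+\mathcal{N}_3(s)=o(1)$, plug back into $\tilde\lambda_s/\tilde\lambda=-b(1+o(1))=-c_0\tilde\lambda^c(1+o(1))$, integrate to get $\tilde\lambda(s)^{-c}\sim c\,c_0 s$ so $\tilde\lambda(s)\sim (c\,c_0 s)^{-1/c}$, then use $dt=\lambda^3 ds\sim \tilde\lambda^3 ds$ to obtain $T-t\sim \tilde\lambda^{3-c}/(c_0(3-c))$, from which all the rates and $l_0=l_0(c_0)$ drop out; and $x_2(t)\to x_\infty$ because $\int|(x_2)_s/\lambda|\,ds\lesssim\int(\mathcal{N}^{1/2}+b^2)ds<\infty$ using \eqref{eq:Dispersiveestimates2} with $\alpha=\frac{11}{4}$ to absorb the $\lambda^{-\alpha c}$-weighted time integrals near blow-up. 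The main obstacle I expect is the bookkeeping at the exit time $t^*$: one must translate the $L^2$-geometric condition $\inf\|\cdots\|_{L^2}=\alpha^*$ into a lower bound on $\|\varepsilon(t^*)\|_{L^2}$ (comparing the modulated decomposition's parameters to the free infimum), then feed this through the conservation laws \eqref{eq:massconservation}, \eqref{eq:energyconservation} and the near-constancy of $b/\tilde\lambda^c$ to extract sharp powers of $\alpha^*$ — this requires carefully tracking the interplay of $b\sim\lambda^c$, $\|\varepsilon\|_{L^2}^2\sim |b|$ (from \eqref{eq:massconservation} when $|\int u_0^2-\int Q^2|\lesssim\alpha_0^2\ll (\alpha^*)^4$), and the sign information, and it is the step most sensitive to the choice of constants $\alpha_0\ll\alpha^*\ll\nu^*$.
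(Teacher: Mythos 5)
Your proposal follows essentially the same route as the paper: propagate \textbf{(H1)} by a bootstrap from the initial smallness and the dispersive bounds, show $b/\tilde{\lambda}^c\to c_0$ via the total-variation estimate \eqref{eq:blambdacinequality}, split according to $c_0>0$ (blow-up), $c_0=0$ (asymptotic stability), $c_0<0$ (exit), and integrate the modulation ODEs together with the conservation laws to extract the asymptotics and the quantitative exit bounds. The only steps needing more care than your sketch suggests are the decay $\mathcal{N}_6(t)\to 0$ in the soliton case, which the paper obtains by extracting a subsequence $t_n$ from the time-integrability of the localized norm and combining it with the propagated weight $\int\int_{y_1>0}y_1^7\varepsilon^2\lesssim 1$ rather than by letting $s_1\to s^{**}$ in \eqref{eq:Dispersiveestimates1}, and the sign $b(t^*)<0$ at exit, which the paper establishes by a separate contradiction argument (if $b\geq 0$ past its last zero then $\lambda$ stays bounded and $|b(t^*)|\lesssim\delta(\alpha_0)$, contradicting $(\alpha^*)^4\lesssim|b(t^*)|$); both fit within your framework.
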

 A continuity argument thus ensures that the cases (Exit) and (Blow Up) are open in $\mathcal{A}_{\alpha_0}.$ First, note that by the decomposition lemma, $u$ admits a decomposition on $[0,t^*]$:
 $$u(t,x_1,x_2)=\frac{1}{\lambda(t)}(Q_{b(t)}+\varepsilon)\Big(t,\frac{x_1 -x_1(t)}{\lambda(t)}, \frac{x_2 -x_2(t)}{\lambda(t)}\Big)$$ 
 together with $u_0\in \mathcal{A}_{\alpha_0}$ implies the estimates on the initial data: 
 \begin{equation}\label{initialdata}
\|\varepsilon(0)\|_{H^1}+|b(0)|+|1-\lambda(0)|\lesssim \delta(\alpha_0), \int \int_{y_1>0}y_1^7\varepsilon^2(0)\leq 2.   
 \end{equation}

 In particular, by Cauchy-Schwarz inequality we have 
 $$\mathcal{N}_i(0)\lesssim \delta(\alpha_0) \text{ for all } 1\leq i \leq 6.$$
 For $\nu^*$ as in Proposition \ref{Monotonicity}, define 
 $$t^{**}=\sup \{0<t<t^* \text{ such that } u \text{ satisfies } \textbf{(H1)} \text{ on } [0,t]\}.$$
 Note that $t^{**}>0$ is well-defined from the initial data for $\varepsilon(0),b(0),\lambda(0)$ and a continuity argument. Recall that $s=s(t)$ is the rescaled time by $\frac{ds}{dt}=\frac{1}{\lambda^3(t)}$ and we let $s^{**}=s(t^{**})$ and $s^*=s(t^*).$ One important step of the proof is to obtain $t^{**}=t^*$ by improving $(H1)$ on $[0,t^{**}].$

 \subsection{Bootstrap argument}
 In this section, we prove the propagation of the a priori estimates to the exit time of the modulated tube. 

\begin{lemma}\label{bootstrap}
Using the notation above, we have $t^{**}=t^{*}.$
\end{lemma}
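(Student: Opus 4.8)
The plan is to run a standard bootstrap/continuity argument: assume for contradiction that $t^{**}<t^*$, which by the definition of $t^{**}$ and continuity of the modulated quantities forces equality somewhere in the a priori bound, i.e.\ at least one of $|b(t^{**})| = \nu^*$, $\mathcal{N}_3(t^{**}) = \nu^*$, or $\|\varepsilon(t^{**})\|_{L^2} = \nu^*$. We then derive a strictly better bound, namely that each of these quantities is $\leq \nu^*/2$ on $[0,t^{**}]$ (in fact $\lesssim \delta(\alpha_0)\ll \nu^*$), contradicting the equality. Since $\alpha_0 \ll \alpha^* \ll \nu^*$, the initial data estimates \eqref{initialdata} give $\mathcal{N}_i(0)\lesssim\delta(\alpha_0)$ for $1\le i\le 6$, $|b(0)|\lesssim\delta(\alpha_0)$, so the improvement is genuinely available at $s=0$ and the whole point is to propagate it.

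The key steps, in order: First, on $[0,s^{**}]$ we are free to use all the conclusions of Proposition \ref{Monotonicity} (Lyapunov control and coercivity) and of the dispersive/energy estimates of the previous section, since $(H1)$ holds there by definition of $t^{**}$. Second, apply the first dispersive bound \eqref{eq:Dispersiveestimates1} with $i=3$ between $s_1=0$ and $s_2=s^{**}$ to get
\[
\mathcal{N}_3(s^{**}) \lesssim \mathcal{N}_3(0) + |b(0)|^3 + |b(s^{**})|^3 \lesssim \delta(\alpha_0) + |b(s^{**})|^3,
\]
so we still need to control $b$. Third, control $b$ via the quantity $b/\tilde\lambda^c$: from \eqref{eq:blambdacinequality} and the fact that $\tilde\lambda\sim\lambda$ with $\lambda(0)$ close to $1$, combined with \eqref{lambdatildeineq} to compare $\tilde\lambda$ and $\lambda$, one bounds $|b(s^{**})|$ in terms of the initial data and $\mathcal{N}_3$; here one has to split into cases according to whether $\lambda$ has grown or shrunk (equivalently the sign of $b_0$), but in every case the conserved-law estimate \eqref{eq:Qbconservation} together with $\|\varepsilon_0\|_{H^1}<\alpha_0$ pins down $|b|$ at the order $\delta(\alpha_0)$, or one uses that $b/\tilde\lambda^c$ has bounded variation so it stays $O(\delta(\alpha_0))$ while $\lambda$ itself stays bounded above and below on the relevant range (in the blow-up case $\lambda\to 0$ makes $b$ even smaller). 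Fourth, control $\|\varepsilon(s^{**})\|_{L^2}$ from \eqref{eq:massconservation}: $\|\varepsilon\|_{L^2}^2\lesssim |b|^{1/2}+|\int u_0^2-\int Q^2|\lesssim \delta(\alpha_0)$, using that $u_0=Q+\varepsilon_0$ with $\|\varepsilon_0\|_{H^1}<\alpha_0$ makes the mass defect $O(\alpha_0)$. Fifth, feed the resulting bound $|b(s^{**})|\lesssim\delta(\alpha_0)$ back into the $\mathcal{N}_3$ estimate to close: $\mathcal{N}_3(s^{**})\lesssim\delta(\alpha_0)$. Combining, $|b(t^{**})|+\mathcal{N}_3(t^{**})+\|\varepsilon(t^{**})\|_{L^2}\lesssim\delta(\alpha_0)\leq \nu^*/2$ once $\alpha_0$ is chosen small enough depending on the universal constants, contradicting that one of them equals $\nu^*$.

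The main obstacle I expect is the bookkeeping around $\lambda$ in the control of $b$: the dispersive estimate \eqref{eq:Dispersiveestimates2} and the quantity $b/\tilde\lambda^c$ carry factors of $\lambda^{-\alpha c}$ that are only harmless because $\lambda$ stays bounded above on $[0,s^{**}]$, and establishing that upper bound on $\lambda$ (equivalently, ruling out that $\lambda$ has already grown large before the tube is exited) requires integrating $|\lambda_s/\lambda+b|\lesssim b^2+\mathcal{N}_2$ together with $\int b^2\lesssim\mathcal{N}_2(0)+|b(0)|+|b(s^{**})|$ from \eqref{controldynamics}, and handling the interplay between these two. One must be careful that the constants in "$\lesssim$" here are universal (independent of $\nu^*$), so that after fixing $\nu^*$ from Proposition \ref{Monotonicity} one can still take $\alpha_0$ small enough to beat them; this is exactly the hierarchy $\alpha_0\ll\alpha^*\ll\nu^*$ in the hypothesis. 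The argument is otherwise routine: no new estimate is needed, only the correct assembly of the monotonicity, dispersive, modulation and conservation-law estimates already proved, plus a continuity/connectedness argument to conclude $t^{**}=t^*$.
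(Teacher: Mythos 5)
Your overall bootstrap skeleton (assume $t^{**}<t^{*}$, improve each quantity in \textbf{(H1)} strictly below $\nu^*$, conclude by continuity) matches the paper, and your treatment of $\|\varepsilon\|_{L^2}$ via \eqref{eq:massconservation} and of $\mathcal{N}_3$ via \eqref{eq:Dispersiveestimates1} is correct. The gap is in your Step 3, the control of $b$.

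First, the target bound you announce, $|b(t)|\lesssim\delta(\alpha_0)$ on $[0,t^{**}]$, is false in general: in the Exit regime the paper later shows $|b(t^{*})|\gtrsim(\alpha^*)^4$, and since $\alpha_0\ll\alpha^*$ this is not $O(\delta(\alpha_0))$. Second, the route you propose to reach it does not close. From \eqref{eq:blambdacinequality} you can only conclude $|b(s)|\lesssim\delta(\alpha_0)\,\tilde{\lambda}^{c}(s)$, and converting this into a bound on $|b(s)|$ requires an a priori upper bound on $\lambda$ over $[0,s^{**}]$. That upper bound is not available: $\lambda$ is only almost monotone when $b\geq 0$ (integrating $\lambda_s/\lambda\approx -b$), and when $b<0$ the scaling grows — indeed the Exit analysis shows $\lambda(t^{*})$ can reach size of order a negative power of $\delta(\alpha_0)$ while still inside $\mathcal{T}_{\alpha^*}$. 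Controlling how negative $b$ gets is exactly what you are trying to prove, so the argument is circular; your own "main obstacle" paragraph identifies the problem but does not resolve it.

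The paper's proof sidesteps all of this: the control of $b$ and $\|\varepsilon\|_{L^2}$ on $[0,t^{**}]$ is \emph{static}, not dynamical. Since $t^{**}\leq t^{*}$, one has $u(t)\in\mathcal{T}_{\alpha^*}$ on this interval, and the modulation claim inside Lemma \ref{eq:nualpha} (the implicit function theorem decomposition) directly yields $\|\varepsilon(t)\|_{L^2}+|b(t)|\lesssim\delta(\alpha^*)$ from the mere fact that $u(t)$ is $\alpha^*$-close to the soliton manifold in $L^2$ — no ODE for $b/\tilde{\lambda}^c$ and no bound on $\lambda$ is needed. Then $\|\varepsilon\|_{L^2}^2\lesssim|b|^{1/2}+\alpha_0\lesssim\delta(\alpha^*)$ by mass conservation and $\mathcal{N}_6(s)\lesssim\mathcal{N}_6(0)+|b(0)|^3+|b(s)|^3\lesssim\delta(\alpha^*)\ll\nu^*$ by the dispersive bound, which improves \textbf{(H1)}. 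If you replace your Step 3 by this observation (and lower your target from $\delta(\alpha_0)$ to $\delta(\alpha^*)$, which is all that is needed since $\alpha^*\ll\nu^*$), the rest of your argument goes through.
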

\begin{proof} 
For a solution close to $Q$, the decomposition of Lemma \ref{decompositionlemma} says that if
$$\|\varepsilon_1(t)\|_{L^2}=\Big|\Big|u(t)-\frac{1}{\lambda_1(t)}Q\Big(\frac{\cdot -x_1(t)}{\lambda_1(t)}, \frac{\cdot -x_2(t)}{\lambda_1(t)}\Big)\Big|\Big|_{L^2}<\mathcal{K}\leq \hat{\nu},$$ then we have for a decomposition of the type 
$$\varepsilon(t,y_1, y_2)=\lambda(t)u(t, \lambda(t)y_1+x_1(t), \lambda(t)y_2+x_2(t))-Q_{b(t)}(y_1,y_2)$$ with $(\lambda(t), b(t), x_1(t), x_2(t))$ chosen to satisfy 
$$(\varepsilon(t), Q)=(\varepsilon(t), \varphi(y_1)Q_{y_1})=(\varepsilon(t), \varphi(y_1)\Lambda Q)=(\varepsilon(t), \varphi(y_1)Q_{y_2})=0$$
then we have that 
$$\|\varepsilon(t)\|_{L^2}+|b(t)|\lesssim \delta(\mathcal{K}).$$

Now, since $u(t) \in \mathcal{T}_{\alpha^*}$ we can take $\mathcal{K}=\alpha^*$ and by choosing $\alpha^*\ll \nu^*$, we get that $|b(s)|\lesssim \delta(\alpha^*)\ll \nu^*.$ As $\Big|\int \int u_0^2-\int \int Q^2\Big|\lesssim \alpha_0\ll \nu^*$ by the choice of the initial data, then  
$$\|\varepsilon\|_{L^2}^2\lesssim |b|^{\frac{1}{2}}+\Big|\int \int u_0^2-\int \int Q^2\Big|\lesssim \delta(\alpha^*)+\alpha_0\ll \nu^*$$
$$\mathcal{N}_6(s)\lesssim \mathcal{N}_6(0)+|b^3(0)|+|b^3(s)|\lesssim \delta(\alpha^*)\ll \nu^*.$$

By improving $\textbf{(H1)}$ \eqref{H1cond} on $[0,t^{**}],$ we get that $t^{**}=t^*$ by a continuity argument. 
\end{proof} 

\begin{remark}
As $|J(s)|\lesssim \mathcal{N}_2(s)^{\frac{1}{2}}\lesssim \mathcal{N}_6(s)^{\frac{1}{2}}\lesssim \delta(\nu^*)$ we get that $\tilde{\lambda}(s)>0$ for all $s\leq s^*.$
\end{remark}

Now, we discuss the cases $t^{*}<T$ and $t^*=T$, the latter having two subcases as $T<+\infty$ or $T=+\infty.$

\subsection{The case \texorpdfstring{$t^{*}=T$}{Lg}} 

We start by stating a Lemma that will be used throughout this analysis. 

\begin{lemma} \label{convergencelemma}
Suppose $f:[0,+\infty)\rightarrow \mathbb{R}$ with $\int_{0}^{\infty}|f'(t)|dt<+\infty,$ then $\lim_{t\rightarrow+\infty}f(t)\rightarrow l\in \mathbb{R}.$ 
\end{lemma}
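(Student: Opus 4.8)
The statement to prove is the elementary real-analysis lemma: if $f:[0,+\infty)\to\mathbb{R}$ has $\int_0^{\infty}|f'(t)|\,dt<+\infty$, then $f(t)$ converges to a finite limit as $t\to+\infty$. The natural approach is to invoke the Cauchy criterion for the existence of a limit at infinity. The plan is to show that for every $\varepsilon>0$ there exists $t_0$ such that $|f(t)-f(s)|<\varepsilon$ whenever $s,t\ge t_0$, and then conclude that $\lim_{t\to+\infty}f(t)$ exists in $\mathbb{R}$.

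First I would note that, implicitly, $f$ is assumed absolutely continuous (or at least $C^1$, as it is applied in this paper), so that the fundamental theorem of calculus applies: for $0\le s\le t$,
\[
f(t)-f(s)=\int_s^t f'(\tau)\,d\tau,
\]
and hence $|f(t)-f(s)|\le\int_s^t|f'(\tau)|\,d\tau\le\int_s^{\infty}|f'(\tau)|\,d\tau$. Next, since $g(s):=\int_s^{\infty}|f'(\tau)|\,d\tau$ is the tail of a convergent integral, $g(s)\to 0$ as $s\to+\infty$: given $\varepsilon>0$, pick $t_0$ with $g(t_0)<\varepsilon$. Then for all $s,t\ge t_0$ we get $|f(t)-f(s)|\le g(\min(s,t))\le g(t_0)<\varepsilon$, which is exactly the Cauchy condition at infinity.

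To finish, I would take any sequence $t_n\to+\infty$; the previous estimate shows $(f(t_n))_n$ is a Cauchy sequence in $\mathbb{R}$, hence converges to some $l\in\mathbb{R}$, and a standard argument shows the limit is independent of the chosen sequence and equals $\lim_{t\to+\infty}f(t)$. Alternatively, one can write directly $f(t)=f(0)+\int_0^t f'(\tau)\,d\tau$ and observe that $\int_0^{\infty}f'(\tau)\,d\tau$ converges absolutely, hence converges, so $f(t)\to f(0)+\int_0^{\infty}f'(\tau)\,d\tau=:l$.

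There is no real obstacle here — the statement is a textbook fact and the only mild subtlety is making explicit the regularity hypothesis under which the fundamental theorem of calculus is valid (which in every application in this paper is guaranteed, since the relevant functions such as $b/\tilde\lambda^c$ are $C^1$). I would therefore keep the proof to a few lines, emphasizing the tail-of-integral estimate and the Cauchy criterion.
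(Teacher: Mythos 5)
Your proof is correct and is the standard argument; the paper itself states this lemma without proof, treating it as an elementary fact. Your write-up (fundamental theorem of calculus, tail estimate, Cauchy criterion — or equivalently absolute convergence of $\int_0^\infty f'$), together with the remark on the implicit regularity hypothesis, is exactly what is needed.
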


We deal with the case $t^{*}=T.$ By Lemma \ref{bootstrap}, we have that \eqref{H1cond} holds up to time $t^{**}=t^{*}=T.$ By a change of variables we have $s^{**}=s^{*}=+\infty.$ By \eqref{eq:blambdacinequality} for $s_1=0$ and $s_2=s$ and using \eqref{H1cond} we have 
$$\frac{b(0)}{\tilde{\lambda}^c(0)}(1-b(0))-\frac{\mathcal{N}_3(0)}{\tilde{\lambda}^c(0)}\lesssim \frac{b(s)}{\tilde{\lambda}^c(s)}\lesssim \frac{b(0)}{\tilde{\lambda}^c(0)}(1+b(0))+\frac{\mathcal{N}_3(0)}{\tilde{\lambda}^c(0)}.$$
In particular, it means $\limsup_{s\rightarrow+\infty}\frac{|b(s)|}{\tilde{\lambda}^c(s)}$ is finite. By \eqref{eq:blambdacinequality}, 
$$\int_{0}^{+\infty}\Big|\frac{d}{ds}\Big\{\frac{b}{\tilde{\lambda}^{c}}\Big\}\Big|ds\lesssim \frac{b^2(0)}{\tilde{\lambda}^c(0)}+\frac{\mathcal{N}_3(0)}{\tilde{\lambda}^c(0)}+\limsup_{s\rightarrow+\infty}{\frac{b^2(s)}{\tilde{\lambda}^c(s)}}<+\infty,$$
which by Lemma \ref{convergencelemma} we get that 
$$\frac{b(s)}{\tilde{\lambda}^{c}(s)}\rightarrow c_0 \in \mathbb{R} \mbox{ as } s\rightarrow +\infty.$$

We define $s_{c}$ such that for all $s\geq s_{c},$ then 
$$\Big|\frac{b(s)}{\tilde{\lambda}^{c}(s)}- c_0\Big|\leq \frac{|c_0|}{3}.$$
Equivalently, we call $t_{c}$ such that $s_{c}=s(t_{c}).$

\textit{Claim.} 
In this case we have $c_0\geq 0.$ 
\begin{proof}
Suppose, by contradiction, that $c_0<0.$ 
By the definition of $s_c$ we get that for all $s\geq s_c$ we get $b(s)<0.$
From \eqref{lambdatildeineq}, we observe that for $s\geq s_c,$
$$\frac{(\tilde{\lambda})_s}{\tilde{\lambda}}(s)+b(s)\geq -Cb^2(s)-C\mathcal{N}_{2}(s),\mbox{ so }\frac{(\tilde{\lambda})_s}{\tilde{\lambda}}(s)\geq -b(s)(1+Cb(s))-C\mathcal{N}_{2}(s)\geq -C\mathcal{N}_{2}(s)$$
where we take $\nu^*$ such that $|Cb(s)|\leq C\nu^*\ll1.$
Integrating in time the inequality and using \eqref{eq:Dispersiveestimates1}, \eqref{H1cond}, we get for $s_c\leq s_1\leq s_2,$
$$\int_{s_1}^{s_2}\frac{(\tilde{\lambda})_s}{\tilde{\lambda}}\geq -C\int_{s_1}^{s_2}\mathcal{N}_{2}(s)ds\geq -\delta(\nu^*),\mbox{ thus }\log\Big(\frac{\tilde{\lambda}(s_2)}{\tilde{\lambda}(s_1)}\Big)\geq -\delta(\nu^*) \implies \tilde{\lambda}(s_2)\geq \frac{1}{2}\tilde{\lambda}(s_1).$$
Using that $\Big|\frac{\tilde{\lambda}(s)}{\lambda(s)}-1\Big|= |J(s)|\leq \mathcal{N}_2(s)^{\frac{1}{2}}\leq \delta(\nu^*)$
we get that 
\begin{equation}\label{almostmonotonicitylambda1}
\lambda(s_2)\geq \frac{1}{2}\lambda(s_1) \mbox{ for }s_2>s_1\geq s_c.   
\end{equation}

We divide the \eqref{lambdatildeineq} by $\tilde{\lambda}^{c},$
$$\Big| \frac{(\tilde{\lambda})_s}{\tilde{\lambda}^{c+1}}+\frac{b}{\tilde{\lambda}^c}\Big|\leq C\frac{b^2}{\tilde{\lambda}^{c}}+C\frac{\mathcal{N}_{2}}{\tilde{\lambda}^{c}}$$ 
which together with  $\frac{(\tilde{\lambda})_s}{\tilde{\lambda}^{c+1}}=\tilde{\lambda}^{2-c}(\tilde{\lambda})_t\frac{1}{(1-J(t))^3},$ we get that 
\begin{equation*}
\begin{split}
\Big(-\frac{b}{\lambda^c}(1+Cb)-C\frac{\mathcal{N}_{2}}{\lambda^c}\Big)(1-J(t))^{3-c}\leq \tilde{\lambda}^{2-c}(\tilde{\lambda})_t\leq \Big(-\frac{b}{\lambda^c}(1+Cb)+C\frac{\mathcal{N}_{2}}{\lambda^c}\Big)(1-J(t))^{3-c}.
\end{split}
\end{equation*}
From \eqref{H1cond} we notice that $\frac{99}{100}\leq 1-J(t), 1+Cb(t)\leq \frac{101}{100}$  and by definition of $t_c,$ for $t_c\leq t$ we have 
\begin{equation}\label{boundsonb1}
 \frac{3c_0}{2}\leq \frac{b(t)}{\lambda^c(t)}\leq \frac{c_0}{2}<0.    
\end{equation}
Therefore we have 
$$\frac{|c_0|}{2}-C\frac{\mathcal{N}_{2}}{\lambda^c}\leq \tilde{\lambda}^{2-c}(\tilde{\lambda})_t\leq \frac{3|c_0|}{2}+C\frac{\mathcal{N}_{2}}{\lambda^c}$$
and by integrating in time we get 
$$\frac{|c_0|}{2}(t-t_c)-C\int_{t_c}^{t}\frac{\mathcal{N}_{2}}{\lambda^c}\leq \tilde{\lambda}^{3-c}(t)-\tilde{\lambda}^{3-c}(t_c)\leq \frac{3|c_0|}{2}(t-t_c)+C\int_{t_c}^{t}\frac{\mathcal{N}_{2}}{\lambda^c}.$$
From \eqref{almostmonotonicitylambda1} we get 
$$\int_{t_c}^t\frac{\mathcal{N}_{2}}{\lambda^c}=\int_{s_c}^{s}\lambda^{3-c} \mathcal{N}_{2}\lesssim \lambda^{3-c}(s)(\mathcal{N}_3(s_c)+|b^3(s)|+|b^3(s_c)|)\lesssim \delta(\nu^*)\lambda^{3-c}(t)$$
hence 
$$\Big(\frac{|c_0|}{2}(t-t_c)+\tilde{\lambda}^{3-c}(t_c)\Big)^{\frac{c}{3-c}}\leq \lambda^c(t) \leq \Big(\frac{3|c_0|}{2}(t-t_c)+\tilde{\lambda}^{3-c}(t_c))\Big)^{\frac{c}{3-c}}.$$
Together with \eqref{boundsonb1} we have 
$$-\frac{3|c_0|}{2}\Big(\frac{3|c_0|}{2}(t-t_c)+\tilde{\lambda}^{3-c}(t_c)\Big)^{\frac{c}{3-c}}\leq b(t) \leq -\frac{|c_0|}{2}\Big(\frac{|c_0|}{2}(t-t_c)+\tilde{\lambda}^{3-c}(t_c)\Big)^{\frac{c}{3-c}}$$
so $|b(t)|\leq C(t).$ From \eqref{eq:massconservation} we have that 
$$\|\varepsilon(t)\|_{L^2}^2\leq C|b(t)|^{\frac{1}{2}}+C\Big|\int \int u_{0}^2-\int \int Q^2\Big|\lesssim \tilde{C}(t)$$
and by the energy conservation law \eqref{eq:energyconservation}, we have 
$$\|\nabla \varepsilon(t)\|_{L^2}^{2}\lesssim b^2(t)+\|\varepsilon(t)\|_{L^2}^2+\lambda^2(t)|E_0|+|b(t)||(P,Q)|\lesssim \tilde{C}(t)$$
therefore $\|\varepsilon\|_{H^1}^2\lesssim \tilde{C}(t).$

Now,  $t^*=T<\infty$ cannot happen since $\|u(t)\|_{H^1}$ exists beyond $T$ in this case as the equation cannot admit type II blow up from the local well-posedness theory, contradiction with the definition of $T.$ If $t^*=T=+\infty,$ we get that $b(t)\rightarrow -\infty $ as $t \rightarrow \infty$. Nevertheless, since $t^*=\infty$, then $u(t)\in \mathcal{T}_{\alpha^*},$ for all $t$, which implies that $|b(t)|\lesssim \delta(\alpha^*)$ for all $t$. This gives a contradiction. 
\end{proof}

\subsubsection{Blow Up - The case \texorpdfstring{$c_0>0.$}{Lg}}

By the definition of $s_c$ we get that for all $s\geq s_c$ we get $b(s)>0.$
From \eqref{lambdatildeineq}, we observe that for $s\geq s_c,$
$$\frac{(\tilde{\lambda})_s}{\tilde{\lambda}}(s)+b(s)\leq Cb^2(s)+C\mathcal{N}_{2}(s),\mbox{ so }\frac{(\tilde{\lambda})_s}{\tilde{\lambda}}(s)\leq -b(s)(1-Cb(s))+C\mathcal{N}_{2}(s)\leq C\mathcal{N}_{2}(s)$$
where we take $\nu^*$ such that $|Cb(s)|\leq C\nu^*\ll1.$
Integrating in time the inequality and using \eqref{eq:Dispersiveestimates1}, \eqref{H1cond}, we get for $s_c\leq s_1\leq s_2,$
$$\int_{s_1}^{s_2}\frac{(\tilde{\lambda})_s}{\tilde{\lambda}}\leq C\int_{s_1}^{s_2}\mathcal{N}_{2}(s)ds\leq \delta(\nu^*),\mbox{ thus }\log\Big(\frac{\tilde{\lambda}(s_2)}{\tilde{\lambda}(s_1)}\Big)\leq \delta(\nu^*) \implies \tilde{\lambda}(s_2)\leq 2\tilde{\lambda}(s_1).$$
Using that $\Big|\frac{\tilde{\lambda}(s)}{\lambda(s)}-1\Big|= |J(s)|\leq \mathcal{N}_2(s)^{\frac{1}{2}}\leq \delta(\nu^*)$
we get that 
\begin{equation}\label{almostmonotonicitylambda2}
\lambda(s_2)\leq 2\lambda(s_1) \mbox{ for }s_c\leq s_1\leq s_2.   
\end{equation}

We divide the \eqref{lambdatildeineq} by $\tilde{\lambda}^{c},$
$$\Big| \frac{(\tilde{\lambda})_s}{\tilde{\lambda}^{c+1}}+\frac{b}{\tilde{\lambda}^c}\Big|\leq C\frac{b^2}{\tilde{\lambda}^{c}}+C\frac{\mathcal{N}_{2}}{\tilde{\lambda}^{c}}$$ 
which together with  $\frac{(\tilde{\lambda})_s}{\tilde{\lambda}^{c+1}}=\tilde{\lambda}^{2-c}(\tilde{\lambda})_t\frac{1}{(1-J(t))^3}$ and we get that 
\begin{equation*}
\begin{split}
\Big(\frac{b}{\lambda^c}(1-Cb)-C\frac{\mathcal{N}_{2}}{\lambda^c}\Big)(1-J(t))^{3-c}\leq -\tilde{\lambda}^{2-c}(\tilde{\lambda})_t\leq \Big(\frac{b}{\lambda^c}(1+Cb)+C\frac{\mathcal{N}_{2}}{\lambda^c}\Big)(1-J(t))^{3-c}.
\end{split}
\end{equation*}
From \eqref{H1cond} we notice that $\frac{99}{100}\leq 1-J(t), 1+Cb(t)\leq \frac{101}{100}$  and for $t_c\leq t$ we have 
\begin{equation}\label{boundsonb2}
 0<\frac{c_0}{2}\leq \frac{b(t)}{\lambda^c(t)}\leq \frac{3c_0}{2}.    
\end{equation}
Therefore we have 
$$\frac{c_0}{2}-C\frac{\mathcal{N}_{2}}{\lambda^c}\leq -\tilde{\lambda}^{2-c}(\tilde{\lambda})_t\leq \frac{3c_0}{2}+C\frac{\mathcal{N}_{2}}{\lambda^c}$$
and by integrating in time we get 
$$\frac{c_0}{2}(t-t_c)-C\int_{t_c}^{t}\frac{\mathcal{N}_{2}}{\lambda^c}\leq \tilde{\lambda}^{3-c}(t_c)-\tilde{\lambda}^{3-c}(t)\leq \frac{3c_0}{2}(t-t_c)+C\int_{t_c}^{t}\frac{\mathcal{N}_{2}}{\lambda^c}.$$
From \eqref{almostmonotonicitylambda2} we get 
$$\int_{t_c}^t\frac{\mathcal{N}_{2}}{\lambda^c}=\int_{s_c}^{s}\lambda^{3-c} \mathcal{N}_{2}\lesssim \lambda^{3-c}(s_c)(\mathcal{N}_3(s_c)+|b^3(s)|+|b^3(s_c)|)\lesssim \delta(\nu^*)\lambda^{3-c}(t_c)$$
hence 
$$ \lambda^{3-c}(t) \lesssim -\frac{c_0}{2}(t-t_c)+\tilde{\lambda}^{3-c}(t_c).$$

We get that if $T=+\infty$ then $t\rightarrow \infty$ implies $\tilde{\lambda}(t)\rightarrow -\infty,$ contradiction with $\tilde{\lambda}>0$. Thus $T<+\infty.$
This means, by the Cauchy theory for the ZK equation that we have blow-up at $T$, which implies $\lambda(t)\rightarrow 0$ as $t\rightarrow T.$
Since $b(t)\leq \frac{3c_0}{2}\lambda^c(t)$ for $t\geq t_c,$ it implies $b(t)\rightarrow 0 $ as $t\rightarrow T.$ 
Also, from the dispersive bound \eqref{eq:Dispersiveestimates2}, we have for sufficiently small $\nu^*,$ there exists some $\eta:=\eta(\nu^*)\leq \frac{1}{4},$ such that for $s\geq s_c$ we have 
\begin{equation}\label{eq:estimateN6}
\frac{\mathcal{N}_6(s)}{\lambda^{(3-\eta)c(s)}}\lesssim \frac{\mathcal{N}_6(s_c)}{\lambda^{(3-\eta)c}(s_c)}+\frac{b^3(s)}{\lambda^{(3-\eta)c}(s)}+\frac{b^3(s_c)}{\lambda^{(3-\eta)c}(s_c)}\lesssim 1+\lambda^{\eta c}(s)+\lambda^{\eta c}(s_c)\lesssim 1
\end{equation}
so $\mathcal{N}_6(s)\lesssim \lambda^{(3-\eta)c}(s)$ for $s\geq s_c$, hence by compactness we get 
\begin{equation}\label{sharpN}
\mathcal{N}_6(s)\lesssim \lambda^{(3-\eta)c}(s)\mbox{ for all }s\geq 0.
\end{equation}
 This implies $\mathcal{N}_6(t)\rightarrow 0$ as $t \rightarrow T.$ 
By the conservation of energy \eqref{eq:energyconservation}, we have 
$$\|\nabla\varepsilon(t)\|_{L^2}^2\lesssim |b(t)|+\lambda^2(t)|E_0|+\mathcal{N}_6(t)\rightarrow 0 \text{ as }t\rightarrow T.$$

We denote $l_0=c_{0}^{\frac{1}{3-c}}>0$ and since $|J(t)|\lesssim \mathcal{N}_3(t)^{\frac{1}{2}}\rightarrow 0$, we get that 
$$\frac{b(t)}{\lambda^c(t)}\rightarrow l_{0}^{3-c} \text{ as } t\rightarrow T \text{ with }l_0\lesssim \delta(\alpha_0).$$

By the $\tilde{\lambda}-$inequality \eqref{lambdatildeineq} we get that 
$$\Big|\frac{(\tilde{\lambda})_s}{\tilde{\lambda}}+b\Big|=\Big|(\tilde{\lambda})_t(\tilde{\lambda})^{2-c}\frac{1}{(1-J(t))^{3-c}}+\frac{b}{\lambda^c}\Big|\lambda^c$$
Using that $|(1-J(t))^{3-c}-1|\lesssim J(t)^{3-c},$
\begin{equation*}
\begin{split}
\Big| (\tilde{\lambda})_t\tilde{\lambda}^{2-c}+\frac{b}{\lambda^c}\Big|&\lesssim \Big| (\tilde{\lambda})_t\tilde{\lambda}^{2-c}+\frac{b}{\lambda^c}(1-J(t))^{3-c}\Big|+\frac{b}{\lambda^c}J(t)^{3-c}\\
&\lesssim  \frac{\mathcal{N}_2+|b|^3}{\lambda^c}(1-J(t))^{3-c}+\frac{b}{\lambda^c}J(t)^{3-c}\\
&\lesssim \frac{\mathcal{N}_2+|b|^3+|b|J(t)^{3-c}}{\lambda^c}
\end{split}
\end{equation*}
Using that $\lim_{t\rightarrow T}\tilde{\lambda}(t)=0$ and integrating the above inequality we get 
\begin{equation*}
\begin{split}
\Big|\tilde{\lambda}(t)^{3-c}-\int_{t}^{T}\frac{b}{\lambda^c}\Big|&\lesssim \int_{t}^{T}\frac{\mathcal{N}_2+o(|b|)}{\lambda^c}dt\\
&\lesssim \int_{s}^{\infty}\lambda^{3-c}(s')\mathcal{N}_2(s')ds'+o(|T-t|)\\
&\lesssim \lambda^{3-c}(t)\int_{s}^{\infty}\mathcal{N}_2(s')ds'+o(|T-t|)
\end{split}
\end{equation*}
Therefore 
$$\Big|\frac{\tilde{\lambda}(t)^{3-c}}{T-t}-\frac{\int_{t}^{T}\frac{b}{\lambda^c}}{T-t}\Big|\lesssim o(1)+o\Big(\frac{\lambda^{3-c}(t)}{T-t}\Big)$$
so 
$$\frac{\int_{t}^{T}\frac{b}{\lambda^c}}{T-t}+o_{t\rightarrow T}(1)\lesssim \frac{\lambda^{3-c}(t)}{T-t}[(1-J(t))^{3-c}+o_{t\rightarrow T}(1)]\lesssim \frac{\int_{t}^{T}\frac{b}{\lambda^c}}{T-t}+o_{t\rightarrow T}(1).$$
Taking $t \rightarrow T$ and using that $\lim_{t\rightarrow T}\frac{b}{\lambda^c}= l_{0}^{3-c}, \lim_{t\rightarrow T}J(t)=0$ we obtain 
\begin{equation}\label{eq:tlawforlambda}
\lim_{t\rightarrow T}\frac{\lambda^{3-c}(t)}{T-t}=l_{0}^{3-c} \implies \lim_{t\rightarrow T}\frac{\lambda(t)}{(T-t)^{\frac{1}{3-c}}}=l_{0}
\end{equation}
which implies 
\begin{equation}\label{eq:tlawforb}
\lim_{t\rightarrow T}\frac{b(t)}{(T-t)^{\frac{c}{3-c}}}=l_{0}^3.
\end{equation}
Since 
$$\Big|\frac{(x_1)_s}{\lambda}-1\Big|\lesssim \mathcal{N}_2(s)^{\frac{1}{2}}+b^2(s)\rightarrow 0 \text{ as }s \rightarrow \infty$$
we get
$$(x_1)_t=\frac{1}{\lambda^2}\frac{(x_1)_s}{\lambda}=\frac{1}{\lambda^2}(1+o_{t\rightarrow T}(1))=\frac{1}{l_{0}^{2}(T-t)^{\frac{2}{3-c}}}(1+o_{t\rightarrow T}(1))$$
implying 
\begin{equation}\label{eq:tlawforx1}
x_1(t)=
\begin{cases}
(1+o_{t\rightarrow T}(1))\tilde{x}_{\infty} \mbox{ for some }\tilde{x}_{\infty}\in \mathbb{R}, &\text{ if } c<1,\\ 
-\frac{1}{l_{0}^2}\ln(T-t)(1+o_{t\rightarrow T}(1)), &\text{ if }c=1,\\
\frac{1}{l_{0}^2}(T-t)^{\frac{1-c}{3-c}}(1+o_{t\rightarrow T}(1)), &\text{ if }c> 1,
\end{cases}
\end{equation}
From Lemma \ref{sharporthogonalities},  the differential equation for $x_2$ gives that 
$$\int_{0}^{\infty}|(x_2-\lambda \tilde{J})_s|ds\lesssim \int_{0}^{\infty}\lambda(s)(\mathcal{N}_{2}(s)+b^2(s))ds<+\infty, $$ hence by Lemma \ref{convergencelemma}, $x_2(s)-\lambda(s)\tilde{J}(s)$ has a finite limit as $s\rightarrow +\infty,$ and since $ \tilde{J}(s)\lesssim \mathcal{N}_{2}(s)^{\frac{1}{2}}\rightarrow 0$ as $s\rightarrow +\infty,$ we conclude that 
\begin{equation}\label{eq:tlawforx2}
x_2(t)\rightarrow x_{\infty} \mbox{ as }t\rightarrow T.
\end{equation}

In the $s$ variable, we have that 
$$s=\frac{3-c}{l_{0}^3}\frac{1}{(T-t)^{\frac{c}{3-c}}}(1+o(1)), b(s)=\frac{1}{cs}(1+o(1)), \lambda(s)=\frac{(\frac{3-c}{c}l_{0}^{3-c})^{\frac{1}{c}}}{s^{\frac{1}{c}}}(1+o(1))$$
\[
x_1(s)=
\begin{cases}
(1+o(1))\tilde{x}_{\infty}, &\text{ if }c<1,\\
(1+o(1))(3-c)^{\frac{c-1}{c}}l_{0}^{\frac{3-c}{c}}\ln s, &\text{ if }c=1,\\
(1+o(1))(3-c)^{\frac{c-1}{c}}l_{0}^{\frac{3-c}{c}}s^{\frac{c-1}{c}}, &\text{ if }c> 1,
\end{cases}
\]
and $x_2(s)\rightarrow x_{\infty}$ as $s \rightarrow \infty.$
We show from \eqref{eq:massconservation} and \eqref{initialdata} that 
\begin{equation}\label{eq:estimateL2}
\|\varepsilon(t)\|_{L^2}\lesssim \delta(\alpha_0)
\end{equation}
and from the conservation of the energy \eqref{eq:energyconservation} and \eqref{H1cond} we have 
$$\|\nabla \varepsilon(t)\|^{2}_{L^2}\lesssim \lambda^2(t)|E_0|+|b(t)|+\mathcal{N}_2(t)\lesssim \lambda^c(t)\delta(\alpha_0).$$
Since from \eqref{sharpN}and the fact that $c<2$ we get 
$$\lambda^c(t)\lesssim\lambda^c(t)-\lambda^2(t)-\lambda^{2c}(t)-\lambda^{\frac{11c}{4}}(t)\lesssim b(t)-\lambda^2(t)-b^2(t)-\mathcal{N}_2(t)\lesssim \|\nabla \varepsilon(t)\|^{2}_{L^2},$$
we conclude that  
\begin{equation}\label{eq:estimateL2derivative}
\|\nabla\varepsilon(t)\|_{L^2}\sim \lambda^{\frac{c}{2}}(t)\mbox{ as }t \rightarrow T.
\end{equation}

Finally, we conclude from \eqref{eq:estimateL2derivative} (on the right we use the variables $y_1,y_2$), 
$$\|\nabla u(t,x_1,x_2)\|_{L^{2}_{x_1x_2}}=\frac{O(|b(t)|^{1-\frac{\gamma}{2}})+\|\nabla \varepsilon(t,y_1,y_2)\|_{L^{2}_{y_1y_2}}+\|\nabla Q\|_{L^{2}_{y_1y_2}}}{\lambda(t)}$$
therefore 
$$\lim_{t\rightarrow T}(T-t)^{\frac{1}{3-c}}\|\nabla u(t)\|_{L^2}=\frac{\|\nabla Q\|_{L^2}}{l_0(u_0)}.$$

\subsubsection{Asympotic Stability - The case \texorpdfstring{$c_0=0.$}{Lg}}

From \eqref{eq:blambdacinequality}, we have for $0\leq s\leq s'<+\infty,$ 
$$\Big|\frac{b(s')}{\tilde{\lambda}^c(s')}-\frac{b(s)}{\tilde{\lambda}^c(s)}\Big|\lesssim \frac{b^2(s')}{\tilde{\lambda}^c(s')}+\frac{b^2(s)}{\tilde{\lambda}^c(s)}+\frac{\mathcal{N}_3(s)}{\tilde{\lambda}^c(s)}.$$
 Using that $c_0=0$ and \eqref{H1cond}, therefore by letting $s'\rightarrow+\infty$ we obtain that there exists $C^*>0$ such that 
 \begin{equation}\label{Ndominatingb}
|b(s)|\leq C^*\mathcal{N}_3(s) \mbox{ for all }s\geq 0.   
 \end{equation}

 From \eqref{lambdatildeineq}, we have 
 $$\Big|\frac{\tilde{\lambda}_s}{\tilde{\lambda}}\Big|\lesssim |b(s)|+\mathcal{N}_{2}(s),$$
 therefore, by \eqref{Ndominatingb} and \eqref{eq:Dispersiveestimates1}, we obtain 
$$\int_{0}^{s_1}\Big|\frac{\tilde{\lambda}_s}{\tilde{\lambda}}\Big|\lesssim \int_{0}^{s_1}|b(s)|+\mathcal{N}_2(s)\lesssim \int_{0}^{s_1}\mathcal{N}_3(s) \lesssim \mathcal{N}_4(0)+|b^3(0)|+|b^3(s)|\lesssim\delta(\nu^*).$$
Hence, from $\log\Big(\frac{\tilde{\lambda}(s)}{\tilde{\lambda}(0)}\Big)\lesssim \delta(\nu^*)$ together with the fact that $\Big|\frac{\tilde{\lambda}(s)}{\lambda(s)}-1\Big|\lesssim \mathcal{N}_{2}^{\frac{1}{2}}(s)\lesssim \delta(\nu^*)$ we get 
$$\Big|\frac{\lambda(s)}{\lambda(0)}-1\Big|\lesssim \delta(\nu^*) \mbox{ for all }s\geq 0.$$
The estimates \eqref{initialdata} imply that 
\begin{equation}\label{lambdacloseto1}
|\lambda(s)-1|\lesssim \delta(\nu^*) \mbox{ for all }s\geq0.    
\end{equation}
 From the conservation of mass \eqref{eq:massconservation} and the fact that $\alpha_0\ll\nu^*$, we obtain 
 \begin{equation}\label{errorL2normsmall1}
   \|\varepsilon(s)\|_{L^2}\lesssim \delta(\nu^*) \mbox{ for all }s\geq 0.   
 \end{equation}

From the conservation of energy $\eqref{eq:energyconservation}$  
$$\|\nabla \varepsilon(s)\|_{L^2}^2\leq C|b(s)|^{\frac{3-\gamma}{2}}+2|b(s)||(P,Q)|+\lambda^2(s)E_0+C\|\varepsilon(s)\|_{L^2}^2+(\|\varepsilon(s)\|_{L^2}^{2}+|b(s)|^{\frac{1-\gamma}{2}})\|\nabla \varepsilon(s)\|_{L^2}^2$$
and from \eqref{H1cond}, \eqref{lambdacloseto1} and \eqref{errorL2normsmall1} we get that 
$$ \|\nabla \varepsilon(t)\|_{L^2}^2\leq 2E_0+1 \text{    } \forall t\in [0,T).$$
Hence, by \eqref{lambdacloseto1} and \eqref{H1cond}, we see that $\|u(t)\|_{H^1}$ is bounded uniformly on $[0,T)$, therefore $T=+\infty.$
 By \eqref{Ndominatingb} and \eqref{eq:Dispersiveestimates1}, we get 
 $$\int_{0}^{+\infty}\Big|\frac{\tilde{\lambda}_s}{\tilde{\lambda}}\Big|\lesssim \int_{0}^{+\infty}|b(s)|+\mathcal{N}_2(s)\lesssim \int_{0}^{+\infty}\mathcal{N}_3(s) \lesssim \mathcal{N}_4(0)+|b^3(0)|+\limsup_{s\rightarrow +\infty}|b^3(s)|<+\infty.$$
By Lemma \ref{convergencelemma}, we obtain $\lim_{s\rightarrow+\infty}\tilde{\lambda}(s)=\lim_{t\rightarrow+\infty}\tilde{\lambda}(t)=\lambda_{\infty}\in \mathbb{R}$ and by \eqref{lambdacloseto1}, $|\lambda_{\infty}-1|\lesssim \delta(\nu^*).$

Now, we claim that $b(t)\rightarrow 0$ as $t\rightarrow +\infty.$ From \eqref{Ndominatingb} and \eqref{eq:Dispersiveestimates1}, 
$$\int_{0}^{\infty}|b_t|dt=\int_{0}^{\infty}|b_s|ds\lesssim \int_{0}^{\infty}(b^2+\mathcal{N}_{1})ds\lesssim \mathcal{N}_6(0)+|b^3(0)|+\limsup_{s\rightarrow +\infty}|b^3(s)|\lesssim \delta(\nu^*)$$
together with Lemma \ref{convergencelemma} implies that $b(t)\rightarrow l$ and since $\int_{0}^{\infty}b^2(t)dt<\infty,$ we get $l=0,$ thus 
\begin{equation}\label{bdecay1}
b(t)\rightarrow 0 \mbox{ as }t\rightarrow +\infty.    
\end{equation}

Since \eqref{eq:Dispersiveestimates1} for $i=6$ and \eqref{H1cond} yields
$$\int_{0}^{\infty}\int\int (|\nabla \varepsilon|^2+\varepsilon^2)(s)(\phi_{6,B})_{y_1}ds \lesssim \delta(\nu^*)$$
there exists $t_n\rightarrow +\infty$ such that $\int \int(|\nabla \varepsilon|^2+\varepsilon^2)(t_n)(\phi_{6,B})_{y_1}\rightarrow 0$ 
and as 
$$\int\int \varepsilon^2(t_n)\phi_{6,B}\lesssim \Big(\int \int \varepsilon^2(t_n)(\phi_{6,B})_{y_1}\Big)^{\frac{1}{2}}\Big(\int \int_{\{y_1>0\}}y_{1}^{7}\varepsilon^2(t_n)\Big)^{\frac{1}{2}}\lesssim \Big(\int \int\varepsilon^2(t_n)(\phi_{6,B})_{y_1}\Big)^{\frac{1}{2}}\rightarrow 0,$$
$$\int \int |\nabla \varepsilon|^2(t_n)\psi_B\leq \int \int|\nabla \varepsilon|^2(t_n)(\phi_{6,B})_{y_1}\rightarrow 0,$$
and hence putting them together we get $\mathcal{N}_6(t_n)\rightarrow 0$ as $n\rightarrow \infty.$
Using this together with $b(t)\rightarrow 0$ and \eqref{eq:Dispersiveestimates1} we have 
\begin{equation}\label{N6decay}
\mathcal{N}_{6}(t)\lesssim \mathcal{N}_6(t_n)+|b^3(t_n)|+|b^3(t)|\rightarrow 0 \text{ as }n\rightarrow \infty \mbox{ and }t \rightarrow +\infty.  
\end{equation}

Now, since $\lim_{t\rightarrow+\infty}\tilde{\lambda}(t)=\lambda_{\infty}$ and from \eqref{N6decay}, we get $\lim_{t\rightarrow+\infty}\lambda(t)=\lambda_{\infty}.$

From \eqref{eq:modulatedcoefficients},
$$\Big| \frac{(x_1)_s}{\lambda}-1\Big|\lesssim \mathcal{N}_{2}^{\frac{1}{2}}(s)+b^2(s)\rightarrow 0 \text{ as } s\rightarrow +\infty$$
so 
$$(x_1)_t=\frac{1}{\lambda^2}\frac{(x_1)_s}{\lambda}=\frac{1+o(1)}{(\lambda_{\infty})^2}$$
so $x_1(t)=\frac{t}{(\lambda_{\infty})^2}(1+o(1)).$
From Lemma \ref{sharporthogonalities},  the sharp modulation equation for $x_2$ gives that 
$$\int_{0}^{\infty}|(x_2-\lambda \tilde{J})_s|ds\lesssim \int_{0}^{\infty}\lambda(s)(\mathcal{N}_{2}(s)+b^2(s))ds<+\infty, $$ hence by Lemma \ref{convergencelemma}, $x_2(s)-\lambda(s)\tilde{J}(s)$ has a finite limit as $s\rightarrow +\infty.$ Since $ \lambda(s)\tilde{J}(s)\lesssim \mathcal{N}_{2}(s)^{\frac{1}{2}}\rightarrow 0$ as $s\rightarrow +\infty,$ we conclude that $x_2(t)\rightarrow x_{\infty}\in \mathbb{R}$.

We get from $\eqref{eq:massconservation}$ and $\eqref{eq:energyconservation}$, for all $t \in [0,+\infty),$ 
\begin{equation} \label{eq:Solitonconservationlaws}
\|\varepsilon(t)\|_{L^2}\lesssim \delta(\alpha_0), \mbox{  }\|\nabla \varepsilon(t)\|_{L^2}^{2}\lesssim |b(t)|+\mathcal{N}_2(t)+\lambda^2(t)|E_0|.
\end{equation}

We observe that we have  asymptotic stability since 
$$\|\lambda(t)u(t,\lambda(t)\cdot+x_1(t),\lambda(t)\cdot+x_2(t))-Q\|_{H^{1}_{loc}}\lesssim \|\varepsilon(t)\|_{H^1_{loc}}+|b(t)|^{1-\frac{\gamma}{2}}\lesssim \mathcal{N}_{2}^{\frac{1}{2}}(t)+|b(t)|^{1-\frac{\gamma}{2}}\rightarrow 0 $$
as $t\rightarrow +\infty.$

Therefore, we have
$$\|\lambda_{\infty}u(t,\lambda_{\infty}\cdot+x_1(t),\lambda_{\infty}\cdot+x_{\infty})-Q\|_{H^{1}_{loc}}\rightarrow 0$$ as $t\rightarrow +\infty.$

\subsection{Exit Case \texorpdfstring{$t^*<T$}{Lg}}

In this subsection, we deal with the situation that the solution exits the modulated tube $\mathcal{T}_{\alpha^*}$ before the maximal time of existence. By the definition of $t^*,$ we have 
\begin{equation}\label{behavioratt*}
(\alpha^*)^2=\inf_{\lambda_1>0, x_1, x_2 \in \mathbb{R}}\Big|\Big|u(t^*, \cdot, \cdot)-\frac{1}{\lambda_1}Q\Big(\frac{\cdot -x_1}{\lambda_1}, \frac{\cdot-x_2}{\lambda_1}\Big)\Big|\Big|^{2}_{L^2}\lesssim |b(t^*)|^{1-\gamma}+\|\varepsilon(t^*)\|_{L^2}^{2}  
\end{equation}
and from \eqref{eq:massconservation} and since $u_0\in \mathcal{A}_{\alpha_0}$ we get that 
$$(\alpha^*)^2\lesssim |b(t^*)|^{\frac{1}{2}}+\alpha_0$$
and $\alpha_0\ll\alpha^*$ implies that 
\begin{equation} \label{lowerboundonb}
(\alpha^*)^4\lesssim |b(t^*)|. 
\end{equation}

\textit{Claim.} We have that $b(t^*)<0.$
\begin{proof}
Suppose, by contradiction, that $b(t^*)\geq 0.$
Define
\[
s_{0}^{*}=
\begin{cases}
0 \mbox{ if } b(s)>0 \mbox{ for all }s\in [0,s^*],\\
\sup\{s\leq s_{0}^{*}: b(s)=0\},
\end{cases}
\]
which implies $b(s_{0}^{*})=0$ and $b(s)\geq 0$ for $s\in [s_{0}^{*},s^*].$  
Using \eqref{eq:blambdacinequality} with $s_1=s$ and $s_2=s_{0}^{*}$ we obtain there exists $C^*>0$ such that 
\begin{equation}\label{Ndomb}
|b(s)|\lesssim C^*\mathcal{N}_{3}(s) \mbox{ for all }s\in [0,s_{0}^{*}].
\end{equation}

By repeating the analysis of \eqref{lambdacloseto1} using \eqref{Ndomb}, we obtain $|\lambda(s_{0}^{*})-1|\lesssim \delta(\nu^*).$ We observe that \eqref{eq:Dispersiveestimates2} with $s_1=0$ and $s_2=s \in [0,s^*]$ together with \eqref{initialdata} imply 
\begin{equation}\label{b1}
\frac{|b(s)|}{\tilde{\lambda}^c(s)}\lesssim \frac{|b(0)|}{\tilde{\lambda}^c(0)}+\frac{\mathcal{N}_3(0)}{\tilde{\lambda}^c(0)}\lesssim \delta(\alpha_0).
\end{equation}

Using that $b(s)\geq 0$ for $s\in [s_{0}^{*}, s^{*}],$ we can use the same analysis as in \eqref{almostmonotonicitylambda2} to show that $\tilde{\lambda}(s)\leq 2\lambda(s_{0}^{*})\leq 3$ for all $s\in [s_{0}^{*}, s^{*}]$. From this and \eqref{b1}, we get $|b(t^*)|=|b(s^*)|\lesssim \delta(\alpha_0).$

From  \eqref{lowerboundonb} we get that $\alpha^*\lesssim \delta(\alpha_0),$ contradiction with the choice of $\alpha_0\ll\alpha^*.$
\end{proof}

From the previous claim and \eqref{lowerboundonb}, we obtain $b(t^*)\lesssim -(\alpha^*)^4.$ Again, by \eqref{eq:Dispersiveestimates2}, \eqref{initialdata} and \eqref{H1cond} we get that $\frac{|b(s)|}{\lambda^c(s)}\lesssim \delta(\alpha_0),$ which implies 
$$\frac{(\alpha^*)^{\frac{4}{c}}}{\delta(\alpha_0)}\lesssim \lambda(t^*).$$

\section{Stability of Blow-Up} \label{Stability of Blow-Up}

Suppose that $v_0 \in H^1$ and let $v(t)$ is a solution through ZK flow with initial data $v_0$ that blows-up as $0<T_v<+\infty$ and $v(t)\in \mathcal{T}_{\alpha^{*}}$ for all $t\in [0,T_v),$ with $\alpha^{*}<\nu$ where $\nu$ is chosen like Lemma \ref{decompositionlemma}. Therefore we can demcopose $v(t)$ as in Lemma \ref{decompositionlemma} with $(\lambda,b, \varepsilon, x_1,x_2)$ such that the orthogonalities \eqref{eq:orthogonalities} hold on $[0,T_v).$ 

Now, take $u_{0,n} \in H^1\cap \mathcal{A}_{\alpha_0}$ a sequence such that $u_{0,n}\rightarrow v$ in $H^1.$ Denote $u_n$ the ZK flows with initial data $u_{0,n}$ and denote by $T_n$ its maximal time of existence. By the $H^1$ local theory, for all $T'<T_v$, there exists $N_1$ such that for all $n\geq N_1,$ $u_n$ exists on $[0,T']$ and $u_n(t)\rightarrow v(t)$ in $H^1$ for all $t\in [0,T'],$ hence $T_v\leq \liminf_{n\rightarrow +\infty}T_n.$ Also, using the triangle inequality, there exists $N_2\geq N_1$ such that for all $n\geq N_2,$ $u_n(t)\in \mathcal{T}_{\alpha^{**}}$ with $\alpha^{*}<\alpha^{**}<\nu,$ hence we can decompose as in Lemma \ref{decompositionlemma} with $(\lambda_n, b_n,x_{1,n}, x_{2,n},\varepsilon_n)$ such that the orthogonalities \eqref{eq:orthogonalities} hold for $\varepsilon_n$ and the estimates \eqref{eq:modulatedcoefficients} hold for $\lambda_n, b_n, x_{1,n},x_{2,n}.$

We state the following result that appears in \cite{FarahHolmerRoudenkoYang} in Lemma $5.4$ (see also \cite{MartelMerle00}, Appendix D for the gKdV case): 
 \begin{lemma} \label{weakstability}
 For a smooth function $\chi(x,y)$ on $\mathbb{R}^2$ with $\chi(x,y)=1$ on $|(x,y)|\leq 1$ and $\chi(x,y)=0$ on $|(x,y)|\geq 2$ set $1_{\leq k}(x,y)=\chi(\frac{x}{k},\frac{y}{k})$, $1_{\geq k}(x,y)=1-1_{\leq k}(x,y)$ for $k\in \mathbb{N}.$ 
 
 Let $v_{0,n}$ be a sequence of $H^1$ initial data such that $v_{0,n}\rightharpoonup v_0$ in $H^1$ as $n\rightarrow +\infty.$ Let $v(t)$, respectively $v_n(t)$ be the solutions under the ZK flow corresponding to $v_0,$ respectively $v_{0,n}.$ Assume that for all $n>0,$ $v_n(t)$ exists on $[0,T_1]$ for some $T_1>0,$ there exists $C>0$ such that $\max_{t\in [0,T_1]}\|v_n(t)\|_{H^1}\leq C$ and there exists $k\geq 0$ such that $\|v_n(0)1_{\geq k}\|_{L^2}\leq \frac{1}{2}\|Q\|_{L^2}.$ Furthermore, assume $v_0(t)$ exists on $[0,T_1]$ and $\|v(t)\|_{H^1}\leq C.$ Then 
 $$\forall t\in[0,T_1], v_n(t)\rightharpoonup v(t) \mbox{ in }H^1 \mbox{ as }n\rightarrow +\infty$$ 
 and 
  $$\forall t\in[0,T_1], v_n(t)1_{\leq k}\rightarrow v(t)1_{\leq k} \mbox{ in }L^2 \mbox{ as }n\rightarrow +\infty$$
 \end{lemma}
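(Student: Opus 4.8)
The plan is to adapt the classical weak-continuity / compactness argument for dispersive PDEs (going back to Martel--Merle for gKdV, and used in \cite{FarahHolmerRoudenkoYang} for ZK) to the present setting. First I would set up the difference $w_n(t) = v_n(t) - v(t)$, which satisfies a linear ZK-type equation with a source term built from $v_n$ and $v$, namely $\partial_t w_n + \partial_{x_1}(\Delta w_n) + \partial_{x_1}\big((v_n^2 + v_nv + v^2)w_n\big) = 0$. The uniform $H^1$ bound $\max_{[0,T_1]}\|v_n\|_{H^1}\le C$ together with $\|v\|_{H^1}\le C$ gives, after passing to a subsequence, a weak-$*$ limit $w_n(t)\rightharpoonup w(t)$ in $H^1$ for each fixed $t$ (using that $\{w_n\}$ is equicontinuous from $[0,T_1]$ into, say, $H^{-3}$ by the equation, so the weak limit is well-defined and continuous in $t$); the goal is to show $w(t)\equiv 0$, which yields the first conclusion $v_n(t)\rightharpoonup v(t)$ in $H^1$, and then to upgrade weak convergence to $L^2_{loc}$ strong convergence via a local mass-type monotonicity estimate.

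The key steps in order: (1) Establish the uniform-in-$n$, uniform-in-$t$ a priori bound and the equicontinuity in a weak topology, so that a diagonal/Arzel\`a--Ascoli argument produces a limit object $\tilde v(t)$ with $v_n(t)\rightharpoonup \tilde v(t)$ in $H^1$ for all $t\in[0,T_1]$; by weak-$*$ lower semicontinuity $\|\tilde v(t)\|_{H^1}\le C$. (2) Show $\tilde v$ solves the ZK equation with $\tilde v(0)=v_0$: here one uses that on compact space-time sets the nonlinearity $v_n^3$ converges strongly enough (Rellich--Kondrachov locally in space, combined with the equicontinuity in time) to pass to the limit in the weak formulation. (3) Invoke uniqueness for the $H^1$ Cauchy problem for ZK (from the local well-posedness theory quoted as Theorem in the excerpt) to conclude $\tilde v = v$, hence $v_n(t)\rightharpoonup v(t)$ in $H^1$ for all $t\in[0,T_1]$. (4) For the strong $L^2$ statement on $|(x,y)|\le k$: here the hypothesis $\|v_n(0)1_{\ge k}\|_{L^2}\le \tfrac12\|Q\|_{L^2}$ is crucial. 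One uses a truncated-mass functional $I_n(t) = \int v_n^2(t)\,\eta_k$ with a suitable spatial cutoff $\eta_k$ adapted to the left-travelling structure of ZK (the monotonicity comes from the $\partial_{x_1}\Delta$ term producing a good sign after integration by parts, exactly as in the monotonicity formulas of Section \ref{Monotonicity Formulas}, and the subcriticality of the localized mass — guaranteed by the $\tfrac12\|Q\|_{L^2}$ bound via a localized Gagliardo--Nirenberg/Weinstein inequality — controls the nonlinear term). This gives almost-monotonicity of the outer mass, hence no mass escapes to spatial infinity, so $\|v_n(t)1_{\le k}\|_{L^2}^2 \to \|v(t)1_{\le k}\|_{L^2}^2$; combined with $v_n(t)1_{\le k}\rightharpoonup v(t)1_{\le k}$ in $L^2$ (from the weak $H^1$ convergence), convergence of norms plus weak convergence in the Hilbert space $L^2$ upgrades to strong $L^2$ convergence.

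I expect the main obstacle to be step (4): propagating the localization and making the truncated-mass monotonicity genuinely sign-definite in two dimensions. In gKdV the cutoff is one-dimensional and the dispersive gain from $u_{xxx}$ is classical; for ZK the operator is $\partial_{x_1}\Delta$, so the cutoff must depend essentially on $x_1$ (translating with the soliton direction) while being harmless in $x_2$, and one must check that the error terms from $\Delta$ acting transversally, as well as the nonlinear term $\int v_n^3 \partial_{x_1}\eta_k$, can be absorbed. This is precisely the kind of estimate handled in Section \ref{Monotonicity Formulas} for $\varepsilon$, so the strategy is to reuse those weighted computations with $\psi,\phi$ replaced by $\eta_k$-type weights. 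A secondary technical point is ensuring the uniform $H^1$ bound is genuinely available for all large $n$ on the full interval $[0,T_1]$ — but this is part of the hypothesis, so it only needs to be invoked, not proved. Since the statement is quoted verbatim from \cite{FarahHolmerRoudenkoYang} (and its gKdV analogue from \cite{MartelMerle00}, Appendix D), the cleanest route is to reduce to those references after verifying the hypotheses match; I would present the proof as a citation with a short indication of the monotonicity ingredient, rather than reproducing the full argument.
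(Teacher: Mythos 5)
Your proposal is correct and matches the paper's treatment: the paper gives no proof of this lemma at all, stating it verbatim as Lemma 5.4 of \cite{FarahHolmerRoudenkoYang} (with the gKdV analogue in \cite{MartelMerle00}, Appendix D), which is precisely the reduction-to-citation you settle on. Your sketch of the underlying argument (weak limits via uniform $H^1$ bounds and equicontinuity, identification of the limit through uniqueness of the $H^1$ Cauchy problem, and the localized almost-monotone mass functional controlled by the $\tfrac12\|Q\|_{L^2}$ hypothesis to upgrade to local $L^2$ strong convergence) is consistent with the proofs in those references.
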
 
 From this we have the following corollary: 
 \begin{corollary}
 Assume all the conditions of the previous lemma hold. Moreover, suppose $u_n$ accepts a decomposition as in Lemma \ref{decompositionlemma} with $(\lambda_n, b_n,x_{1,n}, x_{2,n})$ such that $\varepsilon_n$ satisfies the orthogonalities \eqref{eq:orthogonalities} and there exists constants $c,C$
 $$\forall [0,T_1], 0<c<\lambda_n(t)<C, b_n(0)=0, x_{1,n}(0)=0,x_{2,n}(0)=0.$$
  Then, $u(t)$ accepts a decomposition with $(\tilde{\lambda}, \tilde{b}, \tilde{x}_{1},\tilde{x}_{2})$ such that $\varepsilon$ defined as in Lemma \ref{decompositionlemma} satisfies the orthogonalities \eqref{eq:orthogonalities} and 
 $$\forall t\in [0,T_1],\varepsilon_n(t)\rightharpoonup \varepsilon(t) \mbox{ in }H^1,\lambda_n(t)\rightarrow \tilde{\lambda}(t), b_n(t)\rightarrow \tilde{b}(t),x_{1,n}(t)\rightarrow \tilde{x}_1(t),x_{2,n}(t)\rightarrow \tilde{x}_2(t)$$ 
 as $n\rightarrow +\infty.$
 \end{corollary}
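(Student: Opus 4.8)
The plan is to deduce the corollary from Lemma~\ref{weakstability} together with the continuity of the modulation map furnished by the decomposition Lemma~\ref{decompositionlemma} and its associated $C^1$ implicit–function construction (the Claim inside Lemma~\ref{eq:nualpha}). First I would fix $t\in[0,T_1]$ and record what Lemma~\ref{weakstability} gives at that time: $u_n(t)\rightharpoonup u(t)$ in $H^1$ and $u_n(t)1_{\leq k}\to u(t)1_{\leq k}$ in $L^2$ for every $k$. The hypotheses of that lemma are met because $u_{0,n}\rightharpoonup v_0$ in $H^1$ (indeed strongly, which is stronger), the $u_n$ exist on $[0,T_1]$ with uniformly bounded $H^1$ norm by the local theory plus the a~priori bound $0<c<\lambda_n<C$ and the conservation laws, and the tail smallness $\|u_n(0)1_{\geq k}\|_{L^2}\leq \tfrac12\|Q\|_{L^2}$ holds for $k$ large since $u_{0,n}\to v_0$ in $L^2$ and $v_0\in H^1$.

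Next I would pass the modulation parameters to the limit. Since $u_n(t)\in\mathcal{T}_{\alpha^{**}}$ with $\alpha^{**}<\nu$, the $C^1$ map of Lemma~\ref{eq:nualpha} produces $(\lambda_n(t),b_n(t),x_{1,n}(t),x_{2,n}(t))$; by the uniform bounds $0<c<\lambda_n<C$, $|b_n|+|x_{1,n}|+|x_{2,n}|\lesssim\delta(\alpha^{**})$ (from \eqref{eq:modulatedcoefficients} and the smallness of $\|\varepsilon_n\|_{L^2}$), these sequences are bounded, so along a subsequence $(\lambda_n(t),b_n(t),x_{1,n}(t),x_{2,n}(t))\to(\tilde\lambda(t),\tilde b(t),\tilde x_1(t),\tilde x_2(t))$. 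Then $\varepsilon_n(t)=\lambda_n(t)u_n(t,\lambda_n(t)\cdot+x_{1,n}(t),\lambda_n(t)\cdot+x_{2,n}(t))-Q_{b_n(t)}$; the affine change of variables is bounded on $H^1$ uniformly in $n$, it converges (as an operator, strongly on fixed elements) to the limiting one by convergence of the parameters, and $Q_{b_n(t)}\to Q_{\tilde b(t)}$ in $H^1$ by continuity of $b\mapsto Q_b$. Combining with $u_n(t)\rightharpoonup u(t)$ gives $\varepsilon_n(t)\rightharpoonup \tilde\varepsilon(t):=\lambda_n\!\to\!\tilde\lambda$–rescaled profile of $u(t)$ minus $Q_{\tilde b(t)}$ in $H^1$. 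Because the four orthogonality functions $Q,\varphi(y_1)\Lambda Q,\varphi(y_1)Q_{y_1},\varphi(y_1)Q_{y_2}$ all lie in $L^2$ (indeed decay exponentially), and $\varepsilon_n(t)$ is orthogonal to them, the weak limit $\tilde\varepsilon(t)$ is orthogonal to them too; hence $(\tilde\lambda(t),\tilde b(t),\tilde x_1(t),\tilde x_2(t))$ is exactly the decomposition of $u(t)$ furnished by the uniqueness part of Lemma~\ref{decompositionlemma}. Uniqueness of this decomposition also removes the need for subsequences: every subsequential limit is the same, so the full sequence converges. The initial conditions $\tilde b(0)=0$, $\tilde x_1(0)=\tilde x_2(0)=0$ follow from $b_n(0)=x_{1,n}(0)=x_{2,n}(0)=0$ and passing to the limit, noting $u_n(0)\to v_0=u(0)$ strongly in $H^1$ so in fact $\varepsilon_n(0)\to\varepsilon(0)$ strongly.

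I would then organize the write-up as: (i) verify the hypotheses of Lemma~\ref{weakstability}; (ii) extract bounded parameter sequences and a subsequential limit; (iii) identify the weak limit of $\varepsilon_n(t)$ and check the orthogonality conditions survive weak convergence; (iv) invoke uniqueness in Lemma~\ref{decompositionlemma} to conclude that the limit parameters give the decomposition of $u(t)$ and that no subsequence was needed; (v) read off the initial-value normalizations. The main obstacle I anticipate is step (iii): one must be careful that the composition "rescale by converging parameters then subtract $Q_{b_n}$" interacts correctly with \emph{weak} $H^1$ convergence of $u_n(t)$ — the rescaling operators $f\mapsto \lambda_n f(\lambda_n\cdot+x_n)$ do not converge in operator norm, so one should argue by testing against a fixed $\phi\in H^{-1}$, writing $\langle \varepsilon_n(t),\phi\rangle$ as $\langle u_n(t), \phi_n\rangle - \langle Q_{b_n(t)},\phi\rangle$ where $\phi_n$ is the (converging in $H^{-1}$) pullback of $\phi$, and then splitting $\langle u_n(t),\phi_n\rangle=\langle u_n(t),\phi_n-\phi_\infty\rangle+\langle u_n(t),\phi_\infty\rangle$, controlling the first term by $\|u_n(t)\|_{H^1}\|\phi_n-\phi_\infty\|_{H^{-1}}\to0$ and the second by weak convergence of $u_n(t)$. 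For the $L^2$ (local) convergence claim one additionally uses the $1_{\leq k}$ convergence from Lemma~\ref{weakstability} on compact sets, where the rescaling is benign. Everything else is routine given the machinery already developed in the paper.
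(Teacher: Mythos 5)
Your argument is correct, and it reaches the same conclusion by a slightly different mechanism than the paper. The paper's proof first observes that the uniform $H^1$ bound on $\varepsilon_n$ together with the modulation estimates \eqref{eq:modulatedcoefficients} bounds not only $(\lambda_n,b_n,x_{1,n},x_{2,n})$ but also their time derivatives uniformly in $n$, and then invokes Arzel\`a--Ascoli to get a subsequence of the parameter \emph{functions} converging uniformly on $[0,T_1]$; the identification of the limit and the weak convergence of $\varepsilon_n(t)$ then follow from Lemma~\ref{weakstability}. You instead work pointwise in $t$: compactness of the bounded parameter vectors at each fixed time, preservation of the four orthogonality conditions under weak $H^1$ convergence (legitimate since the orthogonality functions are exponentially decaying, hence in $L^2$), and then the uniqueness part of the implicit-function construction in Lemma~\ref{eq:nualpha} to identify every subsequential limit with the decomposition parameters of $u(t)$ and thereby upgrade to full-sequence convergence. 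What the paper's route buys is uniform-in-$t$ convergence of the parameters essentially for free and no need to invoke uniqueness; what your route buys is that you never need the time-derivative bounds, and your step (iii) — testing the rescaled sequence against pulled-back test functions and splitting $\langle u_n(t),\phi_n\rangle=\langle u_n(t),\phi_n-\phi_\infty\rangle+\langle u_n(t),\phi_\infty\rangle$ — makes explicit the identification of the weak limit that the paper leaves implicit. The only caveat is that your appeal to uniqueness must stay within the neighborhood where the implicit function theorem applies, which is guaranteed here by the a priori bounds $0<c<\lambda_n<C$ and the tube condition $u_n(t)\in\mathcal{T}_{\alpha^{**}}$ with $\alpha^{**}<\nu$; it would be worth saying this explicitly in the write-up.
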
 
 \begin{proof} 
 We sketch a proof of the corollary (also see \cite{MerleRaphael04} Section $4.3$, page $599$). From the decomposition we get that $\sup_{t\in [0,T_1]}\|\varepsilon_n(t)\|_{H^1}\leq C$ uniformly in $n$. Therefore,  \eqref{eq:modulatedcoefficients} implies that $(\lambda_n,b_n,x_{1,n},x_{2,n})$ and $((\lambda_n)_t,(b_n)_t,(x_{1,n})_t,(x_{2,n})_t)$ are uniformly bounded. Therefore, by Arzela-Ascoli lemma, there exists $(\tilde{\lambda}(t), \tilde{b}(t), \tilde{x}_{1}(t),\tilde{x}_{2}(t))$for all $t\in [0,T_1]$ such that $(\lambda_n, b_n,x_{1,n}, x_{2,n})$ convereges uniformly to $(\tilde{\lambda}, \tilde{b}, \tilde{x}_{1},\tilde{x}_{2})$ as $n\rightarrow +\infty.$ This fact together with the previous lemma yields $\varepsilon_n(t)\rightharpoonup\varepsilon(t)$ for all $t\in[0,T_1].$
 \end{proof}
 Now, we return to our proof of stability. Since $u_{0,n}\rightarrow v_0$ in $H^1$ we have that all the conditions in Lemma \ref{weakstability} are satisfied (we get $u_{0,n}1_{\geq k}\rightarrow v_01_{\geq k},$ then $\|u_{0,n}1_{\geq k}\|_{L^2}\leq 2\|v_01_{\geq k}\|_{L^2}\leq \frac{1}{2}\|Q\|_{L^2}$ for all $k$ sufficiently big). Therefore, for $T'<T_v,$ for all $t\in [0,T'], \lambda_n(t)\rightarrow \tilde{\lambda}(t).$

By the blow-up of $v(t),$ we have $\lambda(t)\rightarrow 0$ as $t\rightarrow T_v.$ By a diagonalizing  argument, we get that there exists $N$ such that for all $n\geq N,$ $\lambda_n(t)\rightarrow 0$ as $t\rightarrow T_v$. Since $u_{0,n} \in \mathcal{A}_{\alpha_0},$ by the classification theorem we get that $u_n$ blow-up for all $n\geq N$ with the same law as in Theorem \ref{maintheorem}. As a consequence, we get that $\lim_{n\rightarrow +\infty}T_n=T_v.$

Therefore, there exists $\rho=\rho(v_0)$ such that for all $w_0\in H^1\cap \mathcal{A}_{\alpha_0}$ with $\|v_0-w_0\|_{H^1}<\rho,$ then if $w$ is the solution under the ZK flow with initial $w_0$ blows up with the same blow-up rate as in Theorem \ref{maintheorem}. 

\begin{remark}
    We observe that the same method could apply for any initial data from the Exit Case, resulting that both the Blow-up and Exit cases are stable. 
\end{remark}

\section{Strong Convergence in \texorpdfstring{$L^2$}{Lg} of the Asymptotic Profile} \label{Strong Convergence in $L^2$ of the Asymptotic Profile}
Suppose $\partial_t u+\partial_{x_1}\Delta u+u^2\partial_x u=0$ and that we are in the Blow-Up Case from Theorem \ref{rigiditytheorem}.
We are proving that there exists $u^* \in L^2(\mathbb{R}^2)$ such that 
$$u(t,x_1,x_2)-\frac{1}{\lambda(t)}Q\Big(\frac{x_1-x_1(t)}{\lambda(t)},\frac{x_2-x_2(t)}{\lambda(t)} \Big)\rightarrow u^* \mbox { in } L^2 \mbox{ as } t\rightarrow T.$$
Take $u(t,x_1,x_2)=(Q_S+\tilde{u})(t,x_1,x_2)$  with $Q_S(t,x_1,x_2)=\frac{1}{\lambda(t)}Q_b\Big(\frac{x_1-x_1(t)}{\lambda(t)},\frac{x_2-x_2(t)}{\lambda(t)}\Big)$ and $\tilde{u}(t,x_1,x_2)=\frac{1}{\lambda(t)}\varepsilon\Big(t,\frac{x_1-x_1(t)}{\lambda(t)},\frac{x_2-x_2(t)}{\lambda(t)}\Big).$ We observe that 
\begin{equation}\label{eq:QSL2estimate}
\norm{Q_S(t,x_1,x_2)-\frac{1}{\lambda(t)}Q\Big(\frac{x_1-x_1(t)}{\lambda(t)},\frac{x_2-x_2(t)}{\lambda(t)} \Big)}_{L^2(\mathbb{R}^2)}\lesssim |b(t)|^{2-\gamma}\lesssim (T-t)^{\frac{c(2-\gamma)}{3-c}}
\end{equation}
which means it remains to prove that $\tilde{u}(t)$ has a limit in $u^*$ in $L^2.$  

The function $\tilde{u}$ satisfies the equation $\partial_t \tilde{u}+\partial_{x_1}\Delta \tilde{u}+f(\tilde{u})_{x_1}+\mathfrak{F}=0$
with $f(\tilde{u})=(Q_S+\tilde{u})^3-Q_{S}^{3}$ and 
$$\mathfrak{F}(t)=\frac{1}{\lambda^4(t)}\Big[-\Psi_b+b_s\zeta_bP-\Big(\frac{\lambda_s}{\lambda}+b\Big)\Lambda Q_b-\Big(\frac{(x_1)_s}{\lambda}-1\Big)(Q_b)_{x_1}-\frac{(x_2)_s}{\lambda}(Q_b)_{x_2}\Big]\Big(t,\frac{x_1-x_1(t)}{\lambda(t)},\frac{x_2-x_2(t)}{\lambda(t)}\Big)$$
where $\Psi_b=[(-\Delta Q_b+Q_b-Q_b^3)_{y_1}-b\Lambda Q_b]$, $\zeta_b=\chi_b+\gamma y_1(\chi_B)_{y_1}.$

Let $0<\tau\ll T$ and for all $t \in [0,T-\tau)$ we define $\tilde{u}_{\tau}(t)=\tilde{u}(t+\tau)$ and $v_{\tau}(t')=\tilde{u}_{\tau}(t')-\tilde{u}(t')$ for all $t' \in [t,T-\tau).$ Hence, $v_{\tau}$ satisfies: 
$$\partial_t v_{\tau}+\partial_{x_1}\Delta v_{\tau}+[f(\tilde{u})_{x_1}(t+\tau)-f(\tilde{u})_{x_1}(t)]+([\mathfrak{F}(t+\tau)-\mathfrak{F}(t)]=0$$

Define the unitary group $\{U(t)\}_{t=-\infty}^{t=\infty}$ associated to the linear operator of the Zakharov-Kuznetsov equation, namely 
\begin{equation}\label{eq:Airy}
\begin{split}
g(t,x_1,x_2)=U(t)g_0(x_1,x_2)=\int_{\mathbb{R}^2}e^{i(t(\xi^3+\xi\eta^2)+x_1\xi+x_2\eta)}\hat{g_0}(\xi,\eta)d\xi d\eta.
\end{split}
\end{equation}

By Duhamel formula we have that for $0<t'<T-\tau-t,$
\begin{equation}\label{eq:Duhamel}
\begin{split}
v_{\tau}(t'+t,x_1,x_2)=&U(t')v_{\tau}(t,x_1,x_2)+\int_{0}^{t'}U(t')U(t'')^*[f(\tilde{u})_{x_1}(t''+t+\tau)-f(\tilde{u})_{x_1}(t''+t)]dt''
\\&+\int_{0}^{t'}U(t')U(t'')^*[\mathfrak{F}(t''+t+\tau)-\mathfrak{F}(t''+t)]dt''
\end{split}
\end{equation}
We are going to use the method used by Lan \cite{Lan} and by Merle-Raphael \cite{MerleRaphaelSzeftel} in proving the strong convergence for the $L^2$ super-critical case for gKdV, respectively NLS.

We state the result of Foschi \cite{Foschi} about the inhomogeneous Strichartz estimates: 

\begin{theorem}
Consider a family of linear operators $V(t):H\rightarrow L_{X}^{2}, t \in \mathbb{R},$ where $H$ is a Hilbert space. Suppose the following properties of $V(t)$ hold: 
\begin{itemize}
\item[(1)] For all $t \in \mathbb{R}, h \in H:$
$$\|V(t)h\|_{L_{X}^{2}}\lesssim \|h\|_{H}.$$
\item[(2)] There exists a constant $\sigma>0,$ such that for all $f \in L^{1}_{X}\cap L_{X}^{2}$ and $t,s \in \mathbb{R},$ there holds: 
$$\|V(t)V(s)^*f\|_{L^{\infty}_{X}}\lesssim \frac{1}{|t-s|^{\sigma}}\|f\|_{L^{1}_{X}}$$
\end{itemize}
We say a pair $(q,r)\in [2,+\infty]^2$ is $\sigma-$acceptable if and only if they satisfy:
$$\frac{1}{q}<2\sigma\Big(\frac{1}{2}-\frac{1}{r}\Big) \mbox{ or } (q,r)=(+\infty,2).$$

Consider $0<\sigma<1$ and $2 \sigma-$acceptable pairs: $(q_i,r_i),i=1,2,$ such that the scaling rule is satisfied:
$$\frac{1}{q_1}+\frac{\sigma}{r_1}+\frac{1}{q_2}+\frac{\sigma}{r_2}=\sigma.$$

Then we have the following inhomogeneous Strichartz estimates: 
$$\norm{\int_{s<t}V(t)V(s)^*F(s)}_{L_{t}^{q_1}L_{X}^{r_1}}\lesssim \|F\|_{L_{t}^{q^{'}_{2}}L_{X}^{r^{'}_{2}}},$$
where $q^{'}_{2},r^{'}_{2}$ are the conjugates of $q_2,r_2.$
\end{theorem}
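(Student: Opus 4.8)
The plan is to dualize to a bilinear form, interpolate the two hypotheses into a one-parameter family of dispersive bounds, decompose the retarded region $\{s<t\}$ dyadically in $t-s$, estimate each dyadic block in two complementary ways, and sum a geometric series whose convergence is exactly what the scaling identity and the acceptability inequalities encode.

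First I would dualize: testing against $G\in L^{q_1'}_t L^{r_1'}_X$, the claimed bound is equivalent to
$$|B(F,G)|:=\Big|\int\int_{s<t}\langle V(t)V(s)^*F(s),\,G(t)\rangle\,ds\,dt\Big|\lesssim \|F\|_{L^{q_2'}_t L^{r_2'}_X}\,\|G\|_{L^{q_1'}_t L^{r_1'}_X}.$$
Next, combining hypothesis $(1)$ with its adjoint $\|V(s)^*f\|_{H}\lesssim\|f\|_{L^2_X}$ gives $\|V(t)V(s)^*f\|_{L^2_X}\lesssim\|f\|_{L^2_X}$; interpolating this with hypothesis $(2)$ by Riesz--Thorin yields, for every $2\le\rho\le\infty$,
$$\|V(t)V(s)^*f\|_{L^\rho_X}\lesssim |t-s|^{-2\sigma(\frac12-\frac1\rho)}\,\|f\|_{L^{\rho'}_X}.$$

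Then I would write $B(F,G)=\sum_{k\in\mathbb Z}B_k(F,G)$, where $B_k$ restricts the integration to $2^{k-1}\le t-s<2^k$. Each $B_k$ is estimated in two ways. The first applies the interpolated dispersive bound pointwise in $(s,t)$ (after a Hölder splitting in $X$) followed by Hölder in the time variables over the $s$-interval of length $\sim 2^k$ attached to each $t$; optimizing the free exponent $\rho$ and the time exponents against the target pair $(q_1,r_1)$ gives $|B_k|\lesssim 2^{k\mu_1}\|F\|_{L^{q_2'}_t L^{r_2'}_X}\|G\|_{L^{q_1'}_t L^{r_1'}_X}$. The second runs the same bookkeeping biased toward $(q_2,r_2)$, producing $|B_k|\lesssim 2^{k\mu_2}\|F\|\|G\|$ with a different exponent $\mu_2$. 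A direct computation shows that the scaling identity $\frac1{q_1}+\frac{\sigma}{r_1}+\frac1{q_2}+\frac{\sigma}{r_2}=\sigma$ makes the two exponents balanced, while the \emph{strict} acceptability inequalities $\frac1{q_i}<2\sigma(\frac12-\frac1{r_i})$ force $\mu_1>0>\mu_2$; hence $\sum_{k}\min(2^{k\mu_1},2^{k\mu_2})<\infty$ and the bilinear estimate follows. A small preliminary check is that the auxiliary pairs manufactured in the interpolation are themselves $2\sigma$-acceptable, which is precisely why the statement is phrased over the acceptability region rather than the smaller admissibility region.

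The main obstacle is the borderline case $(q_i,r_i)=(\infty,2)$, which is excluded from the strict inequality: there one of the two dyadic exponents vanishes and the geometric sum degenerates. This is the well-known failure point of the naive dyadic argument, and it is where the retarded structure $s<t$ must be exploited genuinely: on the offending piece I would either replace the dispersive estimate by the dual homogeneous Strichartz estimate (legitimate because $(\infty,2)$ is admissible) and invoke a Christ--Kiselev--type maximal lemma to reinstate the time-ordering, or factor $B_k$ through $H$ using $\|\int_{s\in I}V(s)^*F(s)\,ds\|_{H}\lesssim\|F\|_{L^{q_2'}_t L^{r_2'}_X}$ together with $\|V(t)h\|_{L^{q_1}_t L^{r_1}_X}\lesssim\|h\|_{H}$. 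Assembling the generic dyadic sum with this endpoint treatment completes the argument.
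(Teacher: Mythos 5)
The paper does not prove this theorem: it is quoted as a black-box result and attributed to Foschi \cite{Foschi}, so there is no in-paper argument to compare yours against. Your sketch does, however, reproduce the strategy of Foschi's actual proof: dualize to the retarded bilinear form $B(F,G)$, interpolate the energy bound $\|V(t)V(s)^*f\|_{L^2_X}\lesssim\|f\|_{L^2_X}$ (from hypothesis (1) and its adjoint) with the dispersive bound (2) to get the full family $\|V(t)V(s)^*f\|_{L^\rho_X}\lesssim |t-s|^{-2\sigma(\frac12-\frac1\rho)}\|f\|_{L^{\rho'}_X}$, decompose $\{s<t\}$ dyadically in $t-s$, and sum a two-sided geometric series whose convergence encodes the scaling identity and the strict acceptability inequalities. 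One structural caveat: Foschi uses a genuine Whitney decomposition of $\{s<t\}$ into squares $I\times J$ with $|I|\sim|J|\sim\mathrm{dist}(I,J)\sim 2^k$, which is what makes the H\"older-in-time step local and lets the contributions of the disjoint intervals be resummed by $\ell^q$-almost-orthogonality; your cruder strip decomposition $2^{k-1}\le t-s<2^k$ needs that extra localization to close the time integration.

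Beyond that, two points are asserted rather than carried out. First, the claim that the two dyadic exponents satisfy $\mu_1>0>\mu_2$ is the entire quantitative content of the theorem; it is a computation, but it is \emph{the} computation, and the distinction between the acceptability region and the smaller admissibility region lives exactly there. Second, your endpoint treatment via factoring through $H$ invokes the homogeneous estimate $\|V(t)h\|_{L^{q}_tL^{r}_X}\lesssim\|h\|_H$, which holds for sharp admissible pairs but fails for general acceptable ones; it is legitimate at the endpoint only because the scaling identity forces the complementary pair to be sharp admissible when one pair equals $(+\infty,2)$, and even then the homogeneous and dual homogeneous Strichartz estimates are an additional ingredient you must establish (the non-endpoint $TT^*$ argument with Hardy--Littlewood--Sobolev suffices since $0<\sigma<1$). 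With those two items filled in, your outline is the standard and correct route to the stated result.
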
   

We also state the following theorem that appears in Faminskii \cite{Faminski}, Linares, Pastor \cite{LinaresPastor2}: 

\begin{theorem} (Lemma 2.3, \cite{LinaresPastor2})

Let $U(t)$ be the unitary group defined as in \eqref{eq:Airy}. Then, 
$$\|U(t)h\|_{L^{2}_{x_1x_2}}\lesssim \|h\|_{L^{2}_{x_1x_2}}, \mbox{   } \|U(t)h\|_{L^{\infty}_{x_1x_2}}\lesssim \frac{1}{|t|^{\frac{2}{3}}}\|h\|_{L^{1}_{x_1x_2}}, \mbox{ }\forall t\neq 0.$$ 
\end{theorem}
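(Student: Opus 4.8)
\textbf{Proof plan for the last theorem (Lemma 2.3 of \cite{LinaresPastor2}).}

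The statement we are asked to establish is the pair of dispersive/energy estimates for the unitary Zakharov–Kuznetsov group $U(t)$ defined by \eqref{eq:Airy}, namely $\|U(t)h\|_{L^2_{x_1x_2}}\lesssim \|h\|_{L^2_{x_1x_2}}$ and $\|U(t)h\|_{L^\infty_{x_1x_2}}\lesssim |t|^{-2/3}\|h\|_{L^1_{x_1x_2}}$. The $L^2$ bound is the trivial one: since $U(t)$ acts on the Fourier side by multiplication by the unimodular symbol $e^{it(\xi^3+\xi\eta^2)}$, Plancherel's theorem gives an isometry on $L^2(\mathbb{R}^2)$, so in fact $\|U(t)h\|_{L^2}=\|h\|_{L^2}$ for every $t$. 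The only content is the $L^1\to L^\infty$ decay, and the plan is to reduce it to a two-dimensional oscillatory-integral (stationary/non-stationary phase) estimate for the kernel of $U(t)$.

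First I would write $U(t)h = K_t \ast h$ where $K_t(x_1,x_2) = (2\pi)^{-2}\int_{\mathbb{R}^2} e^{i(t(\xi^3+\xi\eta^2)+x_1\xi+x_2\eta)}\,d\xi\,d\eta$, so that by Young's inequality it suffices to prove the kernel bound $\|K_t\|_{L^\infty_{x_1x_2}}\lesssim |t|^{-2/3}$, uniformly in $(x_1,x_2)$. By the scaling symmetry of the phase $\phi(\xi,\eta)=\xi^3+\xi\eta^2$ — replacing $(\xi,\eta)$ by $(|t|^{-1/3}\xi,|t|^{-1/3}\eta)$ absorbs $t$ into $2/3$ power out front and rescales the linear term — one reduces to proving $\big|\int e^{i(\phi(\xi,\eta)+a\xi+b\eta)}\,d\xi\,d\eta\big|\lesssim 1$ uniformly in $(a,b)\in\mathbb{R}^2$. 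The natural route is to do the $\eta$-integral first: for fixed $\xi\neq 0$ the $\eta$-phase $\xi\eta^2+b\eta$ is a non-degenerate quadratic with second derivative $2\xi$, so the classical one-dimensional van der Corput / Fresnel estimate gives a contribution of size $\lesssim |\xi|^{-1/2}$ times a phase; completing the square produces a residual $\xi$-phase of the form $\xi^3 + a\xi - b^2/(4\xi)$. One is then left with a one-dimensional oscillatory integral $\int |\xi|^{-1/2} e^{i\psi(\xi)}\,d\xi$ with $\psi''$ controlled below on dyadic pieces, and a second van der Corput estimate (second-derivative version, handling the $|\xi|^{-1/2}$ weight and the singularity at $\xi=0$ by splitting $|\xi|\lesssim 1$ and $|\xi|\gtrsim 1$, or more cleanly the regions where each term of $\psi''$ dominates) closes the bound uniformly in $(a,b)$. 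This is exactly the computation carried out in Faminskii \cite{Faminski} and Linares–Pastor \cite{LinaresPastor2}, so I would present the reduction and cite those works for the detailed stationary-phase bookkeeping rather than reproduce it.

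The step I expect to be the genuine obstacle is the uniformity of the one-dimensional oscillatory estimate over all parameter values $(a,b)$, in particular the behaviour near $\xi=0$ where the amplitude $|\xi|^{-1/2}$ is singular and the term $b^2/(4\xi)$ in the residual phase blows up: one must check that the interplay between the amplitude singularity and the phase oscillation still yields an $O(1)$ bound, which is handled by a dyadic decomposition in $|\xi|$ together with van der Corput applied on each piece and summed. Once the kernel bound $\|K_t\|_{L^\infty}\lesssim |t|^{-2/3}$ is in hand, the two claimed inequalities follow immediately — the first by Plancherel, the second by $\|U(t)h\|_{L^\infty}=\|K_t\ast h\|_{L^\infty}\le \|K_t\|_{L^\infty}\|h\|_{L^1}$ — and this is precisely the input needed to apply Foschi's inhomogeneous Strichartz theorem (with $H=L^2$, $V(t)=U(t)$ and $\sigma=2/3$) in the convergence argument of Section \ref{Strong Convergence in $L^2$ of the Asymptotic Profile}.
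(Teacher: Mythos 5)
Your plan is correct, and it is worth noting that the paper itself offers no proof of this statement: it is quoted verbatim as Lemma 2.3 of \cite{LinaresPastor2} (going back to Faminskii \cite{Faminski}), so there is nothing internal to compare against. Your outline — Plancherel for the $L^2$ isometry, $U(t)h=K_t\ast h$ plus Young for the reduction to $\|K_t\|_{L^\infty}\lesssim |t|^{-2/3}$, the degree-$3$ homogeneity of $\xi^3+\xi\eta^2$ to scale out $|t|^{-2/3}$, and then the $\eta$-integration (Fresnel, amplitude $|\xi|^{-1/2}$, residual phase $\xi^3+a\xi-b^2/(4\xi)$) followed by a dyadic van der Corput argument uniform in $(a,b)$ — is exactly the argument of the cited references, and you correctly flag the genuine delicacy, namely the uniformity near $\xi=0$ where the amplitude is singular and $\psi''(\xi)=6\xi-b^2/(2\xi^3)$ can vanish (one clean way out is that $\psi'''(\xi)=6+3b^2/(2\xi^4)\geq 6$ everywhere, so the third-derivative van der Corput estimate applies on each dyadic block). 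Deferring that bookkeeping to \cite{Faminski} and \cite{LinaresPastor2} is entirely consistent with what the paper does.
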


Using the previous two theorems, we get the following refined Strichartz estimates: 
\begin{corollary} 
For all $\frac{2}{3}-$acceptable pairs $(q_1,r_1)$ and $(q_2,r_2),$ if they satisfy: 
$$\frac{1}{q_1}+\frac{2}{3r_1}+\frac{1}{q_2}+\frac{2}{3r_2}=\frac{2}{3},$$
then there holds: 
\begin{equation}\label{eq:Refined}
\norm{\int_{0}^{t}U(t)U(s)^*\Big(h(s,\cdot,\cdot)\Big)}_{L_{t}^{q_1}L_{xy}^{r_1}}\lesssim \|h\|_{L_{t}^{q^{'}_{2}}L_{xy}^{r^{'}_{2}}}
\end{equation}
\end{corollary}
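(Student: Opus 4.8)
The statement to prove is the Corollary asserting the refined inhomogeneous Strichartz estimate \eqref{eq:Refined} for the Zakharov--Kuznetsov group $U(t)$, for all $\tfrac{2}{3}$-acceptable pairs $(q_1,r_1),(q_2,r_2)$ satisfying the scaling relation $\tfrac{1}{q_1}+\tfrac{2}{3r_1}+\tfrac{1}{q_2}+\tfrac{2}{3r_2}=\tfrac{2}{3}$.

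\textbf{Plan of proof.} The plan is to apply Foschi's abstract inhomogeneous Strichartz theorem directly, with $H=L^2(\mathbb{R}^2)$, $X=\mathbb{R}^2$, $V(t)=U(t)$, and dispersive exponent $\sigma=\tfrac{2}{3}$. First I would verify the two hypotheses of Foschi's theorem for the ZK group. Hypothesis (1), the uniform $L^2$ bound $\|U(t)h\|_{L^2_{x_1x_2}}\lesssim\|h\|_{L^2_{x_1x_2}}$, is immediate (indeed an equality, $U(t)$ being unitary) and is exactly the first estimate in the cited Lemma~2.3 of \cite{LinaresPastor2}. Hypothesis (2), the dispersive decay $\|U(t)U(s)^*f\|_{L^\infty_{x_1x_2}}\lesssim |t-s|^{-\sigma}\|f\|_{L^1_{x_1x_2}}$ with $\sigma=\tfrac23$, follows from the second estimate in that same lemma together with the group property $U(t)U(s)^*=U(t-s)$: one writes $U(t)U(s)^*f=U(t-s)f$ and applies $\|U(\tau)f\|_{L^\infty}\lesssim |\tau|^{-2/3}\|f\|_{L^1}$ with $\tau=t-s$. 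Note $0<\sigma=\tfrac23<1$, so Foschi's theorem applies in its stated form.

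\textbf{Matching the acceptability and scaling conditions.} Next I would check that the notion of ``$\tfrac23$-acceptable pair'' used in the Corollary is precisely Foschi's notion of ``$2\sigma$-acceptable'' with $\sigma=\tfrac23$: Foschi calls $(q,r)\in[2,\infty]^2$ $2\sigma$-acceptable when $\tfrac1q<2\sigma(\tfrac12-\tfrac1r)$ or $(q,r)=(\infty,2)$, and with $\sigma=\tfrac23$ this reads $\tfrac1q<\tfrac43(\tfrac12-\tfrac1r)$, which matches the hypothesis on $(q_1,r_1)$ and $(q_2,r_2)$. Likewise the scaling rule in Foschi's theorem, $\tfrac{1}{q_1}+\tfrac{\sigma}{r_1}+\tfrac{1}{q_2}+\tfrac{\sigma}{r_2}=\sigma$, becomes with $\sigma=\tfrac23$ the stated identity $\tfrac{1}{q_1}+\tfrac{2}{3r_1}+\tfrac{1}{q_2}+\tfrac{2}{3r_2}=\tfrac23$. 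Foschi's conclusion then reads
$$
\norm{\int_{s<t}U(t)U(s)^*F(s)\,ds}_{L^{q_1}_tL^{r_1}_{x_1x_2}}\lesssim \|F\|_{L^{q_2'}_tL^{r_2'}_{x_1x_2}},
$$
and replacing $F$ by $h$ and $\int_{s<t}$ by $\int_0^t$ (a restriction of the time interval, harmless by the usual truncation/Christ--Kiselev type observation implicit in Foschi's framework, or simply by applying the estimate to $h\mathbf 1_{[0,t]}$) gives exactly \eqref{eq:Refined}.

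\textbf{Main obstacle.} The only genuine point to be careful about — and the one I expect to be the main (mild) obstacle — is the endpoint bookkeeping and the passage from Foschi's half-line/full-line time integral $\int_{s<t}$ to the forward Duhamel integral $\int_0^t$ over a finite interval: one must ensure the acceptable pairs stay in the open range where Foschi's theorem is unconditional (avoiding the double-endpoint case that requires extra hypotheses), and that the time cutoff does not destroy the estimate. Since the Corollary only claims the estimate for pairs satisfying the strict inequality $\tfrac1q<\tfrac43(\tfrac12-\tfrac1r)$ (plus the harmless $(\infty,2)$ case), we are squarely inside the region covered by Foschi's theorem, and the finite-interval reduction follows by applying the inequality to $h(s,\cdot,\cdot)\mathbf 1_{s\le t}$. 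Everything else is a direct substitution of $\sigma=\tfrac23$ into the abstract statement, so the proof is essentially a verification that the two dispersive inputs from \cite{LinaresPastor2} meet Foschi's hypotheses.
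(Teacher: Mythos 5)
Your proposal is correct and follows exactly the route the paper intends: the paper offers no separate proof of the corollary but simply states that it follows "using the previous two theorems," i.e., by plugging the $L^2$ bound and the $|t|^{-2/3}$ dispersive decay of $U(t)$ into Foschi's abstract theorem with $\sigma=\tfrac23$, which is precisely what you do. Your additional remarks on the group property $U(t)U(s)^*=U(t-s)$ and on restricting the time integral to $[0,t]$ only make explicit what the paper leaves implicit.
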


Now, we return to our problem. Choose $(+\infty, 2), (q_2, r_2)$ two $\frac{2}{3}-$acceptable pairs, such that the scaling rule is satisfied: 
$$\frac{1}{q_2}+\frac{2}{3r_2}=\frac{1}{3}.$$
From \eqref{eq:Duhamel}, \eqref{eq:Refined} we get that, for $0\leq t'<\tau,$ 
\begin{equation*}
\begin{split}
\|v_{\tau}\|_{L_{[t,T-\tau)}^{\infty}L_{x_1x_2}^{2}}&\leq \|U(t')v_{\tau}(t)\|_{L_{[0,T-\tau-t)}^{\infty}L_{x_1x_2}^{2}}\\
&+\norm{\int_{0}^{t'}U(t')U(t'')^*f(\tilde{u})_{x_1}(t'')dt''}_{L_{[t,T-\tau)}^{\infty}L_{x_1x_2}^{2}}\\
&+\norm{\int_{0}^{t'}U(t')U(t'')^*\mathfrak{F}(t'')dt''}_{L_{[t,T-\tau)}^{\infty}L_{x_1x_2}^{2}}\\
&\lesssim  \|v_{\tau}(t)\|_{L_{x_1x_2}^{2}}+\norm{ f(\tilde{u})_{x_1}}_{L_{[t,T-\tau)}^{q^{'}_{2}}L_{x_1x_2}^{r^{'}_{2}}}+\norm{ \mathfrak{F}}_{L_{[t,T-\tau)}^{q^{'}_{2}}L_{x_1x_2}^{r^{'}_{2}}}
\end{split}
\end{equation*}

From now on, we will use from \eqref{sharpN} that $\mathcal{N}_{6}(t)\lesssim \lambda^{(3-\eta)c}(t)$  and we can take $\eta=\frac{1}{4},$ but we will leave it as $\eta.$ 

\textit{Step 1: Estimates on $f(\tilde{u})_{x_1}$} 

Using a change of variables, i.e. $y_i=\frac{x_i-x_i(t)}{\lambda(t)}$ for $i=1,2$ we get
$$\norm{f(\tilde{u})_{x_1}(t)}_{L_{x_1x_2}^{r^{'}_{2}}}=\norm{\frac{1}{\lambda(t)}[(Q_b+\varepsilon)^3-Q_{b}^{3}]_{x_1}\Big(t,\frac{x_1-x_1(t)}{\lambda(t)},\frac{x_2-x_2(t)}{\lambda(t)}\Big)}_{L_{x_1x_2}^{r^{'}_{2}}}$$
$$=\frac{1}{\lambda(t)^{2+\frac{2}{r_2}}}\norm{[(Q_b+\varepsilon)^3-Q_{b}^{3}]_{y_1}}_{L_{y_1y_2}^{r^{'}_{2}}}$$
$$\lesssim \frac{1}{\lambda(t)^{2+\frac{2}{r_2}}}\Big(\norm{Q_{b}^{2}\varepsilon_{y_1}}_{L_{y_1y_2}^{r^{'}_{2}}}+\norm{Q_b(Q_b)_{y_1}\varepsilon}_{L_{y_1y_2}^{r^{'}_{2}}}+\norm{(Q_b)_{y_1}\varepsilon^2}_{L_{y_1y_2}^{r^{'}_{2}}}+\norm{\varepsilon^2\varepsilon_{y_1}}_{L_{y_1y_2}^{r^{'}_{2}}}\Big)$$

We estimate each of these terms. Denote $r>0$ such that $\frac{1}{r^{'}_{2}}=\frac{1}{2}+\frac{1}{r}.$ For the first term, by interpolation, we have 
$$\norm{\varepsilon^2(Q_b)_{y_1}}_{L_{y_1y_2}^{r^{'}_{2}}}\lesssim \norm{\varepsilon^2\sqrt{|(Q_b)_{y_1}|}}_{L^{2}_{y_1y_2}}\norm{\sqrt{|(Q_b)_{y_1}|}}_{L^{r}_{y_1y_2}}$$
Using that \eqref{sharpN}, \eqref{eq:estimateL2}, \eqref{eq:estimateL2derivative},
$$ \norm{\varepsilon^2\sqrt{|(Q_b)_{y_1}|}}_{L^{2}_{y_1y_2}}^{2}=\int \int \varepsilon^4|(Q_b)_{y_1}|\leq \int \int \varepsilon^4|Q_{y_1}+b\chi_bP_{y_1}|+|b|^{1+\gamma}\int \int \varepsilon^4|(\chi_{y_1})(|b|^{\gamma}y_1)P|$$
$$\lesssim \|\varepsilon\|_{L^2}^{2}\mathcal{N}_6(t)+\lambda^{(1+\gamma)c}(t)\|\varepsilon\|_{L^2}^{2}\|\nabla\varepsilon(t)\|_{L^2}^{2}\lesssim \lambda(t)^{c\min(3-\eta,2+\gamma)}$$
and 
$$\norm{\sqrt{|(Q_b)_{y_1}|}}_{L^{r}_{y_1y_2}}^{r}\lesssim \int \int Q_{y_1}^{\frac{r}{2}}+|b|^{\frac{r}{2}}\int \int P_{y_1}^{\frac{r}{2}}+|b|^{\frac{r}{2}(1+\gamma)}\int \int [(\chi_{y_1}(|b|^{\gamma}y_1)P]^{\frac{r}{2}}\lesssim 1+|b|^{\frac{r}{2}}+|b|^{\frac{r}{2}(1+\gamma)-\gamma}\lesssim 1.$$
Hence, 
$$\norm{\varepsilon^2(Q_b)_{y_1}}_{L_{y_1y_2}^{r^{'}_{2}}}\lesssim  \lambda(t)^{c\min(\frac{3-\eta}{2},\frac{2+\gamma}{2})}.$$
For the second term, by interpolation, we get 
$$\norm{\varepsilon_{y_1}(Q_b)^2}_{L_{y_1y_2}^{r^{'}_{2}}}\lesssim \norm{\varepsilon_{y_1}Q_b}_{L^{2}_{y_1y_2}}\norm{Q_b}_{L^{r}_{y_1y_2}}$$
Using that \eqref{sharpN}, \eqref{eq:estimateL2derivative},
$$ \norm{\varepsilon_{y_1}Q_b}_{L^{2}_{y_1y_2}}^{2}=\int \int \varepsilon^{2}_{y_1}(Q_b)^2\leq \int \int \varepsilon_{y_1}^{2}Q^2+\int \int \varepsilon_{y_1}^{2}(b\chi_bP)^2$$
$$\lesssim \mathcal{N}_6(t)+\lambda^{2c}(t)\|\nabla\varepsilon(t)\|_{L^2}^{2}\lesssim \lambda(t)^{c(3-\eta)}$$
and $\norm{Q_b}_{L^r}\lesssim 1,$ therefore 
$$\norm{\varepsilon_{y_1}(Q_b)^2}_{L_{y_1y_2}^{r^{'}_{2}}}\lesssim  \lambda(t)^{c\frac{3-\eta}{2}}.$$
For the third term, by interpolation, we get 
$$\norm{\varepsilon(Q_b)_{y_1}Q_b}_{L_{y_1y_2}^{r^{'}_{2}}}\lesssim \norm{\varepsilon(Q_b)_{y_1}}_{L^{2}_{y_1y_2}}\norm{Q_b}_{L^{r}_{y_1y_2}}$$
Using that \eqref{sharpN}, \eqref{eq:estimateL2},
$$ \norm{\varepsilon(Q_b)_{y_1}}_{L^{2}_{y_1y_2}}^{2}=\int \int \varepsilon^{2}[(Q_b)_{y_1}]^2\leq \int \int \varepsilon^{2}Q_{y_1}^2+b^2\int \int \varepsilon^{2}(\chi_bP_{y_1})^2+b^{2+2\gamma}\int \int \varepsilon^{2}[\chi_{y_1}(|b|^{\gamma}y_1)P]^2$$
$$\lesssim \mathcal{N}_6(t)+b^2(t)\mathcal{N}_6(t)+|b(t)|^{2+2\gamma}\|\varepsilon(t)\|_{L^2}^{2}\lesssim \lambda(t)^{c\min(3-\eta,2+2\gamma)}$$
and $\norm{Q_b}_{L^r}\lesssim 1,$ therefore 
$$\norm{\varepsilon(Q_b)_{y_1}Q_b}_{L_{y_1y_2}^{r^{'}_{2}}}\lesssim  \lambda(t)^{c\min(\frac{3-\eta}{2},1+\gamma)}.$$
For the fourth term, by interpolation, we get 
$$\norm{\varepsilon_{y_1}\varepsilon^2}_{L_{y_1y_2}^{r^{'}_{2}}}\lesssim \norm{\varepsilon_{y_1}}_{L^{2}_{y_1y_2}}\norm{\varepsilon^2}_{L^{r}_{y_1y_2}}\lesssim \norm{\nabla\varepsilon}_{L^{2}_{y_1y_2}}\norm{\varepsilon}_{L^{2r}_{x_1x_2}}^{2}$$
Since $r\geq 2,$ by Gagliardo-Nirenberg inequality  we have
$$\norm{\varepsilon}_{L^{2r}_{y_1y_2}}\leq \norm{\nabla\varepsilon}_{L^{2}_{y_1y_2}}^{1-\frac{1}{r}}\norm{\varepsilon}_{L^{2}_{y_1y_2}}^{\frac{1}{r}}= \norm{\nabla\varepsilon}_{L^{2}_{y_1y_2}}^{\frac{3}{2}-\frac{1}{r'_{2}}}\norm{\varepsilon}_{L^{2}_{y_1y_2}}^{\frac{1}{r'_{2}}-\frac{1}{2}}=\norm{\nabla\varepsilon}_{L^{2}_{y_1y_2}}^{\frac{1}{r_{2}}+\frac{1}{2}}\norm{\varepsilon}_{L^{2}_{y_1y_2}}^{\frac{1}{2}-\frac{1}{r_{2}}}$$
and pluging in the estimate and using \eqref{eq:estimateL2derivative} 
$$\norm{\varepsilon_{y_1}\varepsilon^2}_{L_{y_1y_2}^{r^{'}_{2}}}\lesssim \norm{\nabla\varepsilon}_{L^{2}_{y_1y_2}}^{2+\frac{2}{r_{2}}}\norm{\varepsilon}_{L^{2}_{y_1y_2}}^{1-\frac{2}{r_{2}}}\lesssim \lambda(t)^{c(1+\frac{1}{r_2})}.$$

Define $\alpha=\min(\frac{3-\eta}{2}, 1+\frac{\gamma}{2},1+\frac{1}{r_2}).$ By passing to the original spatial variables and noticing that $1+\frac{1}{r_2}=\frac{3}{2q'_2},$ it implies 
$$\norm{ f(\tilde{u})_{x_1}}_{L_{x_1x_2}^{r^{'}_{2}}}\lesssim \frac{1}{\lambda(t)^{\frac{3}{q'_2}-c\alpha}}.$$

Using the previous estimates and that $\alpha >1>\frac{1}{q'_2}$ we get 
\begin{equation}\label{eq:fuestimate}
\begin{split}
\norm{ f(\tilde{u})_{x_1}}_{L_{[t,T-\tau)}^{q^{'}_{2}}L_{x_1x_2}^{r^{'}_{2}}}&\lesssim \Bigg(\int_{t}^{T}\Bigg(\frac{1}{\lambda(t')^{\frac{3}{q'_2}-c\alpha}}\Bigg)^{q'_2}dt'\Bigg)^{\frac{1}{q'_2}}\sim  \Bigg(\int_{t}^{T}\frac{1}{(T-t')^{\frac{1}{3-c}(3-c\alpha q'_2)}}dt'\Bigg)^{\frac{1}{q'_2}}\\
&=-(T-t')^{\frac{c}{3-c}(\alpha-\frac{1}{q'_2})}|_{t'=t}^{t'=T}=(T-t)^{\frac{c}{3-c}(\alpha-\frac{1}{q'_2})}.
\end{split}
\end{equation}

\textit{ Step 2. Estimates on} $\mathfrak{F}.$

Since
$$\|\zeta_bP\|_{L^{r'_2}_{y_1y_2}}\lesssim |b|^{-\frac{\gamma}{r'_2}}, \|\Lambda Q_b\|_{L^{r'_2}_{y_1y_2}}\lesssim 1, \|(Q_b)_{y_1}\|_{L^{r'_2}_{y_1y_2}}\lesssim 1,  \|(Q_b)_{y_2}\|_{L^{r'_2}_{y_1y_2}}\lesssim 1, \|\Phi_b\|_{L^{r'_2}_{y_1y_2}}\lesssim  |b|^{1+\frac{\gamma}{r_2}_{y_1y_2}}$$ 
we get by change of variables and by  \eqref{eq:modulatedcoefficients}, 
$$\|\mathfrak{F}(t)\|_{L_{x_1x_2}^{r'_2}}\lesssim \frac{1}{\lambda(t)^{2+\frac{2}{r_2}}}\Big[\|\Psi_b\|_{L_{y_1y_2}^{r'_2}}+(b^2(t)+\mathcal{N}_6(t))\|\chi_bP\|_{L_{y_1y_2}^{r'_2}}+$$
$$+(b^2(t)+\mathcal{N}_{6}(t)^{\frac{1}{2}})(\|\Lambda Q_b\|_{L_{y_1y_2}^{r'_2}}+\|(Q_b)_{y_1}\|_{L_{y_1y_2}^{r'_2}}+\|(Q_b)_{y_2}\|_{L_{y_1y_2}^{r'_2}})\Big]\lesssim \frac{1}{\lambda(t)^{2+\frac{2}{r_2}}}\lambda(t)^{c(1+\frac{\gamma}{r_2})}\lesssim \frac{1}{\lambda(t)^{\frac{3}{q'_2}-c\tilde{\alpha}}}$$
where $\tilde{\alpha}=1+\frac{\gamma}{r_2}$ so as $\tilde{\alpha}>1>\frac{1}{q'_2},$
\begin{equation}\label{eq:Festimate}
\norm{ \mathfrak{F}}_{L_{[t,T-\tau)}^{q^{'}_{2}}L_{x_1x_2}^{r^{'}_{2}}}\lesssim \Bigg(\int_{t}^{T}\Bigg(\frac{1}{\lambda(t')^{\frac{3}{q'_2}-c\tilde{\alpha}}}\Bigg)^{q'_2}dt'\Bigg)^{\frac{1}{q'_2}}\sim(T-t)^{\frac{c}{3-c}(\tilde{\alpha}-\frac{1}{q'_2})}.
\end{equation}

For all $\varepsilon>0,$ from \eqref{eq:QSL2estimate}, \eqref{eq:fuestimate}, \eqref{eq:Festimate} we have that there exists a $t_{\varepsilon}$ close enough to $T$ such that for any $t_{\varepsilon}<t<T,$

\begin{equation}\label{eq:QSestimate}
\norm{Q_S(t,x_1,x_2)-\frac{1}{\lambda(t)}Q\Big(\frac{x_1-x_1(t)}{\lambda(t)},\frac{x_2-x_2(t)}{\lambda(t)} \Big)}_{L^2(\mathbb{R}^2)}\leq\varepsilon,
\end{equation}
\begin{equation}\label{eq:fuandFestimate}
\norm{ f(\tilde{u})_{x_1}}_{L_{[t,T-\tau)}^{q^{'}_{2}}L_{x_1x_2}^{r^{'}_{2}}}, \norm{ \mathfrak{F}}_{L_{[t,T-\tau)}^{q^{'}_{2}}L_{x_1x_2}^{r^{'}_{2}}}\leq \varepsilon.
\end{equation}

\textit{ Step 3. Estimates on} $\|v_{\tau}(t)\|_{L^{2}_{x_1x_2}}$

By the triangle inequality we get 
$$\|\tilde{u}(t_{\varepsilon}+\tau)-\tilde{u}(t_{\varepsilon})\|_{L^{2}_{x_1x_2}}\leq \|u(t_{\varepsilon}+\tau)-u(t_{\varepsilon})\|_{L^{2}_{x_1x_2}}$$
$$+\norm{Q_S(t_{\varepsilon},x_1,x_2)-\frac{1}{\lambda(t_{\varepsilon})}Q\Big(\frac{x_1-x_1(t_{\varepsilon})}{\lambda(t_{\varepsilon})},\frac{x_2-x_2(t_{\varepsilon})}{\lambda(t_{\varepsilon})} \Big)}_{L^2(\mathbb{R}^2)}$$
$$+\norm{Q_S(t_{\varepsilon}+\tau,x_1,x_2)-\frac{1}{\lambda(t_{\varepsilon}+\tau)}Q\Big(\frac{x_1-x_1(t_{\varepsilon}+\tau)}{\lambda(t_{\varepsilon}+\tau)},\frac{x_2-x_2(t_{\varepsilon}+\tau)}{\lambda(t_{\varepsilon}+\tau)} \Big)}_{L^2(\mathbb{R}^2)}$$
$$+\norm{\frac{1}{\lambda(t_{\varepsilon})}Q\Big(\frac{x_1-x_1(t_{\varepsilon})}{\lambda(t)},\frac{x_2-x_2(t)}{\lambda(t_{\varepsilon})}\Big) -\frac{1}{\lambda(t_{\varepsilon}+\tau)}Q\Big(\frac{x_1-x_1(t_{\varepsilon}+\tau)}{\lambda(t_{\varepsilon}+\tau)},\frac{x_2-x_2(t_{\varepsilon}+\tau)}{\lambda(t_{\varepsilon}+\tau)}\Big)}_{L^2(\mathbb{R}^2)}$$
From the $H^1$ theory, i.e. $u\in C([0,T),H^1),$ there exists $\tau_0=\tau_0(t_{\varepsilon})\in (0, T-t_{\varepsilon})$ such that $\forall 0<\tau<\tau_0,$ we have
\begin{equation}\label{H1theory}
\|u(t_{\varepsilon}+\tau)-u(t_{\varepsilon})\|_{L^{2}_{x_1x_2}}\leq \varepsilon.
\end{equation}

Define $F(t,x_1,x_2)=\frac{d}{dt}\Big[\frac{1}{\lambda(t)}Q\Big(\frac{x_1-x_1(t)}{\lambda(t)},\frac{x_2-x_2(t)}{\lambda(t)}\Big) \Big]$ and the last term can be written as 
$$\norm{\int_{t_{\varepsilon}}^{t_{\varepsilon}+\tau}F(t',x_1,x_2)dt'}_{L^{2}_{x_1x_2}}\leq \int_{t_{\varepsilon}}^{t_{\varepsilon}+\tau}\norm{F(t',x_1,x_2)}_{L^{2}_{x_1x_2}}dt'$$
where here we used Minkowski's inequality. 
By doing a change of variable (i.e. $y_i=\frac{x_i-x_i(t)}{\lambda(t)}$ for $i=1,2$) we get 
$$\int_{t_{\varepsilon}}^{t_{\varepsilon}+\tau}\norm{F(t',x_1,x_2)}_{L^{2}_{x_1x_2}}dt'\leq \int_{t_{\varepsilon}}^{t_{\varepsilon}+\tau}\Big(\Big|\frac{\lambda_t}{\lambda}\Big|\norm{\Lambda Q}_{L^{2}_{y_1y_2}}+\Big|\frac{(x_1)_t}{\lambda}\Big|\norm{Q_{y_1}}_{L^{2}_{y_1y_2}}+\Big|\frac{(x_2)_t}{\lambda}\Big|\norm{Q_{y_2}}_{L^{2}_{y_1y_2}}\Big)$$
We estimate the first term using \eqref{eq:tlawforlambda}
$$\int_{t_{\varepsilon}}^{t_{\varepsilon}+\tau}\Big|\frac{\lambda_t}{\lambda}\Big|\leq c\int_{t_{\varepsilon}}^{t_{\varepsilon}+\tau}\frac{1}{T-t}dt=\ln\Big(\frac{T-t_{\varepsilon}}{T-t_{\varepsilon}-\tau}\Big)\leq \frac{\tau}{T-t_{\varepsilon}-\tau}\leq \varepsilon$$
the last inequality being true for a small enough $\tau.$ 
For the second term, using \eqref{eq:tlawforx1}
$$\int_{t_{\varepsilon}}^{t_{\varepsilon}+\tau}\Big|\frac{(x_1)_t}{\lambda}\Big|\leq c\int_{t_{\varepsilon}}^{t_{\varepsilon}+\tau}(T-t)^{-\frac{3}{3-c}}dt=\frac{\tau}{(T-t_{\varepsilon})(T-t_{\varepsilon}-\tau)^{\frac{c}{3-c}}}\leq \varepsilon$$
the last inequality being true for a small enough $\tau.$ 
Lastly, for the third term using \eqref{eq:tlawforx2}
$$\int_{t_{\varepsilon}}^{t_{\varepsilon}+\tau}\Big|\frac{(x_2)_t}{\lambda}\Big|\lesssim \int_{t_{\varepsilon}}^{t_{\varepsilon}+\tau}\frac{1}{|\lambda(t)|}dt\leq c\int_{t_{\varepsilon}}^{t_{\varepsilon}+\tau}(T-t)^{-\frac{1}{3-c}}dt=\frac{\tau}{(T-t_{\varepsilon})^{\frac{2-c}{3-c}}}\leq \varepsilon$$
the last inequality being true for a small enough $\tau.$ Hence, there exists $\tau_1=\tau_1(t_{\varepsilon})$ such that for all $0<\tau\leq\tau_1$ we get that 
\begin{equation}\label{eq:modulatedQconvergence}
\norm{\frac{1}{\lambda(t_{\varepsilon})}Q\Big(\frac{x_1-x_1(t_{\varepsilon})}{\lambda(t)},\frac{x_2-x_2(t)}{\lambda(t_{\varepsilon})}\Big) -\frac{1}{\lambda(t_{\varepsilon}+\tau)}Q\Big(\frac{x_1-x_1(t_{\varepsilon}+\tau)}{\lambda(t_{\varepsilon}+\tau)},\frac{x_2-x_2(t_{\varepsilon}+\tau)}{\lambda(t_{\varepsilon}+\tau)}\Big)}_{L^{2}_{x_1x_2}}\leq \varepsilon
\end{equation}
 Hence, from \eqref{eq:QSestimate}, \eqref{H1theory}, \eqref{eq:modulatedQconvergence} it yields that there exists a $\tilde{\tau}=\tilde{\tau}(t_{\varepsilon})$ such that for all $0<\tau\leq \tilde{\tau},$
 $$\|v_{\tau}(t_{\varepsilon})\|_{L^{2}_{x_1x_2}}\leq C\varepsilon.$$
 
 \textit{Step 4. Conclusion.}
 
 From the previous steps, we get that for $\tau<\tilde{\tau}$ we get that 
 $$\|v_{\tau}\|_{L^{\infty}_{[t,T-\tau)}L^{2}_{x_1x_2}}\leq C\varepsilon.$$
 Now, choose $t_0$ with $\max(T-\tilde{\tau},t_{\varepsilon})<T.$ Then, for all $t_1,t_2\in (t_0,T), t_1<t_2, t_2-t_1=\tau<\tilde{\tau}.$ From everything above, we have: 
 $$\|\tilde{u}(t_2)-\tilde{u}(t_1)\|_{L^{2}_{x_1x_2}}=\|v_{\tau}(t_1)\|_{L^{2}_{x_1x_2}}\leq \|v_{\tau}\|_{L^{\infty}_{[t,T-\tau)}L^{2}_{x_1x_2}}\leq C\varepsilon.$$
 Hence, $\tilde{u}(t)$ is a Cauchy sequence in $L^2$ as $t\rightarrow T.$ Therefore, there exists $u^*\in L^2$ such that $\tilde{u}(t)\rightarrow u^*$ in $L^2$ as $t\rightarrow T.$
 
 We remark that 
 $$\norm{u(t,x_1,x_2)-\frac{1}{\lambda(t)}Q\Big(\frac{x_1-x_1(t)}{\lambda(t)},\frac{x_2-x_2(t)}{\lambda(t)}\Big)}_{L^{2}_{x_1x_2}}\rightarrow\|u^*\|_{L^2},$$
 implies $\|u^*\|_{L^2}\geq \frac{1}{2}(\|u_0\|_{L^2}-\|Q\|_{L^2})>0,$ therefore $u^* \not\equiv 0.$
 
 Since, 
 $$\norm{\frac{1}{\lambda^2(t)}(\nabla\varepsilon)\Big(t,\frac{x_1-x_1(t)}{\lambda(t)},\frac{x_2-x_2(t)}{\lambda(t)}\Big)}^{2}_{L^2}=\frac{\|\nabla \varepsilon(t)\|^{2}_{L^2}}{\lambda^2(t)}\sim \lambda^{c-2}(t),$$
 $$\norm{\frac{b(t)}{\lambda(t)}\chi\Big(|b(t)|^{\gamma}\frac{x_1-x_1(t)}{\lambda(t)}\Big)P\Big(\frac{x_1-x_1(t)}{\lambda(t)},\frac{x_2-x_2(t)}{\lambda(t)}\Big)}^{2}_{L^2}\sim \frac{|b(t)|^{2-\gamma}}{\lambda^2(t)}\sim \lambda^{c(2-\gamma)-2}(t),$$
 therefore 
 $$\|\nabla\tilde{u}\|_{L^2}\geq \lambda^{c-2}(t)[1-\lambda^{c\frac{1-\gamma}{2}}(t)]^2\rightarrow +\infty \mbox{ as } t\rightarrow +\infty,$$
 since $c<2.$ Thus the convergence to $u^*$ cannot be in $H^1.$

 \section{Blow Up for \texorpdfstring{$E_0\leq 0$}{Lg}}\label{Blow Up for E0}
 
 We start with an orbital stability result that appears in \cite{Merle01} (Lemma $1$), \cite{MartelMerle02} (Lemma $1$)  for the one-dimensional cade in regards with the gKdV equation. We reiterate the result for the two-dimensional case. 
 
 \begin{theorem} \label{orbitalstability}
 There exists $\alpha_1>0$ such that the following property holds true. For all $0<\alpha'\leq \alpha_1,$ there exists $\delta=\delta(\alpha')>0,$ with $\delta(\alpha')\rightarrow 0$ as $\alpha'\rightarrow 0,$ such that for all $u \in H^1(\mathbb{R}^2), u \not\equiv 0,$ if 
 $$\alpha(u)\leq \alpha',\mbox{  }E(u)\leq \alpha'\int \int |\nabla u|^2,$$  
 then there exists $x_1,y_1 \in \mathbb{R}$ and $\epsilon_0\in \{-1,1\}$ such that 
 $$\|Q-\epsilon_0\lambda_0u(\lambda_0x+x_1,\lambda_0y+y_1)\|_{H^1}\leq \delta(\alpha'),$$
 with $$\lambda_0=\frac{\|\nabla Q\|_{L^2}}{\|\nabla u\|_{L^2}}.$$
 \end{theorem}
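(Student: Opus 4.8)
The plan is to prove Theorem \ref{orbitalstability} by a compactness/concentration argument combined with the variational characterization of the ground state $Q$, exactly as in the gKdV case \cite{Merle01}, \cite{MartelMerle02}, but adapted to two dimensions. First I would argue by contradiction: suppose the statement fails, so there exist $\alpha' > 0$ and a sequence $u_n \in H^1(\mathbb{R}^2)$ with $u_n \not\equiv 0$, $\alpha(u_n) \leq \alpha'$ (where $\alpha(u_n)$ abbreviates the $L^2$-distance to the family $\{Q(\cdot + \vec{y})\}$ up to the normalization appearing in the definition of $U_\alpha$), and $E(u_n) \leq \alpha' \int\int |\nabla u_n|^2$, but such that for every $x_1, y_1 \in \mathbb{R}$ and every sign $\epsilon_0 \in \{-1,1\}$ one has $\|Q - \epsilon_0 \lambda_{0,n} u_n(\lambda_{0,n} \cdot + x_1, \lambda_{0,n} \cdot + y_1)\|_{H^1} > \delta_0$ for some fixed $\delta_0 > 0$ with $\lambda_{0,n} = \|\nabla Q\|_{L^2}/\|\nabla u_n\|_{L^2}$. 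By rescaling with $\lambda_{0,n}$, I may assume $\|\nabla u_n\|_{L^2} = \|\nabla Q\|_{L^2}$ for all $n$.

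The next step is to extract a profile. Since $\alpha(u_n)$ is small, the mass $\|u_n\|_{L^2}$ is close to $\|Q\|_{L^2}$, and with $\|\nabla u_n\|_{L^2} = \|\nabla Q\|_{L^2}$ fixed the sequence $(u_n)$ is bounded in $H^1$; moreover the hypothesis $E(u_n) \leq \alpha' \|\nabla u_n\|_{L^2}^2$ forces $\int\int u_n^4 \geq \frac{1}{2}(1 - 2\alpha')\|\nabla u_n\|_{L^2}^2$, so the $L^4$ norm does not vanish. By the concentration-compactness principle (the vanishing and dichotomy alternatives being excluded by the non-vanishing $L^4$ norm and by the near-minimality of the mass relative to the Weinstein functional, respectively), there is a sequence of translations $(x_n, y_n)$ such that $u_n(\cdot + x_n, \cdot + y_n) \rightharpoonup U$ weakly in $H^1$ with $U \not\equiv 0$. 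Passing to the limit in the Weinstein/Gagliardo--Nirenberg inequality and using weak lower semicontinuity of the norms together with $E(u_n) \leq \alpha' \|\nabla u_n\|_{L^2}^2 \to$ a small quantity, one shows $U$ must realize (in the limit $\alpha' \to 0$, after a further diagonal extraction sending $\alpha' \to 0$) equality in the sharp Gagliardo--Nirenberg inequality, hence $U = \pm \mu Q(\mu(\cdot + a))$ for some $\mu > 0$, $a \in \mathbb{R}^2$ by the variational characterization and uniqueness \cite{Kwong}, \cite{Weinstein}. Matching the gradient normalization pins down $\mu$, and one upgrades weak to strong $H^1$ convergence because all three of $\|u_n\|_{L^2}$, $\|\nabla u_n\|_{L^2}$, $\|u_n\|_{L^4}$ converge to the corresponding norms of the limit (norm convergence plus weak convergence gives strong convergence in $H^1$). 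This strong convergence contradicts the assumed lower bound $\delta_0$, completing the argument.

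The precise bookkeeping is that one should prove a quantitative version: rather than a single contradiction sequence, run the argument with $\alpha'$ as a parameter and show that the function $\alpha' \mapsto \sup\{ \inf_{x_1,y_1,\epsilon_0} \|Q - \epsilon_0 \lambda_0 u(\lambda_0 \cdot + x_1, \lambda_0\cdot + y_1)\|_{H^1} : \alpha(u) \leq \alpha', E(u) \leq \alpha' \|\nabla u\|_{L^2}^2, u \not\equiv 0\}$ tends to $0$ as $\alpha' \to 0^+$; that supremum defines the admissible $\delta(\alpha')$. The contradiction hypothesis is then literally the failure of this limit, and the compactness argument above produces the convergent subsequence.

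The main obstacle I expect is the exclusion of dichotomy in the concentration-compactness step: one must rule out that the mass of $u_n$ splits into two (or more) bubbles traveling apart. Here the key input is that $\|u_n\|_{L^2}^2$ is only slightly above $\|Q\|_{L^2}^2$ while the Weinstein inequality forces any bubble carrying positive energy deficit to carry at least mass $\|Q\|_{L^2}^2$; combined with the near-zero energy this leaves no room for a second bubble, so a single profile captures essentially all the mass. Making this clean in two dimensions — where the Gagliardo--Nirenberg exponent is $\|u\|_{L^4}^4 \lesssim \|u\|_{L^2}^2 \|\nabla u\|_{L^2}^2$, the $L^2$-critical scaling — requires care but follows the template of \cite{Merle01} and \cite{MartelMerle02}; the two-dimensional radial symmetry and decay of $Q$ from \cite{GidasNiNirenberg} and uniqueness from \cite{Kwong} are exactly the ingredients that make the limiting profile unique.
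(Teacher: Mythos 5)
Your proposal is correct and follows essentially the same route as the paper's Appendix D: a contradiction argument after rescaling to $\|\nabla u_n\|_{L^2}=\|\nabla Q\|_{L^2}$, concentration-compactness to extract a nontrivial profile, identification of the limit via the variational characterization of $Q$ (sharp Gagliardo--Nirenberg/Weinstein), and upgrading weak to strong $H^1$ convergence from convergence of the $L^2$, $\dot H^1$ and $L^4$ norms. The only caveat is notational: in the paper (following Merle's Lemma 1) $\alpha(u)$ is the mass excess $\int\int u^2-\int\int Q^2$ rather than the $L^2$-distance to the soliton family, but with either reading the hypothesis combined with Gagliardo--Nirenberg forces $\int\int u_n^2\to\int\int Q^2$, so the argument is unaffected.
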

 
 We include the proof of this theorem in Appendix D \ref{appendixD} for the sake of completeness. 
 
Suppose now that we have an initial data $u_0\in \mathcal{A}_{\alpha_0}$ with $E_0\leq 0$ and take $\alpha_1$ from Theorem \ref{orbitalstability} small enough compared to $\alpha^*$ (implying that $\epsilon_0(t)$ is constant for all $t$), by the same theorem we get that $u(t)$ belongs to the tube $\mathcal{T}_{\alpha^*}$ on the maximal existence interval $[0,T).$ This means the solution $u(t)$ cannot be in the Exit Case. Denote by $(\lambda(t), b(t), x_1(t), x_2(t))$ to be the geometrical quantities arising from a decomposition of $u(t)$ as in Lemma \ref{decompositionlemma}.

\textbf{Case 1:} $E_0<0.$ 

By the conservation of the energy \eqref{eq:energyconservation}, we get 
$$\lambda^2(t)|E_0|+\|\nabla \varepsilon\|_{L^2}^{2}\lesssim |b(t)|+\mathcal{N}_6(t)\rightarrow 0\mbox{ as }t\rightarrow \infty,$$
so $\lambda(t)\rightarrow 0$ as $t\rightarrow \infty$, so we cannot be the asymptotic law in the Soliton Case of Theorem \ref{rigiditytheorem},  which implies that the solution blows up. 

\textbf{Case 2:} $E_0=0.$

Again, by the conservation of the energy \eqref{eq:energyconservation} we have that 
$$\|\nabla \varepsilon(t)\|_{L^2}^{2}\lesssim |b(t)|+\mathcal{N}_6(t)\rightarrow 0\mbox{ as }t\rightarrow \infty.$$
Suppose by contradiction that we are in the Solition Case of Theorem \ref{rigiditytheorem}. Therefore, there exists $C^*$ such that 
\begin{equation}\label{bdominatedbyN}
|b(t)|\leq C^*\mathcal{N}_1(t)
\end{equation}
for all $t \in [0,+\infty).$ 
Thus, by the dispersive estimates \eqref{eq:Dispersiveestimates1}, we obtain
$$\int_{t}^{+\infty}\|\nabla \varepsilon(t')\|_{L^2}^{2}dt'\lesssim \int_{t}^{+\infty}|b(t')|dt'+\int_{t}^{+\infty}\mathcal{N}_2(t')dt'\lesssim \mathcal{N}_6(t)+|b(t)|^3\rightarrow 0, \mbox{ as } t\rightarrow +\infty.$$

We state a lemma that appears in (Appendix C, \cite{MerleRaphael06}) adapted to our setting.

\begin{lemma} 
Let $w \in H^1(\mathbb{R}^2)$, then we have the following estimate: 
$$\int \int_{|y_1|\leq D}w^2\leq  CD^2\Big(\int \int w^2e^{-|y_1|}+\int \int |\nabla w|^{2}\Big).$$
\end{lemma}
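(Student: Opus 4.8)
The plan is to reduce the two–dimensional estimate to a one–dimensional Poincaré–type inequality performed on each horizontal slice $y_1 \mapsto w(y_1,y_2)$, and then integrate the result in $y_2$. Throughout I work with $D \geq 1$, which is the only regime of interest (the lemma is applied with $D \gg 1$) and which allows me to freely replace $D$ by $D^2$. By a routine density argument it suffices to treat $w \in C_c^\infty(\mathbb{R}^2)$, so that all the integrations by parts and applications of the fundamental theorem of calculus below are legitimate.

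First I would fix $y_2$ and write, for $y_1 \in [-D,D]$ and any $z \in [-1,1]$, the identity $w(y_1,y_2) = w(z,y_2) + \int_{z}^{y_1} \partial_{y_1} w(t,y_2)\, dt$. By Cauchy–Schwarz and $|y_1 - z| \leq 2D$ this gives
$$ w(y_1,y_2)^2 \leq 2\, w(z,y_2)^2 + 4D \int_{-D}^{D} |\partial_{y_1} w(t,y_2)|^2\, dt. $$
Averaging this inequality over $z \in [-1,1]$ and using that $e^{-|z|} \geq e^{-1}$ on that interval, one obtains
$$ w(y_1,y_2)^2 \lesssim \int_{\mathbb{R}} w(z,y_2)^2\, e^{-|z|}\, dz + D \int_{\mathbb{R}} |\partial_{y_1} w(t,y_2)|^2\, dt . $$
The point of averaging over $[-1,1]$ is precisely that the weight $e^{-|y_1|}$ is bounded below there by a positive constant, so the "boundary value" term $w(z,y_2)^2$ is absorbed into the weighted $L^2$ norm.

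Next I would integrate the last bound over $y_1 \in [-D,D]$, which costs a factor $2D$, and then integrate over $y_2 \in \mathbb{R}$, which yields
$$ \int \int_{|y_1| \leq D} w^2 \lesssim D \int \int w^2 e^{-|y_1|} + D^2 \int \int |\partial_{y_1} w|^2 \leq C D^2 \Big( \int \int w^2 e^{-|y_1|} + \int \int |\nabla w|^2 \Big), $$
where in the last step I used $D \leq D^2$ and $|\partial_{y_1} w| \leq |\nabla w|$. This is the claimed inequality.

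I do not anticipate any genuine obstacle: this is a classical weighted slicing estimate, and the only points that require a little care are the choice of the averaging window $[-1,1]$ (where the exponential weight is uniformly bounded below) and the bookkeeping of the powers of $D$, so as to land on the $O(D^2)$ constant rather than a worse power. If a sharp constant were needed one could optimize the length of the averaging interval, but for the subsequent application only the $D^2$ scaling is relevant.
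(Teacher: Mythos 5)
Your proof is correct and follows essentially the same route as the paper: a one-dimensional slicing argument via the fundamental theorem of calculus, with the base-point contribution absorbed into the exponentially weighted norm on a fixed interval near the origin, followed by integration in $y_2$. The only (inessential) difference is that you average the base point $z$ over $[-1,1]$, whereas the paper pigeonholes a single good point $y_0\in[0,1]$ with $|v(y_0)|^2\leq 3\int v^2e^{-|y|}$; the two devices are interchangeable.
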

\begin{proof}
We include for completeness a short proof. Suppose $v \in C_{0}^{\infty}(\mathbb{R}).$ By a simple contradiction argument, we get there exists $y_0 \in [0,1]$ such that $|v(y_0)|^2\leq 3\int v^2e^{-|y|}.$ Then writing that $v(y)=v(y_0)+\int_{y_0}^{y}v_y(x)dx,$ we get 
$$\int_{|y|\leq D}v^2(y)dy\leq C\int_{|y|\leq D}\Big(|v(y_0)|^2+|y-y_0|\Big(\int_{y_0}^{y}|v_y|^2(x)dx\Big)dy\Big)$$
$$\leq CD^2\Big(\int v^2e^{-|y|}+\int v_{y}^2\Big).$$
By the density of $C_{0}^{\infty}(\mathbb{R})$ in $H^1(\mathbb{R})$, the conclusion holds for all $v \in H^1.$ The Lemma follows by applying the conclusion to the function $w(\cdot,y_2)=v$ and then integrating in $y_2.$  
\end{proof} 

As a corollary of the above lemma we have 
\begin{equation}\label{eq:MerleLemma}
\int \int_{|y_1|\leq D}\varepsilon^2(t)\leq CD^2\mathcal{N}_1(t).
\end{equation}

\begin{lemma}\label{eq:Improvement}
Suppose $\chi \in C_{c}^{3}(\mathbb{R})$ a function with supp$(\chi) \in [-1,1]$ and denote $\chi_D(\cdot)=\chi(\frac{\cdot}{D}).$
We have the following improvement of the above estimate: there exists $C>0$ independent of $D,$ such that for $t_0>0,$
$$\int \int \varepsilon^2(t_0,y_1,y_2)\chi_D(y_1)\leq C\mathcal{N}_{3}(t_0)+C\min(\mathcal{N}_{2}^{\frac{1}{2}}(t_0),D\mathcal{N}_{2}(t_0))+C\int_{t_0}^{\infty}\Big|\int \int \varepsilon^2(t)(\chi_D)_{y_1}\Big|.$$
\end{lemma}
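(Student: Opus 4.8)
The plan is to prove the estimate by integrating in time the evolution of the localized mass $\int\!\int\varepsilon^2(s)\chi_D(y_1)$. I work in the rescaled time $s$: since the Jacobian $\lambda^3$ cancels between $\tfrac{d}{dt}=\lambda^{-3}\tfrac{d}{ds}$ and $dt=\lambda^3\,ds$, controlling $\int_{t_0}^T\tfrac{d}{dt}\big[\int\!\int\varepsilon^2\chi_D\big]\,dt$ is the same as controlling $\int_{s_0}^\infty\tfrac{d}{ds}\big[\int\!\int\varepsilon^2\chi_D\big]\,ds$. In the Soliton Case $T=+\infty$, and by \eqref{eq:MerleLemma} together with $\mathcal{N}_1(t)\le\mathcal{N}_6(t)\to 0$ we have $\int\!\int\varepsilon^2(t)\chi_D\to 0$ as $t\to+\infty$; hence
\[
\int\!\int\varepsilon^2(s_0,y_1,y_2)\chi_D(y_1)=-2\int_{s_0}^{\infty}\!\!\int\!\int\varepsilon\,\varepsilon_s\,\chi_D(y_1)\,ds ,
\]
and the whole proof reduces to estimating the right-hand side after substituting the modulation equation \eqref{eq:ModulationEquation} for $\varepsilon_s$.

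First I would treat the linear transport part $(-\Delta\varepsilon+\varepsilon)_{y_1}$ of $\varepsilon_s$: integrating by parts in $y_1$ (and once in $y_2$) against $\varepsilon\chi_D$ produces, up to universal constants,
\[
3\!\int\!\int\varepsilon_{y_1}^2(\chi_D)_{y_1}+\int\!\int\varepsilon_{y_2}^2(\chi_D)_{y_1}-\int\!\int\varepsilon^2(\chi_D)_{y_1y_1y_1}+\int\!\int\varepsilon^2(\chi_D)_{y_1}.
\]
The last summand is exactly the quantity kept on the right-hand side of the Lemma (after integration in $s$ and taking absolute values). Since $|(\chi_D)_{y_1}|\lesssim D^{-1}$, the gradient terms are $\lesssim\|\nabla\varepsilon\|_{L^2}^2$, whose time integral is controlled by $\mathcal{N}_3(t_0)$ — here the hypothesis $E_0=0$ enters, via the energy identity \eqref{eq:energyconservation} and the dispersive bound \eqref{eq:Dispersiveestimates1}, which give $\int_{t_0}^{\infty}\|\nabla\varepsilon\|_{L^2}^2\,dt\lesssim\mathcal{N}_3(t_0)$ up to lower-order $|b|^3$ terms. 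The third-derivative term is split into $\{y_1>0\}$, where $(\phi_{3,B})_{y_1}$ dominates $|(\chi_D)_{y_1y_1y_1}|$ on $y_1\gtrsim D$ and it is $\lesssim D^{-2}\mathcal{N}_{3,loc}$ with integrable-in-$s$ primitive $\lesssim\mathcal{N}_3(t_0)$, and $\{y_1<0\}$, where after one integration by parts (trading $(\chi_D)_{y_1y_1y_1}$ for $(\chi_D)_{y_1}$) it is absorbed, modulo sign, into $\int_{t_0}^\infty\big|\int\!\int\varepsilon^2(\chi_D)_{y_1}\big|$.

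Next I would dispose of the lower-order source terms. The modulation terms $(\tfrac{\lambda_s}{\lambda}+b)\Lambda Q_b$, $(\tfrac{(x_1)_s}{\lambda}-1)(Q_b)_{y_1}$, $\tfrac{(x_2)_s}{\lambda}(Q_b)_{y_2}$ and $\Psi_b$ each pair $\varepsilon$ against a profile with exponential decay in $|y_1|+|y_2|$ (using \eqref{eq:estimatesPsib} for $\Psi_b$), so each is $\lesssim\big(|\tfrac{\lambda_s}{\lambda}+b|+|\tfrac{(x_1)_s}{\lambda}-1|+|\tfrac{(x_2)_s}{\lambda}|+b^2\big)\mathcal{N}_{1,loc}^{1/2}$, which by \eqref{eq:modulatedcoefficients} is $\lesssim\mathcal{N}_{3,loc}+b^4$ and integrates to $\lesssim\mathcal{N}_3(t_0)$; the term $\Phi_b=-b_s\zeta_bP$ carries the $b_s$ factor from \eqref{eq:modulatedcoefficients} and is handled using the $L^2$-smallness of $\varepsilon$ on the support of $\zeta_b$ (the choice $\gamma<1$ is what makes the resulting $|b|^{-\gamma}$ loss integrable against $b_s$). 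The $\varepsilon_{y_1}$-piece of the $(x_1)_s$ term reduces after one integration by parts to $-\tfrac12(\tfrac{(x_1)_s}{\lambda}-1)\int\!\int\varepsilon^2(\chi_D)_{y_1}$, which is $\delta(\nu^*)$ times the kept quantity and is absorbed, while the $\varepsilon_{y_2}$-piece of the $(x_2)_s$ term vanishes identically ($\chi_D$ is $y_2$-independent). The nonlinear term $-2\int\!\int\varepsilon\big[(Q_b+\varepsilon)^3-Q_b^3\big]_{y_1}\chi_D$ is handled with the algebraic identity of Section~\ref{Monotonicity Formulas} (producing $\int\!\int(\chi_D)_{y_1}\big[(Q_b+\varepsilon)^4-\dots\big]$ and $\int\!\int(Q_b)_{y_1}(\dots)\chi_D$) together with the Sobolev Lemma~\ref{SobolevLemma}; using $\|\varepsilon\|_{L^2}\ll 1$, $\|Q_b\|_{L^\infty}\lesssim 1$ and the exponential decay of $(Q_b)_{y_1}$, it is bounded by $\delta(\nu^*)\mathcal{N}_{3,loc}$ plus a $\|\varepsilon\|_{L^2}\int\!\int\varepsilon^2|(\chi_D)_{y_1}|$ contribution feeding the kept quantity.

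The genuine obstacle is the drift term $\tfrac{\lambda_s}{\lambda}\Lambda\varepsilon$, which cannot be tamed by $b\ll\lambda^2$ as in \cite{MartelMerleRaphael1}. Here I would use the exact cancellation, obtained by integrating by parts the three pieces of $\Lambda$,
\[
\int\!\int\varepsilon\,\Lambda\varepsilon\,\chi_D=-\tfrac12\int\!\int y_1(\chi_D)_{y_1}\,\varepsilon^2 .
\]
Because $|y_1(\chi_D)_{y_1}(y_1)|=|(y_1/D)\chi'(y_1/D)|\lesssim 1$ and this weight is supported in $|y_1|\sim D$, I split the integral into $\{y_1>0\}$, where $1\lesssim(\phi_{3,B})_{y_1}$ on $y_1\gtrsim D$ and the contribution is $\lesssim\delta(\nu^*)\mathcal{N}_{3,loc}$, and $\{y_1<0\}$, where $|y_1(\chi_D)_{y_1}|\le D|(\chi_D)_{y_1}|$ and, using $|\tfrac{\lambda_s}{\lambda}|\lesssim|b|+\delta(\nu^*)$, the remaining piece is pushed into $C\int_{t_0}^\infty\big|\int\!\int\varepsilon^2(\chi_D)_{y_1}\big|$ — this is precisely why that term must be present on the right-hand side and is not further estimated. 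Finally, the starting terms $C\mathcal{N}_3(t_0)$ and $C\min\big(\mathcal{N}_2^{1/2}(t_0),D\mathcal{N}_2(t_0)\big)$ are boundary values at $t_0$: the former from the localization estimate \eqref{eq:MerleLemma}, the latter from a Cauchy–Schwarz pairing of $\varepsilon(t_0)$ against the cut-off, estimated either through an $L^2$ weight (yielding $\mathcal{N}_2^{1/2}(t_0)$) or through the $O(D)$ support of $\chi_D$ and the local coercivity of $\mathcal{N}_2$ (yielding $D\mathcal{N}_2(t_0)$), keeping the smaller. I expect the careful bookkeeping of the far-left region $\{y_1\sim -D\}$ — the only region where no polynomial weight from $\mathcal{N}_i$ is available and where $\varepsilon$ meets the non-decaying tail of $P$ — to be the delicate step, and it is exactly what forces leaving $\int_{t_0}^\infty\big|\int\!\int\varepsilon^2(\chi_D)_{y_1}\big|$ unestimated in the final bound.
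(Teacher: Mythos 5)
Your global strategy --- integrate $\tfrac{d}{dt}\int\!\int\varepsilon^2\chi_D$ from $t_0$ to $+\infty$, use $\int\!\int\varepsilon^2(t)\chi_D\to0$ (from \eqref{eq:MerleLemma}), substitute the modulation equation, and keep the flux term $\int_{t_0}^{\infty}\big|\int\!\int\varepsilon^2(\chi_D)_{y_1}\big|$ on the right --- is exactly the paper's strategy, and your treatment of the linear, $\mathfrak{F}$-type and nonlinear terms is essentially correct. However, the two terms that make this lemma delicate are both mishandled, and each requires an integration by parts \emph{in time} that your proposal does not contain.

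First, the drift term. From $\int\!\int\varepsilon\,\Lambda\varepsilon\,\chi_D=-\tfrac12\int\!\int y_1(\chi_D)_{y_1}\varepsilon^2$ you propose, on $\{y_1<0\}$, to bound $|y_1(\chi_D)_{y_1}|\le D|(\chi_D)_{y_1}|$ and push the result into $C\int_{t_0}^\infty\big|\int\!\int\varepsilon^2(\chi_D)_{y_1}\big|$. This fails twice: the resulting constant is $\delta(\nu^*)\,D$, not a $D$-independent $C$ (the lemma, and its use in Lemma \ref{masslemma} where one multiplies by $1/D$ and lets $D\to\infty$, requires $C$ independent of $D$); and you have replaced the signed weight $(\chi_D)_{y_1}$ by its absolute value, destroying the sign structure that Lemma \ref{masslemma} exploits. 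The crude pointwise bound on this term only yields the $D\mathcal{N}_2$ branch of the min; to obtain the $D$-independent branch $\mathcal{N}_2^{1/2}(t_0)$ one must insert the sharp modulation equation $\tfrac{\lambda_t}{\lambda}=-b+J_t+\tfrac{\lambda_t}{\lambda}J+O(b^2+\mathcal{N}_2)$ from Lemma \ref{sharporthogonalities} and integrate the $J_t$ contribution by parts in time (the paper's Step 6, Case 2). Your attribution of $\min(\mathcal{N}_2^{1/2},D\mathcal{N}_2)(t_0)$ to a ``boundary pairing at $t_0$'' is not where that term comes from: it is the time-integrated drift contribution.

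Second, the term $\Phi_b=-b_s\zeta_bP$. Since $P$ does not decay as $y_1\to-\infty$ and $\zeta_b$ reaches down to $y_1\sim-|b|^{-\gamma}$, the best pointwise bound is $|b_s|\,\|\varepsilon\|_{L^2}\min(D,|b|^{-\gamma})^{1/2}$. There is no a priori lower bound on $|b|$ (one only has $|b|\lesssim\mathcal{N}_1$), so the $|b|^{-\gamma/2}$ branch is uncontrolled, while the $D^{1/2}$ branch, integrated in time against $|b_s|\lesssim\mathcal{N}_{1,loc}+b^2$, produces a $D^{1/2}$-growing quantity that cannot be dominated by the right-hand side of the lemma when the $D$-independent branch of the min is needed; the condition $\gamma<1$ does not rescue this. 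The paper instead writes $\int_{t_0}^\infty b_tf_D=[bf_D]_{t_0}^\infty-\int_{t_0}^\infty b(f_D)_t$ with $f_D=\int\!\int\zeta_b\chi_DP\varepsilon$, proves the pointwise claim $|b\,(f_D)_t|\lesssim\mathcal{N}_2(t)$ by substituting the evolution equation into $(f_D)_t$, and absorbs the boundary term $|b(t_0)|^{1-\gamma/2}\big(\int\!\int\varepsilon^2(t_0)\chi_D\big)^{1/2}$ into the left-hand side. Without these two time-integration-by-parts arguments the proof does not close.
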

\begin{proof}
We have the following equation for $\varepsilon:$
$$\varepsilon_t(t,y_1,y_2)=-\partial_{y_1}\Delta \varepsilon-\mathfrak{F}-b_t\zeta_bP-f(\varepsilon)_{y_1}+\frac{\lambda_t}{\lambda}\Lambda \varepsilon+\frac{(x_1)_t}{\lambda}\varepsilon_{y_1}+\frac{(x_2)_t}{\lambda}\varepsilon_{y_2},$$
 $f(\varepsilon)=(Q_b+\varepsilon)^3-Q_{b}^{3},$ $\zeta_b=\chi_b+\gamma y_1(\chi_b)_{y_1}$ and 
$$\mathfrak{F}(t)=\Big[-\Psi_b-\Big(\frac{\lambda_t}{\lambda}+b\Big)\Lambda Q_b-\Big(\frac{(x_1)_t}{\lambda}-1\Big)(Q_b)_{y_1}-\Big(\frac{(x_2)_t}{\lambda}\Big)(Q_b)_{y_2}\Big](t,y_1,y_2).$$
We compute 
$$\frac{1}{2}\frac{d}{dt}\int \int \varepsilon^2(t)\chi_D=-\frac{3}{2}\int \int \varepsilon_{y_1}^{2}(\chi_D)_{y_1}-\frac{1}{2}\int \int \varepsilon_{y_2}^{2}(\chi_D)_{y_1}+\int \int \varepsilon^2(\chi_D)_{y_1y_1y_1}-\int \int f(\varepsilon)_{y_1}\varepsilon\chi_D$$
$$-\int \int \mathfrak{F}\varepsilon\chi_D-b_t\int \int \zeta_b P\varepsilon\chi_D+\frac{\lambda_t}{\lambda}\int \int \varepsilon\Lambda \varepsilon \chi_D+\frac{(x_1)_t}{\lambda}\int \int \varepsilon \varepsilon_{y_1}\chi_D+\frac{(x_2)_t}{\lambda}\int \int \varepsilon\varepsilon_{y_2}\chi_D$$
$$\geq -\frac{3}{2}\int \int |\nabla \varepsilon|^2|(\chi_D)_{y_1}|+\int \int \varepsilon^2(\chi_D)_{y_1y_1y_1}-\int \int f(\varepsilon)_{y_1}\varepsilon\chi_D-\int \int \mathfrak{F}\varepsilon\chi_D$$
$$-b_t\int \int \zeta_b P\varepsilon\chi_D+\frac{\lambda_t}{\lambda}\int \int \varepsilon\Lambda \varepsilon \chi_D+\frac{(x_1)_t}{\lambda}\int \int \varepsilon \varepsilon_{y_1}\chi_D+\frac{(x_2)_t}{\lambda}\int \int \varepsilon\varepsilon_{y_2}\chi_D$$
$$\geq -\frac{3}{2}\int \int |\nabla \varepsilon|^2-\int \int \varepsilon^2|(\chi_D)_{y_1y_1y_1}|-\Big|\int \int f(\varepsilon)_{y_1}\varepsilon\chi_D\Big|-\Big|\int \int \mathfrak{F}\varepsilon\chi_D\Big|$$
$$-b_t\int \int \zeta_b P\varepsilon\chi_D+\frac{1}{2}\frac{\lambda_t}{\lambda}\int \int \varepsilon^2 y_1(\chi_D)_{y_1}+\frac{(x_1)_t}{\lambda}\int \int \varepsilon \varepsilon_{y_1}\chi_D+\frac{(x_2)_t}{\lambda}\int \int \varepsilon\varepsilon_{y_2}\chi_D$$

\textit{Step 1: Estimates on }$\int \int \varepsilon^2|(\chi_D)_{x_1x_1x_1}|.$
We have by a change of variables and applying the lemma \eqref{eq:MerleLemma} that 
$$\int \int \tilde{u}^2|(\chi_D)_{y_1y_1y_1}|=\frac{1}{D^3}\Big|\int \int \varepsilon^2\chi_{y_1y_1y_1}(\frac{y_1}{D})|\leq C \frac{1}{D^3}D^2\mathcal{N}_{1}(t)\leq C\frac{1}{D}\mathcal{N}_{1}(t).$$

\textit{Step 2: Estimates on }$|\int \int f(\varepsilon)_{x_1}\varepsilon\chi_D|$
We compute by integration by parts 
$$\Big|\int \int f(\varepsilon)_{y_1}\varepsilon\chi_D\Big|=\Big|3\int \int Q_{b}^{2}\varepsilon\varepsilon_{y_1}\chi_D+3\int \int Q_{b}\varepsilon^2\varepsilon_{y_1}\chi_D+3\int \int \varepsilon^3\varepsilon_{y_1}\chi_D\Big|$$
$$\lesssim \int \int \Big|Q_b(Q_b)_{y_1}\varepsilon^2\Big|+ \int \int Q_{b}^{2}\varepsilon^2+\int \int \Big|(Q_{b})_{y_1}\varepsilon^3\Big|+\int \int \Big|Q_b\varepsilon^3\Big|+\int \int \varepsilon^4$$
$$\lesssim  \int \int Q_{b}(Q_{b})_{y_1}\varepsilon^2+ \int \int Q_{b}^{2}\varepsilon^2+\int \int \Big|(Q_b)_{y_1}\varepsilon^3|+ \int \int \Big|Q_{b}\varepsilon^3\Big|+\int \int \varepsilon^4$$

For each term, we use the Gagliardo-Nirenberg inequality and the smallness $\|\varepsilon\|_{L^2}\ll 1$ from \eqref{eq:Solitonconservationlaws} to get  
$$\Big|\int \int Q_b (Q_b)_{y_1}\varepsilon^2\Big| \lesssim \mathcal{N}_2(t)+b^2(t), \mbox{ }\Big|\int \int Q_{b}^{2}\varepsilon^2\Big| \lesssim \mathcal{N}_2(t)+b^2(t),$$
$$\Big|\int \int (Q_b)_{y_1}\varepsilon^3\Big| \lesssim \mathcal{N}_{2}(t)+|b(t)|^{1+\gamma}\|\nabla \varepsilon\|_{L^2}+|b(t)|\mathcal{N}_{2}(t),$$
$$\Big|\int \int (Q_b)\varepsilon^3\Big| \lesssim \mathcal{N}_{2}(t)+|b(t)|\|\nabla \varepsilon\|_{L^2},\mbox{ }\Big|\int \int \varepsilon^4(t)\Big|\lesssim \|\nabla \varepsilon(t)\|_{L^2}^{2}.$$
Now, putting together all the estimates, 
$$\Big|\int \int [f(\varepsilon)_{y_1}\varepsilon](t)\chi_D\Big|\lesssim \mathcal{N}_2(t)+b^2(t)+\|\nabla\varepsilon(t)\|_{L^2}^{2}\leq C\mathcal{N}_2(t).$$

\textit{Step 3: Estimates on }$\Big|\int \int \mathfrak{F}\varepsilon\chi_D\Big|.$

We have by previous estimates 
$$\Big|\int \int \Lambda Q_b \varepsilon\chi_D\Big|\lesssim \int \int |\Lambda Q_b\varepsilon |dy_1dy_2\lesssim \mathcal{N}_{2}^{\frac{1}{2}}(t)+|b(t)|^{1-\frac{\gamma}{2}},$$
$$\Big|\int \int (Q_b)_{y_1} \varepsilon\chi_D\Big|\lesssim \int \int |(Q_b)_{y_1}\varepsilon| dy_1dy_2\lesssim \mathcal{N}_{2}^{\frac{1}{2}}(t)+|b(t)|^{1+\frac{\gamma}{2}},$$
$$\Big|\int \int (Q_b)_{y_2} \varepsilon\chi_D\Big|\lesssim \int \int |(Q_b)_{y_2}\varepsilon| dy_1dy_2\lesssim \mathcal{N}_{2}^{\frac{1}{2}}(t)+|b(t)|^{1-\frac{\gamma}{2}},$$
$$\Big|\int \int \Psi_b \varepsilon\chi_D\Big|\lesssim \int \int |\Psi_b\varepsilon| dy_1dy_2\lesssim \mathcal{N}_{2}^{1+\frac{\gamma}{2}}(t),$$
and by the estimates \eqref{eq:modulatedcoefficients} we get 
$$\Big|\int \int \mathfrak{F}\varepsilon\chi_D\Big|\leq C(b^2(t)+\mathcal{N}_{2}^{\frac{1}{2}}(t))(\mathcal{N}_{2}^{\frac{1}{2}}(t)+|b(t)|^{1-\frac{\gamma}{2}})\leq C (\mathcal{N}_{2}(t)+|b(t)|^{2-\gamma})\leq C\mathcal{N}_{2}(t)$$

\textit{Step 4: Estimates on }$\frac{(x_1)_t}{\lambda}\int \int \varepsilon\varepsilon_{y_1}\chi_D.$

Using that $|(x_1)_t|, |\lambda(t)| \sim 1,$
$$ \Big|\frac{(x_1)_t}{\lambda}\Big|\Big|\int \int \varepsilon\varepsilon_{y_1}\chi_D\Big|\lesssim \Big|\int \int \varepsilon^2(\chi_D)_{x_1}\Big|.$$

\textit{Step 5: Estimates on }$\frac{(x_2)_t}{\lambda}\int \int \varepsilon\varepsilon_{y_2}\chi_D.$

Using that $|\lambda(t)|\sim 1,$ and  $|(x_2)_t|\lesssim b^2(t)+\mathcal{N}_{2}^{\frac{1}{2}}(t)$ from \eqref{eq:modulatedcoefficients}, we obtain from \eqref{bdominatedbyN} that
\begin{equation}\label{eq:x2term}
\begin{split}
 \Big|\frac{(x_2)_t}{\lambda}\int \int \int \int \varepsilon\varepsilon_{y_2}\chi_D\Big|&\leq C(b^2(t)+\mathcal{N}_{2}^{\frac{1}{2}}(t))\Big|\int \int \varepsilon\varepsilon_{y_2}\chi_D\Big| \leq C(b^2(t)+\mathcal{N}_{2}^{\frac{1}{2}}(t))\|\varepsilon\|_{L^2}\|\nabla\varepsilon\|_{L^2}\\
 &\leq (b^2(t)+\mathcal{N}_{2}^{\frac{1}{2}}(t))(|b(t)|+\mathcal{N}_{2}^{\frac{1}{2}}(t))\leq \mathcal{N}_{2}(t)+b^2(t)\leq \mathcal{N}_{2}(t).
 \end{split}
\end{equation}

Now, by integrating in time on $[t_0,+\infty),$ we have 
$$\lim_{t\rightarrow \infty}\int \int \varepsilon^2(t)\chi_D-\int\int \varepsilon^2(t_0)\chi_D\geq -C\int_{t_0}^{+\infty}\int \int \|\nabla\varepsilon(t')\|_{L^{2}_{y_1y_2}}dt'-C\int_{t_0}^{\infty} \mathcal{N}_{2}(t')dt'$$
$$-C\Big|\int_{t_0}^{\infty}b_t\int \int \zeta_b P\varepsilon\chi_D\Big|-C\Big|\int_{t_0}^{\infty}\frac{\lambda_t}{\lambda}\int \int \varepsilon^2y_1(\chi_D)_{y_1}\Big|-C\int_{t_0}^{\infty}\Big|\int \int \varepsilon^2(\chi_D)_{y_1}\Big|.$$
$$\geq -C\mathcal{N}_{3}(t_0)-C\Big|\int_{t_0}^{\infty}b_t\int \int \zeta_b P\varepsilon\chi_D\Big|-C\Big|\int_{t_0}^{\infty}\frac{\lambda_t}{\lambda}\int \int \varepsilon^2y_1(\chi_D)_{y_1}\Big|-C\int_{t_0}^{\infty}\Big|\int \int \varepsilon^2(\chi_D)_{y_1}\Big|.$$
By \eqref{eq:MerleLemma}, we have $\lim_{t\rightarrow \infty}\int \int \varepsilon^2(t)\chi_D=0,$ therefore 
$$\int \int \varepsilon^2(t_0)\chi_D\lesssim \mathcal{N}_{3}(t_0)+\Big|\int_{t_0}^{\infty}b_t\int \int \zeta_b P\varepsilon\chi_D\Big|+\Big|\int_{t_0}^{\infty}\frac{\lambda_t}{\lambda}\int \int \varepsilon^2y_1(\chi_D)_{y_1}\Big|+\int_{t_0}^{\infty}\Big|\int \int \varepsilon^2(\chi_D)_{y_1}\Big|.$$

\textit{Step 6: Estimates on }$\Big|\int_{t_0}^{\infty}\frac{\lambda_t}{\lambda}\int \int \varepsilon^2(t)y_1(\chi_D)_{y_1}\Big|.$

We will prove that 
$$\Big|\int_{t_0}^{\infty}\frac{\lambda_t}{\lambda}\int \int \varepsilon^2(t)y_1(\chi_D)_{y_1}\Big|\lesssim \min(\mathcal{N}_{2}^{\frac{1}{2}}(t),D\mathcal{N}_{2}(t)).$$

\textit{Case 1.}
Using $\Big|\frac{\lambda_t}{\lambda}\Big|\leq |b(t)|+\mathcal{N}_{2}^{\frac{1}{2}}(t)$ from \eqref{eq:modulatedcoefficients} and $ |y_1(\chi_D)_{y_1}|\leq \|y_1\chi_{y_1}\|_{L^{\infty}},$ from \eqref{eq:MerleLemma} we get that 
$$\Big|\frac{\lambda_t}{\lambda}\int \int \varepsilon^2y_1(\chi_D)_{y_1}\Big|\leq (|b(t)|+\mathcal{N}_{2}^{\frac{1}{2}}(t))\|y_1\chi_{y_1}\|_{L^{\infty}}\Big(\int \int_{|y_1|\leq D} \varepsilon^2\Big)^{\frac{1}{2}}\|\varepsilon\|_{L^2}$$
$$\leq (|b(t)|+\mathcal{N}_{2}^{\frac{1}{2}}(t))\|y_1\chi_{y_1}\|_{L^{\infty}}D\mathcal{N}_{2}^{\frac{1}{2}}\leq C\|y_1\chi_{y_1}\|_{L^{\infty}}D\mathcal{N}_{2}(t).$$

\textit{Case 2.}

Now we do the second estimate $\int_{t_0}^{\infty}\frac{\lambda_t}{\lambda}\int \int \varepsilon^2\frac{y_1}{D}\chi_{y_1}\Big(\frac{y_1}{D}\Big)=\int_{t_0}^{\infty}\frac{\lambda_t}{\lambda}\int \int \varepsilon^2\eta_D,$ where $\eta(y_1)=y_1\chi_{y_1}(y_1),$ so supp$(\eta)\subseteq [-1,1], \eta \in C_{c}^{3}(\mathbb{R})$ and $\eta_D(y_1)=\eta(\frac{\eta}{D}).$ 

By the sharp equation for $\lambda$ from Lemma \ref{sharporthogonalities}, for $J(t)=\frac{1}{2c_Q}(\varepsilon(t), \int_{-\infty}^{y_1}\Lambda Q)$ we have that 
 $$\frac{\lambda_t}{\lambda}=-b+\frac{d}{dt}J+\frac{\lambda_t}{\lambda}J+O(b^2+\mathcal{N}_{2}).$$
 Observe that $|J(t)|\lesssim \mathcal{N}_{2}^{\frac{1}{2}}(t).$ Hence
 $$\int_{t_0}^{\infty}\frac{\lambda_t}{\lambda}\int \int \varepsilon^2\eta_D=-\int_{t_0}^{\infty}b\int \int \varepsilon^2\eta_D+\int_{t_0}^{\infty}\frac{\lambda_t}{\lambda}J\int \int \varepsilon^2\eta_D$$
 $$+\int_{t_0}^{\infty}J_t\int \int \varepsilon^2\eta_D+\int_{t_0}^{\infty}O(b^2+\mathcal{N}_{2})\int \int \varepsilon^2\eta_D.$$
 We estimate the following terms by \eqref{eq:Dispersiveestimates1}, \eqref{eq:Solitonconservationlaws}, \eqref{bdominatedbyN}: 
 $$\Big|\int_{t_0}^{\infty}b(t)\int \int \varepsilon^2(t)\eta_D\Big|\lesssim \int_{t_0}^{\infty}\mathcal{N}_{1}(t)\|\varepsilon(t)\|_{L^2}^{2}\lesssim \mathcal{N}_{2}(t_0),$$
 $$\Big|\int_{t_0}^{\infty}\frac{\lambda_t}{\lambda}J(t)\int \int \varepsilon^2(t)\eta_D\Big|\lesssim \int_{t_0}^{\infty}(|b(t)|+\mathcal{N}_{2}^{\frac{1}{2}}(t))\mathcal{N}_{2}^{\frac{1}{2}}(t)\|\varepsilon(t)\|_{L^2}^{2}\lesssim \int_{t_0}^{\infty}\mathcal{N}_{2}(t)\lesssim \mathcal{N}_{3}(t_0),$$
 $$\int_{t_0}^{\infty}O(b^2(t)+\mathcal{N}_{2}(t))\int \int \varepsilon^2(t)\eta_D\lesssim \int_{t_0}^{\infty}\mathcal{N}_{2}(t)\|\varepsilon(t)\|_{L^2}^{2}\lesssim \mathcal{N}_{3}(t_0).$$
 Now we focus on the last term. 
 $$\int_{t_0}^{\infty}J_t\int \int \varepsilon^2(t)\eta_D=[J(t)\int \int \varepsilon^2(t)\eta_D]_{t_0}^{\infty}-\int_{t_0}^{\infty}J(t)\frac{d}{dt}\Big(\int \int \varepsilon^2(t)\eta_D\Big)$$
 $$=-J(t_0)\int \int \varepsilon^2(t_0)\eta_D-\int_{t_0}^{\infty}J(t)\frac{d}{dt}\Big(\int \int \varepsilon^2(t)\eta_D\Big)$$
 where we used 
 $$\Big|J(t)\int \int \varepsilon^2(t)\eta_D\Big|\lesssim \mathcal{N}_{2}^{\frac{1}{2}}(t)\|\varepsilon\|_{L^2}^{2}\rightarrow 0 \mbox{ as }t\rightarrow 0.$$
 Also, we notice that $\Big|J(t_0)\int \int \varepsilon^2(t_0)\eta_D\Big|\leq \mathcal{N}_{2}^{\frac{1}{2}}(t_0).$
 Let treat the term 
 $$\int_{t_0}^{\infty}J(t)\frac{d}{dt}\Big(\int \int \varepsilon^2(t)\eta_D\Big)=\int_{t_0}^{\infty}\Big[-\frac{3}{2}J(t)\int \int \varepsilon_{y_1}^{2}(\eta_D)_{y_1}-\frac{1}{2}J(t)\int \int \varepsilon_{y_2}^{2}(\eta_D)_{y_1}$$
 $$+J(t)\int \int \varepsilon^2(\eta_D)_{y_1y_1y_1}-J(t)\int \int f(\varepsilon)_{y_1}\varepsilon\eta_D-J(t)\int \int \mathfrak{F}\varepsilon \eta_D-J(t)b_t\int \int \zeta_b\eta_DP\varepsilon$$
 $$-J(t)\frac{\lambda_t}{\lambda}\int \int \varepsilon^2y_1(\eta_D)_{y_1}+J(t)\frac{(x_1)_t}{\lambda}\int \int \varepsilon \varepsilon_{y_1}\eta_D\Big]dt.$$
 We estimate all the terms that appear using part a) in Lemma \ref{sharporthogonalities}: 
 $$\Big|J(t)\int \int \varepsilon_{y_1}^{2}(\eta_D)_{y_1}\Big|, \Big|J(t)\int \int \varepsilon_{y_2}^{2}(\eta_D)_{y_1}\Big|, \Big|J(t)\int \int \varepsilon^2(\eta_D)_{y_1y_1y_1}\Big|\lesssim \mathcal{N}_{2}(t),$$
 $$\Big|J(t)\int \int f(\varepsilon)_{y_1}\varepsilon\eta_D\Big|, \Big|, \Big|J(t)\int \int \mathfrak{F}\varepsilon \eta_D\Big|\lesssim \mathcal{N}_{2}(t),$$
 $$\Big|J(t)\frac{\lambda_t}{\lambda}\int \int \varepsilon^2y_1(\eta_D)_{y_1}\Big|\lesssim \mathcal{N}_{2}^{\frac{1}{2}}(t)(|b(t)|+\mathcal{N}_{2}^{\frac{1}{2}}(t))\|\varepsilon\|_{L^2}^{2}\lesssim \mathcal{N}_{2}(t),$$
 $$\Big|J(t)\frac{(x_1)_t}{\lambda}\int \int \varepsilon \varepsilon_{y_1}\eta_D\Big|\lesssim \mathcal{N}_{2}^{\frac{1}{2}}(t)\mathcal{N}_{2}^{\frac{1}{2}}(t)\|\varepsilon\|_{L^2}\lesssim \mathcal{N}_{2}(t).$$
 
We have the last term to bound $\int_{t_0}^{\infty}J(t)b_t\int \int\zeta_b\eta_DP\varepsilon.$ For that denote $g_D(t)=\int \int \zeta_b\eta_DP\varepsilon(t).$ We note that $|g_D(t)|\lesssim \min(D,|b(t)|^{-\gamma})^{\frac{1}{2}},$ so $|b(t)g_D(t)|\lesssim |b(t)|^{1-\frac{\gamma}{2}}.$ By integrating by parts, 
$$\int_{t_0}^{\infty}J(t)b_tg_D=[J(t)b(t)g_D(t)]_{t_0}^{\infty}-\int_{t_0}^{\infty}J_tbg_D+\int_{t_0}^{\infty}Jb(g_D)_t$$
$$=-J(t_0)b(t_0)g_D(t_0)-\int_{t_0}^{\infty}\Big(\frac{\lambda_t}{\lambda}+b(t)-\frac{\lambda_t}{\lambda}J+O(b^2(t)+\mathcal{N}_{2}(t)\Big)b(t)g_D(t)+\int_{t_0}^{\infty}J(t)b(t)(g_D)_t$$
 where we used that $|J(t)b(t)g_D(t)|\rightarrow 0$ as $t \rightarrow \infty$ and the sharp equation \eqref{sharporthogonalities} for $J(t)$.
 Therefore $|J(t_0)b(t_0)g_D(t_0)|\lesssim \mathcal{N}_{2}(t_0)^{\frac{1}{2}}|b(t_0)|^{1-\frac{\gamma}{2}}\lesssim \mathcal{N}_{2}(t_0)$ and also 
 $$\Big|\Big(\frac{\lambda_t}{\lambda}+b-\frac{\lambda_t}{\lambda}J(t)+O(b^2(t)+\mathcal{N}_{2}(t)\Big)b(t)g_D(t)\Big|\lesssim \mathcal{N}_{2}(t).$$
 
We claim that we can bound pointwise $(g_D)_t$ by 
$$|(g_D)_t|\lesssim 1+(|b(t)|+\mathcal{N}_{2}^{\frac{1}{2}}(t))\min(D,|b(t)|^{-\gamma})^{\frac{1}{2}}+\mathcal{N}_{2}(t)\min(D,|b(t)|^{-\gamma}).$$
The claim is similar to the slightly more complicated one that appears in Step 7. We omit the repetition of the proof here and we refer the reader to Step 7, Claim \eqref{fDt}. 

Thus $|J(t)(g_D)_tb|\lesssim \mathcal{N}_{2}(t),$ which implies $\Big|\int_{t_0}^{\infty}J(t)b_tg_D\Big|\lesssim \mathcal{N}_{3}(t_0)$ which in turn gives
$$\Big|\int_{t_0}^{\infty}J(t)\frac{d}{dt}\Big(\int \int \varepsilon^2(t)\eta_D\Big)\Big|\lesssim \mathcal{N}_{3}(t_0).$$
 This yields the estimate
 $$\Big|\int_{t_0}^{\infty}J_t\int \int \varepsilon^2(t)\eta_D\Big|\lesssim \mathcal{N}_{2}^{\frac{1}{2}}(t_0),
 \mbox{ and so }\Big|\int_{t_0}^{\infty}\frac{\lambda_t}{\lambda}\int \int \varepsilon^2(t)\eta_D\Big|\lesssim \mathcal{N}_{2}^{\frac{1}{2}}(t_0).$$

\textit{Step 7: Estimates on} $\int_{t_0}^{\infty}b_t\int \int \zeta_b P\varepsilon\chi_D.$

Denote $$f_D(t)=\int \int \zeta_b\chi_DP\varepsilon dy_1dy_2.$$
 First, for any $D>0,$ by \eqref{eq:Solitonconservationlaws} we have that 
$$|f_D(t)|\leq \Big(\int \int \varepsilon^2(t)\chi_D\Big)^{\frac{1}{2}}\Big(\int \int \chi_D\zeta_{b}^{2}P^2\Big)^{\frac{1}{2}}\lesssim \|\varepsilon(t)\|_{L^{2}_{y_1y_2}}\min(D,|b(t)|^{-\gamma})^{\frac{1}{2}}\leq \delta(\alpha_0)D^{\frac{1}{2}},$$
which proves that $f_D(t)$ is a well-defined function in $t$. 
Also, we see that 
$$|b(t)f_D(t)|\lesssim \|\varepsilon\|_{L^{2}_{y_1y_2}}|b(t)|^{1-\frac{\gamma}{2}}\rightarrow 0, \mbox{ as } t\rightarrow +\infty.$$
As an alternative bound, we have 
$$|b(t)f_D(t)|\leq \Big(\int \int \varepsilon^2\chi_D\Big)^{\frac{1}{2}}|b(t)|^{1-\frac{\gamma}{2}}.$$
Let's denote $\tilde{\chi}=\chi_D\zeta_bP.$

\textit{Claim.} We have the following bound: 
\begin{equation}\label{fDt}
|(f_D)_t|\lesssim 1+(|b|+\mathcal{N}_{2}^{\frac{1}{2}})\min(D^{\frac{1}{2}},|b|^{-\frac{\gamma}{2}})+\mathcal{N}_{2}(t)\min(D, |b|^{-\gamma})
\end{equation}
and $|b(f_D)_t(t)|\lesssim \mathcal{N}_{2}(t).$

\textit{Proof of the Claim.}Using the modulated flow equation, we compute 
$$(f_D)_t=-\int \int \tilde{\chi}\partial_{y_1}\Delta \varepsilon-\int \int \tilde{\chi}f(\varepsilon)_{y_1}-\int \int \tilde{\chi}\mathfrak{F}-\frac{b_t}{\lambda(t)}\int \int \tilde{\chi}\zeta_bP\varepsilon$$
$$+\frac{\lambda_t}{\lambda}\int \int \tilde{\chi}\Lambda\varepsilon+\frac{(x_1)_t}{\lambda}\int\int \tilde{\chi}\varepsilon_{y_1}+\frac{(x_2)_t}{\lambda}\int \int \tilde{\chi}\varepsilon_{y_2}$$
$$=-\int \int \tilde{\chi}_{y_1y_1y_1}\varepsilon-\int \int \tilde{\chi}_{y_2y_2y_1}\tilde{u}+\int \int \tilde{\chi}_{y_1}f(\varepsilon)-\int \int \tilde{\chi}\mathfrak{F}$$
$$-b_t\int \int \chi_D\zeta_{b}^{2}P^2+\frac{(x_1)_t}{\lambda}\int \int\tilde{\chi}_{x_1}\tilde{u}+\frac{\lambda_t}{\lambda}\int \int \tilde{\chi}\Lambda\varepsilon+\frac{(x_2)_t}{\lambda}\int \int \varepsilon_{y_2}\tilde{\chi},$$
hence we have 
$$\Big|(f_D)_t\Big| \leq \|\tilde{\chi}_{y_1y_1y_1}\|_{L^{2}_{y_1y_2}}\|\varepsilon\|_{L^{2}_{y_1y_2}}+\|\tilde{\chi}_{y_2y_2y_1}\|_{L^{2}_{y_1y_2}}\|\varepsilon\|_{L^{2}_{y_1y_2}}+\|\tilde{\chi}_{y_1}Q_{b}^{2}\varepsilon\|_{L^{2}_{y_1y_2}}$$
$$+\|\tilde{\chi}_{y_1}\varepsilon^3\|_{L^{2}_{y_1y_2}}+\|\tilde{\chi}\|_{L^{2}_{y_1y_2}}\|\mathfrak{F}\|_{L^{2}_{y_1y_2}}+\Big|b_t\int \int \chi_D\zeta_{b}^{2}P^2\Big|$$
$$+\Big|\frac{(x_1)_t}{\lambda}\int \int\tilde{\chi}_{y_1}\varepsilon\Big|+\Big|\frac{(x_2)_t}{\lambda}\int \int \tilde{\chi}\varepsilon_{y_2}\Big|+\Big|\frac{\lambda_t}{\lambda}\int \int\tilde{\chi}\Lambda \varepsilon\Big|.$$
We note that $\|\tilde{\chi}_{y_1}\|_{L^{2}_{y_1y_2}}\|\tilde{\chi}_{y_1y_1y_1}\|_{L^{2}_{y_1y_2}},\|\tilde{\chi}_{y_2y_2y_1}\|_{L^{2}_{y_1y_2}}\lesssim 1, \|\tilde{\chi}\|_{L^{2}_{y_1y_2}}\lesssim \min(D^{\frac{1}{2}}, |b(t)|^{-\frac{\gamma}{2}}),$ also $\|Q_{b}^{2}\varepsilon\|_{L^{2}_{y_1y_2}}\leq \|Q_{b}^{2}\|_{L^{\infty}_{y_1y_2}}\|\varepsilon\|_{L^{2}_{y_1y_2}}\lesssim 1$ and by the Gagliardo-Nirenberg inequality we have $\|\varepsilon^3\|_{L^{2}_{y_1y_2}}\leq \|\varepsilon\|^{2}_{L^{2}_{y_1y_2}}\|\nabla\varepsilon\|_{L^{2}_{y_1y_2}}.$ 
Using the estimates \eqref{eq:modulatedcoefficients}, we have 
$$\|\mathfrak{F}(t)\|_{L^{2}_{y_1y_2}}\lesssim \|\Psi_b\|_{L^{2}_{y_1y_2}}+(b^2(t)+\mathcal{N}_{2}^{\frac{1}{2}}(t))(\|\Lambda Q_b\|_{L^{2}_{y_1y_2}}+\|(Q_b)_{y_1}\|_{L^{2}_{y_1y_2}}+\|(Q_b)_{y_2}\|_{L^{2}_{y_1y_2}}).$$
We estimate each of the terms together with using the inequality $|b(t)|\lesssim \mathcal{N}_2(t),$ the smallness of $\mathcal{N}_2(t),$ we get: 
$$\|\Psi_b(t)\|_{L^{2}_{y_1y_2}}\lesssim |b(t)|^{1+\frac{\gamma}{2}}\lesssim |\mathcal{N}_2(t)|^{1+\frac{\gamma}{2}},$$
$$(b^2(t)+\mathcal{N}_{2}^{\frac{1}{2}}(t))(\|\Lambda Q_b\|_{L^{2}_{y_1y_2}}+\|(Q_b)_{y_1}\|_{L^{2}_{y_1y_2}}+\|(Q_b)_{y_2}\|_{L^{2}_{y_1y_2}})\lesssim b^2(t)+\mathcal{N}_{2}^{\frac{1}{2}}(t).$$
Therefore, 
$$\|\mathfrak{F}(t)\|_{L^{2}_{y_1y_2}}\lesssim b^2(t)+\mathcal{N}_{2}^{\frac{1}{2}}(t).$$
For the other term, we have
$$\Big|b_t\int \int \chi_D\zeta_{b}^{2}P^2\Big|\lesssim (b^2(t)+\mathcal{N}_{2}(t))\min(D, |b(t)|^{-\gamma}).$$
For the next term, using that $(x_1)_t=1+o(1), |P_{y_1}(y_1,y_2)|\lesssim e^{-|y_1|+|y_2|}$ and by a change of variables, we obtain 
$$\int \int |\chi_D\zeta_bP_{y_1}\varepsilon|\lesssim \mathcal{N}_{2}^{\frac{1}{2}}(t),$$
$$\int \int |(\chi_D)_{y_1}\zeta_bP\varepsilon|\lesssim \frac{1}{D}\Big(\int \int (\chi_{y_1})^{2}(\frac{y_1}{D})\zeta_{b}^{2}P^2\Big)^{\frac{1}{2}}\|\varepsilon\|_{L^{2}_{y_1y_2}}\lesssim \frac{1}{D}\min(D^{\frac{1}{2}},|b(t)|^{-\frac{\gamma}{2}}),$$
$$\int \int |\chi_D(\zeta_b)_{y_1}P\varepsilon|\lesssim |b(t)|^{\gamma}\Big(\int \int \chi_{D}^{2}(\zeta_{y_1})^{2}(|b(t)|^{\gamma}y_1)P^2\Big)^{\frac{1}{2}}\|\varepsilon\|_{L^{2}_{y_1y_2}}\lesssim |b(t)|^{\gamma}\min(D^{\frac{1}{2}},|b(t)|^{-\frac{\gamma}{2}}).$$
Therefore,
$$\Big|\frac{(x_1)_t}{\lambda}\int \int \tilde{\chi}_{x_1}\varepsilon\Big|\lesssim \frac{1}{D^{\frac{1}{2}}}+|b(t)|^{\frac{\gamma}{2}}+\mathcal{N}_{2}^{\frac{1}{2}}(t).$$

Following, we get 
$$\Big|\frac{(x_2)_t}{\lambda}\int \int \tilde{\chi}\varepsilon_{y_2}\Big|\leq (b^2(t)+\mathcal{N}_{2}^{\frac{1}{2}}(t))\min(D,|b(t)|^{-\gamma})^{\frac{1}{2}}.$$

For the last term, using that $\Big|\frac{\lambda_t}{\lambda}\Big|\lesssim |b(t)|+\mathcal{N}_{2}^{\frac{1}{2}}(t)$, and 
$$\int \int \tilde{\chi}\Lambda \varepsilon=-\int \int \tilde{\chi}\varepsilon-\int\int y_1\tilde{\chi}_{y_1}\varepsilon-\int \int y_2\tilde{\chi}_{y_2}\varepsilon$$
thus
$$\int \int |\chi_D\zeta_bP\varepsilon|\lesssim \min(D^{\frac{1}{2}},|b(t)|^{-\frac{\gamma}{2}})$$
$$\int \int |\chi_D\zeta_by_1P_{y_1}\varepsilon|\lesssim \mathcal{N}_{2}^{\frac{1}{2}}(t),$$
$$\int \int |y_1(\chi_D)_{y_1}\zeta_bP\varepsilon|\lesssim \Big(\int \int \Big(\frac{y_1}{D}\Big)^2(\chi_{y_1})^{2}(\frac{y_1}{D})\zeta_{b}^{2}P^2\Big)^{\frac{1}{2}}\|\varepsilon\|_{L^{2}_{y_1y_2}}\lesssim \min(D^{\frac{1}{2}},|b(t)|^{-\frac{\gamma}{2}}),$$
$$\int \int |\chi_Dy_1(\zeta_b)_{y_1}P\varepsilon|\lesssim \Big(\int \int \chi_{D}^{2}|b|^{\gamma}y_{1}^{2}(\zeta_{y_1})^{2}(|b|^{2\gamma}y_1)P^2\Big)^{\frac{1}{2}}\|\varepsilon\|_{L^{2}_{y_1y_2}}\lesssim \min(D^{\frac{1}{2}},|b(t)|^{-\frac{\gamma}{2}}),$$
$$\int \int |\chi_D\zeta_by_2P_{y_2}\varepsilon|\lesssim \min(D^{\frac{1}{2}},|b(t)|^{-\frac{\gamma}{2}}),$$
$$\int \int |\chi_D\zeta_bP_{y_2}\varepsilon|\lesssim \min(D^{\frac{1}{2}},|b(t)|^{-\frac{\gamma}{2}}).$$
Therefore 
$$\Big|\frac{\lambda_t}{\lambda}\int \int \tilde{\chi}\Lambda\varepsilon\Big|\leq  (|b(t)|+\mathcal{N}_{2}^{\frac{1}{2}}(t))(\mathcal{N}_{2}^{\frac{1}{2}}(t)+\min(D, |b(t)|^{-\gamma})^{\frac{1}{2}})\leq \mathcal{N}_{2}(t)+(|b(t)|+\mathcal{N}_{2}^{\frac{1}{2}}(t))\min(D, |b(t)|^{-\gamma})^{\frac{1}{2}}.$$
Putting all the estimates together, we get 
$$|(f_D)_t|\lesssim 1+(|b(t)|+\mathcal{N}_{2}^{\frac{1}{2}}(t))\min(D^{\frac{1}{2}},|b(t)|^{-\frac{\gamma}{2}})+\mathcal{N}_{2}(t)\min(D, |b(t)|^{-\gamma})$$
and therefore $|b(t)(f_D)_t(t)|\lesssim \mathcal{N}_{2}(t)$, thus the claim is proved. 

We apply integration by parts to get 
$$\int_{t}^{\infty}b_tf_D=[bf_D]_{t}^{\infty}-\int_{t}^{\infty}b(f_D)_t$$
hence by \eqref{fDt},
$$\Big|\int_{t}^{\infty}b_tf_Ddt'\Big|\lesssim |b(t)f_D(t)|+\int_{t}^{\infty}|b(f_D)_t|dt'\lesssim |b(t)|^{1-\frac{\gamma}{2}}\Big(\int \int \varepsilon^2(t)\chi_D\Big)^{\frac{1}{2}}+\int_{t}^{\infty}\mathcal{N}_{2}(t')dt'$$
$$\lesssim |b(t)|^{1-\frac{\gamma}{2}}\Big(\int \int \varepsilon^2(t)\chi_D\Big)^{\frac{1}{2}}+\mathcal{N}_{3}(t),$$
where in the last inequality we used the dispersive estimate \eqref{eq:Dispersiveestimates1} and \eqref{bdominatedbyN}. Thus, 
$$\Big|\int_{t}^{\infty}b_t\int \int \zeta_bP\varepsilon\chi_D\Big|\leq C\Big( |b(t)|^{1-\frac{\gamma}{2}}\Big(\int \int \varepsilon^2(t)\chi_D\Big)^{\frac{1}{2}}+\mathcal{N}_{3}(t)\Big).$$

\textit{Step 8:} 

Putting all the estimates together, we get 
$$\frac{1}{2}\int \int \varepsilon^2(t)\chi_D\leq C\mathcal{N}_{3}(t)+C|b(t)|^{1-\frac{\gamma}{2}}\Big(\int \int \varepsilon^2(t)\chi_D\Big)^{\frac{1}{2}}+C\min(\mathcal{N}_{2}^{\frac{1}{2}}(t),D\mathcal{N}_{2}(t))$$
$$+C\Big|\int \int \varepsilon^2(t)(\chi_D)_{y_1}\Big|.$$
Therefore, there exists $t_0>0$ such that for all $t\geq t_0,$  
$$\int \int \varepsilon^2(t)\chi_D\leq C\mathcal{N}_{3}(t)+C\min(\mathcal{N}_{2}^{\frac{1}{2}}(t),D\mathcal{N}_{2})+C\int_{t_0}^{\infty}\Big|\int \int \varepsilon^2(t)(\chi_D)_{y_1}\Big|.$$
\end{proof} 

Now, we get back to our problem. 

\begin{lemma} \label{masslemma}
We have that $\|\varepsilon(t)\|_{L^{2}_{y_1y_2}}\rightarrow 0$ as $t\rightarrow \infty.$ 
\end{lemma}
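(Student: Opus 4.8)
The plan is to reduce Lemma~\ref{masslemma} to showing that $m_0:=\|u_0\|_{L^2}^2-\|Q\|_{L^2}^2$ vanishes. Substituting $u(t)=\frac{1}{\lambda(t)}(Q_{b(t)}+\varepsilon(t))\big(\frac{\cdot-x_1(t)}{\lambda(t)},\frac{\cdot-x_2(t)}{\lambda(t)}\big)$ into the conservation of mass and using $(\varepsilon,Q)=0$, the expansion \eqref{eq:Qbconservation}, and $b(t)\to0$ (which holds in the Soliton Case), one gets $\|\varepsilon(t)\|_{L^2}^2\to m_0$, and since $m_0\ge0$ by Weinstein it remains to rule out $m_0>0$. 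As a first reduction, $\phi_{6,B}\gtrsim1$ on $\{y_1>-B/2\}$ together with $\mathcal{N}_6(t)\to0$ (already known in the Soliton Case) gives $\int\int_{y_1>-B/2}\varepsilon^2(t)\lesssim\mathcal{N}_6(t)\to0$, and $\int\int_{-D<y_1<-B/2}\varepsilon^2(t)\lesssim e^{D/B}\mathcal{N}_1(t)\to0$ for each fixed $D$; hence $\int\int_{y_1<-D}\varepsilon^2(t)\to m_0$ for every $D$. In other words, if $m_0>0$ then all of that mass must escape to $y_1=-\infty$.

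The engine is Lemma~\ref{eq:Improvement}, which I would apply at fixed $D$ and then send $D\to\infty$. For fixed $D$, the terms $\mathcal{N}_3(t_0)$ and $\min(\mathcal{N}_2^{1/2}(t_0),D\mathcal{N}_2(t_0))$ tend to $0$ as $t_0\to\infty$; for the flux term, the brute-force bound \eqref{eq:MerleLemma}, the dispersive estimate \eqref{eq:Dispersiveestimates1} and $|b|\lesssim\mathcal{N}_1$ give
\[
\int_{t_0}^{\infty}\Big|\int\int\varepsilon^2(t)(\chi_D)_{y_1}\Big|\,dt\ \lesssim\ \frac1D\int_{t_0}^{\infty}\!\int\int_{|y_1|\le D}\varepsilon^2(t)\,dt\ \lesssim\ D\int_{t_0}^{\infty}\mathcal{N}_1(t)\,dt\ \lesssim\ D\big(\mathcal{N}_2(t_0)+|b(t_0)|^3\big),
\]
which tends to $0$ as $t_0\to\infty$ for fixed $D$; thus $\int\int\varepsilon^2(t)\chi_D\to0$ as $t\to\infty$ for each $D$, which re-derives $\int\int_{|y_1|\le D}\varepsilon^2(t)\to0$. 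To promote this to $\|\varepsilon(t)\|_{L^2}\to0$ one must bound the flux term \emph{uniformly in $D$}. Splitting $(\chi_D)_{y_1}$ near $y_1=D$ and near $y_1=-D$, the part near $y_1=D$ is controlled by $\int\int_{\{y_1\sim D\}}\varepsilon^2\lesssim D^{-6}\mathcal{N}_6$ and by $\int_0^{\infty}\mathcal{N}_6(t)\,dt<\infty$ (the role of the septic weight in $\mathcal{A}_{\alpha_0}$), hence contributes $O(D^{-7})$ and is harmless; the part near $y_1=-D$, namely $\frac1D\int_{t_0}^{\infty}\int\int_{\{-D\le y_1\le-D/2\}}\varepsilon^2(t)\,dt$, is the delicate one, being the $\varepsilon$-mass crossing the moving left boundary of $\{|y_1|\le D\}$.

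The main obstacle is to show that this left flux is $o_{D\to\infty}(1)+o_{t_0\to\infty}(1)$, i.e. that the mass $m_0$ cannot in fact be shed. Here I would invoke a Kato-type almost-monotonicity for $\int\int\varepsilon^2(t)\,\widetilde{w}(y_1+\sigma t)$ with $\widetilde{w}$ nondecreasing and a slow speed $0<\sigma<\frac12$: since the linearized flow in the modulated frame has the form $\varepsilon_t=-\partial_{y_1}\Delta\varepsilon+\varepsilon_{y_1}+(\text{errors})$, the Airy part and the unit transport produce the favorable coefficient $(\sigma-1+\frac12)\int\int\varepsilon^2\widetilde{w}_{y_1}\le0$, while all the profile/modulation errors ($\Psi_b$, $\Phi_b$, $f(\varepsilon)_{y_1}$, and the $b_t$, $\frac{\lambda_t}{\lambda}$, $\frac{(x_i)_t}{\lambda}$ contributions) are treated exactly as in the proof of Lemma~\ref{eq:Improvement} and are time-integrable by the dispersive bounds. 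This yields that the mass to the right of $\{y_1=-\sigma t\}$ is, up to an $o_{t_0\to\infty}(1)$ error, nonincreasing; combined with the fact that the escaping profile travels strictly faster than this window (group velocity $\gtrsim1$ versus $\sigma<\frac12$), and finally with the identity $\lambda^3(t)E_0=0$, $\|\nabla\varepsilon(t)\|_{L^2}\to0$ and Weinstein's variational characterization — which forbids a nonzero $H^1$ state of subcritical mass $m_0<\|Q\|_{L^2}^2$ and zero energy — one concludes $m_0=0$. I expect this last step, where one has to quantify the escape and close the monotonicity so as to genuinely exclude $m_0>0$, to be the crux; everything before it is bookkeeping around Lemma~\ref{eq:Improvement} and the already-established dispersive and conservation identities.
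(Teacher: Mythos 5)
Your reductions (localized decay from Lemma \ref{eq:Improvement} at fixed $D$, the identification of the left flux $\tfrac1D\int_{t_0}^{\infty}\int\int_{\{-D\le y_1\le -D/2\}}\varepsilon^2$ as the only dangerous term, and the $O(D^{-7})$ bound on the right flux) are all sound and match the skeleton of the paper's argument. The gap is exactly where you predicted it: your mechanism for the uniform-in-$D$ control of the left flux does not close. A Kato monotonicity asserting that $\int\int\varepsilon^2(t)\,\widetilde w(y_1+\sigma t)$ is almost non-increasing is information in the wrong direction — it \emph{permits} mass to cross the line $y_1=-\sigma t$ from right to left, which is precisely the scenario you must exclude; and the "group velocity $\gtrsim 1$ versus $\sigma<\tfrac12$" heuristic, even if formalized, only reinforces that the radiation outruns the window, again consistent with escape. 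The final appeal to Weinstein's variational characterization also cannot work: there is no strong $H^1$ limit of the escaping part to apply it to, and in this regime one already knows $\|\nabla\varepsilon(t)\|_{L^2}\to0$, hence $\|\varepsilon(t)\|_{L^4}^4\lesssim\|\varepsilon\|_{L^2}^2\|\nabla\varepsilon\|_{L^2}^2\to0$, so a flat bump of mass $m_0$ spreading to $y_1=-\infty$ carries energy tending to $0$ — perfectly compatible with $E_0=0$. No soft variational argument rules it out; only a flux identity can.

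What the paper actually does at this point is a self-referential second application of Lemma \ref{eq:Improvement}: it takes a bump $\rho$ with $\rho_{y_1}\ge0$ on $[-1,0]$ and $\rho_{y_1}\le0$ on $[0,1]$ and applies the lemma to the cutoff $\chi=\rho_{y_1}$ itself. Writing
\begin{equation*}
\int\int_{-D\le y_1\le 0}\varepsilon^2\,|(\rho_D)_{y_1}|=\int\int_{0\le y_1\le D}\varepsilon^2\,|(\rho_D)_{y_1}|+\int\int\varepsilon^2\,(\rho_D)_{y_1},
\end{equation*}
the first term on the right is $\lesssim\frac1D\mathcal{N}_2(t)$ because the polynomial weight controls all the mass on $\{y_1\ge0\}$, and the signed term is $\le C\mathcal{N}_3(t)$ uniformly in $D$ by Lemma \ref{eq:Improvement} applied to $\rho_{y_1}$, whose own flux term involves $(\rho_D)_{y_1y_1}=O(D^{-2})$ and is absorbed by the brute-force bound \eqref{eq:MerleLemma}. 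This yields $\int\int\varepsilon^2(t)|(\rho_D)_{y_1}|\le C\mathcal{N}_3(t)$ uniformly in $D$, which after time integration via \eqref{eq:Dispersiveestimates1} gives the uniform bound $C\mathcal{N}_4(t_0)$ on the flux term in the second application of the lemma (with $\chi=\rho$), and one lets $D\to\infty$. If you want to salvage your write-up, replace the moving-window/group-velocity/Weinstein paragraph with this sign decomposition and double application of Lemma \ref{eq:Improvement}; the rest of your argument can stand.
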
 
\begin{proof} 
We construct a cut-off function $\rho \in C_{c}^{4}(\mathbb{R})$ with supp$(\rho)\in [-1,1],$ $\rho_{y_1}\geq 0$ on $[-1,0],$ $\rho_{y_1}\leq 0$ on $[0,1]$ and $\rho \geq 0$ on $[-1,1],$ $\rho \geq \frac{1}{2}$ on $[-\frac{1}{2},\frac{1}{2}].$  Denote $\rho_D(y_1)=\rho(\frac{y_1}{D}).$
By applying Lemma \eqref{eq:Improvement} for $\chi=\rho_{y_1}$, we get that 
$$\int \int \varepsilon^2(t)(\rho_D)_{y_1}=\int \int_{-D\leq y_1\leq 0} \varepsilon^2(t)|(\rho_D)_{y_1}|-\int \int_{0\leq y_1\leq D} \varepsilon^2(t)|(\rho_D)_{y_1}|$$
$$\int \int \varepsilon^2(t)|(\rho_D)_{y_1}|\leq C\int \int_{0\leq y_1\leq D} \varepsilon^2(t)|(\rho_D)_{y_1}|+\frac{C}{D}\min(\mathcal{N}_{3}^{\frac{1}{2}}(t),D\mathcal{N}_{3}(t))$$
$$+\frac{C}{D}\int_{t_0}^{\infty}\Big|\int \int \varepsilon^2(t)(\rho_D)_{y_1y_1}\Big|\leq C\mathcal{N}_{3}(t)$$
where we used $\int\int_{y_1\geq0}\varepsilon^2(t)\lesssim \mathcal{N}_{2}(t)$ and \eqref{eq:MerleLemma} at the last inequality. 

Now, by applying the Lemma \eqref{eq:Improvement} to $\chi=\rho,$
$$\int \int_{-\frac{D}{2}\leq y_1\leq \frac{D}{2}}\varepsilon^2(t_0)\leq \int \int \varepsilon^2(t_0)\rho_D\leq C\mathcal{N}_{3}(t_0)+C\min(\mathcal{N}_{2}^{\frac{1}{2}}(t_0),D\mathcal{N}_{2})(t_0)+C\int_{t_0}^{\infty}\int \int \varepsilon^2(t)|(\rho_D)_{y_1}|$$
$$\leq C\mathcal{N}_{4}(t_0)+C\mathcal{N}_{2}^{\frac{1}{2}}(t_0).$$
Now, letting $D\rightarrow \infty$ we get that 
$$\|\varepsilon(t)\|_{L^2}^{2}\leq C\mathcal{N}_{4}(t)+C\mathcal{N}_{2}^{\frac{1}{2}}(t)\rightarrow 0 \mbox{ as }t\rightarrow \infty.$$
\end{proof}
Wrapping up the proof for the case $E_0=0,$ from Lemma \ref{masslemma} we have 
$$\int \int u_{0}^{2}dx_1dx_2=\int \int u^2(t)dx_1dx_2=\int \int [Q_{b(t)}(y_1,y_2)+\varepsilon(t,y_1,y_2)]^2$$
$$=\int \int Q^2+\int \int b(t)\chi_{b(t)}PQ+\int \int b(t)\chi_{b(t)}P\varepsilon+\int \int \varepsilon^2(t)+b^2(t)\int \int \chi_{b}^{2}P^{2}\rightarrow \int \int Q^2$$
as $t\rightarrow \infty.$ Therefore $\|u_0\|_{L^2}=\|Q\|_{L^2},$ contradiction since $u_0$ is not of minimal mass. 

\begin{remark} 
We observe that if $u_0$ is not equivalent to $Q$ up to scaling and translations, the case $E_0=0$ implies blow up for the solution $u(t).$
\end{remark}

\section*{Appendix A} 
 \begin{lemma}[Sobolev Lemma]\label{lemmasobolev} Suppose that $u \in H^1(\mathbb{R}^2)$ and a positive function $\theta \in H^1(\mathbb{R}^2)$ such that $|\theta_{x_1}|\leq \theta $ and $|\theta_{x_2}|\leq \theta$. We have that 
 
$$\int \int u^4 \theta dx_1dx_2  \leq 3 \|u\|_{L^2_{x_1x_2}}^2 \int \int \Big( u^2 +u_{x_1}^2+u_{x_2}^2)\theta dx_1dx_2,$$
$$\int \int u^3 \theta dx_1dx_2 \leq \sqrt{3} \|u\|_{L^2_{x_1x_2}} \int \int \Big( u^2 +u_{x_1}^2+u_{x_2}^2)\theta dx_1dx_2.$$
\end{lemma}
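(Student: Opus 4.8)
The plan is to prove the quartic estimate first and then obtain the cubic one from it by Cauchy--Schwarz. For the cubic inequality, once $\int\int u^4\theta$ is controlled, write $|u|^3\theta=(u^2\theta^{1/2})(|u|\theta^{1/2})$ — both factors lie in $L^2(\mathbb{R}^2)$ — so that $\int\int|u|^3\theta\le(\int\int u^4\theta)^{1/2}(\int\int u^2\theta)^{1/2}$; inserting the quartic bound and using the trivial $\int\int u^2\theta\le\int\int(u^2+u_{x_1}^2+u_{x_2}^2)\theta$ yields the claim with constant $\sqrt 3$. No regularity of $\theta^{1/2}$ is needed at this stage.

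For the quartic estimate the approach is a weighted two-dimensional Gagliardo--Nirenberg (Ladyzhenskaya-type) argument. Split $u^4\theta=u^2\cdot(u^2\theta)$. For each fixed $x_2$, apply the one-dimensional Sobolev bound $\sup_{x_1}g\le\int_{\mathbb{R}}|g'|$ to $g(x_1)=u^2(x_1,x_2)\theta(x_1,x_2)$; using $\partial_{x_1}(u^2\theta)=2uu_{x_1}\theta+u^2\theta_{x_1}$, the hypothesis $|\theta_{x_1}|\le\theta$, and the elementary bound $2|u||u_{x_1}|\le u^2+u_{x_1}^2$, one gets $\sup_{x_1}(u^2\theta)(x_1,x_2)\le S(x_2):=\int_{\mathbb{R}}(2u^2+u_{x_1}^2)(x_1,x_2)\,\theta(x_1,x_2)\,dx_1$. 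Hence $\int\int u^4\theta\le\int_{\mathbb{R}}S(x_2)\,Q(x_2)\,dx_2$ with $Q(x_2)=\int_{\mathbb{R}}u^2(x_1,x_2)\,dx_1$, and since $\int_{\mathbb{R}}Q(x_2)\,dx_2=\|u\|_{L^2}^2$ the inequality will follow once the factor $S(x_2)$ is traded for a fixed constant times $\int\int(u^2+u_{x_1}^2+u_{x_2}^2)\theta$. For this one symmetrises: running the same FTC argument with $x_1$ and $x_2$ interchanged produces a companion bound featuring $\int_{\mathbb{R}}(2u^2+u_{x_2}^2)\theta\,dx_2$ in place of $S$, and one combines the two estimates (together with an auxiliary one-dimensional estimate that uses only first derivatives of $u$, so as to convert the remaining one-variable $\sup$ into an integral of $(2u^2+u_{x_i}^2)\theta$).

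The main obstacle is exactly this final reorganisation. One must pass from the product $\int_{\mathbb{R}}S(x_2)Q(x_2)\,dx_2$ to $\|u\|_{L^2}^2$ times the full weighted $H^1$ energy while (i) never differentiating $u_{x_i}$ — only $H^1$ regularity of $u$ is assumed, so the naive route of bounding $\sup_{x_2}S(x_2)$ by $\int|\partial_{x_2}S|$ is forbidden — and (ii) keeping the total constant no larger than $3$. The way through is to be careful about the order in which the $x_1$- and $x_2$-integrations are performed and about which one-dimensional norm ($L^1$ versus $L^\infty$ in the variable not yet integrated) is taken at each step, so that every derivative of $u$ that ever appears carries a copy of $\theta$, and then to chase the numerical constants through the AM--GM and Cauchy--Schwarz steps. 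That constant-tracking is routine but must be done with some care to land on $3$ (respectively $\sqrt 3$).
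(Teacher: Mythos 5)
Your reduction of the cubic estimate to the quartic one via Cauchy--Schwarz is exactly what the paper does, and that part is fine. The quartic estimate, however, has a genuine gap, and it is located precisely where you flag the ``main obstacle'': the decomposition $u^4\theta=u^2\cdot(u^2\theta)$ cannot be completed to the stated inequality. After your first step you are left with $\int S(x_2)Q(x_2)\,dx_2$, where $S(x_2)=\int(2u^2+u_{x_1}^2)\theta\,dx_1$ and $Q(x_2)=\int u^2\,dx_1$. To produce the factor $\|u\|_{L^2}^2$ you must take $\int Q\,dx_2$, which forces you to take $\sup_{x_2}S(x_2)$ --- and, as you yourself note, that requires $\partial_{x_2}u_{x_1}$, which is unavailable for $u\in H^1$. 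The only other H\"older split, $\|S\|_{L^1_{x_2}}\sup_{x_2}Q(x_2)$, gives $\sup_{x_2}Q\le\int\int|\partial_{x_2}(u^2)|\le\int\int(u^2+u_{x_2}^2)$ \emph{without} the weight, so you end up with an unweighted $H^1$ norm of $u$ in place of $\|u\|_{L^2}^2$, which is not the lemma. Symmetrising (interchanging $x_1$ and $x_2$) produces a second bound with the same defect, and multiplying or averaging the two does not repair it: in each bound one of the two factors is missing the weight on a derivative term. The ``careful ordering of integrations'' you invoke cannot fix this, because the problem is structural: with all of $\theta$ attached to one factor, the factor that gets sup'd in the remaining variable is forced to produce either a second derivative or an unweighted first derivative.

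The fix --- and this is what the paper actually does --- is to split the weight symmetrically: write $u^4\theta=(u^2\theta^{1/2})(u^2\theta^{1/2})$, take $\sup_{x_1}$ of one copy and $\sup_{x_2}$ of the other, and bound the resulting product of one-variable integrals by
$\int\int|\partial_{x_1}(u^2\theta^{1/2})|\cdot\int\int|\partial_{x_2}(u^2\theta^{1/2})|$.
Each derivative produces $2|uu_{x_i}|\theta^{1/2}+\tfrac12 u^2|\theta_{x_i}|\theta^{-1/2}$, and the hypothesis $|\theta_{x_i}|\le\theta$ together with Cauchy--Schwarz in the form $\int\int|uu_{x_i}|\theta^{1/2}\le\|u\|_{L^2}\bigl(\int\int u_{x_i}^2\theta\bigr)^{1/2}$ peels off exactly one unweighted factor $\|u\|_{L^2}$ from each copy while keeping every derivative under the full weight $\theta$. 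The product of the two brackets is then controlled by $\tfrac34\int\int(u^2+u_{x_1}^2+u_{x_2}^2)\theta$, which yields the constant $3$. So the overall architecture of your argument (Ladyzhenskaya in each variable, then Cauchy--Schwarz for the cubic case) is right, but the initial splitting must put $\theta^{1/2}$ on each factor rather than $\theta$ on one of them; without that change the proof does not close.
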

\begin{proof} 
\begin{equation*}
\begin{split} 
\int \int u^4 \theta &= \int \int u^2 \theta^{\frac{1}{2}} u^2 \theta^{\frac{1}{2}} \leq \int \int \max_{x_1}(u^2\theta^{\frac{1}{2}})\max_{x_2}(u^2\theta^{\frac{1}{2}})dx_1dx_2\\
&\int \max_{x_1}(u^2\theta^{\frac{1}{2}})dx_2 \int \max_{x_2}(u^2\theta^{\frac{1}{2}})dx_1\\
&\leq 4\int \int |\partial_{x_1}u^2 \theta^{\frac{1}{2}}| \int \int |\partial_{x_2}u^2 \theta^{\frac{1}{2}}|\\
&\leq 4\Big(\int \int |uu_{x_1}|\theta^{\frac{1}{2}}+\frac{1}{2}u^2\frac{|\theta_{x_1}|}{\theta^{\frac{1}{2}}}\Big)\Big( \int \int |uu_{x_2}|\theta^{\frac{1}{2}}+\frac{1}{2}u^2\frac{|\theta_{x_2}|}{\theta^{\frac{1}{2}}}|\Big)\\
&\leq 4\|u\|_{L^2}\Big[\Big( \int \int u_{x_1}^2\theta\Big)^{\frac{1}{2}}+\frac{1}{2}\Big(\int \int u^2\theta\Big)^{\frac{1}{2}} \Big] \|u\|_{L^2}\Big[\Big( \int \int u_{x_2}^2\theta\Big)^{\frac{1}{2}}+\frac{1}{2}\Big(\int \int u^2\theta\Big)^{\frac{1}{2}} \Big]\\
&\leq 4 \Big(\int \int u^2\Big)\frac{3}{4}\Big[ \int \int (u^2+u_{x_1}^2+u_{x_2}^2)\theta \Big]\\
&\leq 3 \|u\|_{L^2}^2 \int \int (u^2+u_{x_1}^2+u_{x_2}^2)\theta 
\end{split} 
\end{equation*}
where we used that $|\max_{x_i}f|\leq \int |\partial_{x_i} f|dx $ and that $\frac{|\theta_{x_i}|}{\theta^{\frac{1}{2}}}\leq \theta^{\frac{1}{2}},$ for $i=1,2.$
and using this and Cauchy-Schwarz inequality we get 
\begin{equation*}
\begin{split} 
\int \int u^3\theta &\leq \Big(\int \int u^4\theta\Big)^{\frac{1}{2}}\Big(\int \int u^2 \theta\Big)^{\frac{1}{2}}\\
&\leq \sqrt{3}\|u\|_{L^2} \Big(\int \int (u^2+u_{x_1}^2+u_{x_2}^2)\theta\Big)^{\frac{1}{2}}\Big(\int \int u^2 \theta\Big)^{\frac{1}{2}}\\
&\leq \sqrt{3}\|u\|_{L^2} \int \int (u^2+u_{x_1}^2+u_{x_2}^2)\theta
\end{split}
\end{equation*}
\end{proof} 

\begin{lemma} \label{lemmasobolev2} 
 Suppose that $u \in H^1(\mathbb{R}^2)$ and a positive function $\theta \in H^1(\mathbb{R}^2)$ such that $|\theta_{x_1}|\leq \theta $, $|\theta_{x_2}|\leq \theta$, $|\theta_{x_1x_1}|\leq \theta $, $|\theta_{x_2x_2}|\leq \theta$. 
 Let $$A_1=\int \int u^2 u_{x_1}^2\theta +\int \int u^2 u_{x_2}^2\theta + \int \int u^4\theta$$ 
 $$A_2=\int \int u_{x_1x_1}^2\theta  + \int \int u_{x_2x_2}^2\theta +\int \int u^2\theta.$$
 Then we have $A_1 \lesssim \|u\|^2_{L^2}A_2.$
 \end{lemma}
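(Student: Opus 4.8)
The strategy is to reduce the weighted estimate $A_1 \lesssim \|u\|_{L^2}^2 A_2$ to the unweighted Gagliardo--Nirenberg-type inequality in $\mathbb{R}^2$ (which in its sharpest form reads $\int u^2 |\nabla u|^2 + \int u^4 \lesssim \|u\|_{L^2}^2 \int |\Delta u|^2 + \|u\|_{L^2}^2\int u^2$, obtained by integration by parts and Cauchy--Schwarz), by absorbing the weight $\theta$ into the function. The natural move is to apply a known estimate to $v = u\sqrt{\theta}$, so that $\int v^4 = \int u^4\theta$ and $\int v^2 = \int u^2\theta$, and then to control the derivatives of $v$ in terms of derivatives of $u$ using the hypotheses $|\theta_{x_i}|\le\theta$, $|\theta_{x_ix_i}|\le\theta$. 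Concretely, $v_{x_1} = u_{x_1}\sqrt\theta + \tfrac12 u\,\theta_{x_1}/\sqrt\theta$, and since $|\theta_{x_1}|/\sqrt\theta \le \sqrt\theta$, we get $|v_{x_1}| \lesssim (|u_{x_1}| + |u|)\sqrt\theta$, hence $\int v_{x_1}^2 \lesssim \int (u_{x_1}^2 + u^2)\theta$. Similarly, differentiating twice, $v_{x_1x_1} = u_{x_1x_1}\sqrt\theta + u_{x_1}\theta_{x_1}/\sqrt\theta + \tfrac12 u\,\theta_{x_1x_1}/\sqrt\theta - \tfrac14 u\,\theta_{x_1}^2/\theta^{3/2}$, and each weight factor is bounded by $\sqrt\theta$ using the four hypotheses, so $\int v_{x_1x_1}^2 \lesssim \int (u_{x_1x_1}^2 + u_{x_1}^2 + u^2)\theta$, and likewise in $x_2$.

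The order of steps would then be: (1) record the pointwise bounds $|v_{x_i}| \lesssim (|u_{x_i}|+|u|)\sqrt\theta$ and $|v_{x_ix_i}| \lesssim (|u_{x_ix_i}|+|u_{x_i}|+|u|)\sqrt\theta$ for $i=1,2$, as immediate consequences of the weight hypotheses; (2) integrate these to get $\int v_{x_i}^2 \lesssim \int(u_{x_i}^2+u^2)\theta$ and $\int v_{x_ix_i}^2 \lesssim \int (u_{x_ix_i}^2 + u_{x_i}^2 + u^2)\theta$; (3) control the mixed term $\int u_{x_1}^2\theta$ by $\int u_{x_1x_1}^2\theta + \int u^2\theta$ via integration by parts, $\int u_{x_1}^2\theta = -\int u u_{x_1x_1}\theta - \int u u_{x_1}\theta_{x_1} \le \tfrac12\int u_{x_1}^2\theta + C\int(u_{x_1x_1}^2 + u^2)\theta$, and absorb, so that effectively the first-order terms are dominated by $A_2$; (4) apply the unweighted inequality $\int v^2|\nabla v|^2 + \int v^4 \lesssim \|v\|_{L^2}^2(\int|\Delta v|^2 + \int v^2)$ to $v=u\sqrt\theta$, noting $\|v\|_{L^2}^2 = \int u^2\theta \le \|u\|_{L^2}^2\|\theta\|_{L^\infty}$ — or more carefully, since the statement has $\|u\|_{L^2}^2$ not a weighted norm, one should instead apply the 1D-slicing argument of Lemma \ref{lemmasobolev} directly; (5) use the slicing bound $|\max_{x_1} f| \le \int |\partial_{x_1}f|\,dx_1$ on $f = u^2 u_{x_1}^2\theta$ (and permutations) exactly as in the proof of Lemma \ref{lemmasobolev}, which produces $\int u^2 u_{x_1}^2\theta$ bounded by a product of one-dimensional integrals of $\partial_{x_i}(u^2 u_{x_1}^2\theta)$, each of which expands by Leibniz into terms with at most two derivatives on $u$ and a weight factor $\le\theta$, then Cauchy--Schwarz factors out $\|u\|_{L^2}$; (6) collect everything, using Young's inequality to absorb any term carrying $A_1^{1/2}$ on the right-hand side.

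The main obstacle I anticipate is step (5)–(6): after slicing and applying Leibniz, the right-hand side will contain products such as $\|u\|_{L^2} A_1^{1/2} A_2^{1/2}$ and also cubic-in-derivative pieces like $\int |u||u_{x_1}||u_{x_1x_1}|\theta$, and one must organize these so that every occurrence of $A_1$ appears to a power $<1$ (allowing absorption into the left side) while the leftover is a clean multiple of $\|u\|_{L^2}^2 A_2$. This bookkeeping — tracking which weight derivative hits which factor and verifying that no term escapes the $|\theta_{x_i}|, |\theta_{x_ix_i}| \le \theta$ control — is the delicate part; the structural idea (imitate Lemma \ref{lemmasobolev} with $u^2 u_{x_1}^2$ in place of $u^4$, absorb the extra derivative by integration by parts as in step (3), then Young) is straightforward once set up. The second displayed claim, $\int u^6\theta \lesssim \|u\|_{L^2}^4 A_1$, then follows by the same slicing applied to $u^3\cdot u^3\theta$ together with the first inequality, or simply by Cauchy--Schwarz $\int u^6\theta \le (\int u^4\theta)^{1/2}(\int u^8\theta)^{1/2}$ bootstrapped — but cleanest is $\max_{x_1}(u^3\theta^{1/2})\max_{x_2}(u^3\theta^{1/2})$ sliced as before, yielding $\|u\|_{L^2}^2$ times $\int u^2|\nabla u|^2\theta + \int u^4\theta = A_1$, twice, i.e. $\|u\|_{L^2}^4 A_1$.
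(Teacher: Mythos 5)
Your plan does not contain the mechanism that actually closes this estimate, and two of its concrete steps fail as written. First, the substitution $v=u\sqrt{\theta}$ gives $\int\int v^4=\int\int u^4\theta^2$ and $\int\int v^2|\nabla v|^2\approx\int\int u^2|\nabla u|^2\theta^2$, not the weight-$\theta$ quantities appearing in $A_1$; no single power of $\theta$ makes $\int v^4$, $\int v^2$ and $\int|\Delta v|^2$ simultaneously match $A_1$ and $A_2$, so step (4) is a dead end (as you half-suspect when you pivot to slicing). Second, and more seriously, the direct slicing of $u^2u_{x_1}^2\theta=(uu_{x_1}\theta^{1/2})^2$ in step (5) produces, after Leibniz, the term $\int\int u_{x_1}^2\theta^{1/2}$ — both factors are first derivatives, so Cauchy--Schwarz extracts $\|\nabla u\|_{L^2}$ rather than the required $\|u\|_{L^2}$ — and the term $\int\int|u\,u_{x_1x_2}|\theta^{1/2}$, whose control requires $\int\int u_{x_1x_2}^2\theta$, a quantity absent from $A_2$, which contains only the pure second derivatives $u_{x_1x_1}$ and $u_{x_2x_2}$. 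Neither obstruction is the ``absorb powers of $A_1$'' bookkeeping you flag; both would require additional integrations by parts that your outline does not perform.

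The paper's proof runs the logic in the opposite direction and avoids both problems. It first proves $\int\int u^6\theta\lesssim\|u\|_{L^2}^2A_1$ by slicing $u^3\theta^{1/2}$: there Leibniz only produces $u^2u_{x_i}\theta^{1/2}$ and $u^3\theta_{x_i}\theta^{-1/2}$, each of which cleanly factors out $\|u\|_{L^2}$ against $A_1^{1/2}$ (note each of the two sliced factors contributes $\|u\|_{L^2}A_1^{1/2}$, so the product is $\|u\|_{L^2}^2A_1$, not $\|u\|_{L^2}^4A_1$). It then integrates by parts on $\int\int u^2u_{x_i}^2\theta$ itself — not on $\int\int u_{x_i}^2\theta$ as in your step (3) — to obtain $3\int\int u^2u_{x_i}^2\theta=\frac14\int\int u^4\theta_{x_ix_i}-\int\int u^3u_{x_ix_i}\theta$, so that only pure second derivatives and the hypothesis $|\theta_{x_ix_i}|\le\theta$ are needed; Cauchy--Schwarz of each right-hand term against $(\int\int u^6\theta)^{1/2}$ then yields $A_1\lesssim\|u\|_{L^2}A_1^{1/2}A_2^{1/2}$, which closes. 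In your write-up the $u^6$ bound appears only as an afterthought for a separate claim, rather than as the engine of the proof of $A_1\lesssim\|u\|_{L^2}^2A_2$, and that inversion is precisely what leaves the gap.
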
 
 \begin{proof} We start with the following claim.
 
 \textit{Claim} We have that $\int \int u^6\theta \lesssim \|u\|^2_{L^2}A_1.$
 
 \textit{Proof of Claim} 
 \begin{equation*}
\begin{split} 
\int \int u^6 \theta &= \int \int u^3 \theta^{\frac{1}{2}} u^3 \theta^{\frac{1}{2}} \leq \int \int \max_{x_1}(u^3\theta^{\frac{1}{2}})\max_{x_2}(u^3\theta^{\frac{1}{2}})dx_1dx_2\\
&\int \max_{x_1}(u^2\theta^{\frac{1}{2}})dx_2 \int \max_{x_2}(u^2\theta^{\frac{1}{2}})dx_1\\
&\leq 4\int \int |\partial_{x_1}(u^3 \theta^{\frac{1}{2}})| \int \int |\partial_{x_2}(u^3 \theta^{\frac{1}{2}})|\\
&\leq 4\Big(\int \int |u^2u_{x_1}|\theta^{\frac{1}{2}}+\frac{1}{2}u^3\frac{|\theta_{x_1}|}{\theta^{\frac{1}{2}}}\Big)\Big( \int \int |u^2u_{x_2}|\theta^{\frac{1}{2}}+\frac{1}{2}u^3\frac{|\theta_{x_2}|}{\theta^{\frac{1}{2}}}|\Big)\\
&\leq 4\|u\|_{L^2}\Big[\Big( \int \int u^2u_{x_1}^2\theta\Big)^{\frac{1}{2}}+\frac{1}{2}\Big(\int \int u^4\theta\Big)^{\frac{1}{2}} \Big] \|u\|_{L^2}\Big[\Big( \int \int u^2u_{x_2}^2\theta\Big)^{\frac{1}{2}}+\frac{1}{2}\Big(\int \int u^4\theta\Big)^{\frac{1}{2}} \Big]\\
&\leq 4 \Big(\int \int u^2\Big)\frac{3}{4}\Big[ \int \int u^2(u^2+u_{x_1}^2+u_{x_2}^2)\theta \Big]\\
&\leq 3 \|u\|_{L^2}^2 A_1.
\end{split} 
\end{equation*}
and the claim is proven. 

By integration by parts, 
 \begin{equation*}
\begin{split} 
\int \int u^2 u_{x_i}^2\theta&=-\int \int u^3 u_{x_i}\theta_{x_i}-\int \int u^3u_{x_ix_i}-2\int \int u^2u_{x_i}^2\theta\\
&=-\frac{1}{4}\int \int \partial_{x_i}(u^4)\theta_{x_i}-\int \int u^3u_{x_ix_i}\theta-2\int \int u^2u_{x_i}^2\theta
\end{split} 
\end{equation*}
hence $$3\int \int u^2 u_{x_i}^2\theta=\frac{1}{4}\int \int u^4\theta_{x_ix_i}-\int \int u^3u_{x_ix_i}\theta.$$
Since 
$$\int \int u^4\theta_{x_ix_i}\lesssim \Big(\int \int u^6 \theta_{x_ix_i}\Big)^{\frac{1}{2}}\Big(\int \int u^2 \theta_{x_ix_i}\Big)^{\frac{1}{2}}\lesssim \Big(\int \int u^6 \theta\Big)^{\frac{1}{2}}\Big(\int \int u^2 \theta\Big)^{\frac{1}{2}}$$
$$\int \int u^3u_{x_ix_i}\theta \lesssim \Big(\int \int u^6 \theta\Big)^{\frac{1}{2}}\Big(\int \int u^2_{x_ix_i}\theta\Big)^{\frac{1}{2}}$$
Hence we get from this and the claim that 
\begin{equation*}
\begin{split} 
\int \int u^2 u_{x_1}^2\theta +\int \int u^2 u_{x_2}^2\theta &+ \int \int u^4\theta \lesssim \int \int |u^3u_{x_1x_1}|\theta +\int \int |u^3u_{x_2x_2}|\theta+\int \int |u^4|\theta \\
&\lesssim \Big(\int \int u^6 \theta\Big)^{\frac{1}{2}}\Big[\Big(\int \int u^2_{x_1x_1}\theta\Big)^{\frac{1}{2}}+\Big(\int \int u^2_{x_2x_2}\theta\Big)^{\frac{1}{2}}+\Big(\int \int u^2\theta\Big)^{\frac{1}{2}}\Big]
\end{split} 
\end{equation*}
hence $A_1 \lesssim \|u\|_{L^2}A_1^{\frac{1}{2}}A_2^{\frac{1}{2}}$ and so $A_1 \lesssim \|u\|^2_{L^2}A_2.$
\end{proof} 

\section*{Appendix B}\label{AppendixB}

We proceed with the proof of Lemma \ref{sharporthogonalities}. 

The proof of $a)$ is a consequence of the Cauchy-Schwarz inequality. 

For $b)$, we project the modulation equation  \eqref{eq:ModulationEquation} onto $\int_{-\infty}^{y_1}\Lambda Q$ and using that $(\varepsilon, L(\Lambda Q))=-2(\varepsilon,Q)=0$, $(Q,\Lambda Q)=(Q_{y_2}, \int_{-\infty}^{y_1}\Lambda Q)=0$ and notice that $(\Lambda Q, \int_{-\infty}^{y_1}\Lambda Q)=\frac{1}{2}\int (\int \Lambda Qdy_1)^2dy_2=2c_Q.$
 
 \begin{equation*}
 \begin{split}
 \frac{d}{ds}\Big(\varepsilon, \int_{-\infty}^{y_1}\Lambda Q\Big)&=-\frac{\lambda_s}{\lambda}\Big(\Lambda \varepsilon,  \int_{-\infty}^{y_1}\Lambda Q\Big)-\Big(\frac{(x_1)_s}{\lambda}-1\Big)(\varepsilon, \Lambda Q)-\frac{(x_2)_s}{\lambda}\Big(\varepsilon, \int_{-\infty}^{y_1}(\Lambda Q)_{y_2}\Big)\\
 &+\Big(\frac{\lambda_s}{\lambda}+b\Big)2c_Q+b\Big(\frac{\lambda_s}{\lambda}+b\Big)\Big(\Lambda(\chi_bP),\int_{-\infty}^{y_1}\Lambda Q\Big)-b\Big(\frac{(x_1)_s}{\lambda}-1\Big)(\chi_bP,\Lambda Q)\\&
 -b\frac{(x_2)_s}{\lambda}\Big(\chi_bP,\int_{-\infty}^{y_1}(\Lambda Q)_{y_2}\Big)-b_s\Big((\chi_b+\gamma y_1 (\chi_b)_{y_1})P,\int_{-\infty}^{y_1}\Lambda Q\Big)\\&
 -\Big(\Psi_b,\int_{-\infty}^{y_1}\Lambda Q\Big)+\Big(R_{NL}(\varepsilon)+R_b(\varepsilon),\Lambda Q\Big)\\
 \end{split}
 \end{equation*}

Since $\widehat{\mathcal{M}}(s)<+\infty,$ we observe that the inner $L^2$ products $(\varepsilon, y_2\int_{-\infty}^{y_1}\Lambda Q)$ and $(\varepsilon, \int_{-\infty}^{y_1}(\Lambda Q)_{y_2})$ are well defined. Also, from  the decay properties of $P$ in Lemma \ref{Pdecaylemma} we see that the $L^2$ inner products $(\chi_bP,\int_{-\infty}^{y_1}(\Lambda Q)_{y_2}), \Big(\Lambda(\chi_bP),\int_{-\infty}^{y_1}\Lambda Q\Big), \Big((\chi_b+\gamma y_1 (\chi_b)_{y_1})P,\int_{-\infty}^{y_1}\Lambda Q\Big)$ are well-defined. 
 Using the modulation coefficient estimates \eqref{eq:modulatedcoefficients} and the smallness of $\mathcal{M}$, we have the following estimates:  
 \begin{itemize}
 \item[i)]
 $\Big|\frac{\lambda_s}{\lambda}\Big||(\varepsilon,y_1\Lambda Q)|\lesssim (|b|+\mathcal{M}^{\frac{1}{2}}+\delta(\nu^*)\widetilde{\mathcal{M}})\mathcal{M}^{\frac{1}{2}}\lesssim b^2 +\mathcal{M}+\delta(\nu^*)\widetilde{\mathcal{M}},$
 \item[ii)]
$\Big|\frac{\lambda_s}{\lambda}\Big|\Big|(\varepsilon,y_2\int_{-\infty}^{y_1}\Lambda Q)\Big|\lesssim (|b|+\mathcal{M}^{\frac{1}{2}}+\delta(\nu^*)\widetilde{\mathcal{M}})\widehat{\mathcal{M}}^{\frac{1}{2}}\lesssim b^2 +\widehat{\mathcal{M}}+\delta(\nu^*)\widetilde{\mathcal{M}},$
\item[iii)]
$\Big|\Big(\frac{(x_1)_s}{\lambda}-1\Big)(\varepsilon, \Lambda Q)\Big|\lesssim (b^2+\mathcal{M}^{\frac{1}{2}}+\delta(\nu^*)\widetilde{\mathcal{M}})\mathcal{M}^{\frac{1}{2}}\lesssim b^4+\mathcal{M}+\delta(\nu^*)\widetilde{\mathcal{M}},$
\item[iv)]
$\Big|\frac{(x_2)_s}{\lambda}\Big(\varepsilon, \int_{-\infty}^{y_1}(\Lambda Q)_{y_2}\Big)\Big|\lesssim  (b^2+\mathcal{M}^{\frac{1}{2}}+\delta(\nu^*)\widetilde{\mathcal{M}})\widehat{\mathcal{M}}^{\frac{1}{2}}\lesssim  b^4 +\widehat{\mathcal{M}}+\delta(\nu^*)\widetilde{\mathcal{M}},$
\item[v)]
$\Big|b\Big(\frac{\lambda_s}{\lambda}+b\Big)\Big(\Lambda(\chi_bP),\int_{-\infty}^{y_1}\Lambda Q\Big)\Big|\lesssim |b|(b^2+\mathcal{M}^{\frac{1}{2}}+\delta(\nu^*)\widetilde{\mathcal{M}})\lesssim b^2+\mathcal{M}+\delta(\nu^*)\widetilde{\mathcal{M}},$
\item[vi)]
$\Big|b\Big(\frac{(x_1)_s}{\lambda}-1\Big)(\chi_bP,\Lambda Q)\Big|\lesssim |b|(b^2+\mathcal{M}^{\frac{1}{2}}+\delta(\nu^*)\widetilde{\mathcal{M}})\lesssim b^2+\mathcal{M}+\delta(\nu^*)\widetilde{\mathcal{M}},$
\item[vii)]
$\Big|b\frac{(x_2)_s}{\lambda}(\chi_bP,\int_{-\infty}^{y_1}(\Lambda Q)_{y_2})\Big|\lesssim |b|(b^2+\mathcal{M}^{\frac{1}{2}}+\delta(\nu^*)\widetilde{\mathcal{M}})\lesssim b^2+\mathcal{M}+\delta(\nu^*)\widetilde{\mathcal{M}},$
\item[viii)]
$|b_s|\Big|\Big((\chi_b+\gamma y_1 (\chi_b)_{y_1})P,\int_{-\infty}^{y_1}\Lambda Q\Big)\Big|\lesssim b^2+\mathcal{M}+\delta(\nu^*)\widetilde{\mathcal{M}},$
\item[ix)]
$\Big|\Big(\Psi_b,\int_{-\infty}^{y_1}\Lambda Q\Big)\Big|\lesssim b^2,$
\item[x)]
$|(R_{NL}(\varepsilon),\Lambda Q)|\lesssim \mathcal{M}+\delta(\nu^*)\widetilde{\mathcal{M}},$
\item[xi)]
$|(R_{b}(\varepsilon),\Lambda Q)|\lesssim |b|\mathcal{M}^{\frac{1}{2}}+b^2\mathcal{M}^{\frac{1}{2}}\lesssim b^2+\mathcal{M}.$
\end{itemize}
Since $\frac{1}{2c_Q}(\varepsilon(s), \int_{-\infty}^{y_1}\Lambda Q)=J(s)$ and putting together i)-x), we get the equation 
\begin{equation*}
\Big|\frac{d}{ds}J(s)+\frac{\lambda_s}{\lambda}J(s)-\Big(\frac{\lambda_s}{\lambda}+b\Big)\Big|\lesssim b^2+\widehat{\mathcal{M}}+\delta(\nu^*)\widetilde{\mathcal{M}}.
\end{equation*}
The proof of $c)$ is similar to $b).$

\section*{Appendix C}\label{AppendixC} 
We proceed with the proof of Lemma \ref{Virial}. We use $x,y$ for the spatial variables. 

The Lemma \ref{Virial} is proved by the following several lemmas. We start with a generalization of a coercivity lemma that first appeared in Weinstein \cite{Weinstein}.
\begin{lemma}[Generalized Weinstein Lemma]\label{Generalized Weinstein Lemma}
    Let the operator $T$ is a self-adjoint , invertible linear operator on a Hilbert space $H$. Suppose that it has exact $n$ negative eigenvalues  $\lambda_1 < \lambda_2 <\cdots <\lambda_n<0$, with corresponding one-dimensional eigenspaces spanned by $L^2$ normalized eigenfunctions $e_1, e_2, \ldots, e_n$. Take $f_1,\cdots,f_n$ on $H$ and suppose that $(f_i,e_i) \neq 0$ for every $i=1,\ldots,n$. Denote the $n \times n$ matrix $M$ with its elements 
    \begin{align}\label{M matrix}
       m_{i,j}= \begin{cases}
         (T^{-1} f_i,f_j), \quad i\neq j, \\
        (T^{-1} f_i,f_i)-\sum_{j\neq i} \frac{1}{\lambda_j}(f_i,e_j)^2, \quad i= j.
    \end{cases}
    \end{align}
    Then, for any $u\in H$ with orthogonal conditions $(u,f_1)=\cdots=(u,f_n)=0$, if the matrix $M$ is negative definite, we have $(Tu,u)>0$.
\end{lemma}
\begin{proof}
    Denote $f^*_i=T^{-1}f_i$. Let $\hat{f}_i^*=f_i^*-\sum_{j\neq i}^n (f_i^*,e_j)e_j$. Then, we have 
    \begin{align}\label{orthogonalbase}
    (\hat{f}_i^*,e_j)=0, \text{ if }i\neq j, \quad (\hat{f}_i^*,e_i)=(f^*_i,e_i)=\frac{1}{\lambda_i}(f_i,e_i)\neq 0 \text{ for all }i=\overline{1,n}.
    \end{align}
    Denote $w=u-\sum_{i=1}^n \beta_i\hat{f}^*_i$, where $\beta_i=\frac{(u,e_i)}{(\hat{f}^*_i,e_i)}$. Then $\forall i$, $(w,e_i)=0$, and consequently,  $(Tw,w)>0$. On the other hand,
    \begin{align}\label{Bww}
        (Tw,w)=(Tu,u)+\sum_{i=1}^n \beta^2_i(T\hat{f}_i^*,\hat{f}_i^*)+\sum_{i<j}2\beta_i\beta_j(T\hat{f}_i^*,\hat{f}_j^*)-2\sum_{i=1}^n\beta_i(Tu,\hat{f}^*_i).
    \end{align}
    Note that when $i=j$, using \eqref{orthogonalbase} yields 
    \begin{align*}
        (T\hat{f}_i^*,\hat{f}_i^*)=(T{f}_i^*,\hat{f}_i^*)=(T{f}_i^*,f_i^*)-\sum_{j=\neq i} \lambda_j(f_i^*,e_j)^2.        
    \end{align*}
    When $i \neq j$, using \eqref{orthogonalbase} yields 
    \begin{align*}
        (T\hat{f}_i^*,\hat{f}_j^*)=(T{f}_i^*,\hat{f}_j^*)=(T{f}_i^*,f_j^*)- \lambda_j(f_j^*,e_i)(f_i^*,e_i)-\lambda_i(f_i^*,e_j)(f_j^*,e_j).
    \end{align*}
    Moreover, since $(u,f_i)=0$ we get 
    \begin{align*}
        (Tu,\hat{f}_i^*)=(u,T\hat{f}_i^*)=(u,Tf_i^*-\sum_{j\neq i}\lambda_j(f_i^*,e_j)e_j)=-\sum_{j\neq i}(f_i^*,e_j)(u,e_j)\lambda_j.
    \end{align*}
    Then, we have 
    \begin{align}\label{Bww2}
        (Tw,w)=&(Tu,u)+\vec{\beta}^T \tilde{M} \vec{\beta}-\sum_{j=1}^n\beta_i^2 \sum_{j\neq i}\lambda_j(f^*_i,e_j)^2 \nonumber \\
        &-2\sum_{i < j} \beta_i \beta_j [\lambda_i(f_j^*,e_i)(f^*_i,e_i)+\lambda_j(f_i^*,e_j)(f^*_j,e_j)] \nonumber \\
        &+ 2\sum_{i=1}^n \beta_i \sum_{j\neq i}\lambda_j(f_i^*,e_j)(u,e_j),
    \end{align}
        where $\vec{\beta}^T=(\beta_1,\cdots,\beta_n)$, and the matrix $\tilde{M}=(\tilde{m}_{ij})_{1\leq i,j\leq n}$ is defined by $\tilde{m}_{i,j}=(T^{-1} f_i,f_j)$. Note that $(u,e_j)=\beta_j(f_j^*,e_j)$. Hence,
        \begin{align*}
            \sum_{i < j} \beta_i \beta_j [\lambda_i(f_j^*,e_i)(f^*_i,e_i)+\lambda_j(f_i^*,e_j)(f^*_j,e_j)]=\sum_{i=1}^n \beta_i \sum_{j\neq i}\lambda_j(f_i^*,e_j)(u,e_j).
        \end{align*}
    Thus, 
    $$(Tw,w)=(Tu,u)+\vec{\beta}^T \tilde{M} \vec{\beta} - \vec{\beta}^T \hat{M} \vec{\beta},$$
    where $\hat{M}$ is the diagonal matrix with its diagonal element $\hat{m}_{ii}=\sum_{j\neq i} \frac{1}{\lambda_j}(f_i,e_j)^2$.
    In conclusion, the matrix $M=\tilde{M}-\hat{M}$ is defined as in \eqref{M matrix}. Hence, 
    \begin{align*}
        (Tu,u)=(Tw,w)+\vec{\beta}^T(-M)\vec{\beta}>0,
    \end{align*}
    if the matrix $M$ is negative definite.   
\end{proof}

    While we give the explicit linear combination of orthogonal conditions for coercivity, sometimes it will be not easy to find such linear combination explicitly. With this intuition, we state the following lemma which shows the existence of such linear combination without writing it explicitly. 

\begin{lemma}\label{Sylvester}
    Let $T$ be the self-adjoint invertible linear operator in a Hilbert space $H.$ Consider $f_1, f_2, \ldots, f_m \in H$, with $m\geq 1,$ to be $m$ linearly independent elements in $H$ and denote $V=\text{span}\{f_1, \ldots, f_m\}.$  For some $h \in H$ and $a \in \mathbb{R},$ we write $A$ to be the $m\times m$ matrix with its elements $A_{i,j}=-(T^{-1}f_i,f_j)+a(f_i,h)(f_j,h)$ for $1\leq i,j\leq m$. If the matrix $A$ has at least one positive eigenvalue, then there exists a vector $g\in V$, such that 
    $$-(T^{-1}g,g)+a(g,h)^2>0.$$
\end{lemma}
\begin{proof}
    Let $\Gamma:\mathbb{R}^m\rightarrow V$ defined by $\Gamma(x_1, \ldots,x_m)=\sum_{i=1}^{m}x_1f_1\in V.$ Since $\text{dim}V=m,$ we observe that $\Gamma$ is a bijection. If $w=\sum_{i=1}^{m}x_if_i,$ we see that $-(T^{-1}w,w)+a(w,h)^2=\vec{x}^TA\vec{x},$ where $\vec{x}^T=(x_1, \ldots,x_m)\in \mathbb{R}^m.$  
    Since the number of positive eigenvalues of $A$ is at least one, from Sylvester's law of inertia, the maximum dimension on which the quadratic form $Q: \mathbb{R}^m \rightarrow \mathbb{R}$, $Q(\vec{x})=\vec{x}^TA \vec{x}$ is positive definite is at least one. Therefore, there exists $\vec{c} \in \mathbb{R}^m$, such that $Q(\vec{c})>0$.

    Taking such $\vec{c}=(c_1,\cdots,c_m)^T$, we obtain that $$-(T^{-1}g,g)+a(g,h)^2>0$$
    with $g=\sum_{i=1}^m c_i f_i.$
\end{proof}

\begin{lemma}\label{virialcoercivity} Recall that $\alpha_1=1.01.$ There exists $\mu>0$ such that, for all $v \in H^1(\mathbb{R}^2),$ 
\begin{align*}
\int \int (3v_{x}^2+ v_{y}^2+ (1-\frac{1}{4\alpha_1^2})v^2 -3Q^2v^2 +6\alpha_1(1+ e^{-\frac{x}{\alpha_1}})QQ_{x}v^2)\geq \mu \|v\|_{H^1}^2 \\
  -\frac{1}{\mu} \left[\Big(v,\frac{1}{\sqrt{ \varphi_x(x)}} Q\Big)^2 +\Big(v,\frac{\varphi(x)}{\sqrt{ \varphi_x(x)}}Q_x\Big)^2+\Big(v,\frac{\varphi(x)}{\sqrt{ \varphi_x(x)}}\Lambda Q\Big)^2+ \Big(v,\frac{\varphi(x)}{\sqrt{ \varphi_x(x)}}Q_y\Big)^2 \right].
\end{align*}
 \end{lemma}
\begin{proof}
    We first show this result for functions $v \in H^1(\mathbb{R}^2)$ with $\Big(v,\frac{1}{\sqrt{ \varphi_x(x)}} Q\Big)=\Big(v,\frac{\varphi(x)}{\sqrt{ \varphi_x(x)}}Q_x\Big)=\Big(v,\frac{\varphi(x)}{\sqrt{ \varphi_x(x)}}\Lambda Q\Big)=\Big(v,\frac{\varphi(x)}{\sqrt{ \varphi_x(x)}}Q_y\Big)=0.$

    We denote $\tilde{\varphi}(x)=\frac{\varphi(x)}{2\sqrt{\alpha_1 \varphi_x(x)}}=\cosh(\frac{x}{2\alpha_1})$ and the normalized orthogonal conditions $f_1=\frac{e^{-\frac{x}{2\alpha_1}}Q}{\|e^{-\frac{x}{2\alpha_1}}Q\|_{L^2}}$, $f_2=\frac{\tilde{\varphi}(x) Q_x}{\|\tilde{\varphi}(x)Q_x\|_{L^2}}$, $f_3=\frac{\tilde{\varphi}(x) \Lambda Q}{\|\tilde{\varphi}(x)\Lambda Q\|_{L^2}}$, and $f_o=\frac{\tilde{\varphi}(x) Q_y}{\|\tilde{\varphi}(x) Q_y\|_{L^2}}.$ 
    
    Using the numerical computations, we find that the operator 
    $$\mathcal{L}=-3\partial_{xx}-\partial_{yy}+\Big(1-\frac{1}{4\alpha_1^2}\Big)-3Q^2+6\alpha_1(1+e^{-\frac{x}{\alpha_1}})QQ_x$$ 
     has two negative eigenvalues $\lambda_1\approx-12.6913$ and $\lambda_2\approx-2.9114$, with their associated 
     eigenfunctions $e_1$ and $e_2.$ Moreover, we numerically find that $\ker(\mathcal{L})={0},$ hence the operator is invertible.   

     Suppose there are $c_1,c_2,c_3 \in \mathbb{R}$ such that $g=c_1f_1+c_2f_2+c_3f_3=0.$ We compute 
     $$0=(g,\frac{\sqrt{\varphi_{x}(x)}}{\varphi(x)}Q)=c_1(\varphi(x)^{-1}Q,Q)$$
     which implies that $c_1=0$ as $(\varphi(x)^{-1}Q,Q)>0.$ Therefore, $c_2\Lambda Q+c_3Q_x=0$ which implies $c_2=c_3=0$ as $\Lambda Q$ is even in $x$ and $Q_x$ is odd in $x.$ Hence, $f_1,f_2,f_3$ are linearly independent in $H^1(\mathbb{R}^2).$ 
     
     The $3\times 3$ matrix $D$ with elements $d_{ij}=-(\mathcal{L}^{-1}f_i,f_j)+\frac{1}{\lambda_2}(f_i,e_2)(f_j,e_2)>0$ for $1\leq i,j\leq 3.$ By numerical computations we find that its eigenvalues are $0.1009$, $-2.86624$, $-15.1147$. Since we observe there is one positive eigenvalue, then by Lemma \ref{Sylvester} there exists $f_e \in \text{span}\{f_1,f_2,f_3\}$ such that $-(\mathcal{L}^{-1}f_e,f_e)+\frac{1}{\lambda_2}(f_e,e_2)^2>0.$ 
     
     Since $f_e$ is even in $y$ and $f_o$ is odd in $y$ and that the operator $\mathcal{L}$ is preserving the parity in $y$ we obtain that $(\mathcal{L}^{-1}f_e,f_o)=0.$  Using numerical computations, we find that $(\mathcal{L}^{-1}f_o,f_o)- \frac{1}{\lambda_1}(f_o,e_1,)\approx-0.6198.$ Thus, the matrix 
     \begin{align}\label{M matrix fe}
  M= \left[ \begin{matrix}
       & (\mathcal{L}^{-1}f_e,f_e)- \frac{1}{\lambda_1}(f_e,e_2,) \quad & (\mathcal{L}^{-1}f_e,f_o) \\
       & (\mathcal{L}^{-1}f_e,f_o) \,\, & (\mathcal{L}^{-1}f_o,f_o)- \frac{1}{\lambda_1}(f_o,e_1,)
    \end{matrix} \right]
\end{align}
     is a diagonal matrix with negative real numbers on the diagonal, in particular it is negative definite. 
     Now, applying Lemma \ref{Generalized Weinstein Lemma} to the operator $\mathcal{L}$ and $v \in H^1(\mathbb{R}^2)$ with $(v,f_e)=(v,f_o)=0,$ gives that there exists $\mu>0$ such that $(\mathcal{L}u,u)\geq \mu \|u\|_{H^1}^2.$ Hence, 
\begin{equation}\label{Bcoercivity}
    (\mathcal{L}u,u)\geq \mu \|u\|_{H^1}^2, \text{ if } (u,f_1)=(u,f_2)=(u,f_3)=(u,f_o)=0.
\end{equation}

     Now, take any $u\in H^1(\mathbb{R}^2)$ and denote 
     $v=u+aQ+be^{\frac{x}{2\alpha_1}}\Lambda Q+ce^{\frac{x}{2\alpha_1}}Q_x+de^{\frac{x}{2\alpha_1}}Q_y,$
     with 
     $$a=\frac{(u, e^{-\frac{x}{2\alpha_1}}Q)}{(Q, e^{\frac{x}{2\alpha_1}}Q)}, d=\frac{2(u,\tilde{\varphi}(x)Q_y)}{(\varphi(x)Q_y,Q_y)}$$ and 
     and, if $M^*$ is defined as in \eqref{M*}, since $\det M^*\neq 0,$ we take $b,c$ such that
\[
\begin{bmatrix} b \\ c \end{bmatrix} =(M^*)^{-1}\begin{bmatrix} \frac{(u,e^{-\frac{x}{2\alpha_1}}Q)}{(Q, e^{\frac{x}{2\alpha_1}}Q)}(Q,\tilde{\varphi}(x)Q_{x})-(u, \tilde{\varphi}(x)Q_{x})\\  \frac{(u,e^{-\frac{x}{2\alpha_1}}Q)}{(Q, e^{\frac{x}{2\alpha_1}}Q)}(Q,\tilde{\varphi}(x)\Lambda Q)-(u, \tilde{\varphi}(x)\Lambda Q)\end{bmatrix}.
\]
In particular $a, b,c, d$ are linear combinations of 
$$\{ (u,e^{-\frac{x}{2\alpha_1}}Q), (u,\tilde{\varphi}(x)Q_{x}), (u,\tilde{\varphi}(x)\Lambda Q), (u, \tilde{\varphi}(x)Q_y)\}.$$ 
By the decay properties of $Q$ from \eqref{Qdecay}, we have that $v\in H^1.$ By simple algebraic computations, we obtain that 
$$(v,e^{-\frac{x}{2\alpha_1}}Q)=(v,\tilde{\varphi}(x)Q_{x})=(v,\tilde{\varphi}(x)\Lambda Q)=(v, \tilde{\varphi}(x)Q_y)=0.$$ By the coercivity property of $\mathcal{L}$, we have that $(\mathcal{L}v,v)\geq \mu \|v\|_{L^2}^2.$

    Therefore, by using the Cauchy-Schwarz inequality, there exists $K_1>0$ such that 
    $$(\mathcal{L}v,v)=(\mathcal{L}u,u)+2(u, \mathcal{L}(aQ+b\Lambda Q+cQ_x+dQ_y))+(\mathcal{L}(aQ+b\Lambda Q+cQ_x+dQ_y),aQ+b\Lambda Q+cQ_x+dQ_y)$$
    $$\leq (\mathcal{L}u,u)+\frac{\mu}{10}\|u\|_{L^2}^2+K(a^2+b^2+c^2+d^2), $$
     where $\mu$ is the coercivity constant in \eqref{Bcoercivity}.
    Moreover, there exists $K_2>0$ such that 
    $\|v\|_{L^2}^2\geq \frac{1}{5}\|u\|_{L^2}^2-K_2(a^2+b^2+c^2+d^2).$ 
     Putting this all together it yields that there exists $\tilde{\mu}>0$ such that 
     $$(\mathcal{L}u,u)\geq \tilde{\mu}\|u\|_{L^2}^2$$
     $$-\frac{1}{\tilde{\mu}} \left[\Big(v,e^{-\frac{x}{2\alpha_1}} Q\Big)^2 +\Big(v,\tilde{\varphi}(x)Q_x\Big)^2+\Big(v,\tilde{\varphi}(x)\Lambda Q\Big)^2+ \Big(v,\tilde{\varphi}(x)Q_y\Big)^2 \right]$$
    
\end{proof}

\begin{remark}
    We first observe that we can also prove the linear independence of $f_1,f_2,f_3$ by proving that the determinant of the $3\times 3$ matrix $\widehat{M}$ with elements $\widehat{m}_{ij}=(f_i,f_j)$ is nonzero. Indeed, we compute numerically the determinant to get $\det \widehat{M}=0.8367.$
\end{remark}
\begin{remark}
    We observe numerically that $e_1$ is even in $y$ and $e_2$ is odd in $y.$ Indeed, we find that $(f_1,e_2), (f_2,e_2),(f_3,e_2), (f_4,e_1,) \sim 10^{-18}$, which come from the floating error of the numerical computation. From numerical computations, we know that $(\mathcal{L}^{-1}f_i,f_i)>0$, and $(f_i,e_2)=0$ since $f_i$ are even in $y$ axis but $e_2$ is odd in $y$ for all $i=1,2,3$, the matrix 
\begin{align} \label{matrix1}
    A=\left[\begin{matrix}
        & (\mathcal{L}^{-1}f_1,f_1) & (\mathcal{L}^{-1}f_1,f_2) & (\mathcal{L}^{-1}f_1,f_3) \\
      & (\mathcal{L}^{-1}f_1,f_2) & (\mathcal{L}^{-1}f_2,f_2) & (\mathcal{L}^{-1}f_2,f_3) \\
        & (\mathcal{L}^{-1}f_1,f_3) & (\mathcal{L}^{-1}f_2,f_3) & (\mathcal{L}^{-1}f_3,f_3)
    \end{matrix} \right] =
    \left[\begin{matrix}
    & 2.9247 & 0.5925 & -4.4347 \\
    & 0.5925 & 1.9171 & 2.6850 \\
    & -4.4347 & 2.6850 & 13.0383
    \end{matrix} \right]
\end{align}
has eigenvalues $\lambda^A_1 \approx -0.1009$, $\lambda^A_2 \approx 2.86624$ and $\lambda^A_3 \approx 15.1147$. Hence, there exists a combination of $f_e=c_1f_1+c_2f_2+c_3f_3$ such that $(\mathcal{L}^{-1}f_e,f_e)<0$. This is indeed true since we actually found $(\mathcal{L}^{-1}f_e,f_e) \approx -0.0103 \|f_e\|^2_{L^2}<0$ by taking $f_e=f_1-0.85f_2+0.5f_3$ from numerical computations.Then, if we denote $\hat{f}_e=\frac{f_e}{\|f_e\|_{L^2}}$, we have the matrix 
\begin{align}\label{M matrix fe}
  M= \left[ \begin{matrix}
       & (\mathcal{L}^{-1}\hat{f}_e,\hat{f}_e)- \frac{1}{\lambda_1}(\hat{f}_e,e_2,) \quad & (\mathcal{L}^{-1}\hat{f}_e,f_4) \\
       & (\mathcal{L}^{-1}\hat{f}_e,f_4) \,\, & (\mathcal{L}^{-1}f_4,f_4)- \frac{1}{\lambda_1}(f_4,e_1,)
    \end{matrix} \right]
    \approx 
    \left[\begin{matrix}
        &-0.0103 \,\, &0 \\
        &0  &       -0.6918
    \end{matrix}\right].
\end{align}
The matrix $M$ in \eqref{M matrix fe} is negative definite, and hence, $(\mathcal{L}u,u)>0$ when $(u,f_1)=(u,f_2)=(u,f_3)=(u,f_4)=0$.
\end{remark}
\begin{remark}
 The choice of $\alpha_1=1.01$ is chosen carefully for our analysis. We checked numerically that the matrix \eqref{matrix1} will have only positive eigenvalues if $\alpha_1=1.1.$ 

Indeed, when $\alpha_1=1.05.$, we have 
 \begin{align*} 
    A=\left[\begin{matrix}
        & (\mathcal{L}^{-1}f_1,f_1) & (\mathcal{L}^{-1}f_1,f_2) & (\mathcal{L}^{-1}f_1,f_3) \\
      & (\mathcal{L}^{-1}f_1,f_2) & (\mathcal{L}^{-1}f_2,f_2) & (\mathcal{L}^{-1}f_2,f_3) \\
        & (\mathcal{L}^{-1}f_1,f_3) & (\mathcal{L}^{-1}f_2,f_3) & (\mathcal{L}^{-1}f_3,f_3)
    \end{matrix} \right] =
    \left[\begin{matrix}
    & 3.2487 & 0.5122 & -4.9826 \\
    & 0.5122 & 2.2131 & 3.1575 \\
    & -4.9826 & 3.1575 & 14.8637
    \end{matrix} \right],
\end{align*}
which has eigenvalues $\lambda^A_1 \approx -0.0091$, $\lambda^A_2 \approx 3.1050$ and $\lambda^A_3 \approx 17.2295$. This will still work but $\lambda_1^A$ is approaching $0$ now.

When $\alpha_1=1.1$, we have
 \begin{align*} 
    A=\left[\begin{matrix}
        & (\mathcal{L}^{-1}f_1,f_1) & (\mathcal{L}^{-1}f_1,f_2) & (\mathcal{L}^{-1}f_1,f_3) \\
      & (\mathcal{L}^{-1}f_1,f_2) & (\mathcal{L}^{-1}f_2,f_2) & (\mathcal{L}^{-1}f_2,f_3) \\
        & (\mathcal{L}^{-1}f_1,f_3) & (\mathcal{L}^{-1}f_2,f_3) & (\mathcal{L}^{-1}f_3,f_3)
    \end{matrix} \right] =
    \left[\begin{matrix}
    & 3.7423 & 0.3435 & -5.9056 \\
    & 0.3435 & 2.7157 & 4.0117 \\
    & -5.9056 & 4.0117 & 17.9427
    \end{matrix} \right].
\end{align*}
The resulting eigenvalues are $\lambda^A_1 \approx 0.1254$, $\lambda^A_2 \approx 3.4533$ and $\lambda^A_3 \approx 20.8220$. We fail to obtain the negative eigenvalue of $A$, which implies that there might be no linear combination of $f$ such that $(\mathcal{L}^{-1}f,f)<0.$

\end{remark}

Now, we can finally prove the Virial Lemma \ref{Virial}. 

 \begin{lemma} There exists $\mu>0$ and $B_0:=B_0(\mu)>0$, such that if $B\geq B_0,$ for all $v \in H^1,$ with $(v,Q)=(v,\varphi(x)Q_{x})=(v,\varphi(x)Q_{y})=(v,\varphi(x)\Lambda Q)=0,$
 \begin{equation}\label{eq:virial}
 \begin{split}
\int \int_{\{|x|<\frac{B}{2}\}} &[3v_{x}^2+ v_{y}^2]\varphi_x+ v^2(\varphi_x-\varphi_{xxx}) -3Q^2v^2\varphi_x +6QQ_{x}v^2\varphi)\geq \\
&\mu \int \int_{\{|x|<\frac{B}{2}\}} (v_{x}^2+ v_{y}^2+ v^2)\varphi_x-e^{-\frac{B}{800\alpha_1\alpha_2}}\int \int_{\{|x|<\frac{B}{2}\}} v^2e^{-\frac{|x|}{200\alpha_1\alpha_2}}.
 \end{split}
 \end{equation}
 \end{lemma}
 \begin{proof} 
 Let $\zeta$ be a smooth function such that 
 $$\zeta(x)=0 \text{ for }  |x|>\frac{1}{2}, \zeta(x)=1 \text{ for } |x|<\frac{1}{4}, 0\leq \zeta \leq 1 \text{ on } \mathbb{R}.$$ 
 Set $\tilde{v}=v\zeta_B$ where $\zeta_B(x)=\zeta(\frac{x}{B}).$ We notice that $\tilde{v}\sqrt{\varphi_x}\in H^1.$
 By integration by parts, we find that 
 $$\int \int[3\tilde{v}_{x}^2+ \tilde{v}_{y}^2]\varphi_x+ \tilde{v}^2(\varphi_x-\varphi_{xxx}) -3Q^2\tilde{v}^2\varphi_x +6QQ_{x}\tilde{v}^2\varphi)=(\mathcal{L}(\tilde{v}\sqrt{\varphi_x}),\tilde{v}\sqrt{\varphi_x}),$$
where $\mathcal{L}$ is the operator from Lemma \ref{virialcoercivity}. By the same coercivity result, there exists $\mu>0$ such that 
$$(\mathcal{L}(\tilde{v}\sqrt{\varphi_x}),\tilde{v}\sqrt{\varphi_x})\geq \mu \|\tilde{v}\sqrt{\phi_x}\|_{H^1}^2 \\
  -\frac{1}{\mu} \left[(\tilde{v}, Q)^2 +(\tilde{v},\varphi(x)Q_x)^2+(\tilde{v},\varphi(x)\Lambda Q)^2+ (\tilde{v},\varphi(x)Q_y)^2 \right].$$
Integrating by parts, we observe that 
$$\|\tilde{v}\sqrt{\varphi_x}\|_{H^1}^2\geq \Big(1-\frac{1}{4\alpha_1^2}\Big)\int \int (\tilde{v}_x^2+\tilde{v}_y^2+v^2)\varphi_x$$
Take $\tilde{\mu}=\min\{\mu\Big(1-\frac{1}{4\alpha_1^2}\Big), \frac{1}{2}\Big(1-\frac{1}{\alpha_1^2}\Big)\}.$ We get that 
 $$(3-\tilde{\mu})\int \int \tilde{v}_x^2\varphi_x+(1-\tilde{\mu})\int \int \tilde{v}_y^2\varphi_x+\Big(1-\frac{1}{\alpha_1^2}-\tilde{\mu}\Big)\int \int \tilde{v}^2\varphi_x$$
 $$-3\int \int Q^2 \tilde{v}^2\varphi_x+6\int \int \alpha_1(1+e^{-\frac{x}{\alpha_1}})Q_xQ\tilde{v}^2\varphi_x$$
 $$\geq -\frac{1}{\mu}\Big[\Big(\int \int \tilde{v}Q \Big)^2+\Big(\int \int \tilde{v}\varphi(x)Q_{x} \Big)^2+\Big(\int \int \tilde{v}\varphi(x)Q_{y} \Big)^2+\Big(\int \int \tilde{v}\varphi(x)\Lambda Q \Big)^2\Big].$$
 
 Now, we bound the terms from the left hand side of the inequality
\begin{equation*}
\begin{split}
 \int \int \tilde{v}_x^2\varphi_x&=\int \int v^2_x\varphi_x \zeta_B^2 +\int \int v^2\varphi_x((\zeta_B)_x^2-\frac{1}{2}(\zeta_B^2)_{xx})=\int \int v^2_x\varphi_x \zeta_B^2 -\int \int\varphi_x v^2\zeta_B(\zeta_B)_{xx}\\
 &\leq  \int\int_{\{|x|<\frac{B}{2}\}}v^2_x\varphi_x+\frac{C}{B^2}  \int\int_{\{|x|<\frac{B}{2}\}}v^2\varphi_x,
 \end{split} 
 \end{equation*}
 $$\int \int \tilde{v}_y^2\varphi_x\leq \int\int_{\{|x|<\frac{B}{2}\}}v_y^2\varphi_x,$$
  $$\int \int \tilde{v}^2\varphi_x\leq \int\int_{\{|x|<\frac{B}{2}\}}v^2\varphi_x,$$
  and since $Q\leq e^{-|x|/2}$ and $|(1+e^{-\frac{x}{\alpha_1}})Q_xQ|\leq e^{-|x|/2}$ for all $x\in \mathbb{R},$
  \begin{equation*}
  \begin{split}
  &-3 \int \int Q^2\tilde{v}^2\varphi_x= -3\int \int Q^2v^2\varphi_x\zeta_B^2\\
  &=-3\int \int_{\{|x|<\frac{B}{2}\}}Q^2v^2\varphi_x+3\int \int_{\{\frac{B}{4}<|x|<\frac{B}{2}\}}Q^2v^2\varphi_x(1-\zeta^2_B)\\
  &\leq -3\int \int_{\{|x|<\frac{B}{2}\}}Q^2v^2\varphi_x+ 3e^{-\frac{B}{8}}\int \int_{\{\frac{B}{4}<|x|<\frac{B}{2}\}}v^2\varphi_x,
  \end{split}
  \end{equation*}
  
  $$6 \int \int \alpha_1(1+e^{-\frac{x}{\alpha_1}})Q_xQ\tilde{v}^2= 6\int \int \alpha_1(1+e^{-\frac{x}{\alpha_1}})Q_xQv^2\zeta_B^2$$
  $$=6\int \int_{\{|x|<\frac{B}{2}\}}\alpha_1(1+e^{-\frac{x}{\alpha_1}})Q_xQv^2-6\int \int_{\{\frac{B}{4}<|x|<\frac{B}{2}\}}\alpha_1(1+e^{-\frac{x}{\alpha_1}})Q_xQv^2(1-\zeta^2_B)$$
  $$\leq 6\int \int_{\{|x|<\frac{B}{2}\}}\alpha_1(1+e^{-\frac{x}{\alpha_1}})Q_xQv^2+6e^{-\frac{B}{8}}\int \int_{\{\frac{B}{4}<|x|<\frac{B}{2}\}}v^2.$$
  Using the orthogonality conditions, the decay of $Q$ from \eqref{Qdecay} and recalling the definition \eqref{alphas} of $\alpha_2=\alpha_1-\frac{1}{200},$ we have
  $$\Big|\int \int \tilde{v}Q\Big|=\Big|\int \int vQ\zeta_B\Big|=\Big|\int \int vQ(1-\zeta_B)\Big|$$
  $$\leq \Big(\int \int_{|x|>\frac{B}{2}}v^2Q\Big)^{\frac{1}{2}}\Big(\int\int_{|x|>\frac{B}{2}}Q\Big)^{\frac{1}{2}}\leq e^{-\frac{B}{8}}\Big(\int \int_{|x|>\frac{B}{2}} v^2e^{-\frac{|x|}{200\alpha_1\alpha_2}}\Big)^{\frac{1}{2}},$$

  $$\Big|\int \int \tilde{v}\varphi(x)Q_{y}\Big|=\Big|\int \int v\varphi(x)Q_{x}\zeta_B\Big|=\Big|\int \int v\varphi(x)Q_{y}(1-\zeta_B)\Big|$$
   $$\leq \Big(\int \int_{|x|>\frac{B}{2}}v^2\varphi(x)Q_{y}\Big)^{\frac{1}{2}}\Big(\int\int_{|x|>\frac{B}{2}}\varphi(x)Q_{y}\Big)^{\frac{1}{2}}\leq e^{-\frac{B}{800\alpha_1\alpha_2}}\Big(\int \int_{|x|>\frac{B}{2}} v^2e^{-\frac{|x|}{200\alpha_1\alpha_2}}\Big)^{\frac{1}{2}},$$
  
  $$\Big|\int \int \tilde{v}\varphi(x)Q_{y}\Big|=\Big|\int \int v\varphi(x)Q_{y}\zeta_B\Big|=\Big|\int \int v\varphi(x)Q_{y}(1-\zeta_B)\Big|$$
   $$\leq \Big(\int \int_{|x|>\frac{B}{2}}v^2\varphi(x)Q_{y}\Big)^{\frac{1}{2}}\Big(\int\int_{|x|>\frac{B}{2}}\varphi(x)Q_{y}\Big)^{\frac{1}{2}}\leq e^{-\frac{B}{800\alpha_1\alpha_2}}\Big(\int \int_{|x|>\frac{B}{2}} v^2e^{-\frac{|x|}{200\alpha_1\alpha_2}}\Big)^{\frac{1}{2}},$$

    $$\Big|\int \int \tilde{v}\varphi(x)\Lambda Q\Big|=\Big|\int \int v\varphi(x)\Lambda Q\zeta_B\Big|=\Big|\int \int v\varphi(x)\Lambda Q(1-\zeta_B)\Big|$$
   $$\leq \Big(\int \int_{|x|>\frac{B}{2}}v^2\varphi(x)Q_{y}\Big)^{\frac{1}{2}}\Big(\int\int_{|x|>\frac{B}{2}}\varphi(x)\Lambda Q\Big)^{\frac{1}{2}}\leq e^{-\frac{B}{800\alpha_1\alpha_2}}\Big(\int \int_{|x|>\frac{B}{2}} v^2e^{-\frac{|x|}{200\alpha_1\alpha_2}}\Big)^{\frac{1}{2}}.$$

  Taking $B$ such that $9e^{-\frac{B}{8}}+\frac{C}{B^2}\leq \frac{\tilde{\mu}}{2}$, we get 
  \begin{equation*}
  \begin{split} 
  &(3-\tilde{\mu})\int \int_{\{|x|<\frac{B}{2}\}} v_x^2\varphi_x+(1-\tilde{\mu})\int \int_{\{|x|<\frac{B}{2}\}} v_y^2\varphi_x+(1-\frac{1}{\alpha_1^2}-\tilde{\mu})\int \int_{\{|x|<\frac{B}{2}\}} v^2\varphi_x\\&-3\int \int_{\{|x|<\frac{B}{2}\}} Q^2 v^2\varphi_x+6\int \int_{\{|x|<\frac{B}{2}\}} \alpha_1(1+e^{\frac{x}{\alpha_1}})QQ_{x}v^2+\frac{\tilde{\mu}}{2}\int \int_{\{|x|<\frac{B}{2}\}}v^2\\
  &\geq(3-\tilde{\mu})\int \int_{\{|x|<\frac{B}{2}\}} v_x^2\varphi_x+(1-\tilde{\mu})\int \int_{\{|x|<\frac{B}{2}\}} v_y^2\varphi_x+(1-\frac{1}{\alpha_1^2}-\tilde{\mu})\int \int_{\{|x|<\frac{B}{2}\}} v^2\varphi_x\\&-3\int \int_{\{|x|<\frac{B}{2}\}} Q^2 v^2\varphi_x+6\int \int_{\{|x|<\frac{B}{2}\}} \alpha_1(1+e^{\frac{x}{\alpha_1}})QQ_{x}v^2+\Big(9e^{-\frac{B}{8}}+\frac{C}{B^2}\Big)\int \int_{\{|x|<\frac{B}{2}\}}v^2\\
 &\geq  (3-\tilde{\mu})\int \int \tilde{v}_x^2\varphi_x+(1-\tilde{\mu})\int \int \tilde{v}_y^2\varphi_x+\Big(1-\frac{1}{\alpha_1^2}-\tilde{\mu}\Big)\int \int \tilde{v}^2\varphi_x\\
 &-3\int \int Q^2 \tilde{v}^2+6\int \int_{\{|x|<\frac{B}{2}\}} \alpha_1(1+e^{\frac{x}{\alpha_1}})QQ_{x} \tilde{v}^2\\
 & \geq -\frac{1}{\mu}\Big[\Big(\int \int \tilde{v}Q \Big)^2+\Big(\int \int \tilde{v}\varphi(x)Q_{x} \Big)^2+\Big(\int \int \tilde{v}\varphi(x)Q_{y} \Big)^2+\Big(\int \int \tilde{v}\varphi(x)\Lambda Q \Big)^2\\
 &\geq -\frac{1}{\mu}e^{-\frac{B}{800\alpha_1\alpha_2}}\int \int_{\{|x|>\frac{B}{2}\}} v^2e^{-\frac{|x|}{200\alpha_1\alpha_2}}. 
 \end{split}
 \end{equation*}
 Hence, we have
\begin{equation*}
\begin{split} 
  &\int \int_{\{|x|<\frac{B}{2}\}} 3v_x^2\varphi_x+ v_y^2\varphi_x+ v^2(\varphi_x-\varphi_{xxx})-3Q^2 v^2\varphi_x+6QQ_{x}v^2\varphi\\
 &\geq \frac{\tilde{\mu}}{2}\int \int_{\{|x|<\frac{B}{2}\}} (v_x^2+v_y^2+v^2)\varphi_x-\frac{1}{\tilde{\mu}}e^{-\frac{B}{800\alpha_1\alpha_2}}\int \int_{\{|x|>\frac{B}{2}\}} v^2e^{-\frac{|x|}{200\alpha_1\alpha_2}}
 \end{split}
 \end{equation*}
 which concludes the proof. 
 \end{proof} 
 
 \section*{Appendix D} \label{appendixD}
 
  \begin{theorem} 
 There exists $\alpha_1>0$ such that the following property holds. For all $0<\alpha'\leq \alpha_1,$ there exists $\delta=\delta(\alpha')>0,$ with $\delta(\alpha')\rightarrow 0$ as $\alpha'\rightarrow 0,$ such that for all $u \in H^1(\mathbb{R}^2), u \not\equiv 0,$ if 
 $$\alpha(u)\leq \alpha',\mbox{  }E(u)\leq \alpha'\int \int |\nabla u|^2,$$  
 then there exists $x_1,y_1 \in \mathbb{R}$ and $\epsilon_0\in \{-1,1\}$ such that 
 $$\|Q-\epsilon_0\lambda_0u(\lambda_0x+x_1,\lambda_0y+y_1)\|_{H^1}\leq \delta(\alpha'),$$
 with $$\lambda_0=\frac{\|\nabla Q\|_{L^2}}{\|\nabla u\|_{L^2}}.$$
 \end{theorem}
 
 In order to prove this result, we will go through several steps that appear in the works of Martel, Merle (\cite{Merle01}, Lemma $1$), (\cite{MartelMerle02}, Lemma $1$).

 \textit{Claim 1.}The variational characterization of Q: for $v \in H^1(\mathbb{R}^2),$ if 
 $$0<\int \int v^2\leq \int \int Q^2 \mbox{ and }E(v)\leq 0,$$
 then there exists $\tilde{\lambda}>0, (x_0,y_0)\in \mathbb{R}^2, \epsilon_0\in \{\pm 1\}$ such that $v=\epsilon_0\tilde{\lambda}Q(\tilde{\lambda}(x-x_0),\tilde{\lambda}(y-y_0)).$

 This appears in \cite{Weinstein}. 
 As a corollary of this claim, we get that if $v \in H^1(\mathbb{R}^2)$ with $E(v)=0, \int\int v^2=\int \int Q^2, \int \int |\nabla v|^2=\int \int |\nabla Q|^2$ is equivalent to 
 \begin{equation}\label{variational}
 v=\epsilon_0Q(x-x_0, y-y_0).
 \end{equation}
 
 \begin{lemma} \label{lemmastability} Let $(v_n)_{n\geq 1}$ be a sequence in $H^1(\mathbb{R}^2)$ such that  
 $$\int \int v_{n}^{2}\rightarrow \int \int Q^2 \mbox{ and }\int \int |\nabla v_n|^2=\int \int|\nabla Q|^2 \mbox{ and } E(v_n)<0.$$ 
 Then there exists a sequence $\epsilon_n\in \{\pm 1\}$ and a sequence $\overrightarrow{x_n}$ in $\mathbb{R}^2$ such that 
 $$\epsilon_nv_n(\cdot+\overrightarrow{x_n})\rightarrow Q \mbox{ in }H^1.$$
 \end{lemma}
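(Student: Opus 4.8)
The strategy is a concentration-compactness argument in which Claim 1 (the variational characterization of $Q$) identifies the limiting profile while the Weinstein inequality prevents mass from escaping to infinity. Since the conclusion only asks for convergence along the sequence with $n$-dependent signs, it suffices, by a routine subsequence/diagonal argument, to prove: every subsequence of $(v_n)$ admits a further subsequence along which $\epsilon_0 v_n(\cdot + \vec x_n) \to Q$ in $H^1$ for some fixed $\epsilon_0 \in \{\pm 1\}$ and some $\vec x_n \in \mathbb{R}^2$. First I would rule out vanishing: from $E(v_n) < 0$ and $\|\nabla v_n\|_{L^2}^2 = \|\nabla Q\|_{L^2}^2$ one gets $\|v_n\|_{L^4}^4 > 2\|\nabla Q\|_{L^2}^2 > 0$, while $\|v_n\|_{L^4}$ is bounded above since $(v_n)$ is bounded in $H^1$; Lions' vanishing lemma then supplies $R,\eta>0$ and $\vec x_n$ with $\int_{B(\vec x_n,R)}v_n^2 \ge \eta$ along a subsequence. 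Writing $\tilde v_n = v_n(\cdot+\vec x_n)$, boundedness in $H^1$ gives (up to a further subsequence) $\tilde v_n \rightharpoonup V$ in $H^1$ and, by Rellich compactness on $B(0,R)$, $\int_{B(0,R)}V^2 \ge \eta$, so $V\not\equiv 0$.

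Next I would perform the Brezis--Lieb splitting $\tilde v_n = V + w_n$ with $w_n \rightharpoonup 0$ in $H^1$. Weak $L^2$-convergence, weak $L^2$-convergence of the gradients (so the cross term $\int \nabla V\cdot\nabla w_n \to 0$), and the Brezis--Lieb lemma applied to the $L^4$-norms give $\|\tilde v_n\|_{L^2}^2 = \|V\|_{L^2}^2 + \|w_n\|_{L^2}^2 + o(1)$, $\|\nabla \tilde v_n\|_{L^2}^2 = \|\nabla V\|_{L^2}^2 + \|\nabla w_n\|_{L^2}^2 + o(1)$, and $\|\tilde v_n\|_{L^4}^4 = \|V\|_{L^4}^4 + \|w_n\|_{L^4}^4 + o(1)$, hence $E(\tilde v_n) = E(V) + E(w_n) + o(1)$. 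Since $\|w_n\|_{L^2}^2 \to \|Q\|_{L^2}^2 - \|V\|_{L^2}^2 < \|Q\|_{L^2}^2$ (strictly, as $V\not\equiv 0$), the Weinstein inequality yields $E(w_n) \ge 0$ for $n$ large, so $E(V) \le 0$. Combined with $0 < \|V\|_{L^2}^2 \le \|Q\|_{L^2}^2$ (weak lower semicontinuity), Claim 1 gives $V = \epsilon_0 \tilde\lambda\, Q(\tilde\lambda(\cdot - \vec x_0))$ for some $\tilde\lambda>0$, $\vec x_0$, $\epsilon_0$. In particular $\|V\|_{L^2} = \|Q\|_{L^2}$, forcing $\|w_n\|_{L^2}\to 0$, and then $\|w_n\|_{L^4}\to 0$ by the Gagliardo--Nirenberg inequality; thus $\tilde v_n \to V$ in $L^2$ and in $L^4$.

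It remains to pin down $\tilde\lambda = 1$ and promote this to $H^1$-convergence. On one hand, $E(\tilde v_n) < 0$ with $\|\nabla \tilde v_n\|_{L^2}^2 = \|\nabla Q\|_{L^2}^2$ gives $\|\tilde v_n\|_{L^4}^4 > 2\|\nabla Q\|_{L^2}^2$; letting $n\to\infty$ and using the scaling identity $\|V\|_{L^4}^4 = \tilde\lambda^2\|Q\|_{L^4}^4 = 2\tilde\lambda^2\|\nabla Q\|_{L^2}^2$ (the last step because $E(Q)=0$) yields $\tilde\lambda^2 \ge 1$. On the other hand, weak lower semicontinuity of the Dirichlet energy together with $\|\nabla V\|_{L^2}^2 = \tilde\lambda^2\|\nabla Q\|_{L^2}^2$ gives $\tilde\lambda^2 \le 1$. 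Hence $\tilde\lambda = 1$, so $V = \epsilon_0 Q(\cdot - \vec x_0)$ and $\|\nabla w_n\|_{L^2}^2 = \|\nabla Q\|_{L^2}^2 - \|\nabla V\|_{L^2}^2 + o(1) = o(1)$; together with $\|w_n\|_{L^2}\to 0$ this gives $w_n\to 0$ in $H^1$, i.e. $\tilde v_n \to V$ in $H^1$. Absorbing $\vec x_0$ into the translations (replace $\vec x_n$ by $\vec x_n + \vec x_0$) yields $\epsilon_0 v_n(\cdot + \vec x_n)\to Q$ in $H^1$ along the chosen subsequence, which completes the proof via the subsequence principle.

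The main obstacle is the concentration-compactness bookkeeping, and in particular excluding the dichotomy scenario in which part of the mass of $v_n$ drifts off to infinity. The mechanism above resolves this without a separate dichotomy analysis: all possibly-escaping mass is carried by $w_n$, the Weinstein inequality forces $E(w_n)\ge 0$, and then $E(\tilde v_n)<0$ combined with Claim 1 forces $\|V\|_{L^2} = \|Q\|_{L^2}$, i.e. no mass is lost. Everything else — weak lower semicontinuity, Brezis--Lieb, Gagliardo--Nirenberg, and the elementary scaling identities for $Q$ — is standard and should go through routinely.
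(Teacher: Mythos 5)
Your proof is correct. It follows the same overall skeleton as the paper's argument (concentration compactness, identification of the weak limit via the variational characterization of $Q$, then upgrading to strong $H^1$ convergence), but the two proofs diverge at the crucial compactness step. The paper simply invokes ``the concentration compactness method'' as a black box to obtain a translated subsequence converging \emph{strongly in $L^2$}, then uses Gagliardo--Nirenberg to pass to $L^4$, weak lower semicontinuity plus the Weinstein inequality to force $E(V)=0$, and the rigidity corollary of Claim 1 (which requires $\|V\|_{L^2}=\|Q\|_{L^2}$ and $\|\nabla V\|_{L^2}=\|\nabla Q\|_{L^2}$, both of which it already has at that point). You instead supply the mechanism that the paper leaves implicit: after ruling out vanishing with Lions' lemma, the Brezis--Lieb splitting $\tilde v_n = V + w_n$ together with the Weinstein positivity $E(w_n)\ge 0$ (valid because $\|w_n\|_{L^2}<\|Q\|_{L^2}$ for large $n$) shows that no mass can escape, since $E(V)\le 0$ and Claim 1 then force $\|V\|_{L^2}=\|Q\|_{L^2}$. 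Because you apply Claim 1 in its general form (with a free scaling parameter $\tilde\lambda$) rather than the normalized corollary, you need the extra step pinning down $\tilde\lambda=1$ via the scaling identities $\|V\|_{L^4}^4=2\tilde\lambda^2\|\nabla Q\|_{L^2}^2$ and $\|\nabla V\|_{L^2}^2=\tilde\lambda^2\|\nabla Q\|_{L^2}^2$, combined with $E(\tilde v_n)<0$ on one side and weak lower semicontinuity on the other; this step is correct and replaces the paper's route through $E(V)=0$. The net effect is that your argument is more self-contained: it proves the strong $L^2$ convergence that the paper assumes, at the cost of the Brezis--Lieb bookkeeping and the scaling computation. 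The closing appeal to the subsequence principle to recover the full-sequence statement with $n$-dependent signs is also handled correctly.
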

 \begin{proof} 
 We proceed by contradiction. By concentration compactness method, there exists a subsequence $\omega_n=v_n(\cdot+\overrightarrow{x_n})\rightarrow V$ in $L^2$ and $\omega_n=v_n(\cdot+\overrightarrow{x_n})\rightharpoonup V$ in $H^1.$ By Gagliardo-Nirenberg inequality, we have 
 $$\norm{\omega_n-V}_{L^4}^{4}\lesssim \norm{\omega_n-V}_{L^2}\norm{\omega_n-V}_{H^1}\rightarrow 0 \mbox{ as }n\rightarrow\infty$$ 
 so $\omega_n\rightarrow V$ in $L^4.$ By passing to a subsequence, we have that
 $$E(V)=\frac{1}{2}\norm{\nabla u}_{L^2}^{2}-\frac{1}{4}\norm{u}_{L^4}^{4}\leq \liminf_{n\rightarrow\infty}\Big(\frac{1}{2}\norm{\nabla \omega_n}_{L^2}^{2}-\frac{1}{4}\norm{\omega_n}_{L^4}^{4}\Big)\leq  \liminf_{n\rightarrow\infty}E(\omega_n) \leq0.$$ 
 This implies, by Gagliardo-Nirenberg inequality, $E(V)=0,$ so $\omega_n\rightarrow V$ in $H^1.$

 Thus, we get that $\int \int V^2=\int \int Q^2, E(V)=0, \int \int |\nabla V|^2=\int \int |\nabla Q|^2,$ then by the above corollary \ref{variational} we get $V=\epsilon_0Q(\cdot+\overrightarrow{x_0}),$ with $\epsilon_0\in \{\pm 1\}.$  Hence, 
 $$\epsilon_0\lambda_n u_n(\lambda_n \overrightarrow{x}+\overrightarrow{w_n})\rightarrow Q \mbox{ in }L^2 \mbox{ which leads to a contradiction}.$$ 
 \end{proof} 
 \begin{proof}[Proof of Theorem \ref{orbitalstability}]
 By contradiction, suppose there exists a sequence $(u_n)_{n\geq 1}$ of functions in $H^1(\mathbb{R}^2), u_n\not\equiv 0,$ such that 
 $$\lim_{n\rightarrow \infty}\int \int u_{n}^{2}\leq \int \int Q^2, \lim_{n\rightarrow \infty}\frac{E(u_n)}{\int\int |\nabla u_n|^2}\leq 0.$$
 
We set $\lambda_n=\frac{\int\int |\nabla Q|^2}{\int\int |\nabla u_n|^2}$ and $v_n=\lambda_n u_n(\lambda_n\overrightarrow{x}).$ We see that $\int \int v_{n}^{2}=\int \int u_{n}^{2}$ and $\int \int |\nabla v_n|^2=\int \int |\nabla Q|^2.$
 By Gagliardo-Nirenberg inequality we get 
 $$\frac{E(u_n)}{\int \int |\nabla u_n|^2}\geq \frac{1}{2}\Big(1-\frac{\int \int u_{n}^{2}}{\int \int Q^2}\Big)$$
 and since $\lim_{n\rightarrow \infty}\alpha(u_n)\leq 0,\lim_{n \rightarrow \infty}\frac{E(u_n)}{\int\int |\nabla u_n|^2}\leq 0,$ it follows that 
 $$\lim_{n\rightarrow \infty}\int \int u_{n}^{2}=\lim_{n\rightarrow \infty}\int \int v_{n}^{2}=\int \int Q^2 \mbox{ and }\lim_{n\rightarrow \infty}\frac{E(u_n)}{\int\int |\nabla u_n|^2}=0.$$
 By direct calculations, we have: 
 $$E(v_n)=\frac{E(u_n)}{\int\int |\nabla u_n|^2}\int \int |\nabla Q|^2<0(, \int \int |\nabla v_n|^2=\int \int |\nabla Q|^2.$$
 Therefore the sequence $(v_n)_{n\geq 1}$ satisfies the conditions of Lemma \ref{lemmastability}, hence 
 $\epsilon_n v_n(\cdot+\overrightarrow{x_n})\rightarrow Q \mbox{ as }n\rightarrow \infty,$ which yields contradiction.
\end{proof}
\section*{Appendix E} \label{appendixE}
 \lstset{language=Matlab,%
    breaklines=true,%
    morekeywords={matlab2tikz},
    keywordstyle=\color{blue},%
    morekeywords=[2]{1}, keywordstyle=[2]{\color{black}},
    identifierstyle=\color{black},%
    stringstyle=\color{mylilas},
    commentstyle=\color{mygreen},%
    showstringspaces=false,
    numbers=left,%
    numberstyle={\tiny \color{black}},
    numbersep=9pt, 
    emph=[1]{for,end,break},emphstyle=[1]\color{red}, 
}

\section*{\underline{Matlab Code}}
We present the code in computing the constant $c$ as defined in \eqref{eq:definitionofc}. We use the definition of $c$ from \eqref{Feq} that allows us to apply the computational machinery. Define $F$ as the unique solution of the system 
 \begin{equation} 
 \begin{cases}
 -F_{y_2y_2}+F=-\int_{-\infty}^{\infty}\Lambda Qdy_1=g(y_2)\\
 \lim_{y_2\rightarrow -\infty}F(y_2)=\lim_{y_2\rightarrow \infty}F(y_2)=0.
 \end{cases}
 \end{equation}
  It follows that 
$$\frac{2\int_{-\infty}^{\infty}F\Big(-\int_{-\infty}^{\infty}\Lambda Q dy_1\Big)dy_2}{\int_{-\infty}^{\infty}\Big(\int_{-\infty}^{\infty}\Lambda Q dy_1\Big)^2dy_2}=c.$$

Our numerical computations gives that $c\approx 1.6632.$ Below we include the code used for the computations. 
\lstinputlisting{C_calc_2.m}

We use the 2nd order finite difference method to discretize the operator $\mathcal{L}$ in Lemma 30. The operator $\mathcal{L}$ is then approximated by a sparse matrix $\mathbf{L}$. The eigenvalues and eigenfunctions of $\mathbf{L}$ are obtained by using matlab command "eigs", which incorporates "ARPACK" in \cite{Arpack} and \cite{Stewart} algorithms. We include the Matlab code below.

\lstinputlisting{ZK2d3p_spectrum_B_perturbed.m}

The ground state $Q$ is computed by the Petviashvili's Iteration \cite{Petviashvili}. We include the Matlab code below.

\lstinputlisting{GS2D_FD_fun.m}

We also approximate the operator $\mathcal{L}$ by using the Fourier spectral method, which reaches the spectral accuracy (usually on the order of $10^{-15}$), and consequently is more accurate than the finite difference method. After using the Fourier pseudo-spectral discretization, we use the preconditioned conjugate gradient method to find the $f^*=\mathcal{L}^{-1}f$. Finally, the inner products of $(\mathcal{L}^{-1}f_i,f_j)$ is computed from the left rectangular rule, and maintains the spectral accuracy since the both $\mathcal{L}^{-1}f_i$'s and $f_j$'s can be approximated by the finite sum of Fourier series, see details in \cite{SpectralMethods}. 
 The related matlab code are included below. These two different numerical methods lead to similar results, and both of them support the coercivity of the operator $\mathcal{L}$ in the given orthogonal subspace. Hence, the numerical computations are trustful.

\lstinputlisting{ZK2d3p_spectrum_B_perturbed_fft2.m}
\lstinputlisting{GS2d_PT_fft.m}
\lstinputlisting{ cgp_BVP2d_fft2_fun.m}


\bibliographystyle{alpha} 
\bibliography{ZK}

\end{document}